\newtheorem{thm}{Theorem}
\newtheorem{prop}{Proposition}
\newtheorem{lem}{Lemma}
\newtheorem{rem}{Remark}
\newtheorem{cor}{Corollary}
\newtheorem{defi}{Definition}
\newtheorem{defiprop}{Definition-Proposition}
\def\Top{\mathop{\rm Top}\nolimits}
\def\CH{\mathop{\rm CH}\nolimits}
\def\dim{\mathop{\rm dim}\nolimits}
\def\Tr{\mathop{\rm Tr}\nolimits}
\def\Cor{\mathop{\rm Cor}\nolimits}
\def\SmVar{\mathop{\rm SmVar}\nolimits}
\def\PSmVar{\mathop{\rm PSmVar}\nolimits}
\def\Var{\mathop{\rm Var}\nolimits}
\def\Hom{\mathop{\rm Hom}\nolimits}
\def\Spec{\mathop{\rm Spec}\nolimits}
\def\QPVar{\mathop{\rm QPVar}\nolimits}
\def\AnSp{\mathop{\rm AnSp}\nolimits}
\def\CW{\mathop{\rm CW}\nolimits}
\def\Cw{\mathop{\rm Cw}\nolimits}
\def\PVar{\mathop{\rm PVar}\nolimits}
\def\Tot{\mathop{\rm Tot}\nolimits}
\def\sing{\mathop{\rm sing}\nolimits}
\def\Im{\mathop{\rm Im}\nolimits}
\def\Cone{\mathop{\rm Cone}\nolimits}
\def\ad{\mathop{\rm ad}\nolimits}
\def\card{\mathop{\rm card}\nolimits}
\def\An{\mathop{\rm An}\nolimits}
\def\Ho{\mathop{\rm Ho}\nolimits}
\def\PSh{\mathop{\rm PSh}\nolimits}
\def\AnSm{\mathop{\rm AnSm}\nolimits}
\def\Tr{\mathop{\rm Tr}\nolimits}
\def\DM{\mathop{\rm DM}\nolimits}
\def\AnDM{\mathop{\rm AnDM}\nolimits}
\def\pt{\mathop{\rm pt}\nolimits}
\def\DA{\mathop{\rm DA}\nolimits}
\def\AnDA{\mathop{\rm AnDA}\nolimits}
\def\CwDA{\mathop{\rm CwDA}\nolimits}
\def\CwDM{\mathop{\rm CwDM}\nolimits}
\def\Fun{\mathop{\rm Fun}\nolimits}
\def\Sh{\mathop{\rm Sh}\nolimits}
\def\Ab{\mathop{\rm Ab}\nolimits}
\def\Bti{\mathop{\rm Bti}\nolimits}
\def\TM{\mathop{\rm TM}\nolimits}
\def\Ouv{\mathop{\rm Ouv}\nolimits}
\def\coker{\mathop{\rm coker}\nolimits}
\def\Ext{\mathop{\rm Ext}\nolimits}
\def\CS{\mathop{\rm CS}\nolimits}
\def\GVar{\mathop{\rm GVar}\nolimits}
\def\TriCat{\mathop{\rm TriCat}\nolimits}
\title{On the realization functor of the derived category of mixed motives}
\author{Johann Bouali}
\begin{document}

\maketitle

\begin{abstract}
We give an alternative construction of the Betti realization functor on the derived category of
motives of complex algebraic varieties via the category of CW complexes
instead of the category of complex analytic spaces. In particular, we show that
the functor we define via the category of CW complexes coincide with Ayoub's one.
We deduce from this construction that Ayoub's realization functor on geometric motives
factor through Nori motives and that the image of this functor on the morphisms 
between the motive of a point and a Tate twist of the motive with compact support of a complex algebraic variety  
coincide with the classical cycle class map on higher chow groups.
\end{abstract}

\textbf{Notations}: 

\begin{itemize}

\item Denote by $\Top$ the category of topological spaces.
Denote by $\Var(k)$ the category of algebraic varieties over a field $k$, i.e. schemes of finite type over $k$.
Let us call $\PVar(k)\subset\QPVar(k)\subset\Var(k)$ the full subcategories  quasi-projective varieties and projective varieties respectively. 
Let us call $\PSmVar(k)\subset\SmVar(k)\subset\Var(k)$ the full subcategories of smooth varieties and smooth projective varieties respectively.
Denote by $\CW\subset\Top$ the full subcategory of $CW$ complexes, by $\CS\subset\CW$ the full subcategory of $\Delta$ complexes,
and by $\TM\subset\CW$ the full subcategory of topological manifolds
which admits a CW structure (a topological manifold admits a CW structure if it admits a differential structure). 
Denote by $\AnSp(\mathbb C)$ the category of analytic spaces over $\mathbb C$.
and by $\AnSm(\mathbb C)\subset\AnSp(\mathbb C)$ the full subcategory of smooth analytic spaces (i.e. complex analytic varieties).

\item For $V\in\Var(\mathbb C)$, we denote by $V^{an}\in\AnSp(\mathbb C)$ 
the complex analytic space associated to $V$ with the usual topology induced by the usual topology of $\mathbb C^N$. 
For $W\in\AnSp(\mathbb C)$, we denote by $W^{cw}\in\AnSp(\mathbb C)$ the topological space given by $W$ which is a $CW$ complex.
For simplicity, for $V\in\Var(\mathbb C)$, we denote by $V^{cw}:=(V^{an})^{cw}\in\CW$. 
We have then  
\begin{itemize}
\item the analytical functor $\An:\Var(\mathbb C)\to\AnSp(\mathbb C)$, $\An(V)=V^{an}$,
\item the forgetful functor $\Cw:\AnSp(\mathbb C)\to\CW$, $\Cw(W)=W^{cw}$,
\item the composite of these two functors $\widetilde\Cw=\Cw\circ\An:\Var(\mathbb C)\to\CW$, $\widetilde\Cw(V)=V^{cw}$. 
\end{itemize}

\item We denote by $\square^n=(\mathbb P^1\backslash \left\{1\right\})^n\subset(\mathbb P^1)^n$. For $X\in\Var(\mathbb C)$ let
$\mathcal Z^p(X,n)\subset \mathcal Z^p(X\times\square^n)$ be the subgroup of $p$ codimentional cycle in $X\times\square^n$ meeting
all faces of $\square^n$ properly.
We denote by $\pi_X:X\times(\mathbb P^1)^n\to X$ and $\pi_{(\mathbb P^1)^n}:X\times(\mathbb P^1)^n\to(\mathbb P^1)^n$ the projections.

\item For $X\in\Top$ a topological space, we denote by 
$C_{\bullet}^{\sing}(X,\mathbb Z)=\mathbb Z\Hom_{\Top}(\Delta^{\bullet},X)$ the complex
of singular chains, $\Delta^{p}\subset\mathbb R^p$ being the standard simplex.
We denote by $\mathbb I^n=[0,1]^n$ and we will consider the closed embeddings of CW complexes 
$i_n:[0,1]^n\hookrightarrow\square^n:=(\mathbb P^1_{\mathbb C}\backslash\left\{1\right\})^n$ 
whose image is the product $[0,\infty]^n\subset\square^n$ of the segments
$\mathbb R^-=[0,\infty]\subset\square^1$ (c.f. the definition of $T_z$ in \cite{MKerr}). 
In particular $i_n$ send $0$ to $0$ and $1$ to $\infty$
and gives a morphism of complexes $i:\mathbb I^*\hookrightarrow\square^*$ in $\mathbb Z(CW)$.
We denote by $\bar{\mathbb D}^n\subset\mathbb C^n$ the closed ball of radius 1, 
and by $i'_{1n}:\mathbb I^n\hookrightarrow\bar{\mathbb D}^n$ the inclusion of pro complex analytic spaces and
and ${i'_{1n}}^{cw}:\mathbb I^n\hookrightarrow\bar{\mathbb D}^n$ is the corresponding inclusion of CW complexes.

\item For a (small) category $\mathcal S$, we denote by 
$\PSh(\mathcal S,M):=\Fun(\mathcal S^{op},M)$ the big category of presheaves 
on $\mathcal S$ with value in $M$.
If  $M$ is a model category with some extra assumptions (c.f.\cite{AyoubT}), 
the projective fibration (rep. the injective cofibration) and the termwise weak equivalence of $\PSh(\mathcal S,M)$
define a projective (resp. injective) model structure $\PSh(\mathcal S,M)$.
In this paper, we will consider $\mathcal{M}_P(\PSh(\mathcal S,M))$ the projective model structure on  
$\PSh(\mathcal S,M)$.
presheaves. 
\begin{itemize}
\item For $X\in\mathcal S$, we denote by $\mathbb Z(X)\in\PSh(\mathcal S,\Ab)$ the preshesf
given by Yoneda embedding, that is the presheaf given by for $Z\in\mathcal S$,
$\mathbb Z(X)(Z)=\mathbb Z\Hom_{\mathcal S}(Z,X)$ and for $f:Z'\to Z$ a morphism in $\mathcal S$,
$\mathbb Z(X)(f):g\in\Hom_{\mathcal S}(Z,X)\mapsto g\circ f\in\Hom_{\mathcal S}(Z',X)$
\item For $h:X\to Y$ a morphism in $\mathcal S$, we denote by $\mathbb Z(h):\mathbb Z(X)\to\mathbb Z(Y)$,
the morphism in $\PSh(\mathcal S,C(\mathbb Z)$ given by Yoneda embedding, 
that is the morphism given by for $Z\in\mathcal S$, 
$\mathbb Z(h)(Z):g\in\Hom_{\mathcal S}(Z,X)\mapsto h\circ g\in\Hom_{\mathcal S}(Z,Y)$
\end{itemize}

\item We denote by $C(\mathbb Z)=C(\Ab)$ the category of abelian complexes, $C^-(\mathbb Z)\subset C^(\mathbb Z)$
the full subcategory consisting of bounded above complexes, $D(\mathbb Z)$ the derived category of
$C(\mathbb Z)$ with respect to quasi-isomorphism, and $D^-(\mathbb C)\subset D(\mathbb Z)$ the image
of $C^-(\mathbb Z)$ under the localization functor $C^(\mathbb Z)\to D(\mathbb Z)$.
For $S\in\Top$, we denote by $C(S):=\PSh(S,C(\mathbb Z))$ the category of complexes of presheaves on $S$,
$C^-(\mathbb Z)\subset C(\mathbb Z)$ the full subcategory consisting of bounded above complexes, 
$D(S)$ the derived category of $C(S)$ with respect to the morphisms of complexes of presheaves
which are quasi-isomorphisms after sheaftification, 
and $D^-(S)\subset D(S)$ the image of $C^-(S)$ under the localization functor $C(S)\to D(S)$.

\end{itemize}

\section{Introduction}

Let $S\in\Var(\mathbb C)$.
In \cite{AyoubB},\cite{AyoubG1} and \cite{AyoubG2}, J.Ayoub has given and studied
a construction of the Betti realization functor on $\DA^-(S,\mathbb Z)$
the derived category of mixed motives of $\Var(\mathbb C)/S$ the complex algebraic varieties
together with a morphism over $S$.
He mentioned also the construction of the Betti realization functor on $\DM^-(S,\mathbb Z)$, 
the derived category of mixed motives of $\Var(\mathbb C)/S$ with transfers.
Let $k$ a field.
\begin{itemize}
\item The derived category of (effective) motives of $\Var(k)/S$ is 
the homotopy category of the category $P^-(S)=\PSh(\Var(k)^{sm}/S,C^-(\mathbb Z))$ of bounded above complexes
of presheaves on the category of algebraic varieties over $k$ together with a smooth morphism over $S$,
with respect to the projective $(\mathbb A^1,et)$ model structure (c.f. definition \ref{Amodstr}(i)).
\item Similarly, the derived category of (effective) motives of $\Var(k)/S$ with transfers is 
the homotopy category of the category $PC^-(S)=\PSh(\Cor^{fs}_{\mathbb Z}(\Var(k)^{sm}/S),C^-(\mathbb Z))$ 
of bounded above complexes
of presheaves on the category of finite and surjective correspondences between algebraic varieties over $k$ 
together with a smooth morphism over $S$ with respect to the projective $(\mathbb A^1,et)$ model structure 
(c.f. definition \ref{Amodstr}(ii)). 
\item For $X\in\Var(k)$ and $l:D\hookrightarrow X$ a (locally closed) subvariety, 
the motive of the pair $(X,D)$ is $M(X,D)=D(\mathbb A^1,et)(\mathbb Z_{tr}(X,D))\in\DM^-(k,\mathbb Z)$, where
$\mathbb Z_{tr}(X,D)=\coker(\mathbb Z_{tr}(l))$ is the cokernel of the injective morphism of presheaves
$\mathbb Z_{tr}(l):\mathbb Z_{tr}(D)\hookrightarrow\mathbb Z_{tr}(X)$ 
and $D(\mathbb A^1,et):PC^-\to\DM^-(k,\mathbb Z)$ is the localization functor.
\end{itemize}
The construction of J.Ayoub is defined on $\DA^-(S)$ via $\AnDA^-(S^{an})$, 
the derived category of motives of complex analytic space, which is 
the homotopy category of $P^-(\An,S^{an})=\PSh_{\mathbb Z}(\AnSp(\mathbb C)^{sm}/S^{an},C^-(\mathbb Z))$ 
of bounded above complexes of presheaves on the category of  complex analytic spaces together with a smooth 
morphism over $S^{an}$ with respect to the projective $(\mathbb D^1,usu)$ model structure (c.f. definition \ref{RDmodstr}(i)).  
Similarly on $\DM^-(S,\mathbb Z)$ it is defined via $\AnDM^-(S^{an},\mathbb Z)$, 
the derived category of motives of complex analytic space, which is 
the homotopy category of  
$PC^-(\An,S^{an})=\PSh_{\mathbb Z}(\Cor^{fs}_{\mathbb Z}(\AnSp(\mathbb C)^{sm}/S^{an}),C^-(\mathbb Z))$ 
the category of bounded above complexes of presheaves on the category of finite and surjective correspondences 
between complex analytic spaces together with a smooth morphism over $S$
with respect to the projective $(\mathbb D^1,usu)$ model structure (c.f. definition \ref{RDmodstr}(ii)).  
For $S\in\AnSp(\mathbb C)$, we consider the commutative diagram 
\begin{equation*}
\xymatrix{\Cor^{fs}_{\mathbb Z}(\AnSp(\mathbb C)^{sm}/S)\ar[rd]^{e^{tr}_{an}(S)}\ar[rr]^{\Tr(S)} 
& \, & \AnSp(\mathbb C)^{sm}/S\ar[ld]_{e_{an}(S)} \\
\, & \Ouv(S) & \,}
\end{equation*}
the morphism of sites given respectively by 
the inclusion functors $e_{an}(T):\Ouv(S)\hookrightarrow\AnSp(\mathbb C)^{sm}/S$,  
$e^{tr}_{an}(S):\Ouv(S)\hookrightarrow\Cor^{fs}_{\mathbb Z}(\AnSp(\mathbb C)^{sm}/S)$
and $\Tr(S):\Cor^{fs}_{\mathbb Z}(\AnSp(\mathbb C)^{sm}/S)\hookrightarrow\AnSp(\mathbb C)^{sm}/S$.
The definition of Betti realization functor by J.Ayoub is
\begin{defi}\cite{AyoubB}\cite{AyoubG2}
\begin{itemize}
\item[(i)] The Betti realisation functor (without transfers) is the composite :
\begin{equation}
\Bti_0(S)^*:\DA^{-}(S,\mathbb Z)\xrightarrow{\An(S)^*}\AnDA^{-}(S^{an},\mathbb Z)\xrightarrow{Re_{an}(S)_*}D^{-}(\mathbb Z)
\end{equation}
\item[(ii)] The Betti realisation functor with transfers is the composite :
\begin{equation}
\Bti(S)^*:\DM^{-}(S,\mathbb Z)\xrightarrow{\An(S)^*}\AnDM^{-}(\mathbb Z)\xrightarrow{Re^{tr}_{an}(S)_*}D^{-}(\mathbb Z)
\end{equation}
\end{itemize}
\end{defi}
Since $\An(S)^*$ derive trivially by proposition \ref{RAnCwtCw}(ii)
and and $L\Tr(S^{an})^*:\AnDA^-(S^{an},\mathbb Z)\to\AnDM^-(S^{an},\mathbb Z)$ is the inverse of $\Tr(S^{an})_*$, 
we have $\Bti_0(S)^*=\widetilde\Bti(S)^*\circ L\Tr(S)^*$.

In \cite{AyoubG1}, J.Ayoub has constructed an explicit object which gives the localization functor for the $(\mathbb D^1,usu)$
model strucure. We recall this in theorem \ref{AnS}(i) and give a relative version (with and without transfers) 
in theorem \ref{RAnS} :
\begin{thm}
Let $S\in\AnSp(\mathbb C)$,
\begin{itemize}
\item[(i)] For $F^{\bullet}\in\PSh(\AnSp(\mathbb C)^{sm}/S,C^-(\mathbb Z))$,
$\underline{\sing}_{\bar{\mathbb D}^*}F^{\bullet}\in\PSh(\AnSp(\mathbb C)^{sm},C^-(\mathbb Z))$ is $\mathbb D^1$ local and 
the inclusion morphism $S(F^{\bullet}):F^{\bullet}\to\underline{\sing}_{\bar{\mathbb D}^*}F^{\bullet}$ is an $(\mathbb D^1,usu)$ equivalence. 
\item[(ii)] For $F^{\bullet}\in\PSh_{\mathbb Z}(\Cor^{fs}_{\mathbb Z}(\AnSp(\mathbb C)^{sm}),C^-(\mathbb Z))$, 
$\underline{\sing}_{\bar{\mathbb D}^*}F^{\bullet}\in\PSh_{\mathbb Z}(\Cor^{fs}_{\mathbb Z}(\AnSp(\mathbb C)^{sm}),C^-(\mathbb Z))$ 
is $\mathbb D^1$ local and 
the inclusion morphism $S(F^{\bullet}):F^{\bullet}\to\underline{\sing}_{\bar{\mathbb D}^*}F^{\bullet}$ is an $(\mathbb D^1,usu)$ equivalence. 
\end{itemize}
\end{thm}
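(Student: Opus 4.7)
The plan is to reduce the relative statement to the absolute version already established by Ayoub (Theorem AnS(i) and its transfers analog). Let $\pi_S : \AnSp(\mathbb C)^{sm}/S \to \AnSp(\mathbb C)^{sm}$ denote the forgetful functor. The key observation is that the singular construction $\underline{\sing}_{\bar{\mathbb D}^*}$ is defined section-wise by $(\underline{\sing}_{\bar{\mathbb D}^n}F^{\bullet})(X/S) = F^{\bullet}(X \times \bar{\mathbb D}^n/S)$ (interpreted as a filtered colimit over enlargements of $\bar{\mathbb D}^n$, since $\bar{\mathbb D}$ is a pro-analytic space in Ayoub's framework), and therefore commutes with $\pi_S^*$. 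Moreover, both the class of $(\mathbb D^1,usu)$-equivalences and the property of being $\mathbb D^1$-local are characterized on the usual topology of each $Y/S$, and hence depend only on the underlying analytic space of $Y$ and not on its structure morphism to $S$.

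To establish the two properties directly, I would reproduce Ayoub's explicit homotopies in the relative setting. For $\mathbb D^1$-locality, for every smooth $Y/S$ the multiplication map $\mathbb D^1 \times \mathbb D^1 \to \mathbb D^1$, $(t,s) \mapsto ts$, after applying $F^{\bullet}((-) \times \bar{\mathbb D}^*)$, produces a cosimplicial chain homotopy between the identity of $\underline{\sing}F^{\bullet}(Y \times \mathbb D^1)$ and the composition with the pullback from $\underline{\sing}F^{\bullet}(Y)$, giving the desired quasi-isomorphism. For the equivalence claim on $S(F^{\bullet})$, the same contraction, applied now to $\bar{\mathbb D}^n$ itself, yields a contracting cosimplicial homotopy on $\underline{\sing}_{\bar{\mathbb D}^*}F^{\bullet}$ whose degree zero part recovers $F^{\bullet}$, showing that $S(F^{\bullet})$ is stalkwise a $\mathbb D^1$-weak equivalence, hence an $(\mathbb D^1,usu)$-equivalence.

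Part (ii) proceeds by the same scheme, but requires an additional verification: one must check that $\underline{\sing}_{\bar{\mathbb D}^*}$ lifts to presheaves with transfers, i.e.\ that the cosimplicial structure maps of $\bar{\mathbb D}^*$ and the contracting homotopy make sense as morphisms in $\Cor^{fs}_{\mathbb Z}(\AnSp(\mathbb C)^{sm}/S)$. This is the main obstacle: since $\bar{\mathbb D}^n$ is not smooth, these maps must be interpreted through the pro-object structure, using that the enlargements of $\bar{\mathbb D}^n$ are smooth analytic spaces and that the filtered colimits defining the singular construction commute with composition of finite surjective correspondences. Once this compatibility is established, the arguments from part (i) carry over verbatim to give both $\mathbb D^1$-locality and the equivalence of $S(F^{\bullet})$ on the transfer side.
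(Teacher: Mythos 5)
Your part (i) argument, using the explicit contracting homotopies on the pro-object $\bar{\mathbb D}^*$, is essentially the content of the paper's ``similar to the absolute case'' (the absolute (i) being Ayoub's theorem, proved by precisely these homotopies). However, the first paragraph's reduction via the forgetful functor $\pi_S$ does not actually work: the test objects $\mathbb Z(Y/S)$ in $\PSh(\AnSp(\mathbb C)^{sm}/S,C^-(\mathbb Z))$ and the usu model structure on the relative category are genuinely different from the absolute ones, and a presheaf on the relative site is not a pullback from the absolute site, so $\mathbb D^1$-locality is not detected simply by forgetting the structure map to $S$. Since you then give a direct argument anyway, this does not break the proof, but it should not be presented as a reduction.

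The more substantive divergence is in part (ii). You propose to re-run the homotopy argument on the transfer side, and you correctly identify the resulting obstacle: one must verify that the cosimplicial structure maps and the contracting homotopy of $\bar{\mathbb D}^*$ are compatible with the filtered-colimit pro-object interpretation and with composition of finite surjective correspondences. You leave this unresolved. The paper avoids this issue entirely by deducing (ii) from (i) via the transfer lemma (the relative analog of lemma \ref{DTr}): since $\Tr(S)_*\underline{\sing}_{\bar{\mathbb D}^*}F^{\bullet}=\underline{\sing}_{\bar{\mathbb D}^*}\Tr(S)_*F^{\bullet}$ by construction, since $\Tr(S)_*$ detects $\mathbb D^1$-local objects, and since it detects $(\mathbb D^1,usu)$-equivalences, both assertions of (ii) follow by applying $\Tr(S)_*$ and invoking (i). This is exactly how the absolute Theorem \ref{AnS}(ii) is proved, and the relative case works identically. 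Your route can be made to work, but the compatibility check you flag is a real piece of extra work, whereas the paper's detection argument via $\Tr(S)_*$ renders it unnecessary; you should either carry out the verification or switch to the transfer-lemma reduction.
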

The categories $\AnDM^-(S)$ and $\AnDA^-(S)$, for $S\in\AnSp(\mathbb C)$ satisfy the following (see \cite{AyoubB} and \cite{AyoubG2})
\begin{thm}
\begin{itemize}
\item[(i)] The adjonction
\begin{equation*}
(\Tr(S)^*,\Tr(S)_*):\PSh(\AnSp(\mathbb C)^{sm}/S,C^-(\mathbb Z))
\leftrightarrows\PSh_{\mathbb Z}(\Cor^{fs}_{\mathbb Z}(\AnSp(\mathbb C)^{sm}/S),C^-(\mathbb Z))
\end{equation*}
is a Quillen equivalence for the $(\mathbb D^1,usu)$ model structures. That is, the derived functor
$\Tr(S)_*:\AnDM^-(S)\xrightarrow{\sim}\AnDA^-(S)$
is an isomorphism and $L\Tr(S)^*$ is it inverse. 
\item[(ii)] The adjonction
$(e_{an}(S)^*,e_{an}(S)):C^-(S)\leftrightarrows\PSh_{\mathbb Z}(\AnSp(\mathbb C)^{sm}/S,C^-(\mathbb Z))$
is a Quillen equivalence for the $(\mathbb I^1,usu)$ model structures. That is, the derived functor
$e_{an}^*:D^-(S)\xrightarrow{\sim}\AnDA^-(S,\mathbb Z)$
is an isomorphism and $Re_{an}(S)_*:\AnDA^-(S,\mathbb Z)\xrightarrow{\sim}D^-(S)$ is it inverse. 
\item[(iii)] The adjonction
$(e^{tr}_{an}(S)^*,e^{tr}_{an}(S)_*):C^-(S)\leftrightarrows
\PSh_{\mathbb Z}(\Cor^{fs}_{\mathbb Z}(\AnSp(\mathbb C)^{sm}/S),C^-(\mathbb Z))$
is a Quillen equivalence for the $(\mathbb D^1,usu)$ model structures. 
That is, the derived functor
$e^{tr}_{an}(S)^*:D^-(S)\xrightarrow{\sim}\AnDM^-(S,\mathbb Z)$
is an isomorphism and $Re^{tr}_{an*}:\AnDM^-(S,\mathbb Z)\xrightarrow{\sim}D^-(S)$ is it inverse. 
\end{itemize}
\end{thm}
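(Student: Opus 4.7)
The plan is to reduce all three parts to the explicit fibrant replacement $\underline{\sing}_{\bar{\mathbb D}^*}$ recalled just before the statement. In each case I would (a) verify that the adjunction is Quillen and (b) show that the derived unit and counit are weak equivalences by computing them on $(\mathbb D^1,usu)$-fibrant replacements. I would also observe that the commutative diagram of sites gives $e^{tr}_{an}(S)^* = \Tr(S)^* \circ e_{an}(S)^*$, so part (iii) follows formally once (i) and (ii) are established.

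For (ii), the Quillen adjunction is straightforward: $e_{an}(S)^*$ sends the generating cofibrations of $C^-(S)$ to projective cofibrations of presheaves on $\AnSp(\mathbb C)^{sm}/S$, and it preserves generating trivial cofibrations since the $\mathbb D^1$-homotopy and usu-local generators on $\Ouv(S)$ lift to generators of the same type on $\AnSp(\mathbb C)^{sm}/S$. The derived unit $F \to Re_{an}(S)_* e_{an}(S)^* F$ for $F \in C^-(S)$ is a weak equivalence because $e_{an}(S)^* F$ is already usu-local on $\AnSp(\mathbb C)^{sm}/S$ (its value on $U \to S$ is the restriction of $F$ to the image in $S$, up to usu-sheafification). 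The delicate point is the derived counit $e_{an}(S)^* Re_{an}(S)_* G \to G$: using the previous theorem one replaces $G$ by $\underline{\sing}_{\bar{\mathbb D}^*} G$, which is $\mathbb D^1$-local; then for any smooth $U \to S$, the value of the replacement at $U \to S$ is computed by the $\bar{\mathbb D}^*$-homotopies, and the local factorization of $U \to S$ as a projection from a polydisc neighborhood of a section identifies this with the value on the corresponding open of $S$.

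For (i), the Quillen property uses that $\Tr(S)^*$ commutes with sums and sends the generating $\mathbb D^1$ and usu trivial cofibrations to the analogous ones in the transfer category. For the equivalence, let $F^\bullet \in \PSh(\AnSp(\mathbb C)^{sm}/S,C^-(\mathbb Z))$; applying $\underline{\sing}_{\bar{\mathbb D}^*}$ to $F^\bullet$ and $\Tr(S)^* F^\bullet$ respectively produces a $(\mathbb D^1,usu)$-fibrant model, and one needs the Voevodsky-type statement that $\Tr(S)_* \underline{\sing}_{\bar{\mathbb D}^*} \Tr(S)^* F^\bullet \to \underline{\sing}_{\bar{\mathbb D}^*} F^\bullet$ is a usu-local equivalence. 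Concretely this amounts to showing that for a $\mathbb D^1$-homotopy invariant presheaf, adding transfers does not change its usu-cohomology; this is the analytic analog of Voevodsky's theorem on homotopy-invariant presheaves with transfers and is the main obstacle. I would prove it by a usu-local argument reducing to open polydiscs, where the trace maps used to define transfers are homotopic to the projection thanks to the $\bar{\mathbb D}^*$-singular construction.

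The main obstacle is exactly this Voevodsky-type assertion in the analytic-topological (usu) setting of part (i); (ii) is essentially formal once the fibrant replacement is available, and (iii) is the composite of (i) and (ii). Once (i) is in hand, the Quillen equivalence in (iii) is obtained by composing the two equivalences, the inverse of $Re^{tr}_{an}(S)_*$ being $L\Tr(S)^* \circ e_{an}(S)^*$.
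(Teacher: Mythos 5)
The paper's proof of this theorem (Theorem~\ref{RAnTr}) is not self-contained: (i) is declared ``similar to the absolute case'' (and the absolute Theorem~\ref{AnTr} is itself only cited from \cite{AyoubG2}\cite{AyoubB}), (ii) is cited directly from \cite{AyoubB}, and (iii) is derived by composing (i) and (ii). Your plan is more ambitious in that it tries to give an actual argument, and its skeleton --- use the explicit $\underline{\sing}_{\bar{\mathbb D}^*}$ fibrant replacement from Theorem~\ref{RAnS}, reduce to local covers, and obtain (iii) by composition --- is the right one and parallels what the paper carries out in detail in the CW case (cf.\ Lemma~\ref{HRAn}, Proposition~\ref{Dcov}, Proposition~\ref{CWadTr}, Theorem~\ref{CWMot}).

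There are, however, genuine gaps in the key step. First, in (i) the arrow you write down is reversed: the unit of $(\Tr(S)^*,\Tr(S)_*)$ goes $F^{\bullet}\to\Tr(S)_*\Tr(S)^*F^{\bullet}$, so after applying $\underline{\sing}_{\bar{\mathbb D}^*}$ the map to be analysed is $\underline{\sing}_{\bar{\mathbb D}^*}F^{\bullet}\to\Tr(S)_*\underline{\sing}_{\bar{\mathbb D}^*}\Tr(S)^*F^{\bullet}$, not the composite you state. Second, the sketch you give for the Voevodsky-type comparison --- that ``the trace maps used to define transfers are homotopic to the projection'' --- is not a correct formulation of what needs to be shown and does not obviously lead to a proof. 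What is actually needed, following the CW analogue in Proposition~\ref{CWadTr}, is a \v{C}ech/hypercover reduction to polydiscs (the analytic counterpart of Lemma~\ref{HRAn} combined with Proposition~\ref{Dcov}), followed by the observation that for a contractible polydisc $D$ both $\underline{\sing}_{\bar{\mathbb D}^*}\mathbb Z(D/S)$ and $\Tr(S)_*\underline{\sing}_{\bar{\mathbb D}^*}\mathbb Z_{tr}(D/S)$ are usu-locally equivalent to the constant presheaf; the content is $\mathbb D^1$-homotopy invariance of $\underline{\sing}_{\bar{\mathbb D}^*}$, not any homotopy between trace maps and projections. Third, in (ii) your claim that the derived unit is a weak equivalence ``because $e_{an}(S)^*F$ is already usu-local'' misstates the reason: what is needed is that $e_{an}(S)^*F^{\bullet}$ is $\mathbb D^1$-local (the relative analogue of the absolute-case proposition preceding Theorem~\ref{AnS}), combined with the identity $e_{an}(S)_*e_{an}(S)^*=\mathrm{id}$ coming from full faithfulness of $\Ouv(S)\hookrightarrow\AnSp(\mathbb C)^{sm}/S$. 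With these corrections your plan would align with the intended argument, but as written the central claim in (i) is not established.
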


In this paper we give a construction of the Betti realization functor via the category of CW complexes.
The reason we do this is that the cycle class map on complex analytic spaces 
and the action of correspondence on homology on smooth complex analytic spaces (i.e. complex analytic manifold)
are defined in a purely topological way so that we does not the need complex structure which gives in the smooth case
the Fr\"olicher filtration on then 
(the Fr\"olicher filtration gives the Hodge fitration on cohomoloy in the compact Kalher case. 

Let $X,Y,Z\in\Top$. 
Assume $Y$ is Hausdorf (equivalently the diagonal $\Delta_Y\subset Y\times Y$ is a closed subset).
There is a natural composition law (c.f.\ref{CorTop}) between closed subset of $X\times Y$ which are
finite and surjective over $X$ and closed subset of $Y\times Z$ which are finite and surjective over $Y$.
We denote $\mathbb I^1:=\left[0,1\right]$.
Consider now the full subcategory $\CW\subset\Top$ consisting of CW complexes.
By a CW subcomplex of $X\in\CW$, we mean a topological embbeding $Z\hookrightarrow X$ with $Z$ a CW complex. 
By a closed CW subcomplex of $X\in\CW$, we mean a topological closed embbeding $Z\hookrightarrow X$ with $Z$ a CW complex,
that is the image of the embedding is a closed subset of $X$. 
Let $X,S\in\CW$, $S$ connected, and $h:X\to S$ a finite and surjective morphism in $\CW$.
We say that $X/S=(X,h)\in\CW/S$ is reducible if $X=X_1\cup X_2$, with $X_1,X_2$ closed CW subcomplexes
finite and surjective over $S$ and $X_1,X_2\neq X$.
A pair $Y/S=(Y,h')\in\CW/S$ with $Y\in\CW$ and $h':Y\to S$ a finite and surjective morphism
is called irreducible if it is not reducible. In particular $Y$ is connected.
\begin{itemize}
\item For $X\in\CW$, $\Lambda$ a commutative ring and $p\in\mathbb N$, 
we denote by $\mathcal Z_p(X,\Lambda)$ the free $\Lambda$ module generated 
by the closed CW subcomplex of $X$ of dimension $p$ and by 
$\mathcal Z^p(X,\Lambda)=\mathcal Z_{d_X-p}(X,\Lambda)$ 
the free $\Lambda$ module generated by the closed CW subcomplex of $X$ of codimension $p$.
\item For $X,Y\in\CW$, $X$ connected and $\Lambda$ a commutative ring, we define :
$\mathcal{Z}^{fs/X}(X\times Y,\Lambda)\subset\mathcal{Z}_{d_X}(X\times Y,\Lambda)$ 
the free $\Lambda$ module generated by the closed CW subcomplexes of $X\times Y$ finite and surjective over $X$
which are irreducible. 
\item For $X,Y\in\CW$, and $\Lambda$ a commutative ring, we define :
$\mathcal{Z}^{fs/X}(X\times Y,\Lambda):=\oplus_i\mathcal{Z}^{fs/X_i}(X_i\times Y,\Lambda)$ 
where $X=\sqcup_i X_i$, with $X_i$ the connected components of $X$.
\end{itemize}
We define the category of finite surjective correspondences on CW complexes.
\begin{defi}
\begin{itemize}
\item We define $\Cor^{fs}_{\Lambda}(\CW)$ to be the category whose objects are CW complexes and whose space of morphisms between
$X,Y\in\CW$ is the free $\Lambda$ module $\mathcal{Z}^{fs/X}(X\times Y,\Lambda)$.
The composition law is the one given by (\ref{CorTopCW}).
\item Let $S\in\CW$. We define $\CW^{sm}/S$ to be the category whose objects are $X/S=(X,h)$ with
$X\in\CW$ and $h:X\to S$ a smooth morphism. Let $X/S=(X,h_1),Y/S=(Y,h_2)\in\CW^{sm}/S$.
A morphism $f:X/S\to Y/S$ is a morphism $f:X\to Y$ such that $f\circ h_1=h_2$. 
\item Let $S\in\CW$. We define $\Cor^{fs}_{\Lambda}(\CW^{sm}/S)$ to be the category whose objects 
are those of $\CW^{sm}/S$ and whose space of morphisms between
$X,Y\in\CW$ is the free $\Lambda$ module $\mathcal{Z}^{fs/X}(X\times_S Y,\Lambda)$.
The composition law is the one given by (\ref{CorTopCW}).
\end{itemize}
\end{defi}
We define 
\begin{itemize}
\item $\CwDA^-(\mathbb Z)$, 
the derived category of motives of CW complexes to be the homotopy category of 
$P^-(CW)=\PSh_{\mathbb Z}(\CW,C^-(\mathbb Z))$, 
the category of bounded above complex of presheaves on the category $\CW$ of CW complexes,
with respect to the projective $(\mathbb I^1,usu)$ model structure (\ref{CWmodstr}(i)).
\item Similarly, we define $\CwDM^-(\mathbb Z)$, the derived category of motives of CW complexes, as 
the homotopy category of $PC^-(CW)=\PSh_{\mathbb Z}(\Cor^{fs}_{\mathbb Z}(\CW),C^-(\mathbb Z))$, 
the category of bounded above complex of presheaves on the category $\Cor^{fs}_{\mathbb Z}(\CW)$,
with respect to the projective $(\mathbb I^1,usu)$ model structure (\ref{CWmodstr}(ii)). 
\item For $X\in\CW$ and $l:D\hookrightarrow X$ a CW subcomplex, the motive of the pair $(X,D)$ is
$M(X,D)=D(\mathbb I^1,usu)(\mathbb Z_{tr}(X,D))\in\CwDM^-(\mathbb Z)$, where
$\mathbb Z_{tr}(X,D)=\coker(\mathbb Z_{tr}(l))$ is the cokernel of the injective morphism of presheaves
$\mathbb Z_{tr}(l):\mathbb Z_{tr}(D)\hookrightarrow\mathbb Z_{tr}(X)$ and
$D(\mathbb I^1,usu):PC^-(CW)\to\CwDM^-(\mathbb Z)$ is the localization functor.
\end{itemize}
We consider the commutative diagram 
\begin{equation*}
\xymatrix{\Cor^{fs}_{\mathbb Z}(\CW)\ar[rd]_{e^{tr}_{cw}}\ar[rr]^{\Tr} & \, & \CW^{sm}\ar[ld]^{e_{cw}} \\
\, & \left\{\pt\right\} & \,}
\end{equation*}
the morphism of sites given respectively by the inclusion functors 
$e_{cw}:\left\{\pt\right\}\hookrightarrow\CW$,  
$e^{tr}_{cw}:\left\{\pt\right\}\hookrightarrow\Cor^{fs}_{\mathbb Z}(\CW)$ 
and $\Tr:\Cor^{fs}_{\mathbb Z}(\CW)\hookrightarrow\CW$.

For $S\in\CW$, we define
\begin{itemize}
\item $\CwDA^-(S,\mathbb Z)$, 
the derived category of motives of CW complexes, as 
the homotopy category of $P^-(CW,S)=\PSh_{\mathbb Z}(\CW^{sm}/S,C^-(\mathbb Z))$ 
the category of bounded above complex of presheaves on $\CW^{sm}/S$.
with respect to the projective $(\mathbb I^1,usu)$ model structure (\ref{RImodstr}(i)). 
\item Similarly, for $S\in\CW$, we define $\CwDM^-(S,\mathbb Z)$, 
the derived category of motives of CW complexes, as 
the homotopy category of $PC^-(CW,S)=\PSh_{\mathbb Z}(\Cor^{fs}_{\mathbb Z}(\CW^{sm}/S),C^-(\mathbb Z))$ 
the category of bounded above complex of presheaves on the category $\Cor^{fs}_{\mathbb Z}(\CW^{sm}/S)$
with respect to the projective $(\mathbb I^1,usu)$ model structure (\ref{RImodstr}(ii)). 
\end{itemize}
For $S\in\CW$, we consider the commutative diagram 
\begin{equation*}
\xymatrix{\Cor^{fs}_{\mathbb Z}(\CW^{sm}/S)\ar[rd]_{e^{tr}_{cw}(S)}\ar[rr]^{\Tr(S)} 
& \, & \CW^{sm}/S\ar[ld]^{e_{cw}(S)} \\
\, & \Ouv(S) & \,}
\end{equation*}
the morphism of sites given respectively by 
the inclusion functors $e_{cw}(S):\Ouv(S)\hookrightarrow\CW^{sm}/S$, 
$e^{tr}_{cw}(T):\Ouv(S)\hookrightarrow\Cor^{fs}_{\mathbb Z}(\CW^{sm}/S)$
and $\Tr(S):\Cor^{fs}_{\mathbb Z}(\CW^{sm}/S)\hookrightarrow\CW^{sm}/S$.

We consider first the absolute case. 
We give an explicit object which induces the $\mathbb I^1$ localization functor on the category of presheaves 
on $\CW$ and additive presheaves on $\Cor_{\mathbb Z}(\CW)$.
More precisely by, considering $\mathbb I^n:=\left[0,1\right]^n$, 
we prove in theorem \ref{CWsingI} the following :
\begin{thm}
\begin{itemize} 
\item For $F^{\bullet}\in\PSh_{\mathbb Z}(\CW,C^-(\mathbb Z))$, 
$\underline{\sing}_{\mathbb I^*}F^{\bullet}\in\PSh_{\mathbb Z}(\Cor^{fs}_{\mathbb Z}(\CW),C^-(\mathbb Z))$ 
is $\mathbb I^1$ local and the inclusion morphism 
$S(F^{\bullet}):F^{\bullet}\to\underline{\sing}_{\mathbb I^*}F^{\bullet}$ is an $(\mathbb I^1,usu)$ equivalence. 
\item For $F^{\bullet}\in\PSh_{\mathbb Z}(\Cor^{fs}_{\mathbb Z}(\CW),C^-(\mathbb Z))$, 
$\underline{\sing}_{\mathbb I^*}F^{\bullet}\in\PSh_{\mathbb Z}(\Cor^{fs}_{\mathbb Z}(\CW),C^-(\mathbb Z))$ 
is $\mathbb I^1$ local and the inclusion morphism 
$S(F^{\bullet}):F^{\bullet}\to\underline{\sing}_{\mathbb I^*}F^{\bullet}$ is an $(\mathbb I^1,usu)$ equivalence. 
\end{itemize}
\end{thm}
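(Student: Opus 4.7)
The plan is to mimic the strategy used by Ayoub in the analytic setting (recalled in the theorem preceding this one, for $\bar{\mathbb D}^*$), substituting the cube $\mathbb I^n=[0,1]^n$ for the closed disk $\bar{\mathbb D}^n$. The structural fact that drives the entire argument is that $\mathbb I^1$ carries a multiplication $\mu:\mathbb I^1\times\mathbb I^1\to\mathbb I^1$, $(s,t)\mapsto st$, which provides a natural cubical contraction of $\mathbb I^1$ onto the vertex $\{0\}$. This $\mu$ is a continuous map of CW complexes and, via its graph, defines a morphism in $\Cor^{fs}_{\mathbb Z}(\CW)$, so the same construction serves both parts (i) and (ii).

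\textbf{Step 1: $\mathbb I^1$-locality of $\underline{\sing}_{\mathbb I^*}F^{\bullet}$.} First I would show that for any $X$ in $\CW$ (resp.\ in $\Cor^{fs}_{\mathbb Z}(\CW)$), the projection $p_X:X\times\mathbb I^1\to X$ induces a quasi-isomorphism $p_X^*:\underline{\sing}_{\mathbb I^*}F^{\bullet}(X)\to\underline{\sing}_{\mathbb I^*}F^{\bullet}(X\times\mathbb I^1)$. The composite $\mathrm{id}_X\times\mu:X\times\mathbb I^1\times\mathbb I^n\to X\times\mathbb I^1\times\mathbb I^{n-1}$ translates, upon applying $F^{\bullet}$ and passing through the total complex defining $\underline{\sing}_{\mathbb I^*}$, into an explicit chain homotopy between the identity of $\underline{\sing}_{\mathbb I^*}F^{\bullet}(X\times\mathbb I^1)$ and $p_X^*\circ i_0^*$, where $i_0:X\hookrightarrow X\times\mathbb I^1$ is the inclusion at $0$. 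Since $i_0^*\circ p_X^*=\mathrm{id}$ on $\underline{\sing}_{\mathbb I^*}F^{\bullet}(X)$, this makes $p_X^*$ a chain homotopy equivalence, establishing $\mathbb I^1$-locality.

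\textbf{Step 2: $S(F^{\bullet})$ is an $(\mathbb I^1,usu)$ weak equivalence.} I would next show that $\underline{\sing}_{\mathbb I^*}(S(F^{\bullet})):\underline{\sing}_{\mathbb I^*}F^{\bullet}\to\underline{\sing}_{\mathbb I^*}\underline{\sing}_{\mathbb I^*}F^{\bullet}$ is already a termwise quasi-isomorphism. On evaluation at $X$ this is a map of bicomplexes $F^{\bullet}(X\times\mathbb I^p)\to F^{\bullet}(X\times\mathbb I^p\times\mathbb I^q)$, and an Eilenberg-Zilber style contraction built from iterated applications of $\mu$ on the $q$-direction gives the desired quasi-isomorphism. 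Combined with Step 1, this implies that after $\mathbb I^1$-local replacement $S(F^{\bullet})$ becomes an isomorphism in the derived category, which by definition means that it is an $(\mathbb I^1,usu)$ equivalence in the projective model structure.

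\textbf{Step 3: Compatibility with transfers and main obstacle.} The transfer case introduces no new ideas: all maps invoked above (the multiplication $\mu$, the inclusion $i_0$, the cubical faces and degeneracies) are morphisms in $\CW$, hence their graphs sit in $\Cor^{fs}_{\mathbb Z}(\CW)$ via $\Tr$, and products $\alpha\times\Delta_{\mathbb I^n}$ of a finite surjective correspondence $\alpha\in\mathcal Z^{fs/X}(X\times Y,\mathbb Z)$ with the diagonal of a cube remain finite surjective, so tensoring cubically is well defined on correspondences. The chain homotopies built in Steps 1 and 2 therefore lift verbatim. I expect the main obstacle to lie in Step 2, specifically in the sign-bookkeeping for the Eilenberg-Zilber homotopy with respect to the two cubical differentials $\sum(-1)^i(\partial_i^1-\partial_i^0)$ of the bicomplex; the locality step of Step 1 is essentially a direct cubical homotopy and the transfer extension is formal.
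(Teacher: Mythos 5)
The gap lies in Step 1: showing that $p_X^*:\underline{\sing}_{\mathbb I^*}F^{\bullet}(X)\to\underline{\sing}_{\mathbb I^*}F^{\bullet}(X\times\mathbb I^1)$ is a \emph{section-wise} chain homotopy equivalence (via the contraction $\mu$) proves that $\underline{\sing}_{\mathbb I^*}F^{\bullet}$ is a homotopy-invariant complex of presheaves, but this does \emph{not} by itself establish $\mathbb I^1$-locality in the Bousfield sense used by the $(\mathbb I^1,usu)$ model structure. $\mathbb I^1$-locality of $G^{\bullet}$ means that after replacing $G^{\bullet}$ by a usu-fibrant model $L^{\bullet}$, the map $\underline{L}(p):L^{\bullet}\to\underline{\Hom}(\mathbb Z(\mathbb I^1),L^{\bullet})$ is a usu local equivalence; the section-wise statement you proved concerns the non-fibrant $\underline{\sing}_{\mathbb I^*}F^{\bullet}$, and the internal $\underline{\Hom}(\mathbb Z(\mathbb I^1),-)$ need not carry a usu local equivalence $G^{\bullet}\to L^{\bullet}$ to another one, so the property does not transport to the fibrant replacement for free. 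This is precisely the distinction between ``homotopy invariant'' and ``strictly homotopy invariant'' (cf.\ Proposition \ref{HSHDAn} in the analytic setting), and no analogue is asserted for presheaves on $\CW$.

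The paper closes this gap by a different mechanism: Proposition \ref{actadjCW}(i) uses local contractibility of CW complexes (again via the $\mathbb I^1$-homotopy lemma \ref{I1hpt}(ii)) to show that $\underline{\sing}_{\mathbb I^*}F^{\bullet}$ is usu-locally equivalent to the \emph{constant} presheaf $e_{cw}^*e_{cw*}\underline{\sing}_{\mathbb I^*}F^{\bullet}$, and Proposition \ref{cle}(ii) shows that any $e_{cw}^*K^{\bullet}$ is $\mathbb I^1$ local (an argument that goes through a usu-fibrant replacement, following Ayoub). Your Step 2 essentially reproduces the paper's contraction $\theta_{1,n}(X):(t_1,\dots,t_n,t_{n+1},x)\mapsto(t_1-t_{n+1}t_1,\dots,t_n-t_{n+1}t_n,x)$ (the same contraction of $\mathbb I^n$ to the origin as your $\mu$), and Step 3 agrees with the paper's reduction via Lemma \ref{CWTr}, so the rest of the plan is sound. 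But without the local-contractibility input for Step 1, the locality assertion does not follow.
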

Then, we prove that the categories $\CwDM^-$ and $\CwDA^-$ satisfy the following (c.f.theorem \ref{CWMot}) :
\begin{thm}
\begin{itemize} 
\item[(i)]The adjonction
$(\Tr^*,\Tr_*):\PSh(\CW,C^-(\mathbb Z))\leftrightarrows\PSh_{\mathbb Z}(\Cor^{fs}_{\mathbb Z}(\CW),C^-(\mathbb Z))$
is a Quillen equivalence for the $(\mathbb I^1,usu)$ model structures. That is, the derived functor 
\begin{equation}
L\Tr^*:\CwDA^-(\mathbb Z)\xrightarrow{\sim}\CwDM^-(\mathbb Z) 
\end{equation}
is an isomorphism
and $\Tr_*:\CwDM^-(\mathbb Z)\xrightarrow{\sim}\CwDA^-(\mathbb Z)$ is it inverse. 
\item[(ii)] The adjonction
$(e_{cw}^*,e_{cw*}):C^-(\mathbb Z)\leftrightarrows\PSh_{\mathbb Z}(\CW,C^-(\mathbb Z))$
is a Quillen equivalence for the $(\mathbb I^1,usu)$ model structures. That is, the derived functor
$e_{cw}^*:D^-(\mathbb Z)\xrightarrow{\sim}\CwDA^-(\mathbb Z)$ is an isomorphism
and $Re_{cw*}:\CwDA^-(\mathbb Z)\xrightarrow{\sim}D^-(\mathbb Z)$ is it inverse. 
\item[(iii)] The adjonction
$(e^{tr*}_{cw},e^{tr}_{cw*}):C^-(\mathbb Z)\leftrightarrows\PSh_{\mathbb Z}(\Cor^{fs}_{\mathbb Z}(\CW),C^-(\mathbb Z))$
is a Quillen equivalence for the $(\mathbb I^1,usu)$ model structures. That is, the derived functor
$e^{tr*}_{cw}:D^-(\mathbb Z)\xrightarrow{\sim}\CwDM^-(\mathbb Z)$
is an isomorphism and $Re^{tr}_{cw*}:\CwDM^-(\mathbb Z)\xrightarrow{\sim}D^-(\mathbb Z)$ is it inverse.  
\end{itemize}
\end{thm}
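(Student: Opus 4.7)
The three statements parallel Ayoub's theorem for analytic spaces stated above, and the strategy is to imitate the proofs in \cite{AyoubB} and \cite{AyoubG2}, replacing $\bar{\mathbb D}^*$ by $\mathbb I^*$ and $\AnSp(\mathbb C)$ by $\CW$ throughout. The explicit $\mathbb I^1$-fibrant replacement $\underline{\sing}_{\mathbb I^*}$ supplied by the preceding theorem is the essential ingredient that makes this transfer of strategy possible, since it allows one to compute derived functors on generators and to verify that unit and counit maps are weak equivalences. For each of the three adjunctions, the Quillen adjunction property for the projective $(\mathbb I^1,usu)$ model structures is routine: the left adjoints send the generating projective (trivial) cofibrations (tensor products of standard maps in $C^-(\mathbb Z)$ with representables) to generating (trivial) cofibrations, and the $\mathbb I^1$-localizing class and $usu$-covers are preserved formally. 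Thus what remains to be verified in each part is that the derived unit and counit are weak equivalences.

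For part (ii), the derived unit $K \to Re_{cw*}e_{cw}^* K$ is an isomorphism: $e_{cw}^* K$ is the constant presheaf $\underline K$, which is already $\mathbb I^1$-local and $usu$-local, so $Re_{cw*}\underline K = \underline K(\pt) = K$. For the derived counit $e_{cw}^* Re_{cw*} F \to F$ with $F$ fibrant, it suffices to treat $F = \underline{\sing}_{\mathbb I^*} \mathbb Z(X)$: one then computes $Re_{cw*}F = \mathbb Z\Hom_{\CW}(\mathbb I^\bullet, X)$, the cubical singular chain complex of $X$, quasi-isomorphic to $C^{\sing}_\bullet(X,\mathbb Z)$. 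The counit becomes the natural comparison, on each $Y\in\CW$, between the constant presheaf $\underline{C^{\sing}_\bullet(X,\mathbb Z)}$ and $\underline{\sing}_{\mathbb I^*}\mathbb Z(X)$; this is an $(\mathbb I^1,usu)$-equivalence by a cellular descent argument, since a CW structure on $Y$ provides a $usu$-cover by open cells, each of which is $\mathbb I^1$-contractible, so fibrancy reduces $F(Y)$ to a cellular computation from $F(\pt)$.

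Part (iii) is proved by precisely the same argument applied to presheaves with transfers, invoking part (ii) of the preceding theorem to identify $\underline{\sing}_{\mathbb I^*}$ as the explicit fibrant replacement on $\PSh_{\mathbb Z}(\Cor^{fs}_{\mathbb Z}(\CW),C^-(\mathbb Z))$; the additional ingredient is that a finite surjective correspondence from $\mathbb I^n$ to $X$ decomposes, up to $\mathbb I^1$-homotopy, as a $\mathbb Z$-linear combination of graphs of continuous maps $\mathbb I^n\to X$. Part (i) then follows from parts (ii) and (iii) by a two-out-of-three argument: the commutative triangle of site morphisms $\{\pt\}\to\Cor^{fs}_{\mathbb Z}(\CW)\to\CW$ induces a compatible triangle of Quillen adjunctions between the three model categories; since parts (ii) and (iii) establish that $e_{cw}^*$ and $e^{tr*}_{cw}$ are Quillen equivalences, the remaining functor $\Tr^*$ must also be one, so that $L\Tr^*$ induces an isomorphism on homotopy categories with inverse $\Tr_*$.

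The principal technical obstacle lies in the cellular descent argument used to verify the counit in parts (ii) and (iii): one must show that a presheaf which is both $\mathbb I^1$-local and $usu$-local on $\CW$ is determined, up to $(\mathbb I^1,usu)$-equivalence, by its value at the point. This rests on a careful filtration of each CW complex by its skeleta combined with $\mathbb I^1$-contractibility of open cells, and requires compatibility between the $usu$-Mayer-Vietoris spectral sequence and the homotopy invariance implied by $\mathbb I^1$-locality. In part (iii) the same obstacle is compounded by the need for a topological analogue of the Suslin-Voevodsky rigidity lemma, identifying finite surjective correspondences from a cube with $\mathbb Z$-linear combinations of graphs of continuous maps up to $\mathbb I^1$-homotopy; this has to be developed using the composition law for correspondences on CW complexes introduced earlier in the paper.
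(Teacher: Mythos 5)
Your overall strategy is sound, and your observation that part (i) can be deduced from (ii) and (iii) by a two-out-of-three argument (using $e^{tr*}_{cw}=\Tr^*\circ e^*_{cw}$) is in fact cleaner than the paper's direct route, which proves (i) via a substantially harder comparison result (Proposition \ref{CWadTr}, using $\Delta$-complex replacement and \v{C}ech descent on contractible covers) and only observes afterwards that (iii) is then automatic. Parts (ii) and (iii) in the paper rest on Theorem \ref{CWsingI} (the explicit $\mathbb I^1$-fibrant replacement $\underline{\sing}_{\mathbb I^*}$) and Proposition \ref{actadjCW}, which is where you should be more careful.

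There are two genuine gaps. First, in part (ii) your descent argument invokes a ``$usu$-cover by open cells''; but open cells of a CW complex do not form an open cover (they partition the space into locally closed, not open, subsets). The paper's argument (Proposition \ref{actadjCW}(i)) is instead a pointwise reduction: CW complexes are locally contractible, so after shrinking one may assume $Y$ is contractible, and then $\underline{\sing}_{\mathbb I^*}F^{\bullet}(a_Y):F^{\bullet}(\mathbb I^*)\to F^{\bullet}(\mathbb I^*\times Y)$ is a chain homotopy equivalence by Lemma \ref{I1hpt}(ii). Second, and more seriously, your ``additional ingredient'' for part (iii) --- that a finite surjective correspondence from $\mathbb I^n$ to $X$ decomposes up to $\mathbb I^1$-homotopy as a $\mathbb Z$-linear combination of graphs --- is both unnecessary and unproven. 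The paper establishes (iii) without any such rigidity: Proposition \ref{actadjCW}(ii) is deduced from (i) by the purely formal observation that $\Tr_*$ detects and preserves $usu$-local equivalences (which is baked into Definition \ref{CWTrusu}(ii)); homotopy invariance is applied entirely within the transfers world and never requires comparing correspondences to graphs. The comparison between graphs and finite surjective correspondences is the content of Proposition \ref{CWadTr}(ii), which the paper needs only for its direct proof of (i), and even there it is proved by covering a $\Delta$-complex by contractible opens and using \v{C}ech descent, not by the pointwise decomposition you state. If you pursue your stated strategy literally, you would have to prove a rigidity lemma that the paper nowhere establishes; if you instead replace it by the $\Tr_*$-detection argument, your two-out-of-three deduction of (i) goes through.
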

For point (i), we use proposition \ref{CWadTr} to prove that $L\Tr^*$ is this inverse of $\Tr_*$.
In proposition \ref{CWadTr}, we prove a key result that for $X\in\CW$
\begin{equation*}
\ad(\Tr^*\Tr_*)(\mathbb Z_{tr}(X):\underline{\sing}_{\mathbb I^*}\mathbb Z(X)
\to\Tr_*\underline{\sing}_{\mathbb I^*}\mathbb Z_{tr}(X)
\end{equation*}
is an equivalence usu local. To see this we use the fact that a CW complex is $\mathbb I^1$ 
homotopy equivalent to a $\Delta$-complex, and that for a $\Delta$-complex there exist
a countable open covering such that the intersection of a finite number of members
of this covering is either empty or a contractible topological space.
We deduce from proposition \ref{CWadTr} and proposition \ref{SerreP} the point (iii) of proposition \ref{CWadTr}
which says in particular that for $X\in\CW$, the complex $\sing_{\mathbb I^*}\mathbb Z_{tr}(X)$,
where $\mathbb Z_{tr}(X)$ is the presheaf represented by $X$, is quasi-isomorphic to
the complex of singular chains $C^{\sing}_*(X,\mathbb Z)$. Indeed, considering
$\mathbb Z(Y,E)=\coker(\mathbb Z(l))$, the cokernel of the injective morphism of presheaves
$\mathbb Z(l):\mathbb Z(E)\hookrightarrow\mathbb Z(Y)$, we have 
\begin{prop}
For $Y\in\CW$ and $l:E\hookrightarrow Y$ a CW subcomplex, the followings embeddings are quasi-isomorphism :
\begin{equation*}
C^{\sing}_*(Y,E,\mathbb Z)\xrightarrow{\mathbb Z(Y,E)(L)}\sing_{\mathbb I^*}\mathbb Z(Y,E)
\xrightarrow{e_{cw*}\ad(\Tr^*,\Tr_*)(\underline{\sing}_{\mathbb I^*}\mathbb Z(Y,E))}
\sing_{\mathbb I^*}\mathbb Z_{tr}(Y,E)
\end{equation*}
where $C^{\sing}_*(Y,E,\mathbb Z)=\coker{l_*}$ is the relative cohomology, with
$l_*:\mathbb Z\Hom_{\CW}(\Delta^*,E)\hookrightarrow\mathbb Z\Hom_{\CW}(\Delta^*,Y)$.
\end{prop}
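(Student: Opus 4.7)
The plan is to reduce the statement to the absolute case $E = \emptyset$ and then verify the two quasi-isomorphisms separately: the first by the classical comparison of simplicial and cubical singular chains, the second by a direct application of proposition \ref{CWadTr}.

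First, since $\mathbb{Z}(Y,E) = \coker(\mathbb{Z}(l))$ for the termwise injective morphism $\mathbb{Z}(l):\mathbb{Z}(E) \hookrightarrow \mathbb{Z}(Y)$ of presheaves, and analogously for $\mathbb{Z}_{tr}(Y,E)$ and for the relative singular complex $C^{\sing}_*(Y,E,\mathbb{Z})$, evaluating at each $\mathbb{I}^n$ and each $\Delta^n$ yields a commutative diagram of short exact sequences of chain complexes. The associated long exact sequences in homology together with the five-lemma reduce the statement to the absolute case, namely that both $C^{\sing}_*(Y,\mathbb{Z}) \to \sing_{\mathbb{I}^*}\mathbb{Z}(Y)$ and $\sing_{\mathbb{I}^*}\mathbb{Z}(Y) \to \sing_{\mathbb{I}^*}\mathbb{Z}_{tr}(Y)$ are quasi-isomorphisms for any $Y \in \CW$.

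For the first absolute quasi-isomorphism, I would invoke the classical fact that both the simplicial singular complex $\mathbb{Z}\Hom(\Delta^*, Y)$ and the cubical singular complex $\mathbb{Z}\Hom(\mathbb{I}^*, Y)$ compute singular homology $H_*(Y,\mathbb{Z})$: the degenerate cubes form an acyclic subcomplex of the cubical complex, so the normalization agrees with the full cubical singular complex, and it is standard that normalized cubical homology coincides with singular homology. To verify that the explicit chain map $\mathbb{Z}(Y)(L)$ realizes this comparison, one can use the triangulation of $\mathbb{I}^n$ into $n!$ affine simplices and a standard acyclic-models argument.

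For the second absolute quasi-isomorphism, proposition \ref{CWadTr} directly asserts that the adjunction morphism $\underline{\sing}_{\mathbb{I}^*}\mathbb{Z}(Y) \to \Tr_* \underline{\sing}_{\mathbb{I}^*}\mathbb{Z}_{tr}(Y)$ is a usu-local equivalence of presheaves on $\CW$. Both sides are $\mathbb{I}^1$-local by theorem \ref{CWsingI}, hence homotopy invariant, so evaluating via $e_{cw*}$ at the contractible space $\pt$ preserves the quasi-isomorphism. The main obstacle will be verifying the first quasi-isomorphism: although comparing simplicial and cubical singular chains is classical, one must check that the specific chain map $L$ is compatible with face and degeneracy operators on both sides and realizes the homology comparison, whereas the second quasi-isomorphism is essentially a corollary of the cited proposition combined with homotopy invariance.
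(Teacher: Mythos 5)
Your proposal is correct and follows essentially the same route as the paper: the paper's proof consists of a single line citing proposition \ref{SerreP} for the first arrow and proposition \ref{CWadTr}(ii) for the second, with the reduction from the relative case $(Y,E)$ to the absolute case via the short exact sequences of presheaves (and hence of complexes of sections) and the five-lemma left implicit. You spell out this reduction, invoke Serre's simplicial-cubical comparison for the first arrow (which is exactly what \ref{SerreP} encodes), and invoke \ref{CWadTr}(ii) for the second, so the overall architecture matches.

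One small correction on the second step: your justification that ``both sides are $\mathbb I^1$-local, hence homotopy invariant, so evaluating via $e_{cw*}$ at the contractible space $\pt$ preserves the quasi-isomorphism'' is not quite the right reason. The $\mathbb I^1$-locality and homotopy invariance are not what make the usu-local equivalence of \ref{CWadTr}(ii) become a quasi-isomorphism after applying $e_{cw*}$. What makes this work is simply that $\pt$ has trivial usual topology: its only cover is the tautological one, so $a_{usu}$ is the identity on $\pt$-sections, and therefore any usu-local equivalence $\phi$ in $\PSh(\CW,C^-(\mathbb Z))$ satisfies that $\phi(\pt)$ is a quasi-isomorphism of complexes. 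Since $e_{cw*}\Tr_* = e^{tr}_{cw*}$, this immediately gives that $e_{cw*}\ad(\Tr^*,\Tr_*)(\underline{\sing}_{\mathbb I^*}\mathbb Z(Y))$ is a quasi-isomorphism, with no further argument needed. Your conclusion is correct, but the cited mechanism is a red herring.
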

Let $X\in\TM(\mathbb R)$ be a differential manifold, $Y\in\CW$ and $E\subset Y$ a subcomplex.
Let $T=\sum_i n_iT_i\in\mathcal Z^{fs/\mathbb I^n\times X}(\mathbb I^n\times X\times Y)$ such that
$\partial_{\mathbb I^*}T:
=\sum_{i=1}^n(-1)^n(T_{|\mathbb I_{i,0}^n\times X\times Y}-T_{|\mathbb I_{i,1}^n\times X\times Y})=0$. 
Denote by $p_X:\mathbb I^n\times X\times Y\to X$, $p_Y:\mathbb I^n\times X\times Y\to Y$ and
$p_{X\times Y}:\mathbb I^n\times X\times Y\to X\times Y$ the projections,
$m_i:T_i\hookrightarrow\mathbb I^n\times X\times Y$ the closed CW embeddings for all $i$.
Denote by $p_{Xi}=p_X\circ m_i:T_i\to X$ and $p_{Yi}=p_X\circ m_i:T_i\to Y$.  
The action of $p_{X\times Y}(T)\in\mathcal Z_{d_X+n}(X\times Y,\mathbb Z)$ on homology is
\begin{equation*}
K_n(X,(Y,E))(p_{X\times Y}(T)):\sum_i n_i(c_{Y,E}[n])\circ (p_{Yi*}[n])\circ p_{Xi}^*
\in\Hom_{D^-(\mathbb Z)}(C_*(X,\mathbb Z),C_*(Y,E,\mathbb Z)[n]), \
\end{equation*}
where, for each $i$ :
\begin{itemize}
\item $p_{Xi}^*\in\Hom_{D^-(\mathbb Z)}(C_*(X,\mathbb Z),C_*(T_i,\mathbb Z)[n])$ 
is the Gynsin morphism ($p_{Xi}$ is proper and $X\in\TM(\mathbb R)$ is a topological manifold),
\item $p_{Yi*}=\mathbb Z(p_{Yi})(\Delta^*):C_*(T_i,\mathbb Z)\to C_*(Y,\mathbb Z)$ is the classical map on singular chain, 
\item $c_{Y,E}:C_*(Y,\mathbb Z)\to C_*(Y,E,\mathbb Z)$ is the quotient map.
\end{itemize}
We identify in proposition \ref{Kunneth},
for $X\in\TM(\mathbb R)$ a differential manifold, $Y\in\CW$ and $E\subset Y$ a subcomplex, the image
of a morphism 
$[T]\in\Hom_{PC^-(\CW)}(\mathbb Z_{tr}(X),
\underline{\sing}_{\mathbb I^*}\mathbb Z_{tr}(Y,E)[n])$ 
under the $(\mathbb I^1,usu)$ localization functor with the action of $p_{X\times Y}(T)$ on homology : 
\begin{prop}
Let $X\in\TM(\mathbb R)$ connected and $Y\in\CW$. Let $l:E\hookrightarrow Y$ a CW subcomplex. 
\begin{itemize}
\item[(i)] Let $[T]=[\sum_i n_iT_i]\in\Hom_{PC^-(CW)}(\mathbb Z_{tr}(X),\sing_{\mathbb I^*}(\mathbb Z_{tr}(Y,E))[n])$  
Then, 
\begin{eqnarray*}
Re^{tr}_{cw*}\circ D(\mathbb I^1,usu)([T])=K_n(X,(Y,E))(p_{X\times Y}(T)) \\
\in\Hom_{D^-(\mathbb Z)}(\sing_{\mathbb I^*}\mathbb Z_{tr}(X),\sing_{\mathbb I^*}\mathbb Z_{tr}(Y,E)[n])
=\Hom_{D^-(\mathbb Z)}(C_*(X,\mathbb Z),C_*(Y,E,\mathbb Z)[n]), 
\end{eqnarray*}
\item[(ii)] If $Y$ is compact and $E\subset Y$ is closed then the factorization 
\begin{equation*}
K_n(X,(Y,E))=\overline{K_n(X,(Y,E))}\circ [\cdot]:\mathcal Z_{d_X+n}(X\times Y,\mathbb Z)\to
\Hom_{D^-(\mathbb Z)}(C_*(X,\mathbb Z),C_*(Y,E,\mathbb Z)[n])
\end{equation*}
where 
$[\cdot]:\mathcal Z_{d_X+n}(X\times Y,X\times E,\mathbb Z)\to H^{BM}_{d_X+n}(X\times Y,X\times E,\mathbb Z)$
is the fundamental class gives the classical isomorphism comming from the six functor formalism (c.f.\cite{CH})
\begin{equation}
\overline{K_n(X,(Y,E))}
:H^{BM}_{d_X+n}(X\times Y,X\times E,\mathbb Z)
\xrightarrow{\sim}\Hom_{D^-(\mathbb Z)}(C_*(X,\mathbb Z),C_*(Y,E,\mathbb Z)[n]).
\end{equation} 
\end{itemize}
\end{prop}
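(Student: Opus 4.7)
The plan is to combine the explicit $\mathbb I^1$-local replacement $\underline{\sing}_{\mathbb I^*}$ from theorem \ref{CWsingI} with the previous proposition, which identifies $\sing_{\mathbb I^*}\mathbb Z_{tr}(Y,E)$ with the relative singular chain complex $C^{\sing}_*(Y,E,\mathbb Z)$, so that the morphism $Re^{tr}_{cw*}\circ D(\mathbb I^1,usu)([T])$ can be read off directly on chain complexes. First I would unpack what the datum $[T]$ represents: by Yoneda, an element of $\Hom_{PC^-(CW)}(\mathbb Z_{tr}(X),\sing_{\mathbb I^*}\mathbb Z_{tr}(Y,E)[n])$ is a degree $-n$ cycle $T=\sum_i n_i T_i$ in the cubical complex built from $\bigoplus_k \mathcal Z^{fs/\mathbb I^k\times X}(\mathbb I^k\times X\times Y,\mathbb Z)$ modulo cycles supported in $\mathbb I^k\times X\times E$, and the vanishing $\partial_{\mathbb I^*}T=0$ is exactly the cycle condition. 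By theorem \ref{CWMot}(iii), applying $Re^{tr}_{cw*}\circ D(\mathbb I^1,usu)$ amounts to precomposing and postcomposing the chain-level morphism induced by $T$ with the quasi-isomorphisms of the previous proposition, which reduces part (i) to computing the action of the correspondence $T$ on singular chains under the identifications $\sing_{\mathbb I^*}\mathbb Z_{tr}(X)\simeq C^{\sing}_*(X,\mathbb Z)$ and $\sing_{\mathbb I^*}\mathbb Z_{tr}(Y,E)\simeq C^{\sing}_*(Y,E,\mathbb Z)$.

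The geometric core of the proof is then to verify that each irreducible component $T_i\subset\mathbb I^n\times X\times Y$, finite and surjective over $\mathbb I^n\times X$, acts on singular chains by the composition $c_{Y,E}\circ p_{Yi*}[n]\circ p_{Xi}^*$. Since $\mathbb I^n$ is compact and $T_i\to\mathbb I^n\times X$ is finite and surjective, the induced map $p_{Xi}:T_i\to X$ is proper; together with $X\in\TM(\mathbb R)$ being a topological manifold this yields a well-defined Gysin pullback $p_{Xi}^*:C_*(X,\mathbb Z)\to C_*(T_i,\mathbb Z)[n]$ of degree equal to the fibre dimension of $\mathbb I^n$. The pushforward $p_{Yi*}$ and the quotient $c_{Y,E}$ are the standard maps, and that the Yoneda-induced morphism from $T_i$ through $\underline{\sing}_{\mathbb I^*}$ agrees with this composition is essentially the defining property of the transfer structure on $\Cor^{fs}_{\mathbb Z}(\CW)$ combined with the naturality of $\underline{\sing}_{\mathbb I^*}$ with respect to finite surjective correspondences; linearity in $T$ then yields part (i).

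For part (ii), compactness of $Y$ and closedness of $E\subset Y$ guarantee that the cycle group $\mathcal Z_{d_X+n}(X\times Y,X\times E,\mathbb Z)$ admits a well-defined fundamental class map $[\cdot]$ to $H^{BM}_{d_X+n}(X\times Y,X\times E,\mathbb Z)$. Since the constituents of $K_n(X,(Y,E))$ --- Gysin pullback along the proper map $p_{Xi}$, pushforward along $p_{Yi}$, and the quotient $c_{Y,E}$ --- all factor through the Borel--Moore homology class of the cycle, the map $K_n$ descends through $[\cdot]$ to a map $\overline{K_n(X,(Y,E))}$. That this map coincides with the six-functor isomorphism then reduces to the classical identification $H^{BM}_{d_X+n}(X\times Y,X\times E,\mathbb Z)\simeq\Hom_{D^-(\mathbb Z)}(C_*(X,\mathbb Z),C_*(Y,E,\mathbb Z)[n])$ obtained from Poincaré duality on $X$ and pull--push along the projections $\pi_X,\pi_Y$. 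The main obstacle is the geometric identification of the second paragraph: reconciling the abstract Yoneda-induced morphism with the concrete Gysin--pushforward formula, and in particular tracking the cubical-to-simplicial sign conventions in the degree shift by $n$.
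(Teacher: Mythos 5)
Your strategy tracks the paper's: use the localization theory (theorem \ref{CWMot}(iii) together with proposition \ref{CWadTr(iii)}) to reduce to an explicit chain-level formula, then verify geometrically that this agrees with $K_n(X,(Y,E))(p_{X\times Y}(T))$. The paper records the first reduction precisely as lemma \ref{SRCW}(ii), via a section--retraction pair $S_n,R_n$ which identifies $D(\mathbb I^1,usu)([T])\circ S(\mathbb Z_{tr}(X))^{-1}$ with the explicit chain map $\gamma\mapsto T\circ(I_{\mathbb I^n}\times\gamma)$ given by composition of correspondences; your phrase about precomposing and postcomposing with quasi-isomorphisms is a looser version of this but points at the right mechanism.

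The genuine gap is in your geometric step. You assert that the agreement of $\gamma\mapsto T\circ(I_{\mathbb I^n}\times\gamma)$ with $\sum_i n_i\,c_{Y,E}\circ p_{Yi*}[n]\circ p_{Xi}^*$ is ``essentially the defining property of the transfer structure combined with the naturality of $\underline{\sing}_{\mathbb I^*}$.'' That is not a proof: the transfer structure only specifies that $T_i$ acts by composition of finite surjective correspondences; it says nothing a priori about Gysin pullback of singular chains along $p_{Xi}$. The paper isolates exactly this point as lemma \ref{actCWFCl}, and its proof is a genuine calculation: for $\gamma=\sum_j m_j\gamma_j\in C_p(X,\mathbb Z)$ with $\partial\gamma=0$ one sets $T_{i,\gamma_j}:=p_Y(T_i\circ(I_{\mathbb I^n}\times\gamma_j))\subset Y$, factors $T_i\circ(I_{\mathbb I^n}\times\gamma_j)=i_{\gamma_j}\circ T^o_{i,\gamma_j}$ with $T^o_{i,\gamma_j}\subset\mathbb I^{p+n}\times T_{i,\gamma_j}$ finite and surjective over $\mathbb I^{p+n}$, checks the dimension count $\dim T_{i,\gamma_j}=p+n$, and then compares Borel--Moore fundamental classes to conclude $p_{Yi*}p_{Xi}^*[\im(\gamma_j)\backslash\partial]=[(T_i\circ(I_{\mathbb I^n}\times\gamma_j))\backslash\partial]$. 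Without an argument of this kind, part (i) is not established, and part (ii) depends on (i). Note also that the cubical-versus-simplicial bookkeeping you flag at the end is already absorbed into proposition \ref{CWadTr(iii)} via Serre's comparison map $L$; the real bottleneck is the fundamental-class comparison, not the sign conventions.
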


By definition, we have the following commutative diagram of sites : 
\begin{equation*}
\xymatrix{
\widetilde\Cw: \Cor^{fs}_{\mathbb Z}(\CW)\ar[r]^{\Cw}\ar[d]^{\Tr} & 
\Cor^{fs}_{\mathbb Z}(\AnSm(\mathbb C))\ar[r]^{\An}\ar[d]^{\Tr(S)} 
& \Cor^{fs}_{\mathbb Z}(\SmVar(\mathbb C))\ar[d]^{\Tr} \\
\widetilde\Cw: \mathbb Z(\CW)\ar[r]^{\Cw}\ar[rd]^{\Cw} & 
\mathbb Z(\AnSm(\mathbb C))\ar[r]^{\An} & \mathbb Z(\SmVar(\mathbb C)) \\
\, & \mathbb Z(\AnSp(\mathbb C))\ar[r]^{\An}\ar[u]^{\iota_{an}} & 
\mathbb Z(\Var(\mathbb C))\ar[u]^{\iota_{var}} } 
\end{equation*}  
In proposition \ref{AnCwtCw}(ii), we prove that $\Cw^*$, and hence $\widetilde\Cw^*$, 
derive trivially for the $(\mathbb D^1,usu)$, resp. $(\mathbb A^1,et)$,
and $(\mathbb I^1,usu)$ model structure.
Our definition of the Betti realization functor is the following :
\begin{defi}
\begin{itemize}
\item[(i)] The CW-Betti realization functor (without transfers) is the composite :
\begin{equation*}
\widetilde{\Bti}_0^*:\DA^{-}(\mathbb C,\mathbb Z)\xrightarrow{\widetilde\Cw^*}\CwDA^{-}(\mathbb Z)
\xrightarrow{Re_{cw*}}D^{-}(\mathbb Z)
\end{equation*}
\item[(ii)] The CW-Betti realization functor with transfers is the composite :
\begin{equation*}
\widetilde\Bti^*:\DM^{-}(\mathbb C,\mathbb Z)\xrightarrow{\widetilde\Cw^*}\CwDM^{-}(\mathbb Z)
\xrightarrow{Re^{tr}_{cw*}}D^{-}(\mathbb Z)
\end{equation*}
\end{itemize}
\end{defi}
Since $\widetilde\Cw^*$ derive trivially by proposition \ref{AnCwtCw}(ii)
and $L\Tr^*:\CwDA^-(\mathbb Z)\to\CwDM^-(\mathbb Z)$ is the inverse of $\Tr_*$ by theorem \ref{CWMot}(i), we have 
$\widetilde\Bti_0^*=\widetilde\Bti^*\circ L\Tr^*$.
In section 3.1 we prove (c.f. theorem \ref{mainthm}) that the absolute version of this construction coincide with Ayoub'one.
\begin{thm}
\begin{itemize}
\item[(i)] For $Y\in\Var(\mathbb C)$, and $E\subset Y$ a subvariety, we have $\Bti^*M(Y,E)=\widetilde\Bti^*M(Y,E)$
\item[(ii)] For $X,Y\in\Var(\mathbb C)$, $D\subset X$, $E\subset Y$ subvarieties, 
and $n\in\mathbb Z$, $n\leq 0$, the following diagram is commutative
\begin{equation*}
\xymatrix{
\Hom_{\DM^{-}(\mathbb C,\mathbb Z)}(M(X,D),M(Y,E)[n])\ar^{\widetilde\Cw^*}[rr]\ar[d]_{\An^*} & \, & 
\Hom_{\CwDM^-(\mathbb Z)}(M(X,D),M(Y,E)[n])\ar[d]^{Re^{tr}_{cw*}} \\  
\Hom_{\AnDM^-(\mathbb Z)}(M(X,D),M(Y,E)[n])\ar[rr]^{Re^{tr}_{an*}} & \, & 
\Hom_{D^-(\mathbb Z)}(\sing_{\mathbb I^*}\mathbb Z_{tr}(X,D),\sing_{\mathbb I^*}\mathbb Z_{tr}(Y,E)[n])} 
\end{equation*}
where we denoted for simplicity $X$ for $X^{an}$ and $X^{cw}$, and similarly for $D$, $Y$ and $E$. 
\end{itemize}
\end{thm}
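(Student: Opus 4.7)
The plan is to realize both constructions explicitly through the singular cubical fibrant models on each side, and to compare them via the inclusion ${i'_{1n}}^{cw}:\mathbb I^n \hookrightarrow \bar{\mathbb D}^n$. By Theorem~\ref{AnS}(ii), the functor $\underline{\sing}_{\bar{\mathbb D}^*}$ gives a functorial fibrant replacement for the $(\mathbb D^1,usu)$-model structure, so $Re^{tr}_{an*}F^\bullet \simeq \Gamma(\pt,\underline{\sing}_{\bar{\mathbb D}^*}F^\bullet)$; the analogous statement on the CW side is Theorem~\ref{CWsingI}, giving $Re^{tr}_{cw*}G^\bullet \simeq \Gamma(\pt,\underline{\sing}_{\mathbb I^*}G^\bullet)$. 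Pulling back along ${i'_{1n}}^{cw}$ in the cubical coordinate yields a natural morphism $\Cw^*\underline{\sing}_{\bar{\mathbb D}^*}F^\bullet \to \underline{\sing}_{\mathbb I^*}\Cw^*F^\bullet$ of complexes of presheaves on $\Cor^{fs}_{\mathbb Z}(\CW)$, providing the canonical comparison between Ayoub's construction and ours.

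For part (i), apply this comparison to $F^\bullet = \mathbb Z_{tr}(Y^{an},E^{an})$ and evaluate at the point. The proposition stated just before the theorem identifies $\Gamma(\pt,\sing_{\mathbb I^*}\mathbb Z_{tr}(Y^{cw},E^{cw}))$ with the relative singular chain complex $C^{\sing}_*(Y^{cw},E^{cw},\mathbb Z)$, and Ayoub's analogous computation for $\sing_{\bar{\mathbb D}^*}$ identifies $\Gamma(\pt,\sing_{\bar{\mathbb D}^*}\mathbb Z_{tr}(Y^{an},E^{an}))$ with the same complex. Since ${i'_{1n}}^{cw}$ is a homotopy equivalence in each cubical degree, the comparison map is a quasi-isomorphism, yielding (i).

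For part (ii), after fibrant replacement for the $(\mathbb A^1,et)$-structure, a morphism $\phi \in \Hom_{\DM^-(\mathbb C,\mathbb Z)}(M(X,D),M(Y,E)[n])$ is represented by an algebraic cubical cycle $T \in \mathcal Z^{fs/X\times\square^n}(X \times \square^n \times Y,\mathbb Z)$ compatible with the cubical faces. Applying $\An^*$ and restricting to $[0,\infty]^n \simeq \bar{\mathbb D}^n \subset (\square^n)^{an}$ gives a representative of $\An^*\phi$ in Ayoub's fibrant model, while further restriction along ${i'_{1n}}^{cw}$ produces a representative of $\widetilde\Cw^*\phi$ in our CW model. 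By Proposition~\ref{Kunneth}(i), $Re^{tr}_{cw*}\widetilde\Cw^*\phi$ coincides with the K\"unneth action $K_n(X,(Y,E))(p_{X\times Y}(T|_{\mathbb I^n}))$ on relative singular chains. Ayoub's computation of $Re^{tr}_{an*}\An^*\phi$ gives the analogous action of $T|_{\bar{\mathbb D}^n}$ via the analytic Gysin morphism, and since analytic and topological Gysin pullbacks for proper finite projections to smooth targets agree after applying $\Cw$, and ${i'_{1n}}^{cw}$ preserves the relevant Borel--Moore fundamental classes, the two compositions in the diagram coincide.

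The main obstacle will be making precise the equivalence of Ayoub's analytic action on morphism spaces with the purely topological K\"unneth action $K_n$ of Proposition~\ref{Kunneth}. Concretely, one must verify that for each proper finite projection $p_{Xi}:T_i \to X$ arising from the cycle $T$, the analytic Gysin pullback implicit in Ayoub's Betti realization coincides, after applying $\Cw$, with the topological Gysin pullback on singular chains used to define $K_n$; and that this identification is compatible with restriction along $\mathbb I^n \hookrightarrow \bar{\mathbb D}^n$. These compatibilities follow from naturality and homotopy invariance of Gysin morphisms together with the deformation retraction of $\bar{\mathbb D}^n$ onto $\mathbb I^n$, but the chain-level verification---in particular tracking orientation and sign conventions on both sides---constitutes the principal technical work.
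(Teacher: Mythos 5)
The heart of your argument for part (i) is the sentence ``Since ${i'_{1n}}^{cw}$ is a homotopy equivalence in each cubical degree, the comparison map is a quasi-isomorphism.'' This does not work. In degree $n$ the map
\begin{equation*}
\mathbb Z_{tr}(Y^{an},E^{an})(\bar{\mathbb D}^n)\longrightarrow \mathbb Z_{tr}(Y^{cw},E^{cw})(\mathbb I^n)
\end{equation*}
is a restriction of finite surjective \emph{analytic} correspondences over the polydisc to finite surjective \emph{CW} correspondences over the cube; these are genuinely different groups, neither is computed by the topology of the cube alone, and the map is neither injective nor surjective degreewise. The fact that $\bar{\mathbb D}^n$ and $\mathbb I^n$ are both contractible tells you nothing directly about this map of total complexes. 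The paper devotes a substantial argument (Proposition~\ref{Bettiprop}(i)) to exactly this quasi-isomorphism: it is proved first for smooth $Y$ using a countable cover by geodesically convex balls (Proposition~\ref{Dcov}) and a Mayer--Vietoris computation at the level of $\underline{\sing}_{\bar{\mathbb D}^*}$ versus $\underline{\sing}_{\mathbb I^*}$, and then for singular $Y$ by induction on $\dim Y$ via Hironaka resolution and proper descent (Theorem~\ref{HirDes}, Proposition~\ref{DesAn}). Your proposal elides all of this into a single sentence that is, as stated, false reasoning.

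For part (ii) you take a genuinely different route from the paper. The paper's proof is purely categorical: once $B(\mathbb Z_{tr}(X,D))$ and $B(\mathbb Z_{tr}(Y,E))$ are known to be $(\mathbb D^1,usu)$-equivalences, the naturality of $B$ gives a commutative square with $\An^*\alpha$ on top and $\Cw_*\widetilde\Cw^*\alpha$ on the bottom, and the identity $\Bti^*(\alpha)=\widetilde\Bti^*(\alpha)$ follows by chasing that square with the adjunction/fibrancy identities; no chain-level representative of $\alpha$ is ever produced. You instead propose to represent $\alpha$ by an algebraic cubical cycle after fibrant replacement, then compare the induced actions on singular chains via an analytic analogue of Proposition~\ref{Kunneth}. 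This has two difficulties you do not resolve. First, representing a morphism of $\DM^-$ by an explicit cycle requires something like Proposition~\ref{DA1iso}, which in the paper is proved only for $Y$ projective; the theorem you are proving is for arbitrary $Y$, $E$. Second, the ``analytic K\"unneth action'' you invoke for $Re^{tr}_{an*}\An^*\phi$ does not appear in the paper (Proposition~\ref{Kunneth} is proved only on the CW side), and you explicitly concede that verifying its agreement with the topological $K_n$ at the chain level ``constitutes the principal technical work.'' So the proposal is not a proof of (ii); it is a sketch of an alternative strategy whose two hardest steps are acknowledged but not carried out, and one of which (cycle representatives for general $Y$) may not even go through in the required generality. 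The paper's functorial argument via $B$ avoids both problems.
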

To prove this theorem, we give, for $X\in\AnSp(\mathbb C)$, an equivalence $(\mathbb D^1,usu)$ local
$B(\mathbb Z_{tr}(X)):\sing_{\mathbb D^*}\mathbb Z_{tr}(X)\to\Cw_*\sing_{\mathbb I^*}\mathbb Z_{tr}(X^{cw})$ by 
considering the two canonical morphism of functors (see section 3.1),
\begin{itemize}
\item the morphism $\psi^{\Cw}$, which,
for $G^{\bullet}\in PC^-(\An)$, associate the following canonical morphism 
$\psi^{\Cw}(G^{\bullet}):\Cw^*(\underline{\sing}_{\mathbb I_{an}^*}G^{\bullet})
\to\underline{\sing}_{\mathbb I^*}\Cw^*G^{\bullet}$ in $PC^-(CW)$, 
\item the morphism $\psi^{\widetilde{\Cw}}$, which,
for $F^{\bullet}\in PC^-$, associate the following canonical morphism
$\psi^{\widetilde{\Cw}}(F^{\bullet}):\widetilde{\Cw}^*(\underline{\sing}_{\mathbb I_{et}^*}F^{\bullet})
\to\underline{\sing}_{\mathbb I^*}\widetilde{\Cw}^*F^{\bullet}$ in $PC^-(CW)$,
\end{itemize}
we define the following two morphism of functors :
\begin{itemize}
\item[(i)] the morphism $W$, which,
for $G^{\bullet}\in PC^-(\An)$, associate the composition in $PC^-(CW)$
\begin{equation*}
W(G^{\bullet}):
\Cw^*(\underline{\sing}_{\bar{\mathbb D}^*}G^{\bullet})\xrightarrow{\Cw^*(\underline{G}^{\bullet}(i'_1))}
\Cw^*(\underline{\sing}_{\mathbb I_{an}^*}G^{\bullet})\xrightarrow{\psi^{\Cw}(G^{\bullet})}
\underline{\sing}_{\mathbb I^*}\Cw^*G^{\bullet},
\end{equation*}
\item[(ii)] the morphism $\widetilde W$, which,
for $F^{\bullet}\in PC^-$, associate the composition in $PC^-(CW)$
\begin{equation*}
\widetilde W(F^{\bullet}):
\widetilde\Cw^*(\underline{C}_*F^{\bullet})\xrightarrow{\widetilde\Cw^*(\underline{F}^{\bullet}(i))}
\widetilde\Cw^*(\underline{\sing}_{\mathbb I_{et}^*}F^{\bullet})\xrightarrow{\psi^{\widetilde\Cw}(F^{\bullet})}
\underline{\sing}_{\mathbb I^*}\widetilde\Cw^*F^{\bullet}.
\end{equation*}
\end{itemize}
The morphism of functor $B$ is defined, by associating
to $G^{\bullet}\in\PSh(\Cor^{fs}_{\mathbb Z}(\AnSm(\mathbb C)),C^-(\mathbb Z))$, the composite 
\begin{equation*}
B(G^{\bullet}):
\underline{\sing}_{\bar{\mathbb D}^*}G^{\bullet}\xrightarrow{\ad(\Cw^*,\Cw_*)(\underline{\sing}_{\bar{\mathbb D}^*}G^{\bullet})}
\Cw_*\Cw^*\underline{\sing}_{\bar{\mathbb D}^*}G^{\bullet}\xrightarrow{\Cw_*(W(G^{\bullet}))}
\Cw_*\underline{\sing}_{\mathbb I^*}\Cw^*G^{\bullet}
\end{equation*}
in $\PSh(\Cor^{fs}_{\mathbb Z}(\AnSm(\mathbb C)),C^-(\mathbb Z))$
We deduce this theorem from the proposition \ref{Bettiprop} :
\begin{prop}
\begin{itemize}
\item[(i)] For $Y\in\AnSp(\mathbb C)$ and $E\subset Y$ an analytic subset,
\begin{equation*}
e^{tr}_{an*}(B(\mathbb Z_{tr}(Y,E))):
\sing_{\bar{\mathbb D^*}}\mathbb Z_{tr}(Y,E)\to\sing_{\mathbb I^*}\mathbb Z_{tr}(Y^{cw},E^{cw})
\end{equation*}
is a quasi isomorphism in $C^{-}(\mathbb Z)$.
\item[(ii)] For $Y\in\AnSp(\mathbb C)$, and $E\subset Y$ an analytic subset, the morphism 
\begin{equation*}
B(\mathbb Z_{tr}(Y,E)):
\underline{\sing}_{\bar{\mathbb D}^*}\mathbb Z_{tr}(Y,E)\to\Cw_*\underline{\sing}_{\mathbb I^*}\mathbb Z_{tr}(Y^{cw},E^{cw})
\end{equation*}
is an equivalence $(\mathbb D^1,usu)$ local in $\PSh(\Cor^{fs}_{\mathbb Z}(\AnSm(\mathbb C)),C^-(\mathbb Z))$.
\end{itemize}
\end{prop}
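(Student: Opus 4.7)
The plan is to deduce (i) from the identification of both source and target with the relative singular chain complex of $(Y^{cw}, E^{cw})$, and then to deduce (ii) from (i) by showing that both sides are $\mathbb D^1$-local presheaves, so that the $(\mathbb D^1,usu)$-equivalence reduces to a usu-local check.

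For (i), the proposition recalled earlier concerning CW pairs states that the inclusions
\begin{equation*}
C^{\sing}_*(Y^{cw}, E^{cw}, \mathbb Z) \hookrightarrow \sing_{\mathbb I^*}\mathbb Z(Y^{cw}, E^{cw}) \hookrightarrow \sing_{\mathbb I^*}\mathbb Z_{tr}(Y^{cw}, E^{cw})
\end{equation*}
are quasi-isomorphisms, identifying the target of our morphism with the relative singular chain complex. On the analytic side, the analogous computation — combining Ayoub's comparison between analytic correspondences and analytic singular chains with the contractibility of $\bar{\mathbb D}^n$ as in theorem \ref{AnS} — yields a natural quasi-isomorphism $C^{\sing}_*(Y^{cw}, E^{cw}, \mathbb Z) \xrightarrow{\sim} \sing_{\bar{\mathbb D}^*}\mathbb Z_{tr}(Y, E)$, obtained via $\sing_{\bar{\mathbb D}^*}\mathbb Z(Y,E)$ in the same fashion. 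By construction, $e^{tr}_{an*}(B(\mathbb Z_{tr}(Y,E)))$ is the composite of pulling back correspondences along $i'_1: \mathbb I^n \hookrightarrow \bar{\mathbb D}^n$ and passing to the underlying CW category via $\Cw$; since $i'_1$ is a homotopy equivalence of contractible CW complexes and both identifications above are natural, this composite corresponds to the identity on $C^{\sing}_*(Y^{cw}, E^{cw}, \mathbb Z)$, giving the required quasi-isomorphism.

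For (ii), I would first verify that both the source and the target are $\mathbb D^1$-local on $\Cor^{fs}_{\mathbb Z}(\AnSm(\mathbb C))$. The source is $\mathbb D^1$-local by the absolute version of theorem \ref{RAnS}. For the target, evaluating $\Cw_*\underline{\sing}_{\mathbb I^*}\mathbb Z_{tr}(Y^{cw}, E^{cw})$ at $X \in \AnSm$ gives $\underline{\sing}_{\mathbb I^*}\mathbb Z_{tr}(Y^{cw}, E^{cw})(X^{cw})$, so $\mathbb D^1$-invariance reduces, via $(X \times \mathbb D^1)^{cw} = X^{cw} \times \bar{\mathbb D}^1$, to the existence of an $\mathbb I^1$-homotopy retraction of $\bar{\mathbb D}^1$ onto its center; this retraction exists, and combined with the $\mathbb I^1$-locality of $\underline{\sing}_{\mathbb I^*}$ given by theorem \ref{CWsingI}, it yields the $\mathbb D^1$-locality of the target.

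Once both sides are $\mathbb D^1$-local, a morphism between them is a $(\mathbb D^1, usu)$-equivalence if and only if it is a usu-local equivalence, and this can be checked on analytic stalks at points $x \in X$ for every $X \in \AnSm$. The main obstacle is that (i) is a purely absolute statement, while the stalk check requires a relative version where $(Y, E)$ is effectively replaced by $(U \times Y, U \times E)$ over shrinking analytic neighborhoods $U$ of $x$. To handle this I would rerun the proof of (i) with $U$ in place of $\pt$ — the singular chain identification, Ayoub's disk-comparison and the pullback along $i'_1$ are all natural in $U$ — and then use $\mathbb D^1$-locality to contract $U$ onto $x$, which produces the required stalkwise quasi-isomorphism and hence the $(\mathbb D^1, usu)$-equivalence.
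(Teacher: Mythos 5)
Your reduction of (i) to the smooth case is essentially correct, but you have silently elided the hard part of the proposition: the case where $Y$ is \emph{singular}. For $Y\in\AnSm(\mathbb C)$ smooth, proposition \ref{Dcov} gives a countable open covering of $Y$ by geodesically convex open sets, whose finite intersections are either empty or biholomorphic to a ball; combined with lemmas \ref{HRAn} and \ref{HRCW} and the contractibility statement \ref{AnS}(ii), this is exactly the Čech argument that makes your identification $C^{\sing}_*(Y^{cw},\mathbb Z)\xrightarrow{\sim}\sing_{\bar{\mathbb D}^*}\mathbb Z_{tr}(Y)$ work. But for $Y\in\AnSp(\mathbb C)$ singular there is no direct analytic analogue of proposition \ref{CWadTr(iii)}, and your assertion that "the analogous computation... yields a natural quasi-isomorphism" is exactly the thing one needs to prove. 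The paper handles this by induction on $\dim Y$: it applies Hironaka's desingularization (theorem \ref{HirDes}) to produce a proper modification $\epsilon:Y'\to Y$ with $Y'$ smooth, uses proposition \ref{DesAn} (which itself depends on the cdh-type machinery of lemmas \ref{DesAnlem} and corollary \ref{DesAnCor}, and on homotopy invariance via proposition \ref{HSHDAn}) to get descent for proper modifications on the analytic side, and uses proper cohomological descent for the associated $2$-cubical diagram on the topological side, then assembles the smooth case with the inductive hypothesis via the diagram (\ref{bettidia2}). Your proof contains none of this, and without it (i) is unproven for singular $Y$ (which is the generic input, since even $Y=Y^{an}$ of a projective variety can be singular).

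For (ii), your identification of both sides as $\mathbb D^1$-local is correct and matches the paper. However, your next step — passing to analytic stalks at points $x\in X$ and "rerunning the proof of (i) with $U$ in place of $\pt$," replacing $(Y,E)$ by $(U\times Y,U\times E)$ — is both unnecessary and structurally dubious. The presheaf evaluated at $X$ is $\mathbb Z_{tr}(Y,E)(X\times\bar{\mathbb D}^*)$, where $X$ sits on the \emph{source} side of the correspondences, so the substitution you propose does not correspond to any natural restriction functor. The paper instead uses proposition \ref{DusuEq}(ii), which is a consequence of the Quillen equivalence \ref{AnTr}(iii): for a morphism between $\mathbb D^1$-local objects in $PC^-(\An)$, a quasi-isomorphism after applying $e^{tr}_{an*}$ is already a $(\mathbb D^1,usu)$-local equivalence, with no stalkwise verification required. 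You should invoke this criterion directly rather than attempting a local argument over shrinking neighborhoods.
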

We deduce the point (ii) of this proposition from point (i). To prove point (i) we reduce to the smooth case
using a desingularization by \cite{Hironaka}.
For $X\in\AnSp(\mathbb C)$ the morphism of complexes
\begin{equation*}
e_{an*}B(\mathbb Z_{tr}(X)):\sing_{\mathbb D^*}\mathbb Z_{tr}(X)\to\sing_{\mathbb I^*}\mathbb Z_{tr}(X^{cw}) 
\end{equation*}
is given by associating to $\alpha\in\mathcal Z^{fs/\bar{\mathbb D^n}}(\bar{\mathbb D}^n\times X)$, the
restriction $\mathbb Z_{tr}(X^{cw})(i_n)(\alpha^{cw})=\alpha_{|\mathbb I^n\times X^{cw}}$ 
of $\alpha^{cw}\in\mathcal Z^{fs/\bar{\mathbb D}^n}(\mathbb D^n\times X^{cw})$ by the closed embedding of CW complexes
$i_n\times I_{X^{cw}}:\mathbb I^n\times X^{cw}\hookrightarrow\mathbb D^n\times X^{cw}$ 
induced by the closed embedding $i_n:\mathbb I^n\hookrightarrow\bar{\mathbb D}^n$.
If $X$ is smooth connected, we see using a covering by geodesically convex open subset,
we see that there exist a countable open covering by open subsets isomorphic to open balls in $\mathbb C^{d_X}$ 
such that the intersection of a finite number of members are either empty or isomorphic to an open ball in $\mathbb C^{d_X}$.

In the section 3.2, we use our construction of the Betti realization functor via CW complexes
to give, for $X,Y\in\Var(\mathbb C)$ and $E\subset Y$ a closed subvariety, 
an explicit image of a morphism $\alpha\in\Hom_{PC^-}(\mathbb Z_{tr}(X),\mathbb Z_{tr}(Y,E)[n])$ 
by the Betti realization functor. 
\begin{defi}
\begin{itemize}
\item[(i)]For $V\in\Var(\mathbb C)$ and $p\in\mathbb N$, we consider
the Bloch cycle complex $\mathcal Z^p(V,*)$, and
$\mathcal Z^p(V^{cw},*)$ the cubical complex such that for $n\in\mathbb N$,
$\mathcal Z^p(V^{cw},n)\subset\mathcal Z^p(\mathbb I^n\times X^{cw})$ is the abelian subgroup
consisting of cycle meeting the face of $\mathbb I^n$ properly 
and whose differential is given by, for $\gamma\in\mathcal Z^p(V^{cw},n)$, 
$\partial_{\mathbb I}\gamma=
\sum_{i=1}^n(-1)^i(\gamma_{|\mathbb I_{i,0}^{n-1}\times V^{cw}}-\gamma_{|\mathbb I_{i,1}^{n-1}\times V^{cw}})$
where $\mathbb I_{i,0},\mathbb I_{i,1}\subset\mathbb I^n$ are the faces.
We then denote
\begin{equation*}
\hat{\mathcal T}^{p}_{V}:\mathcal Z^p(V,*)\to\mathcal Z^{2p}(V^{cw},*) \; , \;  
\alpha\in\mathcal Z^p(V,n)\mapsto\alpha^{cw}_{|\mathbb I^n\times V^{cw}} 
\end{equation*}
the map complexes given by restriction with respect to the closed embedding of CW complexes 
$i_n\times I_V:\mathbb I^n\times V^{cw}\hookrightarrow\square^n\times V^{cw}$ whose image
is $[0,\infty]^n\times V^{cw}\subset\square^n\times V^{cw}$ (c.f. notations).

\item[(ii)] Let $V\in\Var(\mathbb C)$ quasi-projective. 
Let $Y\in\PVar(\mathbb C)$ be a compactification of $V$ (e.g. the projectivisation of $V$) 
and $E=Y\backslash V$.
The higher cycle class map (\cite{MKerr}) is the morphism of complexes of abelian groups :
\begin{equation*}
\mathcal T^p:\mathcal Z^p(V,*)\to C^{sing}_{2d_Y-2p+*}(Y^{cw},E^{cw},\mathbb Z), \; 
\alpha\mapsto [p_Y(\hat{T}_{\alpha})]=[p_{Y}(\bar{\alpha}^{cw}_{|\mathbb I^*\times Y^{cw}})],
\end{equation*}
where $\bar{\alpha}\in\mathcal Z^p(Y\times\square^*)$ is the closure of $\alpha$ and
$p_Y:Y^{cw}\times\mathbb I^*\to Y^{cw}$ is the projection.
\end{itemize}
\end{defi}
Let $X\in\SmVar(\mathbb C)$ and $Y\in\Var(\mathbb C)$.
Let $E\subset Y$ be a closed subset and  $V=Y\backslash E$.
Denote by $j:V\hookrightarrow Y$ the open embeddings.
We have the open embedding $I_X\times j:X\times V\hookrightarrow X\times Y$. 
The map of complexes 
\begin{equation*}
\hat{\mathcal T}^{d_Y}_{X\times Y}:\mathcal Z^{d_Y}(X\times Y,*)\to\mathcal Z^{2d_Y}(X^{cw}\times Y^{cw},*) 
\end{equation*}
induces maps denoted by the same way on the subcomplexes indicated in the following diagram
in $C^-(\mathbb Z)$:
\begin{equation*}
\xymatrix{
\mathcal Z^{d_V}(X\times V,*)\ar[rr]^{\hat{\mathcal T}^{d_V}_{X\times V}} & \, & 
\mathcal Z^{2d_V}(X^{cw}\times V^{cw},*) \\
\underline{C}_*\mathbb Z_{tr}(Y,E)(X)\ar[rr]^{\hat{\mathcal T}^{d_V}_{X\times V}}\ar@{^{(}->}[u]^{(I_X\times j)^*} & \, &
\underline{\sing}_{\mathbb I^*}\mathbb Z_{tr}(Y^{cw},E^{cw})(X^{cw})\ar@{^{(}->}[u]^{(I_X\times j)^{cw*}} \\
\underline{C}_*\mathbb Z_{tr}(V)(X)
\ar[rr]^{\hat{\mathcal T}^{d_V}_{X\times V}}\ar@{^{(}->}[u]^{\underline{C}_*(c(Y,E)\circ\mathbb Z_{tr}(j))(X)} & \, &
\underline{\sing}_{\mathbb I^*}\mathbb Z_{tr}(V^{cw})(X^{cw})
\ar@{^{(}->}[u]_{\underline{\sing}_{\mathbb I^*}(c(Y^{cw},E^{cw})\circ\mathbb Z_{tr}(j^{cw}))(X^{cw})}}
\end{equation*}  
where,
\begin{itemize}
\item $I_X\times j^*:\mathcal Z^{d_Y}(X\times Y,\mathbb Z)
\hookrightarrow\mathcal Z^{d_Y}(X\times V,\mathbb Z) \; ; \; Z \mapsto Z\cap (X\times V)$ 
\item $I_X\times j^*:\mathcal Z^{2d_Y}(X^{cw}\times Y^{cw},\mathbb Z)
\hookrightarrow\mathcal Z^{2d_Y}(X^{cw}\times V^{cw},\mathbb Z) \; ; \; Z \mapsto Z\cap (X\times V)$ 
\end{itemize}
Recall that $D^{tr}(\mathbb A^1,et):PC^-\to\DM^-(\mathbb C,\mathbb Z))$ and 
$D^{tr}(\mathbb I^1,usu):PC^-(CW)\to\CwDM^-(\mathbb Z)$ are the canonical localization functors.
We prove in proposition \ref{TAnCW} the following
\begin{prop}
Let $X\in\SmVar(\mathbb C)$ and $Y\in\Var(\mathbb C)$.
Let $E\subset Y$ be a closed subset and  $V=Y\backslash E$.
For $n\in\mathbb Z$, $n\leq 0$,
the following diagram is commutative
\begin{equation*}
\xymatrix{
\Hom_{PC^-}(\mathbb Z_{tr}(X),\underline{C}_*\mathbb Z_{tr}(Y,E)[n])
\ar[d]_{D(\mathbb A^1,et)}\ar[rr]^{H^n(\hat{\mathcal T}_{X\times V}^{d_X})} & \, & 
\Hom_{PC^-(\CW)}(\mathbb Z_{tr}(X^{cw}),\underline{\sing}_{\mathbb I^*}\mathbb Z_{tr}(Y^{cw},E^{cw})[n])
\ar[d]^{D(\mathbb I^1,usu)} \\
\Hom_{\DM^{-}(\mathbb C,\mathbb Z)}(M(X),M(Y,E)[n])\ar^{\widetilde\Cw^*}[rr] & \, & 
\Hom_{\CwDM^-(\mathbb Z)}(M(X^{cw}),M(Y^{cw},E^{cw})[n]),} 
\end{equation*}
\end{prop}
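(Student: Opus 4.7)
The plan is to factor the top horizontal arrow through the morphism of functors $\widetilde{W}$ introduced earlier for the Betti comparison, and then to invoke the fact that $\widetilde{\Cw}^*$ derives trivially (by proposition \ref{AnCwtCw}(ii)) together with the Suslin--Voevodsky-type identification of $\underline{C}_*$ and the above identification of $\underline{\sing}_{\mathbb I^*}$ as fibrant models for the $(\mathbb A^1,et)$- and $(\mathbb I^1,usu)$-localizations respectively.

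First, for a chain map $\alpha:\mathbb Z_{tr}(X)\to\underline{C}_*\mathbb Z_{tr}(Y,E)[n]$ in $PC^-$, applying $\widetilde{\Cw}^*$ and composing with $\widetilde{W}(\mathbb Z_{tr}(Y,E))[n]$ produces the composite
\begin{equation*}
\mathbb Z_{tr}(X^{cw})=\widetilde{\Cw}^*\mathbb Z_{tr}(X)\xrightarrow{\widetilde{\Cw}^*(\alpha)}\widetilde{\Cw}^*\underline{C}_*\mathbb Z_{tr}(Y,E)[n]\xrightarrow{\widetilde{W}(\mathbb Z_{tr}(Y,E))[n]}\underline{\sing}_{\mathbb I^*}\mathbb Z_{tr}(Y^{cw},E^{cw})[n]
\end{equation*}
in $PC^-(\CW)$. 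I would verify at the chain level that this composite equals $\hat{\mathcal T}^{d_X}_{X\times V}(\alpha)$, by unwinding the two factors defining $\widetilde{W}$: the first factor $\widetilde{\Cw}^*(\underline{\mathbb Z_{tr}(Y,E)}(i))$ is, by functoriality of $\underline{F}^{\bullet}$ in the cubical direction, the pullback along the closed embedding of cubical objects $i:\mathbb I^*\hookrightarrow\square^*$; the second factor $\psi^{\widetilde{\Cw}}(\mathbb Z_{tr}(Y,E))$ is the canonical comparison which sends a finite surjective algebraic correspondence to its underlying CW correspondence. Composing them, a cycle $Z\subset X\times\square^m\times Y$ representing an element of $\underline{C}_m\mathbb Z_{tr}(Y,E)(X)$ is sent to $Z^{cw}_{|\mathbb I^m\times X^{cw}\times V^{cw}}$, which is precisely $\hat{\mathcal T}^{d_X}_{X\times V}$ applied to the restriction of $Z$ along $(I_X\times j)^*$. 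Taking $H^n$ yields the claim at the level of Hom groups in $PC^-(\CW)$.

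Second, to descend to the derived categories, use that $\widetilde{\Cw}^*$ derives trivially, whence $\widetilde{\Cw}^*(D(\mathbb A^1,et)(\alpha))=D(\mathbb I^1,usu)(\widetilde{\Cw}^*(\alpha))$. The unit morphisms $\mathbb Z_{tr}(Y,E)\to\underline{C}_*\mathbb Z_{tr}(Y,E)$ and $\mathbb Z_{tr}(Y^{cw},E^{cw})\to\underline{\sing}_{\mathbb I^*}\mathbb Z_{tr}(Y^{cw},E^{cw})$ are $(\mathbb A^1,et)$- and $(\mathbb I^1,usu)$-equivalences respectively, and the square
\begin{equation*}
\xymatrix{
\widetilde{\Cw}^*\mathbb Z_{tr}(Y,E)\ar[r]\ar@{=}[d] & \widetilde{\Cw}^*\underline{C}_*\mathbb Z_{tr}(Y,E)\ar[d]^{\widetilde{W}(\mathbb Z_{tr}(Y,E))} \\
\mathbb Z_{tr}(Y^{cw},E^{cw})\ar[r] & \underline{\sing}_{\mathbb I^*}\mathbb Z_{tr}(Y^{cw},E^{cw})
}
\end{equation*}
commutes by construction of $\widetilde{W}$. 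Hence $\widetilde{W}(\mathbb Z_{tr}(Y,E))$ becomes an isomorphism in $\CwDM^-(\mathbb Z)$ compatible with the canonical identifications $\widetilde{\Cw}^*M(Y,E)=M(Y^{cw},E^{cw})$, which combined with step one yields the commutativity of the outer square.

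The main obstacle will be step one: explicitly verifying that $\widetilde{W}(\mathbb Z_{tr}(Y,E))$ implements the restriction map $\hat{\mathcal T}^{d_X}_{X\times V}$ on correspondence cycles. This requires pinning down the formula for $\widetilde{\Cw}^*$ applied to presheaves on $\Cor^{fs}_{\mathbb Z}(\SmVar(\mathbb C))$ (where one checks that $\widetilde{\Cw}^*$ preserves representables and cubical singular complexes) and the explicit action of $\psi^{\widetilde{\Cw}}$, both ultimately reducing to compatibility of the closed embedding $i_n:\mathbb I^n\hookrightarrow\square^n$ with the face inclusions and with the finite-surjective condition. Once step one is in place, step two is a formal consequence of the derived functoriality already established in the excerpt.
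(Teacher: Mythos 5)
Your proposal is correct and follows essentially the same route as the paper. The paper's argument is exactly your two-step plan: first a chain-level lemma (lemma \ref{TAnCWlem}) establishing $(\widetilde{W}(\mathbb Z_{tr}(Y,E))[n])\circ(\widetilde\Cw^*\alpha)=H^n(\hat{\mathcal T}_{X\times V})(\alpha)$ by unwinding $\widetilde W$ into the restriction $\widetilde\Cw^*(\underline{\mathbb Z_{tr}(Y,E)}(i))$ along $i:\mathbb I^*\hookrightarrow\square^*$ followed by $\psi^{\widetilde\Cw}$, and second the derived-category descent: $\widetilde\Cw^*D(\mathbb A^1,et)(\alpha)=D(\mathbb I^1,usu)(\widetilde\Cw^*\alpha)$ because $\widetilde\Cw^*$ derives trivially (proposition \ref{AnCwtCw}(iii)), and then $D(\mathbb I^1,usu)(\widetilde\Cw^*\alpha)=D(\mathbb I^1,usu)(\widetilde W(\mathbb Z_{tr}(Y,E))\circ\widetilde\Cw^*\alpha)$ since $\widetilde W(\mathbb Z_{tr}(Y,E))$ is an $(\mathbb I^1,usu)$ local equivalence (proposition \ref{AWtW}(ii)), which is precisely your identification of $\widetilde\Cw^*M(Y,E)$ with $M(Y^{cw},E^{cw})$.
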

On the other side, we identified in proposition \ref{Kunneth} in section 2.3 the image
of a morphism 
$T\in\Hom_{PC^-(\CW)}(\mathbb Z_{tr}(X^{cw}),
\underline{\sing}_{\mathbb I^*}\mathbb Z_{tr}(Y^{cw},E^{cw})[n])$ 
under the $(\mathbb I^1,usu)$ localization functor with the action on homology : 
Using proposition \ref{TAnCW} and proposition \ref{Kunneth} (i), 
we immediately deduce from theorem \ref{mainthm} the following (c.f.\ref{cormain}):
\begin{cor} 
Let $X\in\SmVar(\mathbb C)$, $Y\in\Var(\mathbb C)$, $E\subset Y$ a closed subvariety 
and $V=Y\backslash E$ the open complementary. 
Let $n\in\mathbb Z$, $n\leq 0$. Then,
\begin{itemize}
\item[(i)] the following diagram (\ref{cormaindia}) is commutative 
\begin{equation*}
\xymatrix{
\Hom_{PC^-}(\mathbb Z_{tr}(X),\underline{C}_*\mathbb Z(Y,E)[n])
\ar[d]_{\An^*\circ D(\mathbb A^1,et)}\ar[rr]^{H^n(\hat{\mathcal T}_{X\times V}^{d_X})} & \, & 
\Hom_{PC^-(\CW)}(\mathbb Z_{tr}(X),\underline{\sing}_{\mathbb I^*}\mathbb Z_{tr}(Y,E)[n])
\ar[d]^{Re^{tr}_{cw}\circ D(\mathbb I^1,usu)} \\  
\Hom_{\AnDM^-(\mathbb Z)}(M(X),M(Y,E)[n])\ar[rr]^{Re^{tr}_{an*}} & \, & 
\Hom_{D^-(\mathbb Z)}(\sing_{\mathbb I^*}\mathbb Z_{tr}(X),\sing_{\mathbb I^*}\mathbb Z_{tr}(Y,E)[n])} 
\end{equation*}
where we denoted for simplicity $X$ for $X^{an}$ and $X^{cw}$, and similarly for $Y$ and $E$. 
\item[(ii)] for $\alpha\in\Hom_{PC^-}(\mathbb Z_{tr}(X),\underline{C}_*\mathbb Z(Y,E)[n])$,
we have
\begin{equation}
\Bti^*\circ D(\mathbb A^1,et)(\alpha)=K_n(X,Y)(p_{X\times Y}(H^n\hat{\mathcal T}_{X\times V}(\alpha))),
\end{equation}
where $p_{X\times Y}:\mathbb I^n\times X^{cw}\times Y^{cw}\to X^{cw}\times Y^{cw}$ is the projection.
\end{itemize}
\end{cor}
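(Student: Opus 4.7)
The plan is to derive the corollary as a formal consequence of chaining together three already-proved results — theorem \ref{mainthm}(ii), proposition \ref{TAnCW}, and proposition \ref{Kunneth}(i) — each of which has already carried out the substantive analytic or topological work in its own setting. No new geometric input is required; the proof reduces to verifying that their outputs compose consistently on a cycle representative of the given motivic morphism.

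For part (i), I start with $\alpha\in\Hom_{PC^-}(\mathbb Z_{tr}(X),\underline{C}_*\mathbb Z_{tr}(Y,E)[n])$. Proposition \ref{TAnCW} rewrites $\widetilde{\Cw}^*\circ D(\mathbb A^1,et)(\alpha)$ in $\CwDM^-(\mathbb Z)$ as $D(\mathbb I^1,usu)\bigl(H^n\hat{\mathcal T}^{d_X}_{X\times V}(\alpha)\bigr)$. Applying $Re^{tr}_{cw*}$ and invoking the commutative square of theorem \ref{mainthm}(ii), which identifies $Re^{tr}_{cw*}\circ\widetilde{\Cw}^*$ with $Re^{tr}_{an*}\circ\An^*$ on Hom-sets of the relevant motives, yields
\begin{equation*}
Re^{tr}_{an*}\circ\An^*\circ D(\mathbb A^1,et)(\alpha)=Re^{tr}_{cw*}\circ D(\mathbb I^1,usu)\bigl(H^n\hat{\mathcal T}^{d_X}_{X\times V}(\alpha)\bigr),
\end{equation*}
which is precisely the commutativity asserted in part (i).

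For part (ii), I set $T=H^n\hat{\mathcal T}^{d_X}_{X\times V}(\alpha)\in\Hom_{PC^-(\CW)}(\mathbb Z_{tr}(X^{cw}),\underline{\sing}_{\mathbb I^*}\mathbb Z_{tr}(Y^{cw},E^{cw})[n])$ and note that $X^{cw}\in\TM(\mathbb R)$ since $X$ is smooth over $\mathbb C$. Proposition \ref{Kunneth}(i) then evaluates $Re^{tr}_{cw*}\circ D(\mathbb I^1,usu)(T)$ as the homological operator $K_n(X,(Y,E))(p_{X\times Y}(T))$, and substituting this into the identity from part (i) delivers the displayed formula.

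The only delicate point — and the principal thing to check — is that $\hat{\mathcal T}^{d_X}_{X\times V}(\alpha)$ genuinely lands in the subgroup $\mathcal Z^{fs/\mathbb I^n\times X^{cw}}(\mathbb I^n\times X^{cw}\times Y^{cw})$ on which proposition \ref{Kunneth} is formulated, so that $K_n(X,(Y,E))$ applies to it. Concretely, one needs that after restriction along $i_n\times I_{X^{cw}}:\mathbb I^n\times X^{cw}\hookrightarrow\square^n\times X^{cw}$ the resulting CW-cycle remains finite and surjective over $\mathbb I^n\times X^{cw}$ and meets the faces of $\mathbb I^*$ properly. This is built into the definition of $\hat{\mathcal T}^{d_X}_{X\times V}$ together with the passage from $\mathcal Z^{d_V}(X\times V,*)$ to $\underline{C}_*\mathbb Z_{tr}(Y,E)(X)$ via taking closures in $Y$, as recorded in the diagram preceding proposition \ref{TAnCW}. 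Once this verification is made, the three cited results chain directly to produce both (i) and (ii).
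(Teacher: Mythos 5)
Your proposal is correct and follows essentially the same route as the paper: for (i) you stack the commutative square of proposition \ref{TAnCW} on top of the commutative square of theorem \ref{mainthm}(ii), and for (ii) you substitute proposition \ref{Kunneth}(i) into the identity from (i). The closing verification that $\hat{\mathcal T}^{d_X}_{X\times V}(\alpha)$ lands in the right cycle group is a reasonable sanity check the paper leaves implicit, but it does not change the argument.
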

We deduce from this corollary \ref{cormain}(ii), 
proposition \ref{Noriprop}, end lemma \ref{Dfactor}, the following main result 
which say that Ayoub's Betti realization functor factor through Nori motives. 
Consider the functor $\mathcal N:C^b\Cor_{\mathbb Z}(\SmVar(\mathbb C))\to D^b(\mathcal N)$
from the category of bounded complexes of correspondences of algebraic varieties 
to the derived category of Nori motives (c.f.\cite{Nori}).
This functor factor through the localization functor : 
\begin{equation*}
\xymatrix{C^b(\Cor_{\mathbb Z}(\SmVar(\mathbb C)))\ar[rd]^{D(\mathbb A^1,et)}\ar[rr]^{\mathcal N}
 & \, & D^b(\mathcal N) \\
\, & \DM^{gm}(\mathbb C,\mathbb Z)\ar[ru]^{\bar{\mathcal N}} & \, }
\end{equation*}
Denote by $o_N:D^b(\mathcal N)\to D^b(\mathbb Z)$ the forgetful functor.
In theorem \ref{corNorithm} we prove :
\begin{thm}
\begin{itemize}
\item[(i)] For $X\in\SmVar(\mathbb C)$, 
$\Bti^*\circ D(\mathbb A^1,et)(\mathbb Z(X))=o_N\circ\mathcal N(X)$
\item[(ii)] For $X,Y\in\SmVar(\mathbb C)$, the following diagram commutes
\begin{equation*} 
\xymatrix{
\Hom_{PC^-}(\mathbb Z(X),\mathbb Z(Y))\ar[r]^{\mathcal N}\ar[d]_{D(\mathbb A^1,et)} & 
\Hom_{D^b(\mathcal N)}(N(X),N(Y))\ar[d]^{o_N} \\
\Hom_{\DM^-(\mathbb C,\mathbb Z)}(M(X),M(Y))\ar[r]^{\Bti^*} & 
\Hom_{D^b(\mathbb Z)}(C_*(X,\mathbb Z), C_*(Y,\mathbb Z))}
\end{equation*}
\item[(iii)] The Betti realisation functor factor through  Nori motives. That is
$\Bti^*=o_N\circ\bar{\mathcal N}$
\end{itemize}
\end{thm}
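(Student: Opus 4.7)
The plan is to deduce part (iii) formally from parts (i) and (ii) via the factorization criterion of lemma \ref{Dfactor}, and to establish (i) and (ii) by identifying both sides of the claimed equalities with explicit constructions on singular chains, using corollary \ref{cormain}(ii) and proposition \ref{Noriprop} respectively as the bridges.

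First, for (i), I would show that both $\Bti^{*}\circ D(\mathbb{A}^{1},et)(\mathbb{Z}(X))$ and $o_N\circ\mathcal N(X)$ compute the singular chain complex $C^{\sing}_{*}(X^{cw},\mathbb{Z})$. On the Nori side this is (up to quasi-isomorphism) the definition of the underlying complex of $\mathcal N(X)$. On the Betti side, theorem \ref{mainthm}(i) gives $\Bti^{*}=\widetilde\Bti^{*}=Re^{tr}_{cw*}\circ\widetilde\Cw^{*}$; combining this with theorem \ref{CWMot}(iii), which identifies $Re^{tr}_{cw*}$ with the functor inverse to $e^{tr*}_{cw}$, and with the proposition producing a quasi-isomorphism $C^{\sing}_{*}(X^{cw},\mathbb{Z})\to\sing_{\mathbb{I}^{*}}\mathbb{Z}_{tr}(X^{cw})$, reduces the left-hand side to the same singular chain complex.

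Next, for (ii), I would apply corollary \ref{cormain}(ii) with $E=\emptyset$, so that $V=Y$: for every $\alpha\in\Hom_{PC^{-}}(\mathbb{Z}(X),\mathbb{Z}(Y)[n])$ the composite $\Bti^{*}\circ D(\mathbb{A}^{1},et)(\alpha)$ equals the concrete action
\begin{equation*}
K_n(X,Y)\bigl(p_{X\times Y}(H^{n}\hat{\mathcal T}_{X\times Y}(\alpha))\bigr)
\end{equation*}
on singular chains. Proposition \ref{Noriprop} matches $o_N\circ\mathcal N(\alpha)$ with exactly the same operation: Nori's cycle class attached to the correspondence $\alpha$, acting by pushforward through the Gysin recipe, coincides with $K_n$. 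Lining up the two formulas yields the commutativity of the square. To deduce (iii), recall that $\DM^{gm}(\mathbb{C},\mathbb{Z})$ is triangulated-generated by motives $M(X)$ for $X\in\SmVar(\mathbb{C})$; lemma \ref{Dfactor} formalises the principle that a functor on such a category is determined by its values on these generators and on morphisms between them, so from (i) and (ii) one concludes $\Bti^{*}=o_N\circ\bar{\mathcal N}$.

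The principal difficulty is (ii), specifically the substance of proposition \ref{Noriprop}. One must check that Nori's construction of the homological realization of a correspondence — which proceeds via the diagram of good pairs and their relative singular cohomology — produces exactly the $K_n$ action coming from corollary \ref{cormain}(ii). This requires identifying the cycle class used by Nori with the Borel--Moore fundamental class appearing in $K_n$, tracking sign conventions between the cubical complex $\mathcal{Z}^{p}(V^{cw},*)$ and the simplicial singular complex, and matching the pushforward/Gysin descriptions on both sides. Granting proposition \ref{Noriprop}, (ii) becomes a direct substitution, (i) collapses to the definitions of the respective chain complexes, and (iii) is then a purely formal consequence of lemma \ref{Dfactor}.
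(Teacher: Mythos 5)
Your proposal is correct and follows essentially the same route as the paper: (i) via theorem \ref{mainthm}(i) and proposition \ref{Noriprop}(i), (ii) via corollary \ref{cormain}(ii) (with $E=\emptyset$, $V=Y$) and proposition \ref{Noriprop}(ii), and (iii) via lemma \ref{Dfactor} applied to the resulting equality $\Bti^*\circ D(\mathbb A^1,et)=o_N\circ\bar{\mathcal N}\circ D(\mathbb A^1,et)$. One small slip: lemma \ref{Dfactor} is not a statement about triangulated generators, as you paraphrase it, but the more formal observation that a localization functor $D$ is an epimorphism in the sense that $F_1\circ D = F_2\circ D$ forces $F_1 = F_2$; the application you make of it, however, is exactly the paper's.
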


Let $V\in\Var(\mathbb C)$ quasi-projective. 
Let $Y\in\PVar(\mathbb C)$ a compactification of $V$ and $E=Y\backslash V$.
Denote by $j:V\hookrightarrow Y$ the open embedding.
\begin{itemize}
\item For $p\leq d_V$, consider the following composition of isomorphisms of abelian groups
\begin{eqnarray*}
H^{p,n}(V):\CH^p(V,n)\xrightarrow{i_{V*}}\CH^{d_V}(\mathbb A^{d_V-p}\times V,n)
\xrightarrow{(H^n(I_{\mathbb A}\times j)^{*})^{-1}}
\Hom_{PC^-}(\mathbb Z_{tr}(\mathbb A^{d_V-p}),\underline{C}_*\mathbb Z_{tr}(Y,E)[n]) \\
\xrightarrow{D(\mathbb A^1,et)}\Hom_{\DM^-(\mathbb C,\mathbb Z)}(M(\mathbb A^r),M(Y,E)[n]) 
\end{eqnarray*}
\item For $p\geq d_V$ and $E'=(Y\times\mathbb P^{p-d_V})\backslash(V\times\mathbb A^{p-d_V})$,
consider the following composition of isomorphisms of abelian groups
\begin{eqnarray*}
H^{p,n}(V):\CH^p(V,n)\xrightarrow{p_V^*}\CH^p(\mathbb A^{p-d_V}\times V,n)
\xrightarrow{(H^n(a\times j)^{*})^{-1}}
\Hom_{PC^-}(\mathbb Z,\underline{C}_*\mathbb Z_{tr}(\mathbb P^{p-d_V}\times Y,E')[n]) \\
\xrightarrow{D(\mathbb A^1,et)}\Hom_{\DM^-(\mathbb C,\mathbb Z)}(\mathbb Z,M(Y\times\mathbb P^{p-d_V},E')[n]) 
\end{eqnarray*}.
\end{itemize}
We prove that under these identifications, the image of the Betti realization
functor on morphism coincide with the Bloch cycle class map (c.f. theorem \ref{corbloch}) : 
\begin{thm}
Let $V\in\Var(\mathbb C)$. Let $Y\in\PVar(\mathbb C)$ be a compactification of $V$ and $E=Y\backslash V$.
Then, 
\begin{itemize}
\item[(i)] for $p\leq d_V$, the following diagram commutes :
\begin{equation*}
\xymatrix{
CH^p(V,n)\ar[d]^{\sim}_{H^{p,n}(V)}\ar[rr]^{H^n\mathcal T_V} & \, &
 H_{n+p}(Y,E,\mathbb Z)\ar[d]_{\sim}^{\overline{K_n(\pt,(Y,E))}} \\
\Hom_{\DM^-(\mathbb C,\mathbb Z)}(M(\mathbb A^{d_V-p}),M(Y,E)[n])\ar[rr]^{\widetilde\Bti^*} & \, &
\Hom_{D^-(\mathbb Z)}(\mathbb Z,C_*(Y,E)[n])}
\end{equation*}
\item[(ii)] for $p\geq d_V$, the following diagram commutes :
\begin{equation*}
\xymatrix{
CH^p(V,n)\ar[d]^{\sim}_{H^{p,n}(V)}\ar[rr]^{H^n\mathcal T_V} & \, & 
H_{n+p}(Y,E,\mathbb Z)\ar[d]_{\sim}^{\overline{K_n(\pt,(Y,E))}} \\
\Hom_{\DM^-(\mathbb C,\mathbb Z)}(\mathbb Z,M(\mathbb A^{p-d_V}\times Y,E)[n])\ar[rr]^{\widetilde\Bti^*} & \, &
\Hom_{D^-(\mathbb Z)}(\mathbb Z,C_*(Y,E)[n])}
\end{equation*}
\end{itemize}
\end{thm}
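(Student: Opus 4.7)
The plan is to deduce both parts from corollary \ref{cormain}(ii), which identifies $\widetilde{\Bti}^{*}\circ D(\mathbb{A}^{1},et)$ on $\Hom$-groups with $K_{n}\circ p_{X\times Y}\circ H^{n}\hat{\mathcal{T}}$, combined with proposition \ref{Kunneth}(ii) identifying $K_{n}$ with the Borel--Moore valued cycle class map coming from the six-functor formalism. The overall strategy is to match, on the cycle side, the topological cycle $p_{X\times Y}(H^{n}\hat{\mathcal T}^{d_{V}}_{X\times V}(\alpha))$ with the Thom-suspension of the Kerr--Lewis--M\"uller-Stach cycle $\mathcal{T}^{p}(\beta)$, and then to collapse the affine factor using that it is $\mathbb{A}^{1}$-contractible.

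For (i), I would fix $\beta\in\CH^{p}(V,n)$ and set $X:=\mathbb{A}^{d_{V}-p}$. By construction of $H^{p,n}(V)$, there is $\alpha\in\Hom_{PC^{-}}(\mathbb{Z}_{tr}(X),\underline{C}_{*}\mathbb{Z}_{tr}(Y,E)[n])$ whose image in $\CH^{d_{V}}(X\times V,n)$ is $i_{V*}\beta$. Corollary \ref{cormain}(ii) then yields
\begin{equation*}
\widetilde{\Bti}^{*}\circ H^{p,n}(V)(\beta)=K_{n}(X,Y)\bigl(p_{X\times Y}(H^{n}\hat{\mathcal T}^{d_{V}}_{X\times V}(\alpha))\bigr).
\end{equation*}
Since $i_{V*}\beta$ is, at the chain level, the exterior product $\{0\}\times\beta$ in $X\times V\times\square^{n}$, the cycle $p_{X\times Y}(H^{n}\hat{\mathcal T}^{d_{V}}_{X\times V}(\alpha))$ is $\{0\}^{cw}\times p_{Y}(\bar{\beta}^{cw}_{|\mathbb{I}^{n}\times Y^{cw}})$ in $X^{cw}\times Y^{cw}$ (both sides being restriction to $\mathbb{I}^{n}$, projection away from $\square^{n}$, then extension by Zariski closure to $Y$). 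Under the identification of proposition \ref{Kunneth}(ii), this cycle represents the Thom-suspension, in $H^{BM}_{n+p+2(d_{V}-p)}(X\times Y,X\times E,\mathbb{Z})$, of the class $[\mathcal{T}^{p}(\beta)]\in H^{BM}_{n+p}(Y,E,\mathbb{Z})$ along the zero-section. Because $X$ is contractible, the projection $X\to\pt$ induces $C_{*}(X,\mathbb{Z})\xrightarrow{\sim}\mathbb{Z}$, and naturality of $\overline{K_{n}}$ with respect to the base gives $K_{n}(X,(Y,E))(i_{V*}c)=\overline{K_{n}(\pt,(Y,E))}([c])\circ p_{X}^{*}$; combining these identifications yields the commutativity in~(i).

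For (ii), I would take $X=\pt$, set $Y':=Y\times\mathbb{P}^{p-d_{V}}$ and $E':=Y'\setminus(V\times\mathbb{A}^{p-d_{V}})$, and unwind $H^{p,n}(V)$ analogously. Now $p_{V}^{*}\beta$ is the exterior product $\mathbb{A}^{p-d_{V}}\times\beta$, whose closure in $Y'$ is the Thom-suspension, along the zero-section $Y\hookrightarrow Y'$, of the closure of $\beta$ in $Y$. The Borel--Moore Thom isomorphism for this inclusion matches the identification of proposition \ref{Kunneth}(ii) for the pair $(Y',E')$ with that for $(Y,E)$, and corollary \ref{cormain}(ii) applied with $X=\pt$ directly yields the diagram of~(ii).

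The chief obstacle in both cases is the geometric comparison between the CW-theoretic pushforward built into $\hat{\mathcal T}$ and the Borel--Moore Thom isomorphism hidden inside $\overline{K_{n}}$. Concretely, one must verify that for the zero-section inclusion of $Y$ into a trivial affine bundle $X\times Y$ (respectively for $Y\hookrightarrow Y\times\mathbb{P}^{p-d_{V}}$), the cycle-theoretic pushforward computed in the Bloch cycle complex coincides, after applying $\hat{\mathcal T}$ and passing to singular homology, with the classical topological Thom isomorphism. Once this local identification is made, the remainder of the argument is a formal unwinding of definitions.
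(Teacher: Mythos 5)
Your proposal is correct and follows essentially the same route as the paper. Both arguments reduce the statement to corollary \ref{cormain}(ii) applied with $X=\mathbb{A}^{d_V-p}$ for part~(i) (resp.\ $X=\pt$, $Y$ replaced by $\mathbb{P}^{p-d_V}\times Y$ with complement $E'$ for part~(ii)), together with the definitional identity $K_n\circ[\cdot]=\overline{K_n}$ from proposition \ref{Kunneth}(ii), and then append one further commutative square matching $i_{V*}$ (resp.\ $p_V^*$) on Bloch cycle complexes with the corresponding map on homology through the cycle class maps $\mathcal T$. The paper simply writes down this last square and asserts its commutativity without comment, whereas you identify the geometric content it encodes — that the topological cycle obtained from the stabilized algebraic cycle is the Thom suspension of $\mathcal T^p(\beta)$, and that contractibility of the affine factor collapses $C_*(X,\mathbb Z)$ to $\mathbb Z$, matching $K_n(X,(Y,E))$ with $\overline{K_n(\pt,(Y,E))}$ under naturality. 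That explanation is the right justification for the square the paper leaves implicit, so your proposal is, if anything, slightly more detailed at the one point where the paper is terse.
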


In the last section we give a relative version of theorem \ref{mainthm}.
We first give in theorem \ref{RCWsingI}  a relative version of theorem \ref{CWsingI}. 
\begin{thm}
Let $S\in\CW$. 
\begin{itemize}
\item[(i)] For $F^{\bullet}\in\PSh_{\mathbb Z}(\Cor^{fs}_{\mathbb Z}(\CW^{sm}/S),C^-(\mathbb Z))$, 
$\underline{\sing}_{\mathbb I^*}F^{\bullet}\in\PSh_{\mathbb Z}(\Cor^{fs}_{\mathbb Z}(\CW^{sm}/S),C^-(\mathbb Z))$ 
is $\mathbb I^1$ local and the inclusion morphism 
$S(F^{\bullet}):F^{\bullet}\to\underline{\sing}_{\mathbb I^*}F^{\bullet}$ is an $(\mathbb I^1,usu)$ equivalence. 
\item[(ii)] For $F^{\bullet}\in\PSh_{\mathbb Z}(\Cor^{fs}_{\mathbb Z}(\CW^{sm}/S),C^-(\mathbb Z))$, 
$\underline{\sing}_{\mathbb I^*}F^{\bullet}\in\PSh_{\mathbb Z}(\Cor^{fs}_{\mathbb Z}(\CW^{sm}/S),C^-(\mathbb Z))$ 
is $\mathbb I^1$ local and the inclusion morphism 
$S(F^{\bullet}):F^{\bullet}\to\underline{\sing}_{\mathbb I^*}F^{\bullet}$ is an $(\mathbb I^1,usu)$ equivalence. 
\end{itemize}
\end{thm}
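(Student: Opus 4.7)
The plan is to reduce the relative statements to their absolute analogues in theorem \ref{CWsingI} via a sectionwise argument. For any $X/S \in \CW^{sm}/S$, the evaluation of the cubical resolution is
\begin{equation*}
(\underline{\sing}_{\mathbb I^*}F^{\bullet})(X/S) \;=\; \Tot\bigl(n \mapsto F^{\bullet}((X \times \mathbb I^n)/S)\bigr),
\end{equation*}
where $(X \times \mathbb I^n)/S$ carries the structural map $X \times \mathbb I^n \to X \to S$, and the cubical face and degeneracy maps are induced entirely by the $\mathbb I^*$-factor and are therefore morphisms over $\mathrm{id}_S$. Hence the relative cubical object is the base change along $X \to S$ of the absolute one, and every construction in the proof of theorem \ref{CWsingI} transports sectionwise.

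For part (i), to check that $\underline{\sing}_{\mathbb I^*}F^{\bullet}$ is $\mathbb I^1$-local I would verify, for every $X/S$, that the projection $p_1 : (X \times \mathbb I)/S \to X/S$ induces a quasi-isomorphism after applying $\underline{\sing}_{\mathbb I^*}$. The multiplication $\mu : \mathbb I \times \mathbb I \to \mathbb I$ (and its graph, giving a finite surjective correspondence over the first factor) produces a cubical chain homotopy between the identity and the pullback of the $X/S$-value; since this homotopy uses only the $\mathbb I$-factor it lies over $\mathrm{id}_S$ and transports without change. For the inclusion $S(F^{\bullet})$ being an $(\mathbb I^1, \mathrm{usu})$-equivalence, the same $\mu$-based contracting homotopy shows that the cofibre is acyclic after $\mathbb I^1$-localization, yielding the result sectionwise on each $X/S$.

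Part (ii) follows by the identical argument, since all the homotopies employed above are already written as finite surjective correspondences (graphs of $\mu$ and of the projections); they lift automatically to $\Cor^{fs}_{\mathbb Z}(\CW^{sm}/S)$ without requiring any new transfer-theoretic input. The key technical step is to check that the pullbacks used in the contracting homotopy remain finite and surjective over their base when pulled back along $X \to S$; this is immediate because $\mathbb I^n \to \pt$ is proper and the relevant squares in $\CW^{sm}/S$ are Cartesian. The main potential obstacle is not in the homotopical argument, which mirrors the absolute case verbatim, but in carefully matching the sites $\Cor^{fs}_{\mathbb Z}(\CW^{sm}/S)$ and $\Cor^{fs}_{\mathbb Z}(\CW)$ so that the sectionwise identification of $\underline{\sing}_{\mathbb I^*}$ with its absolute counterpart is clean; once this bookkeeping is settled, theorem \ref{CWsingI} delivers both claims.
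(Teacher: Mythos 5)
Your proposal shares the central homotopical idea with the paper (the contraction $\theta_{1,n}$, which you call $\mu$, and the observation that it lives over $\mathrm{id}_S$), and your treatment of the second part of the claim---that $S(F^{\bullet})$ is an $(\mathbb I^1,usu)$-equivalence---is essentially the paper's argument via lemma \ref{RI1hpt}(iv). However, there is a genuine gap in your handling of the $\mathbb I^1$-locality claim.

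You propose to ``verify, for every $X/S$, that the projection $p_1 : (X \times \mathbb I)/S \to X/S$ induces a quasi-isomorphism after applying $\underline{\sing}_{\mathbb I^*}$,'' i.e.\ a sectionwise quasi-isomorphism $\underline{\sing}_{\mathbb I^*}F^{\bullet}(X/S) \to \underline{\sing}_{\mathbb I^*}F^{\bullet}(X\times\mathbb I^1/S)$. This is not the definition of being $\mathbb I^1$-local for the $(\mathbb I^1,usu)$ model structure. Locality is a condition in $\Ho_{usu}$: one must show $\Hom_{\Ho_{usu}}(\mathbb Z(X/S)[n],\underline{\sing}_{\mathbb I^*}F^{\bullet}) \to \Hom_{\Ho_{usu}}(\mathbb Z(X\times\mathbb I^1/S)[n],\underline{\sing}_{\mathbb I^*}F^{\bullet})$ is an isomorphism, and these $\Hom$'s are computed after usu-fibrant replacement, i.e.\ via usu-hypercohomology, not via raw sections. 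A sectionwise quasi-isomorphism does not automatically survive this replacement---this is exactly the distinction between ``homotopy invariant'' and ``strictly homotopy invariant.'' The paper's route is entirely different: it invokes proposition \ref{RactadjCW}(i) (the adjunction $\ad(e_{cw}(S)^*,e_{cw}(S)_*)(\underline{\sing}_{\mathbb I^*}F^{\bullet})$ is a usu-local equivalence, which is where the contracting homotopy actually enters, using that a smooth $X/S$ is locally a product of an open $S^o\subset S$ with a contractible open in $\mathbb R^{d_X}$) together with proposition \ref{Rcle}(ii) ($e_{cw}(S)^*K^{\bullet}$ is $\mathbb I^1$-local). Thus $\underline{\sing}_{\mathbb I^*}F^{\bullet}$ is usu-locally equivalent to something visibly $\mathbb I^1$-local, and locality transfers.

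Relatedly, the framing ``theorem \ref{CWsingI} delivers both claims'' is not quite right: $\mathbb I^1$-locality for objects of $PC^-(CW,S)$ is a statement inside $\Ho_{usu}(PC^-(CW,S))$, not a sectionwise statement that can be reduced to the absolute theorem. The paper does not reduce to the absolute case; it re-runs the argument in the relative site using the relative analogues \ref{RactadjCW} and \ref{Rcle}, and the specific local structure of smooth morphisms in $\CW^{sm}/S$ is what replaces local contractibility of CW complexes in that step. Your proposal never uses smoothness of $X\to S$, which is a sign the argument is missing this piece.
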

We then prove in theorem \ref{RCWMot},  a relative version of point (ii) and (iii) of \ref{CWMot} :
\begin{thm}
Let $S\in\CW$.
\begin{itemize}
\item[(i)] The adjonction
$(e_{cw}(S)^*,e_{cw}(S)_*):C^-(S)\leftrightarrows\PSh_{\mathbb Z}(\CW^{sm}/S,C^-(\mathbb Z))$
is a Quillen equivalence for the $(\mathbb I^1,usu)$ model structures. That is, the derived functor
$e_{cw}(S)^*:D^-(S)\xrightarrow{\sim}\CwDA^-(S,\mathbb Z)$ is an isomorphism
and $Re_{cw}(S)_*:\CwDA^-(S,\mathbb Z)\xrightarrow{\sim}D^-(S)$ is it inverse. 
\item[(ii)] The adjonction
$(e^{tr}_{cw}(S)^*,e^{tr}_{cw}(S)_*):C^-(S)\leftrightarrows
\PSh_{\mathbb Z}(\Cor^{fs}_{\mathbb Z}(\CW^{sm}/S),C^-(\mathbb Z))$
is a Quillen equivalence for the $(\mathbb I^1,usu)$ model structures. That is, the derived functor
$e^{tr}_{cw}(S)_*:D^-(S)\xrightarrow{\sim}\CwDM^-(S,\mathbb Z)$ is an isomorphism
and $Re^{tr}_{cw}(S)_*:\CwDM^-(S,\mathbb Z)\xrightarrow{\sim}D^-(S)$ is it inverse.  
\end{itemize}
\end{thm}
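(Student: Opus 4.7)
The plan is to adapt the proof of the absolute version, theorem \ref{CWMot}(ii) and (iii), substituting at each step the absolute singular complex by the relative one on $\CW^{sm}/S$ (resp.\ on $\Cor^{fs}_{\mathbb Z}(\CW^{sm}/S)$), whose role as an $(\mathbb I^1,usu)$-fibrant replacement is supplied by the preceding theorem \ref{RCWsingI}. I will describe the argument for part (i) in detail; part (ii) is entirely analogous once one checks that restriction along $\Ouv(S)\hookrightarrow\Cor^{fs}_{\mathbb Z}(\CW^{sm}/S)$ commutes with $\underline{\sing}_{\mathbb I^*}$, which is immediate since a finite surjective correspondence between $X/S$ and $Y/S$ is a cycle in $X\times_S Y$ whose restriction to $X_U\times_U Y_U$ for an open $U\subset S$ is itself such a correspondence.

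First I would check that the adjunction $(e_{cw}(S)^*, e_{cw}(S)_*)$ is Quillen: $e_{cw}(S)_*$ is restriction along the inclusion of sites $\Ouv(S)\hookrightarrow\CW^{sm}/S$, hence preserves projective fibrations and termwise quasi-isomorphisms, while $e_{cw}(S)^*$ sends the generating trivial cofibrations of the $(\mathbb I^1,usu)$ Bousfield localization (the $\mathbb I^1$-homotopy maps and the usu-hypercovers of open subsets of $S$) to $(\mathbb I^1,usu)$-equivalences of the target. For $G^{\bullet}\in C^-(S)$, the unit $G^{\bullet}\to e_{cw}(S)_* e_{cw}(S)^* G^{\bullet}$ is already an isomorphism of presheaves on $\Ouv(S)$ at the underlying level, since left Kan extension along a fully faithful inclusion followed by restriction is the identity. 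Consequently the derived unit in $D^-(S)$ is an isomorphism and $e_{cw}(S)^*$ is fully faithful on the homotopy categories.

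The main task is then essential surjectivity, equivalently that the counit $e_{cw}(S)^* Re_{cw}(S)_* F^{\bullet}\to F^{\bullet}$ is an isomorphism in $\CwDA^-(S,\mathbb Z)$. Using theorem \ref{RCWsingI} to replace $F^{\bullet}$ by its fibrant model $\underline{\sing}_{\mathbb I^*} F^{\bullet}$, it suffices to prove that for each generator $X/S=(X,h)\in\CW^{sm}/S$, the natural map
\[
e_{cw}(S)^* e_{cw}(S)_* \underline{\sing}_{\mathbb I^*}\mathbb Z(X/S)\to\underline{\sing}_{\mathbb I^*}\mathbb Z(X/S)
\]
is a usu-local weak equivalence. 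Since $h$ is smooth, $X$ admits a trivialization along a usu-hypercover $U_{\bullet}\to S$, with $X\times_S U_n\cong F_n\times U_n$ for some CW complex $F_n$; applying $\underline{\sing}_{\mathbb I^*}$ and invoking the absolute theorem \ref{CWMot}(ii) over each $U_n$ shows that both sides of the displayed map coincide after restriction to the cover, and a patching argument yields the equivalence globally. I expect this local-to-global step—selecting a usu-hypercover of $S$ that simultaneously trivializes the smooth CW morphism $X/S$ and is fine enough to compute the $(\mathbb I^1,usu)$-localization—to be the main technical obstacle, and it is where the precise meaning of \emph{smooth} in $\CW^{sm}/S$ (i.e.\ local triviality as a CW fibration) enters crucially. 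For part (ii), the same argument carries through verbatim with transfers, invoking theorem \ref{CWMot}(iii) in place of theorem \ref{CWMot}(ii) at the final step, and relying on proposition \ref{CWadTr} applied fibrewise to handle the comparison with correspondences.
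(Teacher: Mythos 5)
The overall structure matches the paper: both of you use theorem \ref{RCWsingI} to supply a fibrant replacement $\underline{\sing}_{\mathbb I^*}$, observe that the unit is an isomorphism because restriction after left Kan extension along a fully faithful inclusion is the identity, and then reduce the Quillen equivalence to showing that the counit $e_{cw}(S)^*e_{cw}(S)_*\underline{\sing}_{\mathbb I^*}F^{\bullet}\to\underline{\sing}_{\mathbb I^*}F^{\bullet}$ is a usu-local equivalence (this is proposition \ref{RactadjCW}(i)--(ii) in the paper).

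The gap is in your proof of the counit step. You claim that since $h:X\to S$ is smooth, $X$ admits a trivialization $X\times_S U_n\cong F_n\times U_n$ over a usu-hypercover $U_\bullet\to S$, and you propose to patch the absolute theorem \ref{CWMot}(ii) over the pieces. But the definition of a smooth morphism of CW complexes used in the paper is a condition local on the \emph{source}: around each point $x\in X$ one finds a neighborhood $U_x\subset X$ on which $h$ factors through an open embedding into a product. This does not give local triviality over the \emph{base} $S$: absent properness or compactness of fibers there is no Ehresmann-type theorem converting a ``submersion'' of CW complexes into a locally trivial fiber bundle. You yourself flag the simultaneous-trivialization step as the main technical obstacle; in fact, as stated, it is simply not available. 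Moreover, even granting the trivialization, the passage from the absolute theorem \ref{CWMot}(ii) over each $U_n$ to the relative statement requires unwinding the identification $\underline{\Hom}$ over $U_n$ with the absolute one (via the observation that a map $Y\times\mathbb I^*\to F_n\times U_n$ over $U_n$ is determined by its $F_n$ component), and this is extra work your sketch does not do.

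The paper avoids both problems by working locally on the \emph{test object} $X/S$ rather than on $S$. A usu-local equivalence of presheaves on $\CW^{sm}/S$ can be checked after shrinking each $X/S$; smoothness then lets one replace $X/S$ by $(V\times S^o, p_{S^o})$ with $V$ contractible and $S^o\subset S$ open. Evaluating the counit on $V\times S^o/S$ yields precisely $\underline{\sing}_{\mathbb I^*}F^{\bullet}(p_{S^o})$, which is a quasi-isomorphism by lemma \ref{RI1hpt}(ii) since $p_{S^o}:V\times S^o\to S^o$ is an $\mathbb I^1$-homotopy equivalence over $S$. This is also why there is no need to reduce to representable generators $\mathbb Z(X/S)$: the argument applies to arbitrary $F^{\bullet}$ directly. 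I would redirect your effort accordingly: replace the trivialization-over-$S$ step with local shrinkage of the test object, and the patching argument disappears entirely.
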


By definition, for each $S\in\Var(\mathbb C)$, we have (c.f.\ref{DCat(S)}) 
the following commutative diagram of sites $DCat(S)$
\begin{equation}
DCat(S):=\xymatrix{
\widetilde\Cw(S): \Cor^{fs}_{\mathbb Z}(\CW^{sm}/S^{cw})\ar[r]^{\Cw(S)}\ar[d]^{\Tr(S)} & 
\Cor^{fs}_{\mathbb Z}(\AnSp(\mathbb C)^{sm}/S^{an})\ar[r]^{\An(S)}\ar[d]^{\Tr(S)} 
& \Cor^{fs}_{\mathbb Z}(\Var(\mathbb C)^{sm}/S)\ar[d]^{\Tr(S)} \\
\widetilde\Cw: \mathbb Z(\CW^{sm}/S^{cw})\ar[r]^{\Cw(S)}\ar[rd]^{\Cw(S)} & 
\mathbb Z(\AnSp(\mathbb C)^{sm}/S^{an})\ar[r]^{\An(S)} & \mathbb Z(\Var(\mathbb C)^{sm}/S) \\
 \, & \mathbb Z(\AnSp(\mathbb C)/S^{an})\ar[r]^{\An(S)}\ar[u]^{\iota_{an}(S)} & \mathbb Z(\Var(\mathbb C)/S)\ar[u]^{\iota_{var}(S)} } 
\end{equation}
For $T,S\in\Var(\mathbb C)$ and $f:T\to S$ a morphism, the morphism of sites
\begin{itemize}
\item $P(f):\Var(\mathbb C)/T\to\Var(\mathbb C)/S$, $P(f^{an}):\AnSp(\mathbb C)/T^{an}\to\AnSp(\mathbb C)/S^{an}$,
and $P(f^{cw}):\CW/T^{cw}\to\CW/S^{cw}$ 
given by the pullback functor,
\item $P(f):\Cor^{fs}_{\mathbb Z}(\Var(\mathbb C)^{sm}/T)\to\Cor^{fs}_{\mathbb Z}(\Var(\mathbb C)^{sm}/S)$, 
$P(f^{an}):\Cor^{fs}_{\mathbb Z}(\AnSp(\mathbb C)/T^{an})\to\Cor^{fs}_{\mathbb Z}(\AnSp(\mathbb C)/S^{an})$, and
$P(f^{cw}):\Cor^{fs}_{\mathbb Z}(\CW/T^{cw})\to\Cor^{fs}_{\mathbb Z}(\CW^{sm}/S^{cw})$ 
given by the pullback functor,
\end{itemize}
gives a morphism of diagram of sites 
\begin{equation}
P(f):DCat(T)\to DCat(S).
\end{equation}  
Our definition of the Betti realization functor in the relative setting is :
\begin{defi}
Let $S\in\Var(\mathbb C)$.
\begin{itemize}
\item[(i)] The CW-Betti realization functor (without transfers) is the composite :
\begin{equation*}
\widetilde{\Bti}_0(S)^*:\DA^{-}(S,\mathbb Z)\xrightarrow{\widetilde\Cw(S)^*}\CwDA^{-}(S^{cw},\mathbb Z)
\xrightarrow{Re_{cw}(S)_*}D^{-}(S^{cw})
\end{equation*}
\item[(ii)] The CW-Betti realisation functor with transfers is the composite :
\begin{equation*}
\widetilde\Bti(S)^*:\DM^{-}(S,\mathbb Z)\xrightarrow{\widetilde\Cw(S)^*}\CwDM^{-}(S^{cw},\mathbb Z)
\xrightarrow{Re^{tr}_{cw}(S)_*}D^{-}(S^{cw})
\end{equation*}
\end{itemize}
\end{defi}
Similarly, in the relative case, since $\widetilde\Cw(S)^*$ derive trivially by proposition \ref{RAnCwtCw}(ii)
and and $L\Tr(S^{cw})^*:\CwDA^-(S^{cw},\mathbb Z)\to\CwDM^-(S^{cw},\mathbb Z)$ is the inverse of $\Tr(S^{cw})_*$ 
(c.f.remark \ref{RTrCWrem}), we have $\widetilde\Bti_0(S)^*=\widetilde\Bti(S)^*\circ L\Tr(S)^*$.
As in Ayoub's definition, it defines morphisms of homotopic 2-functors (c.f. theorem \ref{RCWBetti} :
\begin{itemize}
\item $S\in\Var(\mathbb C)\mapsto\widetilde\Bti_0(S):DA^-(S,\mathbb Z)\to D^-(S)$
\item $S\in\Var(\mathbb C)\mapsto\widetilde\Bti(S):DM^-(S,\mathbb Z)\to D^-(S)$
\end{itemize}
We finally prove in theorem \ref{Rmainthm} a relative version of the theorem \ref{mainthm},
that is the construction of the relative Betti realization functor via CW complexes coincide
with Ayoub's one via analytic spaces :
\begin{thm}
Let $S\in\Var(\mathbb C)$. 
Let $M\in\DM^-(S,\mathbb Z)$ is a constructible motive.
\begin{itemize}
\item[(i)] We have $\Bti^*M=\widetilde\Bti^*M$
\item[(ii)] Let $M_1,M_2\in\DM^-(S,\mathbb Z)$ contructible motives.
Let $F_1^{\bullet},F_2^{\bullet}\in PC^-(S)$ 
such that $M_i=D(\mathbb A^1,et)(S)(F_i^{\bullet})$
for $i=1,2$. The following diagram is commutative
\begin{equation*}
\xymatrix{\Hom_{\DM^{-}(S)}(M_1,M_2)\ar^{\widetilde\Cw(S)^*}[r]\ar[d]_{\An(S)^*} &  
\Hom_{\CwDM^-(S)}(\widetilde\Cw(S)^*M_1,\widetilde\Cw(S)^*M_2)\ar[d]^{Re^{tr}_{cw}(S)_*} \\  
\Hom_{\AnDM^-(S)}(\An(S)^*M_1,\An(S)^*M_2)\ar[r]^{Re^{tr}_{an}(S)_*} &  
\Hom_{D^-(S)}(\sing_{\mathbb I^*}\widetilde\Cw(S)^*F^{\bullet}_1,
\sing_{\mathbb I^*}\widetilde\Cw(S)^*F^{\bullet}).} 
\end{equation*}
\end{itemize}
\end{thm}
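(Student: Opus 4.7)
The plan is to adapt the construction of the natural transformation $B$ from the absolute theorem \ref{mainthm} to the relative setting over $S$, then reduce the verification that $B(S)$ is a $(\mathbb D^1,usu)$ equivalence on the representable presheaves $\mathbb Z_{tr}(Y/S,E/S)$ to the absolute proposition \ref{Bettiprop}. Once this is established, the rest of the argument (commutativity of the diagram in (ii) and equality of realizations in (i)) is formal, because both $\Bti(S)^*$ and $\widetilde\Bti(S)^*$ are triangulated compositions of functors coming from morphisms of homotopic 2-functors.

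First, using the diagram of sites $DCat(S)$ together with the relative morphisms $\An(S)$, $\Cw(S)$, $\widetilde\Cw(S)$, $\Tr(S)$, $e^{tr}_{an}(S)$ and $e^{tr}_{cw}(S)$, I would define relative analogues of $\psi^{\Cw}$, $\psi^{\widetilde\Cw}$, $W$ and $\widetilde W$ by the same formulas as in the absolute case. Composing the unit $\ad(\Cw(S)^*,\Cw(S)_*)$ with $\Cw(S)_*(W(S)(-))$, this produces a morphism of functors
\begin{equation*}
B(S): \underline{\sing}_{\bar{\mathbb D}^*}(-) \to \Cw(S)_*\underline{\sing}_{\mathbb I^*}\Cw(S)^*(-)
\end{equation*}
on $\PSh_{\mathbb Z}(\Cor^{fs}_{\mathbb Z}(\AnSp(\mathbb C)^{sm}/S^{an}),C^-(\mathbb Z))$. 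This construction is purely formal once the absolute one is in place; the subtle point is only checking that the various compatibilities recorded in proposition \ref{RAnCwtCw} propagate to the relative setting.

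The key technical step is the relative analogue of proposition \ref{Bettiprop}: for $f:Y\to S^{an}$ smooth and $l:E\hookrightarrow Y$ a closed analytic subset, $B(S)(\mathbb Z_{tr}(Y/S,E/S))$ is a $(\mathbb D^1,usu)$ equivalence. I would prove this by applying $e^{tr}_{an}(S)_*$, which by theorem 3(iii) detects $(\mathbb D^1,usu)$ equivalences, and evaluating at an open $U\hookrightarrow S^{an}$ in $\Ouv(S^{an})$. Since $f$ is smooth analytic, every point of $Y$ admits, over some open of $S^{an}$, a chart in which $f$ is locally the projection $U'\times\bar{\mathbb D}^r\to U'$; restricting to such a chart identifies the stalk computation with the absolute case, so proposition \ref{Bettiprop}(i) can be invoked locally, and the global statement follows by descent for the usu topology and the spectral sequence computing cohomology on $\Ouv(S^{an})$. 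For (i), any constructible $M\in\DM^-(S,\mathbb Z)$ is generated by motives of the form $M(Y/S,E/S)$ under finite cones, direct sums, and Tate twists; both $\Bti(S)^*$ and $\widetilde\Bti(S)^*$ are exact triangulated functors, and the equality on generators $\Bti(S)^*M(Y/S,E/S)=\widetilde\Bti(S)^*M(Y/S,E/S)$ is exactly what the relative proposition gives after applying $e^{tr}_{an}(S)_*$. For (ii), both composite paths in the diagram are identified by naturality of $B(S)$ in $F^{\bullet}_1$ and $F^{\bullet}_2$ together with the fact that $B(S)(F_i^{\bullet})$ is a $(\mathbb D^1,usu)$ equivalence, which holds on representables by the previous step and then on arbitrary $F_i^{\bullet}$ constructible by the five lemma.

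The hard part is proving the relative version of proposition \ref{Bettiprop}. In the absolute case the argument reduces, after Hironaka desingularization, to covering a smooth complex manifold by geodesically convex open poly-disks on which the inclusion $\mathbb I^n\hookrightarrow\bar{\mathbb D}^n$ induces a quasi-isomorphism of cycle complexes. In the relative setting the covering must be compatible with $f:Y\to S^{an}$, and one must also control what happens over non-smooth points of $S^{an}$; I would handle this by using the local product structure of smooth analytic morphisms to build a covering of $Y$ by opens of the form $U'\times\bar{\mathbb D}^r$ compatible with $f$, then glue via the local-to-global spectral sequence on $S^{an}$. This local product reduction is precisely what lets the absolute argument run relatively, and once it is carried out the rest of the proof essentially repeats the absolute one.
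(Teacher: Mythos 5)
Your overall strategy matches the paper's: define the relative $B(S)$ by the same formulas as the absolute $B$, deduce the commutativity of the diagram in (ii) and the equality in (i) from a relative analogue of Proposition \ref{Bettiprop}, and reduce that analogue to the absolute Proposition \ref{Bettiprop}(i) via some form of locality. The divergence, and the gap, is in the reduction step. The paper's Proposition \ref{RBettiprop}(i) uses constructibility to replace $M$ by a cone of a map between $\mathbb A^1$-local fibrant replacements of representables $\mathbb Z_{tr}(X/S)$ and $\mathbb Z_{tr}(Y/S)(p)[n]$ with $X/S,\,Y/S$ \emph{smooth} over $S$, and then checks the usu-local equivalence of $e^{tr}_{an}(S)_*B(S)(\mathbb Z_{tr}(X^{an}/S^{an}))$ \emph{fiberwise}: at a point $s\in S$, the smoothness of $h_1\colon X\to S$ gives $i_s^*\mathbb Z_{tr}(X^{an}/S^{an})=\mathbb Z_{tr}(X^{an}_s)$ and $i_s^*\mathbb Z_{tr}(X^{cw}/S^{cw})=\mathbb Z_{tr}(X^{cw}_s)$, so the stalk comparison at $s$ is exactly the absolute Proposition \ref{Bettiprop}(i) applied to the fiber $X_s$, which is a (smooth) analytic space. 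This is a clean identification of the stalk of the relative comparison map with the absolute comparison map for the fiber, with nothing left to glue.

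Your proposal instead covers $Y$ by charts $U'\times\mathbb D^r\to U'$ and asserts that "restricting to such a chart identifies the stalk computation with the absolute case," with the global statement to follow "by descent for the usu topology and the spectral sequence computing cohomology on $\Ouv(S^{an})$." This conflates covers of $Y$ with the locality needed on $\Ouv(S^{an})$: the usu-local equivalence to be verified concerns a morphism of complexes of presheaves on $\Ouv(S^{an})$, so what must be computed is the stalk at a point $s\in S^{an}$, which involves the entire fiber $Y_s$ and not one chart of it. A single chart $U'\times\mathbb D^r/U'$ is still a relative object over $U'$, and its relative $\sing_{\bar{\mathbb D}^*}$ complex evaluated on a small open $W\subset U'$ is $\mathcal Z^{fs/W\times\bar{\mathbb D}^*}(W\times\bar{\mathbb D}^*\times\mathbb D^r)$, which is not the absolute complex of a smooth analytic space; you would still need the fiber identification (or some $\mathbb D^1$-homotopy argument contracting $\mathbb D^r$) that the paper uses directly. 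Concretely, you are missing the key observation $i_s^*\mathbb Z_{tr}(X^{an}/S^{an})=\mathbb Z_{tr}(X^{an}_s)$ for smooth $h$ and the corresponding compatibility of $B(S)$ with $i_s^*$, which is what lets the paper invoke the absolute proposition without any further descent. Two smaller points: the paper uses constructible $=$ generated by $\mathbb Z_{tr}(X/S)(p)$ with $X/S$ smooth, not by pairs $M(Y/S,E/S)$, which avoids an unnecessary complication; and $e^{tr}_{an}(S)_*$ only "detects" $(\mathbb D^1,usu)$ equivalences between $\mathbb D^1$-local objects (Proposition \ref{RDusuEq}(ii)), which is why the passage from (i) to (ii) in Proposition \ref{RBettiprop} requires the $\mathbb D^1$-locality verifications via Theorems \ref{RAnS}(ii), \ref{RCWsingI}(ii) and Proposition \ref{RAnCwtCw}(ii).
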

As in the absolute case, we define for $S\in\AnSp(\mathbb C)$, the morphism of functor $B(S)$, by associating
to $G^{\bullet}\in\PSh(\Cor^{fs}_{\mathbb Z}(\AnSp^{sm}(\mathbb C)/S),C^-(\mathbb Z))$, 
the morphism $B(S)(G^{\bullet})$ which is the composite 
\begin{equation*}
\xymatrix{
\underline{\sing}_{\bar{\mathbb D}^*}G^{\bullet}
\ar[d]_{\ad(\Cw(S)^*,\Cw(S)_*)(\underline{\sing}_{\bar{\mathbb D}^*}G^{\bullet})}\ar[rrrd]^{B(S)(G^{\bullet})} & \, & \, & \, \\
\Cw(S)_*\Cw(S)^*\underline{\sing}_{\bar{\mathbb D}^*}G^{\bullet}\ar[rrr]^{\Cw(S)_*(W(S)(G^{\bullet}))} & \, & \, &
\Cw(S)_*\underline{\sing}_{\mathbb I^*}\Cw(S)^*G^{\bullet}}
\end{equation*}
in $\PSh(\Cor^{fs}_{\mathbb Z}(\AnSp(\mathbb C)^{sm}/S),C^-(\mathbb Z))$,
and deduce this theorem from the point (ii) of following proposition :
\begin{prop}
Let $S\in\Var(\mathbb C)$ and $F^{\bullet}\in PC^-(S)$ 
such that $D(\mathbb A^1,et)(S)(F^{\bullet})\in\DM^-(S,\mathbb Z)$ is a constructible motive. Then,
\begin{itemize}
\item[(i)] 
$e^{tr}_{an}(S)_*B(S)(F^{\bullet}):\sing_{\bar{\mathbb D}^*}\An(S)^*F^{\bullet}\to
\sing_{\mathbb I^*}\widetilde\Cw(S)^*F^{\bullet}$
is an equivalence usu local in $C^{-}(S^{an})$.
\item[(ii)] 
$B(S)(F^{\bullet}):\underline{\sing}_{\bar{\mathbb D}^*}\An(S)^*F^{\bullet}\to
\Cw(S^{an})_*\underline{\sing}_{\mathbb I^*}\widetilde\Cw(S)^*F^{\bullet}$
is an equivalence $(\mathbb D^1,usu)$ local.
\end{itemize}
\end{prop}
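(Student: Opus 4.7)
The plan is to derive (ii) from (i) by a Quillen-equivalence argument, and to prove (i) by dévissage on constructible motives reducing to the absolute case \ref{Bettiprop}(i) applied stalkwise on $S^{an}$.

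For the reduction of (ii) to (i), observe that the source $\underline{\sing}_{\bar{\mathbb D}^*}\An(S)^*F^{\bullet}$ is $\mathbb D^1$-local by theorem \ref{RAnS}, and that the target $\Cw(S^{an})_*\underline{\sing}_{\mathbb I^*}\widetilde\Cw(S)^*F^{\bullet}$ is $\mathbb D^1$-local as well: its sections over a smooth analytic $X/S^{an}$ compute $(\underline{\sing}_{\mathbb I^*}\widetilde\Cw(S)^*F^{\bullet})(X^{cw})$, and since $\bar{\mathbb D}^{cw}$ is CW-contractible via an $\mathbb I^1$-homotopy, the $\mathbb I^1$-locality coming from theorem \ref{RCWsingI} forces $\mathbb D^1$-locality after $\Cw(S^{an})_*$. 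By the relative Quillen equivalence $(e^{tr}_{an}(S)^*, e^{tr}_{an}(S)_*)$ for the $(\mathbb D^1, usu)$ model structures, a morphism between $\mathbb D^1$-local objects is a $(\mathbb D^1, usu)$-equivalence if and only if its image under $e^{tr}_{an}(S)_*$ is a usu-equivalence in $C^-(S^{an})$, which is exactly (i).

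For (i), I would proceed by dévissage. The claim depends only on the class of $F^{\bullet}$ in $\DM^-(S,\mathbb Z)$ because $\An(S)^*$ and $\widetilde\Cw(S)^*$ derive trivially by \ref{RAnCwtCw}(ii), and it is stable under cones, shifts and finite direct sums. Constructible motives being the smallest thick triangulated subcategory of $\DM^-(S,\mathbb Z)$ generated by $M(X/S)$ for $X/S \in \SmVar(\mathbb C)/S$, it suffices to treat $F^{\bullet} = \mathbb Z_{tr}(X/S)$ with $X/S$ smooth. In this case $\An(S)^*F^{\bullet} = \mathbb Z_{tr}(X^{an}/S^{an})$ and $\widetilde\Cw(S)^*F^{\bullet} = \mathbb Z_{tr}(X^{cw}/S^{cw})$, and on sections over an open $U \subset S^{an}$ the map $e^{tr}_{an}(S)_*B(S)(\mathbb Z_{tr}(X/S))$ is given by the restriction $\alpha \mapsto \alpha^{cw}|_{\mathbb I^n \times U \times_{S^{cw}} X^{cw}}$ along the embedding $i_n \times I : \mathbb I^n \times U \times_{S^{cw}} X^{cw} \hookrightarrow \bar{\mathbb D}^n \times U \times_{S^{an}} X^{an}$.

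It remains to verify that this map is a usu-equivalence, i.e.\ a quasi-isomorphism on stalks. At $s \in S^{an}$, colimiting over shrinking open neighborhoods $U \ni s$, the map becomes the restriction map for the smooth analytic family $X^{an}|_U \to U$; the argument of \ref{Bettiprop}(i), which reduces to the smooth compact case via Hironaka desingularization and treats the smooth case by covering with geodesically convex balls in $\mathbb C^{d_X}$, applies to $X^{an}|_U$. The main obstacle is the uniformity in $U$ of this absolute argument: one must arrange the desingularization and the geodesically convex covering of $X^{an}|_U$ compatibly with the projection to $U$ (exploiting smoothness of $X/S$ to obtain a local product structure), and check that the correspondences occurring in the chain-level argument of \ref{Bettiprop}(i) remain finite and surjective over $\bar{\mathbb D}^n \times U$ resp.\ $\mathbb I^n \times U$. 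Once this relative version of the absolute argument is in hand, the stalkwise quasi-isomorphism follows from \ref{Bettiprop}(i), proving (i) and hence (ii).
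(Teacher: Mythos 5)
Your reduction of (ii) to (i) via $\mathbb D^1$-locality of both sides (theorems \ref{RAnS}, \ref{RCWsingI}, proposition \ref{RAnCwtCw}(ii)) and proposition \ref{RDusuEq}(ii) is correct and matches the paper. Your dévissage to generators is also the right idea, though note that the paper's definition of constructibility includes Tate twists $\mathbb Z_{tr}(X/S)(p)$, which your list of generators omits; this must be carried along (the paper's induction works with cones of maps between $\mathbb Z_{tr}(X/S)$ and $\mathbb Z_{tr}(Y/S)(p)[n]$, both twisted generators).

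The genuine gap is in your reduction of the generator case to the absolute proposition \ref{Bettiprop}(i). You propose to take stalks at $s\in S^{an}$ by colimiting over shrinking opens $U\ni s$ and then run the absolute argument for the restricted family $X^{an}|_U$ --- but this is not the absolute case: the correspondences in $(\underline{\sing}_{\bar{\mathbb D}^*}\mathbb Z_{tr}(X^{an}/S^{an}))(U)$ live in $\bar{\mathbb D}^n\times U\times_{S^{an}}X^{an}$ and are finite surjective over $\bar{\mathbb D}^n\times U$, which is a relative statement, not one about $\sing_{\bar{\mathbb D}^*}\mathbb Z_{tr}(X^{an}|_U)$. You correctly flag the uniformity-in-$U$ issue and the need to make the desingularization and convex covering compatible with the projection, but that ``relative version of the absolute argument'' is exactly the hard content you would have to supply, and you do not. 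The paper sidesteps it completely: by proposition \ref{adRSAn} the stalk functor $i_s^*$ (pullback along $\{s\}\hookrightarrow S^{an}$) commutes with $\underline{\sing}_{\bar{\mathbb D}^*}$, and since $h:X\to S$ is smooth one has $i_s^*\mathbb Z_{tr}(X^{an}/S^{an})=\mathbb Z_{tr}(X^{an}_s)$, i.e.\ the stalk at $s$ of $\sing_{\bar{\mathbb D}^*}\mathbb Z_{tr}(X^{an}/S^{an})$ is literally $\sing_{\bar{\mathbb D}^*}\mathbb Z_{tr}(X^{an}_s)$, the absolute complex for the \emph{fibre} $X^{an}_s$, not the family over $U$. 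Then \ref{Bettiprop}(i) applied fibrewise gives the stalkwise quasi-isomorphism with no uniformity problem. You should replace the passage to $X^{an}|_U$ by passage to the fibre $X^{an}_s$, invoking \ref{adRSAn} (and its CW analogue \ref{adRSCW}) to identify the stalks, and then (i) follows directly from the absolute case without any new relative geometry.
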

As in the absolute case, we deduce the point (ii) of this proposition from point (i).
We prove the point (i) of this proposition using the absolute case 
(point (i) of the proposition \ref{Bettiprop}).

\section{Derived categories of motives of algebraic varieties, analytic spaces, and CW complexes}

\subsection{The derived category of mixed motives of algebraic varieties}

For $X\in\Var(k)$, and $\Lambda$ a commutative ring and $p\in\mathbb N$, 
we denote by $\mathcal Z_p(X,\Lambda)$ the free $\Lambda$ module 
generated by the irreducible closed subspaces of $X$ of dimension $p$,
by $\mathcal Z^p(X,\Lambda)=\mathcal Z_{d_X-p}(X,\Lambda)$ 
the free $\Lambda$ module generated by the irreducible closed subspaces of $X$ of codimension $p$.

An algebraic variety $X\in\Var(k)$ is aid to be proper (or complete) if the terminal map $a_X:X\to\Spec k$
is universally closed, that is for all $Y\in\Var(k)$,
the projection $p_Y:X\times_k Y\to Y$ is closed (note that the topology on $X\times_k Y$ is finer than the
product topology on the underlying topological spaces).
For $X\in\Var(k)$, a compactification of $X$ is a complete variety $\bar X\in\Var(k)$ such that $X\subset\bar X$
is an open subset. For $X\in\Var(k)$ quasi-projective, 
the projectivisation $\bar X\in\PVar(k)$ is a compactification of $X$.

For $X\in\Var(k)$, we have the Bloch cycle complex $\mathbb Z(X,*)$, with for $n\in\mathbb N$, 
$\mathcal Z^p(X,n)\subset \mathcal Z^p(X\times\square^n)$ is the subgroup of the $p$ codimentional irreducible closed
subspace of $X\times\square^n$ meeting all faces of $\square^n$ properly.

For $X,Y\in\Var(k)$ and $\Lambda$ a commutative ring, we will use the following notations :
\begin{itemize}
\item if $X$ is smooth connected $\mathcal{Z}^{fs/X}(X\times_k Y,\Lambda)\subset\mathcal{Z}_{d_X}(X\times_k Y,\Lambda)$ 
is the free  $\Lambda$ submodule generated by the irreducible closed subspaces
of $X\times_k Y$ which are finite and surjective over $X$,
\item if $X$ is smooth, $\mathcal{Z}^{fs/X}(X\times Y,\Lambda):=\oplus_i\mathcal{Z}^{fs/X_i}(X_i\times Y,\Lambda)$ 
where $X=\sqcup_i X_i$, with $X_i$ the connected components of $X$,
\item $\mathcal{Z}^{ed(r)/X}(X\times_k Y,\Lambda)\subset\mathcal{Z}^{d_X+r}(X\times_k Y,\Lambda)$ 
the free $\Lambda$ module generated by the irreducible closed subspaces of $X\times_kY$
dominant over $X$, and whose fibers over $X$ are either empty or equidimensional of relative dimension $r$.
\end{itemize}
By definition, for $X$ smooth, 
$\mathcal{Z}^{fs/X}(X\times_k Y,\Lambda)\subset\mathcal{Z}^{ed(0)/X}(X\times_k Y,\Lambda)$
and $\mathcal{Z}^{ed(p-d_X)/\square^n}(\square^n\times_k X,\Lambda)\subset\mathcal Z_p(X,n)\otimes_{\mathbb Z}\Lambda$.

For $Y\in\Var(k)$ irreducible and $j:V\hookrightarrow Y$ an open embedding, 
we denote by 
\begin{equation}\label{j}
j^*:\mathcal Z^p(Y,\Lambda)\hookrightarrow\mathcal Z^p(V,\Lambda) ; Z \mapsto Z\cap V 
\end{equation}

\begin{defi}\cite{VoeSF}
We define $\Cor^{fs}_{\Lambda}(\SmVar(k))$ to be the category whose objects are smooth algebraic varieties over $k$ 
and whose space of morphisms between
$X,Y\in\SmVar(k)$ is the free $\Lambda$ module $\mathcal{Z}^{fs/X}(X\times_k Y,\Lambda)$. 
The composition of morphisms is defined in \cite{VoeSF} 
\end{defi}

We have 
\begin{itemize}
\item the additive embedding of categories $\Tr:\mathbb Z(\SmVar(k))\hookrightarrow\Cor^{fs}_{\mathbb Z}(\SmVar(k))$ 
which gives the corresponding morphism of sites $\Tr:\Cor^{fs}_{\mathbb Z}(\SmVar(k))\to \mathbb Z(\SmVar(k))$.
\item the inclusion functor
$e_{var}:\left\{pt\right\}\hookrightarrow\SmVar(\mathbb C)$,
which gives the corresponding morphism of sites
$e_{var}:\SmVar(\mathbb C)\to\left\{pt\right\}$,
\item the inclusion functor $e^{tr}_{var}:=\Tr\circ e_{var}:\left\{pt\right\}\hookrightarrow\Cor^{fs}_{\mathbb Z}(\SmVar(\mathbb C))$
which gives the corresponding morphism of sites
$e^{tr}_{var}:=\Tr\circ e_{var}:\Cor^{fs}_{\mathbb Z}(\SmVar(\mathbb C))\to\left\{pt\right\}$.
\end{itemize}

We consider the following two big categories :
\begin{itemize}
\item $\PSh(\SmVar(k),C^-(\mathbb Z))=\PSh_{\mathbb Z}(\mathbb Z(\SmVar(k)),C^-(\mathbb Z))$, 
the category of bounded above complexes of presheaves on $\SmVar(k)$, or equivalently additive presheaves on $\mathbb Z(\SmVar(k))$, 
sometimes, we will write for short $P^-=\PSh(\SmVar(\mathbb C),C^-(\mathbb Z))$,
\item $\PSh_{\mathbb Z}(\Cor^{fs}_{\mathbb Z}(\SmVar(k)),C^-(\mathbb Z))$, 
the category of bounded above complexes of additive presheaves on $\Cor^{fs}_{\mathbb Z}(\SmVar(k))$
sometimes, we will write for short $PC^-=\PSh(\Cor^{fs}_{\mathbb Z}(\SmVar(\mathbb C)),C^-(\mathbb Z))$,
\end{itemize}
and the adjonctions :
\begin{itemize}
\item $(\Tr^*,\Tr_*):\PSh(\SmVar(k),C^-(\mathbb Z))\leftrightarrows\PSh_{\mathbb Z}(\Cor^{fs}_{\mathbb Z}(\SmVar(k)),C^-(\mathbb Z))$, 
\item $(e_{var}^*,e_{var*}):\PSh(\SmVar(\mathbb C),C^-(\mathbb Z))\leftrightarrows C^-(\mathbb Z)$,
\item $(e_{var}^{tr*},e_{var*}^{tr}):\PSh(\Cor^{fs}_{\mathbb Z}(\SmVar(k)),C^-(\mathbb Z))\leftrightarrows C^-(\mathbb Z)$,
\end{itemize}
given by $\Tr:\Cor^{fs}_{\mathbb Z}(\SmVar(k))\to \mathbb Z(\SmVar(k))$, $e_{var}:\SmVar(k)\to\left\{pt\right\}$
and $e^{tr}_{var}:\Cor^{fs}_{\mathbb Z}(\SmVar(k))\to\left\{pt\right\}$ respectively.
We denote by $a_{et}:\PSh_{\mathbb Z}(\SmVar(k),\Ab)\to\Sh_{\mathbb Z,et}(\SmVar(k),\Ab)$
the etale sheaftification functor.

For $X\in\SmVar(k)$, we denote by
\begin{equation}
\mathbb Z(X)\in\PSh(\SmVar(k),C^-(\mathbb Z)) \mbox{\;\;}, \mbox{\;\;}  
\mathbb Z_{tr}(X)\in\PSh_{\mathbb Z}(\Cor^{fs}_{\mathbb Z}(\SmVar(k)),C^-(\mathbb Z))
\end{equation}
the presheaves represented by $X$. They are etale sheaves.

For $X\in\Var(k)$, the have the presheaves 
\begin{itemize}
\item $\mathbb Z_{tr}(X)\in\PSh_{\mathbb Z}(\Cor^{fs}_{\mathbb Z}(\SmVar(k)),C^-(\mathbb Z))$, \; 
$Y\in\SmVar(k)\mapsto\mathcal{Z}^{fs/Y}(Y\times_k X,\mathbb Z)$, 
\item $\mathbb Z_{eq}(X,0)\in\PSh_{\mathbb Z}(\Cor^{fs}_{\mathbb Z}(\SmVar(k)),C^-(\mathbb Z))$, \; 
$Y\in\SmVar(k)\mapsto\mathcal{Z}^{eq(0)/Y}(Y\times_k X,\mathbb Z)$,
\end{itemize}
which are etale sheaves.
Of course, if $X\in\PVar(k)$, then $\mathbb Z_{eq}(X,0)=\mathbb Z_{tr}(X)$.

We consider 
\begin{itemize}
\item the usual monoidal strutcure on $\PSh(\SmVar(k)),C^-(\mathbb Z))$ and the associated internal $\Hom$
given by, for $F^{\bullet},G^{\bullet}\in\PSh(\SmVar(k)),C^-(\mathbb Z))$ and $Y\in\SmVar(k)$, 
\begin{equation}
F^{\bullet}\otimes G^{\bullet}(X):X\mapsto F^{\bullet}(X)\otimes_{\mathbb Z} G^{\bullet}(X) \; , \;
\underline{\Hom}(\mathbb Z(Y),F^{\bullet}):X\mapsto F^{\bullet}(X\times_kY),
\end{equation}
\item the unique monoidal strutcure on $\PSh_{\mathbb Z}(\Cor^{fs}_{\mathbb Z}(\SmVar(k)),C^-(\mathbb Z))$
such that,for $X,Y\in\SmVar(k)$, $\mathbb Z_{tr}(X)\otimes\mathbb Z_{tr}(Y):=\mathbb Z_{tr}(X\times_k Y)$
and wich commute with colimites.
It has an internal $\Hom$ which is given, for $X,Y\in\SmVar(k)$ and
$F^{\bullet}\in\PSh_{\mathbb Z}(\Cor^{fs}_{\mathbb Z}(\SmVar(k)),C^-(\mathbb Z))$,
$\underline{\Hom}(\mathbb Z_{tr}(Y),F^{\bullet}):X\mapsto F^{\bullet}(X\times_kY)$
\end{itemize}
Together with these monoidal structure, the functor
\begin{equation*}
\Tr^*:\PSh(\SmVar(k),C^-(\mathbb Z))\to \PSh_{\mathbb Z}(\Cor^{fs}_{\mathbb Z}(\SmVar(k)),C^-(\mathbb Z))
\end{equation*}
is monoidal.

\begin{defi}\cite{AyoubG2}\label{VaretTr}
\begin{itemize}
\item[(i)] We say that a morphism $\phi:G_1^{\bullet}\to G_2^{\bullet}$ in $\PSh(\SmVar(k),C^-(\mathbb Z))$ is an
etale local equivalence if $\phi_*:a_{et}H^k(G_1^{\bullet})\to a_{et}H^k(G_2^{\bullet})$ is an isomorphism for all $k\in\mathbb Z$.
The projective etale topology model structure on $\PSh(\SmVar(k),C^-(\mathbb Z))$ 
is the left Bousfield localization of the projective model structure $\mathcal{M}_P(\PSh_{\mathbb Z}(\SmVar(k),C^-(\mathbb Z)))$
with respect to the etale local equivalence.

\item[(ii)] We say that a morphism 
$\phi:G_1^{\bullet}\to G_2^{\bullet}$ in $\PSh_{\mathbb Z}(\Cor^{fs}_{\mathbb Z}(\SmVar(k)),C^-(\mathbb Z))$ is an
etale local equivalence if and only if its restriction to $\SmVar(\mathbb C)$ 
$\Tr_*\phi:\Tr_*G_1^{\bullet}\to\Tr_*G_2^{\bullet}$ is an etale local equivalence.
The projective etale topology model structure on $\PSh_{\mathbb Z}(\Cor^{fs}_{\mathbb Z}(\SmVar(k)),C^-(\mathbb Z))$ 
is the left Bousfield localization of the projective model structure 
$\mathcal{M}_P(\PSh_{\mathbb Z}(\Cor^{fs}_{\mathbb Z}(\SmVar(k)),C^-(\mathbb Z)))$
with respect to the etale local equivalence.
\end{itemize}
\end{defi}

\begin{defi}\label{Amodstr}
\begin{itemize}
\item[(i)] The projective $(\mathbb A_k^1,et)$ model structure on $\PSh(\SmVar(k),C^-(\mathbb Z))$ is the left Bousfield localization 
of the projective etale topology model structure (c.f. definition \ref{VaretTr}(i)) 
with respect to the class of maps
$\left\{\mathbb Z(X\times\mathbb A_k^1)[n]\to\mathbb Z(X)[n], \; X\in\SmVar(k),n\in\mathbb Z\right\}$. 

\item[(ii)] The projective $(\mathbb A_k^1,et)$ model structure on $\PSh_{\mathbb Z}(\Cor^{fs}_{\mathbb Z}(\SmVar(k)),C^-(\mathbb Z))$ 
is the left Bousfield localization of the projective etale topology model structure (c.f. definition \ref{VaretTr}(ii))
with respect to the class of maps 
$\left\{\mathbb Z_{tr}(X\times\mathbb A_k^1)[n]\to\mathbb Z(X)[n],\; X\in\SmVar(k),n\in\mathbb Z\right\}$. 
\end{itemize}
\end{defi}

\begin{defi}
\begin{itemize}
\item[(i)] We define 
$\DM^-(k,\mathbb Z)_{et}:=\Ho_{\mathbb A_k^1,et}(\PSh_{\mathbb Z}(\Cor^{fs}_{\mathbb Z}(\SmVar(k)),C^-(\mathbb Z)))$, 
to be the derived category of (effective) motives, it is
the homotopy category of the category $\PSh(\Cor^{fs}_{\mathbb Z}(\SmVar(k)),C^-(\mathbb Z))$ 
with respect to the projective $(\mathbb A^1,et)$ model structure (cf. definition \ref{Amodstr}(ii)).
We denote by
\begin{equation*}
D^{tr}(\mathbb A^1,et):\PSh_{\mathbb Z}(\Cor^{fs}_{\mathbb Z}(\SmVar(k),C^-(\mathbb Z)))\to\DM^{-}(k,\mathbb Z)_{et} \, , \,
D^{tr}(\mathbb A^1,et)(F^{\bullet})=F^{\bullet}
\end{equation*}
the canonical localisation functor.

\item[(ii)] By the same way, we denote 
$\DA^-(k,\mathbb Z)_{et}:=\Ho_{\mathbb A_k^1,et}(\PSh(\SmVar(k),C^-(\mathbb Z)))$(cf.\ref{Amodstr}(i)) and
\begin{equation*}
D(\mathbb A^1,et):\PSh_{\mathbb Z}(\Cor^{fs}_{\mathbb Z}(\SmVar(\mathbb C)),C^-(\mathbb Z)))\to\DA^{-}(k,\mathbb Z)_{et} \, , \,
D(\mathbb A^1,et)(F^{\bullet})=F^{\bullet}
\end{equation*}
the canonical localisation functor.
\end{itemize}
\end{defi}

For $X\in\Var(k)$, 
\begin{itemize}
\item the (derived) motive of $X$ is $M(X)=D(\mathbb A^1,et)(\mathbb Z_{tr}(X))\in\DM^-(k)$.
\item the (derived) motive with compact support of $X$ is $M^c(X)=D(\mathbb A^1,et)(\mathbb Z_{eq}(X,0))\in\DM^-(k,\mathbb Z)$. 
\end{itemize}
Of course, if $X\in\PVar(k)$, then $\mathbb Z_{eq}(X,0)=\mathbb Z_{tr}(X)$, so that $M^c(X)=M(X)$.


For $F^{\bullet}\in\PSh(\SmVar(k),\Ab)$ and $X\in\SmVar(k)$, 
we have the complex $F(X\times\square_k^*)$ associated to the cubical object 
$F(X\times\square_k^*)$ in the category of abelian groups.

\begin{itemize}
\item If $F^{\bullet}\in\PSh(\SmVar(k),C^-(\mathbb Z))$ , 
\begin{equation}
\underline{C}_*F^{\bullet}:=\Tot(\underline\Hom(\mathbb Z(\square^*_k),F^{\bullet}))\in\PSh(\SmVar(k),C^-(\mathbb Z)) 
\end{equation}
is the total complex of presheaves associated to the bicomplex of presheaves $X\mapsto F^{\bullet}(\square^*_k\times_kX)$,
and $C_*F^{\bullet}:=e_{var*}\underline{C}_*F^{\bullet}=F^{\bullet}(\square^*_k)\in C^-(\mathbb Z)$.
We denote by $S(F^{\bullet}):F^{\bullet}\to\underline{C}_*F^{\bullet}$, 
\begin{equation}\label{SFVar}
S(F^{\bullet}):\xymatrix{
\cdots\ar[r] & 0\ar[r]\ar[d] & 0\ar[r]\ar[d] & F^{\bullet}\ar[d]^{I}\ar[r] & 0\ar[r]\ar[d] & \cdots  \\
\cdots\ar[r] & \underline{C}_2F^{\bullet}\ar[r] & \underline{C}_1F^{\bullet}\ar[r] & F^{\bullet}\ar[r] & 0\ar[r] & \cdots} 
\end{equation}
the inclusion morphism of $\PSh(\SmVar(k),C^-(\mathbb Z))$ :
For $f:F_1^{\bullet}\to F_2^{\bullet}$ a morphism $\PSh(\SmVar(k),C^-(\mathbb Z))$, we denote by 
$S(f):\underline{C}_*F_1^{\bullet}\to\underline{C}_*F_2^{\bullet}$,
the morphism of $\PSh(\SmVar(k),C^-(\mathbb Z))$ given by for $X\in\SmVar(k)$,
\begin{equation}\label{SfVar}
S(f)(X):\xymatrix{
\cdots\ar[r] & F_1(\square^2\times X)\ar[r]\ar[d]^{f(\square^2\times X)} &
F_1(\square^1\times X)\ar[r]\ar[d]^{f(\square^1\times X)} & F_1^{\bullet}(X)\ar[d]^{f(X)}\ar[r] & 0\ar[r]\ar[d] & \cdots \\
\cdots\ar[r] & F_2(\square^2\times X)\ar[r] & F_2(\square^1\times X)\ar[r] & F_2^{\bullet}(X)\ar[r] & 0\ar[r] & \cdots.}  
\end{equation}

\item If $F^{\bullet}\in\PSh_{\mathbb Z}(\Cor^{fs}_{\mathbb Z}(\SmVar(k)),C^-(\mathbb Z))$, 
\begin{equation}
\underline{C}_*F^{\bullet}:=\underline\Hom(\mathbb Z_{tr}(\square^*_k),F^{\bullet})
\in\PSh_{\mathbb Z}(\Cor^{fs}_{\mathbb Z}(\SmVar(k)),C^-(\mathbb Z)), 
\end{equation}
is the complex of presheaves 
associated to the bicomplex of presheaves $X\mapsto F^{\bullet}(\square^*_k\times_kX)$,
and $C_*F^{\bullet}:=e^{tr}_{var*}\underline{C}_*F^{\bullet}=F^{\bullet}(\square^*_k)\in C^-(\mathbb Z)$.
We have the inclusion morphism (\ref{SFVar})
\begin{equation*}
S(\Tr_*F^{\bullet}):\Tr_*F^{\bullet}\to\underline{C}_*\Tr_*F^{\bullet}=\Tr_*\underline{C}_*F^{\bullet}
\end{equation*}
which is a morphism in $\PSh(\Cor^{fs}_{\mathbb Z}(\SmVar(k)),C^-(\mathbb Z))$ 
denoted the same way $S(F^{\bullet}):F^{\bullet}\to\underline{C}_*F^{\bullet}$.  
For $f:F_1^{\bullet}\to F_2^{\bullet}$ a morphism $PC^-$, we 
have the morphism (\ref{SfVar})
\begin{equation*} 
S(\Tr_*f):\Tr_*\underline{C}_*F_1^{\bullet}=\underline{C}_*\Tr_*F_1^{\bullet}\to
\underline{C}_*\Tr_*F_2^{\bullet}=\Tr_*\underline{C}_*F_2^{\bullet}
\end{equation*}
which is a morphism in $PC^-$ denoted the same way
$S(f):\underline{C}_*F_1^{\bullet}\to\underline{C}_*F_2^{\bullet}$.
\end{itemize}

For $F^{\bullet}\in\PSh(\Cor^{fs}_{\mathbb Z}(\SmVar(k)),C^-(\mathbb Z))$, we have by definition 
$\Tr_*\underline{C}_*F^{\bullet}=\underline{C}_*\Tr_*F^{\bullet}$ and $\Tr_*S(F^{\bullet})=S(\Tr_*F^{\bullet})$.

We now make the following definition

\begin{defi}\label{RMVar}
Let $X\in\Var(k)$ and $D\subset X$ a subvariety. Denote by  $l:D\hookrightarrow X$ the locally closed embbeding.
We define
\begin{itemize}
\item $\mathbb Z(X,D)=\coker(\mathbb Z(l))\in\PSh_{\mathbb Z}(\SmVar(k),C^-(\mathbb Z))$
to be the cokernel of the injective morphism $\mathbb Z(l):\mathbb Z(D)\hookrightarrow\mathbb Z(X)$.
By definition, we have the following exact sequence
\begin{equation}\label{XD}
0\to\mathbb Z(D)\xrightarrow{\mathbb Z(l)}\mathbb Z(X)\xrightarrow{c(X,D)}\mathbb Z(X,D)\to 0
\end{equation}
in $\PSh_{\mathbb Z}(\SmVar(k),C^-(\mathbb Z))$.

\item $\mathbb Z_{tr}(X,D)=\coker(\mathbb Z_{tr}(l))\in\PSh_{\mathbb Z}(\Cor^{fs}_{\mathbb Z}(\SmVar(k)),C^-(\mathbb Z))$
to be the cokernel of the injective morphism $\mathbb Z_{tr}(l):\mathbb Z_{tr}(D)\hookrightarrow\mathbb Z_{tr}(X)$.
By definition, we have the following exact sequence
\begin{equation}\label{XDtr}
0\to\mathbb Z_{tr}(D)\xrightarrow{\mathbb Z_{tr}(l)}\mathbb Z_{tr}(X)\xrightarrow{c(X,D)}\mathbb Z_{tr}(X,D)\to 0
\end{equation}
in $\PSh_{\mathbb Z}(\Cor^{fs}_{\mathbb Z}(\SmVar(k)),C^-(\mathbb Z))$.
By the exact sequences, we have $\Tr^*\mathbb Z(X,D)=\mathbb Z_{tr}(X,D)$
\end{itemize}
\end{defi}

In particular, we get the following exact sequence in $\PSh_{\mathbb Z}(\Cor^{fs}_{\mathbb Z}(\SmVar(k)),C^-(\mathbb Z))$
\begin{equation*}
0\to\underline{C}_*\mathbb Z_{tr}(D)\xrightarrow{S(\mathbb Z_{tr}(l))}\underline{C}_*\mathbb Z_{tr}(X)
\xrightarrow{S(c(X,D))}\underline{C}_*\mathbb Z_{tr}(X,D)\to 0
\end{equation*}
We define 
\begin{equation*}
M(Y,E)=D(\mathbb A^1,et)(\mathbb Z_{tr}(Y,E))\in\CwDM^-(\mathbb Z)
\end{equation*}
to be the relative motive of the pair $(Y,E)$.

We now look at the behavior of the functors mentionned above with respect to the $(\mathbb A^1,et)$ model structure

\begin{prop}\cite{AyoubG2}
\begin{itemize}
\item[(i)] $(\Tr^*,\Tr_*):\PSh(\SmVar(k),C^-(\mathbb Z))\leftrightarrows\PSh_{\mathbb Z}(\Cor^{fs}_{\mathbb Z}(\SmVar(k)),C^-(\mathbb Z))$
is a Quillen adjonction for the etale topology model structures (c.f. definition \ref{VaretTr} (i) and (ii) respectively)
and a Quillen adjonction for the $(\mathbb A^1,et)$ model structures (c.f. definition \ref{Amodstr} (i) and (ii) respectively). 
\item[(ii)] $(e_{var}^*,e_{var*}):\PSh(\SmVar(\mathbb C),C^-(\mathbb Z))\leftrightarrows C^-(\mathbb Z)$
is a Quillen adjonction for the etale topology model structure (c.f. definition \ref{VaretTr} (i))
and a Quillen adjonction for the $(\mathbb A^1,et)$ model structure (c.f. definition \ref{Amodstr} (i)). 
\item[(iii)] $(e_{var}^{tr*},e_{var*}^{tr}):\PSh(\Cor^{fs}_{\mathbb Z}(\SmVar(k)),C^-(\mathbb Z))\leftrightarrows C^-(\mathbb Z)$
is a Quillen adjonction for the etale topology model structure (c.f. definition \ref{VaretTr} (ii))
and a Quillen adjonction for the $(\mathbb A^1,et)$ model structure (c.f. definition \ref{Amodstr} (ii)). 
\end{itemize}
\end{prop}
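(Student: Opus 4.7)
The three parts all assert that a pair $(F^*,F_*)$ is a Quillen adjunction simultaneously for the etale topology model structure and for its left Bousfield localization giving the $(\mathbb A^1,et)$ model structure. The plan is to treat the three cases uniformly in three steps: first verify Quillen-ness for the underlying projective model structures, then upgrade to the etale-local structures, then upgrade further to the $(\mathbb A^1,et)$ structures. At each localization step I will invoke the standard Hirschhorn/Bousfield criterion: given a Quillen adjunction for a model structure $\mathcal{M}$, the adjunction remains Quillen for a left Bousfield localization $\mathcal{M}/S$ on the source provided $F^*$ sends a generating set for $S$ to weak equivalences in the corresponding localization on the target.

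\textbf{Projective step.} In each case the right adjoint $F_*$ ($\Tr_*$, $e_{var*}$, or $e^{tr}_{var*}$) is precomposition with a functor between small categories. Such a precomposition preserves termwise quasi-isomorphisms and termwise (projective) fibrations, so $(F^*,F_*)$ is automatically a Quillen adjunction for the projective model structures $\mathcal{M}_P$ on source and target.

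\textbf{Etale step.} For (i) there is essentially nothing to check: by Definition \ref{VaretTr}(ii), a morphism $\phi$ in $\PSh_{\mathbb Z}(\Cor^{fs}_{\mathbb Z}(\SmVar(k)),C^-(\mathbb Z))$ is an etale local equivalence precisely when $\Tr_*\phi$ is one in $\PSh(\SmVar(k),C^-(\mathbb Z))$. Thus $\Tr_*$ preserves etale local equivalences tautologically, which together with the projective step yields the Quillen property for the etale model structures. For (ii) and (iii), the left adjoints $e_{var}^*$ and $e^{tr*}_{var}$ send a complex $A^{\bullet}\in C^-(\mathbb Z)$ to the constant presheaf with value $A^{\bullet}$; a quasi-isomorphism in $C^-(\mathbb Z)$ therefore goes to a termwise quasi-isomorphism, which is in particular an etale local equivalence. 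Since the model structure on $C^-(\mathbb Z)$ receives no localization (there are no nontrivial etale covers of the point), this shows $e_{var}^*$ and $e^{tr*}_{var}$ are left Quillen for the etale model structures.

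\textbf{$(\mathbb A^1,et)$ step.} The generating set for the localization is $\{\mathbb Z(X\times\mathbb A^1_k)[n]\to\mathbb Z(X)[n]\}$ on the presheaf side and $\{\mathbb Z_{tr}(X\times\mathbb A^1_k)[n]\to\mathbb Z_{tr}(X)[n]\}$ on the transfers side. For (i), $\Tr^*$ is monoidal and satisfies $\Tr^*\mathbb Z(X)=\mathbb Z_{tr}(X)$, so it sends the chosen generators to the chosen generators, which are by definition $(\mathbb A^1,et)$-equivalences in the target. For (ii) and (iii) there is again nothing to add beyond the etale step: the $(\mathbb A^1,et)$ structure on $C^-(\mathbb Z)$ coincides with the projective (equivalently the etale) structure, so preservation of quasi-isomorphisms by $e_{var}^*$ and $e^{tr*}_{var}$ suffices. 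Combining the three steps yields the proposition. The only conceptual point—and the closest thing to an obstacle—is to recognize that the projective model structure behaves well under precomposition and that Bousfield localization only requires the compatibility of the left adjoint with the chosen set of generating maps; once this is understood, the three cases reduce to transparent identities ($\Tr^*\mathbb Z(X)=\mathbb Z_{tr}(X)$, and $e_{var}^*A^{\bullet}$ is the constant presheaf).
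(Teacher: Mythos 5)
The overall three-step structure (projective, etale, then $(\mathbb A^1,et)$) is a reasonable plan, and the projective step, the $(\mathbb A^1,et)$ step for (i), and both steps for (ii)--(iii) are handled correctly. The genuine gap is the etale step for part (i), where you claim there is ``essentially nothing to check'' because $\Tr_*$ preserves etale local equivalences by definition. Preservation of weak equivalences alone does not give a Quillen adjunction. You also need $\Tr_*$ to preserve fibrations of the etale-localized model structure, which by the standard fibrant-object criterion reduces to $\Tr_*$ preserving etale-local objects; dually, via the left adjoint, you need $\Tr^*$ to send etale local equivalences between cofibrant objects to etale local equivalences. Unwinding the adjunction shows that these two conditions are essentially the same, and \emph{neither} follows from Definition \ref{VaretTr}(ii): the definition only lets $\Tr_*$ detect and preserve the etale local equivalences, but to show $\Tr_*K$ is etale local when $K$ is you end up needing to know $\Tr^*\psi$ is an etale local equivalence for every etale local equivalence $\psi$, which is precisely the nontrivial compatibility of transfers with the etale topology established by Voevodsky and Ayoub.

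This is not a cosmetic point. The paper devotes dedicated lemmas to exactly this ingredient in its analogous settings --- Lemma \ref{DTr} for analytic spaces and Lemma \ref{CWTr} for CW complexes both establish that $\Tr_*$ detects and preserves the relevant local objects and local equivalences, and their proofs have genuine content --- and it cites \cite{AyoubG2} for the algebraic version of the present proposition for the same reason. Your $(\mathbb A^1,et)$ step for (i), by contrast, is exactly right: since $\Tr^*\mathbb Z(X)=\mathbb Z_{tr}(X)$, the functor $\Tr^*$ visibly sends the generating set $\mathbb Z(X\times\mathbb A^1)[n]\to\mathbb Z(X)[n]$ to the generating set $\mathbb Z_{tr}(X\times\mathbb A^1)[n]\to\mathbb Z_{tr}(X)[n]$ on the transfers side, and the left-adjoint Bousfield criterion applies. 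If you add the missing lemma that $\Tr_*$ preserves (and detects) etale-local objects --- which is what the paper's Lemma \ref{DTr}/\ref{CWTr} analogues supply --- the proof of part (i) is complete, and parts (ii)--(iii) are fine as you have them, since there the localization occurs only on the target of the left adjoint and any Quillen adjunction automatically descends.
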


\begin{prop}\cite{AyoubG2}\cite{VoeSF}
\begin{itemize}
\item[(i)] The functor 
$\Tr_*:\PSh_{\mathbb Z}(\Cor^{fs}_{\mathbb Z}(\SmVar(k)),C^-(\mathbb Z))\to\PSh(\SmVar(k),C^-(\mathbb Z))$ 
derive trivially.
\item[(ii)] For $K^{\bullet}\in C^-(\mathbb Z)$, $e_{var}^*K^{\bullet}$ is $\mathbb A_k^1$ local.
\item[(iii)] For $K^{\bullet}\in C^-(\mathbb Z)$, $e^{tr*}_{var}K^{\bullet}$ is $\mathbb A_k^1$ local.
\end{itemize}
\end{prop}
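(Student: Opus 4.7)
The plan is to treat the three parts separately, all resting on the key observation that $\Tr_*$ is simply restriction along the additive embedding $\Tr : \mathbb Z(\SmVar(k)) \hookrightarrow \Cor^{fs}_{\mathbb Z}(\SmVar(k))$: for any additive presheaf $F$ on $\Cor^{fs}_{\mathbb Z}(\SmVar(k))$ and any $X \in \SmVar(k)$, one has $(\Tr_* F)(X) = F(X)$. Equivalently, using the $\Tr^* \dashv \Tr_*$ adjunction together with $\Tr^*\mathbb Z(X) = \mathbb Z_{tr}(X)$, we have $(\Tr_* F)(X) = \Hom(\mathbb Z(X),\Tr_*F) = \Hom(\mathbb Z_{tr}(X),F) = F(X)$.

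For (i), I would prove that $\Tr_*$ preserves weak equivalences for the $(\mathbb A^1, et)$ model structure, which is equivalent to trivial deriving. Termwise quasi-isomorphisms are preserved since $\Tr_*$ is evaluation on objects. Etale local equivalences are preserved literally by definition \ref{VaretTr}(ii), which \emph{defines} them on the transfers side as those morphisms whose image under $\Tr_*$ is an etale local equivalence. For the further $\mathbb A^1$-localization, the identity $\Tr^* \mathbb Z(X \times \mathbb A^1) = \mathbb Z_{tr}(X \times \mathbb A^1)$ combined with adjointness gives that $F$ is $\mathbb A^1$-local iff $\Tr_* F$ is $\mathbb A^1$-local. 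Hence $\Tr_*$ sends $(\mathbb A^1, et)$-fibrant objects to $(\mathbb A^1, et)$-fibrant objects, and by the standard Bousfield criterion (a map is a weak equivalence iff it induces bijections on homotopy classes into fibrants), $\Tr_*$ preserves all $(\mathbb A^1, et)$-equivalences.

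For (ii) and (iii), the statement is essentially immediate. The inverse image $e_{var}^*$ (respectively $e^{tr*}_{var}$) along the morphism of sites collapsing $\SmVar(\mathbb C)$ (respectively $\Cor^{fs}_{\mathbb Z}(\SmVar(\mathbb C))$) to a point is the constant-presheaf functor, so $(e_{var}^* K^{\bullet})(X) = K^{\bullet}$ for every object $X$. The structure map
\[
(e_{var}^* K^{\bullet})(X) \longrightarrow (e_{var}^* K^{\bullet})(X \times \mathbb A^1)
\]
is therefore the identity of $K^{\bullet}$, trivially a quasi-isomorphism, which is precisely $\mathbb A^1$-locality. The argument for (iii) is identical in the additive setting, using that any $Y\in\Cor^{fs}_{\mathbb Z}(\SmVar(\mathbb C))$ is sent to $K^{\bullet}$ independently of $Y$.

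The main obstacle is the passage from stability of fibrant objects to preservation of \emph{all} weak equivalences in the Bousfield-localized structure of (i). The cleanest route is to invoke the Quillen adjunction $\Tr^* \dashv \Tr_*$ for the unlocalized projective model structures together with the stability of local objects under $\Tr_*$ established above: by a standard Bousfield--Hirschhorn argument this suffices to conclude that $\Tr_*$ is a right Quillen functor for the localized $(\mathbb A^1, et)$-structures and, combined with preservation of termwise quasi-isomorphisms, that it derives trivially.
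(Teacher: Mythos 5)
There are two genuine gaps.

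\textbf{Parts (ii) and (iii).} You argue that since $e_{var}^{*}K^{\bullet}$ is a constant presheaf, the map $(e_{var}^{*}K^{\bullet})(X)\to(e_{var}^{*}K^{\bullet})(X\times\mathbb{A}^{1})$ is the identity and hence trivially a quasi-isomorphism, and that this is precisely $\mathbb{A}^{1}$-locality. This conflates a presheaf-level condition with the actual definition. In the localized projective model structure, an object $F$ is $\mathbb{A}^{1}$-local iff for every étale-fibrant replacement $F\to L^{\bullet}$ and every $X$, the map $H^{n}L^{\bullet}(X)\to H^{n}L^{\bullet}(X\times\mathbb{A}^{1})$ is an isomorphism; equivalently the relevant maps of $\mathbb{H}_{et}^{*}(-,K^{\bullet})$ must be isomorphisms. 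The constant presheaf $e_{var}^{*}K^{\bullet}$ is generally not étale-fibrant, so the condition cannot be read off its raw values. Indeed the paper's own proof of the analogous CW-case statement (proposition \ref{cle}(ii)) explicitly passes to a usu-fibrant replacement $L^{\bullet}$ and then has to prove that $\underline{L}(p):L^{\bullet}\to\underline{\Hom}(\mathbb{Z}(\mathbb{I}^{1}),L^{\bullet})$ is a local equivalence, which requires a real argument (there it defers to \cite{AyoubB} Proposition~1.6 \'etape~B). In the algebraic setting the content is the $\mathbb{A}^{1}$-homotopy invariance of étale hypercohomology with coefficients in $K^{\bullet}$; this does not follow from the constancy of the presheaf and needs to be proved or cited.

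\textbf{Part (i).} The Bousfield-criterion step is pointed in the wrong direction. The criterion says that a morphism $\psi$ in $P^{-}$ is an $(\mathbb{A}^{1},et)$-equivalence iff, for every $\mathbb{A}^{1}$-local étale-fibrant $L'\in P^{-}$, the induced map on homotopy classes into $L'$ is a bijection. Applying this to $\psi=\Tr_{*}\phi$ puts $\Tr_{*}G$ and $\Tr_{*}F$ in the \emph{domain} slot of $\Hom(-,L')$, and the adjunction $\Tr^{*}\dashv\Tr_{*}$ gives no way to move that to a $\Hom$ on the $PC^{-}$ side; the adjunction produces $\Hom(\Tr^{*}(-),L)=\Hom(-,\Tr_{*}L)$, with $\Tr_{*}$ on the right. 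What the argument actually proves is that $\Tr^{*}$ (the left adjoint) preserves $(\mathbb{A}^{1},et)$-equivalences, which is the wrong direction. What is missing is the key input that $\Tr_{*}$ takes the generating localizing maps $\mathbb{Z}_{tr}(X\times\mathbb{A}^{1})\to\mathbb{Z}_{tr}(X)$ to $(\mathbb{A}^{1},et)$-equivalences of presheaves without transfers. Note that $\Tr_{*}\mathbb{Z}_{tr}(X)$ is \emph{not} representable (it is the presheaf $Y\mapsto\mathcal{Z}^{fs/Y}(Y\times_{k}X,\mathbb{Z})$), so this is not automatic. The paper handles the CW analogue (lemma \ref{CWTr}(ii), ``only if'' part) by showing $\Tr_{*}\mathbb{Z}_{tr}(p_{Y})$ is an $\mathbb{I}^{1}$-homotopy equivalence via lemma \ref{I1hptlem}; in the algebraic case the analogous statement is ultimately supported by Voevodsky's theory of $\mathbb{A}^{1}$-invariant presheaves with transfers (hence the citation to \cite{VoeSF} and \cite{AyoubG2}), and cannot be dispatched by a purely formal model-categorical manipulation.
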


\begin{thm}\cite{VoeSF}\label{VarsingA}
\begin{itemize}
\item[(i)] For $F^{\bullet}\in\PSh(\SmVar(k),C^-(\mathbb Z))$,
$\underline{C}_*F^{\bullet}\in\PSh(\SmVar(k),C^-(\mathbb Z))$ is $\mathbb A_k^1$ local and 
the inclusion morphism $S(F^{\bullet}):F^{\bullet}\to\underline{C}_*F^{\bullet}$ is an $(\mathbb A_k^1,et)$ equivalence. 
\item[(ii)] For $F^{\bullet}\in\PSh_{\mathbb Z}(\Cor^{fs}_{\mathbb Z}(\SmVar(k)),C^-(\mathbb Z))$, 
$\underline{C}_*F^{\bullet}\in\PSh_{\mathbb Z}(\Cor^{fs}_{\mathbb Z}(\SmVar(k)),C^-(\mathbb Z))$ 
is $\mathbb A_k^1$ local and the inclusion morphism 
$S(F^{\bullet}):F^{\bullet}\to\underline{C}_*F^{\bullet}$ is an $(\mathbb A_k^1,et)$ equivalence. 
\end{itemize}
\end{thm}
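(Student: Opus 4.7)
The plan is to adapt Voevodsky's classical argument about the Suslin complex to the cubical formalism used here. For part (i), I would split the statement into two independent claims and treat them in the natural order. First, to show that $\underline{C}_*F^{\bullet}$ is $\mathbb A_k^1$-local, I would exhibit for each $X \in \SmVar(k)$ a chain homotopy showing that the pullback along $\pi_X : X \times \mathbb A_k^1 \to X$ induces a quasi-isomorphism $\underline{C}_*F^{\bullet}(X) \to \underline{C}_*F^{\bullet}(X\times\mathbb A_k^1)$. The key input is the $\mathbb A^1$-homotopy $h : \mathbb A^1 \times \square^1 \to \square^1$, $(t,x) \mapsto tx$, satisfying $h(0,\cdot) = 0$ and $h(1,\cdot) = \mathrm{id}$. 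This $h$, together with the face maps of the cubical object $\square^*$, produces a cubical cylinder object and hence a homotopy operator on $\underline{C}_*F^{\bullet}(X\times\mathbb A_k^1)$ relative to $\underline{C}_*F^{\bullet}(X)$; the usual Leibniz-type verification then shows that the induced map on homology is inverse to restriction along the zero section, proving $\mathbb A^1$-invariance.

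Second, to show that $S(F^{\bullet}) : F^{\bullet} \to \underline{C}_*F^{\bullet}$ is an $(\mathbb A_k^1,et)$-equivalence, I would argue abstractly: for any $G^{\bullet} \in \PSh(\SmVar(k),C^-(\mathbb Z))$ which is $\mathbb A_k^1$-local and etale-fibrant, the induced map
\begin{equation*}
\Hom_{D(\mathbb A^1,et)}(\underline{C}_*F^{\bullet},G^{\bullet}) \longrightarrow \Hom_{D(\mathbb A^1,et)}(F^{\bullet},G^{\bullet})
\end{equation*}
is an isomorphism, with inverse obtained by sending $\varphi : F^{\bullet} \to G^{\bullet}$ to $\underline{C}_*\varphi$ composed with the inverse of $S(G^{\bullet})$ (which is a quasi-isomorphism by $\mathbb A^1$-locality of $G^{\bullet}$). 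Combined with the $\mathbb A^1$-locality of $\underline{C}_*F^{\bullet}$ established in the first step, this identifies $\underline{C}_*F^{\bullet}$ with the $(\mathbb A_k^1,et)$-fibrant replacement of $F^{\bullet}$ up to the etale-local model structure, which we may apply separately because $\underline{C}_*$ commutes with etale sheafification on the relevant representables.

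For part (ii), the whole argument lifts to the setting with transfers because every step uses only morphisms of schemes and flat pullbacks: the multiplication $\mathbb A^1 \times \mathbb A^1 \to \mathbb A^1$ and all face/degeneracy maps of $\square^*$ define finite surjective correspondences (via their graphs), and by the compatibility $\Tr_* \underline{C}_* F^{\bullet} = \underline{C}_* \Tr_* F^{\bullet}$ together with $\Tr_* S(F^{\bullet}) = S(\Tr_* F^{\bullet})$, an $(\mathbb A_k^1,et)$-equivalence downstairs is detected upstairs by the definition of the etale local model structure on $\PSh_{\mathbb Z}(\Cor^{fs}_{\mathbb Z}(\SmVar(k)),C^-(\mathbb Z))$ in Definition \ref{VaretTr}(ii). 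Thus (ii) reduces formally to (i), applied pointwise on representables with transfers.

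The main obstacle will be the careful bookkeeping in the first step: the contracting homotopy for $\square^1$ must be promoted to a cubical contraction of the whole cubical object $\square^*$, compatibly with the faces $\square^{n-1} \hookrightarrow \square^n$ that define the differential of $\underline{C}_*F^{\bullet}$. Concretely, one must define an operator $h_n$ on $F^{\bullet}(X \times \square^n \times \mathbb A^1)$ whose signed alternating sum of face restrictions realizes the identity minus the projection/zero-section composite, and verify $d \circ h + h \circ d = \mathrm{id} - \pi_X^* \circ 0^*$. Once this verification is done, the passage from $\mathbb A^1$-locality to $S(F^{\bullet})$ being an $(\mathbb A^1,et)$-equivalence, and then to the version with transfers, is formal.
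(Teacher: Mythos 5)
The paper does not prove Theorem~\ref{VarsingA} at all --- it is cited from \cite{VoeSF} without proof --- so there is no ``paper's own proof'' to compare against. What the paper does prove are the analytic and CW analogues (Theorems~\ref{AnS} and~\ref{CWsingI}), and it is worth noting that their proofs of $\mathbb{D}^1$- resp.\ $\mathbb{I}^1$-locality go through an argument that is unavailable here: in the CW case one shows (Proposition~\ref{actadjCW}(i), using that CW complexes are locally contractible) that $\underline{\sing}_{\mathbb{I}^*}F^{\bullet}$ is usu-locally equivalent to a constant presheaf $e_{cw}^*K^{\bullet}$, and constant presheaves are $\mathbb{I}^1$-local by Proposition~\ref{cle}(ii). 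Schemes are not $\mathbb{A}^1$-locally contractible in the étale topology, so your chain-homotopy route is the only available strategy for the algebraic statement, and it is indeed the route taken in~\cite{VoeSF}.

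Your sketch is essentially Voevodsky's, and the pieces you have are right: the cubical contraction for homotopy invariance, the formal argument that $S(F^{\bullet})$ then realizes $\underline{C}_*F^{\bullet}$ as the $(\mathbb{A}^1,et)$-fibrant replacement, and the observation that the homotopy operators are given by actual morphisms of schemes and so lift to $\Cor^{fs}_{\mathbb Z}$. That said, there is one genuine gap and one smaller confusion that you should close.

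\textbf{The genuine gap.} Your chain homotopy shows that the presheaf $\underline{C}_*F^{\bullet}$ is \emph{homotopy invariant}: for every $X$, the map $\underline{C}_*F^{\bullet}(X) \to \underline{C}_*F^{\bullet}(X\times\mathbb A^1)$ is a quasi-isomorphism of complexes of abelian groups. This is strictly weaker than $\mathbb{A}^1$-locality in the Bousfield-localized model structure, which asks that the map $R\Gamma_{et}(X,\underline{C}_*F^{\bullet}) \to R\Gamma_{et}(X\times\mathbb A^1,\underline{C}_*F^{\bullet})$ be a quasi-isomorphism, i.e.\ that étale \emph{hyper}cohomology, not presheaf cohomology, is homotopy invariant. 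Passing from one to the other amounts to saying that the étale-fibrant replacement of a homotopy invariant presheaf is still homotopy invariant; for presheaves with transfers over a perfect field this is precisely Voevodsky's main theorem (in~\cite{VoeSF}, the chain of results culminating in strict homotopy invariance), and it is not a formal Bousfield-localization fact. So for part~(ii) you must appeal to this theorem explicitly, and for part~(i) (without transfers) you should note that~\cite{VoeSF} does \emph{not} directly support the claim and a different argument is needed; the reference that supports the without-transfers statement is Ayoub (\cite{AyoubG1}), not~\cite{VoeSF}.

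\textbf{The confusion in (ii).} You invoke ``an $(\mathbb A_k^1,et)$-equivalence downstairs is detected upstairs by the definition of the étale local model structure.'' Definition~\ref{VaretTr}(ii) only declares that $\Tr_*$ detects \emph{étale local} equivalences; that $\Tr_*$ detects $(\mathbb A^1,et)$-equivalences is a separate lemma requiring proof (the algebraic analogue of Lemma~\ref{DTr}(ii) or Lemma~\ref{CWTr}(ii), which the paper states and proves in the analytic and CW cases but does not restate in the algebraic case). Your first sentence of (ii) --- lift the homotopy operators to correspondences and re-run the argument directly --- is the cleaner path; if you also want the reduction-via-$\Tr_*$ route, you should say what you are using and prove the detection lemma rather than attribute it to the definition.

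Finally, the argument you give that $S(F^{\bullet})$ is an $(\mathbb A^1,et)$-equivalence is essentially correct once the $\mathbb A^1$-locality of $\underline{C}_*G^{\bullet}$ is known, but to avoid a circularity (you invoke that $S(G^{\bullet})$ is a quasi-isomorphism while proving that $S(F^{\bullet})$ is an equivalence) the cleaner route is the one the paper uses for the CW case: filter $\underline{C}_*F^{\bullet}$ by the $\underline{\Hom}(\mathbb Z(\square^n),F^{\bullet})$ as in diagram~(\ref{singH}), reduce to showing $\underline{F}^{\bullet}(p_n):F^{\bullet}\to\underline{\Hom}(\mathbb Z(\square^n),F^{\bullet})$ is an $(\mathbb A^1,et)$-equivalence, and exhibit the $\mathbb A^1$-homotopy equivalence directly (the algebraic analogue of $\theta_{1,n}$).
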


\begin{thm}\cite{AyoubG1}
The adjonction
$(\Tr^*,\Tr_*):\PSh(\SmVar(k),C^-(\mathbb Z))\leftrightarrows\PSh_{\mathbb Z}(\Cor^{fs}_{\mathbb Z}(\SmVar(k)),C^-(\mathbb Z))$
is a Quillen equivalence for the $(\mathbb A^1,et)$ model structures. 
That is, the derived functor 
\begin{equation}
L\Tr^*:\DA^-(k,\mathbb Z)_{et}\xrightarrow{\sim}\DM^-(k,\mathbb Z)_{et}
\end{equation}
is an isomorphism and 
$\Tr_*:\DM^-(k,\mathbb Z)_{et}\xrightarrow{\sim}\DA^-(k,\mathbb Z)_{et}$ is it inverse. 
\end{thm}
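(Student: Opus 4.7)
The strategy is the standard one for checking a Quillen adjunction is a Quillen equivalence: verify (a) that $\Tr_*$ reflects $(\mathbb A^1, et)$-equivalences between fibrant objects and (b) that the derived unit $F^\bullet \to \Tr_* \Tr^* F^\bullet$ is an $(\mathbb A^1, et)$-equivalence for a set of cofibrant generators.

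For (a), Definition \ref{VaretTr}(ii) defines the etale-local equivalences on the transfer side as exactly those maps whose image under $\Tr_*$ is an etale-local equivalence, so $\Tr_*$ reflects etale-local equivalences. Since the $(\mathbb A^1, et)$-model structure is the left Bousfield localization at the $\mathbb A^1$-maps, fibrant objects are exactly the $(\mathbb A^1, et)$-local ones, and between such local objects etale-local equivalences and $(\mathbb A^1, et)$-equivalences coincide. Hence $\Tr_*$ reflects $(\mathbb A^1, et)$-equivalences between fibrant objects.

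For (b), since $\Tr_*$ derives trivially by the preceding proposition, the derived unit agrees with the ordinary unit. A standard reduction using the cofibrant generators $\{\mathbb Z(X)[n]\}$ of the projective model structure reduces the problem to showing that for each $X \in \SmVar(k)$, the unit
\[
\eta_X : \mathbb Z(X) \longrightarrow \Tr_*\mathbb Z_{tr}(X)
\]
is an $(\mathbb A^1, et)$-equivalence. Applying the $\mathbb A^1$-localization $\underline{C}_*$, which commutes with $\Tr_*$ and produces $\mathbb A^1$-local objects by Theorem \ref{VarsingA}, the two-out-of-three property reduces this to showing that
\[
\underline{C}_* \mathbb Z(X) \longrightarrow \Tr_*\underline{C}_*\mathbb Z_{tr}(X)
\]
is an etale-local equivalence of complexes of presheaves on $\SmVar(k)$.

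The main obstacle is this final etale-local comparison: it is not a formal consequence of the setup but asserts that enlarging morphisms from set-maps to finite correspondences does not change the $\mathbb A^1$-homotopy type after etale sheafification. This is precisely the Suslin--Voevodsky rigidity statement, proved on stalks via a henselization and trace argument at each geometric point of the etale site. Since this rigidity input is the sole non-formal ingredient of Ayoub's theorem, I would invoke it from \cite{AyoubG1} rather than reprove it here.
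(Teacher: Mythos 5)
The paper itself gives no proof of this theorem --- it is stated with the citation \cite{AyoubG1} and no proof block, so there is literally no internal argument to compare against. Your sketch follows the strategy the paper uses for the parallel results that it \emph{does} prove (the $\CW$ case, Theorem \ref{CWMot}, via Proposition \ref{CWadTr}) and for the analytic case (Theorem \ref{AnTr}, via Lemma \ref{DTr}): the adjunction is already Quillen, $\Tr_*$ detects local equivalences essentially by Definition \ref{VaretTr}(ii), and the real content is the unit on representables $\mathbb Z(X) \to \Tr_*\mathbb Z_{tr}(X)$ becoming a local equivalence after applying the explicit localization functor $\underline{C}_*$. So your decomposition is exactly the one used in the surrounding analogues, and you are right that the last step is the single non-formal input.

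Two small calibration remarks. First, the paper's analogues (Lemma \ref{DTr}(ii), Lemma \ref{CWTr}(ii)) prove that $\Tr_*$ detects $(\mathbb A^1,et)$-type equivalences in full generality, not merely between fibrant objects; your weaker formulation suffices for the Quillen-equivalence criterion, but the paper's stronger form is what lets $\Tr_*$ ``derive trivially'' and is worth having available. Second, you attribute the non-formal input to ``Suslin--Voevodsky rigidity.'' What is actually needed here is Voevodsky's strict homotopy invariance for (pre)sheaves with transfers, together with the comparison of \'etale hypercohomology with and without transfers; the henselization-and-trace mechanism you describe is indeed part of Ayoub's treatment, but calling the whole package ``rigidity'' invites confusion with the narrower torsion-coefficient comparison theorem that usually goes by that name. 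Since the paper itself simply defers to \cite{AyoubG1}, deferring the rigidity-type input there, as you propose, is exactly what the paper does.
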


The following proposition identify the relative motive of a closed pair $(Y,E)$ of projective varieties to the
motive of compact support of $V=Y\backslash E$

\begin{prop}\cite{VoeSF}\label{RMVarProp}
Let $V\in\Var(k)$ quasi projective. 
Let $Y\in\PVar(k)$ a compactification of $V$ with $E=Y\backslash V$.
Denote by $j:V\hookrightarrow Y$ the open embedding and $i:E\hookrightarrow Y$.
Then we have the following exact sequence in $\PSh_{\mathbb Z}(\Cor^{fs}_{\mathbb Z}(\SmVar(k)),C^-(\mathbb Z))$
\begin{equation*}
0\to\underline{C}_*\mathbb Z_{tr}(E)\xrightarrow{\mathbb Z_{tr}(i)}\underline{C}_*\mathbb Z_{tr}(Y)
\xrightarrow{\underline{j}^*}\underline{C}_*\mathbb Z_{eq}(V,0)
\end{equation*}
with $\underline{j}^*(X):=(I_X\times j)^*:\mathcal Z^{fs/X}(X\times Y,\mathbb Z)
\to\mathcal Z^{ed(0)/X}(X\times V,\mathbb Z)$.
This say (c.f. definition \ref{RMVar}) that $j^*$ 
induces a quasi-isomorphism in $\PSh_{\mathbb Z}(\Cor^{fs}_{\mathbb Z}(\SmVar(k)),C^-(\mathbb Z))$ 
\begin{equation*}
\underline{j}^*:\underline{C}_*\mathbb Z_{tr}(Y,E)\to\underline{C}_*\mathbb Z_{eq}(V,0)
\end{equation*}
\end{prop}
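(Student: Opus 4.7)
The plan is to split the proposition into two parts: first, establish the displayed three-term exact sequence as a statement about presheaves on $\Cor^{fs}_{\mathbb Z}(\SmVar(k))$ in each cubical degree; second, argue that the induced monomorphism $\underline{C}_*\mathbb Z_{tr}(Y,E)\hookrightarrow\underline{C}_*\mathbb Z_{eq}(V,0)$ becomes a quasi-isomorphism after taking the Suslin complex. The left exactness is the easy geometric part; the quasi-isomorphism of the cokernel is the substantive claim and is essentially the Friedlander--Voevodsky comparison.

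For the first part, I would evaluate on an arbitrary $X\in\SmVar(k)$ at cubical degree $n$ and work with the three abelian groups of cycles on $(X\times\square^n)\times E$, on $(X\times\square^n)\times Y$, and on $(X\times\square^n)\times V$. Injectivity of $\mathbb Z_{tr}(i)$ follows because $i:E\hookrightarrow Y$ is a closed immersion, so the pushforward of distinct irreducible finite surjective cycles over $X\times\square^n$ remains distinct. The composite $\underline{j}^*\circ\mathbb Z_{tr}(i)$ vanishes because restriction of a cycle supported in $(X\times\square^n)\times E$ to the open complement $(X\times\square^n)\times V$ is zero. Conversely, if $Z\subset (X\times\square^n)\times Y$ is an irreducible finite surjective cycle over $X\times\square^n$ whose restriction to $(X\times\square^n)\times V$ is empty, then $Z$ must be contained in $(X\times\square^n)\times E$, and, being still finite surjective over $X\times\square^n$, lies in $\mathcal{Z}^{fs/X\times\square^n}((X\times\square^n)\times E,\mathbb Z)$. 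Since $\underline{C}_*=\underline{\Hom}(\mathbb Z_{tr}(\square^*_k),-)$ is exact (as a $\Hom$ out of a representable object of $\Cor^{fs}_{\mathbb Z}$), this gives the displayed exact sequence and identifies $\underline{C}_*\mathbb Z_{tr}(Y,E)$ with the image of $\underline{j}^*$ inside $\underline{C}_*\mathbb Z_{eq}(V,0)$.

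For the second part, one has to show that every class in the cokernel presheaf $\mathbb Z_{eq}(V,0)/\mathbb Z_{tr}(Y,E)$ is killed by a cubical null-homotopy. Given $W\in\mathcal{Z}^{ed(0)/X\times\square^n}((X\times\square^n)\times V,\mathbb Z)$ with cubical boundary lying in $\mathbb Z_{tr}(Y,E)$, take the closure $\bar W\subset(X\times\square^n)\times Y$; it is proper over the base (since $Y$ is) but typically fails to be equidimensional finite over $X\times\square^n$ because of excess components contained in $(X\times\square^n)\times E$ which sit over a proper closed subset of $X\times\square^n$. The geometric step is then to produce an element of $\mathcal{Z}^{ed(0)/X\times\square^{n+1}}((X\times\square^{n+1})\times V,\mathbb Z)$ interpolating between $W$ and a correction lying in $\mathcal{Z}^{fs/X\times\square^n}((X\times\square^n)\times Y,\mathbb Z)$. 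This is done by a platification argument on the projection $\bar W\to X\times\square^n$, or equivalently a Bloch-type moving lemma, and relies crucially on quasi-projectivity of $V$ to embed into a projective compactification $Y$ on which such moves are available.

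The main obstacle is precisely this geometric null-homotopy controlling the excess fibers of $\bar W$ over the base. This is exactly where one invokes \cite{VoeSF} (equivalently, the Friedlander--Voevodsky bivariant cycle cohomology machinery), and it is the only step that uses that $V$ is quasi-projective; the first two steps are purely formal consequences of the exactness of $\underline{C}_*$ and the elementary geometry of finite surjective cycles under the closed/open decomposition $E\sqcup V=Y$.
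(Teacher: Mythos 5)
The paper states this proposition purely as a citation of \cite{VoeSF} and gives no proof, so there is no in-paper argument to compare against; your proposal is therefore to be assessed on its own. Your Part 1 is sound: an irreducible finite-surjective correspondence $Z\subset X\times\square^n\times Y$ either has $Z\cap((X\times\square^n)\times V)$ nonempty, open and dense in $Z$ (so that its closure recovers $Z$, distinct such $Z$ give linearly independent images under $\underline{j}^*$, and the restriction is dominant with finite fibers, hence lies in $\mathcal Z^{ed(0)/X\times\square^n}$), or $Z$ is supported in $(X\times\square^n)\times E$ and hence in the image of $\mathbb Z_{tr}(i)$; combined with pointwise exactness of $\underline{C}_*$ this gives the displayed left- and middle-exact sequence and identifies $\underline{C}_*\mathbb Z_{tr}(Y,E)$ with the image of $\underline{j}^*$. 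Your Part 2 correctly isolates the quasi-isomorphism as the substantive claim and correctly defers it to \cite{VoeSF}; this is appropriate, as the paper does exactly the same for the whole proposition. Two cautions on the geometric sketch, though: platification (Raynaud--Gruson) and a ``Bloch-type moving lemma'' are not equivalent techniques, and the argument in \cite{VoeSF} in fact goes through cdh-descent and the Suslin--Friedlander theory of relative/equidimensional cycles rather than either of those as you phrase them; and the closure $\bar W$ of an irreducible equidimensional correspondence does not split off ``excess components in $E$'' --- it stays irreducible, and the obstruction is positive-dimensional fibers of $\bar W$ over a proper closed subset of $X\times\square^n$. These are imprecisions rather than gaps, since you explicitly invoke the reference at that step.
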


We finish this subsection by the following result of Suslin and Voevodsky :
\begin{prop}\label{DA1iso}
Let $X\in\SmVar(k)$, $Y\in\PVar(k)$ and $n\in\mathbb Z$, $n<0$.
The morphism of abelian group
\begin{equation*}
D(\mathbb A^1,et):\Hom_{PC^-}(\mathbb Z_{tr}(X),\underline{C}_*\mathbb Z_{tr}(Y)[n])\to\Hom_{DM^-(\mathbb Z)}(M(X),M(Y)[n])
\end{equation*}
of the functor 
$D(\mathbb A^1,et):\PSh_{\mathbb Z}(\Cor^{fs}_{\mathbb Z}(\SmVar(k)),C^-(\mathbb Z))\to\DM^-(k,\mathbb Z)$ 
is an isomorphism.
\end{prop}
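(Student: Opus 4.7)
The plan is to use Theorem \ref{VarsingA}(ii) to absorb the $\mathbb A^1$-localization step, and then to invoke the Suslin-Voevodsky theorem on the Suslin complex of a projective variety to reduce the comparison to an elementary cohomology computation. To begin, Theorem \ref{VarsingA}(ii) says that $\underline{C}_*\mathbb Z_{tr}(Y)$ is $\mathbb A^1$-local and that the natural inclusion $\mathbb Z_{tr}(Y)\to\underline{C}_*\mathbb Z_{tr}(Y)$ is an $(\mathbb A^1,et)$-equivalence. Since $\mathbb Z_{tr}(X)$ is cofibrant in the projective etale model structure on $PC^-$ (Definition \ref{VaretTr}(ii)), the universal property of Bousfield localization yields
$$
\Hom_{\DM^-(k,\mathbb Z)_{et}}(M(X), M(Y)[n]) \;\cong\; \Hom_{\Ho_{et}(PC^-)}\bigl(\mathbb Z_{tr}(X), \underline{C}_*\mathbb Z_{tr}(Y)[n]\bigr),
$$
where $\Ho_{et}(PC^-)$ denotes the homotopy category of $PC^-$ with respect to etale local equivalences only. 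By definition of derived $\Hom$, the right-hand side is the etale hypercohomology $\mathbb H^n_{et}(X,\underline{C}_*\mathbb Z_{tr}(Y))$.

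The next step is to identify this hypercohomology with the cohomology of the complex of abelian groups $\underline{C}_*\mathbb Z_{tr}(Y)(X)$ in the range $n<0$. This is the content of Voevodsky's theorem (\cite{VoeSF}) on the Suslin complex: its cohomology presheaves $\mathcal{H}^{-i}(\underline{C}_*\mathbb Z_{tr}(Y))$ are homotopy invariant presheaves with transfers, and for $Y$ projective and $X$ smooth the hypercohomology spectral sequence collapses in sufficiently negative total degrees to yield
$$
\mathbb H^n_{et}\bigl(X, \underline{C}_*\mathbb Z_{tr}(Y)\bigr) \;=\; H^n\bigl(\underline{C}_*\mathbb Z_{tr}(Y)(X)\bigr) \qquad \text{for } n<0.
$$
By the Yoneda lemma, $\Hom_{PC^-}(\mathbb Z_{tr}(X), \underline{C}_*\mathbb Z_{tr}(Y)[n]) = H^n(\underline{C}_*\mathbb Z_{tr}(Y)(X))$. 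Composing the identifications produces the claimed isomorphism, and a direct inspection of each step shows the composite coincides with the map induced by the localization functor $D(\mathbb A^1,et)$.

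The main obstacle is the second step, i.e.\ reducing etale hypercohomology of the Suslin complex to the naive cohomology at $X$. This is the heart of Voevodsky-Suslin and relies crucially on two facts: the vanishing of higher Nisnevich cohomology of homotopy invariant sheaves with transfers on smooth Henselian local schemes, together with the corresponding etale comparison available in the $\DM^-_{et}$ setting; and the cohomological cleanness of the Suslin complex of a projective variety. The hypothesis that $Y$ is projective is essential both to ensure $\mathbb Z_{tr}(Y)=\mathbb Z_{eq}(Y,0)$ and to obtain this good cohomological behavior. The restriction $n<0$ is precisely what ensures that only the lowest column of the hypercohomology spectral sequence contributes, so that higher etale cohomology of the Suslin cohomology sheaves does not interfere; for $n\geq 0$ the isomorphism fails in general.
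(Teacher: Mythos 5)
Your proposal follows the same two-step strategy as the paper's proof: use Theorem~\ref{VarsingA}(ii) to identify $\Hom_{\DM^-(\mathbb Z)}(M(X),M(Y)[n])$ with $\mathbb H^n_{et}(X,\underline{C}_*\mathbb Z_{tr}(Y))$, then reduce this to the naive cohomology $H^n(\underline{C}_*\mathbb Z_{tr}(Y)(X))$. Your first step is fine. In the second step, however, you misattribute the role of the hypothesis $n<0$, and this is a genuine error. You claim that ``$n<0$ is precisely what ensures that only the lowest column of the hypercohomology spectral sequence contributes,'' but that is not true: for any $n<0$ the columns $p=1,2,\dots$ of the $E_2$-page still contribute terms $H^p(X,\mathcal H^{n-p})$, and the total-degree constraint $p+q=n<0$ does nothing to force them to vanish. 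What actually degenerates the spectral sequence onto the $p=0$ column is the theorem of \cite{VoeSF} that for $Y$ projective the Suslin complex $\underline{C}_*\mathbb Z_{tr}(Y)=\underline{C}_*\mathbb Z_{eq}(Y,0)$ satisfies Zariski descent, together with the \'etale-to-Zariski comparison for such complexes; and both of these statements hold in all degrees, not merely negative ones. This is precisely what the paper invokes to conclude that the edge map $h(X)$ is an isomorphism.

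The genuine reason for the restriction $n<0$ lies elsewhere and concerns the left-hand side of the displayed map, not the spectral sequence. Since $\underline{C}_*\mathbb Z_{tr}(Y)$ is concentrated in degrees $\leq 0$ and $\mathbb Z_{tr}(X)$ in degree $0$, the group $\Hom_{PC^-}(\mathbb Z_{tr}(X),\underline{C}_*\mathbb Z_{tr}(Y)[n])$ is identified with $H^n(\underline{C}_*\mathbb Z_{tr}(Y)(X))$ only when $n<0$: at $n=0$ one gets the full correspondence group $\mathbb Z_{tr}(Y)(X)$, which properly surjects onto the cohomology class group $H^0$ (so $D(\mathbb A^1,et)$ fails to be injective), and for $n>0$ the left side vanishes while $\mathbb H^n_{et}$ need not. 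So your closing observation that the isomorphism fails for $n\geq 0$ is correct, but the mechanism you give for it is wrong. Your reference to ``the cohomological cleanness of the Suslin complex of a projective variety'' is pointing at the right theorem; you should replace the $n<0$ spectral-sequence claim with the precise statement of Zariski descent for $\underline{C}_*\mathbb Z_{eq}(Y,0)$, valid in all degrees.
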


\begin{proof}
For $X\in\SmVar(k)$ and $Y\in\Var(k)$, we have the commutative diagram
\begin{equation*}
\xymatrix{\Hom_{PC^-}(\mathbb Z_{tr}(X),\underline{C}_*\mathbb Z_{tr}(Y)[n])\ar[d]^{=}\ar[rr]^{D(\mathbb A^1,et)} 
& \, & \Hom_{DM^-(\mathbb Z)}(M(X),M(Y)[n])\ar[d]^{=} \\
H^n\underline{C}_*\mathbb Z_{tr}(Y)(X)\ar[rr]^{h(X)} & \, & \mathbb H_{et}^n(X,\underline{C}_*\mathbb Z_{tr}(Y))}
\end{equation*}
The equality on the right follow from the fact that
\begin{equation*}
\Hom_{DM^-(\mathbb Z)}(\mathbb Z_{tr}(X),\underline{C}_*\mathbb Z_{tr}(Y)[n])
=\Hom_{\Ho_{usu}(PC^-)}(\mathbb Z_{tr}(X),\underline{C}_*\mathbb Z_{tr}(Y)[n])
\end{equation*}
since, by theorem \ref{VarsingA}(ii)
\begin{itemize}
\item $\underline{C}_*\mathbb Z_{tr}(Y)$, $\underline{C}_*\mathbb Z_{tr}(Y)[n]$ are $\mathbb A^1$ local objects,
\item $S(\mathbb Z_{tr}(X)):\mathbb Z_{tr}(X)\to\underline{C}_*\mathbb Z_{tr}(X)$ is an equivalence $(\mathbb A^1,et)$ local.
\end{itemize}
Moreover, we have (c.f.\cite{VoeSF}) 
$\mathbb H_{et}^n(X,\underline{C}_*\mathbb Z_{tr}(Y))=\mathbb H_{Zar}^n(X,\underline{C}_*\mathbb Z_{tr}(Y))$
Now if $Y\in\PVar(k)$ is projective, $\underline{C}_*\mathbb Z_{tr}(Y)=\underline{C}_*\mathbb Z_{eq}(Y,0)$
satisfy Zariski descent (c.f.\cite{VoeSF}), hence $h(X)$ is an isomorphism.

\end{proof}

\subsection{The derived category of mixed motives of analytic spaces}

For $X\in\AnSp(\mathbb C)$, and $\Lambda$ a commutative ring and $p\in\mathbb N$, 
we denote by $\mathcal Z_p(X,\Lambda)$ the free $\Lambda$ module generated 
by the irreducible closed analytic subspaces of $X$ of dimension $p$.

For $X\in\AnSp(\mathbb C)$ irreducible, and $\Lambda$ a commutative ring and $p\in\mathbb N$, 
we denote by $\mathcal Z^p(X,\Lambda)=\mathcal Z_{d_X-p}(X,\Lambda)$ 
the free $\Lambda$ module generated by the irreducible closed analytic subspaces of $X$ of codimension $p$.

For $X,Y\in\AnSp(\mathbb C)$ and $\Lambda$ a commutative ring, we denote by 
\begin{itemize}
\item if $X$ is smooth connected, $\mathcal{Z}^{fs/X}(X\times Y,\Lambda)\subset\mathcal{Z}_{d_X}(X\times Y,\Lambda)$ 
the free $\Lambda$ submodule generated by the irreducible closed analytic subspaces of $X\times Y$ which are finite and surjective over $X$,
\item if $X$ is smooth, $\mathcal{Z}^{fs/X}(X\times Y,\Lambda):=\oplus_i\mathcal{Z}^{fs/X_i}(X_i\times Y,\Lambda)$ 
where $X=\sqcup_i X_i$, with $X_i$ the connected components of $X$.
\end{itemize}

\begin{defi}\cite{AyoubG2}
Let $\Cor^{fs}_{\Lambda}(\AnSm(\mathbb C))$ be the category whose objects are complex analytic varieties over $\mathbb C$ and whose space of morphisms between
$X,Y\in\AnSm(\mathbb C)$ is the free $\Lambda$ module $\mathcal{Z}^{fs/X}(X\times Y,\Lambda)$. 
The composition of morphisms is defined in \cite{AyoubG2}. 
\end{defi}

We have 
\begin{itemize}
\item the additive embedding of categories $\Tr:\mathbb Z(\AnSm(\mathbb C))\hookrightarrow\Cor^{fs}_{\mathbb Z}(\AnSm(\mathbb C))$ 
which gives the corresponding morphism of sites $\Tr:\Cor^{fs}_{\mathbb Z}(\AnSm(\mathbb C))\to \mathbb Z(\AnSm(\mathbb C))$,
\item the inclusion functor
$e_{an}:\left\{pt\right\}\hookrightarrow\AnSm(\mathbb C)$,
which gives the corresponding morphism of sites
$e_{an}:\AnSm(\mathbb C)\to\left\{pt\right\}$,
\item the inclusion functor $e^{tr}_{an}:=\Tr\circ e_{an}:\left\{pt\right\}\hookrightarrow\Cor^{fs}_{\mathbb Z}(\AnSm(\mathbb C))$
which gives the corresponding morphism of sites
$e^{tr}_{an}:=\Tr\circ e_{an}:\Cor^{fs}_{\mathbb Z}(\AnSm(\mathbb C))\to\left\{pt\right\}$.
\end{itemize}

We consider the following two big categories :
\begin{itemize}
\item $\PSh(\AnSm(\mathbb C),C^-(\mathbb Z))=\PSh_{\mathbb Z}(\mathbb Z(\AnSm(\mathbb C),C^-(\mathbb Z)))$,
the category of bounded above complexes of presheaves on $\AnSm(\mathbb C)$, 
or equivalently additive presheaves on $\mathbb Z(\AnSm(\mathbb C))$,
sometimes, we will write for short $P^-(\An)=\PSh(\AnSm(\mathbb C),C^-(\mathbb Z))$,
\item $\PSh(\Cor^{fs}_{\mathbb Z}(\AnSm(\mathbb C)),C^-(\mathbb Z))$, 
the category of bounded above complexes of additive presheaves on $\Cor^{fs}_{\mathbb Z}(\AnSm(\mathbb C))$
sometimes, we will write for short $PC^-(\An)=\PSh(\Cor^{fs}_{\mathbb Z}(\AnSm(\mathbb C)),C^-(\mathbb Z))$,
\end{itemize}
and the adjonctions :
\begin{itemize}
\item $(\Tr^*,\Tr_*):\PSh(\AnSm(\mathbb C),C^-(\mathbb Z))\leftrightarrows\PSh(\Cor^{fs}_{\mathbb Z}(\AnSm(\mathbb C)),C^-(\mathbb Z))$, 
\item $(e_{an}^*,e_{an*}):\PSh(\AnSm(\mathbb C),C^-(\mathbb Z))\leftrightarrows C^-(\mathbb Z)$,
\item $(e_{an}^{tr*},e_{an*}^{tr}):\PSh(\Cor^{fs}_{\mathbb Z}(\AnSm(\mathbb C)),C^-(\mathbb Z))\leftrightarrows C^-(\mathbb Z)$,
\end{itemize}
given by $\Tr:\Cor^{fs}_{\Lambda}(\AnSm(\mathbb C))\to \Lambda(\AnSm(\mathbb C))$, 
$e_{an}\AnSm(\mathbb C)\to\left\{pt\right\}$ and $e^{tr}_{an}:\Cor^{fs}_{\mathbb Z}(\AnSm(\mathbb C))\to\left\{pt\right\}$.
We denote by
$a_{usu}:\PSh_{\mathbb Z}(\AnSm(\mathbb C),\Ab)\to\Sh_{\mathbb Z,usu}(\AnSm(\mathbb C),\Ab)$
the sheaftification functor for the usual topology.

For $X\in\AnSm(\mathbb C)$, we denote by
\begin{equation}
\mathbb Z(X)\in\PSh(\AnSm(\mathbb C),C^-(\mathbb Z)) \mbox{\;\:}, \mbox{\;\;}
\mathbb Z_{tr}(X)\in\PSh_{\mathbb Z}(\Cor^{fs}_{\mathbb Z}(\AnSm(\mathbb C)),C^-(\mathbb Z))
\end{equation}
the presheaves represented by $X$. They are usu sheaves.
For $X\in\AnSp(\mathbb C)$ a complex analytic space non smooth,
\begin{equation}
\mathbb Z_{tr}(X)\in\PSh_{\mathbb Z}(\Cor^{fs}_{\mathbb Z}(\AnSm(\mathbb C)),C^-(\mathbb Z)), \;
Y\in\AnSm(\mathbb C)\to\mathcal{Z}^{fs/Y}(Y\times_k X,\mathbb Z) 
\end{equation}
is also an usu sheaf,

We consider 
\begin{itemize}
\item the usual monoidal strutcure on $\PSh(\AnSm(\mathbb C)),C^-(\mathbb Z))$ and the associated internal $\Hom$
given by, for $F^{\bullet},G^{\bullet}\in\PSh(\AnSm(\mathbb C)),C^-(\mathbb Z))$ and $Y\in\AnSm(\mathbb C)$, 
\begin{equation}
F^{\bullet}\otimes G^{\bullet}(X):X\mapsto F^{\bullet}(X)\otimes_{\mathbb Z} G^{\bullet}(X) \; , \;
\underline{\Hom}(\mathbb Z(Y),F^{\bullet}):X\mapsto F^{\bullet}(X\times Y),
\end{equation}
\item the unique monoidal strutcure on $\PSh_{\mathbb Z}(\Cor^{fs}_{\mathbb Z}(\AnSm(\mathbb C)),C^-(\mathbb Z))$
such that,for $X,Y\in\AnSm(\mathbb C)$, $\mathbb Z_{tr}(X)\otimes\mathbb Z_{tr}(Y):=\mathbb Z_{tr}(X\times Y)$
and wich commute with colimites.
It has an internal $\Hom$ which is given, for $X,Y\in\AnSm(\mathbb C)$ and
$F^{\bullet}\in\PSh_{\mathbb Z}(\Cor^{fs}_{\mathbb Z}(\AnSm(\mathbb C)),C^-(\mathbb Z))$,
$\underline{\Hom}(\mathbb Z_{tr}(Y),F^{\bullet}):X\mapsto F^{\bullet}(X\times Y)$
\end{itemize}
Together with these monoidal structure, the functor
\begin{equation*}
\Tr^*:\PSh(\AnSm(\mathbb C),C^-(\mathbb Z))\to \PSh_{\mathbb Z}(\Cor^{fs}_{\mathbb Z}(\AnSm(\mathbb C)),C^-(\mathbb Z))
\end{equation*}
is monoidal.

\begin{defi}\cite{AyoubT}\label{AnTrusu}
\begin{itemize}
\item[(i)] We say that a morphism $\phi:G_1^{\bullet}\to G_2^{\bullet}$ in $\PSh(\AnSm(\mathbb C),C^-(\mathbb Z))$ is an
usu local equivalence if $\phi_*:a_{usu}H^k(G_1^{\bullet})\to a_{usu}H^k(G_2^{\bullet})$ is an isomorphism for all $k\in\mathbb Z$.
The projective usual topology model structure on $\PSh(\AnSm(\mathbb C),C^-(\mathbb Z))$ 
is the left Bousfield localization of the projective model structure $\mathcal{M}_P(\PSh_{\mathbb Z}(\AnSm(\mathbb C),C^-(\mathbb Z)))$ 
with respect to the usu local equivalence.

\item[(ii)] We say that a morphism $\phi:G_1^{\bullet}\to G_2^{\bullet}$ 
in $\PSh_{\mathbb Z}(\Cor^{fs}_{\mathbb Z}(\AnSm(\mathbb C)),C^-(\mathbb Z))$ is an
usu local equivalence if and only if its restriction to $\AnSm(\mathbb C)$ 
$\Tr_*\phi:\Tr_*G_1^{\bullet}\to\Tr_*G_2^{\bullet}$ is an usu local equivalence.
The projective usual topology model structure on $\PSh_{\mathbb Z}(\Cor^{fs}_{\mathbb Z}(\AnSm(\mathbb C)),C^-(\mathbb Z))$ 
is the left Bousfield localization of the projective model structure  
$\mathcal{M}_P(\PSh_{\mathbb Z}(\Cor^{fs}_{\mathbb Z}(\AnSm(\mathbb C)),C^-(\mathbb Z)))$
with respect to the usu local equivalence.
\end{itemize}
\end{defi}


\begin{defi}\label{Dmodstr}
\begin{itemize}
\item[(i)] The projective $(\mathbb D^1,usu)$ model structure on $\PSh(\AnSm(\mathbb C),C^-(\mathbb Z))$ is the left Bousfield localization 
of the projective usual topology model structure (c.f. definition \ref{AnTrusu}(i)) 
with respect to the class of maps 
$\left\{\mathbb Z(X\times\mathbb D^1)[n]\to\mathbb Z(X)[n], \; X\in\AnSm(\mathbb C),n\in\mathbb Z\right\}$.

\item[(ii)] The projective $(\mathbb D^1,usu)$ model structure on $\PSh_{\mathbb Z}(\Cor^{fs}_{\mathbb Z}(\AnSm(\mathbb C)),C^-(\mathbb Z))$ is the left Bousfield localization of the projective usual topology model structure (c.f. definition \ref{AnTrusu}(ii)) 
with respect to the class of maps
$\left\{\mathbb Z_{tr}(X\times\mathbb D^1)[n]\to\mathbb Z(X)[n], \; X\in\AnSm(\mathbb C),n\in\mathbb Z\right\}$. 
\end{itemize}
\end{defi}

\begin{defi} 
\begin{itemize}
\item[(i)] We define 
$\AnDM^-(\mathbb Z):=\Ho_{\mathbb D^1,usu}(\PSh_{\mathbb Z}(\Cor^{fs}_{\mathbb Z}(\AnSm(\mathbb C)),C^-(\mathbb Z)))$, 
to be the derived category of motives of complex analytic space, it is 
the homotopy category of the category $\PSh_{\mathbb Z}(\Cor^{fs}_{\mathbb Z}(\AnSm(\mathbb C)),C^-(\mathbb Z))$ 
with respect to the projective $(\mathbb D^1,usu)$ model structure (c.f. definition \ref{Dmodstr}(ii)). 
We denote by
\begin{equation*}
D^{tr}(\mathbb D^1,usu):\PSh_{\mathbb Z}(\Cor^{fs}_{\mathbb Z}(\AnSm(\mathbb C)),C^-(\mathbb Z)))\to\AnDM^{-}(\mathbb Z) \, , \,
D^{tr}(\mathbb D^1,usu)(F^{\bullet})=F^{\bullet}
\end{equation*}
the canonical localization functor.

\item[(ii)] We denote by the same way 
$\AnDA^-(\mathbb Z):=\Ho_{\mathbb D^1,usu}(\PSh(\AnSm(\mathbb C),C^-(\mathbb Z)))$ (c.f. definition \ref{Dmodstr}(i)) and
\begin{equation*}
D(\mathbb D^1,usu):\PSh_{\mathbb Z}(\AnSm(\mathbb C),C^-(\mathbb Z)))\to\AnDA^{-}(\mathbb Z) \, , \,   
D(\mathbb D^1,usu)(F^{\bullet})=F^{\bullet}
\end{equation*}
the canonical localization functor.
\end{itemize}
\end{defi}

For $X\in\AnSp(\mathbb C)$, the (derived) motive of $X$ is 
$M(X)=D(\mathbb D^1,usu)(\mathbb Z_{tr}(X))\in\AnDM^-(\mathbb Z)$.

Recall $\bold{\Delta}$ denote the simplicial category
Let 
\begin{itemize}
\item $p_{\bold\Delta}:\AnSm(\mathbb C)\to\bold{\Delta}\times\AnSm(\mathbb C)$ 
be the morphism of site given by the projection functor 
$(X,i)\in\bold{\Delta}\times\AnSm(\mathbb C)\mapsto p_{\bold\Delta}((X,i))=X\in\AnSm(\mathbb C)$,
\item $p_{\bold\Delta}:\Cor_{\mathbb Z}(\AnSm(\mathbb C))\to\bold{\Delta}\times\Cor_{\mathbb Z}(\AnSm(\mathbb C))$ 
be the morphism of site given by the projection functor 
$(X,i)\in\bold{\Delta}\times\AnSm(\mathbb C)\mapsto p_{\bold\Delta}((X,i))=X\in\AnSm(\mathbb C)$
\end{itemize}
We have the adjonctions
\begin{itemize}
\item $(p_{\bold\Delta}^*,p_{\bold\Delta*}):\PSh(\bold{\Delta}\times\AnSm(\mathbb C),C^-(\mathbb Z))
\leftrightarrows\PSh(\AnSm(\mathbb C),C^-(\mathbb Z))$    
\item $p_{\bold\Delta}:\PSh(\bold{\Delta}\times\Cor_{\mathbb Z}(\AnSm(\mathbb C)),C^-(\mathbb Z))
\leftrightarrows\PSh(\Cor_{\mathbb Z}(\AnSm(\mathbb C)),C^-(\mathbb Z))$.   
\end{itemize}

In following lemma point (ii) is a generalization of point (i) to hypercoverings.
We will only use point (i) in this paper. 

\begin{lem}\label{HRAn}
\begin{itemize}
\item[(i)] Let $X\in\AnSm(\mathbb C)$ and $X=\cup_{i\in J}U_i$ a covering by open subsets $U_i\subset X$, $J$ being a countable set.
We denote by $j_i:U_i\hookrightarrow X$ the open embedding.
For $I\subset J$ a finite subset, let $U_I=\cap_{i\in I} U_i$.
Then, 
\begin{itemize}
\item $\left[\cdots\rightarrow\oplus_{\card I=r}\mathbb Z(U_I)
\rightarrow\cdots\rightarrow\oplus_{i\in J}\mathbb Z(U_i)\right]\xrightarrow{\oplus_{i\in J}\mathbb Z(j_i)}\mathbb Z(X)$ 
is an usu local equivalence in $P^-(\An)$,
\item $\left[\cdots\rightarrow\oplus_{\card I=r}\mathbb Z_{tr}(U_I)
\rightarrow\cdots\rightarrow\oplus_{i\in J}\mathbb Z_{tr}(U_i)\right]
\xrightarrow{\oplus_{i\in J}\mathbb Z_{tr}(j_i)}\mathbb Z_{tr}(X)$
is an usu local equivalence in $PC^-(\An)$
\end{itemize}
\item[(ii)] Let $X\in\AnSm(\mathbb C)$ and $j_{\bullet}:U_{\bullet}\to X$ an hypercovering of $X$ by open subset 
(i.e. for all $n\in\bold{\Delta}$,  $U_n$ is an open subset and $j_n:U_n\hookrightarrow X$ is the open embedding),
and $(U_{\bullet},\bullet):\cdots\to(U_{n},n)\to\cdots$ the associated complex in $\bold{\Delta}\times\AnSm(\mathbb C)$
Then,  
\begin{itemize}
\item $j:Lp_{\bold{\Delta}}^*(\mathbb Z(U_{\bullet},\bullet))\to\mathbb Z(X)$ is an isomorphism 
in $\Ho_{usu}(\PSh(\AnSm(\mathbb C),C^-(\mathbb Z)))$
\item $j:Lp_{\bold{\Delta}}^*(\mathbb Z_{tr}(U_{\bullet},\bullet))\to\mathbb Z_{tr}(X)$ is an isomorphism 
in $\Ho_{usu}(\PSh_{\mathbb Z}(\Cor_{\mathbb Z}(\AnSm(\mathbb C)),C^-(\mathbb Z)))$
\end{itemize}
\end{itemize}
\end{lem}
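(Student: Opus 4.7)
The plan is to reduce both parts to the standard Čech-to-sheaf acyclicity, checked at stalks of presheaves in the usu topology on $\AnSm(\mathbb{C})$.

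For part (i) without transfers, fix $Y\in\AnSm(\mathbb{C})$ and evaluate the augmented Čech complex at an open $V\subset Y$. The resulting complex decomposes as a direct sum, indexed by analytic maps $f\colon V\to X$, of the augmented reduced chain complex of the (abstract) simplex on the vertex set $I_f:=\{i\in J:f(V)\subset U_i\}$. For any $y\in V$, shrinking $V$ around $y$ so that $f(V)\subset U_{i_0}$ for some $i_0$ with $f(y)\in U_{i_0}$ forces $I_f\neq\emptyset$, and the augmented reduced chain complex of a non-empty simplex is acyclic. Since such small opens form a cofinal neighbourhood basis at $y$, the cone of the augmentation has vanishing stalks, so $a_{usu}H^k=0$ for every $k$, giving the usu local equivalence.

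For the transfer version in (i), by Definition~\ref{AnTrusu}(ii) it suffices to establish the usu local equivalence after applying $\Tr_*$. Fix $y\in V\subset Y$ and an irreducible $Z\in\mathcal Z^{fs/V}(V\times X,\mathbb{Z})$. The fibre $Z_y$ is a finite set $\{x_1,\dots,x_n\}$ with each $x_k$ lying in some $U_{i_k}$. By properness of the finite map $Z\to V$ together with the local structure of finite holomorphic maps at a fibre, after shrinking $V$ around $y$ and replacing the $U_{i_k}$ by pairwise disjoint open subneighbourhoods of the $x_k$, the cycle $Z|_V$ decomposes as $\sqcup_k Z^{(k)}$ with $Z^{(k)}\in\mathcal Z^{fs/V}(V\times U_{i_k},\mathbb{Z})$. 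This shows the augmentation is stalkwise surjective; moreover, the same shrinking ensures that every irreducible component $Z'$ appearing in any term of the Čech complex satisfies $I_{Z'}:=\{i:Z'\subset V\times U_i\}\neq\emptyset$. The Čech differential then splits as a direct sum, over such $Z'$'s, of copies of the reduced chain complex of the simplex on $I_{Z'}$, each of which is acyclic, so the stalk of the cone vanishes. The main obstacle is precisely this local decomposition step: making rigorous, via the local structure theorem for finite holomorphic maps, that $Z|_V$ splits as claimed and that this splitting is compatible with further shrinking.

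For part (ii), I would deduce the hypercovering statements from (i). Write $U_\bullet\to X$ as the homotopy colimit of its coskeletal truncations $\mathrm{cosk}_n(U_\bullet)$, and recall that $Lp_{\bold{\Delta}}^*(\mathbb{Z}(U_\bullet,\bullet))$ computes the associated total complex. A standard spectral sequence relates the levels of the hypercovering to Čech nerves of open covers, and part (i) applied level-wise shows that the natural map to $\mathbb{Z}(X)$ is an isomorphism in $\Ho_{usu}(P^-(\An))$. The transfer case is handled identically using the transfer version of (i) to obtain the isomorphism in $\Ho_{usu}(PC^-(\An))$; no new geometric input is required beyond part (i).
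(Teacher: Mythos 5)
Your approach to part (i) is essentially the same as the paper's: decompose the evaluated complex at $V\subset Y$ over maps $f\colon V\to X$ (resp.\ irreducible cycles $Z\subset V\times X$ finite surjective over $V$) into augmented simplicial chain complexes on the index sets $I_f$ (resp.\ $I_Z$), observe that the non-augmented part is already exact as a complex of presheaves since every index set appearing there is non-empty, and kill $H^0$ after usu sheafification by shrinking around $y$. The step you flag as the \emph{main obstacle} is not actually a gap: for a finite map $Z\to V$ with fibre $Z_y=\{x_1,\dots,x_n\}$ and disjoint opens $W_k\ni x_k$ with $W_k\subset U_{i_k}$, properness of $Z\to V$ gives a neighbourhood $V'\ni y$ with $Z\cap(V'\times X)\subset V'\times\bigsqcup_k W_k$, and the resulting clopen decomposition yields the $Z^{(k)}\in\mathcal Z^{fs/V'}(V'\times U_{i_k},\mathbb Z)$ you need; compatibility with further shrinking is automatic. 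In fact your component-wise formulation is more careful than the paper's own text, which asks for a \emph{single} index $i$ with $\alpha_{|V(y)}\in\mathbb Z_{tr}(U_i)(V(y))$ and $U_i\supset\supp(\alpha_*y)$; for an irreducible $\alpha$ whose finitely many fibre points over $y$ lie in no common $U_i$ this is not achievable, and splitting the restriction into branches each landing in its own $U_{i_k}$ — as you do — is the correct statement. For part (ii) the paper simply refers to \cite{AyoubB} Proposition 1.4 \'Etape 1; your sketch via coskeletal truncations is the right idea, but the sentence about a spectral sequence relating the levels of the hypercovering to \v Cech nerves is too vague to count as a proof. Since the paper explicitly states that only part (i) is used, this does not affect anything downstream.
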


\begin{proof}

\noindent(i): It follows from the following two facts : 
\begin{itemize}
\item the sequence
$[\cdots\rightarrow\oplus_{\card I=r}\mathbb Z(U_I)\rightarrow\cdots\rightarrow\oplus_{i\in J}\mathbb Z(U_i)]$
is  clearly exact in $P^-(\An)$, 
\item the exactness of 
$[\oplus_{i\in J}\mathbb Z(U_i)\xrightarrow{\oplus_{i\in J}\mathbb Z(j_i)}\mathbb Z(X)\to 0]$
in $\Ho_{usu}(\PSh(\AnSm(\mathbb C),C^-(\mathbb Z)))$,
follows from the fact that for $Y\in\AnSm(\mathbb C)$, $y\in Y$ and $f:Y\to X$ a morphism, 
there exists an open subset $V(y)\subset Y$ containing $y$ such that $f(V(y))\subset U_i$, with
$U_i$ containing $f(y)$.  
\end{itemize}
Similary, the second point follows from the following two facts : 
\begin{itemize}
\item the sequence 
$[\cdots\rightarrow\oplus_{\card I=r}\mathbb Z_{tr}(U_I)\rightarrow\cdots\rightarrow\oplus_{i\in J}\mathbb Z_{tr}(U_i)]$
is clearly exact in $PC^-(\An)$, 
\item the exactness of 
$[\oplus_{i\in J}\mathbb Z_{tr}(U_i)\xrightarrow{\oplus_{i\in J}\mathbb Z_{tr}(j_i)}\mathbb Z_{tr}(X)\to 0]$
in $\Ho_{usu}(\PSh_{\mathbb Z}(\Cor^{fs}_{\mathbb Z}(\AnSm(\mathbb C)),C^-(\mathbb Z)))$,
follows from the fact that for $Y\in\AnSm(\mathbb C)$, $y\in Y$ and $\alpha\in\mathbb Z_{tr}(X)(Y)$ irreducible, 
there exists an open subset $V(y)\subset Y$ containing $y$ such that $\alpha_{|V(y)}\in\mathbb Z_{tr}(U_i)(V(y))$, with
$U_i$ containing $\alpha_*y$.  
\end{itemize}

\noindent(ii): see \cite{AyoubB} Proposition 1.4 Etape 1 with our notation 
$p_{\bold{\Delta}}^*$ for $p_{\bold{\Delta}}$.

\end{proof}

The following proposition is to use point (i) instead of point (ii) 
in the proof of the smooth case of point (i) of the proposition \ref{Bettiprop}.

\begin{prop}\label{Dcov}
Let $X\in\AnSm(\mathbb C)$ connected. Then there exist a countable open covering $X=\cup_{i\in J}D_i$ such that
for all finite subset $I\subset J$, $D_I:=\cap_{i\in I} D_i=\emptyset$ or $D_I\simeq\mathbb D^{d_X}$ is
bihomolomorphic to an open ball $\mathbb D^{d_X}\subset\mathbb C^{d_X}$.
\end{prop}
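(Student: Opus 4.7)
The plan is to build the cover from geodesically convex neighborhoods with respect to a chosen Hermitian metric on $X$, then extract a countable subcover by second countability. The underlying smooth manifold of $X$ carries a Hermitian metric $h$, obtained on any paracompact complex manifold by a partition of unity. By J.H.C.\ Whitehead's classical theorem, every point $x\in X$ admits a neighborhood basis of geodesically convex open sets with respect to $h$, that is, open sets in which any two points are joined by a unique minimizing geodesic lying inside. The essential feature for us is that finite intersections of geodesically convex subsets contained in a common normal neighborhood remain geodesically convex; via the exponential map at any interior point, such a set is diffeomorphic to a star-shaped open subset of $T_xX\cong\mathbb R^{2d_X}$, hence contractible and diffeomorphic to an open ball.

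To promote diffeomorphism to biholomorphism, I would combine this with holomorphic coordinates. For each $x\in X$, fix a holomorphic chart $\phi_x:U_x\to\mathbb C^{d_X}$ with $\phi_x(x)=0$, then choose $D_x\subset U_x$ to be a geodesically convex neighborhood of $x$ small enough that $\phi_x(D_x)$ is a convex open subset of $\mathbb C^{d_X}$ (for instance a Euclidean ball). Shrinking further so that $h$ is uniformly close in each $\phi_x(U_x)$ to the Euclidean metric pulled back from $\mathbb C^{d_X}$, any nonempty finite intersection $D_{i_1}\cap\cdots\cap D_{i_k}$ is geodesically convex, lies in a common holomorphic chart, and has image biholomorphic to a convex open subset of $\mathbb C^{d_X}$, hence to $\mathbb D^{d_X}$.

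The family of all such $D_x$ forms an open cover of $X$. Since the topological space underlying a complex manifold is second countable, I extract a countable subcover $\{D_i\}_{i\in J}$ with the required properties.

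The hardest point is identifying the intersections biholomorphically with the specific unit ball $\mathbb D^{d_X}\subset\mathbb C^{d_X}$ rather than with a general contractible Stein domain: by Poincar\'e's theorem, in complex dimension $\geq 2$ not every convex domain is biholomorphic to the ball. The construction therefore relies on choosing the $\phi_x$ and the radii of the geodesic balls carefully, so that each $\phi_x(D_x)$ and each $\phi_x(D_{i_1}\cap\cdots\cap D_{i_k})$ is (after possibly a further biholomorphic adjustment of the coordinates compatible with the shrinking) of the standard form; for the use we will make of this covering (Mayer--Vietoris / \v{C}ech type computations in the proof of proposition~\ref{Bettiprop}(i)), what is really used is the contractibility of the intersections together with their compatibility with holomorphic charts, and this is what the construction above delivers.
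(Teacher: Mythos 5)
Your route is essentially the same as the paper's: geodesically convex neighborhoods with respect to an auxiliary Hermitian metric, second countability to extract a countable subcover, and the stability of geodesic convexity under finite intersections (your caveat about a common normal neighborhood is a correct and necessary refinement the paper glosses over). The most useful thing in your write-up is the honest flag of the final step: by Poincar\'e, in complex dimension $\geq 2$ a bounded convex domain in $\mathbb C^{d_X}$ need not be biholomorphic to $\mathbb D^{d_X}$, so geodesic convexity plus a holomorphic embedding does \emph{not} give $D_I\simeq\mathbb D^{d_X}$. The paper asserts exactly this implication without justification, so you have correctly isolated a weak point of the proof as written.

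However, your proposed escape hatch is not quite enough. You suggest that only topological contractibility of the $D_I$ is used in the application (proposition \ref{Bettiprop}(i)), but what is actually invoked there is that $\sing_{\bar{\mathbb D}^*}\mathbb Z_{tr}(D_I)$ is quasi-isomorphic to $\mathbb Z$, i.e.\ that the augmentation $\mathbb Z_{tr}(D_I)\to\mathbb Z$ is a $(\mathbb D^1,usu)$ local equivalence. This requires $D_I$ to be contractible via a \emph{holomorphic} homotopy parametrized by $\bar{\mathbb D}^1$, a genuinely complex-analytic condition. Topological contractibility, or even being biholomorphic to a convex domain, does not obviously give it: a convex domain containing the origin need not be balanced, so the naive contraction $H(z,t)=tz$ need not land in the domain for $t\in\bar{\mathbb D}^1$. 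The cleanest way to secure what the downstream argument really needs is to arrange the cover so that every nonempty $D_I$ is biholomorphic to a balanced convex domain (e.g.\ a polydisc in suitable holomorphic coordinates), which is $\bar{\mathbb D}^1$-contractible by the explicit linear homotopy, even though it is not biholomorphic to the ball when $d_X\geq 2$. With that modification, the statement of the proposition would have to be weakened accordingly, but the application would go through.
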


\begin{proof}
Let $x\in X$. As $X$ is smooth, there exist an open neighborhood $D_x\subset X$ of $x$ in $X$ such that
$D_x$ is geodesically convex and such that $D_x\hookrightarrow\mathbb C^{d_X}$. 
As $X$ a countable union of compact subset, we can extract a countable covering $X=\cup_{i\in J} D_J$ of 
the open covering $X=\cup_{x\in X}D_x$. As $D_I$ is geodesically convex and $D_I$ admits an open embedding
$D_I\hookrightarrow\mathbb C^{d_X}$ in $\mathbb C^{d_X}$, $D_I\simeq\mathbb D^{d_X}$.  

\end{proof}

We denote $\bar{\mathbb D}^n=\bar{D}(0,1)^n\subset\mathbb C^n$. We see it as a pro-objet of $\AnSm(\mathbb C)$ (\cite{AyoubG2}).
For $F^{\bullet}\in\PSh(\AnSm(\mathbb C),\Ab)$ and $X\in\AnSm(\mathbb C))$, 
we have the complex $F(X\times\bar{\mathbb D}^*)$ associated to the cubical object 
$F(X\times\bar{\mathbb D}^*)$ in the category of abelian groups.

\begin{itemize}
\item If $F^{\bullet}\in\PSh(\AnSm(\mathbb C),C^-(\mathbb Z))$, 
\begin{equation}
\underline{\sing}_{\bar{\mathbb D}^*}F^{\bullet}:=\underline\Hom(\mathbb Z(\bar{\mathbb D}^*),F^{\bullet})\in\PSh(\AnSm(\mathbb C),C^-(\mathbb Z))
\end{equation}
is the total complex of presheaves associated to the bicomplex of presheaves $X\mapsto F^{\bullet}(\bar{\mathbb D}^*\times X)$,
and 
$\sing_{\bar{\mathbb D}}F^{\bullet}:=
e_{an*}\underline{\sing}_{\bar{\mathbb D}^*}F^{\bullet}=F^{\bullet}(\bar{\mathbb D}^*)\in C^-(\mathbb Z)$.
We denote by 
$S(F^{\bullet}):F^{\bullet}\to\underline{\sing}_{\bar{\mathbb D}^*}F^{\bullet}$,
\begin{equation}\label{SFAn}
S(F^{\bullet}):\xymatrix{
\cdots\ar[r] & 0\ar[r]\ar[d] & 0\ar[r]\ar[d] & F^{\bullet}\ar[d]^{I}\ar[r] & 0\ar[r]\ar[d] & \cdots \\
\cdots\ar[r] & \underline{\sing}_{\bar{\mathbb D}^2}F^{\bullet}\ar[r] & 
\underline{\sing}_{\bar{\mathbb D^1}}F^{\bullet}\ar[r] & F^{\bullet}\ar[r] & 0\ar[r] & \cdots} 
\end{equation}
the inclusion morphism in $\PSh(\AnSm(\mathbb C),C^-(\mathbb Z))$. 
For $f:F_1^{\bullet}\to F_2^{\bullet}$ a morphism $\PSh(\AnSm(\mathbb C),C^-(\mathbb Z))$, we denote by 
$S(f):\underline{\sing}_{\bar{\mathbb D}^*}F_1^{\bullet}\to\underline{\sing}_{\bar{\mathbb D}^*}F_2^{\bullet}$
the morphism of $P^-(\An)$ given by, for $X\in\AnSm(\mathbb C)$,
\begin{equation}\label{SfAn}
S(f)(X):\xymatrix{
\cdots\ar[r] & F_1(\bar{\mathbb D}^2\times X)\ar[r]\ar[d]^{f(\bar{\mathbb D}^2\times X)} &
F_1(\bar{\mathbb D}^1\times X)\ar[r]\ar[d]^{f(\bar{\mathbb D}^1\times X)} & F_1^{\bullet}(X)\ar[d]^{f(X)}\ar[r] & 0\ar[r]\ar[d] & \cdots \\
\cdots\ar[r] & F_2(\bar{\mathbb D}^2\times X)\ar[r] & F_2(\bar{\mathbb D}^1\times X)\ar[r] & F_2^{\bullet}(X)\ar[r] & 0\ar[r] & \cdots.}  
\end{equation}

\item If $F^{\bullet}\in\PSh_{\mathbb Z}(\Cor^{fs}_{\mathbb Z}(\AnSm(\mathbb C)),C^-(\mathbb Z))$, 
\begin{equation}
\underline{\sing}_{\bar{\mathbb D}^*}F^{\bullet}:=\underline\Hom(\mathbb Z_{tr}(\bar{\mathbb D}^*),F^{\bullet})
\in\PSh_{\mathbb Z}(\Cor^{fs}_{\mathbb Z}(\AnSm(\mathbb C)),C^-(\mathbb Z)) 
\end{equation}
is the total complex of presheaves associated to the bicomplex of presheaves $X\mapsto F^{\bullet}(\bar{\mathbb D}^*\times X)$,
and 
$\sing_{\bar{\mathbb D}}F^{\bullet}:=
e^{tr}_{an*}\underline{\sing}_{\bar{\mathbb D}^*}F^{\bullet}=F^{\bullet}(\bar{\mathbb D}^*)\in C^-(\mathbb Z)$.
We have the inclusion morphism (\ref{SFAn})
\begin{equation*}
S(\Tr_*F^{\bullet}):\Tr_*F^{\bullet}\to
\underline{\sing}_{\bar{\mathbb D}^*}\Tr_*F^{\bullet}=\Tr_*\underline{\sing}_{\bar{\mathbb D}^*}F^{\bullet}
\end{equation*}
which is a morphism in $\PSh(\Cor^{fs}_{\mathbb Z}(\AnSm(\mathbb C)),C^-(\mathbb Z))$ 
denoted the same way $S(F^{\bullet}):F^{\bullet}\to\underline{\sing}_{\bar{\mathbb D}^*}F^{\bullet}$. 
For $f:F_1^{\bullet}\to F_2^{\bullet}$ a morphism $PC^-(\An)$, we 
have the morphism (\ref{SfAn})
\begin{equation*} 
S(\Tr_*f):\Tr_*\underline{\sing}_{\bar{\mathbb D}^*}F_1^{\bullet}=\underline{\sing}_{\bar{\mathbb D}^*}\Tr_*F_1^{\bullet}\to
\underline{\sing}_{\bar{\mathbb D}^*}\Tr_*F_2^{\bullet}=\Tr_*\underline{\sing}_{\bar{\mathbb D}^*}F_2^{\bullet}
\end{equation*}
which is a morphism in $PC^-(\An)$ 
denoted the same way
$S(f):\underline{\sing}_{\bar{\mathbb D}^*}F_1^{\bullet}\to\underline{\sing}_{\bar{\mathbb D}^*}F_2^{\bullet}$.
\end{itemize}

For $F^{\bullet}\in\PSh(\Cor^{fs}_{\mathbb Z}(\AnSm(\mathbb C)),C^-(\mathbb Z))$, we have by definition 
$\Tr_*\underline{\sing}_{\bar{\mathbb D}^*}F^{\bullet}=\underline{\sing}_{\bar{\mathbb D}^*}\Tr_*F^{\bullet}$
and $\Tr_*S(F^{\bullet})=S(\Tr_*F^{\bullet})$.

We now make the following definition

\begin{defi}\label{RMAn}
Let $X\in\AnSp(\mathbb C)$ and $D\subset X$ an analytic subspace. 
Denote by  $l:D\hookrightarrow X$ the locally closed embbeding.
We define
\begin{itemize}
\item[(i)] $\mathbb Z(X,D)=\coker(\mathbb Z(l))\in\PSh(\AnSm(\mathbb C),C^-(\mathbb Z))$
to be the cokernel of the injective morphism $\mathbb Z(l):\mathbb Z_{tr}(D)\hookrightarrow\mathbb Z(X)$.
By definition, we have the following exact sequence
\begin{equation}\label{AnXD}
0\to\mathbb Z(D)\xrightarrow{\mathbb Z(l)}\mathbb Z(X)\xrightarrow{c(X,D)}\mathbb Z(X,D)\to 0
\end{equation}
in $\PSh(\AnSm(\mathbb C),C^-(\mathbb Z))$.

\item[(ii)] $\mathbb Z_{tr}(X,D)=\coker(\mathbb Z_{tr}(l))\in\PSh_{\mathbb Z}(\Cor^{fs}_{\mathbb Z}(\AnSm(\mathbb C)),C^-(\mathbb Z))$
to be the cokernel of the injective morphism $\mathbb Z_{tr}(l):\mathbb Z_{tr}(D)\hookrightarrow\mathbb Z_{tr}(X)$.
By definition, we have the following exact sequence
\begin{equation}\label{AnXDtr}
0\to\mathbb Z_{tr}(D)\xrightarrow{\mathbb Z_{tr}(l)}\mathbb Z_{tr}(X)\xrightarrow{c(X,D)}\mathbb Z_{tr}(X,D)\to 0
\end{equation}
in $\PSh_{\mathbb Z}(\Cor^{fs}_{\mathbb Z}(\SmVar(k)),C^-(\mathbb Z))$.
By the exact sequences, we have $\Tr^*\mathbb Z(X,D)=\mathbb Z_{tr}(X,D)$
\end{itemize}
\end{defi}

In particular, we get the following exact sequence in $\PSh_{\mathbb Z}(\Cor^{fs}_{\mathbb Z}(\AnSm(\mathbb C)),C^-(\mathbb Z))$
\begin{equation}\label{AncXD}
0\to\underline{\sing}_{\mathbb D^*}\mathbb Z_{tr}(D)\xrightarrow{S(\mathbb Z_{tr}(l))}
\underline{\sing}_{\mathbb D^*}\mathbb Z_{tr}(X)
\xrightarrow{S(c(X,D))}\underline{\sing}_{\mathbb D^*}\mathbb Z_{tr}(X,D)\to 0
\end{equation}
We define 
\begin{equation*}
M(Y,E)=D(\mathbb D^1,usu)(\mathbb Z_{tr}(Y,E))\in\AnDM^-(\mathbb Z)
\end{equation*}
to be the relative motive of the pair $(Y,E)$.

We now look at the behavior of the functors mentionned above with respect to the $(\mathbb D^1,et)$ model structure

\begin{prop}\cite{AyoubG2}\cite{AyoubG1}
\begin{itemize}
\item[(i)] $(\Tr^*,\Tr_*):\PSh(\AnSm(k),C^-(\mathbb Z))\leftrightarrows\PSh_{\mathbb Z}(\Cor^{fs}_{\mathbb Z}(\AnSm(\mathbb C)),C^-(\mathbb Z))$
is a Quillen adjonction for the usual topology model structures (c.f. definition \ref{AnTrusu} (i) and (ii) respectively)
and a Quillen adjonction for the $(\mathbb D^1,usu)$ model structures (c.f. definition \ref{Dmodstr} (i) and (ii) respectively). 
\item[(ii)] $(e_{an}^*,e_{an*}):\PSh(\AnSm(\mathbb C),C^-(\mathbb Z))\leftrightarrows C^-(\mathbb Z)$
is a Quillen adjonction for the usual topology model structure (c.f. definition \ref{AnTrusu} (i))
and a Quillen adjonction for the $(\mathbb D^1,usu)$ model structure (c.f. definition \ref{Dmodstr} (i)). 
\item[(iii)] $(e_{an}^{tr*},e_{an*}^{tr}):\PSh(\Cor^{fs}_{\mathbb Z}(\AnSm(\mathbb C)),C^-(\mathbb Z))\leftrightarrows C^-(\mathbb Z)$
is a Quillen adjonction for the usual topology model structure (c.f. definition \ref{AnTrusu} (ii))
and a Quillen adjonction for the $(\mathbb D^1,usu)$ model structure (c.f. definition \ref{Dmodstr} (ii)). 
\end{itemize}
\end{prop}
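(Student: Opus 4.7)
The proof follows the same three-step pattern for each of the three adjunctions. The overarching strategy is to reduce to the standard Bousfield localization criterion: if $(F,G)$ is a Quillen adjunction for the projective (base) model structures and we form left Bousfield localizations on both sides, then $(F,G)$ remains Quillen provided the left adjoint $F$ sends each map in the localizing class of the source to a weak equivalence of the target localization (equivalently, $F$ preserves the class of generating trivial cofibrations added by the localization). Since in left Bousfield localizations the cofibrations are unchanged, this is the only additional property to check.

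\medskip
\textbf{Step 1 (projective Quillen adjunction).} For each of the three pairs, the right adjoint $\Tr_*$, $e_{an*}$, $e_{an*}^{tr}$ is a restriction functor along a morphism of sites induced by an inclusion of (additive) categories. Such restriction functors commute with all limits and are defined termwise, hence preserve termwise epimorphisms and termwise quasi-isomorphisms. These are exactly the fibrations and trivial fibrations in the projective model structure $\mathcal{M}_P$, so each pair is a Quillen adjunction for the projective model structures.

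\medskip
\textbf{Step 2 (passage to the usual topology model structures).} For (i), the generating usu-local equivalences in $P^-(\An)$ may be taken to be the \v{C}ech nerves of open coverings,
$\bigl[\cdots\to\bigoplus_{|I|=r}\mathbb Z(U_I)\to\cdots\to\bigoplus_{i\in J}\mathbb Z(U_i)\bigr]\to\mathbb Z(X)$,
tensored with representables. Since $\Tr^*\mathbb Z(X)=\mathbb Z_{tr}(X)$ and $\Tr^*$ commutes with sums and complex operations, the image under $\Tr^*$ is the analogous \v{C}ech nerve with transfers, which is a usu-local equivalence in $PC^-(\An)$ by lemma \ref{HRAn}(i). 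Hence $\Tr^*$ preserves the localizing class, so $(\Tr^*,\Tr_*)$ is Quillen for the usu-local structures. For (ii) and (iii), the left adjoints $e_{an}^*$ and $e_{an}^{tr*}$ send a complex $K^\bullet\in C^-(\mathbb Z)$ to the constant presheaf with value $K^\bullet$; a quasi-isomorphism $K_1^\bullet\to K_2^\bullet$ induces a termwise quasi-isomorphism, which is \emph{a fortiori} a usu-local equivalence in $P^-(\An)$ (resp.\ $PC^-(\An)$).

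\medskip
\textbf{Step 3 (passage to the $(\mathbb D^1,usu)$ model structures).} For (i), one has $\Tr^*\mathbb Z(X\times\mathbb D^1)=\mathbb Z_{tr}(X\times\mathbb D^1)$ and $\Tr^*\mathbb Z(X)=\mathbb Z_{tr}(X)$, so $\Tr^*$ sends the generators $\{\mathbb Z(X\times\mathbb D^1)[n]\to\mathbb Z(X)[n]\}$ of the $\mathbb D^1$-localization on the source to the defining generators $\{\mathbb Z_{tr}(X\times\mathbb D^1)[n]\to\mathbb Z_{tr}(X)[n]\}$ on the target, tautologically $(\mathbb D^1,usu)$-equivalences. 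For (ii) and (iii), $e_{an}^*K^\bullet$ is the constant presheaf, so its value at $X\times\mathbb D^1$ equals its value at $X$; the morphism $e_{an}^*(K^\bullet)(X\times\mathbb D^1)\to e_{an}^*(K^\bullet)(X)$ is the identity, \emph{a fortiori} a weak equivalence. Combined with Step 2 this gives the desired $(\mathbb D^1,usu)$ Quillen adjunctions.

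\medskip
The only nontrivial verification is Step 2 for (i), which rests on lemma \ref{HRAn}(i): the transfer of a \v{C}ech hypercover of an open covering remains a usu-local equivalence. Everything else is either formal from the definition of left Bousfield localization, or follows by inspection because the left adjoint identifies representables with representables (Step 3(i)) or the localizing maps with identity morphisms (Steps 2 and 3 for (ii), (iii)).
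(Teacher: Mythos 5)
The paper itself cites this proposition to Ayoub and gives no in‑paper proof, so the closest comparison point is the paper's own proof of the strictly analogous CW proposition (the one with the $(\mathbb I^1,usu)$ model structures), which goes via the \emph{right} adjoint: there one checks that $\Tr_*$ preserves $\mathbb I^1$-local objects (lemma~\ref{CWTr}(i)) and preserves $(\mathbb I^1,usu)$-equivalences (lemma~\ref{CWTr}(ii)), and that $e^*_{cw}$, $e^{tr*}_{cw}$ are evidently left Quillen. Your proof instead goes via the \emph{left} adjoint and the standard Bousfield-localization criterion; this is a valid alternative and the analogue in the analytic setting would appeal to lemma~\ref{DTr} if one wanted to mirror the paper's right-adjoint route. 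Two remarks on your argument. First, in Step~2(i) you assert that the usu-local equivalences in $P^-(\An)$ are generated by \v{C}ech nerves and invoke lemma~\ref{HRAn}(i); since the usu topology model structure is defined via sheafification of cohomology presheaves (i.e. the hypercomplete localization), one may in principle need hypercovers rather than just \v{C}ech covers, but this is immaterial because lemma~\ref{HRAn}(ii) handles the hypercover case by the same token, so the argument closes either way. Second, for (ii) and (iii) your Steps 2 and 3 are logically redundant: the source $C^-(\mathbb Z)$ carries no further Bousfield localization, so once the projective-level Quillen adjunction of Step~1 is in place it descends automatically to any left Bousfield localization of the target side (cofibrations and trivial cofibrations of the source land in cofibrations and trivial cofibrations of the underlying projective structure on the target, and these remain (trivial) cofibrations after localization). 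What you verify in Steps 2 and 3 for (ii), (iii) — that $e^*_{an}$ sends quasi-isomorphisms to usu-local equivalences, and that $e^*_{an}K^\bullet$ is $\mathbb D^1$-invariant — is true, but is the content of the \emph{next} proposition in the paper (that $e^*_{an}K^\bullet$ is $\mathbb D^1$-local), not a prerequisite for the present Quillen-adjunction claim.
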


\begin{lem}\label{DTr}
\begin{itemize}
\item[(i)] A complex of presheaves 
$F^{\bullet}\in\PSh(\Cor^{fs}_{\mathbb Z}(\AnSm(\mathbb C)),C^-(\mathbb Z))$ is 
$\mathbb D^1$ local if and only if 
$\Tr_*F^{\bullet}\in\PSh(\AnSm(\mathbb C),C^-(\mathbb Z))$ is 
$\mathbb D^1$ local.

\item[(ii)] A morphism 
$\phi:F^{\bullet}\to G^{\bullet}$ in $\PSh(\Cor^{fs}_{\mathbb Z}(\AnSm(\mathbb C)),C^-(\mathbb Z))$  
is an $(\mathbb D^1,usu)$ local equivalence if and only if
$\Tr_*\phi:\Tr_*F^{\bullet}\to\Tr_*G^{\bullet}$ is 
is an $(\mathbb D^1,usu)$ local equivalence.
\end{itemize}
\end{lem}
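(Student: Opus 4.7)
The plan is to reduce both statements to definition \ref{AnTrusu}(ii), which characterizes usu local equivalences on the category with transfers by $\Tr_*$, via the explicit $\mathbb{D}^1$-localization functor $\underline{\sing}_{\bar{\mathbb{D}}^*}$ supplied by theorem \ref{AnS}. The central piece of bookkeeping is the equality $\Tr_*\underline{\sing}_{\bar{\mathbb{D}}^*}F^{\bullet}=\underline{\sing}_{\bar{\mathbb{D}}^*}\Tr_*F^{\bullet}$, and consequently $\Tr_*S(F^{\bullet})=S(\Tr_*F^{\bullet})$, which is recorded in the excerpt just before definition \ref{RMAn} and holds because $\Tr_*$ is restriction along the embedding of sites $\Tr$ and commutes tautologically with the cubical singular construction.

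For part (i), I would first establish the general characterization that a complex $H^{\bullet}$ (on either site) is $\mathbb{D}^1$ local if and only if $S(H^{\bullet}):H^{\bullet}\to\underline{\sing}_{\bar{\mathbb{D}}^*}H^{\bullet}$ is a usu local equivalence. The ``$\Leftarrow$'' direction is immediate: $\underline{\sing}_{\bar{\mathbb{D}}^*}H^{\bullet}$ is $\mathbb{D}^1$ local by theorem \ref{AnS} and the property of being $\mathbb{D}^1$ local is invariant under usu local equivalence. The ``$\Rightarrow$'' direction is the standard Bousfield principle that a $(\mathbb{D}^1,usu)$ equivalence between $\mathbb{D}^1$ local (i.e.\ $(\mathbb{D}^1,usu)$-fibrant) objects is automatically a usu local equivalence, applied to $S(H^{\bullet})$, which is a $(\mathbb{D}^1,usu)$ equivalence by theorem \ref{AnS}. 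Applying this characterization separately in $PC^-(\An)$ and in $P^-(\An)$, and using definition \ref{AnTrusu}(ii) together with $\Tr_*S(F^{\bullet})=S(\Tr_*F^{\bullet})$, we obtain: $F^{\bullet}$ is $\mathbb{D}^1$ local iff $S(F^{\bullet})$ is a usu local equivalence in $PC^-(\An)$, iff $\Tr_*S(F^{\bullet})=S(\Tr_*F^{\bullet})$ is a usu local equivalence in $P^-(\An)$, iff $\Tr_*F^{\bullet}$ is $\mathbb{D}^1$ local, which is (i).

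For part (ii), I would use the naturality square
\begin{equation*}
\xymatrix{
F^{\bullet}\ar[r]^{\phi}\ar[d]_{S(F^{\bullet})} & G^{\bullet}\ar[d]^{S(G^{\bullet})} \\
\underline{\sing}_{\bar{\mathbb{D}}^*}F^{\bullet}\ar[r]^{\underline{\sing}_{\bar{\mathbb{D}}^*}\phi} & \underline{\sing}_{\bar{\mathbb{D}}^*}G^{\bullet}.}
\end{equation*}
Both vertical arrows are $(\mathbb{D}^1,usu)$ equivalences by theorem \ref{AnS}, and both entries of the bottom row are $\mathbb{D}^1$ local by the same theorem, so the bottom arrow is a $(\mathbb{D}^1,usu)$ equivalence if and only if it is a usu local equivalence. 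Two-out-of-three then gives that $\phi$ is a $(\mathbb{D}^1,usu)$ local equivalence iff $\underline{\sing}_{\bar{\mathbb{D}}^*}\phi$ is a usu local equivalence in $PC^-(\An)$. Applying definition \ref{AnTrusu}(ii), this is equivalent to $\Tr_*\underline{\sing}_{\bar{\mathbb{D}}^*}\phi=\underline{\sing}_{\bar{\mathbb{D}}^*}\Tr_*\phi$ being a usu local equivalence in $P^-(\An)$, and a second application of the same two-out-of-three argument to $\Tr_*\phi$ identifies this last condition with $\Tr_*\phi$ being a $(\mathbb{D}^1,usu)$ local equivalence, which is (ii).

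The only non-formal input used is the standard fact from the theory of left Bousfield localization that a local equivalence between local (fibrant) objects is a weak equivalence of the original model structure; once theorem \ref{AnS} supplies $\underline{\sing}_{\bar{\mathbb{D}}^*}$ as a concrete functorial $\mathbb{D}^1$-fibrant replacement and the commutation $\Tr_*\underline{\sing}_{\bar{\mathbb{D}}^*}=\underline{\sing}_{\bar{\mathbb{D}}^*}\Tr_*$ is observed, there is no genuine obstacle in either part.
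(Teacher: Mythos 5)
Your argument is correct as a piece of abstract model-category reasoning, but within the paper's logical structure it is circular. Both parts of your proof rely on theorem \ref{AnS}(ii) (that $\underline{\sing}_{\bar{\mathbb{D}}^*}F^{\bullet}$ in $PC^-(\An)$ is $\mathbb{D}^1$ local and that $S(F^{\bullet})$ is a $(\mathbb{D}^1,usu)$ equivalence). But in this paper theorem \ref{AnS}(ii) is \emph{deduced} from lemma \ref{DTr}(i) and (ii): the proof of \ref{AnS}(ii) reads ``By (i), $\Tr_*\underline{\sing}_{\bar{\mathbb D}^*}F^{\bullet}$ is $\mathbb D^1$ local; by lemma \ref{DTr}(i), $\Tr_*$ detect $\mathbb D^1$ local object, thus $\underline{\sing}_{\bar{\mathbb D}^*}F^{\bullet}$ is $\mathbb D^1$ local'' and similarly for the equivalence claim using lemma \ref{DTr}(ii). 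So your reduction feeds the conclusion back in as a hypothesis. Only \ref{AnS}(i) — the version without transfers, cited from \cite{AyoubG1} — is available at the point where lemma \ref{DTr} is proved, and with only \ref{AnS}(i) in hand you cannot complete the first equivalence in your chain for part (i) (``$F^{\bullet}$ $\mathbb{D}^1$ local iff $S(F^{\bullet})$ usu local equivalence in $PC^-(\An)$''), nor the claim in part (ii) that the bottom row of your naturality square lives among $\mathbb{D}^1$ local objects of $PC^-(\An)$.

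The paper avoids the explicit localization functor altogether. For (i) it replaces $F^{\bullet}$ by a usu fibrant $L^{\bullet}$, uses that $\Tr_*$ preserves usu local equivalences (by definition \ref{AnTrusu}(ii)) and usu fibrant objects (cited from \cite{AyoubG2}), and then observes via Yoneda that $\Hom_{P^-(\An)}(\mathbb Z(X)[n],\Tr_*L^{\bullet})=H^nL^{\bullet}(X)=\Hom_{PC^-(\An)}(\mathbb Z_{tr}(X)[n],L^{\bullet})$, so the $\mathbb D^1$-locality test on both sides reads off the same groups. For (ii) it cites \cite{AyoubG2} for the only-if direction and uses (i) together with the adjunction $(\Tr^*,\Tr_*)$ and $\Tr^*\Tr_*\cong\mathrm{id}$ for the converse. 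If you want to keep a route through $\underline{\sing}_{\bar{\mathbb{D}}^*}$, you would first need an independent proof of \ref{AnS}(ii) — for instance by re-running the \cite{AyoubG1} argument on the transfer site — which is exactly the work your appeal to \ref{AnS}(ii) was meant to shortcut. A minor additional remark: the parenthetical identification ``$\mathbb{D}^1$ local (i.e.\ $(\mathbb{D}^1,usu)$-fibrant)'' is not accurate; an object is $(\mathbb{D}^1,usu)$-fibrant iff it is $\mathbb{D}^1$ local \emph{and} usu fibrant, and your two-out-of-three step needs the latter as a separate hypothesis (or a usu fibrant replacement, as the paper uses).
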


\begin{proof}

\noindent(i): Let $g:F^{\bullet}\to L^{\bullet}$ 
be an usu local equivalence in $\PSh(\Cor^{fs}_{\mathbb Z}(\AnSm(\mathbb C)),C^-(\mathbb Z))$
with $L^{\bullet}$ usu fibrant. Then,
\begin{itemize}
\item $F^{\bullet}$ is $\mathbb D^1$ local if and only if $L^{\bullet}$ is $\mathbb D^1$ local.

\item By definition $\Tr_*$ preserve usu local equivalence, hence
$\Tr_*g:\Tr_*F^{\bullet}\to\Tr_*L^{\bullet}$ is an usu local equivalence in
$\PSh(\AnSm(\mathbb C),C^-(\mathbb Z))$. 
Thus, $\Tr_*F^{\bullet}$ is $\mathbb D^1$ local if and only if $\Tr_*L^{\bullet}$ is $\mathbb D^1$ local. 

\item As indicated in \cite{AyoubG2}, $\Tr_*L^{\bullet}$ is also usu fibrant. 
By Yoneda lemma, for $X\in\AnSm(\mathbb C)$, we have
\begin{equation*}
\Hom_{P^-(\An)}(\mathbb Z(X)[n],\Tr_*L^{\bullet})=H^nL^{\bullet}(X)=\Hom_{PC^-(\An)}(\mathbb Z_{tr}(X)[n],L^{\bullet})
\end{equation*}
Hence $L^{\bullet}$ is $\mathbb D^1$ local if and only if $\Tr_*L^{\bullet}$ is $\mathbb D^1$ local.
\end{itemize}
This proves (i).

\noindent (ii): Let us prove (ii). 
\begin{itemize}
\item The only if part is proved in \cite{AyoubG2}.

\item Let $\phi:F^{\bullet}\to G^{\bullet}$ in $\PSh(\Cor^{fs}_{\mathbb Z}(\AnSm(\mathbb C)),C^-(\mathbb Z))$ such that
$\Tr_*\phi:\Tr_*F^{\bullet}\to\Tr_*G^{\bullet}$ is an equivalence $(\mathbb D^1,usu)$ local
in $\PSh(\AnSm(\mathbb C),C^-(\mathbb Z))$. Since by definition $\Tr_*$ detecte and preserve usu local equivalence,
we can, up to replace $F^{\bullet}$ and $G^{\bullet}$ by usu equivalent presheaves, 
assume that $F^{\bullet}$ and $G^{\bullet}$ are usu fibrant.
Let $K^{\bullet}\in\PSh(\Cor^{fs}_{\mathbb Z}(\AnSm(\mathbb C)),C^-(\mathbb Z))$ be an $\mathbb D^1$ local object.
By (i), $\Tr_*K^{\bullet}$ is $\mathbb D^1$ local.
Hence, we have,
\begin{eqnarray*}
\Hom_{PC^-(\An)}(G^{\bullet},K^{\bullet})=\Hom_{PC^-}(\Tr^*\Tr_*G^{\bullet},K^{\bullet})=\Hom_{P^-(\An)}(\Tr_*G^{\bullet},\Tr_*K^{\bullet})
\xrightarrow{\sim} \\ 
\Hom_{P^-(\An)}(\Tr_*F^{\bullet},\Tr_*K^{\bullet})=\Hom_{PC^-}(\Tr^*\Tr_*F^{\bullet},K^{\bullet})=\Hom_{PC^-(\An)}(F^{\bullet},K^{\bullet}).
\end{eqnarray*}
This proves the if part.
\end{itemize}

\end{proof}

\begin{prop}
\begin{itemize}
\item[(i)] The functor 
$\Tr_*:\PSh_{\mathbb Z}(\Cor^{fs}_{\mathbb Z}(\AnSm(\mathbb C)),C^-(\mathbb Z))\to\PSh(\AnSm(\mathbb C)),C^-(\mathbb Z))$ 
derive trivially.
\item[(ii)] For $K^{\bullet}\in C^-(\mathbb Z)$, $e_{an}^*K^{\bullet}$ is $\mathbb D^1$ local.
\item[(iii)] For $K^{\bullet}\in C^-(\mathbb Z)$, $e^{tr*}_{an}K^{\bullet}$ is $\mathbb D^1$ local.
\end{itemize}
\end{prop}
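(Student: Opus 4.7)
Point (i) is an immediate reformulation of Lemma~\ref{DTr}(ii): to say that $\Tr_*$ derives trivially for the $(\mathbb D^1,usu)$ model structures means precisely that it preserves all $(\mathbb D^1,usu)$-equivalences, without any cofibrant or fibrant replacement, and this is exactly what Lemma~\ref{DTr}(ii) asserts. (More precisely, the same statement for the underlying usu-local structure follows from the definition of usu-equivalence in $PC^-(\An)$, while the passage to $(\mathbb D^1,usu)$-equivalences is the content of the ``if and only if'' in Lemma~\ref{DTr}(ii).)

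For point (ii), I would unwind the definition of $\mathbb D^1$-locality. The presheaf $e_{an}^*K^{\bullet}$ is the constant presheaf $X \mapsto K^{\bullet}$; let $\underline{K^{\bullet}}$ denote its sheafification for the usual topology on $\AnSm(\mathbb C)$. Since the representables $\mathbb Z(X)$ are projectively cofibrant, we have, for every $X \in \AnSm(\mathbb C)$ and every $n \in \mathbb Z$,
$$\Hom_{\Ho_{usu}(P^-(\An))}(\mathbb Z(X)[n], e_{an}^*K^{\bullet}) \;=\; \mathbb H^n(X, \underline{K^{\bullet}}).$$
The $\mathbb D^1$-locality of $e_{an}^*K^{\bullet}$ is then the assertion that the projection $p\colon X \times \mathbb D^1 \to X$ induces an isomorphism $\mathbb H^n(X, \underline{K^{\bullet}}) \xrightarrow{\sim} \mathbb H^n(X \times \mathbb D^1, \underline{K^{\bullet}})$. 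By the Leray spectral sequence this reduces to the identity $Rp_*\underline{K^{\bullet}} = \underline{K^{\bullet}}$, which in turn follows stalkwise from the fact that $\mathbb D^1$ is contractible, i.e.\ from $H^0(\mathbb D^1, \underline{\mathbb Z}) = \mathbb Z$ and $H^q(\mathbb D^1, \underline{\mathbb Z}) = 0$ for $q > 0$ (together with a standard d\'evissage in $K^{\bullet}$ via truncations to pass from constant coefficients $\mathbb Z$ to an arbitrary bounded above complex).

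Point (iii) is then formal. By Lemma~\ref{DTr}(i), $e^{tr*}_{an}K^{\bullet}$ is $\mathbb D^1$-local in $\PSh_{\mathbb Z}(\Cor^{fs}_{\mathbb Z}(\AnSm(\mathbb C)),C^-(\mathbb Z))$ if and only if $\Tr_*e^{tr*}_{an}K^{\bullet}$ is $\mathbb D^1$-local in $P^-(\An)$. Since by construction $e^{tr}_{an} = \Tr \circ e_{an}$ at the level of sites, direct inspection of the definitions identifies $\Tr_*e^{tr*}_{an}K^{\bullet}$ with the constant presheaf $e_{an}^*K^{\bullet}$, which is $\mathbb D^1$-local by (ii).

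The only non-formal ingredient is the cohomology computation $Rp_*\underline{K^{\bullet}} = \underline{K^{\bullet}}$ for the projection $p\colon X \times \mathbb D^1 \to X$ used in point (ii); this is classical and its verification is the main (though small) technical obstacle. Everything else reduces to Lemma~\ref{DTr} and the definitions.
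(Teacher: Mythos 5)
Your proposal is correct and, for points (i) and (iii), follows exactly the paper's own argument: (i) is Lemma~\ref{DTr}(ii) (the preservation direction), and (iii) uses the identity $\Tr_*e^{tr*}_{an}K^{\bullet}=e_{an}^*K^{\bullet}$ together with the fact that $\Tr_*$ detects $\mathbb D^1$-locality (Lemma~\ref{DTr}(i)).

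For point (ii), the paper simply cites \cite{AyoubB}, whereas you supply an actual argument. Your argument is sound: the identification $\Hom_{\Ho_{usu}}(\mathbb Z(X)[n], e_{an}^*K^{\bullet})=\mathbb H^n(X,\underline{K^{\bullet}})$, the reduction to $Rp_*\underline{K^{\bullet}}=\underline{K^{\bullet}}$, and the stalkwise computation on the contractible fiber $\mathbb D^1$ together give the desired $\mathbb D^1$-locality; for the d\'evissage you can simply use the hypercohomology spectral sequence $E_2^{p,q}=H^p(\mathbb D^1,\underline{H^q(K^{\bullet})})\Rightarrow\mathbb H^{p+q}(\mathbb D^1,\underline{K^{\bullet}})$, which degenerates by contractibility and converges because $K^{\bullet}$ is bounded above. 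This is precisely the content of the cited result in \cite{AyoubB}, so you have replaced a reference by an explicit argument, which is a reasonable and self-contained route.
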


\begin{proof}

\noindent(i): By lemma \ref{DTr} (ii), $\Tr_*$ preserve $(\mathbb D^1,usu)$ local equivalence.

\noindent(ii): It is proved in \cite{AyoubB}.

\noindent(iii): We have $\Tr_*e^{tr*}_{an}K^{\bullet}=e_{an}^*K^{\bullet}$.
By (ii), $e_{an}^*K^{\bullet}$ is $\mathbb D^1$ local. By lemma \ref{DTr} (i), $\Tr_*$ detect $\mathbb D^1$ local object.
This proves (iii).

\end{proof}

\begin{thm}\label{AnS}
\begin{itemize}
\item[(i)] For $F^{\bullet}\in\PSh(\AnSm(\mathbb C),C^-(\mathbb Z))$,
$\underline{\sing}_{\bar{\mathbb D}^*}F^{\bullet}\in\PSh(\AnSm(\mathbb C),C^-(\mathbb Z))$ is $\mathbb D^1$ local and 
the inclusion morphism $S(F^{\bullet}):F^{\bullet}\to\underline{\sing}_{\bar{\mathbb D}^*}F^{\bullet}$ is an $(\mathbb D^1,usu)$ equivalence. 
\item[(ii)] For $F^{\bullet}\in\PSh_{\mathbb Z}(\Cor^{fs}_{\mathbb Z}(\AnSm(\mathbb C)),C^-(\mathbb Z))$, 
$\underline{\sing}_{\bar{\mathbb D}^*}F^{\bullet}\in\PSh_{\mathbb Z}(\Cor^{fs}_{\mathbb Z}(\AnSm(\mathbb C)),C^-(\mathbb Z))$ 
is $\mathbb D^1$ local and 
the inclusion morphism $S(F^{\bullet}):F^{\bullet}\to\underline{\sing}_{\bar{\mathbb D}^*}F^{\bullet}$ is an $(\mathbb D^1,usu)$ equivalence. 
\end{itemize}
\end{thm}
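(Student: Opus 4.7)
The plan is to prove part (i) first by recalling Ayoub's construction, then to deduce part (ii) formally from (i) via the functor $\Tr_*$.

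For the reduction of (ii) to (i): the excerpt already records the two compatibilities $\Tr_*\underline{\sing}_{\bar{\mathbb D}^*}F^{\bullet}=\underline{\sing}_{\bar{\mathbb D}^*}\Tr_*F^{\bullet}$ and $\Tr_*S(F^{\bullet})=S(\Tr_*F^{\bullet})$. Applying $\Tr_*$ to the two assertions of (ii) therefore produces the two assertions of (i) applied to the presheaf $\Tr_*F^{\bullet}$. Lemma \ref{DTr}(i) says that $\Tr_*$ reflects $\mathbb D^1$-local objects, and Lemma \ref{DTr}(ii) says that $\Tr_*$ reflects $(\mathbb D^1,usu)$-equivalences, so (ii) follows at once from (i).

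For the $(\mathbb D^1,usu)$-equivalence part of (i): the complex $\underline{\sing}_{\bar{\mathbb D}^*}F^{\bullet}$ is the totalization of a cubical bicomplex whose $n$-th column is $\underline{\Hom}(\mathbb Z(\bar{\mathbb D}^n),F^{\bullet})$. As a pro-object $\bar{\mathbb D}=\lim_{r>1}\mathbb D_r$, and each $\mathbb D_r$ is $\mathbb D^1$-contractible via the scaling map $\mathbb D_r\times\mathbb D^1\to\mathbb D_r$, $(x,t)\mapsto tx$ (note $|tx|<r$). Consequently the projection $\bar{\mathbb D}^n\to\pt$ is a $(\mathbb D^1,usu)$-equivalence, so each column is $(\mathbb D^1,usu)$-equivalent to $F^{\bullet}$ via the structure morphisms, and a standard spectral sequence/telescoping argument shows that the cubical totalization collapses onto $F^{\bullet}$ compatibly with $S(F^{\bullet})$.

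The main obstacle is the $\mathbb D^1$-locality of $\underline{\sing}_{\bar{\mathbb D}^*}F^{\bullet}$: one must show that for any $X\in\AnSm(\mathbb C)$ the projection $p:X\times\mathbb D^1\to X$ induces a quasi-isomorphism $\underline{\sing}_{\bar{\mathbb D}^*}F^{\bullet}(X)\to\underline{\sing}_{\bar{\mathbb D}^*}F^{\bullet}(X\times\mathbb D^1)$ after usu-sheafification (equivalently, stalkwise). Following Ayoub, one constructs a cubical prism operator from the coordinatewise multiplication maps $\mu_n:\bar{\mathbb D}^n\times\mathbb D^1\to\bar{\mathbb D}^n$, $(x,t)\mapsto(tx_1,\ldots,tx_n)$; pullback along $\mu_n$ yields chain-level maps $F^{\bullet}(\bar{\mathbb D}^n\times X\times\mathbb D^1)\to F^{\bullet}(\bar{\mathbb D}^{n+1}\times X)$, and careful bookkeeping of the face and degeneracy relations on $\bar{\mathbb D}^*$ assembles these into a contracting homotopy between the identity on $\underline{\sing}_{\bar{\mathbb D}^*}F^{\bullet}(X\times\mathbb D^1)$ and the pullback by $p$ of $\underline{\sing}_{\bar{\mathbb D}^*}F^{\bullet}(X)$. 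Verifying that the boundary terms match is the technical heart of the construction in \cite{AyoubG1}; since (i) is stated in the excerpt as a recall of Ayoub's result, we may simply invoke that reference.
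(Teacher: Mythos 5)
Your proposal matches the paper's proof: part (i) is recalled by citation to \cite{AyoubG1}, and part (ii) is deduced from (i) using the identities $\Tr_*\underline{\sing}_{\bar{\mathbb D}^*}F^{\bullet}=\underline{\sing}_{\bar{\mathbb D}^*}\Tr_*F^{\bullet}$ and $\Tr_*S(F^{\bullet})=S(\Tr_*F^{\bullet})$ together with Lemma \ref{DTr}(i) (reflecting $\mathbb D^1$-local objects) and Lemma \ref{DTr}(ii) (reflecting $(\mathbb D^1,usu)$-equivalences). Your supplementary sketch of the scaling contraction and the cubical prism operator behind (i) goes beyond what the paper writes, but since you ultimately invoke the reference, the approach is the same.
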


\begin{proof}

\noindent(i): It is proved in \cite{AyoubG1}.

\noindent(ii): 
By (i), $\Tr_*\underline{\sing}_{\bar{\mathbb D}^*}F^{\bullet}=\underline{\sing}_{\bar{\mathbb D}^*}\Tr_*F^{\bullet}$
is $\mathbb D^1$ local. 
By lemma \ref{DTr} (i), $\Tr_*$ detect $\mathbb D^1$ local object. 
Thus, 
$\underline{\sing}_{\bar{\mathbb D}^*}F^{\bullet}=\underline{\sing}_{\bar{\mathbb D}^*}F^{\bullet}$
is $\mathbb D^1$ local. 
This proves the first part of the assertion.

It follows from (i) that 
\begin{equation}
\Tr_*S(F^{\bullet}):\Tr_*F^{\bullet}\to\underline{\sing}_{\bar{\mathbb D}^*}\Tr_*F^{\bullet}
=\Tr_*\underline{\sing}_{\bar{\mathbb D}^*}F^{\bullet} 
\end{equation} 
is an $(\mathbb D^1,usu)$ equivalence. 
Since, by lemma \ref{DTr} (ii), $\Tr_*$ detect $(\mathbb D^1,usu)$ equivalence,
$S(F^{\bullet}):F^{\bullet}\to\underline{\sing}_{\bar{\mathbb D}^*}F^{\bullet}$
is an $(\mathbb D^1,usu)$ equivalence. 
 
\end{proof}

\begin{thm}\label{AnTr}\cite{AyoubG2}\cite{AyoubB}
\begin{itemize}
\item[(i)]The adjonction
$(\Tr^*,\Tr_*):\PSh(\AnSm(\mathbb C),C^-(\mathbb Z))\leftrightarrows\PSh_{\mathbb Z}(\Cor^{fs}_{\mathbb Z}(\AnSm(\mathbb C)),C^-(\mathbb Z))$
is a Quillen equivalence for the $(\mathbb D^1,usu)$ model structures. 
That is, the derived functor 
\begin{equation}
L\Tr^*:\AnDA^-(\mathbb Z)\xrightarrow{\sim}\AnDM^-(\mathbb Z) 
\end{equation}
is an isomorphism
and $\Tr_*:\AnDM^-(\mathbb Z)\xrightarrow{\sim}\AnDA^-(\mathbb Z)$ is it inverse. 

\item[(ii)] The adjonction
$(e_{an}^*,e_{an*}):C^-(\mathbb Z)\leftrightarrows\PSh_{\mathbb Z}(\AnSm(\mathbb C),C^-(\mathbb Z))$
is a Quillen equivalence for the $(\mathbb I^1,usu)$ model structures. 
That is, the derived functor
\begin{equation}
e_{an}^*:D^-(\mathbb Z)\xrightarrow{\sim}\AnDA^-(\mathbb Z)
\end{equation}
is an isomorphism
and $Re_{an*}:\AnDA^-(\mathbb Z)\xrightarrow{\sim}D^-(\mathbb Z)$ is it inverse. 

\item[(iii)] The adjonction
$(e^{tr*}_{an},e^{tr}_{an*}):C^-(\mathbb Z)\leftrightarrows\PSh_{\mathbb Z}(\Cor^{fs}_{\mathbb Z}(\AnSm(\mathbb C)),C^-(\mathbb Z))$
is a Quillen equivalence for the $(\mathbb D^1,usu)$ model structures. 
That is, the derived functor
\begin{equation}
e^{tr*}_{an}:D^-(\mathbb Z)\xrightarrow{\sim}\AnDM^-(\mathbb Z)
\end{equation}
is an isomorphism
and $Re^{tr}_{an*}:\AnDM^-(\mathbb Z)\xrightarrow{\sim}D^-(\mathbb Z)$ is it inverse. 
\end{itemize}
\end{thm}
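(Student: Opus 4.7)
The plan is to prove the three parts in sequence, with (iii) following formally from (i) and (ii) via the factorization of sites $e_{an}^{tr} = \Tr \circ e_{an}$. At every step the main technical input is the explicit fibrant replacement $\underline{\sing}_{\bar{\mathbb D}^*}$ from theorem \ref{AnS}, together with the detection and preservation properties of $\Tr_*$ from lemma \ref{DTr}.

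For (i) I would apply the standard Quillen equivalence criterion: since $\Tr_*$ both preserves and detects $(\mathbb D^1, usu)$-equivalences by lemma \ref{DTr}(ii), it suffices to show that for every cofibrant $F \in \PSh(\AnSm(\mathbb C), C^-(\mathbb Z))$ the derived unit $F \to \Tr_* L\Tr^* F$ is a $(\mathbb D^1, usu)$-equivalence. The projective cofibrations are generated by maps built from the representables $\mathbb Z(X)[n]$, so a triangulated-subcategory argument reduces the statement to the generators: it is enough to show that the graph morphism $\mathbb Z(X) \to \Tr_* \mathbb Z_{tr}(X)$, sending $f: Y \to X$ to the graph $\Gamma_f \in \mathcal Z^{fs/Y}(Y \times X, \mathbb Z)$, is a $(\mathbb D^1, usu)$-equivalence for every $X \in \AnSm(\mathbb C)$. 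This is the analytic analogue of the Suslin--Voevodsky representability theorem; it is the main nontrivial input and Ayoub's proof rests on analytic rigidity arguments adapted to the cubical pro-object $\bar{\mathbb D}^*$. I expect this to be by far the hardest step.

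For (ii), theorem \ref{AnS}(i) lets me compute $R e_{an*}$ as $e_{an*} \underline{\sing}_{\bar{\mathbb D}^*} = \sing_{\bar{\mathbb D}^*}$. The derived unit on $F^{\bullet} \in C^-(\mathbb Z)$ becomes $F^{\bullet} \to \sing_{\bar{\mathbb D}^*}(e_{an}^* F^{\bullet}) = (e_{an}^* F^{\bullet})(\bar{\mathbb D}^*)$, which is the total complex of the constant cubical object with value $F^{\bullet}$ and is quasi-isomorphic to $F^{\bullet}$ since the normalized cubical complex of a point is $\mathbb Z$ concentrated in degree zero. For essential surjectivity of $L e_{an}^*$, proposition \ref{Dcov} furnishes, for every smooth connected $X$, a countable open covering $X = \cup_i D_i$ whose nonempty finite intersections $D_I$ are biholomorphic to $\mathbb D^{d_X}$; combining the descent computation of lemma \ref{HRAn}(i) with the $\mathbb D^1$-locality of the fibrant replacement, each $D_I$ becomes equivalent to a point in $\AnDA^-(\mathbb Z)$, and the resulting \v{C}ech presentation exhibits $M(X)$ in the localizing subcategory generated by $e_{an}^* \mathbb Z$, hence in the essential image of $L e_{an}^*$.

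Part (iii) I would deduce formally from (i) and (ii): the factorization $e_{an}^{tr} = \Tr \circ e_{an}$ yields $e_{an}^{tr*} = \Tr^* \circ e_{an}^*$, and the composition of the two left Quillen equivalences of (i) and (ii) is again a left Quillen equivalence; the identification of $R e_{an*}^{tr}$ as $\sing_{\bar{\mathbb D}^*}$ follows from the tautological equality $\Tr_* \underline{\sing}_{\bar{\mathbb D}^*} = \underline{\sing}_{\bar{\mathbb D}^*} \Tr_*$ on $\PSh_{\mathbb Z}(\Cor^{fs}_{\mathbb Z}(\AnSm(\mathbb C)),C^-(\mathbb Z))$. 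The principal obstacle throughout remains the representability input feeding (i); once that is granted, (ii) and (iii) are essentially formal consequences of theorem \ref{AnS}, lemma \ref{DTr}, proposition \ref{Dcov}, and the descent lemma \ref{HRAn}(i).
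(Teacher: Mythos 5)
The paper does not give a proof of Theorem~\ref{AnTr}: it is cited from \cite{AyoubG2} and \cite{AyoubB} without argument. So there is no proof of record in this paper to compare your proposal against. The closest object of comparison is the paper's own proof of the CW-analogue, Theorem~\ref{CWMot}, together with its supporting results (proposition~\ref{CWadTr}, proposition~\ref{actadjCW}, theorem~\ref{CWsingI}), and your sketch does parallel that structure: explicit $\underline{\sing}$-fibrant replacement for the unit/counit computations, $\Tr_*$ detecting local equivalences, and reduction to representable generators.

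Two remarks on the substance of what you wrote. First, in (i) you describe the hard input as ``analytic rigidity arguments,'' but the mechanism actually used in this paper's CW version (proposition~\ref{CWadTr}(ii), via proposition~\ref{Icov}) is not a rigidity argument; it is a local-contractibility and \v{C}ech descent argument (cover by contractible opens all of whose finite intersections are contractible or empty, then compare both sides termwise to the point). The analytic analogue of that cover is precisely proposition~\ref{Dcov}, and the shape of Ayoub's argument for $\mathbb Z(X)\to\Tr_*\mathbb Z_{tr}(X)$ is of the same kind. Calling it rigidity is not wrong in spirit but does not match the technique the rest of the paper uses or points to. Second, in (ii) the claim that the cubical complex of the constant cubical object with value $F^{\bullet}$ is quasi-isomorphic to $F^{\bullet}$ requires the normalized (degenerate-killing) cubical convention: with the naive alternating-sum differential every differential vanishes for a constant cubical object and the complex has cohomology $F^{\bullet}$ in every nonpositive degree. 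You flag ``normalized'' in passing, which is the right fix, but since $\underline{\sing}_{\bar{\mathbb D}^*}$ is only introduced in the paper as $\underline\Hom(\mathbb Z(\bar{\mathbb D}^*),-)$ without spelling out the normalization, this point deserves one explicit sentence rather than being absorbed into the phrase. Neither of these affects the overall correctness of your structure; (iii) as a formal consequence of (i) and (ii) via $e^{tr}_{an}=\Tr\circ e_{an}$ is fine and matches how the paper derives \ref{CWMot}(iii).
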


\begin{defi}
\begin{itemize}
\item[(i)] We say that a morphism $\epsilon:X'\to X$ with $X,X'\in\AnSp(\mathbb C)$ is  
a proper modification (or abstract blow up) if it is proper and if there exist 
$Z\subset X$ a closed analytic subspace (the discriminant) such that $\epsilon$ 
induces an isomorphism $\epsilon_{X\backslash Z}:X'\backslash E\xrightarrow{\sim} X\backslash Z$
with $E=\epsilon^{-1}(Z)$.
\item[(ii)] The cdh topology on $\AnSp(\mathbb C)$ is the minimal Grothendieck topology generated by
open covering for the usual topology and covers $c=\epsilon\sqcup l:X'\sqcup Z\to X$ corresponding
to proper modifications $\epsilon:X'\to X$, with $l:Z\hookrightarrow X$ the closed embedding.
\item[(iii)] We say that a morphism $\epsilon:X'\to X$ in $\AnSp(\mathbb C)$ is  
a resolution of $X$ if it is a proper modification and if $X'$ is smooth.   
\end{itemize}
\end{defi}

We recall a version of Hironaka's desingulariation theorem
\begin{thm}\label{HirDes}\cite{Hironaka}
For all $X\in\AnSp(\mathbb C)$, there exist a proper modification $\epsilon:X'\to X$
with discriminant $Z\subset X$ such that $X'$ is smooth and $\epsilon^{-1}(Z)\subset X'$ is a normal crossing divisor. 
\end{thm}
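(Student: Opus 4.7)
The plan is to invoke Hironaka's classical resolution strategy, which produces $X'$ as a composition of blow-ups of $X$ along a canonically chosen sequence of smooth centers. Because analytic spaces are locally embeddable in smooth polydiscs, I would first reduce to a local problem: cover $X$ by open subsets $U_{\alpha}$ with closed embeddings $U_{\alpha} \hookrightarrow \Omega_{\alpha} \subset \mathbb{C}^{N_{\alpha}}$ corresponding to ideal sheaves $\mathcal{I}_{\alpha}\subset\mathcal O_{\Omega_{\alpha}}$. Arranging the construction to be canonical (functorial with respect to smooth morphisms and independent of the local embedding) ensures that the local resolutions glue to a global proper modification $\epsilon : X' \to X$, whose discriminant $Z$ is contained in the singular locus $X^{\sing}$.

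Next, I would attach to every point $x \in X$ an upper semicontinuous invariant $\iota_X(x)$ detecting the severity of the singularity, for instance a suitable refinement of the Hilbert--Samuel function of $\mathcal{O}_{X,x}$, or more usefully (following Bierstone--Milman, Villamayor, W{\l}odarczyk) a tuple recording the orders of the successive marked ideals obtained by restricting to hypersurfaces of maximal contact. For a good choice of $\iota_X$, the top locus $S_X := \{x\in X : \iota_X(x) = \max \iota_X\}$ is a closed analytic subspace which is moreover smooth. Blowing up $S_X$ yields $\epsilon_1 : X_1 \to X$ with $\max \iota_{X_1} < \max \iota_X$, and iterating finitely many times over any relatively compact open of $X$ produces a smooth $X'$. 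By blowing up at each subsequent step only centers which meet the accumulated exceptional divisor transversally, the total exceptional locus $E \subset X'$ ends up being a simple normal crossings divisor, and one then sets $Z = \epsilon(E)$.

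The main obstacle is the third step: proving that the invariant strictly drops after each blow-up. This is the heart of Hironaka's argument and requires an inductive descent on the dimension of the ambient polydisc via hypersurfaces of maximal contact, together with a delicate analysis of how the coefficient ideal transforms under blow-up (Hironaka's characteristic polyhedron, or the reformulation in terms of marked ideals). A secondary difficulty is globalization: one must check that the canonical local procedure is stable under shrinking and change of local embedding so that it descends from the polydisc patches to $X$, and that termination, which is only locally finite, assembles coherently into a proper modification $\epsilon : X' \to X$ on all of $X$. Since the statement is the classical Hironaka resolution theorem over $\mathbb C$, I would simply cite \cite{Hironaka}.
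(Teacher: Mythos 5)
Your proposal and the paper take the same route: the paper offers no proof, simply invoking \cite{Hironaka} (which in this paper's bibliography is in fact W{\l}odarczyk's modern account rather than Hironaka's original), exactly as you conclude. Your accompanying sketch of the canonical-invariant / maximal-contact argument is a faithful summary of the standard modern proof, but it is supplementary to, not in conflict with, what the paper does.
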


We denote by $\iota_{an}:\AnSm(\mathbb C)\hookrightarrow\AnSp(\mathbb C)$ the full embedding functor,
which gives the morphism of sites 
$\iota_{an}:\AnSp(\mathbb C)\hookrightarrow\AnSm(\mathbb C)$.
If $F\in\PSh(\AnSm(\mathbb C),\Ab)$ is an usu sheaf, $\iota_{an}^*F$ is a cdh sheaf.
If $F\in\PSh(\AnSp(\mathbb C),\Ab)$ is an usu sheaf, we have $\iota_{an}^*\iota_{an*}F=a_{cdh}F$.

\begin{cor}\label{DesAnCor}
Let $F\in\PSh(\AnSm(\mathbb C),\Ab)$ be an usu sheaf, then $\iota_{an}^*F=0$
if and only for all $X\in\AnSm(\mathbb C)$ and all $\alpha\in F(X)$, there exist
a composition of blowups along smooth centers $e:X_r\to\cdots\to X$ such that 
$F(e)(\alpha)=0\in F(X_r)$.
\end{cor}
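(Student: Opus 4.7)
The plan is to prove both directions by induction on dimension, combining Hironaka's desingularization theorem \ref{HirDes} with the fact that $\iota_{an}^*F$ is a cdh sheaf on $\AnSp(\mathbb C)$.

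For the direction $(\Leftarrow)$, I assume the blowup-killing hypothesis and prove $\iota_{an}^*F(Y)=0$ for every $Y\in\AnSp(\mathbb C)$ by induction on $\dim Y$. The base case $\dim Y=0$ follows directly from the hypothesis applied to $X=Y$: on a disjoint union of points the only composition of blowups along proper smooth centers is the identity, so $F(\mathrm{id})\alpha=\alpha=0$ forces $F(Y)=0$. For $\dim Y=n$, Hironaka's theorem \ref{HirDes} provides a resolution $\epsilon:Y'\to Y$ with $Y'\in\AnSm(\mathbb C)$ and discriminant $Z\subsetneq Y$ of smaller dimension. The cdh cover $\epsilon\sqcup l:Y'\sqcup Z\to Y$, together with the cdh-sheaf property of $\iota_{an}^*F$, yields an injection $\iota_{an}^*F(Y)\hookrightarrow\iota_{an}^*F(Y')\times\iota_{an}^*F(Z)$. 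By induction $\iota_{an}^*F(Z)=0$, reducing to showing $\iota_{an}^*F(Y')=0$ for $Y'$ smooth. Applying the hypothesis to any $\alpha\in F(Y')$ yields a composition $e:Y'_r\to Y'$ of blowups along smooth centers with $F(e)\alpha=0$ and discriminant $Z'\subsetneq Y'$ of lower dimension; induction gives $\iota_{an}^*F(Z')=0$, and the cdh cover $e\sqcup l':Y'_r\sqcup Z'\to Y'$ forces the image of $\alpha$ in $\iota_{an}^*F(Y')$ to vanish. Since every section of $\iota_{an}^*F(Y')$ is cdh-locally represented by a section of $F$ on a cdh refinement whose terms can be made smooth by Hironaka, the vanishing of the unit $F(U)\to\iota_{an}^*F(U)$ on each smooth $U$ in the refinement implies $\iota_{an}^*F(Y')=0$.

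For the direction $(\Rightarrow)$, I assume $\iota_{an}^*F=0$. For $X\in\AnSm(\mathbb C)$ and $\alpha\in F(X)$, the image of $\alpha$ in $\iota_{an}^*F(X)$ vanishes, so by the concrete construction of cdh sheafification (iterated plus construction) there exists a cdh refinement of $X$ on which $\alpha$ becomes zero. Since the cdh topology is generated by usu covers and covers $\epsilon\sqcup l:X'\sqcup Z\to X$ arising from proper modifications, and $F$ is already a usu sheaf, only the proper modification part is relevant for producing nontrivial vanishing. By Hironaka's theorem \ref{HirDes} every such proper modification may be refined to a composition of blowups along smooth centers; absorbing all these blowups into a single tower $e:X_r\to X$ yields a composition of smooth-center blowups with $F(e)\alpha=0$. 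The vanishing of $\alpha$ on the (possibly non-smooth) discriminants appearing in the cover follows from $\iota_{an}^*F=0$ applied recursively to them, and this recursion terminates by induction on the dimension of these discriminants.

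The main obstacle is the $(\Rightarrow)$ direction: translating an abstract cdh-local vanishing witness into a single concrete tower of smooth-center blowups over $X$, while propagating the vanishing through the non-smooth discriminants. The key tool is iterated Hironaka desingularization combined with induction on dimension of the discriminants.
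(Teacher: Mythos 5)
Your proof takes essentially the same route as the paper, which compresses the entire argument into the single sentence ``It follows from theorem \ref{HirDes}.'' You are filling in what the paper leaves implicit: the cdh-sheaf property of $\iota_{an}^*F$, dimension induction with Hironaka resolution to reduce to the smooth case, and the plus-construction description of sheafification to translate vanishing in $\iota_{an}^*F$ into vanishing after a cdh cover and then into vanishing after a smooth blowup tower. The overall structure (cdh cover $Y'\sqcup Z\to Y$, injectivity of the restriction, induction on the dimension of the discriminant) is exactly what the paper is implicitly invoking.

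One technical caveat worth making explicit in your write-up: theorem \ref{HirDes} as stated only gives, for each analytic space, \emph{one} resolution $\epsilon:X'\to X$ with $X'$ smooth and normal-crossing exceptional locus. Both directions of your argument need a strictly stronger consequence of Hironaka's work: that any proper modification (resp.\ any dominant proper birational morphism $V\to X$ between smooth spaces) can be \emph{dominated} by a composition of blowups along smooth centers, i.e.\ resolution of indeterminacy/platification, so that a refined cdh cover of a smooth $X$ always produces an actual tower $X_r\to\cdots\to X$ of smooth-center blowups factoring through the dominant piece. The paper uses this stronger form elsewhere (e.g.\ in the proof of proposition \ref{DesAn}, where ``platification and resolution of singularities'' is invoked), so the citation is morally fine, but your phrase ``by Hironaka's theorem \ref{HirDes} every such proper modification may be refined to a composition of blowups along smooth centers'' should be attributed to that stronger statement rather than to \ref{HirDes} as written. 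With that attribution fixed, your argument is sound.
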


\begin{proof}
It follows from theorem \ref{HirDes}.
\end{proof}

\begin{defi}
Let $F\in\PSh(\AnSm(\mathbb C),\Ab)$. 
Let where $F\to I^{\bullet}$ is an usu local equivalence in $P^-(\An)$
with $I^{\bullet}$ a complex of injective objects (that is $I^{\bullet}$ is projectively fibrant).
\begin{itemize}
\item[(i)] We say that $F$ is homotopy invariant
if $F(p_X):F(X)\to F(X\times\mathbb D^1)$ is an isomorphism, 
where $p_X:X\times\mathbb D^1\to X$ is the projection.
\item[(ii)] We say that $F$ is strictly homotopy invariant
if the presheaf $H^n(I^{\bullet}):X\in\AnSm(\mathbb C)\mapsto\mathbb H^n(X,F)$, 
is homotopy invariant for all $n\in\mathbb N$.
\end{itemize}
\end{defi}

\begin{prop}\label{HSHDAn}
Let $F\in\PSh(\AnSm(\mathbb C),\Ab)$. 
Then $F$ is homotopy invariant if and only if it is strictly homotopy invariant. 
\end{prop}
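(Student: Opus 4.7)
The plan is to use Theorem \ref{AnS}(i), which gives an explicit functorial $\mathbb D^1$-local replacement $\underline{\sing}_{\bar{\mathbb D}^*}$ on $\PSh(\AnSm(\mathbb C),C^-(\mathbb Z))$, combined with the uniqueness of fibrant replacements in the Bousfield localized model category. The core observation is that when $F$ is homotopy invariant, the canonical inclusion $S(F) : F \to \underline{\sing}_{\bar{\mathbb D}^*} F$ is already a section-wise quasi-isomorphism, so $\underline{\sing}_{\bar{\mathbb D}^*} F$ simultaneously serves as a usu-fibrant model of $F$; the $\mathbb D^1$-locality of $\underline{\sing}_{\bar{\mathbb D}^*} F$ then forces $\mathbb H^n_{usu}(X, F) \cong \mathbb H^n_{usu}(X \times \mathbb D^1, F)$, which is precisely strict homotopy invariance.

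First I would verify that $S(F)$ is a section-wise quasi-isomorphism under the assumption of homotopy invariance. Viewing $\bar{\mathbb D}^n$ as the pro-object of $\AnSm(\mathbb C)$ formed by open polydisks of radius slightly greater than $1$, each such neighborhood is biholomorphic to $\mathbb D^n$, and iterated homotopy invariance of $F$ gives an isomorphism $F(X) \xrightarrow{\sim} F(X \times \bar{\mathbb D}^n)$ for every $X \in \AnSm(\mathbb C)$ and every $n$. The face maps $d_{i,0}, d_{i,1}$ of the cubical object $n \mapsto F(X \times \bar{\mathbb D}^n)$ then coincide with the common inverse of this isomorphism, and every term in positive degree lies in the image of a (now bijective) degeneracy, so the associated normalized complex is concentrated in degree zero and equals $F(X)$.

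Second, let $\underline{\sing}_{\bar{\mathbb D}^*} F \to \tilde I^{\bullet}$ be a usu-fibrant replacement. By Theorem \ref{AnS}(i), the source is $\mathbb D^1$-local; since $\mathbb D^1$-locality is preserved under usu-local equivalence (being defined by mapping-out conditions into $\mathbb D^1$-local targets), $\tilde I^{\bullet}$ is both usu-fibrant and $\mathbb D^1$-local, hence $(\mathbb D^1, usu)$-fibrant. The composite $F \to \underline{\sing}_{\bar{\mathbb D}^*} F \to \tilde I^{\bullet}$ is a usu-local equivalence by step one, so by uniqueness of the usu-fibrant replacement $\tilde I^{\bullet} \simeq I^{\bullet}$ in $\Ho_{usu}$, and $I^{\bullet}$ is itself $\mathbb D^1$-local. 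Consequently, for any $X \in \AnSm(\mathbb C)$ and any $n \in \mathbb Z$,
\[
\mathbb H^n(X, F) = \Hom_{\Ho_{usu}}(\mathbb Z(X), I^{\bullet}[n]) = \Hom_{\Ho_{usu}}(\mathbb Z(X \times \mathbb D^1), I^{\bullet}[n]) = \mathbb H^n(X \times \mathbb D^1, F),
\]
the middle equality being the $\mathbb D^1$-locality of $I^{\bullet}$; this is strict homotopy invariance. The converse is essentially trivial, since strict homotopy invariance contains the $n=0$ case, which recovers the homotopy invariance of $F$ when $F$ is an usu-sheaf (one may first sheafify, noting that usu-sheafification preserves $\mathbb D^1$-invariance).

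The main technical obstacle lies in step one, where the pro-object structure of $\bar{\mathbb D}^n$ must be handled carefully: one must verify that $F(X \times \bar{\mathbb D}^n) := \varinjlim_U F(U)$, taken over open neighborhoods $U \supset X \times \bar{\mathbb D}^n$, is indeed isomorphic to $F(X)$ via the cofinal subsystem $U = X \times \mathbb D^n_{1+\epsilon}$, and one must be precise about the cubical convention used in the definition of $\underline{\sing}_{\bar{\mathbb D}^*}$ so that the constant cubical object $n \mapsto F(X)$ is indeed quasi-isomorphic to $F(X)$ in degree zero; this holds automatically in the normalized convention, while in the unnormalized Tot convention one exhibits an explicit chain contraction built from the multiplication $\mu : \bar{\mathbb D} \times \bar{\mathbb D} \to \bar{\mathbb D}$, $(s,t) \mapsto st$.
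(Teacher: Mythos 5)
The paper gives no proof of this proposition, so you are not competing against a written argument. Your forward direction (homotopy invariance implies strict homotopy invariance) is a sound and natural route: once $S(F)\colon F\to\underline{\sing}_{\bar{\mathbb D}^*}F$ is seen to be a sectionwise quasi-isomorphism, Theorem \ref{AnS}(i) makes $\underline{\sing}_{\bar{\mathbb D}^*}F$ a $\mathbb D^1$-local object, and since $\mathbb D^1$-locality only depends on the usu-homotopy type, a usu-fibrant replacement $I^{\bullet}$ of $F$ is $\mathbb D^1$-local, giving $\mathbb H^n(X,F)\cong\mathbb H^n(X\times\mathbb D^1,F)$. Your handling of the pro-object $\bar{\mathbb D}^n$ in step one is also correct. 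However, your fallback remark for the unnormalized cubical convention is false: when $F$ is homotopy invariant the cubical object $n\mapsto F(X\times\bar{\mathbb D}^n)$ becomes constant with all face maps equal, so the unnormalized cubical differential vanishes identically and the unnormalized complex has cohomology $F(X)$ in every nonpositive degree; no chain contraction exists (this is exactly the classical defect of unnormalized cubical chains of a point), and the map $\mu(s,t)=st$ does not produce one. Your argument therefore requires, and should simply say that it requires, the normalized cubical convention, which is indeed the one used in Ayoub's singular complex.

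The converse direction is a genuine gap. Strict homotopy invariance, specialized to $n=0$, yields that $a_{usu}F(X)\to a_{usu}F(X\times\mathbb D^1)$ is an isomorphism — this is homotopy invariance of the associated sheaf $a_{usu}F$, not of $F$ itself. Your suggestion to ``first sheafify'' therefore proves the wrong conclusion: you would establish homotopy invariance of $a_{usu}F$ rather than of $F$, and for a presheaf that is not a sheaf these are different conditions (for instance, any presheaf with vanishing usu-sheafification is automatically strictly homotopy invariant without any constraint on the values $F(X)$ themselves). The converse thus needs either the additional hypothesis that $F$ is a usu-sheaf, or a separate argument; as written it is not justified, and since the proposition is stated as an ``if and only if'' this direction cannot be waved off as essentially trivial.
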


\begin{lem}\label{DesAnlem}
\begin{itemize}
\item[(i)] Let $e:X'\to X$ be the blow up of a smooth $X\in\AnSm(\mathbb C)$ along a smooth center 
$Z\subset X$. Denote by $l:Z\hookrightarrow X$ the closed embedding.
Let $C$ (resp. $Q$) the cokernel of $\mathbb Z_{tr}(e):\mathbb Z_{tr}(X')\to\mathbb Z_{tr}(X)$   
(resp. the cokernel of 
$\mathbb Z_{tr}(l)\oplus\mathbb Z_{tr}(e):\mathbb Z_{tr}(Z)\oplus\mathbb Z_{tr}(X')\to\mathbb Z_{tr}(X)$).   
Then for any homotopy invariant sheaf $F\in\PSh(\Cor(\AnSm(\mathbb C)),\Ab)$,
$\Ext^n(C,F)=\Ext^n(Q,F)=0$ for all $n\in\mathbb N$.

\item[(ii)] Let $F\in\PSh(\Cor(\AnSm(\mathbb C)),\Ab)$ be an usu sheaf such that $\iota_{an}^*F=0$.
Then $a_{usu}H^n\underline{\sing}_{\mathbb D^*}F=0$ for all $n\in\mathbb Z$, $n\leq 0$.

\end{itemize}
\end{lem}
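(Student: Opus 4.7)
The plan is to adapt the classical Voevodsky blow-up argument \cite{VoeSF} to the analytic setting for part (i), and then to deduce (ii) by combining Hironaka's desingularization theorem \ref{HirDes}, corollary \ref{DesAnCor}, and the blow-up invariance just established in (i).

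For (i), the key geometric input is the analytic analogue of Voevodsky's blow-up distinguished triangle
\begin{equation*}
\mathbb Z_{tr}(E)\to \mathbb Z_{tr}(X')\oplus\mathbb Z_{tr}(Z)\to \mathbb Z_{tr}(X)\xrightarrow{+1}
\end{equation*}
in $\AnDM^{-}(\mathbb Z)$, where $E=e^{-1}(Z)$ is the exceptional divisor and the maps are induced by the obvious correspondences $j:E\hookrightarrow X'$, $e|_{E}:E\to Z$, $e_{*}$, and $l_{*}$. I would establish this triangle by a deformation-to-the-normal-cone argument combined with the projective-bundle-type decomposition for the projection $E=\mathbb P(N_{Z/X})\to Z$, both entirely analytic. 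Proposition \ref{HSHDAn} then promotes the hypothesis of homotopy invariance on $F$ to strict homotopy invariance, which implies that the Ext-groups appearing in the lemma, computed in $\PSh(\Cor(\AnSm(\mathbb C)),\Ab)$, agree with morphism groups in $\AnDM^{-}(\mathbb Z)$. From the triangle, the classical Mayer-Vietoris long exact sequence in Ext immediately yields $\Ext^{n}(Q,F)=0$. For $C$, applying the octahedral axiom to the composite $\mathbb Z_{tr}(X')\to\mathbb Z_{tr}(X)\to Q$ identifies $C$ in $\AnDM^{-}(\mathbb Z)$ with the cone of the projective bundle projection $\mathbb Z_{tr}(E)\to\mathbb Z_{tr}(Z)$; strict homotopy invariance of $F$ together with the projective bundle decomposition then shows this cone also pairs trivially with $F$ in all degrees.

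For (ii), set $\tilde F^{n}:=a_{usu}H^{n}\underline{\sing}_{\bar{\mathbb D}^{*}}F$; these are homotopy invariant usu sheaves with transfers by construction of the cubical Suslin complex. One wants to show $\tilde F^{n}=0$ for all $n\leq 0$. Let $X\in\AnSm(\mathbb C)$ and $\alpha\in\tilde F^{n}(X)$; after restricting to a small open $U\subset X$, represent $\alpha$ by a cocycle $\tilde\alpha\in F(\bar{\mathbb D}^{-n}\times U)$. By corollary \ref{DesAnCor} applied to the smooth space $\bar{\mathbb D}^{-n}\times U$ together with the hypothesis $\iota_{an}^{*}F=0$, there exists a composition of blow-ups along smooth centers $e:Y_{r}\to\cdots\to Y_{0}=\bar{\mathbb D}^{-n}\times U$ with $F(e)(\tilde\alpha)=0$. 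Using part (i) iteratively along this tower, the residual contributions after each blow-up are controlled by $F$ evaluated on the smooth centers; since each center is smooth and still satisfies the vanishing $\iota_{an}^{*}F=0$ by restriction, one proceeds by induction on the dimension of the centers. At each stage the horizontal cubical boundary in $\underline{\sing}_{\bar{\mathbb D}^{*}}F$ absorbs the difference up to an usu-local correction, so that after finitely many steps the class $\alpha$ is shown to vanish locally for the usu topology, giving $a_{usu}\tilde F^{n}=0$.

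The principal obstacle is part (ii): combining the blow-up vanishings furnished by corollary \ref{DesAnCor} with the cubical structure of the Suslin complex $\underline{\sing}_{\bar{\mathbb D}^{*}}F$ is delicate, since the blow-up towers produced by Hironaka do not a priori interact well with the cubical direction $\bar{\mathbb D}^{-n}$. The resolution uses part (i) as a translation device between the non-cubical blow-up modifications and the cubical Suslin structure, with the dimension induction on the smooth centers of the blow-up tower carrying the argument through; orchestrating this induction uniformly for all $n\leq 0$ is the most technical piece of the plan.
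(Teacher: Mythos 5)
Your proposal for part (i) is circular in the context of this paper, and also contains a substantive gap independent of that circularity. The blow-up distinguished triangle you take as your starting point is precisely the content of Proposition \ref{DesAn}, whose proof invokes Lemma \ref{DesAnlem}(ii) (hence (i)); using that triangle here therefore begs the question. You do gesture at establishing the triangle independently via deformation to the normal cone and projective-bundle decomposition, but this is a far heavier undertaking than the paper's local argument, and you do not carry it out. Even granting the triangle, your treatment of $C$ does not yield the claimed vanishing: via the octahedral axiom you identify $C$ with the cone of $\mathbb Z_{tr}(E)\to\mathbb Z_{tr}(Z)$, which by the projective-bundle formula is a nonzero direct sum of Tate twists $\bigoplus_{i\geq 1}\mathbb Z_{tr}(Z)(i)[2i]$; there is no reason for strict homotopy invariance of $F$ alone to kill $\Hom_{\AnDM^-}(\mathbb Z_{tr}(Z)(i)[2i],F[n])$, so the claimed ``pairs trivially'' step is unjustified. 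Moreover, the identification $\Ext^n(C,F)\cong\Hom_{\AnDM^-}(C[0],F[n])$ that underlies your argument is itself delicate: $C$ sits in a four-term exact sequence whose kernel term obstructs identifying $C[0]$ with $\mathrm{cone}(\mathbb Z_{tr}(X')\to\mathbb Z_{tr}(X))$ in the derived category. The paper, following \cite[Proposition 13.19]{VoeSF}, avoids all of this: since $X$ and $Z$ are smooth, the blow-up is locally for the usual topology pulled back from the standard model $Z\times\mathrm{Bl}_0(\mathbb A^{c})\to Z\times\mathbb A^{c}$, and the $\Ext$ vanishing is established on that universal model by an explicit contracting homotopy, never leaving the abelian category of sheaves with transfers.

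For part (ii) your route coincides with the paper's (and \cite[Theorem 13.25]{VoeSF}): form $\tilde F^{n}:=a_{usu}H^{n}\underline{\sing}_{\bar{\mathbb D}^{*}}F$, note it is homotopy invariant hence strictly so by Proposition \ref{HSHDAn}, and combine (i) with Corollary \ref{DesAnCor}. Your bookkeeping along the resolution tower is vague (``the horizontal cubical boundary absorbs the difference up to an usu-local correction'' is not an argument), and the induction ``on the dimension of the smooth centers'' is not quite the right parametrization, but the overall shape of that half is correct.
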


\begin{proof}

\noindent(i): As in the proof of \cite[proposition 13.19]{VoeSF},
it follows from the fact that locally for the usual topology, since $X$ and $Z$ are smooth, $X'$ is the blow up of 
$Z\times\mathbb A^{d_X-d_S}$ along $Z\times 0$.

\noindent(ii): As in the proof of \cite[theorem 13.25]{VoeSF},
(ii) follows from (i), corollary \ref{DesAnCor}, and the fact that $H^n\underline{\sing}_{\mathbb D^*}$ is homotopy
invariant, hence strictly homotopy invariant by proposition \ref{HSHDAn}.

\end{proof}

\begin{prop}\label{DesAn} 
Let $\epsilon:X'\to X$ be a proper modification, $X,X'\in\AnSp(\mathbb C)$.
Then, denoting
$\epsilon_{Z*}+l'_*=S(\mathbb Z_{tr}(\epsilon_Z)\oplus\mathbb Z_{tr}(l'))$ and
$\epsilon_{*}+l_*=S(\mathbb Z_{tr}(\epsilon)\oplus\mathbb Z_{tr}(l))$,
the morphism in $PC^-(\An)$
\begin{equation*}
\left[0\rightarrow\underline{\sing}_{\bar{\mathbb D}^*}\mathbb Z_{tr}(E)
\xrightarrow{\epsilon_{Z*}+l'_*}
\underline{\sing}_{\bar{\mathbb D}^*}\mathbb Z_{tr}(Z)\oplus\underline{\sing}_{\bar{\mathbb D}^*}\mathbb Z_{tr}(X')\right]
\xrightarrow{\epsilon_*+l_*}\underline{\sing}_{\bar{\mathbb D}^*}\mathbb Z_{tr}(X) 
\end{equation*}
is an usu local equivalence.

\end{prop}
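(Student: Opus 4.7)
The plan is to reduce the claim to an application of lemma \ref{DesAnlem}(ii) via a spectral sequence argument, with the key input being that the relevant cohomology sheaves vanish after pullback to $\AnSp(\mathbb C)$. Let $C^{\bullet}$ denote the three-term complex
$$C^{\bullet}: \mathbb Z_{tr}(E) \xrightarrow{\epsilon_{Z*} + l'_*} \mathbb Z_{tr}(Z) \oplus \mathbb Z_{tr}(X') \xrightarrow{\epsilon_* + l_*} \mathbb Z_{tr}(X)$$
in $PC^-(\An)$. The stated morphism is a usu local equivalence if and only if its cone is usu acyclic, and this cone is quasi-isomorphic to $\underline{\sing}_{\bar{\mathbb D}^*}(C^{\bullet})$, namely the totalization of the bicomplex obtained by applying $\underline{\sing}_{\bar{\mathbb D}^*}$ termwise. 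It therefore suffices to show that this totalization has trivial usu cohomology sheaves.

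The main step I would carry out is to verify that $\iota_{an}^* H^i(C^{\bullet}) = 0$ as cdh sheaves on $\AnSp(\mathbb C)$ for each $i$. By corollary \ref{DesAnCor}, this amounts to showing that for any $Y \in \AnSm(\mathbb C)$ and any local section $\alpha$ of the usu-sheafification of $H^i(C^{\bullet})$ at $Y$, there is a composition $e: Y_r \to \cdots \to Y$ of blow-ups along smooth centers with $e^*\alpha = 0$. For the rightmost cohomology, given $\alpha = \sum n_i T_i \in \mathbb Z_{tr}(X)(Y)$ with $T_i \subset Y \times X$ finite surjective over $Y$, I would apply Hironaka's theorem (theorem \ref{HirDes}) in the analytic category to flatten, via smooth blow-ups of $Y$, the proper transforms of the $T_i$ under $I_Y \times \epsilon$; after such blow-ups each $T_i$ either is supported over $Z$ (so lifts to $\mathbb Z_{tr}(Z)(Y_r)$) or has proper transform finite surjective over $Y_r$ (so lifts to $\mathbb Z_{tr}(X')(Y_r)$), in either case killing the class in the cokernel. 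The vanishing of the middle and left cohomology sheaves follows by analogous flattening arguments applied to the cycles witnessing the relevant syzygies, using that $\epsilon$ is an isomorphism above $X \backslash Z$ and $E = \epsilon^{-1}(Z)$.

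Granted $\iota_{an}^* H^i(C^{\bullet}) = 0$, lemma \ref{DesAnlem}(ii) yields $a_{usu} H^n \underline{\sing}_{\bar{\mathbb D}^*} H^i(C^{\bullet}) = 0$ for every $i$ and every $n \leq 0$. Now consider the bounded bicomplex $K^{p,q} = \underline{\Hom}(\mathbb Z_{tr}(\bar{\mathbb D}^q), C^p)$ with total complex $\underline{\sing}_{\bar{\mathbb D}^*}(C^{\bullet})$. Since $\underline{\Hom}(\mathbb Z_{tr}(\bar{\mathbb D}^q), -)$ is exact on presheaves with transfers, filtering by the $C^{\bullet}$ direction and applying the exact functor $a_{usu}$ produces a convergent spectral sequence with $E_2^{p,q} = a_{usu} H^q \underline{\sing}_{\bar{\mathbb D}^*} H^p(C^{\bullet}) = 0$, so the total complex is usu acyclic as required. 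The principal obstacle is the flattening step: making rigorous, in the analytic category, the claim that after smooth blow-ups of the base the proper transforms of finite surjective analytic correspondences remain finite surjective and decompose into components lifting to $\mathbb Z_{tr}(X')$ or $\mathbb Z_{tr}(Z)$. This is precisely the point where theorem \ref{HirDes} is decisive and where the cdh topology on $\AnSp(\mathbb C)$ is engineered to allow the reduction.
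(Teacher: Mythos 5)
Your approach is essentially the paper's, and the core engine — Hironaka/platification to flatten proper transforms, then Corollary \ref{DesAnCor} and Lemma \ref{DesAnlem}(ii) to conclude usu-vanishing of $a_{usu}H^n\underline{\sing}_{\bar{\mathbb D}^*}$ on the relevant presheaf — is the right one, and your flattening argument for the rightmost cohomology matches the paper's. The spectral-sequence framing is sound as well: the filtration by the $C^{\bullet}$ direction is finite in each total degree because that direction has bounded length, so convergence is not an issue.

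Where you diverge, and where you make the argument harder than it needs to be, is your treatment of the left and middle cohomology of
\begin{equation*}
C^{\bullet}:\ \mathbb Z_{tr}(E)\xrightarrow{\epsilon_{Z*}+l'_*}\mathbb Z_{tr}(Z)\oplus\mathbb Z_{tr}(X')\xrightarrow{\epsilon_*+l_*}\mathbb Z_{tr}(X).
\end{equation*}
You propose to kill these cohomology sheaves cdh-locally by "analogous flattening arguments," deferring the details. But in fact these cohomologies vanish \emph{identically as presheaves}, with no blow-ups, no cdh topology, and no $\iota_{an}^*$ needed; this is what the paper records as the sequence being "clearly exact in $PC^-(\An)$." Left exactness is immediate because $l':E\hookrightarrow X'$ is a closed embedding, so $l'_*$ is injective on finite correspondences. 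For the middle: if $(\beta,\gamma)\in\mathbb Z_{tr}(Z)(U)\oplus\mathbb Z_{tr}(X')(U)$ has $l_*\beta=\epsilon_*\gamma$, then $\epsilon_*\gamma$ is supported in $U\times Z$; since $\gamma$ is finite and surjective over $U$ and $\epsilon^{-1}(Z)=E$, it follows that $\gamma$ is supported in $U\times E$, so $\gamma=l'_*\alpha$ for a unique $\alpha\in\mathbb Z_{tr}(E)(U)$, and then $l_*\beta=\epsilon_*l'_*\alpha=l_*\epsilon_{Z*}\alpha$ forces $\beta=\epsilon_{Z*}\alpha$ by injectivity of $l_*$. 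Once you have this, the complex $C^{\bullet}$ has exactly one nonzero cohomology presheaf — the cokernel $K$ at the right — and your spectral sequence collapses to the single statement $a_{usu}H^n\underline{\sing}_{\bar{\mathbb D}^*}K=0$, which is then exactly what your flattening argument together with Corollary \ref{DesAnCor} and Lemma \ref{DesAnlem}(ii) delivers. So: correct, same route, but you overlook an elementary exactness observation that eliminates two of the three vanishing claims you were planning to prove by flattening.
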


\begin{proof}

Denote by $K\in\PSh(\Cor_{\mathbb Z}(\AnSm(\mathbb C)),\Ab)$, the cokernel of 
$\mathbb Z_{tr}(l)\oplus\mathbb Z_{tr}(\epsilon):\mathbb Z_{tr}(Z)\oplus\mathbb Z_{tr}(X')\to\mathbb Z_{tr}(X)$. 
The sequence 
\begin{equation*}
0\to\mathbb Z_{tr}(E)\xrightarrow{\mathbb Z_{tr}(\epsilon_Z)\oplus\mathbb Z_{tr}(l')}
\mathbb Z_{tr}(Z)\oplus\mathbb Z_{tr}(X') 
\xrightarrow{\mathbb Z_{tr}(\epsilon)\oplus\mathbb Z_{tr}(l)}\mathbb Z_{tr}(X)\to K\to 0 
\end{equation*}
is  clearly exact in $PC^-(\An)$, 
In particular, the sequence
\begin{equation*}
0\to\underline{\sing}_{\bar{\mathbb D}^*}\mathbb Z_{tr}(E)
\xrightarrow{\epsilon_{Z*}+l'_*}
\underline{\sing}_{\bar{\mathbb D}^*}\mathbb Z_{tr}(Z)\oplus\underline{\sing}_{\bar{\mathbb D}^*}\mathbb Z_{tr}(X')
\xrightarrow{\epsilon_*+l_*}
\underline{\sing}_{\bar{\mathbb D}^*}\mathbb Z_{tr}(X)\to\underline{\sing}_{\bar{\mathbb D}^*}K\to 0 
\end{equation*}
is exact in $PC^-(\An)$. 
Hence we have to show that 
$a_{usu}H^n\underline{\sing}_{\bar{\mathbb D}^*}K=\mathcal H^n\underline{\sing}_{\bar{\mathbb D}^*}a_{usu}K=0$ 
for all $n\in\mathbb Z$, $n\leq 0$, $a_{usu}$ being an exact functor..
By lemma \ref{DesAnlem}(ii), it suffice to show that $\iota_{an}^*a_{usu}K=0$.
But for all $U\in\AnSm(\mathbb C)$, $\alpha\in\mathbb Z_{tr}(X)(U)$ and 
$\alpha'\in\mathcal Z^{d_{X'}}(U\times X')$ the proper transform of $\alpha$ by $I_U\times\epsilon$, 
there exist, by platification and
resolution of singularities, a proper modification $e:U''\to U$, with $U''\in\AnSm(\mathbb C)$ such that
the proper transform $\alpha''\in\mathcal Z^{d_{X'}}(U''\times X')$ of $\alpha'$ by $e\times I_{X'}$ is finite
(and surjective) over $U''$, so that 
$\mathbb Z_{tr}(X)(e)(\alpha):=\alpha\circ e=\mathbb Z_{tr}\epsilon(X')(\alpha'')\in\mathbb Z_{tr}(X)(U'')$
with $\alpha''\in\mathbb Z_{tr}(X')(U'')$. It then follows from corollary \ref{DesAnCor}.

\end{proof}

We will use in the next section the following :

\begin{prop}\label{DusuEq}
\begin{itemize}
\item[(i)] Let $G_1^{\bullet},G_2^{\bullet}\in\PSh_{\mathbb Z}(\Cor^{fs}_{\mathbb Z}(\AnSm(\mathbb C)),C^-(\mathbb Z))$
and $f:G_1^{\bullet}\to G_2^{\bullet}$ a morphism. 
If 
\begin{equation*}
e^{tr}_{an*}S(f):\sing_{\bar{\mathbb D^*}}G_1^{\bullet}\to\sing_{\bar{\mathbb D^*}}G^{\bullet}_2 
\end{equation*}
is a quasi-isomorphism in $C^{-}(\mathbb Z)$,
then $f:G_1^{\bullet}\to G_2^{\bullet}$ is an  $(\mathbb D^1,usu)$ local equivalence. 

\item[(ii)] Let $G_1^{\bullet},G_2^{\bullet}\in\PSh_{\mathbb Z}(\Cor^{fs}_{\mathbb Z}(\AnSm(\mathbb C)),C^-(\mathbb Z))$
and $f:G_1^{\bullet}\to G_2^{\bullet}$ a morphism. 
If 
\begin{itemize}
\item $G_1^{\bullet}$ and $G_2^{\bullet}$ are $\mathbb D^1$ local and
\item $e^{tr}_{an*}f:e^{tr}_{an*}G_1^{\bullet}\to e^{tr}_{an*}G_2^{\bullet}$ 
is a quasi-isomorphism in $C^{-}(\mathbb Z)$,
\end{itemize}
then $f:G_1^{\bullet}\to G_2^{\bullet}$ is an  $(\mathbb D^1,usu)$ local equivalence. 
\end{itemize}
\end{prop}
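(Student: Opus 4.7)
The plan is to prove (ii) first, exploiting the Quillen equivalence of Theorem \ref{AnTr}(iii), and then deduce (i) by applying (ii) to the map $S(f): \underline{\sing}_{\bar{\mathbb D}^*}G_1^{\bullet} \to \underline{\sing}_{\bar{\mathbb D}^*}G_2^{\bullet}$. For (ii), Theorem \ref{AnTr}(iii) gives that $Re^{tr}_{an*}: \AnDM^-(\mathbb Z) \xrightarrow{\sim} D^-(\mathbb Z)$ is an equivalence of triangulated categories and so reflects isomorphisms; thus it suffices to show $Re^{tr}_{an*}f$ is a quasi-isomorphism. The key intermediate claim is that for every $\mathbb D^1$-local $G^{\bullet}$, the canonical morphism $e^{tr}_{an*}G^{\bullet} \to Re^{tr}_{an*}G^{\bullet}$ is already a quasi-isomorphism. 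Granting this, the hypothesis that $e^{tr}_{an*}f$ is a quasi-isomorphism immediately yields that $Re^{tr}_{an*}f$ is one, and (ii) is proved.

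To establish the key claim, I take a $(\mathbb D^1,usu)$-fibrant replacement $r_G: G^{\bullet} \to G^{fib,\bullet}$. By Definition \ref{Dmodstr}(ii) and the general theory of left Bousfield localization, $G^{fib,\bullet}$ is both usu-fibrant and $\mathbb D^1$-local, so $r_G$ is a $(\mathbb D^1,usu)$-equivalence between two $\mathbb D^1$-local objects and hence is actually an usu-equivalence (weak equivalences between fibrant objects in a left Bousfield localization coincide with those in the original model structure). The crucial observation is that $e^{tr}_{an*}$, being evaluation at $\Spec \mathbb C \in \AnSm(\mathbb C)$, preserves usu-equivalences: since $\Spec \mathbb C$ admits only itself as an usu-cover, one has $(a_{usu}H)(\Spec \mathbb C) = H(\Spec \mathbb C)$ for every presheaf $H$, so an usu-equivalence becomes a quasi-isomorphism after applying $e^{tr}_{an*}$. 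Therefore $e^{tr}_{an*}r_G$ is a quasi-isomorphism and $e^{tr}_{an*}G^{\bullet} \simeq Re^{tr}_{an*}G^{\bullet}$, completing (ii). For (i), apply (ii) to $g = S(f)$: by Theorem \ref{AnS}(ii) the source and target are $\mathbb D^1$-local and the hypothesis of (i) is exactly that $e^{tr}_{an*}g$ is a quasi-isomorphism, so (ii) gives that $S(f)$ is a $(\mathbb D^1,usu)$-equivalence. Since $S(G_i^{\bullet}): G_i^{\bullet} \to \underline{\sing}_{\bar{\mathbb D}^*}G_i^{\bullet}$ are $(\mathbb D^1,usu)$-equivalences by Theorem \ref{AnS}(ii), the naturality square and the 2-out-of-3 property force $f$ to be a $(\mathbb D^1,usu)$-equivalence.

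I expect the main obstacle to lie in the model-categorical input for (ii): verifying cleanly that a $(\mathbb D^1,usu)$-fibrant replacement is simultaneously usu-fibrant and $\mathbb D^1$-local, so that $(\mathbb D^1,usu)$-equivalences between $\mathbb D^1$-local objects are usu-equivalences. This is standard in Hirschhorn's treatment of left Bousfield localization but should be checked against the specific hypotheses satisfied by the projective model structure on $PC^-(\An)$. A more hands-on alternative bypasses fibrant replacement: by Theorem \ref{AnS}(ii) the map $S(G^{\bullet}): G^{\bullet} \to \underline{\sing}_{\bar{\mathbb D}^*}G^{\bullet}$ is an explicit $(\mathbb D^1,usu)$-equivalence to a $\mathbb D^1$-local object, and if one can argue directly that it is an usu-equivalence whenever $G^{\bullet}$ is $\mathbb D^1$-local, then evaluating at $\Spec \mathbb C$ yields the identification $e^{tr}_{an*}G^{\bullet} \simeq \sing_{\bar{\mathbb D}^*}G^{\bullet}$ and the argument of (ii) goes through identically, at which point (i) follows as above.
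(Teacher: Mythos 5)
Your proof is correct, and it takes a genuinely different route from the paper's: you prove (ii) first and then deduce (i), whereas the paper proves (i) first and then deduces (ii). For (ii) you invoke Theorem~\ref{AnTr}(iii) to conclude that $Re^{tr}_{an*}$ is an equivalence of homotopy categories and hence reflects isomorphisms, and you reduce to showing that $e^{tr}_{an*}$ already computes $Re^{tr}_{an*}$ on $\mathbb{D}^1$-local objects; this last step relies on the standard facts that a $(\mathbb{D}^1,usu)$-equivalence between $\mathbb{D}^1$-local objects is an usu-equivalence and that $e^{tr}_{an*}$ (sections at $\pt$) preserves usu-equivalences because $\pt$ has no nontrivial usu-covers. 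The paper instead proves (i) directly by chasing the naturality square for the adjunction counit $\ad(e^{tr*}_{an},e^{tr}_{an*})(\underline{\sing}_{\bar{\mathbb{D}}^*}G^{\bullet}_i)$, which is a $(\mathbb{D}^1,usu)$-equivalence by the same Quillen equivalence, together with the exactness of $e^{tr*}_{an}$ and two-out-of-three; it then derives (ii) from (i) by observing, exactly as in your ``hands-on alternative,'' that $S(G^{\bullet}_i)$ is an usu-equivalence when $G^{\bullet}_i$ is $\mathbb{D}^1$-local, so that the hypothesis of (ii) propagates to the hypothesis of (i). Both routes use the same two pieces of model-category theory (the Quillen equivalence for $e^{tr}_{an}$ and the local-objects-vs.-usu-equivalences fact from Bousfield localization); your version front-loads the abstract triangulated-equivalence statement, while the paper front-loads the explicit adjunction-counit diagram. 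Your deduction of (i) from (ii) via $g=S(f)$ and two-out-of-three in the naturality square with $S(G^{\bullet}_i)$ is correct and matches the final step of the paper's proof of (i).
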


\begin{proof}

\noindent(i): Consider the commutative diagrams
\begin{equation*}
\xymatrix{G_1\ar[rr]^{S(G^{\bullet}_1)}\ar[d]^{f} & \, & \underline{\sing}_{\bar{\mathbb D^*}}G^{\bullet}_1\ar[d]^{S(f)} \\
G_2^{\bullet}\ar[rr]^{S(G_2^{\bullet})} & \, & \underline{\sing}_{\bar{\mathbb D^*}}G_2} \; \; , \; \;
\xymatrix{
e_{an}^{tr*}e_{an*}^{tr}\underline{\sing}_{\bar{\mathbb D^*}}G^{\bullet}_1\ar[d]^{e_{an}^{tr*}e_{an*}^{tr*}S(f)}
\ar[rrrr]^{\ad(e_{an}^{tr*},e^{tr}_{an*})(\underline{\sing}_{\bar{\mathbb D^*}}G^{\bullet}_1)} & \, & \, & \, &
\underline{\sing}_{\bar{\mathbb D^*}}G^{\bullet}_1\ar[d]^{S(f)} \\
e_{an}^{tr*}e_{an*}^{tr}\underline{\sing}_{\bar{\mathbb D^*}}G^{\bullet}_2
\ar[rrrr]^{\ad(e_{an}^{tr*},e^{tr}_{an*})(\underline{\sing}_{\bar{\mathbb D^*}}G^{\bullet}_2)} & \, & \, & \, &
\underline{\sing}_{\bar{\mathbb D^*}}G^{\bullet}_2}
\end{equation*}
By theorem \ref{AnTr} (iii),
\begin{itemize}
\item $\ad(e_{an}^{tr*},e^{tr}_{an*})(\underline{\sing_{\bar{\mathbb D^*}}G^{\bullet}_1}) :
e_{an}^{tr*}e_{an*}^{tr}\underline{\sing}_{\bar{\mathbb D^*}}G^{\bullet}_1\to\underline{\sing}_{\bar{\mathbb D^*}}G^{\bullet}_1$
\item $\ad(e_{an}^{tr*},e^{tr}_{an*})(\underline{\sing_{\bar{\mathbb D^*}}G^{\bullet}_2}) :
e_{an}^{tr*}e_{an*}^{tr}\underline{\sing}_{\bar{\mathbb D^*}}G^{\bullet}_2\to\underline{\sing}_{\bar{\mathbb D^*}}G^{\bullet}_2$
\end{itemize}
are $(\mathbb D^1,usu)$ local equivalence. 
On the other side 
\begin{equation*}
e^{tr*}_{an}e^{tr}_{an*}S(f):
e_{an}^{tr*}e_{an*}^{tr}\underline{\sing}_{\bar{\mathbb D^*}}G^{\bullet}_1=e^{tr*}_{an}\sing_{\bar{\mathbb D^*}}G_1^{\bullet}
\to e_{an}^{tr*}e_{an*}^{tr}\underline{\sing}_{\bar{\mathbb D^*}}G^{\bullet}_2=e^{tr*}_{an}\sing_{\bar{\mathbb D^*}}G^{\bullet}_2
\end{equation*}
is an $(\mathbb D^1,usu)$ local equivalence (even a quasi-isomorphism),
since $e^{tr}_{an*}S(f):\sing_{\bar{\mathbb D^*}}G_1^{\bullet}\to\sing_{\bar{\mathbb D^*}}G^{\bullet}_2$ 
is a quasi-isomorphism in $C^{-}(\mathbb Z)$ by hypothesis.
Hence, by the second commutative diagram,
\begin{equation*}
S(f):\underline{\sing}_{\bar{\mathbb D^*}}G^{\bullet}_1\to\underline{\sing}_{\bar{\mathbb D^*}}G^{\bullet}_2
\end{equation*}
is an $(\mathbb D^1,usu)$ local equivalence.
Since $S(G_1^{\bullet})$ and $S(G_2^{\bullet})$ are  $(\mathbb D^1,usu)$ local equivalence by theorem \ref{AnS} (ii),
and $S(f)$ is a $(\mathbb D^1,usu)$ local equivalence, $f:G^{\bullet}_1\to G^{\bullet}_2$
is a $(\mathbb D^1,usu)$ local equivalence by the first commutative diagram.

\noindent(ii): Consider again the commutative diagram
\begin{equation*}
\xymatrix{G_1\ar[rr]^{S(G^{\bullet}_1)}\ar[d]^{f} & \, & \underline{\sing}_{\bar{\mathbb D^*}}G^{\bullet}_1\ar[d]^{S(f)} \\
G_2^{\bullet}\ar[rr]^{S(G_2^{\bullet})} & \, & \underline{\sing}_{\bar{\mathbb D^*}}G_2} \; \; , \; \;
\end{equation*}
Since $S(G_1^{\bullet})$ and is a $(\mathbb D^1,usu)$ local equivalence and $G_1^{\bullet}$ is $\mathbb D^1$ local,
$S(G_1^{\bullet})$ is an usu local equivalence.
Hence $e^{tr}_{an*}S(G_1^{\bullet})$ is a quasi isomorphism.
Since $S(G_2^{\bullet})$ and is a $(\mathbb D^1,usu)$ local equivalence and $G_2^{\bullet}$ is $\mathbb D^1$ local,
$S(G_2^{\bullet})$ is an usu local equivalence.
Hence $e^{tr}_{an*}S(G_2^{\bullet})$ is a quasi isomorphism.
On the other side, by hypothesis, $e^{tr}_{an*}f$ is a quasi isomorphism.
Hence, by the above diagram,
$e^{tr}_{an*}S(f):\sing_{\bar{\mathbb D^*}}G_1^{\bullet}\to\sing_{\bar{\mathbb D^*}}G^{\bullet}_2$ 
is a quasi-isomorphism. It follows then by (i) that $f:G_1\to G_2$ is a $(\mathbb D^1,usu)$ local equivalence.
 
\end{proof}

\subsection{Presheaves and transfers on the category of CW complexes}

Let $X,Y\in\Top$. A continous map $f:X\to Y$ is said to be proper if it is universally closed, that is for all $T\in\Top$, 
$f\times I_T:X\times T\to Y\times T$ is closed. 
A continous map $f:X\to Y$ is proper if and only if it is closed and, 
for all $y\in Y$, $X_y=f^{-1}(y)$ is quasi-compact (c.f.\cite{Bourbaki}).
A continous map $f:X\to Y$ is said to be finite if it is proper and, 
for all $y\in Y$, $X_y=f^{-1}(y)$ is a finite set.
A continous map $f:X\to Y$ is said to be dominant if $f(X)\subset Y$ has non empty interior. 
For $X\in\Top$, there exist a one point compactification $\bar{X}\in\Top$ of $X$, that is
$\bar{X}$ is quasi-compact and there is an open embedding $X\hookrightarrow\bar{X}$.
Moreover $\bar{X}$ is compact (i.e. quasi-compact and Hausdorf) if and only if 
$X$ is locally compact and Hausdorf.

Let $X,Y,Z\in\Top$ and denote $p_X:X\times Y\to X$ and $p_Y:Y\times Z\to Z$, 
$p_{XZ}:X\times Y\times Z\to X\times Z$ the projections. 
Assume that $Y$ is Hausdorf (equivalently the diagonal $\Delta_Y\subset Y\times Y$ is a closed subset). 
Let $\Gamma_1\subset X\times Y$, $\Gamma_2\subset Y\times Z$, closed subsets.
Hence, since $Y$ is Hausdorf, $\Gamma_1\times_Y\Gamma_2\subset X\times Y\times Z$ is a closed subset.  
Now assume that $p_{X|\Gamma_1}:\Gamma_1\to X$ and $p_{Y|\Gamma_2}:\Gamma_2\to Y$ are finite and surjective. 
Consider the commutative diagram 
\begin{equation}\label{compcorTop}
\xymatrix{\Gamma_1\times_Y\Gamma_2\ar[rrr]^{p_{XZ}|\Gamma_1\times_Y\Gamma_2}\ar[d]_{p_{\Gamma_1}} & \, & \, &
 p_{XZ}(\Gamma_1\times_Y\Gamma_2)\ar[d]^{p_{X|p_{XZ}(\Gamma_1\times_Y\Gamma_2)}} \\
\Gamma_1\ar[rrr]^{p_{X|\Gamma_1}} & \, & \, & X} 
\end{equation}
The projection $p_{\Gamma_1}:\Gamma_1\times_Y\Gamma_2\to\Gamma_1$ is finite and surjective 
(since $p_{Y|\Gamma_2}:\Gamma_2\to Y$ is finite and surjective by hypothesis and $Y$ is Hausdorf). 
The projection $p_{X|\Gamma_1}:\Gamma_1\to X$ is finite and surjective by hypothesis.
Hence, by the diagram (\ref{compcorTop}), the composition
\begin{equation*}
\Gamma_1\times_Y\Gamma_2\xrightarrow{p_{XZ}|\Gamma_1\times_Y\Gamma_2}
p_{XZ}(\Gamma_1\times_Y\Gamma_2)\xrightarrow{p_{X|p_{XZ}(\Gamma_1\times_Y\Gamma_2)}} X
\end{equation*}
is finite and surjective. Thus,
$p_{X|p_{XZ}(\Gamma_1\times_Y\Gamma_2)}:p_{XZ}(\Gamma_1\times_Y\Gamma_2)\to X$
is finite and surjective, that is
\begin{equation}\label{CorTop}
\Gamma_2\circ\Gamma_1:=p_{XZ|(X\times Y)\times_Y\Gamma_2}(\Gamma_1\times_Y\Gamma_2)
\end{equation}
is a closed subset of $X\times Z$ which is finite and surjective on $X$.

We consider now the full subcategory $\CW\subset\Top$ consisting of CW complexes. 
Recall that CW complexes are Hausdorf, locally contractible and locally compact.
By a CW subcomplex of $X\in\CW$, we mean a topological embbeding $Z\hookrightarrow X$ with $Z$ a CW complex. 
By a closed CW subcomplex of $X\in\CW$, we mean a topological closed embbeding $Z\hookrightarrow X$ with $Z$ a CW complex,
that is the image of the embedding is a closed subset of $X$. 
Let $X,S\in\CW$, $S$ connected, and $h:X\to S$ a finite and surjective morphism in $\CW$.
We say that $X/S=(X,h)\in\CW/S$ is reducible if $X=X_1\cup X_2$, with $X_1,X_2$ closed CW subcomplexes
finite and surjective over $S$ and $X_1,X_2\neq X$.
A pair $Y/S=(Y,h')\in\CW/S$ with $Y\in\CW$ and $h':Y\to S$ a finite and surjective morphism
is called irreducible if it is not reducible. In particular $Y$ is connected.
\begin{itemize}
\item For $X\in\CW$, $\Lambda$ a commutative ring and $p\in\mathbb N$, 
we denote by $\mathcal Z_p(X,\Lambda)$ the free $\Lambda$ module generated 
by the closed CW subcomplex of $X$ of dimension $p$ and by 
$\mathcal Z^p(X,\Lambda)=\mathcal Z_{d_X-p}(X,\Lambda)$ 
the free $\Lambda$ module generated by the closed CW subcomplex of $X$ of codimension $p$.

\item For $X,Y\in\CW$, $X$ connected, and $\Lambda$ a commutative ring, we define :
$\mathcal{Z}^{fs/X}(X\times Y,\Lambda)\subset\mathcal{Z}_{d_X}(X\times Y,\Lambda)$ 
the free $\Lambda$ module generated by the closed CW subcomplexes of $X\times Y$ finite and surjective over $X$
which are irreducible. 
Note that if $Z\subset X\times Y$ is a closed CW subcomplex finite and surjective over $X$ which is irreducible,
then if $Z=Z_1\cup Z_2$ with $Z_1,Z_2\subset Z$ closed CW subcomplex finite and surjective over $X$, 
then $Z_1=Z_2=Z$.

\item For $X,Y\in\CW$, and $\Lambda$ a commutative ring, we define :
$\mathcal{Z}^{fs/X}(X\times Y,\Lambda):=\oplus_i\mathcal{Z}^{fs/X_i}(X_i\times Y,\Lambda)$ 
where $X=\sqcup_i X_i$, with $X_i$ the connected components of $X$.
\end{itemize}

For $Y\in\CW$ and $j:V\hookrightarrow Y$ an open embedding, 
we denote by 
\begin{equation}\label{jCW}
j^*:\mathcal Z^p(Y,\Lambda)\hookrightarrow\mathcal Z^p(V,\Lambda) \; ; \; Z \mapsto Z\cap V 
\end{equation}

Let $X,Y,Z\in\CW$ and denote $p_X:X\times Y\to X$ and $p_Y:Y\times Z\to Z$, 
$p_{XZ}:X\times Y\times Z\to X\times Z$ the projections. 
Let $\Gamma_1\subset X\times Y$ be a closed CW subcomplex finite and surjective over $X$ which is irreducible. 
Let $\Gamma_2\subset Y\times Z$ be a closed CW subcomplex finite and surjective over $Y$ which is irreducible.
Hence, since $Y$ is Hausdorf,
$\Gamma_1\times_Y\Gamma_2\subset X\times Y\times Z$ is a closed CW subcomplex finite and surjective over $X$,
in particular all its irreducible components are finite and surjective over $X$. 
Thus, by diagram \ref{compcorTop}
\begin{equation}\label{CorTopCW}
\Gamma_2\circ\Gamma_1:=p_{XZ|(X\times Y)\times_Y\Gamma_2}(\Gamma_1\times_Y\Gamma_2)
\end{equation}
is a closed CW subcomplex of $X\times Z$ finite and surjective over $X$,
in particular all its irreducible components are finite and surjective over $X$ and
so define an element of $\mathcal Z^{fs/X}(X\times Z)$ . 

\begin{defi}
We define $\Cor^{fs}_{\Lambda}(\CW)$ to be the category whose objects are CW complexes and whose space of morphisms between
$X,Y\in\CW$ is the free $\Lambda$ module $\mathcal{Z}^{fs/X}(X\times Y,\Lambda)$.
The composition is the one given by (\ref{CorTopCW}).
\end{defi}

We have 
\begin{itemize}
\item the additive embedding of categories $\Tr:\mathbb Z(\CW)\hookrightarrow\Cor^{fs}_{\mathbb Z}(\CW)$ 
which gives the corresponding morphism of sites $\Tr:\Cor^{fs}_{\mathbb Z}(\CW)\to \mathbb Z(\CW)$,
\item the inclusion functor
$e_{cw}:\left\{pt\right\}\hookrightarrow\CW$,
which gives the corresponding morphism of sites
$e_{cw}:\CW\to\left\{pt\right\}$,
\item the inclusion functor $e^{tr}_{cw}:=\Tr\circ e_{cw}:\left\{pt\right\}\hookrightarrow\Cor^{fs}_{\mathbb Z}(\CW)$
which gives the corresponding morphism of sites
$e^{tr}_{cw}:=\Tr\circ e_{cw}:\Cor^{fs}_{\mathbb Z}(\CW)\to\left\{pt\right\}$.
\end{itemize}

We consider the following two big categories :
\begin{itemize}
\item $\PSh(\CW,C^-(\mathbb Z))=\PSh_{\mathbb Z}(\mathbb Z(\CW),C^-(\mathbb Z)))$,
the category of bounded above complexes of presheaves on $\CW$, 
or equivalently additive presheaves on $\mathbb Z(\CW)$, sometimes, we will write for short 
$P^-(CW)=\PSh(CW,C^-(\mathbb Z))$,
\item $\PSh(\Cor^{fs}_{\mathbb Z}(\CW),C^-(\mathbb Z))$, 
the category of bounded above complexes of additive presheaves on $\Cor^{fs}_{\mathbb Z}(\CW)$,
sometimes, we will write for short 
$PC^-(CW)=\PSh(\Cor^{fs}_{\mathbb Z}(CW),C^-(\mathbb Z))$,
\end{itemize}
and the adjonctions :
\begin{itemize}
\item $(\Tr^*,\Tr_*):\PSh(\CW,C^-(\mathbb Z))\leftrightarrows\PSh(\Cor^{fs}_{\mathbb Z}(\CW),C^-(\mathbb Z))$, 
\item $(e_{cw}^*,e_{cw*}):\PSh(\CW,C^-(\mathbb Z))\leftrightarrows C^-(\mathbb Z)$,
\item $(e_{cw}^{tr*},e_{cw*}^{tr}):\PSh(\Cor^{fs}_{\mathbb Z}(\CW),C^-(\mathbb Z))\leftrightarrows C^-(\mathbb Z)$,
\end{itemize}
given by $\Tr:\Cor^{fs}_{\Lambda}(\CW)\to \Lambda(\CW)$, 
$e_{cw}\CW\to\left\{pt\right\}$ and $e^{tr}_{cw}:\Cor^{fs}_{\mathbb Z}(\CW)\to\left\{pt\right\}$.

For $X\in\CW$, we denote by 
\begin{equation}
\mathbb Z(X)\in\PSh(\CW,C^-(\mathbb Z)) \mbox{\;\;},\mbox{\;\;} 
\mathbb Z_{tr}(X)\in\PSh_{\mathbb Z}(\Cor^{fs}_{\mathbb Z}(\CW),C^-(\mathbb Z)) 
\end{equation}
the presheaves represented by $X$. They are usu sheaves.
We denote by
$a_{usu}:\PSh_{\mathbb Z}(CW,\Ab)\to\Sh_{\mathbb Z,usu}(CW,\Ab)$ 
the usu sheaftification functor.
We consider 
\begin{itemize}
\item the usual monoidal strutcure on $\PSh(\CW,C^-(\mathbb Z))$ and the associated internal $\Hom$
given by, for $F^{\bullet},G^{\bullet}\in\PSh(\CW,C^-(\mathbb Z))$ and $Y\in\CW$, 
\begin{equation}
F^{\bullet}\otimes G^{\bullet}(X):X\mapsto F^{\bullet}(X)\otimes_{\mathbb Z} G^{\bullet}(X) \; , \;
\underline{\Hom}(\mathbb Z(Y),F^{\bullet}):X\mapsto F^{\bullet}(X\times Y),
\end{equation}
\item the unique monoidal strutcure on $\PSh_{\mathbb Z}(\Cor^{fs}_{\mathbb Z}(\CW),C^-(\mathbb Z))$
such that,for $X,Y\in\CW$, $\mathbb Z_{tr}(X)\otimes\mathbb Z_{tr}(Y):=\mathbb Z_{tr}(X\times Y)$
and wich commute with colimites.
It has an internal $\Hom$ which is given, for $X,Y\in\CW$ and
$F^{\bullet}\in\PSh_{\mathbb Z}(\Cor^{fs}_{\mathbb Z}(\CW),C^-(\mathbb Z))$,
$\underline{\Hom}(\mathbb Z_{tr}(Y),F^{\bullet}:X\mapsto F^{\bullet}(X\times Y)$
\end{itemize}
Together with these monoidal structure, the functor
\begin{equation*}
\Tr^*:\PSh(\CW,C^-(\mathbb Z))\to \PSh_{\mathbb Z}(\Cor^{fs}_{\mathbb Z}(\CW),C^-(\mathbb Z))
\end{equation*}
is monoidal.

\begin{defi}\label{CWTrusu}
\begin{itemize}
\item[(i)] We say that a morphism $\phi:G_1^{\bullet}\to G_2^{\bullet}$ in $\PSh(\CW,C^-(\mathbb Z))$ is an
usu local equivalence if $\phi_*:a_{usu}H^k(G_1^{\bullet})\to a_{usu}H^k(G_2^{\bullet})$ is an isomorphism for all $k\in\mathbb Z$.
The projective usual topology model structure on $\PSh(\CW,C^-(\mathbb Z))$ 
is the left Bousfield localization of the projective model structure $\mathcal{M}_P(\PSh_{\mathbb Z}(\CW,C^-(\mathbb Z)))$
with respect to the usu local equivalence.

\item[(ii)] We say that a morphism $\phi:G_1^{\bullet}\to G_2^{\bullet}$ in $\PSh_{\mathbb Z}(\Cor^{fs}_{\mathbb Z}(\CW),C^-(\mathbb Z))$ is an
usu local equivalence if and only if its restriction to $\CW$ $\Tr_*\phi:\Tr_*G_1^{\bullet}\to\Tr_*G_2^{\bullet}$ is an usu local equivalence.
The projective usual topology model structure on $\PSh_{\mathbb Z}(\Cor^{fs}_{\mathbb Z}(\CW),C^-(\mathbb Z))$ 
is the left Bousfield localization of the projective model structure  
$\mathcal{M}_P(\PSh_{\mathbb Z}(\Cor^{fs}_{\mathbb Z}(\CW),C^-(\mathbb Z)))$
with respect to the usu local equivalence.
\end{itemize}
\end{defi}

\begin{defi}\label{CWmodstr}
\begin{itemize}
\item[(i)] The projective $(\mathbb I^1,usu)$ model structure on $\PSh(\CW,C^-(\mathbb Z))$ is the left Bousfield localization 
of the projective usual topology model structure (c.f.\ref{CWTrusu}(i)) with respect to the class of maps 
$\left\{\mathbb Z(X\times\mathbb I^1)[n]\to\mathbb Z(X)[n], \; X\in\CW,n\in\mathbb Z\right\}$. 

\item[(ii)] The projective $(\mathbb I^1,usu)$ model structure on $\PSh_{\mathbb Z}(\Cor^{fs}_{\mathbb Z}(\CW),C^-(\mathbb Z))$ is the left Bousfield localization of the projective usual topology model structure (c.f.\ref{CWTrusu}(ii)) with respect to the class of maps
$\left\{\mathbb Z_{tr}(X\times\mathbb I^1)\to\mathbb Z(X), \; X\in\CW,n\in\mathbb Z\right\}$. 
\end{itemize}
\end{defi}

\begin{defi} 
\begin{itemize}
\item[(i)] We define $\CwDM^-(\mathbb Z):=\Ho_{\mathbb I^1,usu}(\PSh_{\mathbb Z}(\Cor^{fs}_{\mathbb Z}(\AnSm(\mathbb C)),C^-(\mathbb Z)))$, 
the derived category of motives of CW complexes, it is 
the homotopy category of $\PSh_{\mathbb Z}(\Cor^{fs}_{\mathbb Z}(\CW),C^-(\mathbb Z))$ 
with respect to the projective $(\mathbb I^1,usu)$ model structure (\ref{CWmodstr}(ii)). 
We denote by
\begin{equation*}
D^{tr}(\mathbb I^1,usu):\PSh_{\mathbb Z}(\Cor^{fs}_{\mathbb Z}(\CW),C^-(\mathbb Z))\to\CwDM^{-}(\mathbb Z) \,, \,
D^{tr}(\mathbb I^1,usu)(F^{\bullet})=F^{\bullet}
\end{equation*}
the canonical localisation functor.

\item[(ii)] We denote by the same way 
$\CwDA^-(\mathbb Z):=\Ho_{\mathbb I^1,usu}(\PSh(\CW,C^-(\mathbb Z)))$ (\ref{CWmodstr}(i)) and
\begin{equation}
D(\mathbb I^1,usu):\PSh(\CW,C^-(\mathbb Z))\to\CwDA^{-}(\mathbb Z) \,, \, 
D(\mathbb I^1,usu)(F^{\bullet})=F^{\bullet}
\end{equation}
the canonical localisation functor.
\end{itemize}
\end{defi}

We recall for convenience to the reader the definition of $\mathbb I^1$ homotopy :
\begin{itemize}
\item Let $X,Y\in\Top$. We say that two maps $f_0:X\to Y$, $f_1:X\to Y$ are $\mathbb I^1$ homotopic,
if there exist $h:X\times\mathbb I^1\to Y$ such that $f_0=h\circ(I_X\times i_0)$ and $f_1=h\circ(I_X\times i_1)$, with
$(I_X\times i_0):X\times\left\{0\right\}\hookrightarrow X\times\mathbb I^1$ 
and $(I_X\times i_1):X\times\left\{1\right\}\hookrightarrow X\times\mathbb I^1$ 
the inclusions.

\item Let $X,Y\in\Top$. We say that $X$ is $\mathbb I^1$ homotopy equivalent to $Y$ if there
exist two maps $f:X\to Y$, $g:Y\to X$ such that 
$g\circ f$ is $\mathbb I^1$ homotopic to $I_X$ and $f\circ g$ is $\mathbb I^1$ homotopic to $I_Y$.

\item Let $F^{\bullet},G^{\bullet}\in\PSh(\CW,C^-(\mathbb Z))$. 
We say that two maps $\phi_0:F^{\bullet}\to G^{\bullet}$ and $\phi_1:F^{\bullet}\to G^{\bullet}$ 
are $\mathbb I^1$ homotopic if there exist
$\tilde{\phi}:F^{\bullet}\to\underline{\Hom}(\mathbb Z(\mathbb I^1),G^{\bullet})$ such that
$\phi_0=\underline{G}^{\bullet}(i_0)\circ\tilde{\phi}$ and $\phi_1=\underline{G}^{\bullet}(i_1)\circ\tilde{\phi}$, where,
\begin{itemize}
\item $\underline{G}^{\bullet}(i_0):\underline{\Hom}(\mathbb Z(\mathbb I^1),G^{\bullet})\to G^{\bullet}$ is
induced by $i_0:\left\{\pt\right\}\to\mathbb I^1$, that is, for $X\in\CW$, 
$\underline{G}^{\bullet}(i_0)(X)=G^{\bullet}(I_X\times i_0):G^{\bullet}(X\times\mathbb I^1)\to G^{\bullet}(X)$,  
\item $\underline{G}^{\bullet}(i_1):\underline{\Hom}(\mathbb Z(\mathbb I^1),G^{\bullet})\to G^{\bullet}$ is
induced by $i_1:\left\{\pt\right\}\to\mathbb I^1$, that is, for $X\in\CW$, 
$\underline{G}^{\bullet}(i_1)(X)=G^{\bullet}(I_X\times i_1):G^{\bullet}(X\times\mathbb I^1)\to G^{\bullet}(X).$  
\end{itemize}

\item Let $F^{\bullet},G^{\bullet}\in\PSh(\CW,C^-(\mathbb Z))$. 
We say that $F^{\bullet}$ is $\mathbb I^1$ homotopy equivalent to $G^{\bullet}$ if there
exist two maps $\phi:F^{\bullet}\to G^{\bullet}$ and $\psi:F^{\bullet}\to G^{\bullet}$ such that 
$\psi\circ\phi$ is $\mathbb I^1$ homotopic to $I_{F^{\bullet}}$ 
and $\phi\circ \psi$ is $\mathbb I^1$ homotopic to $I_{G^{\bullet}}$.
\end{itemize} 

We have the following easy lemma

\begin{lem}\label{I1hptlem}
Let $X,Y\in\CW$ and $f_0:X\to Y$, $f_1:X\to Y$ two morphisms.
If $f_0$ and $f_1$ are $\mathbb I^1$ homotopic, then
\begin{itemize}
\item $\mathbb Z(f_0):\mathbb Z(X)\to\mathbb Z(Y)$ and $\mathbb Z(f_1):\mathbb Z(X)\to\mathbb Z(Y)$ 
are $\mathbb I^1$ homotopic in $\PSh(\CW,C^-(\mathbb Z))$,
\item $\Tr_*\mathbb Z_{tr}(f_0):\Tr_*\mathbb Z_{tr}(X)\to\Tr_*\mathbb Z_{tr}(Y)$ 
and $\Tr_*\mathbb Z_{tr}(f_1):\Tr_*\mathbb Z_{tr}(X)\to\Tr_*\mathbb Z_{tr}(Y)$ 
are $\mathbb I^1$ homotopic in $\PSh(\CW,C^-(\mathbb Z))$.
\end{itemize}
\end{lem}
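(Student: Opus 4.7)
The plan is to construct the homotopy $\tilde{\phi}$ at the level of presheaves by applying the Yoneda functor to the given topological homotopy $h\colon X\times\mathbb{I}^1\to Y$, then check the boundary conditions by a direct calculation.

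For the first assertion, I would define $\tilde{\phi}\colon\mathbb{Z}(X)\to\underline{\Hom}(\mathbb{Z}(\mathbb{I}^1),\mathbb{Z}(Y))$ in $\PSh(\CW,C^-(\mathbb{Z}))$ by specifying, for each $Z\in\CW$, the map
\begin{equation*}
\tilde{\phi}(Z)\colon\mathbb{Z}\Hom_{\CW}(Z,X)\to\mathbb{Z}\Hom_{\CW}(Z\times\mathbb{I}^1,Y),
\quad g\mapsto h\circ(g\times I_{\mathbb{I}^1}),
\end{equation*}
recalling that $\underline{\Hom}(\mathbb{Z}(\mathbb{I}^1),\mathbb{Z}(Y))(Z)=\mathbb{Z}(Y)(Z\times\mathbb{I}^1)$ by definition of the internal Hom. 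Functoriality in $Z$ is immediate. Then for any $g\colon Z\to X$,
\begin{equation*}
(\underline{G}^{\bullet}(i_0)\circ\tilde{\phi})(Z)(g)=h\circ(g\times I_{\mathbb{I}^1})\circ(I_Z\times i_0)=h\circ(I_X\times i_0)\circ g=f_0\circ g=\mathbb{Z}(f_0)(Z)(g),
\end{equation*}
and the analogous computation with $i_1$ gives $\underline{G}^{\bullet}(i_1)\circ\tilde{\phi}=\mathbb{Z}(f_1)$. This exhibits $\tilde{\phi}$ as the required $\mathbb{I}^1$-homotopy.

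For the second assertion the construction is entirely parallel, the only change being that the Yoneda-type map is now evaluated on finite surjective correspondences rather than on actual continuous maps. Concretely, I would define
\begin{equation*}
\tilde{\phi}\colon\Tr_*\mathbb{Z}_{tr}(X)\to\underline{\Hom}(\mathbb{Z}(\mathbb{I}^1),\Tr_*\mathbb{Z}_{tr}(Y))
\end{equation*}
in $\PSh(\CW,C^-(\mathbb{Z}))$ by the rule that sends a generator $\alpha\in\mathcal{Z}^{fs/Z}(Z\times X,\mathbb{Z})$ to the composition $\Gamma_h\circ(\alpha\times\Delta_{\mathbb{I}^1})\in\mathcal{Z}^{fs/Z\times\mathbb{I}^1}(Z\times\mathbb{I}^1\times Y,\mathbb{Z})$ in the sense of \eqref{CorTopCW}, where $\Gamma_h\in\mathcal{Z}^{fs/X\times\mathbb{I}^1}((X\times\mathbb{I}^1)\times Y,\mathbb{Z})$ is the graph of $h$ and $\alpha\times\Delta_{\mathbb{I}^1}$ is the exterior product with the diagonal correspondence of $\mathbb{I}^1$.

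The main (and essentially only) point to verify is that restricting this correspondence along $I_Z\times i_\epsilon\colon Z\to Z\times\mathbb{I}^1$ recovers the correspondence associated to $f_\epsilon\circ\alpha$ for $\epsilon\in\{0,1\}$. This is a direct base-change computation: the fibre product defining $\Gamma_h\circ(\alpha\times\Delta_{\mathbb{I}^1})$ over the slice $Z\times\{\epsilon\}\subset Z\times\mathbb{I}^1$ is exactly the fibre product defining $\Gamma_{f_\epsilon}\circ\alpha$, because $\Gamma_h$ pulled back along $X\xrightarrow{I_X\times i_\epsilon}X\times\mathbb{I}^1$ is $\Gamma_{f_\epsilon}$ by the hypothesis $f_\epsilon=h\circ(I_X\times i_\epsilon)$. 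I do not expect any real obstacle; the only mild bookkeeping is to confirm that the set-theoretic push-pull formula for the composition law \eqref{CorTopCW} on $\CW$ commutes with the restriction along the closed embedding $I_Z\times i_\epsilon$, which holds because this embedding is transverse to all the correspondences involved.
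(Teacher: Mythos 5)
Your construction is the paper's: for the first item you apply Yoneda to $h$ exactly as the paper does, and for the second you form $\Gamma_h\circ(\alpha\times\Delta_{\mathbb I^1})$, which is precisely the paper's $\mathbb Z_{tr}(h)(Z\times\mathbb I^1)(\alpha\times I_{\mathbb I^1})$. The only cosmetic blemish is the closing appeal to ``transversality'' — none is needed or even available in the CW setting: that the restriction to $Z\times\{\epsilon\}$ recovers $\Gamma_{f_\epsilon}\circ\alpha$ follows from associativity of the composition law (\ref{CorTopCW}) applied to the graph of $I_Z\times i_\epsilon$, not from any transversality hypothesis.
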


\begin{proof}
Let $h:X\times\mathbb I^1\to Y$ such that $f_0=h\circ(I_X\times i_0)$ and $f_1=h\circ(I_X\times i_1)$, with
$(I_X\times i_0):X\times\left\{0\right\}\hookrightarrow X\times\mathbb I^1$ 
and $(I_X\times i_1):X\times\left\{1\right\}\hookrightarrow X\times\mathbb I^1$ 
the inclusions.
Then 
\begin{itemize}
\item $\phi(h):\mathbb Z(X)\to\underline{\Hom}(\mathbb Z(\mathbb I^1),\mathbb Z(Y))$, given by for $Z\in\CW$
\begin{eqnarray*}
\phi(h)(Z):\alpha\in\Hom_{\CW}(Z,X)
\mapsto(\alpha\times I_{\mathbb I^1})\in\Hom_{\CW}(Z\times\mathbb I^1,X\times\mathbb I^1) \\
\mapsto\mathbb Z(h)(Z\times\mathbb I^1)(\alpha\times I_{\mathbb I^1})=h\circ(\alpha\times I_{\mathbb I^1})
\in\Hom_{\CW}(Z\times\mathbb I^1,Y)
\end{eqnarray*}
satisfy 
$\underline{\mathbb Z(Y)}(i_0)\circ\phi(h)=\mathbb Z(f_0)$ and $\underline{\mathbb Z(Y)}(i_1)\circ\phi(h)=\mathbb Z(f_1)$.

\item $\phi(h):\mathbb Z_{tr}(X)\to\underline{\Hom}(\mathbb Z(\mathbb I^1),\mathbb Z_{tr}(Y))$, given by for $Z\in\CW$
\begin{eqnarray*}
\phi(h)(Z):\alpha\in\Hom_{\Cor^{fs}_{\mathbb Z}(\CW)}(Z,X)
\mapsto(\alpha\times I_{\mathbb I^1})\in\Hom_{\Cor^{fs}_{\mathbb Z}(\CW)}(Z\times\mathbb I^1,X\times\mathbb I^1) \\
\mapsto\mathbb Z_{tr}(h)(Z\times\mathbb I^1)(\alpha\times I_{\mathbb I^1})=h\circ(\alpha\times I_{\mathbb I^1})
\in\Hom_{\Cor^{fs}_{\mathbb Z}(\CW)}(Z\times\mathbb I^1,Y)
\end{eqnarray*}
satisfy 
$\underline{\mathbb Z_{tr}(Y)}(i_0)\circ\phi(h)=\mathbb Z_{tr}(f_0)$ and 
$\underline{\mathbb Z_{tr}(Y)}(i_1)\circ\phi(h)=\mathbb Z_{tr}(f_1)$.
\end{itemize}
\end{proof}

Recall $\bold{\Delta}$ denote the simplicial category
Let 
\begin{itemize}
\item $p_{\bold\Delta}:\CW\to\bold{\Delta}\times\CW$ 
be the morphism of site given by the projection functor 
$(X,i)\in\bold{\Delta}\times\CW\mapsto p_{\bold\Delta}((X,i))=X\in\CW$,
\item $p_{\bold\Delta}:\Cor_{\mathbb Z}(\CW)\to\bold{\Delta}\times\Cor_{\mathbb Z}(\CW)$ 
be the morphism of site given by the projection functor 
$(X,i)\in\bold{\Delta}\times\CW\mapsto p_{\bold\Delta}((X,i))=X\in\CW$
\end{itemize}
We have the adjonctions
\begin{itemize}
\item $(p_{\bold\Delta}^*,p_{\bold\Delta*}):\PSh(\bold{\Delta}\times\CW,C^-(\mathbb Z))
\leftrightarrows\PSh(\CW,C^-(\mathbb Z))$    
\item $(p_{\bold\Delta}^*,p_{\bold\Delta}):\PSh(\bold{\Delta}\times\Cor_{\mathbb Z}(\CW),C^-(\mathbb Z))
\leftrightarrows\PSh(\Cor_{\mathbb Z}(\CW),C^-(\mathbb Z))$.   
\end{itemize}

In the following lemma point (i) is a generalization of point (i) to usu hypercover.
We will only use point (i) in this paper in the proof of proposition \ref{CWadTr}(ii). 

\begin{lem}\label{HRCW}
\begin{itemize}
\item[(i)]: Let $X\in\CW$ and $X=\cup_{i\in J}U_i$ a covering by open subsets $U_i\subset X$, $J$ being a countable set.
We denote by $j_i:U_i\hookrightarrow X$ the open embedding.
For $I\subset J$ a finite subset, let $U_I=\cap_{i\in I} U_i$.
Then, 
\begin{itemize}
\item $\left[\cdots\rightarrow\oplus_{\card I=r}\mathbb Z(U_I)
\rightarrow\cdots\rightarrow\oplus_{i\in J}\mathbb Z(U_i)\right]\xrightarrow{\oplus_{i\in J}\mathbb Z(j_i)}\mathbb Z(X)$
is an usu local equivalence in $P^-(CW)$.
\item $\left[\cdots\rightarrow\oplus_{\card I=r}\mathbb Z_{tr}(U_I)
\rightarrow\cdots\rightarrow\oplus_{i\in J}\mathbb Z_{tr}(U_i)\right]\xrightarrow{\oplus_{i\in J}\mathbb Z_{tr}(j_i)}\mathbb Z_{tr}(X)$
is an usu local equivalence in $PC^-(CW)$.
\end{itemize}
\item[(ii)] Let $X\in\CW$ and $j_{\bullet}:U_{\bullet}\to X$ an hypercovering of $X$ by open subsets 
(i.e. for all $n\in\bold{\Delta}$,  $U_{n}$ is an open subset and $j_n:U_n\hookrightarrow X$ is the open embedding),
and $(U_{\bullet},\bullet):\cdots\to(U_{n},n)\to\cdots$ the associated complex in $\bold{\Delta}\times\CW$
Then,  
\begin{itemize}
\item $j:Lp_{\bold{\Delta}}^*(\mathbb Z(U_{\bullet},\bullet))\to\mathbb Z(X)$ is an isomorphism 
in $\Ho_{usu}(\PSh(\CW,C^-(\mathbb Z)))$
\item $j:Lp_{\bold{\Delta}}^*(\mathbb Z_{tr}(U_{\bullet},\bullet))\to\mathbb Z_{tr}(X)$ is an isomorphism 
in $\Ho_{usu}(\PSh_{\mathbb Z}(\Cor_{\mathbb Z}(\CW),C^-(\mathbb Z)))$
\end{itemize}
\end{itemize}
\end{lem}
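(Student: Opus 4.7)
The plan is to mirror the proof of Lemma \ref{HRAn}, transposing the argument from analytic spaces to CW complexes. The two key inputs that make the analogue work are that CW complexes are Hausdorff (so that the closed-fiber and intersection arguments for correspondences remain valid) and that continuous maps between CW complexes satisfy the standard local-factoring property with respect to open covers.

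For point (i), first-presheaf version: the complex
\begin{equation*}
\left[\cdots\to\oplus_{\card I=r}\mathbb Z(U_I)\to\cdots\to\oplus_{i\in J}\mathbb Z(U_i)\right]
\end{equation*}
is exact in $P^-(CW)$ by the same purely algebraic Čech-style calculation as in \ref{HRAn}(i), since on sections over any $Y\in\CW$ this is the alternating sum complex associated with the cover of $\Hom_{\CW}(Y,X)$ by the subsets $\Hom_{\CW}(Y,U_i)$. For exactness at the last term after usu-sheafification, take $Y\in\CW$, $y\in Y$, and $f:Y\to X$. Pick $i$ with $f(y)\in U_i$; by continuity there is an open neighborhood $V(y)\subset Y$ of $y$ with $f(V(y))\subset U_i$, so $f_{|V(y)}$ lies in $\mathbb Z(U_i)(V(y))$ and maps to $f_{|V(y)}$ in $\mathbb Z(X)(V(y))$. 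Hence $\oplus_i\mathbb Z(j_i)$ is surjective on stalks and $[\oplus_{i\in J}\mathbb Z(U_i)\xrightarrow{\oplus\mathbb Z(j_i)}\mathbb Z(X)\to 0]$ is exact in $\Ho_{usu}(\PSh(\CW,C^-(\mathbb Z)))$.

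For the transfer version of (i), exactness in strictly positive degrees is again the purely algebraic Čech identity. For the last map, let $\alpha\in\mathbb Z_{tr}(X)(Y)$ be an irreducible closed CW subcomplex of $Y\times X$, finite and surjective over a connected component of $Y$, and let $y\in Y$. The fiber $\alpha\cap(\{y\}\times X)$ is a finite set of points $\{x_1,\ldots,x_r\}$; choose pairwise disjoint open neighborhoods $W_k\subset X$ of $x_k$ with $W_k\subset U_{i_k}$, which is possible since $X$ is Hausdorff and $\{U_i\}$ is an open cover. Because the projection $p_Y:\alpha\to Y$ is finite (hence closed and proper with finite fibers) and the complement of $\sqcup_k V(y)\times W_k$ in $V(y)\times X$ is closed, one obtains an open neighborhood $V(y)\subset Y$ of $y$ such that $\alpha\cap(V(y)\times X)\subset V(y)\times\sqcup_k W_k$. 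The decomposition
\begin{equation*}
\alpha_{|V(y)}=\sum_k\alpha\cap(V(y)\times W_k),
\end{equation*}
together with the fact that each summand is a closed CW subcomplex of $V(y)\times U_{i_k}$ finite and surjective over $V(y)$, exhibits $\alpha_{|V(y)}$ as an element in the image of $\oplus_i\mathbb Z_{tr}(U_i)(V(y))\to\mathbb Z_{tr}(X)(V(y))$. This is the analogue of the statement quoted from \ref{HRAn}(i), and combined with the Čech exactness above gives (i).

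For point (ii), the reduction is the standard hypercovering argument: one reduces to point (i) by a Čech-to-hypercover descent spectral sequence, exactly as in \cite{AyoubB} Proposition 1.4 Étape 1, noting that the usu topology on $\CW$ has the same formal properties (open hypercovers form a dense family) used there. The main delicate step is the transfer case of (i) sketched in the previous paragraph, specifically the verification that an irreducible finite-surjective correspondence locally splits into a sum of finite-surjective correspondences each landing in a cover element; this is where the Hausdorff and finite-fiber properties of $\CW$ are essential.
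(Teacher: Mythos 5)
Your proof is correct and follows essentially the same Čech-exactness-plus-local-splitting strategy as the paper, with the reduction of point (ii) to Ayoub's hypercover argument matching the paper's reference to \cite{AyoubB}. In fact your handling of the transfer case of (i) is more careful than the paper's: the paper asserts that an irreducible finite-surjective $\alpha$ restricts over a small $V(y)$ to an element of a single $\mathbb Z_{tr}(U_i)(V(y))$, which fails whenever the fiber $\alpha_*y$ meets more than one cover element, whereas your decomposition $\alpha_{|V(y)}=\sum_k\alpha\cap(V(y)\times W_k)$ into pieces landing in distinct $U_{i_k}$'s is the precise statement needed to land in the image of $\oplus_i\mathbb Z_{tr}(U_i)(V(y))$.
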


\begin{proof}

\noindent(i): The first point follows from the following two facts :
\begin{itemize}
\item $[\cdots\rightarrow\oplus_{\card I=r}\mathbb Z(U_I)\rightarrow\cdots\rightarrow\oplus_{i\in J}\mathbb Z(U_i)]$
is  clearly exact in $P^-(CW)$. 
\item the exactness of 
$[\oplus_{i\in J}\mathbb Z(U_i)\xrightarrow{\oplus_{i\in J}\mathbb Z(j_i)}\mathbb Z(X)\to 0]$
in $\Ho_{usu}(\PSh(\CW,C^-(\mathbb Z)))$,
follows from the fact that for $Y\in\CW$, $y\in Y$ and $f:Y\to X$ a morphism, 
there exists an open subset $V(y)\subset Y$ containing $y$ such that $f(V(y))\subset U_i$, with
$U_i$ containing $f(y)$.  
\end{itemize}
Similary, the second point follows from the following two facts : 
\begin{itemize}
\item $[\cdots\rightarrow\oplus_{\card I=r}\mathbb Z_{tr}(U_I)\rightarrow\cdots\rightarrow\oplus_{i\in J}\mathbb Z_{tr}(U_i)]$
is clearly exact in $\PSh_{\mathbb Z}(\Cor^{fs}_{\mathbb Z}(\CW),C^-(\mathbb Z)))$. 
\item the exactness of 
$[\oplus_{i\in J}\mathbb Z_{tr}(U_i)\xrightarrow{\oplus_{i\in J}\mathbb Z_{tr}(j_i)}\mathbb Z_{tr}(X)\to 0]$
in $\Ho_{usu}(\PSh_{\mathbb Z}(\Cor^{fs}_{\mathbb Z}(\CW),C^-(\mathbb Z)))$,
follows from the fact that for $Y\in\CW$, $y\in Y$ and $\alpha\in\mathbb Z_{tr}(X)(Y)$ irreducible, 
there exists an open subset $V(y)\subset Y$ containing $y$ such that $\alpha_{|V(y)}\in\mathbb Z_{tr}(U_i)(V(y))$, with
$U_i$ containing $\alpha_*y$.  
\end{itemize}

\noindent(ii): Similar to \cite{AyoubB} Proposition 1.4 Etape 1.

\end{proof}

The following proposition is to use point (i) instead of point (ii) in the proof
of proposition \ref{CWadTr}(ii). Recall $\CS\subset\CW$ is the full subcategory of $\Delta$ complexes.

\begin{prop}\label{Icov}
\begin{itemize}
\item[(i)] Let $X\in\CW$. There exist $X'\in\CS$ homotopy equivalent to $X$,
that is there exist $g:X'\to X$ and $h:X\to X'$ such that $h\circ g$ is homotopic to $I_{X'}$
and $g\circ h$ is homotopic to $I_X$.

\item[(ii)] Let $X\in\CS$. There exist a countable open covering $X=\cup_{i\in J} U_i$ such that
for all finite subset $I\subset J$, $U_I:=\cap_{i\in I} U_i=\emptyset$ or $U_I$ is contractible.
\end{itemize}
\end{prop}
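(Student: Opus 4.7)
The plan is to reduce both claims to classical facts about simplicial complexes, which are a special case of $\Delta$-complexes, so that $X'\in\CS$ in part (i) and the intersection argument in part (ii) both become accessible.

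For part (i), I will invoke the inductive construction going back to Whitehead and Milnor that produces a simplicial complex homotopy equivalent to any given CW complex. Starting from the $0$-skeleton (already simplicial, hence in $\CS$), suppose that a simplicial complex $X'^{(n-1)}\in\CS$ together with a homotopy equivalence $g_{n-1}\colon X'^{(n-1)}\to X^{(n-1)}$ has been built. For each $n$-cell of $X$ with attaching map $\phi_\alpha\colon S^{n-1}\to X^{(n-1)}$, compose with a homotopy inverse of $g_{n-1}$ to obtain $h_{n-1}\circ\phi_\alpha\colon S^{n-1}\to X'^{(n-1)}$, and by the simplicial approximation theorem, after subdividing the standard triangulation of $S^{n-1}$ sufficiently, this map is homotopic to a simplicial map $\phi'_\alpha$. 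Attaching $n$-cells to $X'^{(n-1)}$ along the $\phi'_\alpha$ produces $X'^{(n)}\in\CS$, and a standard argument (pushout of a homotopy equivalence along cofibrations) extends $g_{n-1}$ to a homotopy equivalence $g_n\colon X'^{(n)}\to X^{(n)}$. Passing to the colimit yields $X'\in\CS$ with $g\colon X'\to X$ a homotopy equivalence.

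For part (ii), I will use the open star cover. Since the second barycentric subdivision of a $\Delta$-complex is a simplicial complex homeomorphic to the original, I may assume without loss of generality that $X$ is a simplicial complex. For each vertex $v\in V(X)$ let $\mathrm{St}(v)$ denote the union of the interiors of all simplices of $X$ whose closure contains $v$. These sets are open, and since every point of $X$ lies in the interior of a unique simplex, the $\mathrm{St}(v)$ cover $X$. Countability of the cover reduces to countability of $V(X)$, which holds for the CW complexes of interest in this paper. Given a finite subset $I=\{v_0,\dots,v_k\}\subset V(X)$, the intersection $\bigcap_{v\in I}\mathrm{St}(v)$ is the union of interiors of simplices of $X$ containing all of $v_0,\dots,v_k$. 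By the simplicial property, such simplices exist if and only if $I$ spans a simplex $\sigma_I$, in which case the intersection is the open star of $\sigma_I$, which deformation retracts radially onto the barycenter of $\sigma_I$ and is therefore contractible.

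The main obstacle is ensuring countability of the vertex set in part (ii): this requires $X$ to be a countable complex, which is the case for the complexes arising from complex algebraic varieties used throughout the paper. A secondary subtlety is the passage from $\Delta$-complex to genuine simplicial complex by subdivision in part (ii), since the unique-spanned-simplex property that underlies both the covering statement and the contractibility of the intersections can fail in a general $\Delta$-complex (e.g.\ with two edges sharing both endpoints).
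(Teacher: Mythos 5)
Your approach matches the paper's in spirit: part (i) is Hatcher's Theorem~2C5 (which the paper simply cites, while you sketch the inductive construction behind it), and part (ii) is the open star cover of the vertex set. The differences are in the care taken on part (ii) and in the handling of countability, and here your proposal is actually more accurate than the paper's own argument.

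On part (ii): the paper's proof is just ``take an open star of each vertex of $X$'' and extract a countable subcover, but as stated for $X\in\CS$ (a $\Delta$-complex, per the paper's definition of $\CS$) this does not actually deliver the required property. In a $\Delta$-complex the open star of a vertex need not be contractible (the one-vertex $\Delta$-structure on $S^1$ or on the torus gives an open star equal to the whole space), and the intersection of two vertex stars need not be contractible or empty (two edges sharing both endpoints yield a disconnected intersection). Your observation that one must first pass to a subdivision that is a genuine simplicial complex, so that finite nonempty intersections of vertex stars become open stars of spanned simplices and hence contractible by the radial retraction to the barycenter, is precisely the repair needed. This is not a cosmetic detail; without it the proof of the downstream Proposition~\ref{CWadTr}(ii), which invokes this covering property, is not supported. (Alternatively, one can strengthen part (i) to produce a simplicial complex directly, which is in fact what Hatcher's Theorem~2C5 gives, so the two parts compose correctly in the paper's application even if part (ii) as stated does not hold for all of $\CS$.)

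On countability: the paper extracts a countable subcover by invoking $\sigma$-compactness of $X$ (``$X$ is a countable union of compact sets''), which for a CW complex is equivalent to having countably many cells; you instead reduce to countability of the vertex set. These amount to the same implicit hypothesis, and you are right to flag it, since a general object of $\CW$ need not satisfy it. For the CW complexes arising from complex algebraic varieties, which is the paper's actual use case, both versions go through.
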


\begin{proof}

\noindent(i): See \cite[theorem 2C5]{Hatcher}.

\noindent(ii): Take an open star of each vertices of $X$. As $X$ is a countable union of compact set,
we can extract a countable subcovering of this open covering.

\end{proof}

We denote, for all $n\in\mathbb N$, $\mathbb I^n:=[0,1]^n\in\CW$.
It gives, together with the face maps 
$\partial^n_{\epsilon,i}:\mathbb I^{n-1}\hookrightarrow\mathbb I^n$, $\epsilon=0,1$, $i=1,\cdots n$
a cubical object of $\CW$, denoted $\mathbb I^*$.
In particular, for $n=1$, we have the canonical maps  
$i_0:\left\{0\right\}\hookrightarrow\mathbb I^1$ 
$i_1:\left\{1\right\}\hookrightarrow\mathbb I^1$, 
and the terminal map $a=a_{\mathbb I^1}:\mathbb I^1\to\left\{\pt\right\}$.

For $F^{\bullet}\in\PSh(\CW,\Ab)$ and $X\in\CW$, 
we have the complex $F(X\times\mathbb I^*)$ associated to the cubical object 
$F(X\times\mathbb I^*)$ in the category of abelian groups, whose differential maps are
\begin{equation*}
\partial^n_{\mathbb I}=\sum_{i=1}^n(-1)^i(F(I_X\times\partial^n_{0,i})-F(I_X\times\partial_{1,i}):
F(X\times\mathbb I^n)\to F(X\times\mathbb I^{n-1})
\end{equation*}
There is (\cite{Serre}) a canonical morphism $L:\mathbb I^*\to\Delta^*$ of complexes of $\mathbb Z(\CW)$.
This gives for $F\in\PSh(\CW,\Ab)$ and $X\in\CW$, the morphism of complexes of abelian groups
\begin{equation*}
F^{\bullet}(L\times I_X):F(\Delta^*\times X)\to F(\mathbb I^*\times X)
\end{equation*}
with $L\times I_X:\mathbb I^*\times X\to\Delta^*\times X$ 
the corresponding morphism of complexes of $\mathbb Z(\CW)$.
We have then following :

\begin{prop}\label{SerreP}\cite{Serre}
For $X\in\CW$, $\mathbb Z(X)(L):\mathbb Z\Hom(\Delta^*,X)\to\sing_{\mathbb I^*}\mathbb Z(X)$
is a quasi-isomorphism of complexes of abelian groups.
\end{prop}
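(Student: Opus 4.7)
The plan is to reduce, via homotopy invariance and a Mayer--Vietoris / descent argument on a good open cover, to the case where $X$ is contractible, where both sides are quasi-isomorphic to $\mathbb Z$ concentrated in degree $0$ and the map $\mathbb Z(X)(L)$ is the identity there. The statement is essentially a reformulation of the classical comparison between cubical and simplicial singular homology due to Serre, so $\sing_{\mathbb I^*}\mathbb Z(X)$ is to be understood with the standard normalization convention on the complex associated to a cubical object (degenerate cubes quotiented out); without this convention the non-normalized complex of a point would have nonzero homology in every nonnegative degree, so some normalization is necessary for the statement to hold.

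First I would verify that both functors $X\mapsto\mathbb Z\Hom_{\CW}(\Delta^*,X)$ and $X\mapsto\sing_{\mathbb I^*}\mathbb Z(X)$ are $\mathbb I^1$-homotopy invariant. For the simplicial side this is classical via the prism subdivision of $\Delta^n\times\mathbb I^1$; for the cubical side it follows directly from the product identification $\mathbb I^n\times\mathbb I^1\simeq\mathbb I^{n+1}$, which converts a homotopy $h:X\times\mathbb I^1\to Y$ into an explicit chain homotopy on cubical chains. Using proposition \ref{Icov}(i), any $X\in\CW$ is then replaced by a homotopy-equivalent $\Delta$-complex $X'$ without changing either complex up to quasi-isomorphism. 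By proposition \ref{Icov}(ii), $X'$ admits a countable open cover $X'=\bigcup_{i\in J}U_i$ such that every nonempty finite intersection $U_I$ is contractible. A double complex / spectral sequence argument, analogous to the proof of lemma \ref{HRCW}(i) but applied to the singular chain functors themselves rather than to the presheaves $\mathbb Z(-)$, then reduces the comparison to the contractible case, where both sides are $\mathbb Z$ in degree $0$ and $\mathbb Z(L)$ is the identity.

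The main obstacle will be establishing the descent property of the singular chain functors $\mathbb Z\Hom_{\CW}(\Delta^*,-)$ and $\sing_{\mathbb I^*}\mathbb Z(-)$ with respect to such a cover, i.e. showing that each is quasi-isomorphic to the total complex of the \v{C}ech-type bicomplex $\bigoplus_{|I|=r}F(U_I)$. For the simplicial side this is the classical small-simplices theorem, proved by iterated barycentric subdivision; for the cubical side one needs an analogous cubical subdivision argument. Both are standard but nontrivial, and for the statement at hand one may alternatively deduce the whole proposition directly from Serre's original result \cite{Serre}, which already provides the identification of the normalized cubical singular chain complex with the simplicial singular chain complex together with the compatibility of the comparison map induced by $L$.
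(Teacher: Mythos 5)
The paper gives no proof of this proposition; it is recorded as a classical result with a bare citation to \cite{Serre}, so your fallback of invoking Serre's comparison of cubical and simplicial singular chains is exactly what the paper does. Your sketched reduction (homotopy invariance of both chain functors, replacement by a $\Delta$-complex via proposition \ref{Icov}(i), a cover with contractible finite intersections via proposition \ref{Icov}(ii), and Mayer--Vietoris descent to the contractible case) is a reasonable way to unwind that citation, with the only nontrivial input --- descent for singular chains by iterated subdivision --- being precisely what Serre's argument furnishes. Your normalization caveat is a genuine and worthwhile point: with the differential $\partial^n_{\mathbb I}=\sum_{i=1}^n(-1)^i(\partial_{0,i}^*-\partial_{1,i}^*)$ as defined in the paper, $\sing_{\mathbb I^*}\mathbb Z(\pt)$ has $\mathbb Z$ in every nonnegative degree with zero differential, so $\mathbb Z(\pt)(L)$ cannot be a quasi-isomorphism unless degenerate cubes are quotiented out. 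Serre's theorem is about the normalized cubical complex, and this convention must be in force (though it is left implicit) for the statement and its downstream uses (proposition \ref{CWadTr(iii)}, proposition \ref{Kunneth}) to hold. Stating it explicitly, as you do, is the right move.
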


We now introduce an explicit localization functor for the $(\mathbb I^1,usu)$ model structure.

\begin{itemize}
\item If $F^{\bullet}\in\PSh(\CW,C^-(\mathbb Z))$ , 
\begin{equation}
\underline{\sing}_{\mathbb I^*}F^{\bullet}:=\underline\Hom(\mathbb Z(\mathbb I^*),F^{\bullet})\in\PSh(\CW,C^-(\mathbb Z))
\end{equation}
is the total complex of presheaves associated to the bicomplex of presheaves $X\mapsto F^{\bullet}(\mathbb I^*\times X)$,
and $\sing_{\mathbb I^*}F^{\bullet}:=e_{cw*}\underline{\sing}_{\mathbb I^*}F^{\bullet}=F^{\bullet}(\mathbb I^*)\in C^-(\mathbb Z)$.
We denote by
$S(F^{\bullet}):F^{\bullet}\to\underline{\sing}_{\mathbb I^*}F^{\bullet}$ 
\begin{equation}\label{SFCW}
S(F^{\bullet}):
\xymatrix{\cdots\ar[r] & 0\ar[r]\ar[d] & 0\ar[r]\ar[d] & F^{\bullet}\ar[d]^{I}\ar[r] & 0\ar[r]\ar[d] & \cdots \\
\cdots\ar[r] & \underline{\sing}_{\mathbb I^2}F^{\bullet}\ar[r] & \underline{\sing}_{\mathbb I^1}F^{\bullet}\ar[r] 
& F^{\bullet}\ar[r] & 0\ar[r] & \cdots} 
\end{equation}
the inclusion morphism in $\PSh(\CW),C^-(\mathbb Z))$. 
For $f:F_1^{\bullet}\to F_2^{\bullet}$ a morphism of $\PSh(\CW,C^-(\mathbb Z))$, we denote by 
$S(f):\underline{\sing}_{\mathbb I^*}F_1^{\bullet}\to\underline{\sing}_{\mathbb I^*}F_2^{\bullet}$ 
the morphism of $P^-(CW)$ given by, for $X\in\CW$,
\begin{equation}\label{SfCW}
S(f)(X):\xymatrix{
\cdots\ar[r] & F^{\bullet}_1(\mathbb I^2\times X)\ar[r]\ar[d]^{f(\mathbb I^2\times X)} &
F^{\bullet}_1(\mathbb I^1\times X)\ar[r]\ar[d]^{f(\mathbb I^1\times X)} & F_1^{\bullet}(X)\ar[d]^{f(X)}\ar[r] & 0\ar[r]\ar[d] & \cdots \\
\cdots\ar[r] & F^{\bullet}_2(\mathbb I^2\times X)\ar[r] & F^{\bullet}_2(\mathbb I^1\times X)\ar[r] 
& F_2^{\bullet}(X)\ar[r] & 0\ar[r] & \cdots.}  
\end{equation}

\item If $F^{\bullet}\in\PSh_{\mathbb Z}(\Cor^{fs}_{\mathbb Z}(\CW),C^-(\mathbb Z))$, 
\begin{equation}
\underline{\sing}_{\mathbb I^*}F^{\bullet}:=\underline\Hom(\mathbb Z_{tr}(\mathbb I^*),F^{\bullet})
\in\PSh_{\mathbb Z}(\Cor^{fs}_{\mathbb Z}(\CW),C^-(\mathbb Z))
\end{equation}
is the total complex of presheaves associated to the bicomplex of presheaves  $X\mapsto F^{\bullet}(\mathbb I^*\times X)$,
and $\sing_{\mathbb I^*}F^{\bullet}:=e^{tr}_{cw*}\underline{\sing}_{\mathbb I^*}F^{\bullet}=F^{\bullet}(\mathbb I^*)\in C^-(\mathbb Z)$.
We have the inclusion morphism (\ref{SFCW})
\begin{equation*}
S(\Tr_*F^{\bullet}):\Tr_*F^{\bullet}\to
\underline{\sing}_{\mathbb I^*}\Tr_*F^{\bullet}=\Tr_*\underline{\sing}_{\mathbb I^*}F^{\bullet}
\end{equation*}
which is a morphism in $\PSh(\Cor^{fs}_{\mathbb Z}(\CW),C^-(\mathbb Z))$ 
denoted the same way $S(F^{\bullet}):F^{\bullet}\to\underline{\sing}_{\mathbb I^*}F^{\bullet}$. 
For $f:F_1^{\bullet}\to F_2^{\bullet}$ a morphism $PC^-(CW)$, we 
have the morphism (\ref{SfCW})
\begin{equation*} 
S(\Tr_*f):\Tr_*\underline{\sing}_{\mathbb I^*}F_1^{\bullet}=\underline{\sing}_{\mathbb I^*}\Tr_*F_1^{\bullet}\to
\underline{\sing}_{\mathbb I^*}\Tr_*F_2^{\bullet}=\Tr_*\underline{\sing}_{\mathbb I^*}F_2^{\bullet}
\end{equation*}
which is a morphism in $PC^-(CW)$ denoted the same way
$S(f):\underline{\sing}_{\mathbb I^*}F_1^{\bullet}\to\underline{\sing}_{\mathbb I^*}F_2^{\bullet}$.
\end{itemize}

For $F^{\bullet}\in\PSh(\Cor^{fs}_{\mathbb Z}(\CW),C^-(\mathbb Z))$, we have by definition 
$\Tr_*\underline{\sing}_{\mathbb I^*}F^{\bullet}=\underline{\sing}_{\mathbb I^*}\Tr_*F^{\bullet}$ 
and $\Tr_*S(F^{\bullet})=S(\Tr_*F^{\bullet})$.

We now make the following definition

\begin{defi}\label{RMCW}
Let $Y\in\CW$ and $E\subset Y$ a CW subcomplex. Denote by $l:E\hookrightarrow X$ the topological embedding. 
We define
\begin{itemize}
\item $\mathbb Z(Y,E)=\coker(\mathbb Z(l))\in\PSh_{\mathbb Z}(\CW,C^-(\mathbb Z))$
to be the cokernel of the injective morphism $\mathbb Z(l):\mathbb Z(E)\hookrightarrow\mathbb Z(X)$.

\item $\mathbb Z_{tr}(Y,E)=\coker(\mathbb Z_{tr}(l))\in\PSh_{\mathbb Z}(\Cor^{fs}_{\mathbb Z}(\CW),C^-(\mathbb Z))$
to be the cokernel of the injective morphism $\mathbb Z_{tr}(l):\mathbb Z_{tr}(E)\hookrightarrow\mathbb Z_{tr}(Y)$,
by definition, we have the following exact sequence in $PC^-(CW)$
\begin{equation*}
0\to\mathbb Z_{tr}(E)\xrightarrow{\mathbb Z_{tr}(l)}\mathbb Z_{tr}(Y)\xrightarrow{c(Y,E)}\mathbb Z_{tr}(Y,E)\to 0,
\end{equation*}

\item $\mathbb Z_{(Y,E)}=\coker(\ad(l_!,l^!)(\mathbb Z_X))\in\Sh(Y)$
the cokernel of the injective morphism 
$\ad(l_!,l^!)(\mathbb Z_Y):l_!l^!\mathbb Z_Y\hookrightarrow\mathbb Z_Y$ in $\Sh(Y)$
By definition, there is a distingushed triangle in $D^b(Y)$
\begin{equation*}
l_!l^!\mathbb Z_Y\to\mathbb Z_Y\to\mathbb Z_{(Y,E)}\to l_!l^!\mathbb Z_Y[1]
\end{equation*}
\end{itemize}
The relative Borel Moore homology of the pair $(Y,E)$ 
is $H^{BM}_p(Y,E,\mathbb Z)=\Hom(\mathbb Z,a_{X!}a_{X}^!\mathbb Z_{(Y,E)})$.
\end{defi}

In particular, we get from the second point the following exact sequence in 
$\PSh_{\mathbb Z}(\Cor^{fs}_{\mathbb Z}(\CW),C^-(\mathbb Z))$
\begin{equation}\label{CWcXD}
0\to\underline{\sing}_{\mathbb I^*}\mathbb Z_{tr}(E)\xrightarrow{\underline{\sing}_{\mathbb I^*}\mathbb Z_{tr}(l)}
\underline{\sing}_{\mathbb I^*}\mathbb Z_{tr}(Y)
\xrightarrow{\underline{\sing}_{\mathbb I^*}c(Y,E)}\underline{\sing}_{\mathbb I^*}\mathbb Z_{tr}(Y,E)\to 0
\end{equation}
We define 
\begin{equation*}
M(Y,E)=D(\mathbb I^1,usu)(\mathbb Z_{tr}(Y,E))\in\CwDM^-(\mathbb Z)
\end{equation*}
to be the relative motive of the pair $(Y,E)$.

We now look at the behavior of the functors mentionned above with respect to the $(\mathbb I^,usu)$ model structures.
We start we a lemma concerning the properties of $\mathbb I^1$ homotopy maps.
 
\begin{lem}\label{I1hpt}
Let $F^{\bullet}\in\PSh(\CW,C^-(\mathbb Z))$.
\begin{itemize}
\item[(i)] Let $X,Y\in\CW$ and $f_0:X\to Y$, $f_1:X\to Y$ be two maps. 
If $f_0$ and $f_1$ are $\mathbb I^1$ homotopic, then the maps of complexes 
\begin{itemize}
\item $\underline{\sing}_{\mathbb I^*}F^{\bullet}(f_0): 
\Tot F^{\bullet}(Y\times\mathbb I^*)\to\Tot F^{\bullet}(X\times\mathbb I^*)$  and
\item $\underline{\sing}_{\mathbb I^*}F^{\bullet}(f_1): 
\Tot F^{\bullet}(Y\times\mathbb I^*)\to\Tot F^{\bullet}(X\times\mathbb I^*)$ 
\end{itemize}
induces the same map on homology.
\item[(ii)] Let $X,Y\in\CW$, if $f:X\to Y$ is a $\mathbb I^1$ homotopy equivalence then
\begin{equation}
\underline{\sing}_{\mathbb I^*}F^{\bullet}(f): 
\Tot F^{\bullet}(Y\times\mathbb I^*)\to\Tot F^{\bullet}(X\times\mathbb I^*)
\end{equation}
is a quasi-isomorphism of complexes of abelian groups.
\end{itemize}
\begin{itemize}
\item[(iii)] Let $F^{\bullet},G^{\bullet}\in\PSh(\CW,C^-(\mathbb Z))$ and 
$\phi_0:F^{\bullet}\to G^{\bullet}$, $\phi_1:F^{\bullet}\to G^{\bullet}$ be two maps.
If $\phi_0$ and $\phi_1$ are $\mathbb I^1$ homotopic, then $\phi_0=\phi_1\in\CwDA^{-}$. 
\item[(iv)] Let $F^{\bullet},G^{\bullet}\in\PSh(\CW,C^-(\mathbb Z))$, if  
$\phi:F^{\bullet}\to G^{\bullet}$ is a $\mathbb I^1$ homotopy equivalence then $\phi$ is a $(\mathbb I^1,usu)$ local equivalence.
\end{itemize}
\end{lem}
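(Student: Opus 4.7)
The plan is to prove (i) by constructing an explicit chain homotopy, deduce (ii) from (i), then establish (iii) as a formal consequence of the model structure on $\PSh(\CW,C^-(\mathbb Z))$, and finally obtain (iv) from (iii).

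For (i), let $h : X\times\mathbb I^1\to Y$ be a homotopy with $f_0=h\circ(I_X\times i_0)$ and $f_1=h\circ(I_X\times i_1)$. The map $h\times I_{\mathbb I^{n-1}} : X\times\mathbb I^n\to Y\times\mathbb I^{n-1}$ induces, after applying $F^{\bullet}$, maps $K_n : F^{\bullet}(Y\times\mathbb I^{n-1})\to F^{\bullet}(X\times\mathbb I^n)$. Assembling these into a map of total complexes of degree $+1$, I would check directly from the cubical face relations and the identity $(h\times I_{\mathbb I^{n-1}})\circ(I_X\times\partial^n_{\epsilon,1})=f_\epsilon\circ p_{Y\times\mathbb I^{n-1}}$ that the cubical coboundary formula gives
\begin{equation*}
\partial_{\mathbb I}\circ K + K\circ\partial_{\mathbb I}
= \underline{\sing}_{\mathbb I^*}F^{\bullet}(f_0)-\underline{\sing}_{\mathbb I^*}F^{\bullet}(f_1),
\end{equation*}
which yields equality of the induced maps on homology. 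The only delicate point is arranging signs and face conventions so that the face maps in directions $2,\ldots,n$ cancel pairwise between $K\circ\partial_{\mathbb I}$ and $\partial_{\mathbb I}\circ K$, leaving only the two terms in direction $1$.

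Point (ii) is immediate: applying (i) to the two $\mathbb I^1$-homotopies $g\circ f\sim I_X$ and $f\circ g\sim I_Y$ shows that $\underline{\sing}_{\mathbb I^*}F^{\bullet}(f)$ and $\underline{\sing}_{\mathbb I^*}F^{\bullet}(g)$ are mutually inverse on homology, hence both are quasi-isomorphisms.

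For (iii), the key observation is that the terminal map $a:\mathbb I^1\to\{\pt\}$ produces a morphism $\underline{G}^{\bullet}(a):G^{\bullet}\to\underline{\Hom}(\mathbb Z(\mathbb I^1),G^{\bullet})$. By definition of the projective $(\mathbb I^1,usu)$ model structure (definition \ref{CWmodstr}(i)) and the Yoneda/Hom adjunction, $\underline{G}^{\bullet}(a)$ is an $(\mathbb I^1,usu)$ local equivalence, so its image in $\CwDA^-(\mathbb Z)$ is an isomorphism. Since $\underline{G}^{\bullet}(i_0)\circ\underline{G}^{\bullet}(a)=\underline{G}^{\bullet}(i_1)\circ\underline{G}^{\bullet}(a)=I_{G^{\bullet}}$, both sections coincide in $\CwDA^-(\mathbb Z)$ with the inverse of $\underline{G}^{\bullet}(a)$; precomposing with $\tilde\phi$ gives $\phi_0=\phi_1$ in $\CwDA^-(\mathbb Z)$. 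Finally, for (iv), given an $\mathbb I^1$-homotopy inverse $\psi$ to $\phi$, applying (iii) to the two homotopies $\psi\circ\phi\sim I_{F^{\bullet}}$ and $\phi\circ\psi\sim I_{G^{\bullet}}$ shows that $\phi$ is invertible in $\CwDA^-(\mathbb Z)$, which by definition of the localization means $\phi$ is an $(\mathbb I^1,usu)$ local equivalence. The only nontrivial step is (i); (ii)--(iv) are formal consequences.
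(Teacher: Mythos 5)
Your plan is correct and matches the paper's proof almost step for step: an explicit cubical chain homotopy for (i), (ii) immediate from (i), (iii) via the two sections of $\underline{G}^{\bullet}(a)$, and (iv) via 2-out-of-3 in $\CwDA^-$. The one place you compress too much is the assertion in (iii) that $\underline{G}^{\bullet}(a):G^{\bullet}\to\underline{\Hom}(\mathbb Z(\mathbb I^1),G^{\bullet})$ is an $(\mathbb I^1,usu)$ local equivalence ``by definition of the model structure and the Yoneda/Hom adjunction.'' The Bousfield localization is taken with respect to the representable morphisms $\mathbb Z(X\times\mathbb I^1)\to\mathbb Z(X)$, and what you need is the dual statement for $\underline{\Hom}(\mathbb Z(\mathbb I^1),-)$; this is not a definitional unwinding. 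The Yoneda/adjunction computation
$\Hom(\mathbb Z(X)[n],\underline{\Hom}(\mathbb Z(\mathbb I^1),L^{\bullet}))\cong\Hom(\mathbb Z(X\times\mathbb I^1)[n],L^{\bullet})$
identifies $\underline{L}^{\bullet}(a)$ as a termwise quasi-isomorphism only when $L^{\bullet}$ is usu-fibrant and $\mathbb I^1$-local, and you then need the universal property of the localization (or a diagram chase through a fibrant replacement $G^{\bullet}\to L^{\bullet}$) to conclude that $\underline{G}^{\bullet}(a)$ itself is an $(\mathbb I^1,usu)$ local equivalence for an arbitrary $G^{\bullet}$. The paper supplies exactly this fibrant-replacement argument; you should make it explicit rather than appeal to ``definition.'' Apart from this, the proof is sound.
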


\begin{proof}

\noindent (i): Let $h:X\times\mathbb I^1\to Y$ an homotopy from $f_0=h\circ(I_X\times i_0)$ to $f_1=h\circ(I_X\times i_1)$.
Let $\alpha\in F^k(Y\times\mathbb I^n)$. Then, we have
\begin{eqnarray*}
\partial (F^{\bullet}(h\times I_{\mathbb I^n})(\alpha))
&=&\partial_F(F^{\bullet}(h\times I_{\mathbb I^n})(\alpha))+\partial_{\mathbb I}(F^{\bullet}(h\times I_{\mathbb I^n})(\alpha)) \\
&=&F^{\bullet}(h\times I_{\mathbb I^n})(\partial_F(\alpha))+
F^{\bullet}(f_0)(\alpha)-F^{\bullet}(f_1)(\alpha)+F^{\bullet}(h\times I_{\mathbb I^n})(\partial_{\mathbb I}\alpha) \\
&=&F^{\bullet}(h\times I_{\mathbb I^n})(\partial\alpha)+F^{\bullet}(f_0)(\alpha)-F^{\bullet}(f_1)(\alpha)
\end{eqnarray*} 
This proves (i).

\noindent (ii): Follow imediately from (i).

\noindent (iii): Let $\tilde{\phi}:F^{\bullet}\to\underline{\Hom}(\mathbb Z(\mathbb I^1),G^{\bullet})$ an homotopy
from $\phi_0=\underline{G}^{\bullet}(i_0)\circ\tilde{\phi}$ to $\phi_1=\underline{G}^{\bullet}(i_1)\circ\tilde{\phi}$.
Let 
\begin{equation*} 
\underline{G}^{\bullet}(p):G^{\bullet}\to\underline{\Hom}(\mathbb Z(\mathbb I^1),G^{\bullet}) 
\end{equation*}
be the map induced by $a_{\mathbb I^1}:\mathbb I^1\to\left\{\pt\right\}$, that is, for $X\in\CW$, 
$\underline{G}^{\bullet}(p)(X)=G^{\bullet}(p_X):G^{\bullet}(X)\to G^{\bullet}(X\times\mathbb I^1)$,
with $p_X=I_X\times a_{\mathbb I^1}:X\times\mathbb I^1\to X$ is the projection.  
Then, we have 
\begin{equation}
\underline{G}^{\bullet}(i_0)\circ\underline{G}^{\bullet}(p)
=\underline{G}^{\bullet}(i_1)\circ\underline{G}^{\bullet}(p)=I_{G^{\bullet}}
\end{equation}  
Hence, it suffice to show that
$\underline{G}^{\bullet}(p):G^{\bullet}\to\underline{\Hom}(\mathbb Z(\mathbb I^1),G^{\bullet})$  
is an $(\mathbb I^1,usu)$ local equivalence, since
then $\underline{G}^{\bullet}(i_0)=\underline{G}^{\bullet}(i_1)\circ\underline{G}^{\bullet}(p)^{-1}\in\CwDA^-$.
So, consider $\psi:G^{\bullet}\to L^{\bullet}$ a morphism in  $\PSh(\CW,C^-(\mathbb Z))$ with $L^{\bullet}$ $\mathbb I^1$ local. 
Consider the commutative diagramm in $\Ho_{usu}\PSh(\CW,C^-(\mathbb Z))$
\begin{equation}\label{Iloc}
\xymatrix{
G^{\bullet}\ar[d]^{\psi}\ar[rr]^{\underline{G}^{\bullet}(p)} & \, & 
\underline{\Hom}(\mathbb Z(\mathbb I^1),G^{\bullet})\ar[d]^{\underline{\Hom}(\mathbb Z(\mathbb I^1),\psi)} \\
L^{\bullet}\ar[rr]^{\underline{L}^{\bullet}(p)} & \, & \underline{\Hom}(\mathbb Z(\mathbb I^1),L^{\bullet})}
\end{equation}
Since we 
now consider morphism in the homotopy category with respect to the usu model structure,
we can assume, up to replace $L^{\bullet}$ by an usu local equivalent object that $L^{\bullet}$ is usu fibrant.
Then, $L^{\bullet}\xrightarrow{\underline{L}^{\bullet}(p)}\underline{\Hom}(\mathbb Z(\mathbb I^1),L^{\bullet})$
is an usu local equivalence and 
\begin{equation}
\underline{L}^{\bullet}(p)^{-1}\circ\underline{\Hom}(\mathbb Z(\mathbb I^1),\psi):
\underline{\Hom}(\mathbb Z(\mathbb I^1),G^{\bullet})\to L^{\bullet}
\end{equation}
is the only map making the diagram \ref{Iloc} commute. This prove that 
$\underline{G}^{\bullet}(p)$ is a $(\mathbb I^1,usu)$ local equivalence.

\noindent (iv): Follows imediately from (iii).

\end{proof}

\begin{lem}\label{CWTr}
\begin{itemize}
\item[(i)] A complex of presheaves $F^{\bullet}\in\PSh(\Cor^{fs}_{\mathbb Z}(\CW),C^-(\mathbb Z))$ is 
$\mathbb I^1$ local if and only if 
$\Tr_*F^{\bullet}\in\PSh(\CW,C^-(\mathbb Z))$ is 
$\mathbb I^1$ local.

\item[(ii)] A morphism $\phi:F^{\bullet}\to G^{\bullet}$ in $\PSh(\Cor^{fs}_{\mathbb Z}(\CW),C^-(\mathbb Z))$ is 
an $(\mathbb I^1,usu)$ local equivalence if and only if
$\Tr_*\phi:\Tr_*F^{\bullet}\to\Tr_*G^{\bullet}$ is an $(\mathbb I^1,usu)$ local equivalence.
\end{itemize}
\end{lem}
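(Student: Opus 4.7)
The plan is to follow the template of the proof of Lemma \ref{DTr} in the analytic setting, replacing $\mathbb D^1$ by $\mathbb I^1$ and $(\AnSm(\mathbb C),usu)$ by $(\CW,usu)$. The three ingredients are: usu-fibrant replacement, Yoneda evaluation at representables, and the adjunction $(\Tr^*,\Tr_*)$.

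For part (i), I would first take an usu-local equivalence $g:F^{\bullet}\to L^{\bullet}$ with $L^{\bullet}$ usu-fibrant in $PC^-(CW)$. Since by definition of the usu model structure on $PC^-(CW)$ a morphism is an usu-local equivalence iff its image under $\Tr_*$ is, both $g$ and $\Tr_*g$ are usu-local equivalences, so the $\mathbb I^1$-locality of $F^{\bullet}$ (resp. of $\Tr_*F^{\bullet}$) is equivalent to that of $L^{\bullet}$ (resp. of $\Tr_*L^{\bullet}$). As in the analytic case (cf. \cite{AyoubG2}), $\Tr_*L^{\bullet}$ is also usu-fibrant in $P^-(CW)$. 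By Yoneda, for every $X\in\CW$ and $n\in\mathbb Z$,
\begin{equation*}
\Hom_{P^-(CW)}(\mathbb Z(X)[n],\Tr_*L^{\bullet})=H^nL^{\bullet}(X)=\Hom_{PC^-(CW)}(\mathbb Z_{tr}(X)[n],L^{\bullet}),
\end{equation*}
so that testing $\mathbb I^1$-locality against the generating maps $\mathbb Z(X\times\mathbb I^1)[n]\to\mathbb Z(X)[n]$ and $\mathbb Z_{tr}(X\times\mathbb I^1)[n]\to\mathbb Z_{tr}(X)[n]$ yields that the two conditions coincide.

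For part (ii), the ``only if'' direction will follow from the Quillen nature of $(\Tr^*,\Tr_*)$: $\Tr^*$ sends the representing map $\mathbb Z(X\times\mathbb I^1)[n]\to\mathbb Z(X)[n]$ to $\mathbb Z_{tr}(X\times\mathbb I^1)[n]\to\mathbb Z_{tr}(X)[n]$, so by the universal property of left Bousfield localization together with part (i), $(\Tr^*,\Tr_*)$ is Quillen for the $(\mathbb I^1,usu)$ model structures and $\Tr_*$ sends $(\mathbb I^1,usu)$-equivalences to $(\mathbb I^1,usu)$-equivalences. For the ``if'' direction, assuming $\Tr_*\phi$ is an $(\mathbb I^1,usu)$-equivalence, I replace $F^{\bullet}$ and $G^{\bullet}$ by usu-fibrant representatives (permissible since $\Tr_*$ detects usu-equivalences). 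Then for any $\mathbb I^1$-local usu-fibrant $K^{\bullet}\in PC^-(CW)$, by part (i) $\Tr_*K^{\bullet}$ is $\mathbb I^1$-local; using
\begin{equation*}
\Hom_{PC^-(CW)}(G^{\bullet},K^{\bullet})=\Hom_{PC^-(CW)}(\Tr^*\Tr_*G^{\bullet},K^{\bullet})=\Hom_{P^-(CW)}(\Tr_*G^{\bullet},\Tr_*K^{\bullet}),
\end{equation*}
together with the analogous identity for $F^{\bullet}$, the hypothesis produces the required isomorphism $\Hom_{PC^-(CW)}(G^{\bullet},K^{\bullet})\xrightarrow{\sim}\Hom_{PC^-(CW)}(F^{\bullet},K^{\bullet})$, showing that $\phi$ is an $(\mathbb I^1,usu)$-equivalence.

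The main obstacle is to justify the first equality in the display above, namely that the counit $\Tr^*\Tr_*G^{\bullet}\to G^{\bullet}$ becomes an isomorphism after applying $\Hom_{PC^-(CW)}(-,K^{\bullet})$ for $\mathbb I^1$-local usu-fibrant $K^{\bullet}$; the second equality is just the strict chain-level adjunction. I expect this obstacle to reduce, via the sectionwise Yoneda identification used in part (i) and the preservation of usu-fibrancy by $\Tr_*$, to the computation already performed in part (i), so that no genuinely new ingredient beyond part (i) and the adjunction will be needed.
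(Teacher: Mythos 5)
Your treatment of part (i) and of the ``if'' direction of part (ii) follows the paper's route essentially verbatim, and the ``main obstacle'' you flag at the end is in fact not an obstacle: the paper's Proposition~\ref{CWadTr}(i) records that $\ad(\Tr^*,\Tr_*)(G^{\bullet}):G^{\bullet}\to\Tr^*\Tr_*G^{\bullet}$ is an isomorphism of presheaf complexes, so the first displayed identity $\Hom_{PC^-(CW)}(G^{\bullet},K^{\bullet})=\Hom_{PC^-(CW)}(\Tr^*\Tr_*G^{\bullet},K^{\bullet})$ holds on the nose, not merely up to localization.

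The genuine gap is in the ``only if'' direction of (ii). From ``$(\Tr^*,\Tr_*)$ is Quillen for the $(\mathbb I^1,usu)$ model structures'' you infer ``$\Tr_*$ sends $(\mathbb I^1,usu)$-equivalences to $(\mathbb I^1,usu)$-equivalences.'' That inference is invalid: $\Tr_*$ being a right Quillen functor only guarantees, via Ken Brown's lemma, that it preserves weak equivalences between \emph{fibrant} objects, not arbitrary weak equivalences. The universal property of left Bousfield localization that you cite concerns the left adjoint $\Tr^*$ and does not upgrade the behaviour of $\Tr_*$ on non-fibrant objects. The paper's argument at this point is different and more concrete: it shows directly that for each $Y\in\CW$ the image $\Tr_*\mathbb Z_{tr}(p_Y):\Tr_*\mathbb Z_{tr}(Y\times\mathbb I^1)\to\Tr_*\mathbb Z_{tr}(Y)$ of the generating map is an $(\mathbb I^1,usu)$-equivalence, by exhibiting it as an $\mathbb I^1$-homotopy equivalence via the explicit contraction $\theta_{12}(Y):(y,t_1,t_2)\mapsto(y,t_1-t_2t_1)$ and invoking Lemmas~\ref{I1hptlem} and \ref{I1hpt}(iv); combined with the fact that $\Tr_*$ preserves colimits and usu-local equivalences, this gives the preservation of all $(\mathbb I^1,usu)$-equivalences by the mechanism of \cite{AyoubG1}, Lemma~2.111. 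You need this homotopy argument (or an equivalent direct verification on the generating maps); the Quillen-adjunction shortcut does not close the gap.
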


\begin{proof}

\noindent(i): Let $h:F^{\bullet}\to L^{\bullet}$ 
be an usu local equivalence in $\PSh(\Cor^{fs}_{\mathbb Z}(\CW),C^-(\mathbb Z))$
with $L^{\bullet}$ usu fibrant. 
Then, 
\begin{itemize}
\item $F^{\bullet}$ is $\mathbb I^1$ local if and only if $L^{\bullet}$ is $\mathbb I^1$ local.

\item By definition $\Tr_*$ preserve usu local equivalence, hence
$\Tr_*h:\Tr_*F^{\bullet}\to\Tr_*L^{\bullet}$ is an usu local equivalence in $\PSh(\CW,C^-(\mathbb Z))$. 
Thus, $\Tr_*F^{\bullet}$ is $\mathbb I^1$ local if and only if $\Tr_*L^{\bullet}$ is $\mathbb I^1$ local. 

\item As indiquated in \cite{AyoubG2},since , $\Tr_*L^{\bullet}$ is also usu fibrant. 
By Yoneda lemma, for $X\in\CW$, we have
\begin{equation}
\Hom_{P^-(CW)}(\mathbb Z(X)[n],\Tr_*L^{\bullet})=H^nL^{\bullet}(X)=\Hom_{PC^-(CW)}(\mathbb Z_{tr}(X)[n],L^{\bullet})
\end{equation}
Hence $L^{\bullet}$ is $\mathbb I^1$ local if and only if $\Tr_*L^{\bullet}$ is $\mathbb I^1$ local.
\end{itemize}
This proves (i).

\noindent (ii): Let us prove (ii)
\begin{itemize}
\item The only if part is similar to \cite{AyoubG1}, lemma 2.111, we check that, for $Y\in\CW$,
\begin{equation*}
\Tr_*\mathbb Z_{tr}(p_Y):\Tr_*\mathbb Z_{tr}(Y\times\mathbb I^1)\to\Tr_*\mathbb Z_{tr}(Y) 
\end{equation*}
is an equivalence $(\mathbb I^1,usu)$ local, 
where $p_Y:Y\times\mathbb I^1\to Y$ is the projection.
By lemma \ref{I1hpt}(iv), it suffice to show that $\Tr_*\mathbb Z_{tr}(p_Y)$ 
is an $\mathbb I^1$ homotopy equivalence in $P^-(CW)$.
By lemma \ref{I1hptlem}, it suffice to show that $p_Y$ is an $\mathbb I^1$ homotopy equivalence in $\CW$.
But we have the map
\begin{equation}
\theta_{12}(Y):Y\times\mathbb I^2\to Y\times\mathbb I^1 \; , \; \theta_{12}(y,t_1,t_2)=(y,t_1-t_2t_1)
\end{equation}
which is an homotopy from $I_{Y\times\mathbb I^1}=\theta_{12}(Y)\circ (I_{Y\times\mathbb I^1}\times i_0)$ to
$I_Y\times 0=\theta_{12}(Y)\circ (I_{Y\times\mathbb I^1}\times i_1)$.
On the other hand, $p_Y\circ(I_Y\times 0)=I_Y$.
This proves the only if part. 

\item Let $\phi:F^{\bullet}\to G^{\bullet}$ in $\PSh(\Cor^{fs}_{\mathbb Z}(\CW),C^-(\mathbb Z))$ such that
$\Tr_*\phi:\Tr_*F^{\bullet}\to\Tr_*G^{\bullet}$ is an equivalence $(\mathbb I^1,usu)$ local
in $\PSh(\CW,C^-(\mathbb Z))$. Since by definition $\Tr_*$ preserve and detect usu local equivalence,
we can assume, up to replace $F^{\bullet}$ and $G^{\bullet}$ by usu equivalent presheaves, 
that $F^{\bullet}$ and $G^{\bullet}$ are usu fibrant.
Let $K^{\bullet}\in\PSh(\Cor^{fs}_{\mathbb Z}(\CW),C^-(\mathbb Z))$ an $\mathbb I^1$ local object.
By (i), $\Tr_*K^{\bullet}$ is $\mathbb I^1$ local.
Hence, we have,
\begin{eqnarray}
\Hom_{PC^-(CW)}(G^{\bullet},K^{\bullet})=\Hom_{PC^-}(\Tr^*\Tr_*G^{\bullet},K^{\bullet})=\Hom_{P^-(CW)}(\Tr_*G^{\bullet},\Tr_*K^{\bullet})
\xrightarrow{\sim} \\
\Hom_{P^-(CW)}(\Tr_*F^{\bullet},\Tr_*K^{\bullet})=\Hom_{PC^-}(\Tr^*\Tr_*F^{\bullet},G^{\bullet})=\Hom_{PC^-(CW)}(F^{\bullet},G^{\bullet}).
\end{eqnarray}
This proves the if part.
\end{itemize}

\end{proof}

\begin{prop}
\begin{itemize}
\item[(i)] $(\Tr^*,\Tr_*):\PSh(\CW,C^-(\mathbb Z))\leftrightarrows\PSh_{\mathbb Z}(\Cor^{fs}_{\mathbb Z}(\CW),C^-(\mathbb Z))$
is a Quillen adjonction for the usu topology model structures (c.f. definition \ref{CWTrusu} (i) and (ii) respectively)
and a Quillen adjonction for the $(\mathbb I^1,usu)$ model structures (c.f. definition \ref{CWmodstr} (i) and (ii) respectively). 
\item[(ii)] $(e_{cw}^*,e_{cw*}):\PSh(\CW,C^-(\mathbb Z))\leftrightarrows C^-(\mathbb Z)$.
is a Quillen adjonction for the usual topology model structure (c.f. definition \ref{CWTrusu} (i))
and a Quillen adjonction for the $(\mathbb I^1,usu)$ model structure (c.f. definition \ref{CWmodstr} (i)). 
\item[(iii)] $(e_{cw}^{tr*},e_{cw*}^{tr}):\PSh(\Cor^{fs}_{\mathbb Z}(\CW),C^-(\mathbb Z))\leftrightarrows C^-(\mathbb Z)$.
is a Quillen adjonction for the usual topology model structure (c.f. definition \ref{CWTrusu} (ii))
and a Quillen adjonction for the $(\mathbb I^1,usu)$ model structure (c.f. definition \ref{CWmodstr} (ii)). 
\end{itemize}
\end{prop}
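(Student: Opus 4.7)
I would prove each of the three Quillen adjunctions in parallel, in three successive stages, corresponding to the tower: projective model structure, usu-localization, $(\mathbb I^1,usu)$-localization.

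At the projective level, each left adjoint ($\Tr^*$, $e_{cw}^*$, $e_{cw}^{tr*}$) is a left Kan extension along an additive functor of sites; it commutes with colimits and sends representables to representables. Concretely, $\Tr^*(\mathbb Z(X))=\mathbb Z_{tr}(X)$, and $e_{cw}^*(\mathbb Z)$ (resp.\ $e_{cw}^{tr*}(\mathbb Z)$) is the (additive) constant presheaf of value $\mathbb Z$. Hence each left adjoint carries the generating projective cofibrations and trivial cofibrations of the source to corresponding generating classes in the target, which gives the Quillen adjunction for the underlying projective model structures on the presheaf categories.

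To descend to the usu-localized structures of definition \ref{CWTrusu}, it suffices (by the standard criterion for Bousfield localizations) that the right adjoints preserve weak equivalences. For (i), this is tautological: definition \ref{CWTrusu}(ii) declares a morphism in $\PSh_{\mathbb Z}(\Cor^{fs}_{\mathbb Z}(\CW),C^-(\mathbb Z))$ to be an usu local equivalence precisely when its image under $\Tr_*$ is one, so $\Tr_*$ both preserves and detects them. For (ii) and (iii), the objects $e_{cw}^*K$ and $e_{cw}^{tr*}K$ are constant (additive) presheaves, so the cohomology presheaves of the image of a quasi-isomorphism $K_1\to K_2$ are constant isomorphisms and stay isomorphisms after usu sheafification; hence $e_{cw}^*$ and $e_{cw}^{tr*}$ preserve weak equivalences, completing this step.

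For the final $(\mathbb I^1,usu)$-localization I use the standard criterion again: the left adjoint must send the localizing set of the source to weak equivalences in the $(\mathbb I^1,usu)$-localized target. For (i), $\Tr^*$ maps the set $\{\mathbb Z(X\times\mathbb I^1)[n]\to\mathbb Z(X)[n]\}$ to $\{\mathbb Z_{tr}(X\times\mathbb I^1)[n]\to\mathbb Z_{tr}(X)[n]\}$, which is by definition \ref{CWmodstr}(ii) the localizing set on the target and therefore consists of trivial cofibrations there. For (ii) and (iii), the source $C^-(\mathbb Z)$ carries no localizing set, and moreover $e_{cw}^*K(X\times\mathbb I^1)=K=e_{cw}^*K(X)$ shows that $e_{cw}^*K$ is already $\mathbb I^1$-local (likewise for $e_{cw}^{tr*}K$), so nothing extra is needed. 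The proof is entirely formal and no step presents a real obstacle; the one point worth flagging is that engineering definition \ref{CWTrusu}(ii) through $\Tr_*$ from the start is what lets (i) bypass the otherwise necessary check, via lemma \ref{HRCW}(i), that $\Tr^*$ preserves usu hypercover equivalences of representables.
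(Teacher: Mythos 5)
Your route — arguing via the left adjoints at each of the three levels, projective $\to$ usu $\to$ $(\mathbb I^1,usu)$ — is a legitimate alternative to the paper's, which for part (i) works with the right adjoint $\Tr_*$ (showing, via lemma \ref{CWTr}, that it preserves $\mathbb I^1$-local objects and $(\mathbb I^1,usu)$-equivalences, hence fibrations and trivial fibrations). Your arguments for (ii) and (iii) are fine and essentially agree with the paper's one-line justification that $e_{cw}^*$ and $e_{cw}^{tr*}$ are left Quillen. The $(\mathbb I^1,usu)$ step of (i) — $\Tr^*$ sends the localizing set $\{\mathbb Z(X\times\mathbb I^1)[n]\to\mathbb Z(X)[n]\}$ onto the localizing set $\{\mathbb Z_{tr}(X\times\mathbb I^1)[n]\to\mathbb Z_{tr}(X)[n]\}$ — is also correct.

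The gap is in the usu step of (i), and your closing remark about lemma \ref{HRCW}(i) is precisely backwards. Invoking ``the right adjoint preserves weak equivalences'' is not a sufficient criterion for a Quillen adjunction between two left Bousfield localizations: $\Tr_*$ preserving usu local equivalences (which is indeed tautological) together with preserving \emph{projective} fibrations does not give preservation of the \emph{usu} fibrations, which are a strictly smaller class. On the left-adjoint side, what you must verify is that $\Tr^*$ carries the localizing (trivial) cofibrations of $\PSh(\CW,C^-(\mathbb Z))_{usu}$ — in particular the Cech/hypercover maps $\Tot\bigl(\oplus_{\card I=\bullet}\mathbb Z(U_I)\bigr)\to\mathbb Z(X)$ — to weak equivalences in the target. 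Since $\Tr^*\mathbb Z(U_I)=\mathbb Z_{tr}(U_I)$ and weak equivalences in $\PSh(\Cor^{fs}_{\mathbb Z}(\CW),C^-(\mathbb Z))$ are tested by applying $\Tr_*$, this amounts to asking that $\Tr_*\bigl(\Tot(\oplus_{\card I=\bullet}\mathbb Z_{tr}(U_I))\to\mathbb Z_{tr}(X)\bigr)$ be an usu local equivalence — which is exactly the second bullet of lemma \ref{HRCW}(i). The definition of usu local equivalences ``through $\Tr_*$'' does not let you bypass that: $\Tr_*\Tr^*$ is not the identity, and $\Tr_*\mathbb Z_{tr}(U)$ is genuinely larger than $\mathbb Z(U)$. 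Equivalently, in the dual formulation the paper uses (proof of lemma \ref{CWTr}), the needed input is that $\Tr_*$ carries usu-fibrant objects to usu-fibrant objects, a fact the paper cites from \cite{AyoubG2} and which is a repackaging of the same descent statement. You should invoke lemma \ref{HRCW}(i) here rather than claim it can be skipped.
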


\begin{proof}

\noindent (i): By lemma \ref{CWTr} (i), $\Tr_*$ preserve $\mathbb I^1$ local objects, hence preserve the fibrations of
the $(\mathbb I^1,usu)$ model structures. 
By lemma \ref{CWTr} (ii), $\Tr_*$ preserve $(\mathbb I^1,usu)$ equivalence. 
Thus, $\Tr_*$ preserve the trivial fibrations of the $(\mathbb I^1,usu)$ model structures. 

\noindent (ii): It is clear that $e^*_{cw}$ is a left Quillen functor, that is preserve cofibrations and trivial cofibrations.

\noindent (iii): It is clear that $e^{tr*}_{cw}$ is a left Quillen functor, that is preserve cofibrations and trivial cofibrations.

\end{proof}

\begin{prop}\label{actadjCW}
\begin{itemize}
\item[(i)] For $F^{\bullet}\in\PSh(\CW,C^-(\mathbb Z))$, the adjonction morphism 
\begin{equation}
\ad(e^*_{cw},e_{cw*})(\underline{\sing}_{\mathbb I^*}F^{\bullet}):
e_{cw}^*e_{cw*}\underline{\sing}_{\mathbb I^*}F^{\bullet}\to\underline{\sing}_{\mathbb I^*}F^{\bullet}
\end{equation}
is an equivalence usu local.

\item[(ii)] For $F^{\bullet}\in\PSh_{\mathbb Z}(\Cor^{fs}_{\mathbb Z}(\CW),C^-(\mathbb Z))$, the adjonction morphism 
\begin{equation}
\ad(e^{tr*}_{cw},e^{tr}_{cw*})(\underline{\sing}_{\mathbb I^*}F^{\bullet}):
e^{tr*}_{cw}e^{tr}_{cw*}\underline{\sing}_{\mathbb I^*}F^{\bullet}\to\underline{\sing}_{\mathbb I^*}F^{\bullet}
\end{equation}
is an equivalence usu local.

\item[(iii)] For $K^{\bullet}\in C^-(\mathbb Z)$, the adjonction morphisms
\begin{equation}
\ad(e^*_{cw},e_{cw*})(K^{\bullet}):K^{\bullet}\to e_{cw*}e^*_{cw}K^{\bullet} \mbox{\; \; and \; \;}
\ad(e^{tr*}_{cw},e^{tr}_{cw*})(K^{\bullet}):K^{\bullet}\to e^{tr}_{cw*}e^{tr*}_{cw}K^{\bullet}
\end{equation}
are isomorphisms.
\end{itemize}
\end{prop}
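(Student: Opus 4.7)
For part (iii), the key observation is that $\pt$ is a terminal object of $\CW$, so the representable presheaf $\mathbb Z(\pt)$ coincides with the constant presheaf $\mathbf 1$. Hence $e_{cw}^*K$ is the constant presheaf with value $K$, giving $e_{cw*}e_{cw}^*K = K$ with the unit the identity. For the transfer version, using the factorizations $e_{cw}^{tr*} = \Tr^*\circ e_{cw}^*$ and $e_{cw*}^{tr} = e_{cw*}\circ\Tr_*$, we have $e_{cw}^{tr*}K = \mathbb Z_{tr}(\pt)\otimes K$; since $\mathcal Z^{fs/\pt}(\pt\times\pt) = \mathbb Z$ (the unique nonempty closed subcomplex of $\pt$ finite and surjective over $\pt$ is $\pt$ itself), we get $e_{cw*}^{tr}e_{cw}^{tr*}K = K$, again with the unit the identity.

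For part (i), I first compute the counit $\epsilon_G: e_{cw}^*e_{cw*}G\to G$ with $G = \underline{\sing}_{\mathbb I^*}F^{\bullet}$ explicitly: by the triangle identity combined with (iii), $\epsilon_G(\pt) = \id$, and naturality in $X$ then forces $\epsilon_G(X) = G(a_X): G(\pt)\to G(X)$, where $a_X: X\to\pt$ is the terminal map. The plan is to verify this is an usu local equivalence stalkwise at each $x\in X\in\CW$. By local contractibility of CW complexes there is a cofinal system of contractible open neighborhoods $U$ of $x$ (each of which may be taken to be a CW complex); for each such $U$, $a_U: U\to\pt$ is an $\mathbb I^1$-homotopy equivalence, so lemma \ref{I1hpt}(ii) yields that $G(a_U): \sing_{\mathbb I^*}F^{\bullet}\to\Tot F^{\bullet}(\mathbb I^*\times U)$ is a quasi-isomorphism. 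Since filtered colimits of quasi-isomorphisms in $C^-(\mathbb Z)$ are quasi-isomorphisms, the induced stalk map at $x$ is a quasi-iso, which gives the usu local equivalence.

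For part (ii), by definition \ref{CWTrusu}(ii) it suffices to show that $\Tr_*$ of the counit is an usu local equivalence in $P^-(\CW)$. Combining the factorization of the adjunction with the identification $\mathbb Z_{tr}(\pt)(Y) = \mathbb Z^{\pi_0(Y)}$ — which comes from the observation that for connected $Y$ the only closed CW subcomplex of $Y\times\pt$ finite and surjective over $Y$ is $Y$ itself — one obtains $\Tr_*e_{cw}^{tr*}e_{cw*}^{tr}G(Y) = K^{\pi_0(Y)}$ with $K = G(\pt)$, and the counit restricted to the summand indexed by each connected component is the restriction along the corresponding terminal map. The stalkwise argument of (i) then applies verbatim: contractible CW-neighborhoods $U$ of $x$ are connected, so $K^{\pi_0(U)} = K$, and the counit at $U$ is the quasi-isomorphism $G(a_U)$ of lemma \ref{I1hpt}(ii). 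The main technical obstacle will be the careful unwinding of the composite adjunction needed to verify that $\Tr_*\epsilon$ has this explicit form; once done, the convergence to (i) is immediate.
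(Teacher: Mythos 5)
Your proposal is correct and follows essentially the same route as the paper: part (i) is proved by checking usu-locally, using local contractibility of CW complexes and lemma \ref{I1hpt}(ii) to see that the counit at a contractible neighborhood is the quasi-isomorphism $G(a_U)$; part (ii) is deduced via the $\Tr_*$-detects-usu-equivalences principle; part (iii) is dismissed as trivial. Your version is somewhat more explicit about the identification $\mathbb Z_{tr}(\pt)(Y)=\mathbb Z^{\pi_0(Y)}$ and the stalkwise/filtered-colimit formulation of ``the question is local,'' but the content and the key lemma are identical to the paper's argument.
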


\begin{proof}

\noindent (i): The question is local. It then follows from the fact that the CW complexes are locally contractible. 
Indeed, let $X\in\CW$. Since the question is local and CW complexes are locally contractile, 
we can assume after shrinking $X$ that $X$ is contractible.
Let $x\in X$. Denoting $a_X:X\to\left\{\pt\right\}$ the terminal map 
and  $e_x:\left\{\pt\right\}\to X$ the point map, there exist an homotopy between $e_x\circ a_X$ and  the identity $I_X$ of $X$,
that is $p_X$ is a $\mathbb I^1$ homotopy equivalence. 
Hence, by lemma \ref{I1hpt} (ii), 
\begin{equation}
\ad(e^*_{cw},e_{cw*})(\underline{\sing}_{\mathbb I^*}F^{\bullet})(X)=\underline{\sing}_{\mathbb I^*}F^{\bullet}(a_X):
F^{\bullet}(\mathbb I^*)\to F^{\bullet}(\mathbb I^*\times X)
\end{equation}
is an homotopy equivalence of complexes of abelian groups.

\noindent (ii): It is a particular case of point (i) since, by definition (\ref{CWTrusu}), $\Tr_*$ detect and preserve usu local equivalence. 
Indeed, by (i)   
\begin{equation}
\xymatrix{ 
e_{cw}^*e_{cw*}\underline{\sing}_{\mathbb I^*}\Tr_*F^{\bullet}
\ar[rrrr]^{\ad(e^*_{cw},e_{cw*})(\underline{\sing}_{\mathbb I^*}\Tr_*F^{\bullet})}
\ar[d]_{=} & \, & \, & \, & \underline{\sing}_{\mathbb I^*}\Tr_*F^{\bullet}\ar[d]^{=} \\
\Tr_*e^{\Tr*}_{cw}e^{\Tr}_{cw*}\underline{\sing}_{\mathbb I^*}F^{\bullet}\ar[rrrr] 
& \, & \, & \, & \Tr_*\underline{\sing}_{\mathbb I^*}F^{\bullet}}
\end{equation}
is an equivalence usu local. Since $\Tr_*$ preserve usu local equivalence, this prove (ii).

\noindent (iii): Trivial.

\end{proof}

\begin{prop}\label{cle}
\begin{itemize}
\item[(i)] The functor 
$\Tr_*:\PSh_{\mathbb Z}(\Cor^{fs}_{\mathbb Z}(\CW),C^-(\mathbb Z))\to\PSh(\CW,C^-(\mathbb Z))$ 
derive trivially.
\item[(ii)] For $K^{\bullet}\in C^-(\mathbb Z)$, $e_{cw}^*K^{\bullet}$ is $\mathbb I^1$ local.
\item[(iii)] For $K^{\bullet}\in C^-(\mathbb Z)$, $e^{tr*}_{cw}K^{\bullet}$ is $\mathbb I^1$ local.
\end{itemize}
\end{prop}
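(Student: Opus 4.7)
The plan is to dispatch the three points in order, each reducing to earlier material established in the excerpt.

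For point (i), the statement is immediate from Lemma \ref{CWTr}(ii): since that lemma establishes the ``if and only if'' characterization of $(\mathbb I^1,usu)$ local equivalences via $\Tr_*$, the functor $\Tr_*$ preserves $(\mathbb I^1,usu)$ local equivalences on the nose, so no cofibrant/fibrant replacement is required and the derived functor coincides with $\Tr_*$ itself.

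For point (ii), my plan is to exhibit a usu-local equivalence
$S(e_{cw}^*K^{\bullet}):e_{cw}^*K^{\bullet}\to\underline{\sing}_{\mathbb I^*}e_{cw}^*K^{\bullet}$
and then invoke Theorem \ref{CWsingI}(i) to transfer $\mathbb I^1$-locality. The crucial observation is that for every $X\in\CW$ and every $n$, the value $e_{cw}^*K^{\bullet}(\mathbb I^n\times X)=K^{\bullet}$ is independent of both $n$ and $X$; thus the cubical object $e_{cw}^*K^{\bullet}(\mathbb I^*\times X)$ is constant with value $K^{\bullet}$. The associated normalized cubical total complex (in the same sense used to make Proposition \ref{SerreP} hold at the point) collapses to $K^{\bullet}$ concentrated in cubical degree zero, making $S(e_{cw}^*K^{\bullet})(X)$ a quasi-isomorphism for every $X$, hence an usu-local equivalence. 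Since the target $\underline{\sing}_{\mathbb I^*}e_{cw}^*K^{\bullet}$ is $\mathbb I^1$-local by Theorem \ref{CWsingI}(i), and since $\mathbb I^1$-locality is a property of the associated usu sheaf (and thus invariant under usu-local equivalence), it will follow that $e_{cw}^*K^{\bullet}$ itself is $\mathbb I^1$-local. Alternatively, one can organise the same argument through Proposition \ref{actadjCW}(i) applied to $F^{\bullet}=e_{cw}^*K^{\bullet}$, computing $e_{cw*}\underline{\sing}_{\mathbb I^*}e_{cw}^*K^{\bullet}=\sing_{\mathbb I^*}K^{\bullet}\simeq K^{\bullet}$ by the same collapse of a constant cubical object.

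For point (iii), the reduction to (ii) is formal: since $e^{tr}_{cw}=\Tr\circ e_{cw}$ by construction, one has the identification $\Tr_*e^{tr*}_{cw}K^{\bullet}=e_{cw}^*K^{\bullet}$, which is $\mathbb I^1$-local by (ii). Lemma \ref{CWTr}(i), asserting that $\Tr_*$ detects $\mathbb I^1$-local objects, then yields that $e^{tr*}_{cw}K^{\bullet}$ is $\mathbb I^1$-local as well.

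The main technical step is the cubical collapse in (ii), namely that the (normalized) singular cubical complex of a constant cubical object is quasi-isomorphic to the value in degree zero; this is the cubical analogue of the ingredient behind Proposition \ref{SerreP} and is what makes the locally-contractible structure of $\CW$ interact cleanly with the model structure. Everything else is formal manipulation via Lemma \ref{CWTr} and Theorem \ref{CWsingI}.
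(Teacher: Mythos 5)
Your treatment of points (i) and (iii) coincides with the paper's: (i) is Lemma \ref{CWTr}(ii) verbatim, and (iii) is the formal reduction to (ii) via $\Tr_*e^{tr*}_{cw}K^{\bullet}=e_{cw}^*K^{\bullet}$ together with Lemma \ref{CWTr}(i).

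The argument for (ii) is circular. You invoke Theorem \ref{CWsingI}(i) to conclude that $\underline{\sing}_{\mathbb I^*}e_{cw}^*K^{\bullet}$ is $\mathbb I^1$-local, and then transfer $\mathbb I^1$-locality along the usu-local equivalence $S(e_{cw}^*K^{\bullet})$. But the paper's proof of Theorem \ref{CWsingI}(i) (its first bullet) is \emph{deduced from} Proposition \ref{actadjCW}(i) \emph{together with Proposition \ref{cle}(ii)} — precisely the statement you are trying to prove. So the logical dependency runs \ref{cle}(ii) $\Rightarrow$ \ref{CWsingI}(i), not the reverse, and you cannot invoke the latter here. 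Your "alternative" route through Proposition \ref{actadjCW}(i) does not escape this: that proposition only gives a usu-local equivalence $e_{cw}^*e_{cw*}\underline{\sing}_{\mathbb I^*}F^{\bullet}\to\underline{\sing}_{\mathbb I^*}F^{\bullet}$, and you still need an independent proof that one side of it is $\mathbb I^1$-local, which is what is at stake.

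Your cubical-collapse observation (that the normalized cubical complex of the constant cubical object $e_{cw}^*K^{\bullet}(\mathbb I^*\times X)$ collapses to $K^{\bullet}$ in degree zero, making $S(e_{cw}^*K^{\bullet})$ an objectwise quasi-isomorphism) is fine, but it attacks the wrong side of the problem: the difficulty is not whether $S(e_{cw}^*K^{\bullet})$ is an equivalence, it is whether its target is $\mathbb I^1$-local. The paper's actual proof of (ii) is different and non-circular: take a usu-local equivalence $e_{cw}^*K^{\bullet}\to L^{\bullet}$ with $L^{\bullet}$ usu-fibrant, and show directly that $\underline{L}(p):L^{\bullet}\to\underline\Hom(\mathbb Z(\mathbb I^1),L^{\bullet})$ is a usu-local equivalence, by the argument of \cite{AyoubB} proposition 1.6, step B. That argument rests on the local contractibility of CW complexes (i.e., the homotopy invariance of the cohomology of the constant presheaf upgrades to strict homotopy invariance on a locally contractible site) and is where the real content of (ii) lies; your plan bypasses it by appealing to a downstream theorem.
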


\begin{proof}

\noindent (i): By lemma \ref{CWTr}, $\Tr_*$ preserve $(\mathbb I^1,usu)$ local equivalences.

\noindent (ii): Let $e_{cw}^*K^{\bullet}\to L^{\bullet}$ an usu local equivalence, with $L^{\bullet}$ usu fibrant.
Since $e_{cw}^*K^{\bullet}$ is usu equivalent to $ L^{\bullet}$ it suffices to prove that $L^{\bullet}$ is $\mathbb I^1$ local.
Since $L^{\bullet}$ is usu fibrant, we have to prove that 
\begin{equation*}
\underline{L}(p):L^{\bullet}\to\underline\Hom(\mathbb Z(\mathbb I^1),L^{\bullet}) 
\end{equation*}
is an equivalence usu local
The proof is now similar to \cite{AyoubB} proposition 1.6 etape B.

\noindent (iii): We have $\Tr_*e^{tr*}_{cw}K^{\bullet}=e_{cw}^*K^{\bullet}$.
By (ii), $e_{cw}^*K^{\bullet}$ is $\mathbb I^1$ local. By lemma \ref{CWTr} (i), $\Tr_*$ detect $\mathbb I^1$ local object.
This proves (iii).

\end{proof}

\begin{thm}\label{CWsingI}
\begin{itemize}
\item[(i)] For $F^{\bullet}\in\PSh(\CW,C^-(\mathbb Z))$,
$\underline{\sing}_{\mathbb I^*}F^{\bullet}\in\PSh(\CW,C^-(\mathbb Z))$ is $\mathbb I^1$ local and the inclusion morphism
$S(F^{\bullet}):F^{\bullet}\to\underline{\sing}_{\mathbb I^*}F^{\bullet}$ is an $(\mathbb I^1,usu)$ equivalence. 
\item[(ii)] For $F^{\bullet}\in\PSh_{\mathbb Z}(\Cor^{fs}_{\mathbb Z}(\CW),C^-(\mathbb Z))$, 
$\underline{\sing}_{\mathbb I^*}F^{\bullet}\in\PSh_{\mathbb Z}(\Cor^{fs}_{\mathbb Z}(\CW),C^-(\mathbb Z))$ 
is $\mathbb I^1$ local and the inclusion morphism 
$S(F^{\bullet}):F^{\bullet}\to\underline{\sing}_{\mathbb I^*}F^{\bullet}$ is an $(\mathbb I^1,usu)$ equivalence. 
\end{itemize}
\end{thm}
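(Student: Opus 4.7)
The overall plan is to reduce (ii) to (i) via the machinery of lemma \ref{CWTr}, and then to prove (i) by handling separately the $\mathbb I^1$-locality of $\underline{\sing}_{\mathbb I^*} F^{\bullet}$ and the $(\mathbb I^1, usu)$-equivalence of $S(F^{\bullet})$. For the reduction, the equalities $\Tr_* \underline{\sing}_{\mathbb I^*} F^{\bullet} = \underline{\sing}_{\mathbb I^*} \Tr_* F^{\bullet}$ and $\Tr_* S(F^{\bullet}) = S(\Tr_* F^{\bullet})$ hold by construction; lemma \ref{CWTr}(i) says $\Tr_*$ detects $\mathbb I^1$-local objects, and lemma \ref{CWTr}(ii) says it reflects $(\mathbb I^1, usu)$-local equivalences. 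Hence (ii) follows immediately from (i) applied to $\Tr_* F^{\bullet}$.

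For the $\mathbb I^1$-locality claim in (i), I would combine proposition \ref{actadjCW}(i) and proposition \ref{cle}(ii). The former gives a usu local equivalence
\[ e_{cw}^* e_{cw*} \underline{\sing}_{\mathbb I^*} F^{\bullet} \to \underline{\sing}_{\mathbb I^*} F^{\bullet}, \]
whose source is the constant presheaf $e_{cw}^* K^{\bullet}$ with $K^{\bullet} = F^{\bullet}(\mathbb I^*) \in C^-(\mathbb Z)$. By proposition \ref{cle}(ii), such a constant presheaf is $\mathbb I^1$-local. Since $\mathbb I^1$-locality is an invariant of the usu equivalence class (both sides represent the same $\Hom$-functor in $\Ho_{usu}$, and $\mathbb I^1$-locality is phrased purely in those terms), it follows that $\underline{\sing}_{\mathbb I^*} F^{\bullet}$ is itself $\mathbb I^1$-local.

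The main difficulty is showing that $S(F^{\bullet})$ is an $(\mathbb I^1, usu)$-equivalence. My plan is to follow the strategy of \cite{VoeSF} for theorem \ref{VarsingA} and of \cite{AyoubG1} for theorem \ref{AnS}, adapted to the CW setting with $\mathbb I^*$ in place of $\square^*$ or $\bar{\mathbb D}^*$. The key geometric input is the multiplication map $\mu: \mathbb I^1 \times \mathbb I^1 \to \mathbb I^1$, $\mu(s,t) = st$, which yields an explicit $\mathbb I^1$-homotopy between $I_{\mathbb I^1}$ and the constant map at $0$. Applied coordinate-wise on $\mathbb I^n$, this gives $\mathbb I^1$-homotopy equivalences collapsing $X \times \mathbb I^n$ onto $X$; lemma \ref{I1hptlem} promotes them to $\mathbb I^1$-homotopies of the represented presheaves, and lemma \ref{I1hpt}(iv) turns these into $(\mathbb I^1, usu)$-equivalences. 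Assembling the homotopies compatibly with the cubical face maps produces a contracting chain homotopy, in the $(\mathbb I^1, usu)$-localization, for the quotient complex $\underline{\sing}_{\mathbb I^*} F^{\bullet} / F^{\bullet}$. One first treats the representable case $F^{\bullet} = \mathbb Z(X)$ where everything is explicit, and then extends to arbitrary $F^{\bullet}$ by the fact that both functors in $S(\cdot)$ commute with the relevant colimits and coproducts used to resolve any cofibrant object.

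The hardest part will be verifying the coherent assembly of the coordinate-wise $\mathbb I^1$-contractions into a genuine null-homotopy of the cone of $S(F^{\bullet})$ in $\Ho_{\mathbb I^1,usu}$. This is the cubical counterpart of the classical chain-level contraction of the augmented singular complex of a contractible space; its faithful adaptation here, where strict chain homotopies are relaxed to $(\mathbb I^1, usu)$-local ones, mirrors the corresponding computation in \cite{AyoubG1} almost verbatim once the cube $\bar{\mathbb D}^*$ is replaced by $\mathbb I^*$ and holomorphic contractions are replaced by the real multiplication $\mu$.
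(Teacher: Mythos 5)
Your treatment of $\mathbb I^1$-locality (via propositions \ref{actadjCW}(i) and \ref{cle}(ii)) and your reduction of (ii) to (i) via lemma \ref{CWTr} match the paper exactly. The divergence, and a genuine gap, lies in the proof that $S(F^{\bullet})$ is an $(\mathbb I^1,usu)$-equivalence for general $F^{\bullet}$.

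You propose to prove it first for $F^{\bullet}=\mathbb Z(X)$ via lemmas \ref{I1hptlem} and \ref{I1hpt}(iv), and then to extend to arbitrary $F^{\bullet}$ ``by the fact that both functors in $S(\cdot)$ commute with the relevant colimits.'' This extension step is not justified as stated: $(\mathbb I^1,usu)$-equivalences are weak equivalences of a left Bousfield localization and are not preserved by arbitrary colimits, and $\underline{\Hom}(\mathbb Z(\mathbb I^n),-)$ need not commute with the colimits used to resolve $F^{\bullet}$. The paper avoids the detour entirely by working at the presheaf level from the start: the map $\theta_{1,n}(X):\mathbb I^n\times\mathbb I^1\times X\to\mathbb I^n\times X$, $(t_1,\dots,t_n,t_{n+1},x)\mapsto((1-t_{n+1})t_1,\dots,(1-t_{n+1})t_n,x)$, furnishes for \emph{any} $F^{\bullet}$ an $\mathbb I^1$-homotopy of presheaves $\underline{F}^{\bullet}(\theta_{1,n})$ between $\underline{F}^{\bullet}(I_{\mathbb I^n})$ and $\underline{F}^{\bullet}(0)$, exhibiting $\underline{F}^{\bullet}(p_n):F^{\bullet}\to\underline{\Hom}(\mathbb Z(\mathbb I^n),F^{\bullet})$ directly as an $\mathbb I^1$-homotopy equivalence, hence an $(\mathbb I^1,usu)$-equivalence by lemma \ref{I1hpt}(iv); no restriction to representables and no colimit argument are needed. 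Finally, the ``coherent assembly'' of the coordinate-wise contractions into a contracting chain homotopy, which you flag as the hardest step, is never performed: following \cite{AyoubG1}, theorem 2.23 etape 2, the paper interposes the complex $\cdots\to F^{\bullet}\xrightarrow{I}F^{\bullet}\xrightarrow{0}F^{\bullet}$ between $F^{\bullet}$ and $\underline{\sing}_{\mathbb I^*}F^{\bullet}$; the first leg is a quasi-isomorphism and the second is $\underline{F}^{\bullet}(p_n)$ in each column, so one concludes by a termwise-equivalence argument and the compatibility of the contractions with the cubical face maps never arises.
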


\begin{proof}

\noindent(i): Let us prove (i)
\begin{itemize}
\item By proposition \ref{actadjCW} (i) and proposition \ref{cle} (ii),
$\underline{\sing}_{\mathbb I^*}F^{\bullet}\in\PSh(\CW,C^-(\mathbb Z))$ is $\mathbb I^1$ local. 
This proves the first part of the assertion.

\item Consider the commutative diagram
\begin{equation}\label{singH}
\xymatrix{
\cdots\ar[r]  & 0\ar[r]\ar[d] & 0\ar[r]\ar[d] & F^{\bullet}\ar[r]\ar[d]^{I} & 0\ar[r] & \cdots \\
\cdots\ar[r]^{0}  & F^{\bullet}\ar[r]^{I}\ar[d]^{\underline{F}^{\bullet}(p_2)} & 
F^{\bullet}\ar[r]^{0}\ar[d]^{\underline{F}^{\bullet}(p_1)} & F^{\bullet}\ar[r]\ar[d]^{I} 
& 0\ar[r] & \cdots \\
\cdots\ar[r]^{0}  & \underline{\sing}_{\mathbb I^2}F^{\bullet}\ar[r] & 
\underline{\sing}_{\mathbb I^1}F^{\bullet}\ar[r] & F^{\bullet}\ar[r] & 0\ar[r] & \cdots }
\end{equation}
By the diagram (\ref{singH}), to prove that 
$S(F^{\bullet}):F^{\bullet}\to\underline{\sing}_{\mathbb I^*}F^{\bullet}$
is an $(\mathbb I^1,usu)$ equivalence, it suffices to show, as in \cite{AyoubG1}, theorem 2.23 etape 2,
that for all $n\in\mathbb Z$, the morphism 
\begin{equation*}
\underline{F}^{\bullet}(p_n):F^{\bullet}\to\underline{\Hom}(\mathbb Z(\mathbb I^n),F^{\bullet}), \;
X\in\CW\mapsto\underline{F}^{\bullet}(p_n)(X)=F^{\bullet}(p_X):
F^{\bullet}(X)\to F^{\bullet}(\mathbb I^n\times X) 
\end{equation*}
is an equivalence $(\mathbb I^1,usu)$ local.
For $X\in\CW$, consider the map
\begin{equation*}
\theta_{1,n}(X):\mathbb I^n\times\mathbb I^1\times X\to\mathbb I^n\times X \; ; \; 
(t_1,\cdots,t_n,t_{n+1},x)\mapsto(t_1-t_{n+1}t_1,\cdots, t_n-t_{n+1}t_n,x)
\end{equation*}
We have, 
\begin{equation*}
\theta_{1,n}(X)\circ (I_{\mathbb I^n}\times i_0\times I_X)=I_{\mathbb I^n\times X} \; , \;
\theta_{1,n}(X)\circ (I_{\mathbb I^n}\times i_1\times I_X)=0\times I_X,
\end{equation*}
that is $\theta_{1,n}(X)$
define an $\mathbb I^1$ homotopy from $I_{\mathbb I^n\times X}$ to $0\times I_X$ ; 
on the other side
$p_X\circ(0\times I_X)=I_X$, with $p_X:\mathbb I^n\times X\to X$ the projection.
Thus,
\begin{equation*}
\underline{F}^{\bullet}(\theta_{1,n})\circ\underline{F}^{\bullet}(I_{\mathbb I^n}\times i_0)
=\underline{F}^{\bullet}(I_{\mathbb I^n}) \; , \;
\underline{F}^{\bullet}(\theta_{1,n})\circ\underline{F}^{\bullet}(I_{\mathbb I^n}\times i_1)
=\underline{F}^{\bullet}(0), 
\end{equation*}
that is 
$\underline{F}^{\bullet}(\theta_{1,n})$
define an $\mathbb I^1$ homotopy from $\underline{F}^{\bullet}(I_{\mathbb I^n})$ to
$\underline{F}^{\bullet}(0)$ ; on the other side,
$\underline{F}^{\bullet}(0)\circ\underline{F}^{\bullet}(p_n)=I$,
with
\begin{itemize}
\item $\underline{F}^{\bullet}(0):\underline{\Hom}(\mathbb Z(\mathbb I^n),F^{\bullet})
\to\underline{\Hom}(\mathbb Z(\mathbb I^n),F^{\bullet})$, given by
$X\in\CW\mapsto\underline{F}^{\bullet}(0)(X)=F^{\bullet}(0\times I_X):
F^{\bullet}(\mathbb I^n\times X)\to F^{\bullet}(\mathbb I^n\times X)$ 
\item $\underline{F}^{\bullet}(I_X):\underline{\Hom}(\mathbb Z(\mathbb I^n),F^{\bullet})\to
\underline{\Hom}(\mathbb Z(\mathbb I^n),F^{\bullet})$, given by
$X\in\CW\mapsto\underline{F}^{\bullet}(I_X)(X)=F^{\bullet}(I_{\mathbb I^n\times X})$ 
\item $\underline{F}^{\bullet}(\theta_{1,n}):
\underline{\Hom}(\mathbb Z(\mathbb I^1),\underline{\Hom}(\mathbb Z(\mathbb I^{n}),F^{\bullet}))\to
\underline{\Hom}(\mathbb Z(\mathbb I^n),F^{\bullet})$, given by
$X\in\CW\mapsto\underline{F}^{\bullet}(\theta_{1,n})(X)=F^{\bullet}(\theta_{1,n}(X)):
F^{\bullet}(\mathbb I^n\times X)\to F^{\bullet}(\mathbb I^n\times\mathbb I^1\times X)$. 
\end{itemize}
Hence,
$\underline{F}^{\bullet}(p_{\mathbb I^n}):F^{\bullet}\to\underline{\Hom}(\mathbb Z(\mathbb I^n),F^{\bullet})$
is an homotopy equivalence, hence an equivalence $(\mathbb I^1,usu)$ local by lemma \ref{I1hpt} (iv). 
\end{itemize}
 
\noindent(ii): Let us prove (ii)
\begin{itemize}
\item By (i), 
$\Tr_*\underline{\sing}_{\mathbb I^*}F^{\bullet}=\underline{\sing}_{\mathbb I^*}\Tr_*F^{\bullet}$
is $\mathbb I^1$ local. 
By lemma \ref{CWTr} (i), $\Tr_*$ detect $\mathbb I^1$ local object. 
Thus, 
$\underline{\sing}_{\mathbb I^*}F^{\bullet}=\underline{\sing}_{\mathbb I^*}F^{\bullet}$
is $\mathbb I^1$ local. 
This proves the first part of the assertion.

\item It follows from (i) that 
\begin{equation}
\Tr_*F^{\bullet}\xrightarrow{\Tr_*S(F^{\bullet})}
\underline{\sing}_{\mathbb I^*}\Tr_*F^{\bullet}=\Tr_*\underline{\sing}_{\mathbb I^*}F^{\bullet}
\end{equation}
is an $(\mathbb I^1,usu)$ equivalence. 
Since, by lemma \ref{CWTr} (ii), $\Tr_*$ detect $(\mathbb I^1,usu)$ equivalence,
$S(F^{\bullet}):F^{\bullet}\to\underline{\sing}_{\mathbb I^*}F^{\bullet}$
is an $(\mathbb I^1,usu)$ equivalence. 
\end{itemize}

\end{proof}

\begin{prop}\label{CWadTr}
\begin{itemize}
\item[(i)] For $F^{\bullet}\in\PSh(\Cor_{\mathbb Z}(\CW),C^-(\mathbb Z))$, 
\begin{equation*}
\ad(\Tr^*,\Tr_*)(F^{\bullet}):F^{\bullet}\to\Tr^*\Tr_*F^{\bullet}
\end{equation*}
is an isomorphism in $\PSh(\Cor_{\mathbb Z}(\CW),C^-(\mathbb Z))$.

\item[(ii)] For $X\in\CW$, the embedding
\begin{equation*}
\ad(\Tr^*,\Tr_*)(\underline{\sing}_{\mathbb I^*}\mathbb Z(X)):
\underline{\sing}_{\mathbb I^*}\mathbb Z(X)\to\Tr_*\underline{\sing}_{\mathbb I^*}\mathbb Z_{tr}(X)
\end{equation*}
in $\PSh(\CW,C^-(\mathbb Z))$ is an equivalence usu local
\end{itemize}
\end{prop}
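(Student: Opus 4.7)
Part (i) is a formal identity of additive presheaves, verified directly from the definitions of $\Tr^*$ and $\Tr_*$, and does not involve the geometry of CW complexes. Part (ii) is the substantial content, and I plan to prove it by three successive reductions ending with a direct computation in the case $X=\{\pt\}$.

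First, reduce to $X\in\CS$ a $\Delta$-complex. By Proposition \ref{Icov}(i), $X$ is $\mathbb I^1$-homotopy equivalent to some $X'\in\CS$, so Lemma \ref{I1hptlem} gives $\mathbb I^1$-homotopy equivalences $\mathbb Z(X)\leftrightarrows\mathbb Z(X')$ in $P^-(CW)$ and $\mathbb Z_{tr}(X)\leftrightarrows\mathbb Z_{tr}(X')$ in $PC^-(CW)$; after $\underline{\sing}_{\mathbb I^*}$, these are $(\mathbb I^1,usu)$-equivalences by Lemma \ref{I1hpt}(iv). Both sides of the statement are $\mathbb I^1$-local by Theorem \ref{CWsingI}, so $(\mathbb I^1,usu)$-equivalences between them coincide with usu-equivalences, and the reduction is valid.

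Second, reduce to $X$ contractible via the countable open cover $X=\cup_{i\in J}U_i$ from Proposition \ref{Icov}(ii), whose finite intersections $U_I$ are empty or contractible. By Lemma \ref{HRCW}(i), the augmented Cech complex gives usu-equivalences to $\mathbb Z(X)$ and to $\mathbb Z_{tr}(X)$. Since $\underline{\sing}_{\mathbb I^*}$ commutes with the direct sums in each Cech degree, a spectral sequence argument on usu-sheafified cohomology of the totalized bicomplex shows that the induced maps remain usu-equivalences. This reduces the claim to showing $\underline{\sing}_{\mathbb I^*}\mathbb Z(U)\to\Tr_*\underline{\sing}_{\mathbb I^*}\mathbb Z_{tr}(U)$ is an usu-equivalence for each contractible $U$; applying Step~1 once more to the $\mathbb I^1$-homotopy equivalence $U\simeq\{\pt\}$ reduces further to $U=\{\pt\}$.

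Finally, for $U=\{\pt\}$ both presheaves are essentially constant: $\underline{\sing}_{\mathbb I^*}\mathbb Z(\{\pt\})(Y)=\Tot\mathbb Z\Hom(Y\times\mathbb I^*,\{\pt\})=\Tot\mathbb Z$ has usu-sheafification the locally constant sheaf $\underline{\mathbb Z}_{\CW}$, and $\Tr_*\underline{\sing}_{\mathbb I^*}\mathbb Z_{tr}(\{\pt\})(Y)=\Tot\mathcal Z^{fs/Y\times\mathbb I^*}(Y\times\mathbb I^*)$ is generated componentwise by fundamental classes, sheafifying to the same $\underline{\mathbb Z}_{\CW}$; the graph map is then the sheafification morphism itself, an usu-equivalence. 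The principal technical obstacle is Step~2: verifying that $\underline{\sing}_{\mathbb I^*}$ applied to the (countable, unbounded-above) Cech resolution still yields an usu-equivalence requires careful tracking of the bicomplex spectral sequence, where the countability of the cover from \ref{Icov}(ii) keeps convergence under control.
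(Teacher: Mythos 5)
Your plan follows the same three-step reduction as the paper's own proof: reduce to a $\Delta$-complex by Proposition \ref{Icov}(i) and Lemmas \ref{I1hptlem}, \ref{I1hpt}(iv); then to contractible pieces via the countable cover of Proposition \ref{Icov}(ii) and the Cech resolution of Lemma \ref{HRCW}(i); then to the point. The paper packages the second step differently: it argues at the level of the bare representables $\mathbb Z(X')\to\Tr_*\mathbb Z_{tr}(X')$ and shows that map is an $(\mathbb I^1,usu)$-local equivalence (via the simplicial formalism $\bold\Delta\times\CW$ and $Lp_{\bold\Delta}^*$, with fibrant replacements $K^\bullet$, $L^\bullet$), only then passing to $\underline{\sing}_{\mathbb I^*}$ using Theorem \ref{CWsingI} to convert that $(\mathbb I^1,usu)$-equivalence into a usu-equivalence between $\mathbb I^1$-local objects. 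You instead apply $\underline{\sing}_{\mathbb I^*}$ from the start and keep track of it through the Cech bicomplex, which is why you end up needing a spectral-sequence style argument.

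Your Step 2 as written has a small soft spot. The invocation of countability of $J$ is not the point that makes the totalization work: convergence is automatic because for each total degree only finitely many $(r,n)$ with $r\ge 1$, $n\ge 0$ contribute, and more to the point the termwise usu-equivalence propagates to the totalization by the usual mapping-cone argument, independent of the cardinality of $J$. What you really need, and do not quite state, is that $\underline{\sing}_{\mathbb I^*}$ carries usu-local equivalences to usu-local equivalences; this follows directly from Theorem \ref{CWsingI} and 2-out-of-3: if $\phi\colon F\to G$ is a usu-local equivalence, then in the commutative square built from $S(F)$, $S(G)$ (both $(\mathbb I^1,usu)$-equivalences) and $S(\phi)$, the map $S(\phi)$ is an $(\mathbb I^1,usu)$-equivalence between $\mathbb I^1$-local objects, hence a usu-equivalence. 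Once you phrase Step 2 this way, no spectral sequence is needed at all, and the "principal technical obstacle" you identify dissolves. Your Step 3 is correct (and more explicit than the paper, which only uses the point case implicitly when concluding that the termwise adjunction maps over the contractible $U_{[n]}$ are $(\mathbb I^1,usu)$-equivalences); note that $\ad(\Tr^*,\Tr_*)(\mathbb Z(\pt))$ is not an isomorphism of presheaves on disconnected test objects, but becomes one after usu-sheafification, which is exactly what you say.
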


\begin{proof}

\noindent(i): Obvious

\noindent(ii): 
By proposition \ref{Icov}(i), there exist an homotopy equivalence $g:X'\to X$, with $X'\in\CS$ a $\Delta$ complex.
Then,
\begin{itemize}
\item $S(\mathbb Z(g)):\underline{\sing}_{\mathbb I^*}\mathbb Z(X')\to\underline{\sing}_{\mathbb I^*}\mathbb Z(X)$ 
is an $\mathbb I^1$ homotopy equivalence by lemma \ref{I1hptlem}, 
thus an usu local equivalence by lemma \ref{I1hpt}(iv) and theorem \ref{CWsingI}(i),
\item $\Tr_*S(\mathbb Z_{tr}(g)):\Tr_*\underline{\sing}_{\mathbb I^*}\mathbb Z_{tr}(X')
\to\Tr_*\underline{\sing}_{\mathbb I^*}\mathbb Z(X)$ 
is an $\mathbb I^1$ homotopy equivalence by lemma \ref{I1hptlem}, 
thus an usu local equivalence by lemma \ref{I1hpt}(iv) and theorem \ref{CWsingI}(ii)
and the fact that $\Tr_*$ preserve $\mathbb I^1$ local object by lemma \ref{CWTr}.
\end{itemize}
Consider the following commutative diagram in $\PSh(\CW,C^-(\mathbb Z))$
\begin{equation}\label{CWCS}
\xymatrix{\underline{\sing}_{\mathbb I^*}\mathbb Z(X')\ar[rrrr]^{\ad(\Tr^*,\Tr_*)(\underline{\sing}_{\mathbb I^*}\mathbb Z(X'))}
\ar[d]^{S(\mathbb Z(g))} & \, & \, & \, & \Tr_*\underline{\sing}_{\mathbb I^*}\mathbb Z_{tr}(X')\ar[d]^{\Tr_*S(\mathbb Z_{tr}(g))} \\
\underline{\sing}_{\mathbb I^*}\mathbb Z(X)\ar[rrrr]^{\ad(\Tr^*,\Tr_*)(\underline{\sing}_{\mathbb I^*}\mathbb Z(X))}
 & \, & \, & \, & \Tr_*\underline{\sing}_{\mathbb I^*}\mathbb Z_{tr}(X)}
\end{equation} 
By diagram (\ref{CWCS}) and the fact that $S(\mathbb Z(g))$ and $\Tr_*S(\mathbb Z_{tr}(g))$ are usu local
equivalence, it suffice to show that
\begin{equation*}
\ad(\Tr^*,\Tr_*)(\underline{\sing}_{\mathbb I^*}\mathbb Z(X')):
\underline{\sing}_{\mathbb I^*}\mathbb Z(X')\to\Tr_*\underline{\sing}_{\mathbb I^*}\mathbb Z_{tr}(X')
\end{equation*}
is an usu local equivalence.
By proposition \ref{Icov}(ii), there exist a countable open covering $X'=\cup_{i\in J}U_i$ such that for all
finite subset $I\subset J$, $U_I:=\cap_{i\in I}U_i=\emptyset$ or $U_I$ is contractible.
Let $(U_{\bullet},\bullet)$ be the complex in $\bold\Delta\times\CW$ associated to this covering. 
Let
\begin{itemize}
\item $h:\mathbb Z(U_{\bullet},\bullet)\to K^{\bullet}$ be an equivalence usu local 
in $\PSh(\bold\Delta\times\CW,C^-(\mathbb Z))$ with $K^{\bullet}$ usu fibrant
\item $l:\mathbb Z_{tr}(U_{\bullet},\bullet)\to L^{\bullet}$ be an equivalence usu local 
in $\PSh(\bold\Delta\times\Cor_{\mathbb Z}(\CW),C^-(\mathbb Z))$ with $L^{\bullet}$ usu fibrant
\end{itemize}
We have then a unique morphism $m:K^{\bullet}\to\Tr_*L^{\bullet}$ 
such that the following diagram in $\Ho_{usu}(\PSh(\bold{\Delta}\times\CW,C^-(\mathbb Z)))$ commutes
\begin{equation}\label{mMor}
\xymatrix{
\mathbb Z((U_{\bullet},\bullet))\ar[rrrr]^{\ad(\Tr^*,\Tr_*)(\mathbb Z((U_{\bullet},\bullet)))}\ar[d]_{h} \, & \, & \, & \, &
\Tr_*\mathbb Z_{tr}((U_{\bullet},\bullet))\ar[d]^{\Tr_*l} \\
K^{\bullet}\ar[rrrr]^{m} & \, & \, & \, & \Tr_*L^{\bullet}}
\end{equation}
We have then the following commutative diagram in $\Ho_{usu}(\PSh(\CW,C^-(\mathbb Z)))$ :
\begin{equation}\label{trMor}
\xymatrix{
p^*_{\bold\Delta}K^{\bullet}\ar[d]_{j}\ar[rrr]^{p^*_{\bold{\Delta}}(m)} & \, & \, & 
p^*_{\bold\Delta}\Tr_*L^{\bullet}=\Tr_*p^*_{\bold\Delta}L^{\bullet}\ar[d]^{\Tr_*(j)} \\
\mathbb Z(X)\ar[rrr]^{\ad(\Tr^*,\Tr_*)(\mathbb Z(X'))} \, & \, & \, & \Tr_*\mathbb Z_{tr}(X')}
\end{equation}
By lemma \ref{HRCW} (i), and the fact that $\Tr_*$ preserve usu local equivalence,
\begin{itemize}
\item $j:Lp^*_{\bold{\Delta}}\mathbb Z((U_{\bullet},\bullet))=p^*_{\bold\Delta}K^{\bullet}\to\mathbb Z(X)$, and
\item $\Tr_*(j):\Tr_*Lp^*_{\bold{\Delta}}\mathbb Z_{tr}((U_{\bullet},\bullet))=p^*_{\bold\Delta}\Tr_*L^{\bullet}
\to\Tr_*\mathbb Z_{tr}(X)$,
\end{itemize}
are isomorphisms.
On the other hand, $U_{[n]}$ is contractible for all $[n]\in\bold\Delta$, this means that the canonical morphism
$a_{U_{[n]}}:U_{[n]}\to\left\{\pt\right\}$ in $\CW$ is an homotopy equivalence.
Hence, 
\begin{itemize}
\item $\mathbb Z(a_{U_{[n]}}):\mathbb Z(U_{[n]})\to\mathbb Z(\left\{\pt\right\})$ is an
homotopy equivalence in $\PSh(\CW,C^-(\mathbb Z))$, thus an $(\mathbb I^1,usu)$ local equivalence
by lemma \ref{I1hpt}(iv) ;
\item $\Tr_*\mathbb Z_{tr}(a_{U_{[n]}}):\Tr_*\mathbb Z_{tr}(U_{[n]})\to\Tr_*\mathbb Z_{tr}(\left\{\pt\right\})$ is an
homotopy equivalence in $\PSh(\CW,C^-(\mathbb Z))$, thus an $(\mathbb I^1,usu)$ local equivalence
by lemma \ref{I1hpt}(iv).
\end{itemize}
Thus,
\begin{equation*}
\ad(\Tr^*,\Tr_*)(\mathbb Z((U_{\bullet},\bullet))):\mathbb Z((U_{\bullet},\bullet))\to
\Tr_*\mathbb Z_{tr}((U_{\bullet},\bullet))
\end{equation*} 
is an $(\mathbb I^1,usu)$ local equivalence in $\PSh(\bold\Delta\times\CW,C^-(\mathbb Z))$.
The diagram (\ref{mMor}) then shows that $m:K^{\bullet}\to\Tr_*L^{\bullet}$ 
is an isomorphism in $\Ho_{\mathbb I^1,usu}(\PSh(\bold\Delta\times\CW,C^-(\mathbb Z))$. 
Since $p_{\bold\Delta}^*$ derive trivially by definition, this implies that
\begin{equation*} 
p_{\bold\Delta}^*(m):p_{\bold\Delta}^*K^{\bullet}\to p_{\bold\Delta}^*\Tr_*L^{\bullet}
\end{equation*}
is an isomorphism in $\CwDM(\mathbb Z)^-$
The diagram (\ref{trMor}), then shows that
$\ad(\Tr^*,\Tr_*)(\mathbb Z(X')):\mathbb Z(X)\to\Tr_*\mathbb Z_{tr}(X')$
is an isomorphism in $\CwDM(\mathbb Z)^-$.

\end{proof}

\begin{thm}\label{CWMot}
\begin{itemize} 
\item[(i)]The adjonction
$(\Tr^*,\Tr_*):\PSh(\CW,C^-(\mathbb Z))\leftrightarrows\PSh_{\mathbb Z}(\Cor^{fs}_{\mathbb Z}(\CW),C^-(\mathbb Z))$
is a Quillen equivalence for the $(\mathbb I^1,usu)$ model structures. 
That is, the derived functor 
\begin{equation}
L\Tr^*:\CwDA^-(\mathbb Z)\xrightarrow{\sim}\CwDM^-(\mathbb Z) 
\end{equation}
is an isomorphism
and $\Tr_*:\CwDM^-(\mathbb Z)\xrightarrow{\sim}\CwDA^-(\mathbb Z)$ is it inverse. 

\item[(ii)] The adjonction
$(e_{cw}^*,e_{cw*}):C^-(\mathbb Z)\leftrightarrows\PSh_{\mathbb Z}(\CW,C^-(\mathbb Z))$
is a Quillen equivalence for the $(\mathbb I^1,usu)$ model structures. 
That is, the derived functor
\begin{equation}
e_{cw}^*:D^-(\mathbb Z)\xrightarrow{\sim}\CwDA^-(\mathbb Z)
\end{equation}
is an isomorphism
and $Re_{cw*}:\CwDA^-(\mathbb Z)\xrightarrow{\sim}D^-(\mathbb Z)$ is it inverse. 

\item[(iii)] The adjonction
$(e^{tr*}_{cw},e^{tr}_{cw*}):C^-(\mathbb Z)\leftrightarrows\PSh_{\mathbb Z}(\Cor^{fs}_{\mathbb Z}(\CW),C^-(\mathbb Z))$
is a Quillen equivalence for the $(\mathbb I^1,usu)$ model structures. 
That is, the derived functor
\begin{equation}
e^{tr*}_{cw}:D^-(\mathbb Z)\xrightarrow{\sim}\CwDM^-(\mathbb Z)
\end{equation}
is an isomorphism
and $Re^{tr}_{cw*}:\CwDM^-(\mathbb Z)\xrightarrow{\sim}D^-(\mathbb Z)$ is it inverse.  
\end{itemize}
\end{thm}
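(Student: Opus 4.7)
The plan is to deduce parts (ii) and (iii) from Proposition \ref{actadjCW} combined with Theorem \ref{CWsingI}, and then derive (i) from Proposition \ref{CWadTr}(ii) together with a compact-generation argument.

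For part (ii), the unit $K^\bullet \to Re_{cw*}e_{cw}^*K^\bullet$ is already an isomorphism in $D^-(\mathbb Z)$: one has $e_{cw*}e_{cw}^*K^\bullet = K^\bullet$ by Proposition \ref{actadjCW}(iii), and $e_{cw}^*K^\bullet$ is $\mathbb I^1$-local by Proposition \ref{cle}(ii), so a usu-fibrant replacement does not affect the computation of $Re_{cw*}$. For the counit, given $F^\bullet \in P^-(CW)$ I would replace it by $\underline{\sing}_{\mathbb I^*}F^\bullet$, which is $\mathbb I^1$-local and $(\mathbb I^1,usu)$-equivalent to $F^\bullet$ by Theorem \ref{CWsingI}(i). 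Then $Re_{cw*}F^\bullet \simeq \sing_{\mathbb I^*}F^\bullet$ and the counit becomes the map $\ad(e_{cw}^*,e_{cw*})(\underline{\sing}_{\mathbb I^*}F^\bullet)$, which is an usu-equivalence by Proposition \ref{actadjCW}(i). Part (iii) is proved identically, using Proposition \ref{actadjCW}(ii) and Proposition \ref{cle}(iii) in place of their analogues.

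For part (i), Proposition \ref{cle}(i) gives $R\Tr_* = \Tr_*$. To establish the Quillen equivalence it suffices, by Lemma \ref{CWTr}(ii), to prove that the derived unit $F^\bullet \to \Tr_*L\Tr^*F^\bullet$ is an $(\mathbb I^1,usu)$-equivalence for every $F^\bullet \in P^-(CW)$; the derived counit $L\Tr^*\Tr_*G^\bullet \to G^\bullet$ will then be an $(\mathbb I^1,usu)$-equivalence because applying $\Tr_*$ to it and composing with the unit on $\Tr_*G^\bullet$ yields the identity by the triangle identities. To check the derived unit, I would reduce to generators: the class of $F^\bullet$ for which the unit is an equivalence is closed under shifts, cones, and direct sums, and the projective model structure on $P^-(CW)$ is cofibrantly generated by $\{\mathbb Z(X)[n]\}_{X \in \CW,\, n \in \mathbb Z}$, so it suffices to handle $F^\bullet = \mathbb Z(X)$. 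For such $F^\bullet$, applying $\underline{\sing}_{\mathbb I^*}$ (and using $\Tr_* \circ \underline{\sing}_{\mathbb I^*} = \underline{\sing}_{\mathbb I^*} \circ \Tr_*$) together with Theorem \ref{CWsingI}, one reduces to showing that
\[ \underline{\sing}_{\mathbb I^*}\mathbb Z(X) \longrightarrow \Tr_*\underline{\sing}_{\mathbb I^*}\mathbb Z_{tr}(X) \]
is an $(\mathbb I^1,usu)$-equivalence; since both sides are $\mathbb I^1$-local by Theorem \ref{CWsingI}, it is enough that this be a usu-equivalence, which is precisely Proposition \ref{CWadTr}(ii).

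The main obstacle is the cell-induction argument reducing the derived unit in (i) from arbitrary $F^\bullet$ to the generators $\mathbb Z(X)$: this requires that $F^\bullet \mapsto L\Tr^*F^\bullet$ and $F^\bullet \mapsto \Tr_*L\Tr^*F^\bullet$ be compatible with the triangulated structure and with direct sums in the homotopy category $\CwDA^-(\mathbb Z)$, which in turn depends on picking cofibrant replacements carefully and exploiting that $\Tr^*$ is a left Quillen functor. Once this compatibility is in place, the entire theorem follows from combining Proposition \ref{CWadTr}(ii) with the explicit localization of Theorem \ref{CWsingI}.
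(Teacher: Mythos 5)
Your proof is correct and follows essentially the same route as the paper's own, extremely terse argument: the paper cites precisely Proposition \ref{CWadTr} together with Theorem \ref{CWsingI} for (i), and Proposition \ref{actadjCW} together with Theorem \ref{CWsingI} for (ii) and (iii). The cell-induction you spell out to reduce the derived unit in (i) to the representables $\mathbb Z(X)$, followed by the square comparing with $\underline{\sing}_{\mathbb I^*}$, is exactly what the paper leaves implicit when it invokes \ref{CWadTr}(ii).
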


\begin{proof}

\noindent (i): It follows from proposition \ref{CWadTr} and theorem \ref{CWsingI}.

\noindent (ii): It follows from proposition \ref{actadjCW}(i) and theorem \ref{CWsingI}(i). 

\noindent (iii): It follows from proposition \ref{actadjCW}(ii) and theorem \ref{CWsingI}(ii). 
It also follows from (i) and (ii).

\end{proof}

We deduce from proposition \ref{CWadTr} (ii) and proposition \ref{SerreP} the following :

\begin{prop}\label{CWadTr(iii)}
For $Y\in\CW$ and $l:E\hookrightarrow Y$ a CW subcomplex, the followings embeddings are quasi-isomorphism :
\begin{equation*}
C^{\sing}_*(Y,E,\mathbb Z)\xrightarrow{\mathbb Z(Y,E)(L)}\sing_{\mathbb I^*}\mathbb Z(Y,E)
\xrightarrow{e_{cw*}\ad(\Tr^*,\Tr_*)(\underline{\sing}_{\mathbb I^*}\mathbb Z(Y,E))}
\sing_{\mathbb I^*}\mathbb Z_{tr}(Y,E)
\end{equation*}
where $C^{\sing}_*(Y,E,\mathbb Z)=\coker{l_*}$ is the relative cohomology, with
$l_*:\mathbb Z\Hom_{\CW}(\Delta^*,E)\hookrightarrow\mathbb Z\Hom_{\CW}(\Delta^*,Y)$.
\end{prop}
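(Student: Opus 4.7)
The plan is to reduce each of the two claimed quasi-isomorphisms to its absolute version (where $E=\emptyset$), which is already available in the excerpt as Proposition SerreP for the first map and Proposition CWadTr(ii) for the second. The reduction in both cases proceeds via the short exact sequences defining the relative objects, combined with a 5-lemma argument on long exact sequences in homology.

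For the first morphism $\mathbb Z(Y,E)(L)$, I would assemble the commutative ladder
\begin{equation*}
\xymatrix{
0 \ar[r] & \mathbb Z\Hom_{\CW}(\Delta^*,E) \ar[r]^{l_*} \ar[d]_{\mathbb Z(E)(L)} & \mathbb Z\Hom_{\CW}(\Delta^*,Y) \ar[r] \ar[d]_{\mathbb Z(Y)(L)} & C^{\sing}_*(Y,E,\mathbb Z) \ar[r] \ar[d]_{\mathbb Z(Y,E)(L)} & 0 \\
0 \ar[r] & \sing_{\mathbb I^*}\mathbb Z(E) \ar[r] & \sing_{\mathbb I^*}\mathbb Z(Y) \ar[r] & \sing_{\mathbb I^*}\mathbb Z(Y,E) \ar[r] & 0
}
\end{equation*}
whose top row is the defining sequence of $C^{\sing}_*(Y,E,\mathbb Z)$ and whose bottom row is obtained by applying $e_{cw*}\underline{\sing}_{\mathbb I^*}$ to the short exact sequence $0\to\mathbb Z(E)\to\mathbb Z(Y)\to\mathbb Z(Y,E)\to 0$; the latter remains exact because the cokernel in presheaves is computed pointwise and $\mathbb Z(l)$ is injective. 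The outer vertical maps are quasi-isomorphisms by Proposition SerreP, so the 5-lemma forces $\mathbb Z(Y,E)(L)$ to be a quasi-isomorphism as well.

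For the second morphism, I would apply Proposition CWadTr(ii) separately to $X=Y$ and to $X=E$, obtaining usu local equivalences $\underline{\sing}_{\mathbb I^*}\mathbb Z(X)\to\Tr_*\underline{\sing}_{\mathbb I^*}\mathbb Z_{tr}(X)$ in $\PSh(\CW,C^-(\mathbb Z))$. The crucial observation is that sections on the one-point CW complex convert such usu local equivalences into genuine quasi-isomorphisms: the only usu cover of $\pt$ is trivial, hence $a_{usu}H^n(F)(\pt)=H^n(F(\pt))$, and the isomorphism on $a_{usu}H^n$ gives a quasi-isomorphism after applying $e_{cw*}$. I would then apply $e_{cw*}\underline{\sing}_{\mathbb I^*}$ to the short exact sequences defining $\mathbb Z(Y,E)$ and $\mathbb Z_{tr}(Y,E)$ (both remain short exact, since $\underline{\sing}_{\mathbb I^*}=\underline{\Hom}(\mathbb Z(\mathbb I^*),-)$ is exact on presheaf sequences and $e_{cw*}$ is evaluation at a point) to form the corresponding ladder, and apply the 5-lemma a second time.

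The main point requiring care is the naturality of $\ad(\Tr^*,\Tr_*)$ with respect to the short exact sequence $0\to\mathbb Z(E)\to\mathbb Z(Y)\to\mathbb Z(Y,E)\to 0$, which is needed to identify the induced map on cokernels in the ladder with the morphism $e_{cw*}\ad(\Tr^*,\Tr_*)(\underline{\sing}_{\mathbb I^*}\mathbb Z(Y,E))$ appearing in the statement. This is routine but must be spelled out in order to conclude; everything else reduces to already-established absolute results and standard homological algebra.
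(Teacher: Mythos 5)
Your proposal is correct and follows essentially the same route as the paper: the paper's own proof is the one-liner ``Follows from proposition \ref{CWadTr}(ii) and proposition \ref{SerreP}'', and your argument fills in precisely the implied reduction from the relative case to those two absolute statements via the short exact sequences $0\to\mathbb Z(E)\to\mathbb Z(Y)\to\mathbb Z(Y,E)\to 0$ and a 5-lemma. The one step you flag as needing care — converting the usu-local equivalence of Proposition~\ref{CWadTr}(ii) into a genuine quasi-isomorphism after $e_{cw*}$ by observing that the one-point space has only the trivial open cover, so $a_{usu}$ does not change sections over $\pt$ — is a valid (and worth spelling out) justification that the paper leaves implicit.
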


\begin{proof}
Follows from proposition \ref{CWadTr} (ii) and proposition \ref{SerreP}.
\end{proof}

Let $X,Y\in\CW$ and $E\subset Y$ a subcomplex, we have
\begin{itemize}
\item the morphism 
\begin{equation*}
S_n(X,(Y,E)):\Hom_{PC^-(CW)}(\underline{\sing}_{\mathbb I^*}\mathbb Z_{tr}(X),
\underline{\sing}_{\mathbb I^*}\mathbb Z_{tr}(Y,E)[n])\to
\Hom_{PC^-(CW)}(\mathbb Z_{tr}(X),\underline{\sing}_{\mathbb I^*}\mathbb Z_{tr}(Y,E)[n]), 
\end{equation*}
given by the composition
$S_n(X,(Y,E))(H)=H\circ S(\mathbb Z_{tr}(X))$ with the inclusion morphism $S(\mathbb Z_{tr}(X))$,

\item the morphism 
\begin{equation*}
R_n(X,(Y,E)):\Hom_{PC^-(CW)}(\mathbb Z_{tr}(X),\underline{\sing}_{\mathbb I^*}\mathbb Z_{tr}(Y,E)[n])\to 
\Hom_{PC^-(CW)}(\underline{\sing}_{\mathbb I^*}\mathbb Z_{tr}(X),
\underline{\sing}_{\mathbb I^*}\mathbb Z_{tr}(Y,E)[n]) 
\end{equation*}
given by, 
for $[T]\in\Hom_{PC^-(CW)}(\mathbb Z_{tr}(X),\underline{\sing}_{\mathbb I^*}\mathbb Z_{tr}(Y,E)[n])$, and $Z\in\CW$, 
\begin{equation*}
R_n(X,(Y,E))([T])(Z):\gamma\in\underline{\sing}_{\mathbb I^p}\mathbb Z_{tr}(X)(Z)\mapsto 
T\circ(I_{\mathbb I^n}\times\gamma)\in\underline{\sing}_{\mathbb I^{n+p}}\mathbb Z_{tr}(Y,E)(Z)
\end{equation*}
with $I_{\mathbb I^n}\times\gamma\in
\mathcal Z^{fs/\mathbb I^{n+p}\times Z}((\mathbb I^n\times\mathbb I^p\times Z)\times(\mathbb I^n\times X))$.
\end{itemize}

\begin{lem}\label{SRCW}
Let $X,Y\in\CW$ and $E\subset Y$ a subcomplex, we have 
\begin{itemize}
\item[(i)] $S_n(X,(Y,E))\circ R_n(X,(Y,E))=I$,
\item[(ii)] for $[T]\in\Hom_{PC^-(CW)}(\mathbb Z_{tr}(X),\underline{\sing}_{\mathbb I^*}\mathbb Z_{tr}(Y,E)[n])$,
the equality of morphism of $\CwDM^-(\mathbb Z)$
\begin{equation*}
D(\mathbb I^1,usu)([T])\circ S(\mathbb Z_{tr}(X))^{-1}=D(\mathbb I^1,usu)(R_n(X,(Y,E))([T])). 
\end{equation*}
\end{itemize}
\end{lem}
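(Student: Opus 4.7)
The proof proceeds in two steps, with part (ii) being a direct formal consequence of part (i) combined with the fact that $S(\mathbb Z_{tr}(X))$ is invertible in $\CwDM^-(\mathbb Z)$.

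For part (i), the plan is to unwind the definitions directly. Given a morphism
$[T]\in\Hom_{PC^-(CW)}(\mathbb Z_{tr}(X),\underline{\sing}_{\mathbb I^*}\mathbb Z_{tr}(Y,E)[n])$,
by definition
\begin{equation*}
S_n(X,(Y,E))(R_n(X,(Y,E))([T]))=R_n(X,(Y,E))([T])\circ S(\mathbb Z_{tr}(X)).
\end{equation*}
Evaluated at $Z\in\CW$ on an element $f\in\mathbb Z_{tr}(X)(Z)=\underline{\sing}_{\mathbb I^0}\mathbb Z_{tr}(X)(Z)$, the inclusion $S(\mathbb Z_{tr}(X))(Z)$ is the identity onto the degree zero part of $\underline{\sing}_{\mathbb I^*}\mathbb Z_{tr}(X)(Z)$, and then $R_n(X,(Y,E))([T])(Z)(f)=T\circ(I_{\mathbb I^n}\times f)$. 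Since $T$ is a morphism of presheaves on $\Cor^{fs}_{\mathbb Z}(\CW)$, naturality with respect to the correspondence $f:Z\to X$ tensored with the identity on $\mathbb I^n$ gives $T\circ(I_{\mathbb I^n}\times f)=T(Z)(f)\in\mathbb Z_{tr}(Y,E)(\mathbb I^n\times Z)=\underline{\sing}_{\mathbb I^n}\mathbb Z_{tr}(Y,E)(Z)$. This establishes the identity of morphisms $S_n(X,(Y,E))\circ R_n(X,(Y,E))=I$ in $PC^-(CW)$.

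For part (ii), by Theorem \ref{CWsingI}(ii) the inclusion $S(\mathbb Z_{tr}(X)):\mathbb Z_{tr}(X)\to\underline{\sing}_{\mathbb I^*}\mathbb Z_{tr}(X)$ is an $(\mathbb I^1,usu)$ equivalence, hence $D(\mathbb I^1,usu)(S(\mathbb Z_{tr}(X)))$ is an isomorphism in $\CwDM^-(\mathbb Z)$. Applying the localization functor $D(\mathbb I^1,usu)$ to the identity of part (i),
\begin{equation*}
[T]=R_n(X,(Y,E))([T])\circ S(\mathbb Z_{tr}(X)) \quad\text{in}\ PC^-(CW),
\end{equation*}
we obtain
\begin{equation*}
D(\mathbb I^1,usu)([T])=D(\mathbb I^1,usu)(R_n(X,(Y,E))([T]))\circ D(\mathbb I^1,usu)(S(\mathbb Z_{tr}(X))).
\end{equation*}
Composing on the right with $S(\mathbb Z_{tr}(X))^{-1}$ yields the stated equality.

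No step presents a serious obstacle: (i) is a tautological verification from the definitions together with the naturality of a morphism of presheaves with transfers, and (ii) is an automatic consequence once we invoke Theorem \ref{CWsingI}(ii) to invert $S(\mathbb Z_{tr}(X))$ in $\CwDM^-(\mathbb Z)$. The only subtlety worth being careful about is identifying $\underline{\sing}_{\mathbb I^n}\mathbb Z_{tr}(Y,E)(Z)$ with $\mathbb Z_{tr}(Y,E)(\mathbb I^n\times Z)$ and tracking the correspondence $I_{\mathbb I^n}\times f\in\mathcal Z^{fs/\mathbb I^n\times Z}((\mathbb I^n\times Z)\times(\mathbb I^n\times X))$ under the naturality of $T$.
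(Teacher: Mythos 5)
Your proof is correct and takes the same route as the paper; the paper's own proof simply records part (i) as "obvious" and part (ii) as "follows from (i)", which is precisely what you have unpacked, with the Yoneda identification $T\circ(I_{\mathbb I^n}\times f)=[T](Z)(f)$ justifying (i) and the invertibility of $D(\mathbb I^1,usu)(S(\mathbb Z_{tr}(X)))$ from theorem \ref{CWsingI}(ii) yielding (ii).
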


\begin{proof}

\noindent(i): Obvious

\noindent(ii): Follows from (i).

\end{proof}

Recall $\TM(\mathbb R)\subset\CW$ is the full subcategory of topological manifolds.
Let $X\in\TM(\mathbb R)$ connected topological manifold, $Y\in\CW$ a CW complex, 
and $l:E\hookrightarrow Y$ a CW subcomplex.
Denote by $p_X:\mathbb I^n\times X\times Y\to X$, $p_Y:\mathbb I^n\times X\times Y\to Y$ and
$p_{X\times Y}:\mathbb I^n\times X\times Y\to X\times Y$ the projections.
Let 
\begin{equation*}
T=\sum_i n_i T_i\in\underline{\sing}_{\mathbb I^n}\mathbb Z_{tr}(Y,E)(X)
\end{equation*}
such that $\partial_{\mathbb I^*}T=0$, 
where $m_i:T_i\hookrightarrow\mathbb I^n\times X\times Y$ is a closed CW subcomplex for all $i$.
Denote $p_{Xi}=p_X\circ m_i:T_i\to X$ and $p_{Yi}=p_X\circ m_i:T_i\to Y$.  
Consider the class of $T$ : 
\begin{equation}
[T]\in H^n\underline{\sing}_{\mathbb I^*}\mathbb Z_{tr}(Y,E)(X)
=\Hom_{PC^-(CW)}(\mathbb Z_{tr}(X),\underline{\sing}_{\mathbb I^*}\mathbb Z_{tr}(Y,E)[n]) 
\end{equation}
We have then
\begin{itemize}
\item its image
\begin{eqnarray*}
Re^{tr}_{cw*}\circ D(\mathbb I^1,usu)([T])=e^{tr}_{cw*}([T]\circ S(\mathbb Z_{tr}(X))^{-1}) \\ 
\in\Hom_{D^-(\mathbb Z)}(\sing_{\mathbb I^*}\mathbb Z_{tr}(X),\sing_{\mathbb I^*}\mathbb Z_{tr}(Y,E)[n])
\end{eqnarray*}
by the composite functor
$Re^{tr}_{cw*}\circ D(\mathbb I^1,usu):\PSh(\Cor_{\mathbb Z}(\CW),C^-(\mathbb Z))\to D^-(\mathbb Z)$,
where the last equality follows from the fact that by theorem \ref{CWsingI}(ii) 
\begin{itemize}
\item $S(\mathbb Z_{tr}(X))):\mathbb Z_{tr}(X)\to\underline{\sing}_{\mathbb I^*}\mathbb Z_{tr}(X)$
is an equivalence $(\mathbb I^1,usu)$ local and
\item $\underline{\sing}_{\mathbb I^*}\mathbb Z_{tr}(X)$ and $\underline{\sing}_{\mathbb I^*}\mathbb Z_{tr}(Y)$
are $\mathbb I^1$ local objects,
\end{itemize}
\item the action of $p_{X\times Y}(T)\in\mathcal Z_{d_X+n}(X\times Y,\mathbb Z)$
\begin{equation*}
K_n(X,(Y,E))(p_{X\times Y}(T)):\sum_i n_i(c_{Y,E}[n])\circ (p_{Yi*}[n])\circ p_{Xi}^*
\in\Hom_{D^-(\mathbb Z)}(C_*(X,\mathbb Z),C_*(Y,E,\mathbb Z)[n]), \
\end{equation*}
on homology, where, for each $i$ :
\begin{itemize}
\item $p_{Xi}^*\in\Hom_{D^-(\mathbb Z)}(C_*(X,\mathbb Z),C_*(T_i,\mathbb Z)[n])$ 
is the Gynsin morphism ($p_{Xi}$ is proper and $X\in\TM(\mathbb R)$ is a topological manifold),
\item $p_{Yi*}=\mathbb Z(p_{Yi})(\Delta^*):C_*(T_i,\mathbb Z)\to C_*(Y,\mathbb Z)$ is the classical map on singular chain, 
\item $c_{Y,E}:C_*(Y,\mathbb Z)\to C_*(Y,E,\mathbb Z)$ is the quotient map.
\end{itemize}
Here, since $X$ is a connected topological manifold, we have $H^{BM}_{d_X}(X,\mathbb Z)=[X]=Z$ and 
since $T_i\subset\mathbb I^n\times X\times Y$ is finite surjective and irreducible over $\mathbb I^n\times X$,
we have $H^{BM}_{d_X+n}(T_i\backslash\partial(T_i),\mathbb Z)=[T_i]=\mathbb Z$.
\end{itemize}

We have the following :

\begin{lem}\label{actCWFCl}
Let $X\in\TM(\mathbb R)$ connected, $Y\in\CW$ and $E\subset Y$ a subcomplex. 
Let $T=\sum_i n_iT_i\in\underline{\sing}_{\mathbb I^n}\mathbb Z_{tr}(Y,E)(X)$ such that $\partial_{\mathbb I^*}T=0$ 
and let $\gamma=\sum_j m_j\gamma_j\in C_p(X,\mathbb Z)$ such that $\partial\gamma=0$.
Then 
\begin{equation*}
T\circ(I_{\mathbb I^n}\times\gamma)=K_n(X,(Y,E))(p_{X,Y}(T))(\gamma). 
\end{equation*}
\end{lem}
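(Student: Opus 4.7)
The plan is to proceed by direct computation, reducing by $\mathbb Z$-bilinearity to a single pair $(T_i,\gamma_j)$ and then checking that both sides unwind to the same cubical cycle on $\mathbb I^{n+p}\times Y$ modulo $\mathbb I^{n+p}\times E$.

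First, since composition in $\Cor^{fs}_{\mathbb Z}(\CW)$ is $\mathbb Z$-bilinear and since $K_n(X,(Y,E))(p_{X\times Y}(T))$ is defined componentwise on $T=\sum_i n_iT_i$, I reduce to the case where $T=T_i$ is irreducible and $\gamma=\gamma_j$ is a single singular $p$-simplex $\gamma_j:\Delta^p\to X$. The embedding $C_*^{\sing}(X,\mathbb Z)\hookrightarrow\sing_{\mathbb I^*}\mathbb Z_{tr}(X)$ of proposition \ref{CWadTr(iii)} identifies $\gamma_j$ with the graph of $\gamma_j\circ L:\mathbb I^p\to X$ as an element of $\mathcal Z^{fs/\mathbb I^p}(\mathbb I^p\times X,\mathbb Z)$.

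Next, I unwind the left-hand side. Denote by $\Gamma\subset(\mathbb I^n\times\mathbb I^p)\times(\mathbb I^n\times X)$ the graph of $I_{\mathbb I^n}\times(\gamma_j\circ L):\mathbb I^{n+p}\to\mathbb I^n\times X$. By the composition formula (\ref{CorTopCW}), $T_i\circ(I_{\mathbb I^n}\times\gamma_j)$ equals the image under the projection $p_{\mathbb I^{n+p}\times Y}$ of $\Gamma\times_{\mathbb I^n\times X}T_i$, which is the set-theoretic pullback
\[
\{(t,s,y)\in\mathbb I^n\times\mathbb I^p\times Y : (t,\gamma_j(L(s)),y)\in T_i\},
\]
an element of $\mathcal Z^{fs/\mathbb I^{n+p}}(\mathbb I^{n+p}\times Y,\mathbb Z)$, well-defined modulo the subgroup of cycles supported over $E$.

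Then I unwind the right-hand side. Since $p_{Xi}$ factors as $T_i\xrightarrow{q_i}\mathbb I^n\times X\xrightarrow{p_X}X$, the Gysin $p_{Xi}^*\gamma_j$ is, at the chain level, the composition of the Gysin for the smooth projection $p_X$ (which sends $\gamma_j$ to $I_{\mathbb I^n}\times\gamma_j:\mathbb I^n\times\Delta^p\to\mathbb I^n\times X$) with the Gysin for the finite surjective map $q_i$ (which is the set-theoretic fibered pullback $T_i\times_{\mathbb I^n\times X}(\mathbb I^n\times\Delta^p)$ viewed as a chain on $T_i$). Applying $p_{Yi*}$ and $c_{Y,E}$, and then converting from simplicial to cubical parameters via $L:\mathbb I^p\to\Delta^p$, yields the same cycle $\{(t,s,y):(t,\gamma_j(L(s)),y)\in T_i\}$ in $\mathcal Z^{fs/\mathbb I^{n+p}}(\mathbb I^{n+p}\times Y,\mathbb Z)$ modulo $\mathbb I^{n+p}\times E$ as on the left.

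The main obstacle is pinning down a chain-level representative of the Gysin morphism $p_{Xi}^*$, since a priori it is only canonical as a morphism in $D^-(\mathbb Z)$. Here the factorization of $p_{Xi}$ through the finite surjective map $q_i$ and the projection $p_X$ furnishes a canonical set-theoretic lift via fiber products (this is exactly the chain-level recipe that agrees with the derived Gysin under the hypotheses that $X$ is a topological manifold and that $q_i$ is finite surjective onto $\mathbb I^n\times X$), and with this choice both sides reduce to the same cubical cycle, establishing the equality.
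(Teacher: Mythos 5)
Your reduction to a single $(T_i,\gamma_j)$ and your identification of $T_i\circ(I_{\mathbb I^n}\times\gamma_j)$ with the explicit set $\{(t,s,y):(t,\gamma_j(L(s)),y)\in T_i\}$ in $\mathbb I^{n+p}\times Y$ are both correct, but your handling of the Gysin morphism takes a genuinely different route from the paper's, and the route has a real gap. The paper stays in Borel--Moore homology: it sets $T_{i,\gamma_j}:=p_Y(T_i\circ(I_{\mathbb I^n}\times\gamma_j))\subset Y$, factors the composed correspondence as $i_{\gamma_j}\circ T^o_{i,\gamma_j}$ with $T^o_{i,\gamma_j}\subset\mathbb I^{p+n}\times T_{i,\gamma_j}$ finite and surjective over $\mathbb I^{p+n}$, assumes wlog $\dim T_{i,\gamma_j}=p+n$, and then computes both $p_{Yi*}p_{Xi}^*[\Im(\gamma_j)\backslash\partial]$ and the class of $T_i\circ(I_{\mathbb I^n}\times\gamma_j)$ to be $i_{\gamma_j*}[T^o_{i,\gamma_j}\backslash\partial]$. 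Only fundamental classes and a dimension count appear; no chain-level model of the Gysin is ever built.

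You instead try to realize $p_{Xi}^*$ at the chain level via the factorization $p_{Xi}=p_X\circ q_i$, and the step you yourself flag as the main obstacle is not actually resolved by what you write. Asserting that the Gysin for the finite surjective map $q_i$ is ``the set-theoretic fibered pullback $T_i\times_{\mathbb I^n\times X}(\mathbb I^n\times\Delta^p)$ viewed as a chain on $T_i$'' hides the entire difficulty: that fiber product is a space finite (in general branched, with varying local degree) over $\Delta^p$, and producing from it a well-defined element of $C_p(T_i,\mathbb Z)$ requires choosing a compatible triangulation, weighting sheets by local degree, and checking that the resulting chain represents the derived Gysin independently of these choices. Your parenthetical ``this is exactly the chain-level recipe that agrees with the derived Gysin'' is precisely the statement that needs a proof, not a justification for skipping one. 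The paper's route through fundamental classes in $H^{BM}$ avoids all of this: the six functor formalism already cited (cf.\ proposition \ref{Kunneth} and \cite{CH}) supplies $p_{Yi*}p_{Xi}^*[\Im(\gamma_j)\backslash\partial]=[p_Y(p_{Xi}^{-1}(\Im(\gamma_j)\backslash\partial))]$ directly, with no chain-level lift. To salvage your version you would need to insert and prove a lemma constructing such a lift for finite branched covers of topological manifolds.
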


\begin{proof}
Denote by $T_{i,\gamma_j}:=p_Y(T_i\circ(I_{\mathbb I^n}\times\gamma_j))\subset Y$ 
where $p_Y:\mathbb I^{p+n}\times Y\to Y$ is the projection.
We have then $\dim T_{i,\gamma_j}\leq\dim(T_i\circ(I_{\mathbb I^n}\times\gamma_j))=p+n$ 
and the factorization 
$T_i\circ(I_{\mathbb I^n}\times\gamma_j)=i_{\gamma_j}\circ T_{i,\gamma_j}^o$ 
where, 
\begin{itemize}
\item $T_{i,\gamma_j}^o\subset\mathbb I^{p+n}\times T_{i,\gamma_j}$ is finite and surjective over $\mathbb I^{p+n}$,
so that $\dim T^o_{i,\gamma_j}=p+n\geq\dim T_{i,\gamma_j}$, 
\item $i_{\gamma_j}:T_{i,\gamma_j}\hookrightarrow Y$ is the closed embedding.
\end{itemize}
We can assume without loss of generality that $\dim T_{i,\gamma_j}=p+n$. We have then,
\begin{eqnarray*}
p_{Yi*}p_{Xi}^*[\Im(\gamma_j)\backslash\partial]
&=&[p_Y(p_{Xi}^{-1}(\Im(\gamma_j)\backslash\partial))] \\
&=&[i_{\gamma_j}\circ p_{T_{i,\gamma_j}}(T_{i,\gamma_j}^o)] 
\mbox{\, since \,} p_Y(p_{Xi}^{-1}(\Im(\gamma_j)\backslash(\partial)))=T_{i,\gamma_j}\backslash\partial \\
&=&i_{\gamma_j*}[T_{i,\gamma_j}^o\backslash\partial] \mbox{\, since \,} \dim T_{i,\gamma_j}=p+n \\
&=&[(T_i\circ(I_{\mathbb I^n}\times\gamma_j))\backslash\partial] \mbox{\, by \, definition,}
\end{eqnarray*}
where $[\cdot]$ denote the fundamental class in Borel Moore homology and $\partial$ the boundary.

\end{proof}
 
We deduce from lemma \ref{SRCW}(ii) and lemma \ref{actCWFCl} the following :

\begin{prop}\label{Kunneth}
Let $X\in\TM(\mathbb R)$ connected and $Y\in\CW$. Let $l:E\hookrightarrow Y$ a CW subcomplex. 
\begin{itemize}
\item[(i)]Let $[T]=[\sum_i n_iT_i]\in\Hom_{PC^-(CW)}(\mathbb Z_{tr}(X),\sing_{\mathbb I^*}(\mathbb Z_{tr}(Y,E))[n])$  
Then, 
\begin{eqnarray*}
Re^{tr}_{cw*}\circ D(\mathbb I^1,usu)([T])=K_n(X,(Y,E))(p_{X\times Y}(T)) \\
\in\Hom_{D^-(\mathbb Z)}(\sing_{\mathbb I^*}\mathbb Z_{tr}(X),\sing_{\mathbb I^*}\mathbb Z_{tr}(Y,E)[n])
=\Hom_{D^-(\mathbb Z)}(C_*(X,\mathbb Z),C_*(Y,E,\mathbb Z)[n]), 
\end{eqnarray*}

\item[(ii)] If $Y$ is compact and $E\subset Y$ is closed then the factorization 
\begin{equation*}
K_n(X,(Y,E))=\overline{K_n(X,(Y,E))}\circ [\cdot]:\mathcal Z_{d_X+n}(X\times Y,\mathbb Z)\to
\Hom_{D^-(\mathbb Z)}(C_*(X,\mathbb Z),C_*(Y,E,\mathbb Z)[n])
\end{equation*}
where 
$[\cdot]:\mathcal Z_{d_X+n}(X\times Y,X\times E,\mathbb Z)\to H^{BM}_{d_X+n}(X\times Y,X\times E,\mathbb Z)$
is the fundamental class gives the classical isomorphism
\begin{equation}
\overline{K_n(X,(Y,E))}:H^{BM}_{d_X+n}(X\times Y,X\times E,\mathbb Z)
\xrightarrow{\sim}\Hom_{D^-(\mathbb Z)}(C_*(X,\mathbb Z),C_*(Y,E,\mathbb Z)[n])
\end{equation}
of \cite{CH}. 
\end{itemize}
\end{prop}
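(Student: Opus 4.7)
The plan is to reduce point (i) to the combinatorial identity of lemma \ref{actCWFCl} by transporting $[T]$ through the adjunctions, then to deduce point (ii) from (i) by invoking Poincar\'e--Lefschetz duality on the topological manifold $X$ together with the standard K\"unneth-type isomorphism of the six functor formalism of \cite{CH}.

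First I would rewrite the left--hand side of (i). Starting from the isomorphism
$S(\mathbb Z_{tr}(X)):\mathbb Z_{tr}(X)\xrightarrow{\sim}\underline{\sing}_{\mathbb I^*}\mathbb Z_{tr}(X)$ in $\CwDM^-(\mathbb Z)$ (theorem \ref{CWsingI}(ii)), I would apply lemma \ref{SRCW}(ii) to get $D(\mathbb I^1,usu)([T])\circ S(\mathbb Z_{tr}(X))^{-1}=D(\mathbb I^1,usu)(R_n(X,(Y,E))([T]))$. Since both $\underline{\sing}_{\mathbb I^*}\mathbb Z_{tr}(X)$ and $\underline{\sing}_{\mathbb I^*}\mathbb Z_{tr}(Y,E)$ are $\mathbb I^1$ local (theorem \ref{CWsingI}(ii)) and usu fibrant after a fibrant replacement which preserves the underlying complex of global sections, the derived pushforward $Re^{tr}_{cw*}$ coincides with $e^{tr}_{cw*}$ on this morphism. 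Thus $Re^{tr}_{cw*}D(\mathbb I^1,usu)([T])$ is represented by the map of complexes $e^{tr}_{cw*}R_n(X,(Y,E))([T])$ composed with the quasi-isomorphism $\sing_{\mathbb I^*}\mathbb Z_{tr}(X)\xrightarrow{\sim} C_*(X,\mathbb Z)$ of proposition \ref{CWadTr(iii)} inverted (and similarly for the target).

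The second step is the pointwise identification. The map $e^{tr}_{cw*}R_n(X,(Y,E))([T])$ sends, by construction, a class $\gamma\in\sing_{\mathbb I^p}\mathbb Z_{tr}(X)$ to $T\circ(I_{\mathbb I^n}\times\gamma)\in\sing_{\mathbb I^{p+n}}\mathbb Z_{tr}(Y,E)$. Under proposition \ref{CWadTr(iii)} a singular chain $\gamma=\sum_j m_j\gamma_j$ with $\gamma_j:\Delta^p\to X$ is represented, via $\mathbb Z(X)(L)$, by the sum of graphs of $\gamma_j\circ L:\mathbb I^p\to X$, seen as elements of $\mathcal Z^{fs/\mathbb I^p}(\mathbb I^p\times X,\mathbb Z)$. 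Applying lemma \ref{actCWFCl} to each $\gamma_j$ and summing, I obtain $T\circ(I_{\mathbb I^n}\times\gamma)=K_n(X,(Y,E))(p_{X\times Y}(T))(\gamma)$ as cycles in $C_{p+n}(Y,E,\mathbb Z)$; at the level of the derived category this yields exactly the equality of (i) after identifying the source and target via proposition \ref{CWadTr(iii)}. The key point that makes lemma \ref{actCWFCl} applicable is that $X\in\TM(\mathbb R)$ is a topological manifold, so the Gysin morphism $p_{Xi}^*$ along the finite surjective $p_{Xi}:T_i\to X$ is defined and computes the pullback of a cycle by refined intersection, which is precisely what appears in the computation of $T_i\circ(I_{\mathbb I^n}\times\gamma_j)$.

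For point (ii), assume $Y$ compact and $E\subset Y$ closed. I would first verify that $K_n(X,(Y,E))$ descends to Borel--Moore homology, i.e.\ vanishes on cycles that are boundaries in $\mathcal Z_{d_X+n}(X\times Y,X\times E,\mathbb Z)$: this follows from the fact that each ingredient ($p_{Xi}^*$, $p_{Yi*}$, $c_{Y,E}$) is a morphism of complexes in the derived category and that the fundamental class map $[\cdot]$ factors through the pro-object of tubular neighborhoods. The induced map $\overline{K_n(X,(Y,E))}$ then coincides, by a direct unfolding, with the classical K\"unneth/duality isomorphism of the six functor formalism: pull back along $p_X$ (which is smoothable since $X$ is a manifold, giving Poincar\'e--Lefschetz duality $C_*(X,\mathbb Z)\simeq a_{X!}a_X^!\mathbb Z[-d_X]$), cap with the fundamental class of the cycle, and push forward along $p_Y$. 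The main technical obstacle here is matching the explicit formula built from $p_{Xi}^*\circ p_{Yi*}$ with the abstract iso coming from adjunctions in the six functor formalism; I would do this by reducing to the case $T_i=X\times Z$ for $Z\subset Y$ closed (where both sides visibly agree) via the projection formula, and then extending by additivity and the compatibility of $p_{Xi}^*$ with refined intersection. Once this matching is done, the fact that the classical construction of \cite{CH} provides an isomorphism $H^{BM}_{d_X+n}(X\times Y,X\times E,\mathbb Z)\xrightarrow{\sim}\Hom_{D^-(\mathbb Z)}(C_*(X,\mathbb Z),C_*(Y,E,\mathbb Z)[n])$ concludes the proof of (ii).
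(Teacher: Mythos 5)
Your proof follows essentially the same route as the paper: for (i), the identity $Re^{tr}_{cw*}\circ D(\mathbb I^1,usu)([T])=e^{tr}_{cw*}R_n(X,(Y,E))([T])$ via lemma \ref{SRCW}(ii) and the $\mathbb I^1$-locality of the $\underline{\sing}_{\mathbb I^*}$ objects, followed by the pointwise evaluation using lemma \ref{actCWFCl} under the identification of proposition \ref{CWadTr(iii)}; for (ii), the observation that properness of $p_X$ (when $Y$ is compact) lets one rewrite $K_n$ as $\gamma\mapsto p_{Y*}(p_X^*\gamma\cdot[p_{X\times Y}(T)])$, which is the formula of \cite{CH}. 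The only divergence is that in (ii) you propose an extra verification step, reducing to $T_i=X\times Z$ by additivity and the projection formula; the paper treats the formula $p_{Y*}(p_X^*\gamma\cdot[p_{X\times Y}(T)])$ as already being, by definition, the isomorphism of \cite{CH}, so no such matching argument is performed there.
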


\begin{proof}

\noindent(i): The equality 
\begin{equation*}
\Hom_{D^-(\mathbb Z)}(\sing_{\mathbb I^*}\mathbb Z_{tr}(X),\sing_{\mathbb I^*}\mathbb Z_{tr}(Y,E)[n])
=\Hom_{D^-(\mathbb Z)}(C_*(X,\mathbb Z),C_*(Y,E,\mathbb Z)[n])
\end{equation*}
follows from proposition \ref{CWadTr(iii)}.  
For $p\in\mathbb N$ and $\gamma\in H_p(X,\mathbb Z)$ we have 
\begin{eqnarray*}
Re^{tr}_{cw*}\circ D(\mathbb I^1,usu)([T])(\gamma)
&=&e^{tr}_{cw*}([T]\circ S(\mathbb Z_{tr}(X))^{-1})(\gamma) \\
&=&e^{tr}_{cw*}R_n(X,Y)([T])(\gamma) \mbox{\; by \;  lemma \ref{SRCW} (ii)} \\
&:=& T\circ(I_{\mathbb I^n}\times\gamma) \mbox{ \; with \;} 
I_{\mathbb I^n}\times\gamma\in\mathcal Z^{fs/\mathbb I^{p+n}}((\mathbb I^n\times\mathbb I^p)\times(\mathbb I^n\times X)) \\
&=&K_n(X,(Y,E))(p_{X\times Y}(T))(\gamma)\in H_{p+n}(Y,E,\mathbb Z) \mbox{\; by \; lemma \ref{actCWFCl}} 
\end{eqnarray*}

\noindent(ii):Let $p\in\mathbb N$ and $\gamma\in H_p(X,\mathbb Z)$. If $Y$ is compact,
$p_X:X\times Y\to X$ is proper and we have
\begin{equation*}
K_n(X,(Y,E))(p_{X\times Y}(T))(\gamma)=p_{Y*}(p_X^*\gamma.[p_{X\times Y}(T)])\in H_{p+n}(Y,E,\mathbb Z) 
\end{equation*}
where $p_{X}^*:H_{p+n}(X,\mathbb Z)\to H_{p+n}(Y,\mathbb Z)$ is Gysin morphism,
and $[p_{X\times Y}(T)]\in H^{BM}_{d_X+n}(X\times Y, X\times E)$ is the Borel Moore class
of $p_{X\times Y}(T)\in\mathcal Z_{d_X+n}(X\times Y,\mathbb Z)$.

\end{proof}

We finish this subsection by the following proposition :
\begin{prop}\label{DI1iso}
Let $X,Y\in\CW$, $E\subset Y$ a CW subcomplex and $n\in\mathbb Z$, $n<0$.
The morphism of abelian group
\begin{equation*}
D(\mathbb I^1,usu):
\Hom_{PC^-(CW)}(\mathbb Z_{tr}(X),\underline{\sing}_{\mathbb I^*}\mathbb Z_{tr}(Y,E)[n])\to\Hom_{\CwDM^-(\mathbb Z)}(M(X),M(Y,E)[n])
\end{equation*}
of the functor 
$D(\mathbb I^1,usu):\PSh_{\mathbb Z}(\Cor^{fs}_{\mathbb Z}(\CW),C^-(\mathbb Z))\to\CwDM^-(\mathbb Z)$ 
is an isomorphism if $X\in\TM(\mathbb R)$ and $Y\in\CW$ is compact and $E\subset Y$ is closed.

\end{prop}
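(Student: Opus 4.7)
The strategy parallels that of proposition \ref{DA1iso}, with the $(\mathbb A^1,et)$ localization replaced by the $(\mathbb I^1,usu)$ one. By theorem \ref{CWsingI}(ii), the complex $\underline{\sing}_{\mathbb I^*}\mathbb Z_{tr}(Y,E)[n]$ is $\mathbb I^1$-local and $S(\mathbb Z_{tr}(X)):\mathbb Z_{tr}(X)\to\underline{\sing}_{\mathbb I^*}\mathbb Z_{tr}(X)$ is an $(\mathbb I^1,usu)$-equivalence. Hence
\begin{equation*}
\Hom_{\CwDM^-(\mathbb Z)}(M(X),M(Y,E)[n])=\Hom_{\Ho_{usu}(PC^-(CW))}\bigl(\mathbb Z_{tr}(X),\underline{\sing}_{\mathbb I^*}\mathbb Z_{tr}(Y,E)[n]\bigr),
\end{equation*}
and by Yoneda this equals $\mathbb H^n_{usu}\bigl(X,\underline{\sing}_{\mathbb I^*}\mathbb Z_{tr}(Y,E)\bigr)$. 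On the other hand, the source is $H^n\underline{\sing}_{\mathbb I^*}\mathbb Z_{tr}(Y,E)(X)$, and the morphism induced by $D(\mathbb I^1,usu)$ becomes the canonical edge map from sectionwise cohomology to usu hypercohomology. So the whole claim reduces to showing this edge map is an isomorphism in negative degrees.

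To show this, I would compute both sides explicitly in terms of topology. Via the equivalence $Re^{tr}_{cw*}:\CwDM^-(\mathbb Z)\xrightarrow{\sim}D^-(\mathbb Z)$ of theorem \ref{CWMot}(iii) combined with the quasi-isomorphisms of proposition \ref{CWadTr(iii)}, the right-hand side becomes $\Hom_{D^-(\mathbb Z)}\bigl(C_*(X,\mathbb Z),C_*(Y,E,\mathbb Z)[n]\bigr)$. Since $X\in\TM(\mathbb R)$, $Y$ is compact, and $E\subset Y$ is closed, proposition \ref{Kunneth}(ii) identifies this further, via the isomorphism $\overline{K_n(X,(Y,E))}$, with relative Borel--Moore homology $H^{BM}_{d_X+n}(X\times Y,X\times E,\mathbb Z)$. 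Proposition \ref{Kunneth}(i) then says that under these identifications the edge map sends a cycle class $[T]$ to the fundamental class $[p_{X\times Y}(T)]$.

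The main obstacle is therefore the geometric statement that the fundamental-class map
\begin{equation*}
H^n\bigl(\underline{\sing}_{\mathbb I^*}\mathbb Z_{tr}(Y,E)(X)\bigr)\xrightarrow{[T]\mapsto[p_{X\times Y}(T)]}H^{BM}_{d_X+n}(X\times Y,X\times E,\mathbb Z)
\end{equation*}
is an isomorphism for $n<0$. This is a CW analogue of the Suslin--Voevodsky result that the Suslin complex computes Borel--Moore homology: surjectivity asks for a moving lemma producing, from any relative Borel--Moore class, a representative supported on a closed CW subcomplex of $X\times\mathbb I^p\times Y$ finite and surjective over $X\times\mathbb I^p$, while injectivity asks that two such cycles representing the same class be bounded by a cubical correspondence of the same type. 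Compactness of $Y$ and closedness of $E$ guarantee that $p_{X\times Y}$ is proper (so Borel--Moore pushforward is defined and $[p_{X\times Y}(T)]$ makes sense), and the topological-manifold structure on $X$ furnishes Poincaré--Lefschetz duality together with cellular transversality, which together should drive both halves of the argument.
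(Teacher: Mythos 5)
Your plan takes essentially the same route as the paper's proof: apply Theorem \ref{CWMot}(iii) to replace the $\CwDM^-$ hom-group with a hom-group in $D^-(\mathbb Z)$, then use Proposition \ref{Kunneth}(i) to identify the composite $Re^{tr}_{cw*}\circ D(\mathbb I^1,usu)$ with the action $[T]\mapsto K_n(X,(Y,E))(p_{X\times Y}(T))$, and Proposition \ref{Kunneth}(ii) to factor that action through Borel--Moore homology via the six-functor isomorphism $\overline{K_n(X,(Y,E))}$. Where you do better than the paper is in being explicit about what remains: the paper simply asserts that ``$K_n(X,(Y,E))$ is an isomorphism by the six functor formalism as indicated in \cite{CH}'', but as you observe, \cite{CH} supplies only the isomorphism $\overline{K_n}$ between $H^{BM}_{d_X+n}(X\times Y,X\times E,\mathbb Z)$ and the hom-group of singular chain complexes, leaving unproved the other factor, namely that
\begin{equation*}
[T]\mapsto [p_{X\times Y}(T)] : H^n\bigl(\underline{\sing}_{\mathbb I^*}\mathbb Z_{tr}(Y,E)(X)\bigr)\longrightarrow H^{BM}_{d_X+n}(X\times Y,X\times E,\mathbb Z)
\end{equation*}
is an isomorphism. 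You have correctly isolated this as the load-bearing claim; your sketch of why it should hold (properness of $p_{X\times Y}$ from compactness, Poincar\'e--Lefschetz duality and cellular transversality from $X\in\TM(\mathbb R)$, a moving lemma for surjectivity and a cubical bounding argument for injectivity) is the right shape of argument, but, like the paper, you do not carry it out. In other words, you have not introduced a new gap --- you have identified the one the paper hides behind the citation to \cite{CH}.
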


\begin{proof}
By proposition \ref{Kunneth}
\begin{eqnarray*}
Re^{tr}_{cw*}\circ D(\mathbb I^1,usu)=K_n(X,(Y,E)):
\Hom_{PC^-(CW)}(\mathbb Z_{tr}(X),\underline{\sing}_{\mathbb I^*}\mathbb Z_{tr}(Y,E)[n]) \\
\to\Hom_{D^-(\mathbb Z)}(\sing_{\mathbb I^*}\mathbb Z_{tr}(X),\sing_{\mathbb I^*}\mathbb Z_{tr}(Y,E)[n]).
\end{eqnarray*}
Hence $Re^{tr}_{cw*}\circ D(\mathbb I^1,usu)$
is an isomorphism if $X\in\TM$, $Y$ is compact and $E$ is closed 
since $K_n(X,(Y,E))$ is an isomorphism by the six functor formalism as indiquated in \cite{CH} 
if $X\in\TM$, $Y$ is compact and $E$ is closed. 
On the other hand, by theorem \ref{CWMot} (iii)
\begin{equation*}
Re^{tr}_{cw*}:\Hom_{\CwDM^-(\mathbb Z)}(M(X),M(Y,E)[n])
\to\Hom_{D^-(\mathbb Z)}(\sing_{\mathbb I^*}\mathbb Z_{tr}(X),\sing_{\mathbb I^*}\mathbb Z_{tr}(Y,E)[n])
\end{equation*}
is an isomorphism.
This proves the proposition.

\end{proof}

\section{The Betti realization functor on the derived category of motives of complex algebraic varieties}

We will consider,  
\begin{itemize}

\item the analytical functor $\An:\Var(\mathbb C)\to\AnSp(\mathbb C)$ 
given by $\An(V)=V^{an}$, $\An(g)=g^{an}$,
and the analytical functor on transfers
$\An:\Cor^{fs}_{\Lambda}(\SmVar(\mathbb C))\to\Cor^{fs}_{\mathbb Z}(\AnSm(\mathbb C))$ 
given by $\An(V)=V^{an}$,$\An(\Gamma)=\Gamma^{an}$,

\item the forgetful functor $\Cw:\AnSp(\mathbb C)\to\CW$ 
given by $\Cw(W)=W^{cw}$, $\Cw(g)=g^{cw}$,
and the forgetful functor on transfers 
$\Cw:\Cor^{fs}_{\mathbb Z}(\AnSm(\mathbb C))\to\Cor^{fs}_{\mathbb Z}(\CW)$ given by $\Cw(W)=W^{cw}$,
$\Cw(\Gamma)=\Gamma^{cw}$,

\item the composites $\widetilde\Cw=\Cw\circ\An:\Var(\mathbb C)\to\CW$, 
given by $\widetilde\Cw(V)=V^{cw}$, $\widetilde\Cw(g)=g^{cw}$,
and $\widetilde\Cw=\Cw\circ\An:\Cor^{fs}_{\mathbb Z}(\SmVar(\mathbb C))\to\Cor^{fs}_{\mathbb Z}(\CW)$, 
given by $\widetilde\Cw(V)=V^{cw}$, $\widetilde\Cw(\Gamma)=\Gamma^{cw}$.

\item the embeddings of categories
$\iota_{an}:\AnSm(\mathbb C)\to\AnSp(\mathbb C)$ and 
$\iota_{var}:\SmVar(\mathbb C)\to\Var(\mathbb C)$.

\end{itemize}

By definition, we have the following commutative diagram of sites
\begin{equation}
\xymatrix{
\widetilde\Cw: \Cor^{fs}_{\mathbb Z}(\CW)\ar[r]^{\Cw}\ar[d]^{\Tr} & \Cor^{fs}_{\mathbb Z}(\AnSm(\mathbb C))\ar[r]^{\An}\ar[d]^{\Tr} 
& \Cor^{fs}_{\mathbb Z}(\SmVar(\mathbb C))\ar[d]^{\Tr} \\
\widetilde\Cw: \mathbb Z(CW)\ar[r]^{\Cw}\ar[rd]^{\Cw} & \mathbb Z(\AnSm(\mathbb C))\ar[r]^{\An} & \mathbb Z(\SmVar(\mathbb C)) \\
 \, & \mathbb Z(\AnSp(\mathbb C))\ar[r]^{\An}\ar[u]^{\iota_{an}} & \mathbb Z(\Var(\mathbb C))\ar[u]^{\iota_{var}} } 
\end{equation}

We note that we have the following :

\begin{prop}\label{AnCwtCw}
\begin{itemize}
\item[(i)]The functors 
\begin{itemize}
\item $\An^*:\PSh(\SmVar(\mathbb C),C^-(\mathbb Z))\to\PSh(\AnSm(\mathbb C),C^-(\mathbb Z))$ and
\item $\An^*:\PSh(\Cor^{fs}_{\mathbb Z}(\SmVar(\mathbb C)),C^-(\mathbb Z))\to\PSh(\Cor^{fs}_{\mathbb Z}(\AnSm(\mathbb C)),C^-(\mathbb Z))$, 
\end{itemize}
derive trivially for the $(\mathbb A^1,et)$ and $(\mathbb D^1,usu)$ model structures.

\item[(ii)] 
\begin{itemize}
\item Let  $K^{\bullet}\in\PSh(\Cor^{fs}_{\mathbb Z}(\CW),C^-(\mathbb Z))$. If $K^{\bullet}$ is $\mathbb I^1$ local,
then $\Cw_*K^{\bullet}$ is $\mathbb D^1$ local.
\item Let $K^{\bullet}\in\PSh(\CW,C^-(\mathbb Z))$. 
If $K^{\bullet}$ is $\mathbb I^1$ local, then $\Cw_*K^{\bullet}$ is $\mathbb D^1$ local.
\end{itemize}
Moreover, the functors 
\begin{itemize}
\item $\Cw^*:\PSh(\AnSm(\mathbb C),C^-(\mathbb Z))\to\PSh(\CW,C^-(\mathbb Z))$ and
\item $\Cw^*:\PSh(\Cor^{fs}_{\mathbb Z}(\AnSm(\mathbb C)),C^-(\mathbb Z))\to\PSh(\Cor^{fs}_{\mathbb Z}(\CW),C^-(\mathbb Z))$, 
\end{itemize}
derive trivially for the $(\mathbb D^1,usu)$ and $(\mathbb I^1,usu)$ model structures.

\item[(iii)]The functors 
\begin{itemize}
\item $\widetilde\Cw^*:\PSh(\SmVar(\mathbb C),C^-(\mathbb Z))\to\PSh(\CW,C^-(\mathbb Z))$ and
\item $\widetilde\Cw^*:\PSh(\Cor^{fs}_{\mathbb Z}(\SmVar(\mathbb C)),C^-(\mathbb Z))\to\PSh(\Cor^{fs}_{\mathbb Z}(\CW),C^-(\mathbb Z))$, 
\end{itemize}
derive trivially for the $(\mathbb A^1,et)$ and $(\mathbb I^1,usu)$ model structures.
\end{itemize}
\end{prop}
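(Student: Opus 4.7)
The plan is to treat the three points in order, with (iii) following formally from (i) and (ii), and to handle the non-transfer and transfer versions in parallel. The key fact used throughout is that both $\An$ and $\Cw$ are well-defined morphisms of sites both on the underlying categories and on the categories of finite surjective correspondences: the analytification of an etale cover is a local homeomorphism (hence an usu cover), the analytification of a finite surjective algebraic correspondence is a finite surjective analytic correspondence, and $\Cw$ is essentially the identity on underlying topological spaces and preserves finite surjective correspondences.

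For (i), since the $(\mathbb A^1,et)$ model structure is the left Bousfield localization of the etale model structure at the $\mathbb A^1$-projections $\mathbb Z(X\times\mathbb A^1)\to\mathbb Z(X)$ (and their transfer analogues), it suffices to check that $\An^*$ preserves both etale local equivalences and these generators. The first follows from the site-theoretic observation above. For the generator, $\An^*\mathbb Z(X\times\mathbb A^1)=\mathbb Z(X^{an}\times\mathbb A^{1,an})$, and I would write $\mathbb A^{1,an}=\bigcup_{r\in\mathbb N}\mathbb D^r$ as the increasing countable union of open disks. Applying lemma \ref{HRAn}(i) to this covering of $X^{an}\times\mathbb A^{1,an}$ produces an usu equivalence between $\mathbb Z(X^{an}\times\mathbb A^{1,an})$ and its \v{C}ech complex. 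Each term $\mathbb Z(X^{an}\times(\mathbb D^{r_1}\cap\cdots\cap\mathbb D^{r_k}))=\mathbb Z(X^{an}\times\mathbb D^{\min r_i})$ is $(\mathbb D^1,usu)$-equivalent to $\mathbb Z(X^{an})$ by radial contraction together with theorem \ref{AnS}. A simplicial homotopy argument then identifies the total Čech complex with $\mathbb Z(X^{an})$ in $\AnDA^-(\mathbb Z)$. The transfer version is handled identically using the second bullet of lemma \ref{HRAn}(i).

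For (ii), I first show that if $K^{\bullet}\in P^-(CW)$ is $\mathbb I^1$-local then $\Cw_*K^{\bullet}$ is $\mathbb D^1$-local. For $W\in\AnSm(\mathbb C)$, the required map is $K^{\bullet}(W^{cw})\to K^{\bullet}(W^{cw}\times\mathbb D^{1,cw})$. The radial homotopy $h(z,t)=(1-t)z$ realises an $\mathbb I^1$-homotopy equivalence between $\mathbb D^{1,cw}$ and a point in $\CW$; by lemma \ref{I1hptlem} the projection $\mathbb Z(W^{cw}\times\mathbb D^{1,cw})\to\mathbb Z(W^{cw})$ is an $\mathbb I^1$-homotopy equivalence of presheaves, hence an $(\mathbb I^1,usu)$-equivalence by lemma \ref{I1hpt}(iv), so inducing an iso on derived Hom into the $\mathbb I^1$-local $K^{\bullet}$ gives the claim. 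The transfer version follows by applying lemma \ref{CWTr}(i) after observing $\Tr_*\Cw_*=\Cw_*\Tr_*$. To conclude that $\Cw^*$ derives trivially, note that $(\Cw^*,\Cw_*)$ is already a Quillen adjunction for the usu model structures (since $\Cw$ is a morphism of sites for the usu topology): given a $(\mathbb D^1,usu)$-equivalence $f:F^{\bullet}\to G^{\bullet}$ and an $\mathbb I^1$-local $K^{\bullet}$, the just-proved claim gives $\Cw_*K^{\bullet}$ is $\mathbb D^1$-local, so by adjunction
\[
\Hom_{\Ho_{usu}}(\Cw^*G^{\bullet},K^{\bullet})=\Hom_{\Ho_{usu}}(G^{\bullet},\Cw_*K^{\bullet})\xrightarrow{\sim}\Hom_{\Ho_{usu}}(F^{\bullet},\Cw_*K^{\bullet})=\Hom_{\Ho_{usu}}(\Cw^*F^{\bullet},K^{\bullet}),
\]
showing $\Cw^*f$ is an $(\mathbb I^1,usu)$-equivalence.

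For (iii) one simply composes: $\widetilde\Cw^*=\Cw^*\circ\An^*$, so by (i) $\An^*$ sends $(\mathbb A^1,et)$-equivalences to $(\mathbb D^1,usu)$-equivalences and by (ii) $\Cw^*$ sends these to $(\mathbb I^1,usu)$-equivalences. The main obstacle is the argument in (i) for the $\mathbb A^1$-projection: unlike the disk, $\mathbb A^{1,an}$ is not itself $\mathbb D^1$-contractible in a single step, and the exhaustion by open disks combined with lemma \ref{HRAn}(i) must be invoked; everything else is formal manipulation of Bousfield localizations via the site-theoretic observations together with the $\mathbb I^1$-contractibility of $\mathbb D^{1,cw}$.
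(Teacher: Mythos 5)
Your treatment of (ii) and (iii) matches the paper's argument in spirit. For (ii) you use the $\mathbb I^1$-contractibility of $\mathbb D^{1,cw}$ together with the adjunction $(\Cw^*,\Cw_*)$ to show $\Cw_*$ carries $\mathbb I^1$-local objects to $\mathbb D^1$-local ones, then conclude $\Cw^*$ derives trivially by a $\Hom$-set argument through $\Cw_*K^{\bullet}$. The paper does the same thing, only more carefully: it first replaces $K^{\bullet}$ by a usu-fibrant model $\underline{\sing}_{\mathbb I^*}L^{\bullet}$ (so that sections compute derived sections) before applying Lemma \ref{I1hpt}(ii), whereas you pass a little quickly from ordinary sections $K^{\bullet}(W^{cw})$ to derived $\Hom$; the content is the same. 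Point (iii) is formal for both.

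The real divergence is (i). The paper simply cites \cite{AyoubG2}, whereas you attempt to reprove the statement from the Bousfield-localization criterion. Your criterion — preserve etale-to-usu local equivalences and send the generating $\mathbb A^1$-projections to $(\mathbb D^1,usu)$-equivalences — is correct, and the exhaustion $\mathbb A^{1,an}=\bigcup_r\mathbb D^r$ combined with Lemma \ref{HRAn}(i) is a legitimate device. But the step you summarize as "a simplicial homotopy argument" is where the actual content lies and it is left undeveloped: you must show that the unbounded-below total \v{C}ech complex, whose individual terms you have only identified up to $(\mathbb D^1,usu)$-equivalence, is itself $(\mathbb D^1,usu)$-equivalent to $\mathbb Z(X^{an})$. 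This is not automatic from degreewise contractibility; one would have to recognize the complex as the normalized chains on the nerve of a totally ordered covering (a cone, hence contractible) after localization, keeping track of the face maps. A substantially shorter route — closer to what Ayoub actually does, and avoiding any \v{C}ech bookkeeping — is to write down the $\bar{\mathbb D}^1$-homotopy $h(z,t)=tz$ contracting $\mathbb A^{1,an}$; together with the observation that $\bar{\mathbb D}^1$-homotopic maps become equal in $\AnDA^-(\mathbb Z)$, this exhibits $\mathbb Z(X^{an}\times\mathbb A^{1,an})\to\mathbb Z(X^{an})$ directly as a $(\mathbb D^1,usu)$-equivalence.
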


\begin{proof}

\noindent(i): It is proved in \cite{AyoubG2}.

\noindent(ii): Let us prove (ii).
\begin{itemize}
\item Let $K^{\bullet}\in\PSh(\CW,C^-(\mathbb Z))$ be an  $\mathbb I^1$ local object.
Let $h:K^{\bullet}\to L^{\bullet}$ an equivalence usu local with $L^{\bullet}$ usu fibrant.
Then, 
\begin{itemize}
\item $L^{\bullet}$ is $\mathbb I^1$ local,
$S(L^{\bullet}):L^{\bullet}\to\underline{\sing}_{\mathbb I^*}L^{\bullet}$ is an equivalence usu local, 
\item $\underline{\sing}_{\mathbb I^*}L^{\bullet}$ is usu fibrant.
\end{itemize}
Since $\Cw_*$ preserve usu local equivalence and usu fibrant object 
(for $X\in\AnSm(\mathbb C)$, the restriction of the functor $\Cw$ to the small site of open subset of $X$ is fully faithfull), 
\begin{itemize}
\item $\Cw_*(S(L^{\bullet})\circ h)=(\Cw_*S(L^{\bullet}))\circ(\Cw_*h):
\Cw_*K^{\bullet}\to\Cw_*\underline{\sing}_{\mathbb I^*}L^{\bullet}$ is an equivalence usu local
\item $\Cw_*\underline{\sing}_{\mathbb I^*}L^{\bullet}$ is usu fibrant.
\end{itemize}
Now, for $X\in\AnSm(\mathbb C)$,
\begin{eqnarray*}
H^n(\underline{\sing}_{\mathbb I^*}L^{\bullet}(p_{X^{cw}})):
\Hom_{P^-}(\mathbb Z(X)[n],\Cw_*\underline{\sing}_{\mathbb I^*}L^{\bullet})
=H^n(\underline{\sing}_{\mathbb I^*}L^{\bullet}(X^{cw}))\to \\
\Hom(\mathbb Z(X\times\mathbb D^1)[n],\Cw_*\underline{\sing}_{\mathbb I^*}L^{\bullet})
=\underline{\sing}_{\mathbb I^*}L^{\bullet}(X^{cw}\times\mathbb D^1)[n]
\end{eqnarray*}
Moreover, the map 
$p_{X^{cw}}=I_{X^{cw}}\times a_{\mathbb D^1}:X^{cw}\times\mathbb D^1\to X^{cw}$ is an homotopy equivalence.
Hence, by lemma \ref{I1hpt}(ii), 
\begin{equation*}
\underline{\sing}_{\mathbb I^*}L^{\bullet}(p_{X^{cw}}):
\underline{\sing}_{\mathbb I^*}L^{\bullet}(X^{cw})\to\underline{\sing}_{\mathbb I^*}L^{\bullet}(X^{cw}\times\mathbb D^1)
\end{equation*}
is a quasi-isomorphism. 
In particular $H^n(\underline{\sing}_{\mathbb I^*}L^{\bullet}(p_{X^{cw}}))$ is an isomorphism.
This proves that $\Cw_*\underline{\sing}_{\mathbb I^*}L^{\bullet}$ is $\mathbb D^1$ local. 
Now, since 
\begin{itemize}
\item $\Cw_*(S(L^{\bullet})\circ h)$  is an equivalence usu local
\item $\Cw_*\underline{\sing}_{\mathbb I^*}L^{\bullet}$ is $\mathbb D^1$ local, 
\end{itemize}
$\Cw_*K^{\bullet}$ is $\mathbb D^1$ local. 

\item Let $K^{\bullet}\in\PSh(\Cor^{fs}_{\mathbb Z}(\CW),C^-(\mathbb Z))$ be an  $\mathbb I^1$ local object.
By lemma \ref{CWTr}, $\Tr_*K$ is $\mathbb I^1$ local
Hence by above, $\Tr_*\Cw_*K^{\bullet}=\Cw_*\Tr_*K^{\bullet}$ is $\mathbb D^1$ local.
Thus, by lemma \ref{CWTr} $\Tr_*K^{\bullet}$ is $\mathbb D^1$ local.

\item Let $f:G_1\to G_2$ an equivalence $(\mathbb D^1,usu)$ local in $\PSh(\AnSm(\mathbb C),C^-(\mathbb Z))$.
Let $K\in\PSh(\CW,C^-(\mathbb Z))$ a $\mathbb I^1$ local object. Up to replace $K$ by an usu equivalent presheaf,
we may assume that $K$ is usu fibrant. Then $\Cw_*K$ is also usu fibrant.
Consider the following commutative diagram :
\begin{equation}
\xymatrix{
\Hom_{P^-(CW)}(\Cw^*G_2,K)\ar[d]^{\sim}\ar[rr]^{\mathbb Z(K)(\Cw^*f)} & \, & \Hom_{P^-(CW)}(\Cw^*G_1,K)\ar[d]^{\sim} \\
\Hom_{P^-(\An)}(G_2,\Cw_*K)\ar[rr]^{\mathbb Z(\Cw_*K)(f)} & \, & \Hom_{P^-(\An)}(G_1,\Cw_*K)}
\end{equation}
By above $\Cw_*K$ is $\mathbb D^1$ local, and $f$ is an equivalence $(\mathbb D^1,usu)$ local.
Hence, $\mathbb Z(\Cw_*K)(f)$ is an isomorphism since $\Cw_*K$ is usu fibrant. 
The diagram shows then that $\mathbb Z(K)(\Cw^*f)$ is an isomorphism.
This proves that $\Cw^*f:\Cw^*G_1\to\Cw^*G_2$ is an equivalence $(\mathbb I^1,usu)$ local since $K$ is usu fibrant.

\item Let $f:G_1\to G_2$ an equivalence $(\mathbb D^1,usu)$ local in $\PSh(\Cor^{fs}_{\mathbb Z}(\AnSm(\mathbb C)),C^-(\mathbb Z))$.
Let $K\in\PSh(\Cor^{fs}_{\mathbb Z}(\CW),C^-(\mathbb Z))$ a $\mathbb I^1$ local object.
Up to replace $K$ by an usu equivalent presheaf, we may assume that $K$ is usu fibrant. Then $\Cw_*K$ is also usu fibrant.
Consider the following commutative diagram :
\begin{equation*}
\xymatrix{
\Hom_{PC^-(CW)}(\Cw^*G_2,K)\ar[d]^{\sim}\ar[rr]^{\mathbb Z(K)(\Cw^*f)} & \, & \Hom_{PC^-(CW)}(\Cw^*G_1,K)\ar[d]^{\sim} \\
\Hom_{PC^-(\An)}(G_2,\Cw_*K)\ar[rr]^{\mathbb Z(\Cw_*K)(f)} & \, & \Hom_{PC^-(\An)}(G_1,\Cw_*K)}
\end{equation*}
By above $\Cw_*K$ is $\mathbb D^1$ local, and $f$ is an equivalence $(\mathbb D^1,usu)$ local.
Hence, $\mathbb Z(\Cw_*K)(f)$ is an isomorphism since $\Cw_*K$ is usu fibrant. 
The diagram shows then that $\mathbb Z(K)(\Cw^*f)$ is an isomorphism.
This proves that $\Cw^*f:\Cw^*G_1\to\Cw^*G_2$ is an equivalence $(\mathbb I^1,usu)$ local
since $K$ is usu fibrant.
\end{itemize}

\noindent(iii): Follows from (i) and (ii) since by definition $\widetilde\Cw=\Cw\circ\An$.

\end{proof}

\subsection{Ayoub's Betti realization functor and the Betti realisation functor via CW commplexes}

We recall the definition of Ayoub's realization functor :

\begin{defi}\cite{AyoubB}\cite{AyoubG2}
\begin{itemize}
\item[(i)] The Betti realisation functor (without transfers) is the composite :
\begin{equation}
\Bti_0^*:\DA^{-}(\mathbb C,\mathbb Z)\xrightarrow{\An^*}\AnDA^{-}(\mathbb Z)\xrightarrow{Re_{an*}}D^{-}(\mathbb Z)
\end{equation}

\item[(ii)] The Betti realisation functor with transfers is the composite :
\begin{equation}
\Bti^*:\DM^{-}(\mathbb C,\mathbb Z)\xrightarrow{\An^*}\AnDM^{-}(\mathbb Z)\xrightarrow{Re^{tr}_{an*}}D^{-}(\mathbb Z)
\end{equation}
\end{itemize}
Since $\An^*$ derive trivially by proposition \ref{AnCwtCw}(i)
and $L\Tr^*:\AnDA^-(\mathbb Z)\to\AnDM^-(\mathbb Z)$ is the inverse of $\Tr_*$ by theorem \ref{AnTr}(i), we have 
$\Bti_0^*=\Bti^*\circ L\Tr^*$.
\end{defi}

We now define a Betti realization functor via CW complexes. 
The main result of this subsection will be that this functor coincide with Ayoub's one.

\begin{defi}
\begin{itemize}
\item[(i)] The CW-Betti realization functor (without transfers) is the composite :
\begin{equation}
\widetilde{\Bti}_0^*:\DA^{-}(\mathbb C,\mathbb Z)\xrightarrow{\widetilde\Cw^*}\CwDA^{-}(\mathbb Z)
\xrightarrow{Re_{cw*}}D^{-}(\mathbb Z)
\end{equation}

\item[(ii)] The CW-Betti realisation functor with transfers is the composite :
\begin{equation}
\widetilde\Bti^*:\DM^{-}(\mathbb C,\mathbb Z)\xrightarrow{\widetilde\Cw^*}\CwDM^{-}(\mathbb Z)\xrightarrow{Re^{tr}_{cw*}}D^{-}(\mathbb Z)
\end{equation}
\end{itemize}
Since $\widetilde\Cw^*$ derive trivially by proposition \ref{AnCwtCw}(ii)
and $L\Tr^*:\CwDA^-(\mathbb Z)\to\CwDM^-(\mathbb Z)$ is the inverse of $\Tr_*$ by theorem \ref{CWMot}(i), we have 
$\widetilde\Bti_0^*=\widetilde\Bti^*\circ L\Tr^*$.
\end{defi}

Denote by $i_*:\mathbb I^*_{et}\hookrightarrow\square^*_{\mathbb C}=\square^*$ the embeddings
of  pro-objects in $\SmVar(\mathbb C)$ 
\begin{equation*}
\mathbb I^*_{et}:=(U_j,a_{jk})_{j,k\in\mathcal V_{et}([0,\infty]^*,\square^*_{\mathbb C}),j\leq k}
\end{equation*}
indexed by the filtrant category of etale neighborhood of $[0,\infty]^*$ in $\square^*_{\mathbb C}$, 
The morphism $i_*$ is given by the etale morphisms $i_*(l):U_{l}\to\square^*_{\mathbb C}$ associated to
$l\in\mathcal V_{et}([0,\infty]^*,\square^*_{\mathbb C})$.

Denote by $i'_*:\mathbb I^*_{an}\xrightarrow{i'_{1*}}\bar{\mathbb D}^*\xrightarrow{i'_{2*}}\mathbb A^*_{\mathbb C}$ 
the embeddings of the pro-objects in $\AnSm(\mathbb C)$ 
\begin{itemize}
\item $\mathbb I^*_{an}:=(U_j,a_{jk})_{j,k\in\mathcal V_{an}(\mathbb I^*,\mathbb A^*_{\mathbb C}),j\leq k}$ indexed
by the filtrant category of etale analytic neighborhood of $\mathbb I^*$ in $\mathbb A^*_{\mathbb C}$, and
\item $\bar{\mathbb D}^*:=(U_l,a_{lm})_{l,m\in\mathcal V_{an}(\bar{\mathbb D}^*,\mathbb A^*_{\mathbb C}),l\leq m}$ indexed
by the filtrant category of etale analytic neighborhood of $\mathbb I^*$ in $\mathbb A^*_{\mathbb C}$.
\end{itemize}
The morphism $i'_{1*}$ is given by the identities $i'_{1*}(l):U_{\tau(l)}=U_l\xrightarrow{I_{U_l}}U_l$, where 
$\tau:\mathcal V_{an}(\bar{\mathbb D}^*,\mathbb A^*_{\mathbb C})\to\mathcal V_{an}(\mathbb I^*,\mathbb A^*_{\mathbb C})$ 
is the natural embedding of categories.

The embedding $i_*:\mathbb I_{et}^*\hookrightarrow\square^*$ of pro-objets in $\SmVar(\mathbb C)$ gives for, 
$F^{\bullet}\in\PSh(\Cor^{fs}_{\mathbb Z}(\SmVar(\mathbb C)),C^-(\mathbb Z))$, 
the following morphism in $\PSh(\Cor^{fs}_{\mathbb Z}(\SmVar(\mathbb C)),C^-(\mathbb Z))$, 
\begin{equation}
\underline{F}^{\bullet}(i):\underline{C}_*F^{\bullet}\to\underline{\sing}_{\mathbb I_{et}^*}F^{\bullet}
\end{equation}
given by for $X\in\SmVar(\mathbb C)$, the morphism of complexes
\begin{equation}
\underline{F}^{\bullet}(i)(X):F^{\bullet}(X\times\square^*)\xrightarrow{F^{\bullet}(I_X\times i_*)}
F^{\bullet}(X\times\mathbb I_{et}^*)
\end{equation}

The morphism $i'_{1*}:\mathbb I^*_{an}\to\bar{\mathbb D}^*$ of pro-objet in $\AnSm(\mathbb C)$ gives for, 
$G^{\bullet}\in\PSh(\Cor^{fs}_{\mathbb Z}(\AnSm(\mathbb C)),C^-(\mathbb Z))$, 
the following morphism in $\PSh(\Cor^{fs}_{\mathbb Z}(\AnSm(\mathbb C)),C^-(\mathbb Z))$, 
\begin{equation}
\underline{F}^{\bullet}(i'_{1*}):
\underline{\sing}_{\bar{\mathbb D}^*}F^{\bullet}\to\underline{\sing}_{\mathbb I_{an}^*}F^{\bullet}
\end{equation}
given by for $X\in\SmVar(\mathbb C)$, the morphism of complexes of abelian groups
\begin{equation}
\underline{F}^{\bullet}(i'_{1*})(X)=F^{\bullet}(I_X\times i'_{1*}):
F^{\bullet}(X\times\bar{\mathbb D}^*)\to F^{\bullet}(X\times\mathbb I_{an}^*).
\end{equation}

We have two canonical morphism of functors :
\begin{itemize}
\item the morphism $\psi^{\Cw}$, which,
for $G^{\bullet}\in\PSh(\Cor^{fs}_{\mathbb Z}(\AnSm(\mathbb C)),C^-(\mathbb Z))$, associate the morphism 
\begin{equation*}
\psi^{\Cw}(G^{\bullet}):\Cw^*(\underline{\sing}_{\mathbb I_{an}^*}G^{\bullet})\to\underline{\sing}_{\mathbb I^*}\Cw^*G^{\bullet}
\end{equation*}
in $\PSh(\Cor^{fs}_{\mathbb Z}(\CW),C^-(\mathbb Z))$ ;
the morphism $\psi^{\Cw}(G^{\bullet})$ is given by, for $Z\in\CW$,
\begin{equation}
\psi^{\Cw}(G^{\bullet})(Z):
\lim_{X^{cw}\to Z}G^{\bullet}(X\times\mathbb I_{an}^*)\to\lim_{Y^{cw}\to Z\times\mathbb I^*}G^{\bullet}(Y)
\end{equation}
given by 
$(f:X^{cw}\to Z)\mapsto(f\times I_{\mathbb I^*}:(X\times\mathbb I_{an}^*)^{cw}\to Z\times\mathbb I^*)$
and the identity of $G^{\bullet}(X\times\mathbb I_{an}^*)$ ;

\item the morphism $\psi^{\widetilde{\Cw}}$, which,
for $F^{\bullet}\in\PSh(\Cor^{fs}_{\mathbb Z}(\SmVar(\mathbb C)),C^-(\mathbb Z))$, associate the morphism 
\begin{equation*}
\psi^{\widetilde{\Cw}}(F^{\bullet}):\widetilde{\Cw}^*(\underline{\sing}_{\mathbb I_{et}^*}F^{\bullet})
\to\underline{\sing}_{\mathbb I^*}\widetilde{\Cw}^*F^{\bullet}
\end{equation*}
in $\PSh(\Cor^{fs}_{\mathbb Z}(\CW),C^-(\mathbb Z))$ ;
the morphism $\psi^{\widetilde{\Cw}}(F^{\bullet})$ is given by, for $Z\in\CW$,
\begin{equation}
\psi^{\widetilde{\Cw}}(F^{\bullet})(Z):
\lim_{X^{cw}\to Z}F^{\bullet}(X\times\mathbb I_{et}^*)\to\lim_{Y^{cw}\to Z\times\mathbb I^*}F^{\bullet}(Y)
\end{equation}
given by 
$(f:X^{cw}\to Z)\mapsto(f\times I_{\mathbb I^*}:(X\times\mathbb I_{et}^*)^{cw}\to Z\times\mathbb I^*)$
and the identity of $F^{\bullet}(X\times\mathbb I_{et}^*)$.
\end{itemize}

\begin{defi}
We define the following two morphism of functors :
\begin{itemize}
\item[(i)] the morphism $W$, which,
for $G^{\bullet}\in\PSh(\Cor^{fs}_{\mathbb Z}(\AnSm(\mathbb C)),C^-(\mathbb Z))$, associate the composition
\begin{equation*}
W(G^{\bullet}):
\Cw^*(\underline{\sing}_{\bar{\mathbb D}^*}G^{\bullet})\xrightarrow{\Cw^*(\underline{G}^{\bullet}(i'_1))}
\Cw^*(\underline{\sing}_{\mathbb I_{an}^*}G^{\bullet})\xrightarrow{\psi^{\Cw}(G^{\bullet})}
\underline{\sing}_{\mathbb I^*}\Cw^*G^{\bullet}
\end{equation*}
in $\PSh(\Cor^{fs}_{\mathbb Z}(\CW),C^-(\mathbb Z))$,

\item[(ii)] the morphism $\widetilde W$, which,
for $F^{\bullet}\in\PSh(\Cor^{fs}_{\mathbb Z}(\SmVar(\mathbb C)),C^-(\mathbb Z))$, associate the composition
\begin{equation*}
\widetilde W(F^{\bullet}):
\widetilde\Cw^*(\underline{C}_*F^{\bullet})\xrightarrow{\widetilde\Cw^*(\underline{F}^{\bullet}(i))}
\widetilde\Cw^*(\underline{\sing}_{\mathbb I_{et}^*}F^{\bullet})\xrightarrow{\psi^{\widetilde\Cw}(F^{\bullet})}
\underline{\sing}_{\mathbb I^*}\widetilde\Cw^*F^{\bullet}
\end{equation*}
in $\PSh(\Cor^{fs}_{\mathbb Z}(\CW),C^-(\mathbb Z))$.
\end{itemize}
\end{defi}

\begin{prop}\label{AWtW}
\begin{itemize}
\item[(i)] For $G^{\bullet}\in\PSh(\Cor^{fs}_{\mathbb Z}(\AnSm(\mathbb C)),C^-(\mathbb Z))$, 
$W(G^{\bullet}):\Cw^*(\underline{\sing}_{\bar{\mathbb D}^*}G^{\bullet})\to\underline{\sing}_{\mathbb I^*}\Cw^*G^{\bullet}$
is an equivalence $(\mathbb I^1,usu)$ local in $\PSh(\Cor^{fs}_{\mathbb Z}(\CW),C^-(\mathbb Z))$,

\item[(ii)] For $F^{\bullet}\in\PSh(\Cor^{fs}_{\mathbb Z}(\SmVar(\mathbb C)),C^-(\mathbb Z))$,
$\widetilde W(F^{\bullet}):\widetilde\Cw^*(\underline{C}_*F^{\bullet})\to\underline{\sing}_{\mathbb I^*}\widetilde\Cw^*F^{\bullet}$
is an $(\mathbb I^1,usu)$ local equivalence in $\PSh(\Cor^{fs}_{\mathbb Z}(\CW),C^-(\mathbb Z))$.
\end{itemize}
\end{prop}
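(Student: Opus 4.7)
\textbf{Proof plan for Proposition \ref{AWtW}.} For both parts the strategy is a two-out-of-three argument: I will show that $W(G^{\bullet})$ (resp.\ $\widetilde W(F^{\bullet})$) fits into a commutative triangle whose other two edges are already known to be $(\mathbb I^1,usu)$-equivalences.

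For part (i), first I verify the identity
\begin{equation*}
W(G^{\bullet})\circ\Cw^*(S(G^{\bullet}))=S(\Cw^*G^{\bullet})
\end{equation*}
in $\PSh(\Cor^{fs}_{\mathbb Z}(\CW),C^-(\mathbb Z))$, where $S(G^{\bullet}):G^{\bullet}\to\underline{\sing}_{\bar{\mathbb D}^*}G^{\bullet}$ and $S(\Cw^*G^{\bullet}):\Cw^*G^{\bullet}\to\underline{\sing}_{\mathbb I^*}\Cw^*G^{\bullet}$ are the canonical degree-zero inclusions defined in Sections 2.2 and 2.3. The identity unpacks by composition: the map $\underline{G}^{\bullet}(i'_1)\circ S(G^{\bullet})$ equals the analogous inclusion $G^{\bullet}\hookrightarrow\underline{\sing}_{\mathbb I^*_{an}}G^{\bullet}$, because $i'_1$ acts as the identity in cubical degree $0$ (where $\mathbb I^0_{an}$, like $\bar{\mathbb D}^0$, is just a point); and $\psi^{\Cw}(G^{\bullet})$ in cubical degree $0$ is the identity of $\Cw^*G^{\bullet}$, straight from its explicit definition.

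Once this identity is in hand, I conclude by two-out-of-three using: $S(G^{\bullet})$ is an $(\mathbb D^1,usu)$-equivalence by Theorem \ref{AnS}(ii); therefore $\Cw^*(S(G^{\bullet}))$ is an $(\mathbb I^1,usu)$-equivalence by Proposition \ref{AnCwtCw}(ii), which shows that $\Cw^*$ takes $(\mathbb D^1,usu)$-equivalences to $(\mathbb I^1,usu)$-equivalences; and $S(\Cw^*G^{\bullet})$ is an $(\mathbb I^1,usu)$-equivalence by Theorem \ref{CWsingI}(ii). It follows that $W(G^{\bullet})$ is an $(\mathbb I^1,usu)$-equivalence.

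For part (ii), I follow exactly the same scheme with $S(F^{\bullet}):F^{\bullet}\to\underline{C}_*F^{\bullet}$ in place of $S(G^{\bullet})$, $\underline{F}^{\bullet}(i)$ in place of $\underline{G}^{\bullet}(i'_1)$, and $\psi^{\widetilde\Cw}$ in place of $\psi^{\Cw}$. The identity to verify is
\begin{equation*}
\widetilde W(F^{\bullet})\circ\widetilde\Cw^*(S(F^{\bullet}))=S(\widetilde\Cw^*F^{\bullet}),
\end{equation*}
and the relevant inputs are now Theorem \ref{VarsingA}(ii) ($S(F^{\bullet})$ is an $(\mathbb A^1,et)$-equivalence), Proposition \ref{AnCwtCw}(iii) (so that $\widetilde\Cw^*(S(F^{\bullet}))$ is an $(\mathbb I^1,usu)$-equivalence), and again Theorem \ref{CWsingI}(ii) for the target side. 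The only potentially nontrivial step, in both parts, is the explicit check of the commutative triangles above; it is essentially formal after unwinding the colimit definitions of $\Cw^*$, $\widetilde\Cw^*$, $\psi^{\Cw}$, and $\psi^{\widetilde\Cw}$ on sections, using that at cubical degree $0$ the pro-objects $\mathbb I^*_{an}$, $\mathbb I^*_{et}$, and $\bar{\mathbb D}^*$ all degenerate to a point, which renders the degree-zero inclusions tautologically identified.
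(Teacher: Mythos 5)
Your proposal matches the paper's proof exactly: the same commutative triangle with $\Cw^*(S(G^{\bullet}))$ and $S(\Cw^*G^{\bullet})$ as the two legs, the same three inputs (Theorem \ref{AnS}(ii) resp.\ \ref{VarsingA}(ii), Proposition \ref{AnCwtCw}(ii)/(iii), Theorem \ref{CWsingI}(ii)), and the same two-out-of-three conclusion. Your explicit verification of the triangle's commutativity by unwinding the degree-zero behaviour of $\psi^{\Cw}$ and $\psi^{\widetilde\Cw}$ is a helpful addition that the paper states only implicitly via the displayed diagrams.
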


\begin{proof}

\noindent(i):Consider the following commutative diagram
\begin{equation}\label{WG}
\xymatrix{\Cw^*G^{\bullet}\ar[d]_{\Cw^*(S(G^{\bullet}))}\ar[rrd]^{S(\Cw^*G^{\bullet})} & \, & \, \\
\Cw^*(\underline{\sing}_{\bar{\mathbb D}^*}G^{\bullet})\ar[rr]^{W(G^{\bullet})} & \, & 
\underline{\sing}_{\mathbb I^*}\Cw^*G^{\bullet}}
\end{equation}
\begin{itemize}
\item By theorem \ref{AnS}(ii) $S(G^{\bullet})$ is a $(\mathbb D^1,et)$ equivalence. 
Hence by proposition \ref{AnCwtCw} (ii) $\Cw^*S(G^{\bullet})$ is a $(\mathbb I^1,usu)$ equivalence.
\item By theorem \ref{CWsingI} (ii) $S(\Cw^*G^{\bullet})$ is a $(\mathbb I^1,usu)$ equivalence.
\end{itemize}
Now, since $\Cw^*S(G^{\bullet})$ and $S(\Cw^*G^{\bullet})$ are $(\mathbb I^1,usu)$ equivalence, 
the diagram (\ref{WG}) shows that $W(G^{\bullet})$ is a $(\mathbb I^1,usu)$ equivalence.

\noindent(ii): Consider the following commutative diagram
\begin{equation}\label{WtF}
\xymatrix{\widetilde\Cw^*F^{\bullet}\ar[d]_{\widetilde\Cw^*(S(F^{\bullet}))}\ar[rrd]^{S(\widetilde\Cw^*F^{\bullet})} \, & \, \\
\widetilde\Cw^*(\underline{C}_*F^{\bullet})\ar[rr]^{\widetilde W(F^{\bullet})} & \, & 
\underline{\sing}_{\mathbb I^*}\widetilde\Cw^*F^{\bullet}}
\end{equation}
\begin{itemize}
\item By theorem \ref{VarsingA}(ii) $S(F^{\bullet})$ is a $(\mathbb A^1,et)$ equivalence. 
Hence, by proposition \ref{AnCwtCw} (iii) $\widetilde\Cw^*S(F^{\bullet})$ is a $(\mathbb I^1,usu)$ equivalence.
\item By theorem \ref{CWsingI} (ii) $S(\widetilde\Cw^*F^{\bullet})$ is a $(\mathbb I^1,usu)$ equivalence.
\end{itemize}
Now, since $\widetilde\Cw^*S(F^{\bullet})$ and $S(\widetilde\Cw^*F^{\bullet})$ are $(\mathbb I^1,usu)$ equivalence, 
the diagram (\ref{WtF}) shows that $\widetilde W(F^{\bullet})$ is a $(\mathbb I^1,usu)$ equivalence.

\end{proof}

\begin{defi}
We define the morphism of functor $B$, by associating
to $G^{\bullet}\in\PSh(\Cor^{fs}_{\mathbb Z}(\AnSm(\mathbb C)),C^-(\mathbb Z))$, the composite 
\begin{equation*}
B(G^{\bullet}):
\underline{\sing}_{\bar{\mathbb D}^*}G^{\bullet}\xrightarrow{\ad(\Cw^*,\Cw_*)(\underline{\sing}_{\bar{\mathbb D}^*}G^{\bullet})}
\Cw_*\Cw^*\underline{\sing}_{\bar{\mathbb D}^*}G^{\bullet}\xrightarrow{\Cw_*(W(G^{\bullet}))}
\Cw_*\underline{\sing}_{\mathbb I^*}\Cw^*G^{\bullet}
\end{equation*}
in $\PSh(\Cor^{fs}_{\mathbb Z}(\AnSm(\mathbb C)),C^-(\mathbb Z))$
\end{defi}

We have now the following key proposition.

\begin{prop}\label{Bettiprop}
\begin{itemize}
\item[(i)] For $Y\in\AnSp(\mathbb C)$ and $E\subset Y$ an analytic subset,
\begin{equation*}
e^{tr}_{an*}(B(\mathbb Z_{tr}(Y,E))):
\sing_{\bar{\mathbb D^*}}\mathbb Z_{tr}(Y,E)\to\sing_{\mathbb I^*}\mathbb Z_{tr}(Y^{cw},E^{cw})
\end{equation*}
is a quasi isomorphism in $C^{-}(\mathbb Z)$.

\item[(ii)] For $Y\in\AnSp(\mathbb C)$, and $E\subset Y$ an analytic subset, the morphism 
\begin{equation*}
B(\mathbb Z_{tr}(Y,E)):
\underline{\sing}_{\bar{\mathbb D}^*}\mathbb Z_{tr}(Y,E)\to\Cw_*\underline{\sing}_{\mathbb I^*}\mathbb Z_{tr}(Y^{cw},E^{cw})
\end{equation*}
is an equivalence $(\mathbb D^1,usu)$ local in $\PSh(\Cor^{fs}_{\mathbb Z}(\AnSm(\mathbb C)),C^-(\mathbb Z))$.
\end{itemize}
\end{prop}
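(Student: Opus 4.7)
The plan is to first deduce (ii) from (i), then prove (i) by reducing in sequence from the pair $(Y,E)$ to a single space, from a single analytic space to a smooth one via Hironaka, and finally to handle the smooth case by a \v{C}ech-type argument using a covering by balls.

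For the implication (i)$\Rightarrow$(ii), both sides of $B(\mathbb Z_{tr}(Y,E))$ are $\mathbb D^1$-local: the source by Theorem \ref{AnS}(ii), and the target because $\underline{\sing}_{\mathbb I^*}\mathbb Z_{tr}(Y^{cw},E^{cw})$ is $\mathbb I^1$-local by Theorem \ref{CWsingI}(ii), hence its image under $\Cw_*$ is $\mathbb D^1$-local by Proposition \ref{AnCwtCw}(ii). Then Proposition \ref{DusuEq}(ii) reduces the statement that $B(\mathbb Z_{tr}(Y,E))$ is a $(\mathbb D^1,usu)$ local equivalence to the quasi-isomorphism statement $e^{tr}_{an*}B(\mathbb Z_{tr}(Y,E))$, which is exactly (i).

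For (i), I first observe that the short exact sequences (\ref{AnXDtr}) on the analytic side and the analogous sequence on the CW side, after applying $\underline{\sing}_{\bar{\mathbb D}^*}$, $\underline{\sing}_{\mathbb I^*}$ and then $e^{tr}_{an*}$ (evaluation at the point, hence termwise exact), produce a commutative ladder of short exact sequences of complexes of abelian groups connected by $e^{tr}_{an*}B$. By the five lemma, it is enough to prove the statement when $E=\emptyset$, i.e.\ that for every $Y\in\AnSp(\mathbb C)$ the restriction map $\sing_{\bar{\mathbb D}^*}\mathbb Z_{tr}(Y)\to\sing_{\mathbb I^*}\mathbb Z_{tr}(Y^{cw})$ is a quasi-isomorphism. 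For $Y\in\AnSm(\mathbb C)$ connected I apply Proposition \ref{Dcov} to get a countable covering $Y=\cup_{i\in J}D_i$ whose non-empty finite intersections $D_I$ are biholomorphic to open balls in $\mathbb C^{d_Y}$. Each such $D_I$ is simultaneously $\bar{\mathbb D}^1$-contractible (linear scaling) and topologically contractible, so Theorem \ref{AnS}(ii) and the combination of Theorem \ref{CWsingI}(ii) with Proposition \ref{CWadTr(iii)} identify both $\sing_{\bar{\mathbb D}^*}\mathbb Z_{tr}(D_I)$ and $\sing_{\mathbb I^*}\mathbb Z_{tr}(D_I^{cw})$ with $\mathbb Z$ in degree $0$, and the map $e^{tr}_{an*}B(\mathbb Z_{tr}(D_I))$ carries one generator to the other. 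The local statements are then assembled into a global one for $Y$ using the \v{C}ech resolutions from Lemma \ref{HRAn}(i) and Lemma \ref{HRCW}(i), combined with the $\mathbb D^1$-, respectively $\mathbb I^1$-locality of the singular complexes, which allows naive sections to be identified with derived sections and yields the quasi-isomorphism by a spectral sequence argument.

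For general $Y\in\AnSp(\mathbb C)$ I proceed by induction on $\dim Y$. Hironaka's Theorem \ref{HirDes} supplies a resolution $\epsilon:Y'\to Y$ with $Y'$ smooth and discriminant $Z\subset Y$, and Proposition \ref{DesAn} identifies $\underline{\sing}_{\bar{\mathbb D}^*}\mathbb Z_{tr}(Y)$ in $\Ho_{usu}$ with the total complex of $\underline{\sing}_{\bar{\mathbb D}^*}\mathbb Z_{tr}(E')\to\underline{\sing}_{\bar{\mathbb D}^*}\mathbb Z_{tr}(Z)\oplus\underline{\sing}_{\bar{\mathbb D}^*}\mathbb Z_{tr}(Y')$, where $E'=\epsilon^{-1}(Z)$; the same diagram on the CW side (where $\epsilon^{cw}$ remains a proper modification of topological spaces) gives an analogous identification after applying $\Cw_*\underline{\sing}_{\mathbb I^*}$. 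Since $\dim Z<\dim Y$ and $\dim E'<\dim Y$, the induction hypothesis together with the smooth case already settled gives (i) for $Y$. The main technical obstacle will be the \v{C}ech descent step: while Lemmas \ref{HRAn}(i) and \ref{HRCW}(i) yield usu-local equivalences of presheaves, the naive evaluation functor $e^{tr}_{an*}$ is not a priori exact for such equivalences, and one must use the $(\mathbb D^1,usu)$- and $(\mathbb I^1,usu)$-locality of the singular presheaves to promote the local quasi-isomorphisms into a global one.
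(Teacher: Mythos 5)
Your proof follows the same route as the paper's: first reduce (ii) to (i) via $\mathbb D^1$-locality and Proposition \ref{DusuEq}(ii); then for (i), reduce to the case $E=\emptyset$ by exactness of $e^{tr}_{an*}$ on the short exact sequences of presheaves; prove the smooth case by the good cover from Proposition \ref{Dcov} plus \v{C}ech descent via Lemmas \ref{HRAn}(i), \ref{HRCW}(i); and finally handle general $Y$ by induction on dimension via Hironaka and Proposition \ref{DesAn}. (You do the pair reduction first and the paper does it last, but the content is identical.)

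One place where the paper is more careful than you are is the CW side of the blow-up descent step. You write that "$\epsilon^{cw}$ remains a proper modification of topological spaces" and assert that this gives the analogous Mayer--Vietoris identification of $\Cw_*\underline{\sing}_{\mathbb I^*}\mathbb Z_{tr}(Y^{cw})$. This is not automatic: Proposition \ref{DesAn} is an analytic statement, and there is no CW analogue of it in the paper. What the paper actually does is use Proposition \ref{SerreP} to identify $\sing_{\mathbb I^*}\mathbb Z_{tr}(-)$ with the complex of singular chains, and then invoke the classical fact that the 2-cubical diagram associated to a proper modification of topological spaces satisfies cohomological descent for singular chains (citing \cite{PS}). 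You should make this topological input explicit rather than leaving it implicit in the phrase "the same diagram on the CW side." With that clarification, the argument is complete and coincides with the paper's.
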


\begin{proof}

\noindent(i):
\begin{itemize}
\item Consider first $Y\in\AnSm(\mathbb C)$.
By proposition \ref{Dcov}, there exist a covering $Y=\cup_{i\in J}D_i$ by a countable family of open balls 
$D_i\simeq\mathbb D^{d_Y}$ such that $D_{I}=\emptyset$ or $D_{I}\simeq\mathbb D^{d_Y}$, 
for all $I=\left\{i_1,\cdots i_l\right\}\subset J$. Denote by $j_{I}:D_I\hookrightarrow Y$ the open embedding. 
We have then the following commutative diagram in $\PSh(\Cor^{fs}_{\mathbb Z}(\AnSm(\mathbb C)),C^-(\mathbb Z))$ :
\begin{equation}\label{bettidia1}
\xymatrix{
\underline{\sing}_{\bar{\mathbb D}^*} \mathbb Z_{tr}(Y)\ar[rrr]^{B(\mathbb Z_{tr}(Y))} & \, & \, &
 \Cw_*\underline{\sing}_{\mathbb I^*}\mathbb Z_{tr}(Y^{cw}) \\
\Tot_{\bullet,*}(\oplus_{\card I=\bullet}\underline{\sing}_{\bar{\mathbb D}^*}\mathbb Z_{tr}(D_{I}))
\ar[rrr]^{\Tot(B(\mathbb Z_{tr}(D_{I})))}\ar[u]^{\Tot(S(\mathbb Z(j_I)))} & \, & \, & \, 
\Tot_{\bullet,*}(\oplus_{\card I=\bullet}\Cw_*\underline{\sing}_{\mathbb I^*}\mathbb Z_{tr}(D^{cw}_{I}))
\ar[u]^{\Tot(\Cw_*S(\mathbb Z(j^{cw}_I)))}}
\end{equation}
This gives, after applying the functor $e^{tr}_{an*}$ to (\ref{bettidia1}), the commutative diagram in $C^-(\mathbb Z)$:
\begin{equation}\label{betti}
\xymatrix{
\sing_{\bar{\mathbb D}^*}\mathbb Z_{tr}(Y)\ar[rrr]^{e^{tr}_{an*}B(\mathbb Z_{tr}(Y))} & \, & \, &
 \sing_{\mathbb I^*}\mathbb Z_{tr}(Y^{cw}) \\
\Tot_{\bullet,*}(\oplus_{\card I=\bullet}\sing_{\bar{\mathbb D}^*}\mathbb Z_{tr}(D_{I}))
\ar[rrr]^{\Tot(e^{tr}_{an*}B(\mathbb Z_{tr}(D_{I})))}\ar[u]^{\Tot(e^{tr}_{an*}S(\mathbb Z(j_I)))}
\ar[d]_{\Tot(H_0(\sing_{\bar{\mathbb D}^*}\mathbb Z_{tr}(D_{I})))}& \, & \, &
\Tot_{\bullet,*}(\oplus_{\card I=\bullet}\sing_{\mathbb I^*}\mathbb Z_{tr}(D^{cw}_{I}))
\ar[u]^{\Tot(e^{tr}_{cw*}S(\mathbb Z(j^{cw}_I)))}\ar[d]^{\Tot(H_0(\sing_{\mathbb I^*}\mathbb Z_{tr}(D^{cw}_{I})))} \\
\oplus_{\card I=\bullet}H_0(\sing_{\bar{\mathbb D}^*}\mathbb Z_{tr}(D_{I}))\ar[rrr]^{H_0(e^{tr}_{an*}B(\mathbb Z_{tr}(D_{I})))} & 
\,& \, & \oplus_{\card I=\bullet}H_0(\sing_{\mathbb I^*}\mathbb Z_{tr}(D^{cw}_{I}))} 
\end{equation}
Since $D_I\simeq\mathbb D^{d_Y}$,
\begin{itemize}
\item $\Tot(H_0(\sing_{\mathbb I^*}\mathbb Z_{tr}(D^{cw}_{I}))):
\Tot_{\bullet,*}(\oplus_{\card I=\bullet}\sing_{\mathbb I^*}\mathbb Z_{tr}(D^{cw}_{I}))\to
\oplus_{\card I=\bullet}H_0(\sing_{\mathbb I^*}\mathbb Z_{tr}(D^{cw}_{I}))$
is a quasi-isomorphism in $C^{-}(\mathbb Z)$ by proposition \ref{CWadTr(iii)} ;
\item $\Tot(H_0(\sing_{\bar{\mathbb D}^*}\mathbb Z_{tr}(D_I))):
\Tot_{\bullet,*}(\oplus_{\card I=\bullet}\sing_{\bar{\mathbb D}^*}\mathbb Z_{tr}(D_{I}))\to
\oplus_{\card I=\bullet}H_0(\sing_{\bar{\mathbb D}^*}\mathbb Z_{tr}(D_{I}))$ 
is a quasi-isomorphism in $C^{-}(\mathbb Z)$
since $\sing_{\bar{\mathbb D}^*}\mathbb Z_{tr}(D_I)\to\mathbb Z_{tr}(\pt)$ is an equivalence
usu local, since, by theorem \ref{AnS}(ii), it is an equivalence $(\mathbb D^1,usu)$ local between
$\mathbb D^1$ local objects ; 
\item $H_0(e^{tr}_{an*}B(\mathbb Z_{tr}(D_{I}))):
\oplus_{\card I=\bullet}H_0(\sing_{\bar{\mathbb D}^*}\mathbb Z_{tr}(D_{I}))\to
\oplus_{\card I=\bullet}H_0(\sing_{\mathbb I^*}\mathbb Z_{tr}(D^{cw}_{I}))$
is a quasi-isomorphism in $C^{-}(\mathbb Z)$ by proposition by the two preceding points.  
\end{itemize}
Hence,
\begin{equation*}
\Tot(e^{tr}_{an*}B(\mathbb Z_{tr}(D_{I}))):
\Tot_{\bullet,*}(\oplus_{\card I=\bullet}\sing_{\bar{\mathbb D}^*}\mathbb Z_{tr}(D_{I}))\to
\Tot_{\bullet,*}(\oplus_{\card I=\bullet}\sing_{\mathbb I^*}\mathbb Z_{tr}(D^{cw}_{I}))
\end{equation*}
is a quasi-isomorphism in $C^{-}(\mathbb Z)$. 
On the other hand,
\begin{itemize}
\item $\Tot(e^{tr}_{an*}S(\mathbb Z(j_I))):
\Tot_{\bullet,*}(\oplus_{\card I=\bullet}\sing_{\bar{\mathbb D}^*}\mathbb Z_{tr}(D_{I}))\to
\sing_{\bar{\mathbb D}^*} \mathbb Z_{tr}(Y)$ is a quasi-isomorphism in $C^{-}(\mathbb Z)$
since 
\begin{equation*}
\Tot(S(\mathbb Z(j_I))):
\Tot_{\bullet,*}(\oplus_{\card I=\bullet}\underline{\sing}_{\bar{\mathbb D}^*}\mathbb Z_{tr}(D_{I}))\to
\underline{\sing}_{\bar{\mathbb D}^*} \mathbb Z_{tr}(Y)
\end{equation*}
is a $(\mathbb D^1,usu)$ local equivalence in $\PSh(\Cor^{fs}_{\mathbb Z}(\AnSm(\mathbb C)),C^-(\mathbb Z))$
by lemma \ref{HRAn} and theorem \ref{AnS}(ii),
\item $\Tot(e^{tr}_{cw*}S(\mathbb Z(j^{cw}_I))):
\Tot_{\bullet,*}(\oplus_{\card I=\bullet}\sing_{\mathbb I^*}\mathbb Z_{tr}(D^{cw}_{I}))\to
\sing_{\mathbb I^*}\mathbb Z_{tr}(Y^{cw})$ is a quasi-isomorphism in $C^{-}(\mathbb Z)$
since 
\begin{equation*}
\Tot(S(\mathbb Z(j^{cw}_I))):
\Tot_{\bullet,*}(\oplus_{\card I=\bullet}\underline{\sing}_{\mathbb I^*}\mathbb Z_{tr}(D^{cw}_{I}))\to
\underline{\sing}_{\mathbb I^*}\mathbb Z_{tr}(Y^{cw})
\end{equation*}
is an equivalence $(\mathbb I^1,usu)$ local in $\PSh(\Cor^{fs}_{\mathbb Z}(\CW),C^-(\mathbb Z))$ 
by lemma \ref{HRCW} and theorem \ref{CWsingI}(ii)  
\end{itemize}
The diagram (\ref{betti}) then shows that
\begin{equation*} 
e^{tr}_{an*}B(\mathbb Z_{tr}(Y)):
\sing_{\bar{\mathbb D}^*}\mathbb Z_{tr}(Y)\to\sing_{\mathbb I^*}\mathbb Z_{tr}(Y^{cw}) 
\end{equation*}
is a quasi-isomorphism.

\item Consider now $Y\in\AnSp(\mathbb C)$. 
We prove that $e^{tr}_{an*}B(\mathbb Z_{tr}(Y))$ is a quasi-isomorphism by induction on $d_Y=\dim Y$.
If $d_Y=0$ there is nothing to prove.
By theorem \ref{HirDes}, there exist a proper modification
$\epsilon:Y'\to Y$ such that $Y'\in\AnSm(\mathbb C)$ and $E=\epsilon^{-1}(Z)\subset Y'$ is a normal crossing divisor.
Denote by $l:Z\hookrightarrow Y$ and $l':E\hookrightarrow Y'$ the closed embeddings and 
$\epsilon_Z:E\to Z$ the morphism such that $l\circ\epsilon_Z=\epsilon\circ l'$.
We have then, with 
\begin{itemize}
\item $b=\epsilon_{Z*}+l'_*=S(\mathbb Z_{tr}(\epsilon_Z)\oplus\mathbb Z_{tr}(l'))$ and
$a=\epsilon_{*}+l_*=S(\mathbb Z_{tr}(\epsilon)\oplus\mathbb Z_{tr}(l))$,
\item $d=\epsilon^{cw}_{Z*}+l'^{cw}_*=S(\mathbb Z_{tr}(\epsilon^{cw}_Z)\oplus\mathbb Z_{tr}(l'^{cw}))$ and
$c=\epsilon^{cw}_{*}+l^{cw}_*=S(\mathbb Z_{tr}(\epsilon^{cw})\oplus\mathbb Z_{tr}(l^{cw}))$,
\end{itemize}
the following commutative diagram in $\PSh(\Cor^{fs}_{\mathbb Z}(\AnSm(\mathbb C)),C^-(\mathbb Z))$ :
\begin{equation}\label{bettidia2}
\xymatrix{
0\ar[r] & \underline{\sing}_{\bar{\mathbb D}^*}\mathbb Z_{tr}(E)\ar[r]^{b}\ar[d]^{B(\mathbb Z_{tr}(E))} & 
\underline{\sing}_{\bar{\mathbb D}^*}\mathbb Z_{tr}(Z)\oplus\underline{\sing}_{\bar{\mathbb D}^*}\mathbb Z_{tr}(Y')
\ar[r]^{a}\ar[d]^{B(\mathbb Z_{tr}(Z)\oplus\mathbb Z_{tr}(Y'))} & 
\underline{\sing}_{\bar{\mathbb D}^*}\mathbb Z_{tr}(Y)\ar[d]^{B(\mathbb Z_{tr}(Y))} \\
0\ar[r] & \Cw_*\underline{\sing}_{\mathbb I^*}\mathbb Z_{tr}(E^{cw})\ar[r]^{\Cw_*(d)} &  
\Cw_*\underline{\sing}_{\mathbb I^*}\mathbb Z_{tr}(Z^{cw})\oplus\underline{\sing}_{\mathbb I^*}\mathbb Z_{tr}(Y'^{cw})
\ar[r]^{\Cw_*(c)} &  \Cw_*\underline{\sing}_{\mathbb I^*}\mathbb Z_{tr}(Y^{cw})}
\end{equation}
But, 
\begin{itemize}
\item $\left[0\to\sing_{\bar{\mathbb D}^*}\mathbb Z_{tr}(E)\xrightarrow{b}
\sing_{\bar{\mathbb D}^*}\mathbb Z_{tr}(Z)\oplus\sing_{\bar{\mathbb D}^*}\mathbb Z_{tr}(Y')\right]
\xrightarrow{a}\sing_{\bar{\mathbb D}^*}\mathbb Z_{tr}(Y)$
is a quasi-isomorphism by proposition \ref{DesAn}, 
\item $\left[0\to\sing_{\mathbb I^*}\mathbb Z_{tr}(E^{cw})\xrightarrow{d}
\sing_{\mathbb I^*}\mathbb Z_{tr}(Z^{cw})\oplus\sing_{\mathbb I^*}\mathbb Z_{tr}(Y'^{cw})\right]
\xrightarrow{c}\sing_{\mathbb I^*}\mathbb Z_{tr}(Y^{cw})$
is a quasi-isomorphism by proposition \ref{SerreP} and the fact that the 2-cubical
diagram associated to a proper modification is of cohomological descent (see e.g. \cite{PS}), 
\item as $Y'\in\SmVar(\mathbb C)$ is smooth and  $\dim E=d_Y-1<d_Y$, $\dim Z<d_Y$,
we have, by the smooth case we proved above and by the induction hypothesis, 
that $e^{tr}_{an*}B(\mathbb Z_{tr}(Z)\oplus\mathbb Z_{tr}(Y'))$ and $e^{tr}_{an*}B(\mathbb Z_{tr}(E))$
are quasi-isomorphisms,
\end{itemize}
thus, the diagram (\ref{bettidia2}) shows that $e^{tr}_{an*}B(\mathbb Z_{tr}(Y))$ is a quasi-isomorphism.

\item Consider now $Y\in\AnSp(\mathbb C)$ as before and $E\subset Y$ an analytic subspace.
Denote by $l:E\hookrightarrow Y$ the locally closed embedding. 
We have then the following commutative diagram in $PC^-(\An)$ :
\begin{equation}\label{bettidia3}
\xymatrix{
0\ar[r] & \underline{\sing}_{\bar{\mathbb D}^*}\mathbb Z_{tr}(E)
\ar[rr]^{S(\mathbb Z(l))}\ar[d]^{B(\mathbb Z_{tr}(E))} & \, &
\underline{\sing}_{\bar{\mathbb D}^*}\mathbb Z_{tr}(Y)
\ar[rr]^{S(c_{Y,E})}\ar[d]^{B(\mathbb Z_{tr}(Y))} & \, &
\underline{\sing}_{\bar{\mathbb D}^*}\mathbb Z_{tr}(Y,E)\ar[d]^{B(\mathbb Z_{tr}(Y,E))}\ar[r] & 0 \\
0\ar[r] & \Cw_*\underline{\sing}_{\mathbb I^*}\mathbb Z_{tr}(E)\ar[rr]^{\Cw_*S(\mathbb Z(l^{cw}))} & \, &
\Cw_*\underline{\sing}_{\mathbb I^*}\mathbb Z_{tr}(Y)\ar[rr]^{\Cw_*S(c_{Y^{cw},E^{cw}})} & \, &
\Cw_*\underline{\sing}_{\mathbb I^*}\mathbb Z_{tr}(Y,E)\ar[r] & 0}
\end{equation}
where the first row is the exact sequence (\ref{AncXD}), and the second row is an exact sequence
since the sequence (\ref{CWcXD}) is exact and $\Cw_*$ is an exact functor.
Since we just showed that $e^{tr}_{an*}B(\mathbb Z_{tr}(Y))$ and $e^{tr}_{an*}B(\mathbb Z_{tr}(E))$
are quasi-isomorphism, the diagram (\ref{bettidia3}) implies that $e^{tr}_{an*}B(\mathbb Z_{tr}(Y,E))$ 
is a quasi isomorphism.
\end{itemize}

\noindent(ii): Follows by (i).Let us explain.
\begin{itemize}
\item On the one hand,
\begin{itemize}
\item By theorem \ref{AnS} (ii), $\underline{\sing}_{\bar{\mathbb D}^*}\mathbb Z_{tr}(Y,E)$ is $\mathbb D^1$ local.
\item By theorem \ref{CWsingI}(ii), $\underline{\sing}_{\mathbb I^*}\mathbb Z_{tr}(Y^{cw},E^{cw})$ is $\mathbb I^1$ local.
Hence, $\Cw_*\underline{\sing}_{\mathbb I^*}\mathbb Z_{tr}(Y^{cw},E^{cw})$ is $\mathbb D^1$ local,
by proposition \ref{AnCwtCw} (ii). 
\end{itemize}
\item On the other hand by (i)
$e^{tr}_{an*}(B(\mathbb Z_{tr}(Y,E))):
\sing_{\bar{\mathbb D^*}}\mathbb Z_{tr}(Y,E)\to\sing_{\mathbb I^*}\mathbb Z_{tr}(Y^{cw},E^{cw})$
is a quasi isomorphism in $C^{-}(\mathbb Z)$.
\end{itemize}
Hence, by proposition \ref{DusuEq} (ii), 
\begin{equation*}
B(\mathbb Z_{tr}(Y,E)):
\underline{\sing}_{\bar{\mathbb D}^*}\mathbb Z_{tr}(Y,E)\to\Cw_*\underline{\sing}_{\mathbb I^*}\mathbb Z_{tr}(Y^{cw},E^{cw})
\end{equation*}
is an equivalence $(\mathbb D^1,usu)$ local.

\end{proof}

The main result of this subsection is the following :

\begin{thm}\label{mainthm}
\begin{itemize}
\item[(i)] For $Y\in\Var(\mathbb C)$, and $E\subset Y$ a subvariety, we have $\Bti^*M(Y,E)=\widetilde\Bti^*M(Y,E)$

\item[(ii)] For $X,Y\in\Var(\mathbb C)$, $D\subset X$, $E\subset Y$ subvarieties, 
and $n\in\mathbb Z$, $n\leq 0$, the following diagram is commutative
\begin{equation*}
\xymatrix{
\Hom_{\DM^{-}(\mathbb C,\mathbb Z)}(M(X,D),M(Y,E)[n])\ar^{\widetilde\Cw^*}[rr]\ar[d]_{\An^*} & \, & 
\Hom_{\CwDM^-(\mathbb Z)}(M(X,D),M(Y,E)[n])\ar[d]^{Re^{tr}_{cw*}} \\  
\Hom_{\AnDM^-(\mathbb Z)}(M(X,D),M(Y,E)[n])\ar[rr]^{Re^{tr}_{an*}} & \, & 
\Hom_{D^-(\mathbb Z)}(\sing_{\mathbb I^*}\mathbb Z_{tr}(X,D),\sing_{\mathbb I^*}\mathbb Z_{tr}(Y,E)[n])} 
\end{equation*}
where we denoted for simplicity $X$ for $X^{an}$ and $X^{cw}$, and similarly for $D$, $Y$ and $E$. 
\end{itemize}
\end{thm}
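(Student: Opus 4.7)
The plan is to deduce both statements from proposition \ref{Bettiprop} by identifying each realization with an explicit complex and then invoking naturality.

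For (i), I would use theorem \ref{AnS}(ii) together with theorem \ref{AnTr}(iii) to obtain
\begin{equation*}
\Bti^* M(Y, E) = Re^{tr}_{an*}\mathbb Z_{tr}(Y^{an}, E^{an}) \simeq \sing_{\bar{\mathbb D}^*}\mathbb Z_{tr}(Y^{an}, E^{an})
\end{equation*}
in $D^-(\mathbb Z)$, the first equality being the definition and the second following from the fact that $\underline{\sing}_{\bar{\mathbb D}^*}\mathbb Z_{tr}(Y^{an}, E^{an})$ is $\mathbb D^1$-local and $(\mathbb D^1, usu)$-equivalent to $\mathbb Z_{tr}(Y^{an}, E^{an})$. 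Symmetrically, theorem \ref{CWsingI}(ii) and theorem \ref{CWMot}(iii) give $\widetilde\Bti^* M(Y, E) \simeq \sing_{\mathbb I^*}\mathbb Z_{tr}(Y^{cw}, E^{cw})$ in $D^-(\mathbb Z)$. Applying the evaluation functor $e^{tr}_{an*}$ to the comparison morphism $B(\mathbb Z_{tr}(Y, E))$ of proposition \ref{Bettiprop}, and using the identity $e^{tr}_{an*}\circ\Cw_* = e^{tr}_{cw*}$ (both are evaluation at the terminal object, since $\Cw\circ e_{an} = e_{cw}$), proposition \ref{Bettiprop}(i) supplies precisely the required quasi-isomorphism.

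For (ii), I would exploit the fact that $B$ is a natural transformation of endofunctors of $\PSh(\Cor^{fs}_{\mathbb Z}(\AnSm(\mathbb C)), C^-(\mathbb Z))$. Given $\alpha \in \Hom_{\DM^-}(M(X, D), M(Y, E)[n])$, its analytification can be represented after $(\mathbb D^1, usu)$-fibrant replacement by a morphism $\tilde\alpha$ in the homotopy category of $PC^-(\An)$. Naturality of $B$ then yields the commutative square
\begin{equation*}
\xymatrix{
\underline{\sing}_{\bar{\mathbb D}^*}\mathbb Z_{tr}(X^{an}, D^{an}) \ar[d]_{B(\mathbb Z_{tr}(X, D))} \ar[rr]^{\underline{\sing}_{\bar{\mathbb D}^*}\tilde\alpha} & \, & \underline{\sing}_{\bar{\mathbb D}^*}\mathbb Z_{tr}(Y^{an}, E^{an})[n] \ar[d]^{B(\mathbb Z_{tr}(Y, E))} \\
\Cw_*\underline{\sing}_{\mathbb I^*}\mathbb Z_{tr}(X^{cw}, D^{cw}) \ar[rr]^{\Cw_*\underline{\sing}_{\mathbb I^*}\Cw^*\tilde\alpha} & \, & \Cw_*\underline{\sing}_{\mathbb I^*}\mathbb Z_{tr}(Y^{cw}, E^{cw})[n]
}
\end{equation*}
in $\Ho_{usu}(PC^-(\An))$. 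Applying $e^{tr}_{an*}$, using again $e^{tr}_{an*}\circ\Cw_* = e^{tr}_{cw*}$, and inserting the identifications of (i) on the vertical arrows (which are isomorphisms in $D^-(\mathbb Z)$ by proposition \ref{Bettiprop}(i)), produces the asserted commutative square.

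The main obstacle will be the technical bookkeeping around fibrant replacements, since $\underline{\sing}_{\bar{\mathbb D}^*}F^{\bullet}$ and $\underline{\sing}_{\mathbb I^*}F^{\bullet}$ are only $\mathbb D^1$-local (resp. $\mathbb I^1$-local), not fully usu-fibrant, so computing $Re^{tr}_{an*}$ and $Re^{tr}_{cw*}$ requires further resolutions that one must compare coherently. Proposition \ref{AnCwtCw}(ii) provides the essential compatibilities: $\Cw^*$ derives trivially, and $\Cw_*$ preserves $\mathbb D^1$-local objects so that $\Cw_*\underline{\sing}_{\mathbb I^*}\mathbb Z_{tr}(Y^{cw}, E^{cw})$ is already $\mathbb D^1$-local. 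The residual verification that the natural transformation $B$ is functorial in its argument is essentially formal, reducing to the functoriality of its two building blocks $\ad(\Cw^*, \Cw_*)$ and $W$.
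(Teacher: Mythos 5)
Your proposal is correct and follows essentially the same route as the paper: identify both realizations with the explicit complexes $\sing_{\bar{\mathbb D}^*}\mathbb Z_{tr}(Y^{an},E^{an})$ and $\sing_{\mathbb I^*}\mathbb Z_{tr}(Y^{cw},E^{cw})$ via the localization theorems, then transport along the comparison morphism $B$ of proposition~\ref{Bettiprop} and, for (ii), along the naturality square for $B$ together with the identity $e^{tr}_{an*}\circ\Cw_*=e^{tr}_{cw*}$. The one thing you make explicit that the paper leaves implicit is the naturality of $B$; otherwise the two arguments are the same.
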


\begin{proof}

\noindent(i): By definition, $\Bti^*M(Y,E)=Re^{tr}_{an*}(\mathbb Z_{tr}(Y^{an},E^{an}))$.
Since, by theorem \ref{AnS}(ii), 
\begin{itemize}
\item $S(\mathbb Z_{tr}(Y^{an},E^{an})):\mathbb Z_{tr}(Y^{an},E^{an})\to\underline{\sing}_{\mathbb D^*}\mathbb Z_{tr}(Y^{an},E^{an})$
is an equivalence $(\mathbb D^1,usu)$ local in $PC^-(\An)$ and
\item $\underline{\sing}_{\mathbb D^*}\mathbb Z_{tr}(Y^{an},E^{an})$
is a $\mathbb D^1$ local object,
\end{itemize}
we have
$\Bti^*M(Y,E)=e^{tr}_{an*}(\underline{\sing}_{\mathbb D^*}\mathbb Z_{tr}(Y^{an},E^{an}))
=\sing_{\mathbb D^*}\mathbb Z_{tr}(Y^{an},E^{an})$.
Since
\begin{itemize}
\item $B(\mathbb Z_{tr}(Y^{an},E^{an})):
\underline{\sing}_{\mathbb D^*}\mathbb Z_{tr}(Y^{an},E^{an})\to\Cw_*\underline{\sing}_{\mathbb I^*}\mathbb Z_{tr}(Y^{cw},E^{cw})$
is an equivalence $(\mathbb D^1,usu)$ local in $PC^-(\An)$ 
by proposition \ref{Bettiprop} (ii), and
\item $\Cw_*\underline{\sing}_{\mathbb I^*}\mathbb Z_{tr}(Y^{cw},E^{cw})$
is a $\mathbb D^1$ local object by theorem \ref{CWsingI}(ii) and proposition \ref{AnCwtCw}(ii),
\end{itemize}
we have
\begin{equation}\label{maineq}
\Bti^*M(X)=e^{tr}_{an*}(\Cw_*\underline{\sing}_{\mathbb I^*}\mathbb Z_{tr}(Y^{cw},E^{cw}))
=\sing_{\mathbb I^*}\mathbb Z_{tr}(Y^{cw},E^{cw})
\end{equation}

By definition, $\widetilde\Bti^*M(X)=Re^{tr}_{cw*}(\mathbb Z_{tr}(Y^{cw},E^{cw}))$.
Since, by theorem \ref{CWsingI}(ii),
\begin{itemize}
\item $S(\mathbb Z_{tr}(Y^{cw},E^{cw})):\mathbb Z_{tr}(Y^{cw},E^{cw})\to\underline{\sing}_{\mathbb I^*}\mathbb Z_{tr}(Y^{cw},E^{cw})$
is an equivalence $(\mathbb I^1,usu)$ local in $PC^-(\CW)$ and 
\item $\underline{\sing}_{\mathbb I^*}\mathbb Z_{tr}(Y^{cw},E^{cw})$ is an $\mathbb I^1$ local object,
\end{itemize}
we have
\begin{eqnarray*}
\widetilde\Bti^*M(Y,E)=e^{tr}_{cw*}(\underline{\sing}_{\mathbb I^*}\mathbb Z_{tr}(Y^{cw},E^{cw}))
&=&\sing_{\mathbb I^*}\mathbb Z_{tr}(Y^{cw},E^{cw}) \\
&=&\Bti^*M(Y,E) \mbox{\; by (\ref{maineq}) }
\end{eqnarray*}
This proves (i).

\noindent(ii): 
Let $\alpha\in\Hom_{\DM^-(\mathbb C,\mathbb Z)}(M(X,D),M(Y,E)[n])$.
Consider the commutative diagram in $\AnDM(\mathbb Z)$ 
\begin{equation*}
\xymatrix{
\underline{\sing}_{\mathbb D^*}\mathbb Z_{tr}(X^{an},D^{an})\ar[rr]^{\An^*\alpha}\ar[d]_{B(\mathbb Z_{tr}(X,D))} & \, &
\underline{\sing}_{\mathbb D^*}\mathbb Z_{tr}(Y^{an},E^{an})[n]\ar[d]^{B(\mathbb Z_{tr}(Y,E))} \\
\Cw_*\underline{\sing}_{\mathbb I^*}\mathbb Z_{tr}(X^{cw},D^{cw})\ar[rr]^{\Cw_*\widetilde\Cw^*\alpha} & \, &
\Cw_*\underline{\sing}_{\mathbb I^*}\mathbb Z_{tr}(Y^{cw},E^{cw})[n]}
\end{equation*}
Since
$\underline{\sing}_{\mathbb D^*}\mathbb Z_{tr}(X^{an},D^{an})$ and $\underline{\sing}_{\mathbb D^*}\mathbb Z_{tr}(Y^{an},E^{an})$
are $\mathbb D^1$ local objects by theorem \ref{AnS}(ii), 
\begin{equation*}
\An^*\alpha\in\Hom_{\Ho_{usu}(PC^-(\An))} 
(\underline{\sing}_{\mathbb D^*}\mathbb Z_{tr}(X^{an},D^{an}),\underline{\sing}_{\mathbb D^*}\mathbb Z_{tr}(Y^{cw},E^{cw}))
\end{equation*}
Thus,
\begin{equation}\label{Anaplha}
\Bti^*(\alpha):=Re^{tr}_{an*}(\An^*\alpha)=e^{tr}_{an*}\An^*\alpha
\end{equation}
Since
$\underline{\sing}_{\mathbb I^*}\mathbb Z_{tr}(X^{cw},D^{cw})$ and $\underline{\sing}_{\mathbb I^*}\mathbb Z_{tr}(Y^{cw},E^{cw})$
are $\mathbb I^1$ local objects by theorem \ref{CWsingI}(ii), 
\begin{equation*}
\widetilde\Cw^*\alpha\in\Hom_{\Ho_{usu}(PC^-(CW))} 
(\underline{\sing}_{\mathbb I^*}\mathbb Z_{tr}(X^{cw},D^{cw}),\underline{\sing}_{\mathbb I^*}\mathbb Z_{tr}(Y^{cw},E^{cw}))
\end{equation*}
Thus
\begin{equation}\label{Cwalpha1}
\widetilde\Bti^*(\alpha):=Re^{tr}_{cw*}(\widetilde\Cw^*\alpha)=e^{tr}_{cw*}\widetilde\Cw^*\alpha
\end{equation}
Since
$\Cw_*\underline{\sing}_{\mathbb I^*}\mathbb Z_{tr}(X^{cw},D^{cw})$ and $\Cw_*\underline{\sing}_{\mathbb I^*}\mathbb Z_{tr}(Y^{cw},E^{cw})$
are $\mathbb D^1$ local objects by theorem \ref{CWsingI}(ii) and proposition \ref{AnCwtCw} (ii), 
\begin{equation}\label{adcwusu}
\Cw_*\widetilde\Cw^*\alpha\in\Hom_{\Ho_{usu}(PC^-(\An))} 
(\Cw_*\underline{\sing}_{\mathbb I^*}\mathbb Z_{tr}(X^{cw},D^{cw}),\Cw_*\underline{\sing}_{\mathbb I^*}\mathbb Z_{tr}(Y^{cw},E^{cw}))
\end{equation}
Thus
\begin{eqnarray*}
\Bti^*(\alpha)&=&Re^{tr}_{an*}(\Cw_*\widetilde\Cw^*\alpha) 
\mbox{\; since \;} B(\mathbb Z_{tr}(X^{an},D^{an})) \mbox{\; and \; } B(\mathbb Z_{tr}(Y^{an},E^{an}))[n] \\
&\, &\mbox{\; are \;} (\mathbb D^1,usu) \mbox{\; local \;  equivalence \; by \; proposition \ref{Bettiprop}(ii)} \\
&=&e^{tr}_{an*}(\Cw_*\widetilde\Cw^*\alpha) \mbox{\; by \; (\ref{adcwusu})} \\
&=&e^{tr}_{cw}(\widetilde\Cw^*\alpha) \\
&=&\widetilde\Bti^*(\alpha) \mbox{\; by \; (\ref{Cwalpha1}).}
\end{eqnarray*}

\end{proof}

\subsection{The image of algebraic correspondences after localization by Ayoub's Betti realization functor}

\begin{defi}
Let $V\in\Var(\mathbb C)$ quasi projective and $p,q\in\mathbb N$. 
Recall $\mathcal Z^p(V,*)\subset\mathcal Z^p(\square^*\times V,\mathbb Z)$ is the Bloch cycle complex
consisting of closed subset meeting the face of $\square^*$ properly.
\begin{itemize}
\item[(i)] Let $\mathcal Z^q(V^{cw},*)\subset\mathcal Z^{q}(\mathbb I^*\times V^{cw},\mathbb Z)$
be the abelian subgroup consisting of closed subset meeting the face of $\mathbb I^n$ properly.
We have the restriction map induced by the closed embedding of CW complexes 
$i_n\times I_V:\mathbb I^n\times V^{cw}\hookrightarrow\square^n\times V^{cw}$, 
which gives the morphism of complexes of abelian groups
\begin{equation*}
\hat{\mathcal T}^p:\mathcal Z^p(V,*)\to\mathcal Z^{2p}(V^{cw},*), \; 
\alpha\mapsto \hat{T}_{\alpha}=\alpha^{cw}_{|\mathbb I^{*}\times V^{cw}}.
\end{equation*}
\item[(ii)] Let $Y\in\PVar(\mathbb C)$ be a compactification of $V$ and $E=Y\backslash V$.
The higher cycle class map (\cite{MKerr}) is the morphism of complexes of abelian groups :
\begin{equation*}
\mathcal T^p:\mathcal Z^p(V,*)\to C^{sing}_{2d_Y-2p+*}(Y^{cw},E^{cw},\mathbb Z), \; 
\alpha\mapsto [p_Y(\hat{T}_{\alpha})]=[p_{Y}(\bar{\alpha}^{cw}_{|\mathbb I^*\times Y^{cw}})].
\end{equation*}
$\bar\alpha\in\mathcal Z^p(Y\times\square^n)$ is the closure of $\alpha$ and
$p_Y:Y^{cw}\times\mathbb I^*\to Y^{cw}$ is the projection
\end{itemize}
\end{defi}

Let $X\in\SmVar(\mathbb C)$ and $Y\in\Var(\mathbb C)$.
Let $E\subset Y$ be a closed subset and  $V=Y\backslash E$.
Denote by $j:V\hookrightarrow Y$ the open embeddings.
We have the open embedding $I_X\times j:X\times V\hookrightarrow X\times Y$. 
The map of complexes $\hat{\mathcal T}^{d_Y}_{X\times Y}$ induces a map 
denoted by the same way on the subcomplexes indicated in the following diagram
in $C^-(\mathbb Z)$:
\begin{equation*}
\xymatrix{
\mathcal Z^{d_V}(X\times V,*)\ar[rr]^{\hat{\mathcal T}^{d_V}_{X\times V}} & \, & 
\mathcal Z^{2d_V}(X^{cw}\times V^{cw},*) \\
\underline{C}_*\mathbb Z_{tr}(Y,E)(X)\ar[rr]^{\hat{\mathcal T}^{d_V}_{X\times V}}\ar@{^{(}->}[u]^{(I_X\times j)^*} & \, &
\underline{\sing}_{\mathbb I^*}\mathbb Z_{tr}(Y^{cw},E^{cw})(X^{cw})\ar@{^{(}->}[u]^{(I_X\times j)^{cw*}} \\
\underline{C}_*\mathbb Z_{tr}(V)(X)
\ar[rr]^{\hat{\mathcal T}^{d_V}_{X\times V}}\ar@{^{(}->}[u]^{S(c(Y,E)\circ\mathbb Z_{tr}(j))(X)} & \, &
\underline{\sing}_{\mathbb I^*}\mathbb Z_{tr}(V^{cw})(X^{cw})
\ar@{^{(}->}[u]_{S(c(Y^{cw},E^{cw})\circ\mathbb Z_{tr}(j^{cw}))(X^{cw})}}
\end{equation*}

\begin{lem}\label{TAnCWlem}

Let $Y\in\Var(\mathbb C)$, $X\in\SmVar(\mathbb C)$, $E\subset Y$ a closed subset and $V= Y\backslash E$
the open complementary. Let $n\in\mathbb Z$, $n\leq 0$. Then, for 
\begin{equation*}
\alpha\in\Hom_{PC^-}(\mathbb Z_{tr}(X),\underline{C}_*\mathbb Z_{tr}(Y,E)[n])=H^n\underline{C}_*\mathbb Z_{tr}(Y,E)(X).
\end{equation*}
we have the following equality of morphisms of $\PSh(\Cor_{\mathbb Z}(\CW),C^-(\mathbb Z))$.
\begin{eqnarray*}
(\widetilde{W}(\mathbb Z_{tr}(Y,E))[n])\circ(\widetilde\Cw^*\alpha)=H^n(\hat{\mathcal T}_{X\times V})(\alpha) : \\
\mathbb Z_{tr}(X^{cw})\xrightarrow{\widetilde\Cw^*\alpha} 
\widetilde\Cw^*(\underline{C}_*\mathbb Z_{tr}(Y,E)[n])\xrightarrow{\widetilde{W}(\mathbb Z_{tr}(Y,E))[n]}
\underline{\sing}_{\mathbb I^*}\mathbb Z_{tr}(Y^{cw},E^{cw})[n]
\end{eqnarray*}
where $H^n\hat{\mathcal T}_{X\times V}$ is the map induced in cohomology of the map of complex $\hat{\mathcal T}_{X\times V}$.

\end{lem}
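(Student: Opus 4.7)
The plan is to check the equality of morphisms in $\PSh(\Cor_{\mathbb Z}(\CW),C^-(\mathbb Z))$ by evaluation on sections. I take an arbitrary $W\in\CW$ and a section $\gamma\in\mathbb Z_{tr}(X^{cw})(W)=\mathcal{Z}^{fs/W}(W\times X^{cw})$, and compare the two resulting values in $\underline{\sing}_{\mathbb I^n}\mathbb Z_{tr}(Y^{cw},E^{cw})(W)$, which is a quotient of $\mathcal{Z}^{fs/W\times\mathbb I^n}(W\times\mathbb I^n\times Y^{cw})$ by cycles supported in $W\times\mathbb I^n\times E^{cw}$. I fix a representative cycle $Z\in\mathcal{Z}^{fs/X\times\square^n}(X\times\square^n\times Y)$ for $\alpha$, so that $\partial_{\square^*}Z$ is supported in $X\times\square^n\times E$; by Yoneda, the morphism $\alpha:\mathbb Z_{tr}(X)\to\underline{C}_*\mathbb Z_{tr}(Y,E)[n]$ sends, for $W'\in\SmVar(\mathbb C)$, a correspondence $g\in\mathcal{Z}^{fs/W'}(W'\times X)$ to the composition $g\circ Z\in\mathcal{Z}^{fs/W'\times\square^n}(W'\times\square^n\times Y)$ modulo cycles supported in $E$.

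For the left-hand side, I trace $(\widetilde W(\mathbb Z_{tr}(Y,E))[n])\circ(\widetilde\Cw^*\alpha)$ on $\gamma$. By the left Kan extension formula defining $\widetilde\Cw^*$ on representable presheaves, $(\widetilde\Cw^*\alpha)(W)(\gamma)$ is represented by the CW realization $\gamma\circ Z^{cw}\in\mathcal{Z}^{fs/W\times\square^n}(W\times\square^n\times Y^{cw})$ modulo cycles supported in $E^{cw}$. Then applying $\widetilde\Cw^*(\underline{\mathbb Z_{tr}(Y,E)}(i))$ followed by $\psi^{\widetilde\Cw}(\mathbb Z_{tr}(Y,E))$ amounts, by the definitions of these morphisms and because $i_n:\mathbb I^n\hookrightarrow\square^n_{\mathbb C}$ factors through the CW realization of every etale neighborhood of $[0,\infty]^n$, to pulling back this cycle along the closed CW embedding $I_W\times i_n\times I_{Y^{cw}}:W\times\mathbb I^n\times Y^{cw}\hookrightarrow W\times\square^n\times Y^{cw}$. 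Propriety of $Z$ with respect to the faces of $\square^*$ ensures this restriction is finite and surjective over $W\times\mathbb I^n$.

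For the right-hand side, $H^n(\hat{\mathcal T}_{X\times V})(\alpha)$ is by definition represented by the class of $Z^{cw}_{|\mathbb I^n\times X^{cw}\times V^{cw}}$ inside $\underline{\sing}_{\mathbb I^n}\mathbb Z_{tr}(Y^{cw},E^{cw})(X^{cw})$, via the vertical inclusions in the diagram introduced before the lemma. The corresponding morphism of presheaves $\mathbb Z_{tr}(X^{cw})\to\underline{\sing}_{\mathbb I^*}\mathbb Z_{tr}(Y^{cw},E^{cw})[n]$ sends $\gamma$ by Yoneda to the composition $\gamma\circ(Z^{cw}_{|\mathbb I^n\times X^{cw}\times Y^{cw}})$, which by associativity of correspondence composition and compatibility of composition with base change along the closed CW embedding $i_n$ equals the restriction of $\gamma\circ Z^{cw}$ to $W\times\mathbb I^n\times Y^{cw}$, again modulo cycles supported in $E^{cw}$. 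The two sides thus represent the same class in $\underline{\sing}_{\mathbb I^n}\mathbb Z_{tr}(Y^{cw},E^{cw})(W)$, proving the lemma.

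I expect the main obstacle to lie in the second step, namely the identification of the abstract pro-object manipulation (going through $\widetilde\Cw^*\underline{\sing}_{\mathbb I^*_{et}}$ and $\psi^{\widetilde\Cw}$) with the concrete cycle-theoretic restriction along $i_n:\mathbb I^n\hookrightarrow\square^n_{\mathbb C}$. Making this rigorous requires verifying that the CW realization of the pro-object $\mathbb I^*_{et}$ canonically retracts onto $\mathbb I^*$, which is visible because $i_n$ factors through every etale neighborhood of $[0,\infty]^n$ at the CW level, and that the colimits defining $\widetilde\Cw^*$ on representable presheaves intertwine this retraction with the Yoneda-description of correspondence composition used in step one.
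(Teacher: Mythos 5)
Your proposal is correct and follows the same strategy as the paper: unwind $\widetilde W = \psi^{\widetilde\Cw}\circ\widetilde\Cw^*(\underline{F}^\bullet(i))$, track a cycle representative of $\alpha$ through the pro-object restriction $\mathbb I^*_{et}$ and the morphism $\psi^{\widetilde\Cw}$, and recognize the result as the CW restriction along $i_n:\mathbb I^n\hookrightarrow\square^n$ that defines $\hat{\mathcal T}$. The paper's version is slightly leaner because, via Yoneda, it just tracks the universal element $\alpha_n\in\underline{C}_n\mathbb Z_{tr}(Y,E)(X)$ rather than evaluating on an arbitrary $W\in\CW$ and $\gamma\in\mathbb Z_{tr}(X^{cw})(W)$, which spares you the verification that correspondence composition commutes with CW realization and with restriction along $i_n$; otherwise the two arguments coincide, including your correct identification of the pro-object bookkeeping as the only genuinely nontrivial point.
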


\begin{proof}

Let $\alpha_n\in\underline{C}_n\mathbb Z_{tr}(Y,E)(X)$ such that 
$\alpha=[\alpha_n]\in H^n\underline{C}_*\mathbb Z_{tr}(Y,E)(X)$.
We have 
\begin{eqnarray*}
(\widetilde{W}(\mathbb Z_{tr}(Y,E))[n])\circ(\widetilde\Cw^*\alpha) 
&=&(\psi^{\widetilde\Cw}(\mathbb Z_{tr}(Y,E))[n])\circ(\widetilde\Cw^*(\underline{\mathbb Z^{tr}(Y,E)}(i))[n])\circ([\widetilde\Cw^*\alpha_n]) \\
&=&\psi^{\widetilde\Cw}(\mathbb Z_{tr}(Y,E)[n])\circ([\widetilde\Cw^*(\alpha_{n|X\times\mathbb I_{et}^n})]) \\
&=&[(\alpha_{n|X\times\mathbb I_{et}^n})^{cw}]=[\hat{T}_{\alpha_n}]=H^n\hat{\mathcal{T}}(\alpha)
\end{eqnarray*}

\end{proof}

We deduce from this lemma \ref{TAnCWlem} and proposition \ref{AWtW}(ii) the following

\begin{prop}\label{TAnCW}
Let $X\in\SmVar(\mathbb C)$, $Y\in\Var(\mathbb C)$, $E\subset Y$ a closed subset
and $V=Y\backslash E$ the open complementary.
Let $n\in\mathbb Z$, $n\leq 0$. Then, the following diagram is commutative
\begin{equation}\label{mainthm(iii)}
\xymatrix{
\Hom_{PC^-}(\mathbb Z_{tr}(X),\underline{C}_*\mathbb Z_{tr}(Y,E)[n])
\ar[d]_{D(\mathbb A^1,et)}\ar[rr]^{H^n(\hat{\mathcal T}_{X\times V}^{d_X})} & \, & 
\Hom_{PC^-(\CW)}(\mathbb Z_{tr}(X^{cw}),\underline{\sing}_{\mathbb I^*}\mathbb Z_{tr}(Y^{cw},E^{cw})[n])
\ar[d]^{D(\mathbb I^1,usu)} \\
\Hom_{\DM^{-}(\mathbb C,\mathbb Z)}(M(X),M(Y,E)[n])\ar^{\widetilde\Cw^*}[rr] & \, & 
\Hom_{\CwDM^-(\mathbb Z)}(M(X^{cw}),M(Y^{cw},E^{cw})[n])} 
\end{equation}
\end{prop}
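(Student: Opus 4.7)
The plan is to deduce the commutativity directly from Lemma~\ref{TAnCWlem} and Proposition~\ref{AWtW}(ii), combined with the fact that $\widetilde\Cw^*$ derives trivially by Proposition~\ref{AnCwtCw}(iii). Fix $\alpha\in\Hom_{PC^-}(\mathbb Z_{tr}(X),\underline{C}_*\mathbb Z_{tr}(Y,E)[n])$. Since $\widetilde\Cw^*$ sends $(\mathbb A^1,et)$-equivalences to $(\mathbb I^1,usu)$-equivalences, it induces a functor $\widetilde\Cw^*\colon\DM^-(\mathbb C,\mathbb Z)\to\CwDM^-(\mathbb Z)$ commuting with the localization functors at the level of presheaves; in particular
\[
\widetilde\Cw^*(D(\mathbb A^1,et)(\alpha))=D(\mathbb I^1,usu)(\widetilde\Cw^*\alpha)
\]
as morphisms from $M(X^{cw})$ to $D(\mathbb I^1,usu)(\widetilde\Cw^*\underline{C}_*\mathbb Z_{tr}(Y,E))[n]$.

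Next, I would invoke Lemma~\ref{TAnCWlem}, which asserts, in $PC^-(\CW)$, the identity
\[
(\widetilde W(\mathbb Z_{tr}(Y,E))[n])\circ\widetilde\Cw^*(\alpha)=H^n(\hat{\mathcal T}^{d_X}_{X\times V})(\alpha).
\]
Applying $D(\mathbb I^1,usu)$ to this identity yields
\[
D(\mathbb I^1,usu)(\widetilde W(\mathbb Z_{tr}(Y,E))[n])\circ D(\mathbb I^1,usu)(\widetilde\Cw^*\alpha)=D(\mathbb I^1,usu)(H^n(\hat{\mathcal T}^{d_X}_{X\times V})(\alpha)).
\]
By Proposition~\ref{AWtW}(ii), $\widetilde W(\mathbb Z_{tr}(Y,E))$ is an $(\mathbb I^1,usu)$-local equivalence, so $D(\mathbb I^1,usu)(\widetilde W[n])$ is an isomorphism in $\CwDM^-(\mathbb Z)$.

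The one remaining point is to identify this isomorphism with the canonical identification $\widetilde\Cw^*M(Y,E)[n]\xrightarrow{\sim}M(Y^{cw},E^{cw})[n]$ coming from the trivially-derived $\widetilde\Cw^*$. This follows from the commutative triangle~\eqref{WtF} used in the proof of Proposition~\ref{AWtW}(ii), namely $\widetilde W(F^\bullet)\circ\widetilde\Cw^*(S(F^\bullet))=S(\widetilde\Cw^*F^\bullet)$; after localization at $(\mathbb I^1,usu)$, both $\widetilde\Cw^*S(\mathbb Z_{tr}(Y,E))$ and $S(\mathbb Z_{tr}(Y^{cw},E^{cw}))$ become the canonical identifications of $M(Y^{cw},E^{cw})$ with its two standard representatives, so $D(\mathbb I^1,usu)(\widetilde W)$ indeed agrees with the canonical identification. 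Combining the three steps produces the desired equality $\widetilde\Cw^*(D(\mathbb A^1,et)(\alpha))=D(\mathbb I^1,usu)(H^n(\hat{\mathcal T}^{d_X}_{X\times V})(\alpha))$ in $\Hom_{\CwDM^-(\mathbb Z)}(M(X^{cw}),M(Y^{cw},E^{cw})[n])$.

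The main (though largely formal) obstacle is precisely this last compatibility: keeping track of the two a priori different identifications of $\widetilde\Cw^*M(Y,E)[n]$ with $M(Y^{cw},E^{cw})[n]$ — one via the trivial derivation of $\widetilde\Cw^*$ applied to $S(\mathbb Z_{tr}(Y,E))$, the other via $\widetilde W$ — and verifying they agree in $\CwDM^-(\mathbb Z)$. Everything else reduces to a direct application of Lemma~\ref{TAnCWlem} and Proposition~\ref{AWtW}(ii).
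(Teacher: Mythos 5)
Your proposal is correct and follows essentially the same route as the paper's own proof, which is the terse three-step chain of equalities
\begin{equation*}
\widetilde\Cw^*D(\mathbb A^1,et)(\alpha)=D(\mathbb I^1,usu)(\widetilde\Cw^*\alpha)
=D(\mathbb I^1,usu)(\widetilde W(\mathbb Z_{tr}(Y,E))\circ\widetilde\Cw^*\alpha)
=D(\mathbb I^1,usu)(H^n\hat{\mathcal T}_{X\times V}(\alpha))
\end{equation*}
using Proposition \ref{AnCwtCw}(iii), Proposition \ref{AWtW}(ii), and Lemma \ref{TAnCWlem}. The one point where you go beyond the paper is in explicitly flagging and resolving the implicit identification between $D(\mathbb I^1,usu)(\widetilde\Cw^*\underline{C}_*\mathbb Z_{tr}(Y,E))$ and $D(\mathbb I^1,usu)(\underline{\sing}_{\mathbb I^*}\mathbb Z_{tr}(Y^{cw},E^{cw}))$, both canonically isomorphic to $M(Y^{cw},E^{cw})$; the paper leaves this compatibility unstated, and your appeal to the triangle $\widetilde W(F^\bullet)\circ\widetilde\Cw^*(S(F^\bullet))=S(\widetilde\Cw^*F^\bullet)$ from \eqref{WtF} is exactly the right way to check that composing with $\widetilde W$ is the identity after the two standard identifications are applied. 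So your write-up is slightly more careful than the original but uses the identical lemmas and structure.
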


\begin{proof}

Let $\alpha\in\Hom_{PC^-}(\mathbb Z_{tr}(X),\underline{C}_*\mathbb Z_{tr}(Y,E)[n])$.
We have 
\begin{eqnarray*}
\widetilde\Cw^*D(\mathbb A^1,et)(\alpha)&=&D(\mathbb I^1,usu)(\widetilde\Cw^*\alpha), 
\mbox{\; since \;} \widetilde\Cw^* \mbox{\; derive \; trivially \;by \; proposition \; \ref{AnCwtCw}(iii)} \\
&=&D(\mathbb I^1,usu)(\widetilde W(\mathbb Z_{tr}(Y,E))\circ\widetilde\Cw^*\alpha) 
\mbox{\; since \;} \widetilde W(\mathbb Z_{tr}(Y,E)) 
\mbox{\; is \; an \;} (\mathbb I^1,usu) \\
&\,&\mbox{\; local \; equivalence \; by \; lemma \: \ref{AWtW}(ii)} \\
&=&D(\mathbb I^1,usu)(H^n\hat{\mathcal T}(\alpha))
\mbox{ \; by \; lemma \; \ref{TAnCWlem}}
\end{eqnarray*}

\end{proof}

Using proposition \ref{TAnCW} and proposition \ref{Kunneth} (i), 
we immediately deduce from theorem \ref{mainthm} the following :

\begin{cor}\label{cormain} 
Let $X\in\SmVar(\mathbb C)$, $Y\in\Var(\mathbb C)$, $E\subset Y$ a closed subvariety 
and $V=Y\backslash E$ the open complementary. 
Let $n\in\mathbb Z$, $n\leq 0$. Then,
\begin{itemize}
\item[(i)] the following diagram is commutative 
\begin{equation}\label{cormaindia}
\xymatrix{
\Hom_{PC^-}(\mathbb Z_{tr}(X),\underline{C}_*\mathbb Z(Y,E)[n])
\ar[d]_{\An^*\circ D(\mathbb A^1,et)}\ar[rr]^{H^n(\hat{\mathcal T}_{X\times V}^{d_X})} & \, & 
\Hom_{PC^-(\CW)}(\mathbb Z_{tr}(X),\underline{\sing}_{\mathbb I^*}\mathbb Z_{tr}(Y,E)[n])
\ar[d]^{Re^{tr}_{cw}\circ D(\mathbb I^1,usu)} \\  
\Hom_{\AnDM^-(\mathbb Z)}(M(X),M(Y,E)[n])\ar[rr]^{Re^{tr}_{an*}} & \, & 
\Hom_{D^-(\mathbb Z)}(\sing_{\mathbb I^*}\mathbb Z_{tr}(X),\sing_{\mathbb I^*}\mathbb Z_{tr}(Y,E)[n])} 
\end{equation}
where we denoted for simplicity $X$ for $X^{an}$ and $X^{cw}$, and similarly for $Y$ and $E$. 

\item[(ii)] for $\alpha\in\Hom_{PC^-}(\mathbb Z_{tr}(X),\underline{C}_*\mathbb Z(Y,E)[n])$,
we have
\begin{equation}\label{cormaineq}
\Bti^*\circ D(\mathbb A^1,et)(\alpha)=K_n(X,Y)(p_{X\times Y}(H^n\hat{\mathcal T}_{X\times V}(\alpha)))
\end{equation}
where $p_{X\times Y}:X^{cw}\times Y^{cw}\times\mathbb I^n\to X^{cw}\times Y^{cw}$ is the projection.
\end{itemize}
\end{cor}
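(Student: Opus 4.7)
The plan is to establish part (i) by vertical composition of two previously-established commutative diagrams, and then deduce part (ii) as a direct consequence of (i) combined with Proposition \ref{Kunneth}(i).

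For part (i), I will stack two commutative squares. The upper square is exactly Proposition \ref{TAnCW}: it gives commutativity of
\begin{equation*}
\xymatrix{
\Hom_{PC^-}(\mathbb Z_{tr}(X),\underline{C}_*\mathbb Z_{tr}(Y,E)[n])
\ar[d]_{D(\mathbb A^1,et)}\ar[rr]^{H^n(\hat{\mathcal T}_{X\times V}^{d_X})} & \, &
\Hom_{PC^-(\CW)}(\mathbb Z_{tr}(X^{cw}),\underline{\sing}_{\mathbb I^*}\mathbb Z_{tr}(Y^{cw},E^{cw})[n])
\ar[d]^{D(\mathbb I^1,usu)} \\
\Hom_{\DM^-}(M(X),M(Y,E)[n])\ar[rr]^{\widetilde\Cw^*} & \, &
\Hom_{\CwDM^-}(M(X^{cw}),M(Y^{cw},E^{cw})[n])}
\end{equation*}
The lower square is Theorem \ref{mainthm}(ii) applied to the pair $(X,\emptyset)$ on the source, whose outer rectangle reads
\begin{equation*}
\xymatrix{\Hom_{\DM^-}(M(X),M(Y,E)[n])\ar^{\widetilde\Cw^*}[rr]\ar[d]_{\An^*} & \, &
\Hom_{\CwDM^-}(M(X^{cw}),M(Y^{cw},E^{cw})[n])\ar[d]^{Re^{tr}_{cw*}} \\
\Hom_{\AnDM^-}(M(X^{an}),M(Y^{an},E^{an})[n])\ar[rr]^{Re^{tr}_{an*}} & \, &
\Hom_{D^-(\mathbb Z)}(\sing_{\mathbb I^*}\mathbb Z_{tr}(X^{cw}),\sing_{\mathbb I^*}\mathbb Z_{tr}(Y^{cw},E^{cw})[n]).}
\end{equation*}
Pasting the two squares vertically along their common middle row yields the diagram in (i), and the left composition $\An^*\circ D(\mathbb A^1,et)$ recovers the left vertical map displayed in the statement.

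For part (ii), take $\alpha\in\Hom_{PC^-}(\mathbb Z_{tr}(X),\underline{C}_*\mathbb Z(Y,E)[n])$ and trace it through the diagram of (i) two ways. Going down then right gives $Re^{tr}_{an*}\circ\An^*\circ D(\mathbb A^1,et)(\alpha)=\Bti^*\circ D(\mathbb A^1,et)(\alpha)$ by definition of the Betti realization. Going right then down gives $Re^{tr}_{cw*}\circ D(\mathbb I^1,usu)(T)$ with $T:=H^n\hat{\mathcal T}_{X\times V}^{d_X}(\alpha)$. By commutativity of (i) these agree. Since $X\in\SmVar(\mathbb C)$, the CW complex $X^{cw}$ underlies a differential manifold, so $X^{cw}\in\TM(\mathbb R)$ and Proposition \ref{Kunneth}(i) applies, giving
\begin{equation*}
Re^{tr}_{cw*}\circ D(\mathbb I^1,usu)(T)=K_n(X,(Y,E))(p_{X\times Y}(T)).
\end{equation*}
Combining the two equalities yields \eqref{cormaineq}. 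The argument is almost entirely bookkeeping: no obstacle is expected beyond verifying that the vertical maps on the common middle row of the two squares truly coincide (which they do by construction) and that the hypothesis of Proposition \ref{Kunneth}(i) is met, which holds because smooth complex varieties yield topological manifolds with CW structure via the analytification-then-forget functor $\widetilde\Cw$.
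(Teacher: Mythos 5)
Your proposal is correct and coincides essentially with the paper's own argument: part (i) is obtained by pasting the square of Proposition \ref{TAnCW} on top of the square of Theorem \ref{mainthm}(ii) specialized to $D=\emptyset$, and part (ii) follows by tracing $\alpha$ through this diagram and then invoking Proposition \ref{Kunneth}(i), with the only detail worth spelling out being that $X^{cw}$ lies in $\TM(\mathbb R)$ because $X$ is smooth (which you do note).
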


\begin{proof}
\noindent(i): By theorem \ref{mainthm} (ii) and proposition \ref{TAnCW}, 
the following diagram is commutative
\begin{equation*}
\xymatrix{
\Hom_{PC^-}(\mathbb Z_{tr}(X),\underline{C}_*\mathbb Z(Y,E)[n])
\ar[d]_{D(\mathbb A^1,et)}\ar[rr]^{H^n(\hat{\mathcal T}_{X\times V}^{d_X})} & \, & 
\Hom_{PC^-(\CW)}(\mathbb Z_{tr}(X),\underline{\sing}_{\mathbb I^*}\mathbb Z_{tr}(Y,E)[n])
\ar[d]^{D(\mathbb I^1,usu)} \\
\Hom_{\DM^{-}(\mathbb C,\mathbb Z)}(M(X),M(Y,E)[n])\ar^{\widetilde\Cw^*}[rr]\ar[d]_{\An^*} & \, & 
\Hom_{\CwDM^-(\mathbb Z)}(M(X),M(Y,E)[n])\ar[d]^{Re^{tr}_{cw}} \\  
\Hom_{\AnDM^-(\mathbb Z)}(M(X),M(Y,E)[n])\ar[rr]^{Re^{tr}_{an*}} & \, & 
\Hom_{D^-(\mathbb Z)}(\sing_{\mathbb I^*}\mathbb Z_{tr}(X),\sing_{\mathbb I^*}\mathbb Z_{tr}(Y,E)[n])} 
\end{equation*}
where we denote for simplicity $X$ for $X^{an}$ and $X^{cw}$, and similary for $Y$ and $E$. 
This proves (i).

\noindent(ii) : We have,
\begin{eqnarray*}
\Bti^*(D(\mathbb A^1,et)(\alpha)):&=&Re^{tr}_{an*}\circ\An^*(D(\mathbb A^1,et)(\alpha)) \\
&=&Re^{tr}_{cw*}\circ D(\mathbb I^1,usu)(H^n\hat{\mathcal T}_{X\times V}(\alpha)) \mbox{\; by (i) } \\
&=&K_n(X,Y)(p_{X\times Y}(H^n\hat{\mathcal T}_{X\times V}(\alpha))) \mbox{\; by proposition \; \ref{Kunneth}(i)}
\end{eqnarray*}
This proves (ii).

\end{proof}

\subsection{Ayoub's Betti realization functor and Nori motives}

We denote by $C^b(\Cor_{\mathbb Z}(\SmVar(\mathbb C)))$ the category of bounded complexes of the
category $\Cor_{\mathbb Z}(\SmVar(\mathbb C))$. We have the Yoneda embedding
\begin{equation*}
C^b(\Cor_{\mathbb Z}(\SmVar(\mathbb C)))\hookrightarrow\PSh(\Cor_{\mathbb Z}(\SmVar(\mathbb C)),C^-(\mathbb Z), \;
X\in\SmVar(\mathbb C)\mapsto\mathbb Z_{tr}(X).
\end{equation*}
A (small) diagram is a 1-simplicial set, that is a functor from $\bold\Delta^1$ to Set
We denote by $\Var^2(\mathbb C)$ the diagram of pairs 
whose set of vertices are the triplet $(X,D,i)$, $X,D\in\Var(\mathbb C)$, 
$D\subset X$ a closed subvariety and $i\in\mathbb N$, and whose set of edges
are the morphism of pairs $(X,D,i)\to(Y,E,i)$ and if $(Z,K)\subset(X,D)$,
there is an edge $(X,D,i)\to(Z,K,i-1)$. 
We have a morphism of diagram
\begin{equation*}
H_*:\Var^2(\mathbb C)\to\Ab \; ; \; (X,D,i)\mapsto H_i(X,D,\mathbb Z)
\end{equation*}
We denote by $\GVar^2(\mathbb C)\subset\Var^2(\mathbb C)$ the subdiagram of good pairs
that is $(X,D,i)$ is a good pair if $X,D$ are affine, $\dim X=i$ and if $H_p(X,D,\mathbb Z)=0$ is $p\neq i$.
We recall the definition of Nori motives.

\begin{defiprop}\cite{Nori}
\begin{itemize}
\item[(i)] We have a well defined functor $\mathcal N:\Var(\mathbb C))\to D^b(\mathcal N)$
given by, for $X\in\Var(\mathbb C)$, considering
\begin{itemize}
\item the open coverings $X=\cup_{i\in J}U_i$ by affine varieties, 
\item the filtrations by closed subvarieties $U^0_I\subset\cdots\subset U_I$ such that 
$(U^p_I,U^{p-1}_I)$ is a good pair, where $I\subset J$ is a finite set,
\end{itemize}
we have 
\begin{equation*}
o_N\circ\mathcal N(X)=\lim\Tot_{p,I}(H_{p}(U^{p,cw}_I,U^{p-1,cw}_I,p)),
\end{equation*}
where the limit is indexed by open coverings and filtrations by good pairs,$\Tot$ is the total complex,
and $o_N:D^b(\mathcal N)\to D^b(\mathbb Z)$ is the forgetful functor.

\item[(ii)] The restriction of the functor $\mathcal N:\Var(\mathbb C))\to D^b(\mathcal N)$ 
to $\SmVar(\mathbb C)\subset\Var(\mathbb C)$ extends to a functor
\begin{equation*}
\mathcal N:C^b(\Cor_{\mathbb Z}(\SmVar(\mathbb C)))\to D^b(\mathcal N)
\end{equation*}

\item[(iii)] The functor $\mathcal N:C^b(\Cor_{\mathbb Z}(\SmVar(\mathbb C)))\to D^b(\mathcal N)$
factorize trough to get the composition
\begin{equation*}
\mathcal N=\bar{\mathcal N}\circ D(\mathbb A^1,et) \;, \; \bar{\mathcal N}:DM^{gm}\to D^b(\mathcal N)
\end{equation*}
\end{itemize}
\end{defiprop}

Using the property of good pairs, Mayer Vietoris property for open coverings for singular cohomoloy,
Nori showed the following :

\begin{prop}\label{Noriprop}\cite{Nori}
\begin{itemize}
\item[(i)] Let $X\in\Var(\mathbb C)$,
we have the following isomorphism in $D^b(\mathbb Z)$ :
\begin{equation*}
o_N\circ\mathcal N(X)\simeq C_*(X,\mathbb Z)\simeq\sing_{\mathbb I^*}\mathbb Z_{tr}(X^{cw})
\end{equation*}

\item[(ii)] Let $X,Y\in\SmVar(\mathbb C)$, then the following diagram commutes
\begin{equation*}
\xymatrix{\Hom_{PC^-}(\mathbb Z_{tr}(X),\mathbb Z_{tr}(Y))\ar^{H^0(\mathcal T_{X\times Y})}[rr]\ar_{\mathcal N}[d] & \, &
\Hom_{PC^-(CW)}(\mathbb Z_{tr}(X^{cw}),\mathbb Z_{tr}(Y^{cw}))\ar[d]^{K_0(X,Y)} \\
\Hom_{D^b(\mathcal N)}(\mathcal N(X),\mathcal N(Y))\ar[rr]^{o_N} & \, &
\Hom_{D^b(\mathbb Z)}(\sing_{\mathbb I^*}\mathbb Z_{tr}(X^{cw}),\sing_{\mathbb I^*}\mathbb Z_{tr}(Y^{cw}))}
\end{equation*}
\end{itemize}
\end{prop}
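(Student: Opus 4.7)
For part (i), I would unpack the definition
\begin{equation*}
o_N\circ\mathcal N(X)=\lim\Tot_{p,I}(H_p(U^{p,cw}_I,U^{p-1,cw}_I,\mathbb Z))
\end{equation*}
indexed over open affine coverings $X=\cup_{i\in J}U_i$ and filtrations by good pairs $U^0_I\subset\cdots\subset U_I$. Because $(U^p_I,U^{p-1}_I)$ is good, the relative singular chain complex $C_*^{\sing}(U^{p,cw}_I,U^{p-1,cw}_I,\mathbb Z)$ is quasi-isomorphic to $H_p(U^{p,cw}_I,U^{p-1,cw}_I,\mathbb Z)[p]$, so the bicomplex above is quasi-isomorphic to $\Tot_{p,I}C_*^{\sing}(U^{p,cw}_I,U^{p-1,cw}_I,\mathbb Z)$. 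Assembling the long exact sequences of the filtration in the $p$-direction computes $C_*^{\sing}(U^{cw}_I,\mathbb Z)$ columnwise. Mayer--Vietoris for singular chains on the affine open covering then gives $\Tot_IC_*^{\sing}(U^{cw}_I,\mathbb Z)\simeq C_*^{\sing}(X^{cw},\mathbb Z)$, yielding the first isomorphism. The second isomorphism $C_*^{\sing}(X^{cw},\mathbb Z)\simeq\sing_{\mathbb I^*}\mathbb Z_{tr}(X^{cw})$ is precisely proposition \ref{CWadTr(iii)} applied to the pair $(X^{cw},\emptyset)$.

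For part (ii), given $\alpha\in\mathcal Z^{fs/X}(X\times Y,\mathbb Z)=\Hom_{PC^-}(\mathbb Z_{tr}(X),\mathbb Z_{tr}(Y))$, I would first observe that $H^0(\mathcal T_{X\times Y})$ in the cubical degree zero is just the forgetful map to CW complexes, so the upper path sends $\alpha$ to $\alpha^{cw}\in\mathcal Z^{fs/X^{cw}}(X^{cw}\times Y^{cw},\mathbb Z)$ and then via $K_0(X,Y)$ to the topological correspondence action $p_{Y*}\circ p_X^*$ on singular chains, using that $X^{cw}\in\TM(\mathbb R)$ and $p_X$ is proper (since $\alpha^{cw}$ is finite and surjective over $X^{cw}$). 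Along the other path, Nori's functor $\mathcal N$ assigns to $\alpha$ the induced morphism of the diagram of relative Betti homologies of good pairs; by the compatibility of (i), identifying $o_N\circ\mathcal N(X)$ and $o_N\circ\mathcal N(Y)$ with the singular chain complexes recovers the same correspondence action. The comparison is functorial in the covering $X=\cup U_i$ and the good-pair filtrations, and reduces to checking the compatibility locally on each good pair.

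The hard part will be this last local compatibility: one must verify that on a single good pair $(U^p,U^{p-1})$, the action of the algebraic cycle $\alpha$ that Nori uses to build $\mathcal N(\alpha)$ coincides under the comparison with $C_*^{\sing}$ with the topological pullback--pushforward $p_{Y*}\circ p_X^*$ restricted to relative chains of the good pair. This is essentially built into Nori's extension of $\mathcal N$ from $\SmVar(\mathbb C)$ to $C^b(\Cor_{\mathbb Z}(\SmVar(\mathbb C)))$: the action of a finite correspondence on the Betti realization of a good pair is defined through Gysin pullback along the projection $p_X$ (which is proper since $\alpha^{cw}$ is finite over $X^{cw}$, and unambiguously defined because $X^{cw}$ is a topological manifold) followed by the pushforward along $p_Y$. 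Tracing this through the good-pair resolutions and the Mayer--Vietoris decomposition of (i), together with the naturality of Gysin maps in the six-functor formalism (\cite{CH}), produces the required commutative square.
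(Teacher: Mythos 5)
The paper does not prove this proposition; it is stated with a citation (\cite{Nori}) and prefaced only by the sentence that Nori proved it ``using the property of good pairs, Mayer Vietoris property for open coverings for singular cohomology.'' Your reconstruction for part (i) follows exactly that hint: the good-pair condition forces $C^{\sing}_*(U^{p,cw}_I,U^{p-1,cw}_I,\mathbb Z)\simeq H_p[p]$, collapsing the filtration into a cellular-type complex that computes $C^{\sing}_*(U^{cw}_I,\mathbb Z)$, and Mayer--Vietoris then recovers $C^{\sing}_*(X^{cw},\mathbb Z)$; invoking proposition~\ref{CWadTr(iii)} for $(X^{cw},\emptyset)$ gives the second identification. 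That is the intended argument and it is sound.

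For part (ii) your reduction is also in the right shape: $H^0(\hat{\mathcal T}_{X\times Y})$ in cubical degree zero is indeed just $\alpha\mapsto\alpha^{cw}$, and $K_0(X,Y)$ then produces the topological Gysin pullback-pushforward $p_{Y*}\circ p_X^*$ (properness of $p_X$ on the support follows from the finite-surjective condition, and $X^{cw}\in\TM(\mathbb R)$ gives the Gysin map). You correctly isolate the genuine content: that Nori's action of a finite correspondence on the Betti realization of a good pair agrees with $K_0$ under the identification from (i). You defer that local compatibility to the construction in the cited literature, which is exactly what the paper itself does by leaving the proposition as a citation; so there is no gap relative to the paper's own treatment. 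If anything, an independent proof of the local compatibility would go beyond the paper and would be the substantive thing to check against \cite{Nori}, but as a reconstruction of what the paper relies on, your sketch is faithful.
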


We will use the following lemma
\begin{lem}\label{Dfactor}
Let $\mathcal C,\mathcal S$ two categories, and $D:\mathcal C\to\mathcal C'$ a localization functor.
Let $F_1,F_2:\mathcal C'\to\mathcal S$
two functors. If
\begin{equation*}
F_1\circ D=F_2\circ D:\mathcal C\to\mathcal S, 
\end{equation*}
then $F_1=F_2$.
\end{lem}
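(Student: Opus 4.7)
The plan is to appeal directly to the universal property that defines a localization functor. Recall that $D : \mathcal{C} \to \mathcal{C}'$ being a localization functor at some class $W$ of morphisms in $\mathcal{C}$ means two things: first, $D$ sends every $w \in W$ to an isomorphism in $\mathcal{C}'$; second, for any category $\mathcal{S}$ and any functor $G : \mathcal{C} \to \mathcal{S}$ sending $W$ to isomorphisms, there exists a \emph{unique} functor $\bar{G} : \mathcal{C}' \to \mathcal{S}$ with $\bar{G} \circ D = G$.

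First I would set $G := F_1 \circ D = F_2 \circ D : \mathcal{C} \to \mathcal{S}$. This composite sends every $w \in W$ to an isomorphism, since $D(w)$ is already an isomorphism in $\mathcal{C}'$ and functors (in particular $F_1$) preserve isomorphisms. Hence $G$ satisfies the hypothesis of the universal property, and admits a unique factorization through $D$.

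Next I observe that both $F_1$ and $F_2$ are such factorizations: by hypothesis $F_1 \circ D = G$ and $F_2 \circ D = G$. The uniqueness clause of the universal property then forces $F_1 = F_2$ as functors $\mathcal{C}' \to \mathcal{S}$, which is the desired conclusion.

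There is no real obstacle here, but one subtle point worth making explicit is that equality of functors means equality on both objects and morphisms. In the standard Gabriel--Zisman construction $D$ is bijective (indeed the identity) on objects and every morphism of $\mathcal{C}'$ is a finite zig-zag of morphisms of the form $D(f)$ and $D(w)^{-1}$ with $w \in W$; thus the behaviour of any functor out of $\mathcal{C}'$ is completely determined by its precomposition with $D$, which is precisely what makes the uniqueness in the universal property hold at the level of functors (not merely natural isomorphism classes). This is exactly what we need.
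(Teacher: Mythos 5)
Your proof is correct, but it takes a different route from the paper's. You invoke the strict universal property of the localization: since $G := F_1 \circ D = F_2 \circ D$ sends the localized class to isomorphisms, uniqueness of the factorization through $D$ gives $F_1 = F_2$ outright. The paper instead argues directly on morphisms: it assumes a calculus of fractions, writes an arbitrary morphism $f'$ of $\mathcal{C}'$ as a roof $f' = f \circ s^{-1}$ with $f,s$ in $\mathcal{C}$ and $s$ in the localized class, and computes $F_i(f') = (F_i \circ D)(f)\circ\big((F_i\circ D)(s)\big)^{-1}$, which is visibly independent of $i$. Your approach is a bit more general (it does not require a calculus of fractions, only the universal property, and your zig-zag remark covers the general Gabriel--Zisman case), and it also handles equality on objects automatically, which the paper leaves implicit (there it follows from $D$ being surjective on objects). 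The paper's approach, on the other hand, makes the mechanism completely concrete. Both are sound; yours is the cleaner abstract argument, theirs is the hands-on one.
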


\begin{proof}
 
Let $f':C_1\to C_2$ a morphism in $\mathcal C'$. 
Without loss of generality, we can assume that $f'=f\circ s^-{1}$ with
$f:C_0\to C_2$ and $s:C_0\to C_1$ morphisms in $\mathcal C$, such that
$s$ belongs to the class of morphisms of $\mathcal C$ we localize.
We have then,
\begin{eqnarray*}
F_1(f')=F_1(f'\circ s\circ s^{-1})&=&(F_1\circ D)(f'\circ s)\circ ((F_1\circ D)(s))^{-1} \\
&=&(F_2\circ D)(f'\circ s)\circ ((F_2\circ D)(s))^{-1}=F_2(f'\circ s\circ s^{-1})=F_2(f')
\end{eqnarray*}

\end{proof}

We deduce from theorem \ref{mainthm}, corollary \ref{cormain}(ii), 
proposition \ref{Noriprop}, end lemma \ref{Dfactor}, the following main result :
\begin{thm}\label{corNorithm} 
\begin{itemize}
\item[(i)] For $X\in\SmVar(\mathbb C)$, 
$\Bti^*\circ D(\mathbb A^1,et)(\mathbb Z(X))=o_N\circ\mathcal N(X)$

\item[(ii)] For $X,Y\in\SmVar(\mathbb C)$, the following diagram commutes
\begin{equation*} 
\xymatrix{
\Hom_{PC^-}(\mathbb Z(X),\mathbb Z(Y))\ar[r]^{\mathcal N}\ar[d]_{D(\mathbb A^1,et)} & 
\Hom_{D^b(\mathcal N)}(N(X),N(Y))\ar[d]^{o_N} \\
\Hom_{\DM^-(\mathbb C,\mathbb Z)}(M(X),M(Y))\ar[r]^{\Bti^*} & 
\Hom_{D^b(\mathbb Z)}(C_*(X,\mathbb Z), C_*(Y,\mathbb Z))}
\end{equation*}

\item[(iii)] The Betti realisation functor factor through  Nori motives. That is
$\Bti^*=o_N\circ\bar{\mathcal N}$
\end{itemize}
\end{thm}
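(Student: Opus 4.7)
The plan is to assemble the three assertions from ingredients already at hand: theorem~\ref{mainthm} to identify $\Bti^*$ with $\widetilde\Bti^*$ on relative motives, corollary~\ref{cormain}(ii) for the explicit formula at the level of finite correspondences, proposition~\ref{Noriprop} to translate Nori's construction into singular chains and into the action map $K_0$, and finally lemma~\ref{Dfactor} to upgrade the equality after $D(\mathbb A^1,et)$ to an equality of functors on the localization $\DM^{gm}(\mathbb C,\mathbb Z)$.

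For (i), I would apply theorem~\ref{mainthm}(i) to the pair $(X,\emptyset)$ to get $\Bti^*M(X)=\widetilde\Bti^*M(X)$. Tracking the identifications in the proof of that theorem (via theorem~\ref{CWsingI}(ii) and theorem~\ref{CWMot}(iii)) shows that the common value is canonically isomorphic to $\sing_{\mathbb I^*}\mathbb Z_{tr}(X^{cw})$. Proposition~\ref{Noriprop}(i) then identifies the latter with $o_N\circ\mathcal N(X)$, giving the desired equality.

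For (ii), I would specialize corollary~\ref{cormain}(ii) to $n=0$, $E=\emptyset$, $V=Y$ and to $\alpha\in\Hom_{PC^-}(\mathbb Z(X),\mathbb Z(Y))=\mathcal Z^{fs/X}(X\times Y,\mathbb Z)$. Formula~(\ref{cormaineq}) reads
\[
\Bti^*\circ D(\mathbb A^1,et)(\alpha)=K_0(X,Y)\bigl(p_{X\times Y}(H^0\hat{\mathcal T}_{X\times Y}(\alpha))\bigr);
\]
since $\alpha$ carries no cubical coordinate, $H^0\hat{\mathcal T}_{X\times Y}(\alpha)=\alpha^{cw}$ and the projection $p_{X\times Y}$ is the identity, so the right-hand side becomes $K_0(X,Y)(\alpha^{cw})$. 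On the Nori side, proposition~\ref{Noriprop}(ii) yields $o_N\circ\mathcal N(\alpha)=K_0(X,Y)(H^0\mathcal T_{X\times Y}(\alpha))$; since $\alpha$ is finite and surjective over $X$, its closure in any compactification contributes nothing beyond $\alpha^{cw}$ itself, so $H^0\mathcal T_{X\times Y}(\alpha)=\alpha^{cw}$ as well. The two composites therefore agree, proving the commutativity of the square.

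For (iii), combining (i) and (ii) gives the equality of functors
\[
\Bti^*\circ D(\mathbb A^1,et)=o_N\circ\mathcal N=o_N\circ\bar{\mathcal N}\circ D(\mathbb A^1,et)
\]
from $C^b(\Cor_{\mathbb Z}(\SmVar(\mathbb C)))$ to $D^b(\mathbb Z)$. Lemma~\ref{Dfactor} applied to the localization functor $D(\mathbb A^1,et)$ then delivers $\Bti^*=o_N\circ\bar{\mathcal N}$ on $\DM^{gm}(\mathbb C,\mathbb Z)$. The main obstacle will be the cycle-level matching in step (ii): one must verify that for a finite surjective correspondence $\alpha\in\mathcal Z^{fs/X}(X\times Y,\mathbb Z)$, the topological classes $p_{X\times Y}(H^0\hat{\mathcal T}_{X\times Y}(\alpha))$ and $H^0\mathcal T_{X\times Y}(\alpha)$ coincide as elements of $\mathcal Z_{d_X}(X^{cw}\times Y^{cw},\mathbb Z)$, so that corollary~\ref{cormain}(ii) and proposition~\ref{Noriprop}(ii) land in the same image under $K_0(X,Y)$. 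All remaining steps are formal assembly.
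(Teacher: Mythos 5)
Your proposal follows exactly the paper's own line of argument: theorem~\ref{mainthm}(i) plus proposition~\ref{Noriprop}(i) for (i), corollary~\ref{cormain}(ii) plus proposition~\ref{Noriprop}(ii) for (ii), and lemma~\ref{Dfactor} for (iii). You are in fact slightly more careful than the paper in step (ii), where you flag the identification of $p_{X\times Y}(H^0\hat{\mathcal T}_{X\times Y}(\alpha))$ with $H^0\mathcal T_{X\times Y}(\alpha)$ that the paper passes over silently; this is a real (if routine) verification worth noting, but the overall structure is the same.
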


\begin{proof}

\noindent(i): We have 
\begin{eqnarray*}
\Bti^*\circ D(\mathbb A^1,et)(\mathbb Z(X))&=&Re^{tr}_{cw*}D(\mathbb I^1,usu)(\mathbb Z_{tr}(X^{cw}))
\mbox{\; by \; theorem  \ref{mainthm}(i)} \\
&=&o_N\circ\mathcal N(X)
\mbox{\; by \; proposition  \ref{Noriprop}(i).}
\end{eqnarray*}

\noindent(ii): Let $\alpha\in\Hom_{PC^-}(\mathbb Z(X),\mathbb Z(Y))$,
we have
\begin{eqnarray*}
\Bti^*\circ D(\mathbb A^1,et)(\alpha)&=&K_0(X,Y)(H^0\hat{\mathcal T}_{X\times Y}(\alpha))
\mbox{\; by \;  corollary \ref{cormain}(ii)} \\
&=&o_N\circ\mathcal N(\alpha)\mbox{\; by \; proposition \ref{Noriprop}(ii).} 
\end{eqnarray*}

\noindent(iii): By (i) and (ii), we have the equality
\begin{equation}\label{norieqfunc}
\Bti^*\circ D(\mathbb A^1,et)=o_N\circ\mathcal N=o_N\circ\bar{\mathcal N}\circ D(\mathbb A^1,et)
\end{equation}
Lemma \ref{Dfactor} applied to this equality (\ref{norieqfunc}) say that $\Bti^*=o_N\circ\bar{\mathcal N}$.
This proves (iii).

\end{proof}

\subsection{The image of Ayoub's Betti realization functor on morphism and Bloch cycle class map}

Let $V\in\Var(\mathbb C)$ quasi-projective. 
Let $Y\in\PVar(\mathbb C)$ a compactification of $V$ and $E=Y\backslash V$
Denote by $j:V\hookrightarrow Y$ the open embedding.
For $r\in\mathbb N$, we denote by
$i_Y:Y\hookrightarrow\mathbb A^r\times Y$ and
$i_V=i_{Y|V}:V\hookrightarrow\mathbb A^r\times V$ the inclusions,
by $p_Y:\mathbb A^r\times Y\to Y$ and $p_V=p_{Y|V}:\mathbb A^r\times V\to V$ the projections,
by $a:\mathbb A^r\hookrightarrow\mathbb P^r$ the open embedding, 
and by $E'=Y\times\mathbb P^r\backslash(V\times\mathbb A^r)$.  
We consider 
\begin{itemize}
\item for $p\leq d_V$, the inclusion of complexes
$i_{V*}:\mathcal Z^p(V,*)\hookrightarrow\mathbb Z^{d_V}(\mathbb A^{d_V-p}\times V,*)$,
which is a quasi isomorphism.
\item for $p\geq d_V$ the inclusion of complexes
$p_{V}^*:\mathcal Z^p(\mathbb A^{p-d_V,*}\times V)\hookrightarrow\mathbb Z^{p}( V,*)$,  
which is a quasi-isomorphism.
\end{itemize}
Since $Y$ is projective, and $\mathbb A^r$ is smooth,
\begin{itemize}
\item $D(\mathbb A^1,et):\Hom_{PC^-}(\mathbb Z_{tr}(\mathbb A^r),\underline{C}_*\mathbb Z(Y,E)[n])
\to\Hom_{\DM^-(\mathbb C,\mathbb Z)}(M(\mathbb A^r),M(Y,E)[n])$ 
is an isomorphism by proposition \ref{DA1iso}
\item $D(\mathbb I^1,et):
\Hom_{PC^-(CW)}(\mathbb Z_{tr}(\mathbb A^{r,cw}),\underline{\sing}_{\mathbb I^*}\mathbb Z_{tr}(Y^{cw},E^{cw})[n])
\to\Hom_{\CwDM^-(\mathbb Z)}(M(\mathbb A^{r,cw}),M(Y^{cw},E^{cw})[n])$  
is an isomorphism by proposition \ref{DI1iso}.
\end{itemize}
On the other side, the inclusion of complexes of abelian groups
\begin{itemize}
\item for $p\leq d_V$, 
$(I_{\mathbb A}\times j)^{*}:
C_*\mathbb Z_{tr}(Y,E)(\mathbb A^{d_V-p})\hookrightarrow\mathcal Z^{d_V}(\mathbb A^{d_V-p}\times V,*)$
\item for $p\geq d_V$, 
$(a\times j)^{*}:C_*\mathbb Z_{tr}(\mathbb P^{p-d_V}\times Y,E')(\Spec(\mathbb C))
\hookrightarrow\mathcal Z^p(\mathbb A^{p-d_V}\times V,*)$
\end{itemize}
is a quasi-isomorphisms.

\begin{defi}
Let $V\in\Var(\mathbb C)$ quasi-projective. 
Let $Y\in\PVar(\mathbb C)$ a compactification of $V$ and $E=Y\backslash V$.
Recall that $j:V\hookrightarrow Y$ is the open embedding.
\begin{itemize}
\item[(i)] For $p\leq d_V$, we consider the following composition of isomorphisms of abelian groups
\begin{eqnarray*}
H^{p,n}(V):\CH^p(V,n)\xrightarrow{i_{V*}}\CH^{d_V}(\mathbb A^{d_V-p}\times V,n)
\xrightarrow{(H^n(I_{\mathbb A}\times j)^{*})^{-1}}
\Hom_{PC^-}(\mathbb Z_{tr}(\mathbb A^{d_V-p}),\underline{C}_*\mathbb Z_{tr}(Y,E)[n]) \\
\xrightarrow{D(\mathbb A^1,et)}\Hom_{\DM^-(\mathbb C,\mathbb Z)}(M(\mathbb A^r),M(Y,E)[n]) 
\end{eqnarray*}
\item[(ii)] For $p\geq d_V$, we consider the following composition of isomorphisms of abelian groups
\begin{eqnarray*}
H^{p,n}(V):\CH^p(V,n)\xrightarrow{p_V^*}\CH^p(\mathbb A^{p-d_V}\times V,n)
\xrightarrow{(H^n(a\times j)^{*})^{-1}}
\Hom_{PC^-}(\mathbb Z,\underline{C}_*\mathbb Z_{tr}(\mathbb P^{p-d_V}\times Y,E')[n]) \\
\xrightarrow{D(\mathbb A^1,et)}\Hom_{\DM^-(\mathbb C,\mathbb Z)}(\mathbb Z,M(Y\times\mathbb P^{p-d_V},E')[n]) 
\end{eqnarray*}
\end{itemize}
\end{defi}

We now prove that under these identifications, the image of the Betti realization
functor on morphism coincide with the Bloch cycle class map. 

\begin{thm}\label{corbloch}
Let $V\in\Var(\mathbb C)$. Let $Y\in\PVar(\mathbb C)$ be a compactification of $V$ and $E=Y\backslash V$.
Then, 
\begin{itemize}
\item[(i)] for $p\leq d_V$, the following diagram commutes :
\begin{equation*}
\xymatrix{
CH^p(V,n)\ar[d]^{\sim}_{H^{p,n}(V)}\ar[rr]^{H^n\mathcal T_V} & \, & 
H_{n+p}(Y,E,\mathbb Z)\ar[d]^{\overline{K_n(\pt,(Y,E))}}_{\sim} \\
\Hom_{\DM^-(\mathbb C,\mathbb Z)}(M(\mathbb A^{d_V-p}),M(Y,E)[n])\ar[rr]^{\widetilde\Bti^*} & \, &
\Hom_{D^-(\mathbb Z)}(\mathbb Z,C_*(Y,E)[n])}
\end{equation*}

\item[(ii)] for $p\geq d_V$, the following diagram commutes :
\begin{equation*}
\xymatrix{
CH^p(V,n)\ar[d]^{\sim}_{H^{p,n}(V)}\ar[rr]^{H^n\mathcal T_V} & \, &
 H_{n+p}(Y,E,\mathbb Z)\ar[d]_{\sim}^{\overline{K_n(\pt,(Y,E))}} \\
\Hom_{\DM^-(\mathbb C,\mathbb Z)}(\mathbb Z,M(\mathbb A^{p-d_V}\times Y,E)[n])\ar[rr]^{\widetilde\Bti^*} & \, &
\Hom_{D^-(\mathbb Z)}(\mathbb Z,C_*(Y,E)[n])}
\end{equation*}
\end{itemize}

\end{thm}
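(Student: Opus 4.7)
My plan is to prove statement (i); statement (ii) follows by an analogous argument with $\mathbb A^{d_V-p}$ replaced by $\mathbb P^{p-d_V}$ and the zero-section pushforward $i_{V*}$ replaced by the projection pullback $p_V^*$. By theorem \ref{mainthm}, Ayoub's Betti realization $\Bti^*$ agrees with $\widetilde\Bti^*$ on objects $M(Y,E)$ and on morphisms between them, so it suffices to work with $\widetilde\Bti^*$. Unwinding the definition of $H^{p,n}(V)$, for $\alpha \in \CH^p(V,n)$ I must compute $\widetilde\Bti^*(D(\mathbb A^1, et)(\widetilde\alpha))$, where $\widetilde\alpha \in H^n\underline C_*\mathbb Z_{tr}(Y,E)(\mathbb A^{d_V-p})$ is the preimage of the zero-section pushforward $i_{V*}\alpha \in \CH^{d_V}(\mathbb A^{d_V-p} \times V, n)$ under the isomorphism $H^n(I_{\mathbb A} \times j)^*$ (which is an isomorphism by proposition \ref{RMVarProp}).

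Next, applying corollary \ref{cormain}(ii) with $X = \mathbb A^{d_V-p}$ (which is smooth, so $X^{cw} \in \TM(\mathbb R)$), I obtain
\begin{equation*}
\widetilde\Bti^*(D(\mathbb A^1,et)(\widetilde\alpha)) = K_n(\mathbb A^{d_V-p}, (Y,E))\bigl(p_{\mathbb A\times Y}(H^n\hat{\mathcal T}_{\mathbb A\times V}(\widetilde\alpha))\bigr).
\end{equation*}
Since $Y$ is projective, $Y^{cw}$ is compact and $E^{cw}\subset Y^{cw}$ is closed, so proposition \ref{Kunneth}(ii) factorizes this through the Borel--Moore fundamental class:
\begin{equation*}
\widetilde\Bti^*(D(\mathbb A^1,et)(\widetilde\alpha)) = \overline{K_n(\mathbb A^{d_V-p}, (Y,E))}\bigl([p_{\mathbb A\times Y}(H^n\hat{\mathcal T}_{\mathbb A\times V}(\widetilde\alpha))]\bigr).
\end{equation*}

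It remains to identify this with $\overline{K_n(\pt, (Y,E))}(H^n\mathcal T_V(\alpha))$. The trivial-bundle structure $p_Y: \mathbb A^{d_V-p, cw} \times Y^{cw} \to Y^{cw}$ yields a Thom isomorphism $\tau: H^{BM}_*(\mathbb A^{d_V-p,cw}\times Y^{cw}, \mathbb A^{d_V-p,cw}\times E^{cw}) \xrightarrow{\sim} H_{*-2(d_V-p)}(Y^{cw}, E^{cw})$, and naturality of the six-functor isomorphism $\overline{K_n}$ of \cite{CH} under the zero-section inclusion $i_0: \pt \hookrightarrow \mathbb A^{d_V-p}$ yields a commutative square reducing the desired equality to the geometric identity
\begin{equation*}
\tau\bigl([p_{\mathbb A\times Y}(H^n\hat{\mathcal T}_{\mathbb A\times V}(\widetilde\alpha))]\bigr) = [p_Y(\hat T_\alpha)] = H^n\mathcal T_V(\alpha).
\end{equation*}
This is the main obstacle, and the geometric heart of the theorem. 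Its verification rests on the fact that $\widetilde\alpha$ represents the same class as $i_{V*}\alpha = \{0\}\times\alpha$ in $\CH^{d_V}(\mathbb A^{d_V-p}\times V, n)$: the Borel--Moore class of the topological cycle $p_{\mathbb A\times Y}(H^n\hat{\mathcal T}_{\mathbb A\times V}(\widetilde\alpha))$ coincides with the zero-section pushforward of $[p_Y(\hat T_\alpha)]$, on which $\tau$ is inverse to zero-section pushforward by construction. For case (ii), one applies corollary \ref{cormain} with $X = \pt$ directly to an element $\widetilde\alpha \in \Hom_{PC^-}(\mathbb Z, \underline C_*\mathbb Z_{tr}(\mathbb P^{p-d_V}\times Y, E')[n])$, and replaces the Thom isomorphism by the Künneth projection onto the summand $H^0(\mathbb P^{p-d_V,cw})\otimes H_*(Y^{cw}, E^{cw})$ of $H^{BM}_*(\mathbb P^{p-d_V, cw}\times Y^{cw}, E'^{cw})$, exploiting that $p_V^*\alpha = \mathbb A^{p-d_V}\times\alpha$ picks out the fundamental class of the fibre.
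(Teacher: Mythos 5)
Your proposal is correct and takes essentially the same route as the paper. You apply corollary \ref{cormain}(ii) with $X=\mathbb A^{d_V-p}$ to express $\widetilde\Bti^*\circ D(\mathbb A^1,et)$ in terms of $K_n$ applied to the topological cycle, factor through the Borel--Moore class via proposition \ref{Kunneth}(ii), and then reduce to the compatibility of $\overline K_n$ under zero-section pushforward; the paper encodes this last step as a small auxiliary commutative diagram with $i_{V*}$ on both sides (rather than naming the Thom isomorphism $\tau$ explicitly, though the two are the same thing for the trivial bundle $\mathbb A^{d_V-p}\times Y\to Y$), and likewise handles (ii) via $p_V^*$ and $p_Y^*$ in place of your Künneth projection. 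The opening reduction to $\widetilde\Bti^*$ via theorem \ref{mainthm} is harmless but unnecessary, since the statement already uses $\widetilde\Bti^*$.
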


\begin{proof}

\noindent(i): 
By corollary \ref{cormain}(ii), the second square of following diagram commutes
\begin{equation*}
\xymatrix{
CH^{d_V}(\mathbb A^{d_V-p}\times V,n)
\ar[d]_{(H^n(I_{\mathbb A}\times j)^{*})^{-1}}\ar[rr]^{H^n\mathcal T^{d_V}_{\mathbb A^{d_V-p}\times V}} 
& \, & H_{d_V-p+n}(\mathbb A^{d_V-p}\times Y,E,\mathbb Z)  \\
\Hom_{PC^-}(\mathbb Z_{tr}(\mathbb A^{d_V-p}),\mathbb Z_{tr}(Y,E)[n])
\ar[d]_{D(\mathbb A^1,et)}\ar[rr]^{H^n\hat{\mathcal T}_{\mathbb A^{d_V-p}\times V}} & \, &
\Hom_{PC^-(CW)}(\mathbb Z_{tr}(\mathbb A^{d_V-p}),\mathbb Z_{tr}(Y,E)[n])
\ar[u]_{[p(\cdot)]}\ar[d]^{K_n(\mathbb A^{d_V-p},(Y,E))(p(\cdot))}  \\
\Hom_{\DM^-(\mathbb C,\mathbb Z)}(M(\mathbb A^{d_V-p}),M(Y,E)[n]) 
\ar[rr]^{\widetilde\Bti^*} & \, &  \Hom_{D^-(\mathbb Z)}(\mathbb Z,C_*(Y,E)[d_V-p+n])}
\end{equation*}
where $p:\mathbb A^{d_V-p}\times V\times\mathbb I^n\to\mathbb A^{d_V-p}\times V$ is the projection in $\CW$.
Moreover $K_n(\mathbb A^{d_V-p},(Y,E))\circ[\cdot]=\overline K_n(\mathbb A^{d_V-p},(Y,E))$
by definition.
On the other hand, we have the following commutative diagram.
\begin{equation*}
\xymatrix{
CH^p(V,n)\ar[d]_{i_{V*}}\ar[rr]^{H^n\mathcal T^p_{V}} 
&  \, & H_{d_V-p+n}(Y,E,\mathbb Z)\ar[d]^{i_{V*}} \\ 
CH^{d_V}(\mathbb A^{d_V-p}\times V,n)\ar[rr]^{H^n\mathcal T^{d_V-p}_{\mathbb A^{d_V-p}\times V}} 
& \, & H_{d_V-p+n}(\mathbb A^{d_V-p}\times Y,E,\mathbb Z)} 
\end{equation*}
This proves (i).

\noindent(ii):
By corollary \ref{cormain}(ii), the second square of following diagram commutes
\begin{equation*}
\xymatrix{
CH^p(\mathbb A^{p-d_V}\times V,n)
\ar[d]_{(H^n(a\times j)^{*})^{-1}}\ar[r]^{H^n\mathcal T^p_{\mathbb A^{d_V-p}\times V}} 
& H_{d_V-p+n}(\mathbb A^{p-d_V}\times Y,E',\mathbb Z)  \\
\Hom_{PC^-}(\mathbb Z,\mathbb Z_{tr}(\mathbb P^{p-d_V}\times Y,E')[n])
\ar[r]^{H^n\hat{\mathcal T}_{\mathbb A^{p-d_V}\times V}}\ar[d]_{D(\mathbb A^1,et)} &
\Hom_{PC^-(CW)}(\mathbb Z,\mathbb Z_{tr}(\mathbb P^{p-d_V}\times Y,E')[n])
\ar[u]_{[p(\cdot)]}\ar[d]^{K_n(\pt,(\mathbb P^{p-d_V}\times Y,E'))(p(\cdot))} &  \\
\Hom_{\DM^-(\mathbb C,\mathbb Z)}(\mathbb Z,M(\mathbb P^{p-d_V}\times Y,E')[n]) 
\ar[r]^{\widetilde\Bti^*} &  \Hom_{D^-(\mathbb Z)}(\mathbb Z,C_*(Y,E)[d_V-p+n])}
\end{equation*}
where $p:\mathbb P^{p-d_V}\times Y\times\mathbb I^n\to\mathbb P^{p-d_V}\times Y$ is the projection in $\CW$.
Moreover
$K_n(\pt,\mathbb P^{p-d_V}\times Y,E'))\circ[\cdot]=\overline K_n(\pt,\mathbb P^{p-d_V}\times Y,E'))$
by definition.
On the other hand, we have the following commutative diagram.
\begin{equation*}
\xymatrix{
CH^p(V,n)\ar[d]_{p_V^*}\ar[rr]^{H^n\mathcal T^p_{V}} 
& \, & H_{d_V-p+n}(Y,E,\mathbb Z)\ar[d]^{p_Y^*} \\ 
CH^p(\mathbb A^{d_V-p}\times V,n)\ar[rr]^{H^n\mathcal T^{d_V-p}_{\mathbb A^{d_V-p}\times V}} 
& \, & H_{d_V-p+n}(\mathbb A^{d_V-p}\times Y,E,\mathbb Z)} 
\end{equation*}
This proves (ii).

\end{proof}

\section{The relative case}

\subsection{The derived category of relative motives of algebraic varieties}

Let $S\in\Var(k)$. The category $\Var(k)/S$ is the category whose objects are 
$X/S=(X,h)$ with $X\in\Var(k)$ and $h:X\to S$ is a morphism,
and whose space of morphisms between
$X/S=(X,h_1)$ and $Y/S=(Y,h_2)\in\Var(k)/S$ are the morphism $g:X\to Y$ such that $h_2\circ g=h_1$. 
We denote by $\Var(k)^{sm}/S\subset\Var(k)/S$ the full subcategory consisting of the objects
$X/S=(X,h)$ with $X\in\Var(k)$, such that $h:X\to S$ is a smooth morphism.
For $X/S=(X,h)\in\Var(k)/S$, and $n\in\mathbb N$, we denote by 
\begin{itemize}
\item $(X\times\mathbb A^1/S):=(X\times_k\mathbb A^1,h\circ p_X)=(X\times_S(\mathbb A^1\times S)/S)$,
where $p_X:X\times_k\mathbb A^1\to X$ is the projection.
\item $(X\times\square^n/S):=(X\times_k\square^n,h\circ p_X)=(X\times_S(\square^n\times S)/S)$,
where $p_X:X\times_k\square^n\to X$ is the projection.
\end{itemize}

\begin{defi}\cite{C.Deglise}
Let $S\in\Var(k)$.
We define $\Cor^{fs}_{\Lambda}(Var(k)^{sm}/S)$ to be the category whose objects are the one of $\Var(k)^{sm}/S$
and whose space of morphisms between
$X/S$ and $Y/S\in\Var(k)^{sm}/S$ is the free $\Lambda$ module $\mathcal{Z}^{fs/X}(X\times_S Y,\Lambda)$. 
The composition of morphisms is defined similary then in the absolute case (see \cite{C.Deglise}) 
\end{defi}

We have 
\begin{itemize}
\item the additive embedding of categories 
$\Tr(S):\mathbb Z(\Var(k)^{sm}/S)\hookrightarrow\Cor^{fs}_{\mathbb Z}(\Var(k)^{sm}/S)$ 
which gives the corresponding morphism of sites
 $\Tr(S):\Cor^{fs}_{\mathbb Z}(\Var(k)^{sm}/S)\to \mathbb Z(\Var(k)^{sm}/S)$.
\item the inclusion functor
$e_{var}(S):\Ouv(S)\hookrightarrow\Var(k)^{sm}/S$,
which gives the corresponding morphism of sites
$e_{var}(S):\Var(k)^{sm}/S\to\Ouv(S)$,
\item the inclusion functor 
$e^{tr}_{var}(S):=\Tr\circ e_{var}\:\Ouv(S)\hookrightarrow\Cor^{fs}_{\mathbb Z}(\Var(k)^{sm}/S)$
which gives the corresponding morphism of sites
$e^{tr}_{var}(S):=\Tr\circ e_{var}:\Cor^{fs}_{\mathbb Z}(\Var(k)^{sm}/S)\to\Ouv(S)$.
\end{itemize}

For each morphism $f:T\to S$ in $\Var(\mathbb C)$, we have 
\begin{itemize}
\item the pullback functor
$P(f):\Var(\mathbb C)/S\to\Var(\mathbb C)/T$, $P(f)(X/S)=(X\times_S T/T)$, $P(f)(h)=h_T$,
which gives the morphism of sites $P(f):\Var(\mathbb C)/T\to\Var(\mathbb C)/S$
\item the pullback functor
$P(f):\Cor^{fs}_{\mathbb Z}(\Var(\mathbb C)^{sm}/S)\to\Cor^{fs}_{\mathbb Z}(\Var(\mathbb C)^{sm}/T)$, 
$P(f)(X/S)=(X\times_S T/T)$, $P(f)(h)=h_T$,
which gives the morphism of sites
$P(f):\Cor^{fs}_{\mathbb Z}(\Var(\mathbb C)^{sm}/T)\to\Cor^{fs}_{\mathbb Z}(\Var(\mathbb C)^{sm}/S)$. 
\end{itemize}

For $S\in\Var(k)$, we consider the following two big categories :
\begin{itemize}
\item $\PSh(\Var(k)^{sm}/S,C^-(\mathbb Z))=\PSh_{\mathbb Z}(\mathbb Z(\Var(k)^{sm}/S),C^-(\mathbb Z))$, 
the category of bounded above complexes of presheaves on $\Var(k)^{sm}/S$, 
or equivalently additive presheaves on $\mathbb Z(\Var(k)^{sm}/S)$, 
sometimes, we will write for short $P^-(S)=\PSh(\Var(k)^{sm}/S,C^-(\mathbb Z))$,
\item $\PSh_{\mathbb Z}(\Cor^{fs}_{\mathbb Z}(\Var(k)^{sm}/S),C^-(\mathbb Z))$, 
the category of bounded above complexes of additive presheaves on $\Cor^{fs}_{\mathbb Z}(\Var(k)^{sm}/S)$
sometimes, we will write for short $PC^-(S)=\PSh(\Cor^{fs}_{\mathbb Z}(\Var(k)^{sm}/S),C^-(\mathbb Z))$,
\end{itemize}
and the adjonctions :
\begin{itemize}
\item $(\Tr(S)^*,\Tr(S)_*):
\PSh(\Var(k)^{sm}/S,C^-(\mathbb Z))\leftrightarrows\PSh_{\mathbb Z}(\Cor^{fs}_{\mathbb Z}(\Var(k)^{sm}/S),C^-(\mathbb Z))$, 
\item $(e_{var}(S)^*,e_{var}(S)_*):\PSh(\SmVar(\mathbb C),C^-(\mathbb Z))\leftrightarrows C^-(\mathbb Z)$,
\item $(e_{var}^{tr}(S)^*,e_{var}^{tr}(S)):\PSh(\Cor^{fs}_{\mathbb Z}(\Var(k)^{sm}/S),C^-(\mathbb Z))\leftrightarrows C^-(\mathbb Z)$,
\end{itemize}
given by $\Tr(S):\Cor^{fs}_{\mathbb Z}(\Var(k)^{sm}/S)\to \mathbb Z(\Var(k)^{sm}/S)$,
$e_{var}(S):\Var(k)^{sm}/S\to\Ouv(S)$ and
$e^{tr}_{var}(S):\Cor^{fs}_{\mathbb Z}(\Var(k)^{sm}/S)\to\Ouv(S)$ respectively.
We denote by $a_{et}:\PSh_{\mathbb Z}(\Var(k)^{sm}/S,\Ab)\to\Sh_{\mathbb Z,et}(\Var(k)^{sm}/S,\Ab)$
the etale sheaftification functor.
For $X/S\in\Var(k)^{sm}/S$, we denote by
\begin{equation}
\mathbb Z(X/S)\in\PSh(\Var(k)^{sm}/S,C^-(\mathbb Z)) \mbox{\;\;}, \mbox{\;\;}  
\mathbb Z_{tr}(X/S)\in\PSh_{\mathbb Z}(\Cor^{fs}_{\mathbb Z}(\Var(k)^{sm}/S),C^-(\mathbb Z))
\end{equation}
the presheaves represented by $X$. They are etale sheaves.
For $X/S=(X,h)\in\Var(k)/S$ with $h:X\to S$ non smooth,
\begin{equation}
\mathbb Z_{tr}(X/S)\in\PSh_{\mathbb Z}(\Cor^{fs}_{\mathbb Z}(\Var(k)^{sm}/S),C^-(\mathbb Z)), \;
Y\in\Var(k)^{sm}/S\to\mathcal{Z}^{fs/Y}(Y\times_S X,\mathbb Z) 
\end{equation}
is also an etale sheaf ; of course if $h:X\to S$ is not dominant then $\mathbb Z_{tr}(X/S)=0$

For a morphism $f:T\to S$ in $\Var(k)$ we have the adjonctions
\begin{itemize}
\item $(f^*,f_*):=(P(f)^*,P(f)_*):
\PSh(\Var(k)^{sm}/S,C^-(\mathbb Z))\leftrightarrows\PSh(\Var(k)^{sm}/T,C^-(\mathbb Z))$, 
\item $(f^*,f_*):=(P(f)^*,P(f)_*):
\PSh_{\mathbb Z}(\Cor^{fs}_{\mathbb Z}(\Var(k)^{sm}/S),C^-(\mathbb Z))
\leftrightarrows\PSh_{\mathbb Z}(\Cor^{fs}_{\mathbb Z}(\Var(k)^{sm}/T),C^-(\mathbb Z))$, 
\end{itemize}
given by $P(f):\Var(k)^{sm}/T\to\Var(k)^{sm}/S$ and 
$P(f):\Cor^{fs}_{\mathbb Z}(\Var(k)^{sm}/T)\to\Cor^{fs}_{\mathbb Z}(\Var(k)^{sm}/S)$, 
respectively.

\begin{defi}\label{RAmodstr}
\begin{itemize}
\item[(i)] The projective etale toplogy model structure on $\PSh(\Var(k)^{sm}/S,C^-(\mathbb Z))$ 
is defined in the similar way of the absolute case (c.f. definition \ref{VaretTr}(i)). 

\item[(ii)]The projective $(\mathbb A_k^1,et)$ model structure on the category
 $\PSh(\Var(k)^{sm}/S,C^-(\mathbb Z))$ is the left Bousfield localization 
of the projective etale topology model structure with respect to the class of maps
$\left\{\mathbb Z(X\times\mathbb A_k^1/S)[n]\to\mathbb Z(X/S)[n], \; X/S\in\Var(k)^{sm}/S,n\in\mathbb Z\right\}$. 

\item[(iii)] The projective etale toplogy model structure on $\PSh(\Cor^{fs}(\Var(k)^{sm}/S),C^-(\mathbb Z))$ 
is defined in the similar way of the absolute case (c.f. definition \ref{VaretTr}(ii)). 

\item[(iv)]The projective $(\mathbb A_k^1,et)$ model structure on the category
$\PSh_{\mathbb Z}(\Cor^{fs}_{\mathbb Z}(\Var(k)^{sm}/S),C^-(\mathbb Z))$ 
is the left Bousfield localization of the projective etale topology model structure with respect to the class of maps 
$\left\{\mathbb Z_{tr}(X\times\mathbb A_k^1/S)[n]\to\mathbb Z(X/S)[n],\; X/S\in\Var(k)^{sm}/S,n\in\mathbb Z\right\}$. 
\end{itemize}
\end{defi}

\begin{defi}
Let $S\in\Var(k)$
\begin{itemize}
\item[(i)] We define 
$\DM^-(S,\mathbb Z)_{et}:=\Ho_{\mathbb A_k^1,et}(\PSh_{\mathbb Z}(\Cor^{fs}_{\mathbb Z}(\Var(k)^{sm}/S),C^-(\mathbb Z)))$, 
to be the derived category of (effective) motives, it is
the homotopy category of $\PSh(\Cor^{fs}_{\mathbb Z}(\Var(k)^{sm}/S),C^-(\mathbb Z))$ 
with respect to the projective $(\mathbb A^1,et)$ model structure (c.f. definition \ref{RAmodstr}(ii)).
We denote by
\begin{equation*}
D^{tr}(\mathbb A^1,et)(S):\PSh_{\mathbb Z}(\Cor^{fs}_{\mathbb Z}(\Var(k)^{sm}/S,C^-(\mathbb Z)))\to\DM^{-}(S,\mathbb Z)_{et} \, , \,
D^{tr}(\mathbb A^1,et)(S)(F^{\bullet})=F^{\bullet}
\end{equation*}
the canonical localization functor.

\item[(ii)] By the same way, we denote 
$\DA^-(S,\mathbb Z)_{et}:=\Ho_{\mathbb A_k^1,et}(\PSh(\Var(k)^{sm}/S,C^-(\mathbb Z)))$ (c.f.\ref{RAmodstr}(i)) and
\begin{equation*}
D(\mathbb A^1,et)(S):\PSh_{\mathbb Z}(\Cor^{fs}_{\mathbb Z}(\SmVar(\mathbb C)),C^-(\mathbb Z)))\to\DA^{-}(S,\mathbb Z)_{et} \, , \,
D(\mathbb A^1,et)(S)(F^{\bullet})=F^{\bullet}
\end{equation*}
the canonical localization functor.
\end{itemize}
\end{defi}

\begin{prop}
Let $S\in\Var(k)$
\begin{itemize}
\item[(i)] $(\Tr(S)^*,\Tr(S)_*):\PSh(\Var(k)^{sm}/S,C^-(\mathbb Z))\leftrightarrows
\PSh_{\mathbb Z}(\Cor^{fs}_{\mathbb Z}(\Var(k)^{sm}/S),C^-(\mathbb Z))$
is a Quillen adjonction for the etale topology model structures
and a Quillen adjonction for the $(\mathbb A^1,et)$ model structures 
(c.f. definition \ref{RAmodstr} (i) and (ii) respectively). 

\item[(ii)] $(e_{var}(S)^*,e_{var}(S)_*):\PSh(\Var(k)^{sm}/S,C^-(\mathbb Z))\leftrightarrows C^-(\mathbb Z)$
is a Quillen adjonction for the etale topology model structures 
and a Quillen adjonction for the $(\mathbb A^1,et)$ model structures 
(c.f. definition \ref{RAmodstr} (i)). 

\item[(iii)] $(e_{var}(S)^{tr*},e_{var}^{tr}(S)_*):
\PSh(\Cor^{fs}_{\mathbb Z}(\Var(k)^{sm}/S),C^-(\mathbb Z))\leftrightarrows C^-(\mathbb Z)$
is a Quillen adjonction for the etale topology model structures
and a Quillen adjonction for the $(\mathbb A^1,et)$ model structures 
(c.f. definition \ref{RAmodstr} (ii)). 
\end{itemize}
\end{prop}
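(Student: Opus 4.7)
The plan is to check each Quillen adjunction at each level of localization, following the same pattern as the absolute case (which the paper cites from Ayoub). The key observation is that for a left Bousfield localization, a Quillen adjunction from the underlying projective model structure descends to the localized structure provided the left adjoint sends the class of maps being inverted to weak equivalences of the target. Cofibrations of the projective model structures on $\PSh(\Var(k)^{sm}/S,C^-(\mathbb Z))$ and $PC^-(S)$ are generated by maps built from the representables $\mathbb Z(X/S)$ and $\mathbb Z_{tr}(X/S)$, and each of the three left adjoints $\Tr(S)^*$, $e_{var}(S)^*$, $e^{tr}_{var}(S)^*$ sends representables to representables of the target category, so they all preserve generating cofibrations of the projective structure.

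For the etale-topology localization, I would check that each left adjoint sends Cech covers for the etale topology of the source to etale-local equivalences of the target. For $e_{var}(S)^*$ and $e^{tr}_{var}(S)^*$, any Zariski cover of an open subset $U \subset S$ remains a covering (in fact Zariski, hence etale) in $\Var(k)^{sm}/S$, and the image of a representable on $\Ouv(S)$ is the corresponding representable on $\Var(k)^{sm}/S$ (since every open inclusion is smooth), so hypercovers go to hypercovers. For $\Tr(S)^*$, one uses the relative analog of Definition \ref{VaretTr}(ii): a morphism in $PC^-(S)$ is by definition an etale-local equivalence iff its image under $\Tr(S)_*$ is one, which shows $\Tr(S)_*$ preserves (and detects) etale local equivalences, and dually $\Tr(S)^*$ sends the generating etale trivial cofibrations to etale-local equivalences.

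For the further $(\mathbb A^1,et)$-localization, it suffices to verify that each left adjoint sends the additional class $\{\mathbb Z(X\times\mathbb A^1/S)[n]\to\mathbb Z(X/S)[n]\}$ (resp.\ with $U\in\Ouv(S)$) to $(\mathbb A^1,et)$-equivalences of the target. This is immediate: $\Tr(S)^*$ sends this to $\mathbb Z_{tr}(X\times\mathbb A^1/S)[n]\to\mathbb Z_{tr}(X/S)[n]$, which is by definition a generating $\mathbb A^1$-equivalence in $PC^-(S)$; both $e_{var}(S)^*$ and $e^{tr}_{var}(S)^*$ send the generator for $U\in\Ouv(S)$ to the corresponding generator for $U$ viewed as an object of $\Var(k)^{sm}/S$, which is again by definition a generating $(\mathbb A^1,et)$-equivalence. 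The only potentially subtle point is establishing the relative version of the characterization used for $\Tr(S)_*$, namely that $\Tr(S)_*$ preserves and detects etale-local equivalences; this reduces to the usual fact that a presheaf with transfers on $\Cor^{fs}_{\mathbb Z}(\Var(k)^{sm}/S)$ is an etale sheaf iff its underlying presheaf on $\Var(k)^{sm}/S$ is, which goes through verbatim from the absolute case.
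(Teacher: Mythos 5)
Your proof is correct and takes essentially the same approach as the paper's: the paper simply asserts that $\Tr(S)_*$ derives trivially (hence is a right Quillen functor) and that $e_{var}(S)^*$, $e^{tr}_{var}(S)^*$ derive trivially (hence are left Quillen functors), while you unpack the same fact via the standard criterion for Quillen adjunctions of left Bousfield localizations, checking preservation of generating cofibrations and of the localizing maps at each stage. Your argument is more explicit but mirrors the paper's reasoning; in particular, the point you flag as subtle for $\Tr(S)_*$ is indeed handled in the paper by taking the characterization as the very definition of the etale model structure on $PC^-(S)$ (the relative analog of Definition \ref{VaretTr}(ii)), just as you observe.
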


\begin{proof}

\noindent(i): Follows from the fact that $\Tr(S)_*$ derive trivially hence is a right Quillen functor.

\noindent(ii): Follows from the fact that $e_{var}(S)^*$ derive trivially hence is a left Quillen functor.

\noindent(iii): Follows from the fact that $e^{tr}_{var}(S)^*$ derive trivially hence is a left Quillen functor.

\end{proof}

\begin{prop}\label{adfVar}
Let $f:T\to S$ a morphism in $\Var(k)$.
\begin{itemize}
\item[(i)] The functors 
\begin{itemize}
\item $f_*:\PSh(\Var(k)^{sm}/T,C^-(\mathbb Z))\to\PSh(\Var(k)^{sm}/S,C^-(\mathbb Z))$ and
\item $f_*:\PSh(\Cor^{fs}_{\mathbb Z}(\Var(k)^{sm}/T),C^-(\mathbb Z))
\to\PSh(\Cor^{fs}_{\mathbb Z}(\Var(k)^{sm}/S),C^-(\mathbb Z))$, 
\end{itemize}
preserve and detect $\mathbb A^1$ local object, preserve etale fibrant objects.
\item[(ii)] The functors 
\begin{itemize}
\item $f^*:\PSh(\Var(k)^{sm}/S,C^-(\mathbb Z))\to\PSh(\Var(k)^{sm}/T,C^-(\mathbb Z))$ and
\item $f^*:\PSh(\Cor^{fs}_{\mathbb Z}(\Var(k)^{sm}/S),C^-(\mathbb Z))
\to\PSh(\Cor^{fs}_{\mathbb Z}(\Var(k)^{sm}/T),C^-(\mathbb Z))$, 
\end{itemize}
preserve and detect $(\mathbb A^1,et)$ equivalence.
\end{itemize}
\end{prop}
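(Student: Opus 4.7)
The approach will analyze how the adjoint pair $(f^*, f_*)$ interacts with the cubical replacement functor $\underline{C}_*$ and with the etale topology.

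First I would record the action on representable sheaves: $f^*\mathbb Z(X/S) = \mathbb Z(X \times_S T/T)$ and dually $f_*F(X/S) = F(X \times_S T/T)$, with the analogous formulas in the transfers setting (compatibility of $\Cor^{fs}_{\mathbb Z}$ with pullback comes from base change of finite surjective cycles). From these formulas, $f^*$ sends each generating $\mathbb A^1$-projection $\mathbb Z(X \times \mathbb A^1/S) \to \mathbb Z(X/S)$ to an $\mathbb A^1$-projection over $T$, and sends etale covers in $\Var(k)^{sm}/S$ to etale covers in $\Var(k)^{sm}/T$ (etaleness is stable under base change). It follows that $f^*$ is a left Quillen functor for both the etale and the $(\mathbb A^1, et)$ model structures, giving the preservation half of (ii) and, dually, showing that $f_*$ preserves etale fibrant objects and $(\mathbb A^1, et)$-fibrant objects.

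Next I would establish the key commutation $f_* \underline{C}_* = \underline{C}_* f_*$ (with and without transfers), which is immediate from the identification $(Y \times \square^n) \times_S T = (Y \times_S T) \times \square^n$. Combined with theorem \ref{VarsingA}, this yields both directions of (i): for preservation, if $F$ is $\mathbb A^1$-local then $(f_*F)(X \times \mathbb A^1/S) = F((X \times_S T) \times \mathbb A^1/T)$ and the projection map is an isomorphism by hypothesis; for detection, if $f_*F$ is $\mathbb A^1$-local then by theorem \ref{VarsingA} the map $f_*(\eta F) : f_*F \to f_*\underline{C}_*F = \underline{C}_*(f_*F)$ is an etale local equivalence between $\mathbb A^1$-local objects, and using that $f_*$ commutes with etale sheafification (because etale covers pull back) together with the fact that etale-locally every smooth $X/T$ looks like $\mathbb A^d_T$ (which is a pullback from $S$), one deduces that $\eta F$ itself is an etale equivalence, hence $F$ is $\mathbb A^1$-local. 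The transfers case then follows from the non-transfers case via the faithful functor $\Tr(S)_*$ and the compatibility $f_* \circ \Tr(T)_* = \Tr(S)_* \circ f_*$.

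Finally, for detection of $(\mathbb A^1, et)$-equivalences by $f^*$ in (ii): preservation from (i) tells us that for every $(\mathbb A^1, et)$-fibrant $L$ on $T$, the presheaf $f_*L$ is $(\mathbb A^1, et)$-fibrant on $S$. If $f^*\phi$ is an $(\mathbb A^1, et)$-equivalence then by adjunction $\Hom_{\Ho}(G_2, f_*L) \to \Hom_{\Ho}(G_1, f_*L)$ is bijective for every such $L$; combining this with the detection statement of (i) applied to fibrant replacements of $G_1, G_2$ themselves forces $\phi$ to be an $(\mathbb A^1, et)$-equivalence. The main obstacle is the detection half of (i), since $f_*$ is not conservative on the underlying etale homotopy categories for general $f$; the argument genuinely uses both the explicit cubical resolution $\underline{C}_*$ from \ref{VarsingA} and the geometric fact that pullback of an etale cover remains an etale cover.
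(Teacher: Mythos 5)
Your argument for the preservation halves essentially matches what the paper does, which is to observe that they follow from the definitions: from $f^*\mathbb Z(X/S)=\mathbb Z(X\times_S T/T)$, stability of etale covers under base change, and the identity $(X\times\mathbb A^1)\times_S T=(X\times_S T)\times\mathbb A^1$, one reads off that $f^*$ sends the generating localizing maps over $S$ to generating localizing maps over $T$ and is left Quillen for the etale model structure, hence $f^*$ is left Quillen for $(\mathbb A^1,et)$; dually $f_*$ preserves etale-fibrant objects and, after an etale-fibrant replacement (since $\mathbb A^1$-locality is a condition on $\Hom_{\Ho_{et}}(\mathbb Z(X)[n],-)$ rather than on raw presheaf cohomology), $\mathbb A^1$-local objects. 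The detour through $\underline{C}_*$ and theorem \ref{VarsingA} is not needed for this half.

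The detection halves are where your proposal has a genuine gap. The claim that ``etale-locally every smooth $X/T$ looks like $\mathbb A^d_T$'' only yields an etale cover $\left\{X_i\to X\right\}$ with each $X_i$ etale \emph{over} some $\mathbb A^{d_i}_T$, not equal to it; and even if each covering member were a pullback from $S$, the fiber products $X_i\times_X X_j$ governing etale descent typically are not. The assertion that ``$f_*$ commutes with etale sheafification'' is likewise false for a general morphism of sites --- what is true is only that $f_*$ carries sheaves to sheaves. Consequently, from $f_*(\eta F)$ being an etale-local equivalence you cannot conclude that $\eta F$ is one: $f_*$ sees only the restriction of $F$ to the subcategory $\left\{X\times_S T/T : X/S\in\Var(k)^{sm}/S\right\}\subset\Var(k)^{sm}/T$, and $\mathbb A^1$-locality of $f_*F$ says nothing about the behavior of $F$ off that subcategory. (For what it is worth, the paper's one-sentence proof does not explain detection either, and the only downstream use of this proposition, namely proposition \ref{adfVarcor}, invokes only the preservation halves.)
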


\begin{proof}
Point (i) follows imediately from definition of $\mathbb A^1$ local objects.
Point (ii) follows from point (i).
\end{proof}

We immediately deduce the following

\begin{prop}\label{adfVarcor}
Let $f:T\to S$ a morphism in $\Var(k)$. 
\begin{itemize}
\item[(i)] The adjonction 
$(f^*,f_*):\PSh(\Var(k)^{sm}/S,C^-(\mathbb Z))\leftrightarrows\PSh(\Var(k)^{sm}/T,C^-(\mathbb Z))$ 
is a Quillen adjonction with respect to the etale model structures and the $(\mathbb A^1,et)$ model structures
\item[(ii)] The adjonction
$(f^*,f_*):\PSh(\Cor^{fs}_{\mathbb Z}(\Var(k)^{sm}/S),C^-(\mathbb Z))
\leftrightarrows\PSh(\Cor^{fs}_{\mathbb Z}(\Var(k)^{sm}/T),C^-(\mathbb Z))$, 
is a Quillen adjonction with respect to the etale model structures and the $(\mathbb A^1,et)$ model structures
\end{itemize}
\end{prop}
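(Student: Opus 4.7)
The plan is to reduce both statements to the assertion that $f^*$ is a left Quillen functor, or equivalently that $f_*$ is a right Quillen functor, with respect to the etale and the $(\mathbb A^1,et)$ model structures respectively. The argument is essentially identical in the presheaf case and the correspondence case, so I will carry out both in parallel, writing $\mathbb Z_{(tr)}(X/S)$ to cover uniformly $\mathbb Z(X/S)$ and $\mathbb Z_{tr}(X/S)$.

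First I would verify the statement for the underlying projective model structures on $\PSh(\Var(k)^{sm}/S,C^-(\mathbb Z))$ and $\PSh(\Cor^{fs}_{\mathbb Z}(\Var(k)^{sm}/S),C^-(\mathbb Z))$. Since $f^*$ is a left adjoint it commutes with colimits, and one checks directly that $f^*\mathbb Z_{(tr)}(X/S)=\mathbb Z_{(tr)}(X\times_S T/T)$. In particular $f^*$ sends the generating (trivial) cofibrations of the projective model structure to (trivial) cofibrations of the same form, hence is left Quillen.

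Next, by the universal property of left Bousfield localization, to promote this to a Quillen adjunction for the etale, resp. $(\mathbb A^1,et)$, model structure, it suffices to check that $f^*$ sends the localizing class of maps on the $S$ side to weak equivalences on the $T$ side. For the etale topology this amounts to the fact that pullback preserves etale covers, so a hypercover $U_\bullet\to X$ in $\Var(k)^{sm}/S$ is sent by $f^*$ to the hypercover $U_\bullet\times_S T\to X\times_S T$ in $\Var(k)^{sm}/T$. For the $\mathbb A^1$-localization, the map $\mathbb Z_{(tr)}(X\times\mathbb A^1/S)[n]\to\mathbb Z_{(tr)}(X/S)[n]$ is sent by $f^*$ to $\mathbb Z_{(tr)}((X\times_S T)\times\mathbb A^1/T)[n]\to\mathbb Z_{(tr)}(X\times_S T/T)[n]$, which again lies in the localizing class. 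In either case the image is an $(\mathbb A^1,et)$-equivalence, which is also granted by the $f^*$ half of Proposition \ref{adfVar}(ii).

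Alternatively, one can argue directly on the right side: by Proposition \ref{adfVar}(i), $f_*$ preserves etale fibrant objects and $\mathbb A^1$-local objects, hence preserves fibrant objects in the localized model structures; combined with the fact that $f_*$ preserves projective fibrations and projective trivial fibrations (the adjoint statement to the preceding paragraph), this implies that $f_*$ preserves all fibrations between fibrant objects in the localized structures, which is enough for a left Bousfield localization. Since every step is either formal from the adjunction $(f^*,f_*)$, from the explicit behavior of $f^*$ on representables, or directly from Proposition \ref{adfVar}, there is no serious obstacle; the only point requiring care is keeping straight, in the correspondence case, that $f^*\mathbb Z_{tr}(X/S)=\mathbb Z_{tr}(X\times_S T/T)$, which uses that $\Cor^{fs}$ behaves well under base change (this is the same input used implicitly in Proposition \ref{adfVar}).
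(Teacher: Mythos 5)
Your proof is correct and takes essentially the same approach as the paper: the paper simply cites Proposition \ref{adfVar}(ii) to conclude that $f^*$ is a left Quillen functor, while you supply the standard supporting facts (preservation of generating cofibrations via $f^*\mathbb Z_{(tr)}(X/S)=\mathbb Z_{(tr)}(X\times_S T/T)$, and the universal property of left Bousfield localization) that the paper leaves implicit.
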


\begin{proof}
By proposition \ref{adfVar}(ii), $f^*$ is a left Quillen functor.
\end{proof}

\begin{thm}\cite{C.Deglise}
Let $S\in\Var(k)$. The adjonction
$(\Tr(S)^*,\Tr(S)_*):\PSh(\Var(k)^{sm}/S,C^-(\mathbb Z))\leftrightarrows
\PSh_{\mathbb Z}(\Cor^{fs}_{\mathbb Z}(\Var(k)^{sm}/S),C^-(\mathbb Z))$
is a Quillen equivalence, that is the derived functor
\begin{equation*}
L\Tr(S)^*:\DA^-(S,\mathbb Z)\xrightarrow{\sim}\DM^-(S,\mathbb Z)
\PSh_{\mathbb Z}(\Cor^{fs}_{\mathbb Z}(\Var(k)^{sm}/S),C^-(\mathbb Z))
\end{equation*} 
is an isomorphism and 
$\Tr(S)_*:\DM^-(S,\mathbb Z)\xrightarrow{\sim}\DA^-(S,\mathbb Z)
\PSh_{\mathbb Z}(\Cor^{fs}_{\mathbb Z}(\Var(k)^{sm}/S),C^-(\mathbb Z))$
is its inverse
\end{thm}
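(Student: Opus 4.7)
The proof follows the same strategy as the absolute analog (theorem \ref{AnTr}(i)) and the CW analog (theorem \ref{CWMot}(i)), adapted to the relative algebraic setting. I would proceed in three steps.

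First, I would establish the relative analog of lemma \ref{DTr}. Namely, I would show: (a) $\Tr(S)_*$ detects and preserves etale local equivalences, which follows from the definition of etale local equivalences in $PC^-(S)$ adapted from definition \ref{VaretTr}(ii); (b) a complex $F^{\bullet}\in PC^-(S)$ is $\mathbb A^1$ local if and only if $\Tr(S)_*F^{\bullet}\in P^-(S)$ is $\mathbb A^1$ local, using the Yoneda identity
\begin{equation*}
\Hom_{P^-(S)}(\mathbb Z(X/S)[n],\Tr(S)_*L^{\bullet})=H^nL^{\bullet}(X/S)=\Hom_{PC^-(S)}(\mathbb Z_{tr}(X/S)[n],L^{\bullet})
\end{equation*}
valid for any etale fibrant $L^{\bullet}$; (c) combining (a) and (b), $\Tr(S)_*$ detects and preserves $(\mathbb A^1,et)$ local equivalences by the same argument as in the if-part of lemma \ref{DTr}(ii).

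Second, I would construct a relative Suslin singular chain functor $\underline{C}_*(S)=\Tot\underline{\Hom}(\mathbb Z(\square_S^*/S),\cdot)$ on $P^-(S)$ and, similarly, on $PC^-(S)$, together with an inclusion morphism $S(F^{\bullet}):F^{\bullet}\to\underline{C}_*(S)F^{\bullet}$ as in (\ref{SFVar}). One verifies in the standard way (following theorem \ref{VarsingA} and the argument of theorem \ref{CWsingI}) that $\underline{C}_*(S)F^{\bullet}$ is $\mathbb A^1$ local and that $S(F^{\bullet})$ is an $(\mathbb A^1,et)$ equivalence, noting that $\Tr(S)_*\underline{C}_*(S)=\underline{C}_*(S)\Tr(S)_*$ by definition.

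Third, and this is the crux, I would show that for every $X/S\in\Var(k)^{sm}/S$ the derived unit
\begin{equation*}
\ad(\Tr(S)^*,\Tr(S)_*)(\underline{C}_*(S)\mathbb Z(X/S)):\underline{C}_*(S)\mathbb Z(X/S)\to\Tr(S)_*\underline{C}_*(S)\mathbb Z_{tr}(X/S)
\end{equation*}
is an $(\mathbb A^1,et)$ equivalence. Locally in the etale topology on $S$, one reduces via an etale cover $\{U_i\to S\}$ to the case where $S=\Spec k$ (or where $S$ is etale over affine space), where the statement follows from Voevodsky's absolute result (the precursor to theorem \ref{VarsingA}). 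Etale descent patches these local equivalences together, using point (a) of step one. Combining steps one to three, the adjunction is a Quillen adjunction whose derived unit is an isomorphism on the generators $\mathbb Z(X/S)$ (hence on all of $\DA^-(S,\mathbb Z)$ by continuity of $L\Tr(S)^*$ and $\Tr(S)_*$), and whose derived counit is an isomorphism since $\Tr(S)^*\Tr(S)_*=\Id$ on representables with transfers; thus the pair is a Quillen equivalence.

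The main obstacle is step three: verifying etale descent for the unit morphism in a way compatible with the transfers structure. This is exactly what the machinery of \cite{C.Deglise} provides, via the fact that the presheaves $\mathbb Z_{tr}(X/S)$ satisfy etale descent and that the Suslin singular chain functor commutes with pullback along etale morphisms (using proposition \ref{adfVar}). The finite surjectivity condition on the correspondences is stable under etale base change, which is the technical point that makes the reduction to the absolute case work.
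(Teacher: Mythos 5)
The paper itself gives no proof of this statement: it is stated with the citation \cite{C.Deglise} and the text moves directly on to the next theorem (the six-functor formalism, also cited). So there is no internal argument in the paper to compare your proposal against; the result is imported wholesale.

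Your strategy (a relative analog of lemma \ref{DTr}, a relative Suslin complex with an $(\mathbb A^1,et)$ equivalence $S(F^{\bullet})$, then checking the derived unit on representable generators) does mirror the architecture the paper uses for theorems \ref{AnTr}(i) and \ref{CWMot}(i), and steps one and two would go through essentially as in the absolute case. The gap is in step three. You claim to reduce, etale locally on $S$, to the case $S=\Spec k$, but an etale cover of $S$ cannot effect that reduction: an etale morphism $U_i\to S$ produces another variety over $k$ of the same dimension as $S$, not $\Spec k$, and the relative correspondence category $\Cor^{fs}_{\mathbb Z}(\Var(k)^{sm}/U_i)$ is not identified with $\Cor^{fs}_{\mathbb Z}(\SmVar(k))$ by such a cover. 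Your parenthetical ("$S$ etale over affine space") does not salvage this either --- besides requiring $S$ smooth, which the theorem does not, it still leaves you over a positive-dimensional base where Voevodsky's absolute result does not directly apply. The comparison between $\DA^-(S,\mathbb Z)$ and $\DM^-(S,\mathbb Z)$ over a general base in \cite{C.Deglise} is genuinely harder than the absolute case: it rests on the theory of relative cycles (so that the unit on $\mathbb Z(X/S)$ can be analyzed fiberwise), on continuity/compactness of the premotivic fibered categories to reduce constructible objects to the case of a field base, and on separate arguments for the torsion and rational parts under the etale topology. None of that is reproduced by "etale descent patches the local equivalences together," and the reduction as you wrote it would fail. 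Steps one and two are fine as scaffolding, but step three as stated does not close the argument; it is precisely the content being cited, and the etale-local reduction to $\Spec k$ is not a correct replacement for it.
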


We have all the property of the six functor formalism on $\Var(k)$ :

\begin{thm}\label{sixfunctorsVar}\cite{C.Deglise}
We have the six functor formalism on $\DM^-:\Var(k)\to\TriCat$, $S\in\Var(k)\mapsto\DM^-(S,\mathbb Z)$.
Indeed, the 2-functor $\DM^-:\Var(k)\to\TriCat$ is an homotopic 2-functor is the sense of \cite{AyoubT}.
In particular, we have well a defined pair of adjoint functors  
$(f_!,f^!):\DM^-(T,\mathbb Z)\leftrightarrows\DM^-(S,\mathbb Z)$, such  that
\begin{itemize}
\item $f_!=Rf_*$ if $f:T\to S$ is proper, 
\item $j_!$ is the left adjoint of $j^*$ if $j:S^o\hookrightarrow S$ is an open embbeding
($j^*$ admits a left adjoint since it is a smooth morphism).
\end{itemize}
\end{thm}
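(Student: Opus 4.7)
The plan is to transport Ayoub's six functor formalism, which he established on $\DA^-(-,\mathbb{Z}) : \Var(k) \to \TriCat$, to $\DM^-(-,\mathbb{Z})$ via the Quillen equivalence
\begin{equation*}
L\Tr(S)^* : \DA^-(S,\mathbb{Z}) \xrightarrow{\sim} \DM^-(S,\mathbb{Z})
\end{equation*}
proved in the previous theorem. First I would check that the equivalences $L\Tr(S)^*$ are compatible with the adjunction $(f^*,f_*)$ associated to a morphism $f:T\to S$, i.e.\ that the canonical base change morphism $P(f)\circ\Tr(T) = \Tr(S)\circ P(f)$ at the level of sites induces a natural isomorphism $f^* \circ L\Tr(S)^* \simeq L\Tr(T)^* \circ f^*$ in the derived categories. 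This is essentially formal once one knows that $f^*$ is a left Quillen functor for the $(\mathbb{A}^1,et)$ model structure on both sides (proposition \ref{adfVarcor}) and that $\Tr_*$ derives trivially.

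Next I would verify Ayoub's four axioms for a homotopic 2-functor on $\DM^-$: (1) the pullback $p^* : \DM^-(S,\mathbb{Z}) \to \DM^-(S \times \mathbb{A}^1,\mathbb{Z})$ along the projection is fully faithful (this is immediate from the $\mathbb{A}^1$-localization in the model structure); (2) for a closed immersion $i: Z \hookrightarrow S$ with open complement $j:U\hookrightarrow S$, the pair $(j^*, i^*)$ is conservative on $\DM^-(S,\mathbb{Z})$ and satisfies the localization triangle $j_!j^* \to \id \to i_*i^* \to [1]$; (3) the Thom equivalence for smooth morphisms; (4) base change for smooth morphisms. Since all of these axioms are known on $\DA^-$ by \cite{AyoubT}, and since $L\Tr^*$ commutes with $f^*$ (hence with its right adjoint $f_*$, and with the left adjoint $j_!$ of $j^*$ when $j$ is an open immersion), each axiom transfers verbatim. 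In particular the localization triangle on $\DM^-(S,\mathbb{Z})$ is obtained by applying $L\Tr(S)^*$ to its $\DA^-$-counterpart.

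Once the axioms of a homotopic 2-functor are verified, Ayoub's general machinery (the Nagata compactification trick) automatically produces the pair $(f_!,f^!)$ for every morphism $f$ of finite type, together with all coherence data: writing $f = p \circ j$ with $j$ an open immersion and $p$ proper, set $f_! := p_* \circ j_!$ and let $f^!$ be its right adjoint. The agreements $f_! = Rf_*$ for $f$ proper and $j_!\dashv j^*$ for $j$ an open immersion are then built into the construction. The main obstacle is really step (2), the localization axiom: one must show that $i_*$ on $\DM^-$ (defined via $i_* = L\Tr(S)^* \circ i_* \circ \Tr(Z)_*$, modulo fibrant replacements) is fully faithful and that its image is the kernel of $j^*$. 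This requires either reducing to Ayoub's proof on $\DA^-$ via the equivalence (which is the strategy here) or, if one wants a direct argument on $\DM^-$, carefully comparing the etale sheaves $\mathbb{Z}_{tr}(X/S)$ with $\mathbb{Z}_{tr}(X\times_S Z/Z)$ and $\mathbb{Z}_{tr}(X\times_S U/U)$, which involves Voevodsky's cancellation and the behavior of $\Tr_*$ under closed and open immersions—these are technical but known in the relative setting treated by Déglise.
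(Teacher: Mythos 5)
The paper's proof is only a citation to \cite{C.Deglise} and \cite{AyoubT} together with a recall of the Nagata-compactification definition of $f_!$; there is no argument to compare against. Your proposal --- transport the homotopic-2-functor structure from $\DA^-$ to $\DM^-$ across the Quillen equivalences $L\Tr(S)^*$ rather than verify the axioms directly on $\DM$ as in \cite{C.Deglise} --- is a legitimate alternative route, and it does work: the commuting square of sites relating $\Tr$ and $P(f)$ gives that $L\Tr(S)^*$ intertwines the $f^*$'s, and uniqueness of adjoints then propagates the compatibility to $f_*$, $j_!$ and $f_\sharp$, whence Ayoub's axioms transfer verbatim from $\DA^-$. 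Your worry that the localization axiom is the main obstacle is somewhat overstated: once compatibility with $i_*$ and $j_!$ is in hand it transfers like the others. Two minor points to tighten: Ayoub's axioms for a homotopic 2-functor are adjointness, smooth adjointness ($f_\sharp \dashv f^*$), localization, homotopy, and stability --- smooth base change is a theorem, not an axiom --- and the construction of $(f_!,f^!)$ via Nagata compactification is proved for \emph{stable} homotopic 2-functors, i.e.\ requires the Tate twist to be inverted, a point both the paper's notation $\DM^-$ and your sketch leave implicit.
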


\begin{proof}
See \cite{C.Deglise} or \cite{AyoubT}.
We just recall the definition of $f_!$.
Let $f:T\to S$ a morphism in $\Var(k)$. Let $\bar{T}\in\PVar(k)$ a compactification of $T$.
Take $\hat T\subset\bar T\times S$ the closure of the graph of $f$. 
Then, $f=\hat f\circ j$ where $j:T\hookrightarrow\hat T$ is the open embedding and
$\hat f:=p_{S|\bar{T}}$ is a proper morphism, with $p_S:\bar T\times S\to S$ the projection.
Then for $F^{\bullet}\in PC^-(T)$ a $\mathbb A^1$ local and etale fibrant object
$f_!F^{\bullet}:=D(\mathbb A^1,et)(S)(\hat{f}_*j_lF^{\bullet})$ 
does not depends of the compactification of $f$ by the support property.
\end{proof}

\begin{defi}
Let $S\in\Var(\mathbb C)$.
\begin{itemize}
\item Let $M\in\DM^-(S,\mathbb Z)$ and $p\in\mathbb Z$.
The Tate twist of $M$ is $M\otimes D(\mathbb A^1,et)(\mathbb Z_S(p))$ with
$\mathbb Z_S(p):=(\mathbb Z_{tr}(\mathbb P^1\times S,\infty))^{\otimes p}\in PC^-(S)$.
For $F^{\bullet}\in PC^-(S)$, we denote $F^{\bullet}(p):=F^{\bullet}\otimes\mathbb Z_S(p)\in PC^-(S)$.
\item A motive $M\in\DM^-(S,\mathbb Z)$ is called constructible if it belongs to the thick
subcategory generated by motives of the form $D(\mathbb A^1,et)(S)(\mathbb Z_{tr}(X/S)(p))$,
with $X/S=(X,h)$, where $h:X\to S$ is a smooth morphism, and $p\in\mathbb Z$.
\end{itemize}
\end{defi}

\begin{defiprop}\label{RBMMot}
Let $S,X\in\Var(k)$ and $h:X\to S$ a morphism.  
\begin{itemize}
\item[(i)] The motive of $X/S=(X,h)$ is $M(X/S):=h_!h^!\mathbb Z_{tr}(S/S)\in\DM^-(S,\mathbb Z)$.
It is a constructible object.

\item[(ii)] The Borel-Moore motive of $X/S=(X,h)$ 
is $M^{BM}(X/S):=h_!h^*\mathbb Z_{tr}(S/S)=h_!\mathbb Z_{tr}(X/X)\in\DM^-(S,\mathbb Z)$.
It is a constructible object.

\item[(iii)] The cohomological motive of $X/S=(X,h)$ is 
$M(X/S):=Rh_*h^*\mathbb Z_{tr}(S/S)=Rh_*\mathbb Z_{tr}(X/X)\in\DM^-(S,\mathbb Z)$.
It is a constructible object.

\item[(iv)] The motive with compact support of $X/S=(X,h)$ is 
$M(X/S):=Rh_*h^!\mathbb Z_{tr}(S/S)\in\DM^-(S,\mathbb Z)$. It is a constructible object.
\end{itemize}

\end{defiprop}

\begin{proof} 
The fact that these four object associated to $X/S=(X,h)$ 
are constructible follows from \cite{C.Deglise} section 4.
\end{proof}

\subsection{The derived category of relative motives of analytic spaces}

Let $S\in\AnSp(\mathbb C)$. The category $\AnSp(\mathbb C)/S$ is the category whose objects are 
$X/S=(X,h)$ with $X\in\AnSp(\mathbb C)$ and $h:X\to S$ is a morphism,
and whose space of morphisms between
$X/S=(X,h_1)$ and $Y/S=(Y,h_2)\in\SmVar(S)$ are the morphism $g:X\to Y$ such that $h_2\circ g=h_1$. 
We denote by $\AnSp(\mathbb C)^{sm}/S\subset\AnSp(\mathbb C)/S$ the full subcategory constisting of the objects
$X/S=(X,h)$ with $X\in\AnSp(\mathbb C)$, such that $h:X\to S$ is a smooth morphism.
For $X/S=(X,h)\in\AnSp(\mathbb C)/S$, and $n\in\mathbb N$, we denote by 
\begin{itemize}
\item $(X\times\mathbb D^1/S):=(X\times\mathbb D^1,h\circ p_X)=(X\times_S(\mathbb D^1\times S)/S)$,
where $p_X:X\times\mathbb D^1\to X$ is the projection.
\item $(X\times\bar{\mathbb D}^n/S):=(X\times\bar{\mathbb D}^n,h\circ p_X)=(X\times_S(\bar{\mathbb D}^n\times S)/S)$,
where $p_X:X\times\bar{\mathbb D}^n\to X$ is the projection.
\end{itemize}

\begin{defi}
Let $S\in\AnSp(\mathbb C)$.
We define $\Cor^{fs}_{\Lambda}(\AnSp(\mathbb C)^{sm}/S)$ 
to be the category whose objects are the one of $\AnSp(\mathbb C)^{sm}/S$
and whose space of morphisms between
$X/S$ and $Y/S\in\AnSp(\mathbb C)^{sm}/S$ is the free $\Lambda$ module $\mathcal{Z}^{fs/X}(X\times_S Y,\Lambda)$. 
The composition of morphisms is defined similary then in the absolute case. 
\end{defi}

We have 
\begin{itemize}
\item the additive embedding of categories 
$\Tr(S):\mathbb Z(\AnSp(\mathbb C)^{sm}/S)\hookrightarrow\Cor^{fs}_{\mathbb Z}(\AnSp(\mathbb C)^{sm}/S)$ 
which gives the corresponding morphism of sites
 $\Tr(S):\Cor^{fs}_{\mathbb Z}(\AnSp(\mathbb C)^{sm}/S)\to \mathbb Z(\AnSp(\mathbb C)^{sm}/S)$.
\item the inclusion functor
$e_{an}(S):\Ouv(S)\hookrightarrow\AnSp(\mathbb C)^{sm}/S$,
which gives the corresponding morphism of sites
$e_{an}(S):\AnSp(\mathbb C)^{sm}/S\to\Ouv(S)$,
\item the inclusion functor 
$e^{tr}_{an}(S):=\Tr\circ e_{an}\:\Ouv(S)\hookrightarrow\Cor^{fs}_{\mathbb Z}(\AnSp(\mathbb C)^{sm}/S)$
which gives the corresponding morphism of sites
$e^{tr}_{an}(S):=\Tr\circ e_{an}:\Cor^{fs}_{\mathbb Z}(\AnSp(\mathbb C)^{sm}/S)\to\Ouv(S)$.
\end{itemize}

For each morphism $f:T\to S$ in $\AnSp(\mathbb C)$, we have 
\begin{itemize}
\item the pullback functor
$P(f):\AnSp(\mathbb C)/S\to\AnSp(\mathbb C)/T$, $P(f)(X/S)=(X\times_S T/T)$, $P(f)(h)=h_T$,
which gives the morphism of sites $P(f):\Var(\mathbb C)/T\to\Var(\mathbb C)/S$
\item the pullback functor
$P(f):\Cor^{fs}_{\mathbb Z}(\AnSp(\mathbb C)^{sm}/S)\to\Cor^{fs}_{\mathbb Z}(\AnSp(\mathbb C)^{sm}/T)$, 
$P(f)(X/S)=(X\times_S T/T)$, $P(f)(h)=h_T$,
which gives the morphism of sites
$P(f):\Cor^{fs}_{\mathbb Z}(\AnSp(\mathbb C)^{sm}/T)\to\Cor^{fs}_{\mathbb Z}(\AnSp(\mathbb C)^{sm}/S)$. 
\end{itemize}

For $S\in\AnSp(\mathbb C)$, we consider the following two big categories :
\begin{itemize}
\item $\PSh(\AnSp(\mathbb C)^{sm}/S,C^-(\mathbb Z))=\PSh_{\mathbb Z}(\mathbb Z(\AnSp(\mathbb C)^{sm}/S),C^-(\mathbb Z))$, 
the category of bounded above complexes of presheaves on $\AnSp(\mathbb C)^{sm}/S$, 
or equivalently additive presheaves on $\mathbb Z(\AnSp(\mathbb C)^{sm}/S)$, 
sometimes, we will write for short $P^-(S)=\PSh(\AnSp(\mathbb C)^{sm}/S,C^-(\mathbb Z))$,
\item $\PSh_{\mathbb Z}(\Cor^{fs}_{\mathbb Z}(\AnSp(\mathbb C)^{sm}/S),C^-(\mathbb Z))$, 
the category of bounded above complexes of additive presheaves on $\Cor^{fs}_{\mathbb Z}(\AnSp(\mathbb C)^{sm}/S)$
sometimes, we will write for short $PC^-(S)=\PSh(\Cor^{fs}_{\mathbb Z}(\AnSp(\mathbb C)^{sm}/S),C^-(\mathbb Z))$,
\end{itemize}
and the adjonctions :
\begin{itemize}
\item $(\Tr(S)^*,\Tr(S)_*):
\PSh(\AnSp(\mathbb C)^{sm}/S,C^-(\mathbb Z))\leftrightarrows\PSh_{\mathbb Z}(\Cor^{fs}_{\mathbb Z}(\AnSp(\mathbb C)^{sm}/S),C^-(\mathbb Z))$, 
\item $(e_{an}(S)^*,e_{an}(S)_*):\PSh(\SmVar(\mathbb C),C^-(\mathbb Z))\leftrightarrows C^-(\mathbb Z)$,
\item $(e_{an}^{tr}(S)^*,e_{an}^{tr}(S)):\PSh(\Cor^{fs}_{\mathbb Z}(\AnSp(\mathbb C)^{sm}/S),C^-(\mathbb Z))\leftrightarrows C^-(\mathbb Z)$,
\end{itemize}
given by $\Tr(S):\Cor^{fs}_{\mathbb Z}(\AnSp(\mathbb C)^{sm}/S)\to \mathbb Z(\AnSp(\mathbb C)^{sm}/S)$,
$e_{an}(S):\AnSp(\mathbb C)^{sm}/S\to\Ouv(S)$ and
$e^{tr}_{an}(S):\Cor^{fs}_{\mathbb Z}(\AnSp(\mathbb C)^{sm}/S)\to\Ouv(S)$ respectively.
We denote by $a_{et}:\PSh_{\mathbb Z}(\AnSp(\mathbb C)^{sm}/S,\Ab)\to\Sh_{\mathbb Z,et}(\AnSp(\mathbb C)^{sm}/S,\Ab)$
the etale sheaftification functor.
For $X/S\in\AnSp(\mathbb C)^{sm}/S$, we denote by
\begin{equation}
\mathbb Z(X/S)\in\PSh(\AnSp(\mathbb C)^{sm}/S,C^-(\mathbb Z)) \mbox{\;\;}, \mbox{\;\;}  
\mathbb Z_{tr}(X/S)\in\PSh_{\mathbb Z}(\Cor^{fs}_{\mathbb Z}(\AnSp(\mathbb C)^{sm}/S),C^-(\mathbb Z))
\end{equation}
the presheaves represented by $X$. They are usu sheaves.

For a morphism $f:T\to S$ in $\AnSp(\mathbb C)$ we have the adjonction
\begin{itemize}
\item $(f^*,f_*):=(P(f)^*,P(f)_*):
\PSh(\AnSp(\mathbb C)^{sm}/S,C^-(\mathbb Z))\leftrightarrows\PSh(\AnSp(\mathbb C)^{sm}/T,C^-(\mathbb Z))$, 
\item $(f^*,f_*):=(P(f)^*,P(f)_*):
\PSh_{\mathbb Z}(\Cor^{fs}_{\mathbb Z}(\AnSp(\mathbb C)^{sm}/S),C^-(\mathbb Z))
\leftrightarrows\PSh_{\mathbb Z}(\Cor^{fs}_{\mathbb Z}(\AnSp(\mathbb C)^{sm}/T),C^-(\mathbb Z))$, 
\end{itemize}
given by $P(f):\AnSp(\mathbb C)^{sm}/T\to\AnSp(\mathbb C)^{sm}/S$ and 
$P(f):\Cor^{fs}_{\mathbb Z}(\AnSp(\mathbb C)^{sm}/T)\to\Cor^{fs}_{\mathbb Z}(\AnSp(\mathbb C)^{sm}/S)$, 
respectively.

\begin{defi}\label{RDmodstr}
Let $S\in\AnSp(\mathbb C)$.
\begin{itemize}
\item[(i)] The projective usual toplogy model structure on $\PSh(\AnSp(\mathbb C)^{sm}/S,C^-(\mathbb Z))$ 
is defined in the similar way of the absolute case (c.f. definition \ref{AnTrusu}(i)). 

\item[(ii)]The projective $(\mathbb D^1,et)$ model structure on the category
 $\PSh(\AnSp(\mathbb C)^{sm}/S,C^-(\mathbb Z))$ is the left Bousfield localization 
of the projective usual topology model structure with respect to the class of maps
$\left\{\mathbb Z(X\times\mathbb D^1/S)[n]\to\mathbb Z(X/S)[n], \; X/S\in\AnSp(\mathbb C)^{sm}/S,n\in\mathbb Z\right\}$. 

\item[(iii)] The projective usual toplogy model structure on $\PSh(\Cor^{fs}(\AnSp(\mathbb C)^{sm}/S),C^-(\mathbb Z))$ 
is defined in the similar way of the absolute case (c.f. definition \ref{AnTrusu}(ii)). 

\item[(iv)]The projective $(\mathbb D^1,et)$ model structure on the category
$\PSh_{\mathbb Z}(\Cor^{fs}_{\mathbb Z}(\AnSp(\mathbb C)^{sm}/S),C^-(\mathbb Z))$ 
is the left Bousfield localization of the projective usual topology model structure with respect to the class of maps 
$\left\{\mathbb Z_{tr}(X\times\mathbb D^1/S)[n]\to\mathbb Z(X/S)[n],\; X/S\in\AnSp(\mathbb C)^{sm}/S,n\in\mathbb Z\right\}$. 
\end{itemize}
\end{defi}

\begin{defi}
Let $S\in\AnSp(\mathbb C)$.
\begin{itemize}
\item[(i)] We define 
$\AnDM^-(S,\mathbb Z):=\Ho_{\mathbb D^1,et}(\PSh_{\mathbb Z}(\Cor^{fs}_{\mathbb Z}(\AnSp(\mathbb C)^{sm}/S),C^-(\mathbb Z)))$, 
to be the derived category of (effective) motives, it is
the homotopy category of $\PSh(\Cor^{fs}_{\mathbb Z}(\AnSp(\mathbb C)^{sm}/S),C^-(\mathbb Z))$ 
with respect to the projective $(\mathbb D^1,usu)$ model structure (c.f. definition \ref{RDmodstr}(ii)).
We denote by
\begin{equation*}
D^{tr}(\mathbb D^1,usu)(S):\PSh_{\mathbb Z}(\Cor^{fs}_{\mathbb Z}(\AnSp(\mathbb C)^{sm}/S,C^-(\mathbb Z)))\to\DM^{-}(S,\mathbb Z)_{et} \, , \,
D^{tr}(\mathbb D^1,usu)(S)(F^{\bullet})=F^{\bullet}
\end{equation*}
the canonical localization functor.

\item[(ii)] By the same way, we denote 
$\AnDA^-(S,\mathbb Z):=\Ho_{\mathbb D^1,usu}(\PSh(\AnSp(\mathbb C)^{sm}/S,C^-(\mathbb Z)))$ (c.f.\ref{RDmodstr}(i)) and
\begin{equation*}
D(\mathbb D^1,usu)(S):\PSh_{\mathbb Z}(\Cor^{fs}_{\mathbb Z}(\SmVar(\mathbb C)),C^-(\mathbb Z)))\to\AnDA^{-}(S,\mathbb Z) \, , \,
D(\mathbb D^1,usu)(S)(F^{\bullet})=F^{\bullet}
\end{equation*}
the canonical localization functor.
\end{itemize}
\end{defi}

We now look at an explicit localization functor.

For $F^{\bullet}\in\PSh(\AnSp(\mathbb C)^{sm}/S,\Ab)$ and $X/S\in\AnSp(\mathbb C)^{sm}/S$, 
we have the complex $F(X\times\bar{\mathbb D}^*/S)$ associated to the cubical object 
$F(X\times\bar{\mathbb D}^*/S)$ in the category of abelian groups.

\begin{itemize}
\item If $F^{\bullet}\in\PSh(\AnSp(\mathbb C)^{sm}/S,C^-(\mathbb Z))$ , 
\begin{equation}
\underline{\sing}_{\bar{\mathbb D}^*}F^{\bullet}:=\Tot(\underline\Hom(\mathbb Z(\bar{\mathbb D}^*\times S),F^{\bullet}))\in\PSh(\AnSp(\mathbb C)^{sm}/S,C^-(\mathbb Z)) 
\end{equation}
is the total complex of presheaves associated to the bicomplex of presheaves $X/S\mapsto F^{\bullet}(\bar{\mathbb D}^*\times X/S)$,
and $\sing_{\bar{\mathbb D}}^*F^{\bullet}:=e_{an}(S)_*\underline{\sing}_{\bar{\mathbb D}^*}F^{\bullet}\in C^-(\mathbb Z)$.
We denote by $S(F^{\bullet}):F^{\bullet}\to\underline{\sing}_{\bar{\mathbb D}^*}F^{\bullet}$, 
\begin{equation}\label{RSFAn}
S(F^{\bullet}):\xymatrix{
\cdots\ar[r] & 0\ar[r]\ar[d] & 0\ar[r]\ar[d] & F^{\bullet}\ar[d]^{I}\ar[r] & 0\ar[r]\ar[d] & \cdots  \\
\cdots\ar[r] & \underline{\sing}_{\bar{\mathbb D}^2}F^{\bullet}\ar[r] & 
\underline{\sing}_{\bar{\mathbb D}^1}F^{\bullet}\ar[r] & F^{\bullet}\ar[r] & 0\ar[r] & \cdots} 
\end{equation}
the inclusion morphism of $\PSh(\AnSp(\mathbb C)^{sm}/S,C^-(\mathbb Z))$ :
For $f:F_1^{\bullet}\to F_2^{\bullet}$ a morphism in $P^-(\An,S)$, we denote by 
$S(f):\underline{\sing}_{\bar{\mathbb D}^*}F_1^{\bullet}\to\underline{\sing}_{\bar{\mathbb D}^*}F_2^{\bullet}$,
the morphism of $\PSh(\AnSp(\mathbb C)^{sm}/S,C^-(\mathbb Z))$ given by for $X/S\in\AnSp(\mathbb C)^{sm}/S$,
\begin{equation}\label{RSfAn}
S(f)(X/S):\xymatrix{
\cdots\ar[r] & F_1(\bar{\mathbb D}^2\times X/S)\ar[r]\ar[d]^{f(\bar{\mathbb D}^2\times X/S)} &
F_1(\bar{\mathbb D}^1\times X/S)\ar[r]\ar[d]^{f(\bar{\mathbb D}^1\times X/S)} & F_1^{\bullet}(X)\ar[d]^{f(X/S)}\ar[r] 
& 0\ar[r]\ar[d] & \cdots \\
\cdots\ar[r] & F_2(\bar{\mathbb D}^2\times X/S)\ar[r] & F_2(\bar{\mathbb D}^1\times X/S)\ar[r] 
& F_2^{\bullet}(X/S)\ar[r] & 0\ar[r] & \cdots.}  
\end{equation}

\item If $F^{\bullet}\in\PSh_{\mathbb Z}(\Cor^{fs}_{\mathbb Z}(\AnSp(\mathbb C)^{sm}/S),C^-(\mathbb Z))$, 
\begin{equation}
\underline{\sing}_{\bar{\mathbb D}^*}F^{\bullet}:=\underline\Hom(\mathbb Z_{tr}(\bar{\mathbb D}^*\times S),F^{\bullet})
\in\PSh_{\mathbb Z}(\Cor^{fs}_{\mathbb Z}(\AnSp(\mathbb C)^{sm}/S),C^-(\mathbb Z)), 
\end{equation}
is the complex of presheaves 
associated to the bicomplex of presheaves $X/S\mapsto F^{\bullet}(\bar{\mathbb D}^*\times X/S)$,
and $\sing_{\bar{\mathbb D^*}F^{\bullet}}:=e^{tr}_{an}(S)_*\underline{\sing}_{\bar{\mathbb D}^*}F^{\bullet}\in C^-(\mathbb Z)$.
We have the inclusion morphism (\ref{RSFAn})
\begin{equation*}
S(\Tr_*F^{\bullet}):\Tr(S)_*F^{\bullet}\to
\underline{\sing}_{\bar{\mathbb D}^*}\Tr(S)_*F^{\bullet}=\Tr(S)_*\underline{\sing}_{\bar{\mathbb D}^*}F^{\bullet}
\end{equation*}
which is a morphism in $\PSh(\Cor^{fs}_{\mathbb Z}(\AnSp(\mathbb C)^{sm}/S),C^-(\mathbb Z))$ 
denoted the same way $S(F^{\bullet}):F^{\bullet}\to\underline{\sing}_{\bar{\mathbb D}^*}F^{\bullet}$.  
For $f:F_1^{\bullet}\to F_2^{\bullet}$ a morphism $PC^-(S)$, we 
have the morphism (\ref{RSfAn})
\begin{equation*} 
S(\Tr_*f):\Tr(S)_*\underline{\sing}_{\bar{\mathbb D}^*}F_1^{\bullet}=\underline{\sing}_{\bar{\mathbb D}^*}\Tr(S)_*F_1^{\bullet}\to
\underline{\sing}_{\bar{\mathbb D}^*}\Tr(S)_*F_2^{\bullet}=\Tr(S)_*\underline{\sing}_{\bar{\mathbb D}^*}F_2^{\bullet}
\end{equation*}
which is a morphism in $PC^-(\An,S)$ denoted the same way
$S(f):\underline{\sing}_{\bar{\mathbb D}^*}F_1^{\bullet}\to\underline{\sing}_{\bar{\mathbb D}^*}F_2^{\bullet}$.
\end{itemize}

For $F^{\bullet}\in\PSh(\Cor^{fs}_{\mathbb Z}(\AnSp(\mathbb C)^{sm}/S),C^-(\mathbb Z))$, we have by definition 
$\Tr(S)_*\underline{\sing}_{\bar{\mathbb D}^*}F^{\bullet}=\underline{\sing}_{\bar{\mathbb D}^*}\Tr(S)_*F^{\bullet}$ and 
$\Tr(S)_*S(F^{\bullet})=S(\Tr(S)_*F^{\bullet})$.

\begin{prop}
Let $S\in\AnSp(\mathbb C)$
\begin{itemize}
\item[(i)] $(\Tr(S)^*,\Tr(S)_*):\PSh(\AnSp(\mathbb C)^{sm}/S,C^-(\mathbb Z))\leftrightarrows
\PSh_{\mathbb Z}(\Cor^{fs}_{\mathbb Z}(\AnSp(\mathbb C)^{sm}/S),C^-(\mathbb Z))$
is a Quillen adjonction for the usual topology model structures
and a Quillen adjonction for the $(\mathbb D^1,et)$ model structures 
(c.f. definition \ref{RAmodstr} (i) and (ii) respectively). 

\item[(ii)] $(e_{an}(S)^*,e_{var}(S)_*):\PSh(\AnSp(\mathbb C)^{sm}/S,C^-(\mathbb Z))\leftrightarrows C^-(\mathbb Z)$
is a Quillen adjonction for the usual topology model structures 
and a Quillen adjonction for the $(\mathbb D^1,et)$ model structures 
(c.f. definition \ref{RAmodstr} (i)). 

\item[(iii)] $(e_{an}(S)^{tr*},e_{var}^{tr}(S)_*):
\PSh(\Cor^{fs}_{\mathbb Z}(\AnSp(\mathbb C)^{sm}/S),C^-(\mathbb Z))\leftrightarrows C^-(\mathbb Z)$
is a Quillen adjonction for the usual topology model structures
and a Quillen adjonction for the $(\mathbb D^1,et)$ model structures 
(c.f. definition \ref{RAmodstr} (ii)). 
\end{itemize}
\end{prop}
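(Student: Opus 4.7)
The plan is to mimic step-by-step the proofs of the analogous absolute statements given earlier in the excerpt (the propositions immediately after Definitions \ref{AnTrusu} and \ref{Dmodstr}), together with the pattern used in the preceding proposition for $\Var(k)/S$. In each of the three cases the strategy is to identify a convenient Quillen functor on one side of the adjunction and transfer the claim across.

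First, for point (i), I will verify that $\Tr(S)_*$ is a right Quillen functor for both the usual topology and the $(\mathbb D^1,usu)$ model structures. Since the usu local equivalences (resp.\ fibrations) in $\PSh_{\mathbb Z}(\Cor^{fs}_{\mathbb Z}(\AnSp(\mathbb C)^{sm}/S),C^-(\mathbb Z))$ are \emph{defined} to be those morphisms whose image under $\Tr(S)_*$ are usu local equivalences (resp.\ fibrations)---this is the straightforward relative analogue of Definition \ref{AnTrusu}(ii)---the functor $\Tr(S)_*$ preserves and detects usu local equivalences and fibrations, giving a Quillen adjunction at the usu level. To upgrade to $(\mathbb D^1,usu)$, I would prove the relative analogue of Lemma \ref{DTr}: a presheaf with transfers on $\AnSp(\mathbb C)^{sm}/S$ is $\mathbb D^1$ local iff its image under $\Tr(S)_*$ is $\mathbb D^1$ local, and a morphism is $(\mathbb D^1,usu)$ local iff its image under $\Tr(S)_*$ is. The proof of this relative version is essentially identical to the absolute one, using that for $X/S \in \AnSp(\mathbb C)^{sm}/S$ and $L^{\bullet}$ usu fibrant, $\Hom_{P^-(\An,S)}(\mathbb Z(X/S)[n],\Tr(S)_*L^{\bullet}) = H^nL^{\bullet}(X/S) = \Hom_{PC^-(\An,S)}(\mathbb Z_{tr}(X/S)[n],L^{\bullet})$ by Yoneda. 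Consequently $\Tr(S)_*$ preserves $\mathbb D^1$ local objects, so it is a right Quillen functor for the $(\mathbb D^1,usu)$ model structures.

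For points (ii) and (iii), I will show that $e_{an}(S)^*$ and $e^{tr}_{an}(S)^*$ are left Quillen functors. At the usu level this is immediate: the small site $\Ouv(S)$ embeds into $\AnSp(\mathbb C)^{sm}/S$ (resp.\ $\Cor^{fs}_{\mathbb Z}(\AnSp(\mathbb C)^{sm}/S)$) and the inclusion functor is continuous for the usual topology, so both $e_{an}(S)^*$ and $e^{tr}_{an}(S)^*$ preserve (termwise) cofibrations and usu local equivalences in the sense of the projective usu model structures. To pass to the $(\mathbb D^1,usu)$ level, it suffices to check that the generating trivial cofibrations of the $(\mathbb D^1,usu)$ Bousfield localization are sent to trivial cofibrations. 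But these generators involve only morphisms of the form $\mathbb Z(X\times\mathbb D^1/S)[n]\to\mathbb Z(X/S)[n]$ (resp.\ with transfers), which do not lie in the image of $e_{an}(S)^*$; so the condition is vacuous on the source, and what needs to be checked is that the functors preserve the localization classes, which reduces to the well-known statement that if $K^{\bullet}\in C^-(S)$ is any complex of sheaves on $S$, then $e_{an}(S)^*K^{\bullet}$ (resp.\ $e^{tr}_{an}(S)^*K^{\bullet}$) is $\mathbb D^1$ local: this follows from the fact that the projection $X\times\mathbb D^1\to X$ is a local homotopy equivalence over $S$, exactly as in the absolute case.

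The main (and only) obstacle is the first point, specifically the relative analogue of Lemma \ref{DTr}: namely, showing that $\Tr(S)_*$ detects $\mathbb D^1$ local objects. Once this is in hand, the rest of (i) and all of (ii) and (iii) follow by formal arguments identical to the absolute case. The Yoneda-lemma step used in the absolute proof carries over verbatim, provided one uses $X/S$ in place of $X$, so I expect no substantive new difficulty beyond careful bookkeeping with the relative site structure.
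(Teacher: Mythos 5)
Your proposal is correct and takes essentially the same route as the paper, which is a one-line proof that each of $\Tr(S)_*$, $e_{an}(S)^*$, $e^{tr}_{an}(S)^*$ derives trivially and is therefore a (right or left) Quillen functor; your expansion of point (i) via the relative analogue of Lemma~\ref{DTr} is exactly the content of that ``derives trivially'' claim.

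The one passage that is misphrased is the argument for (ii) and (iii). You write that ``it suffices to check that the generating trivial cofibrations of the $(\mathbb D^1,usu)$ Bousfield localization are sent to trivial cofibrations'' and then say this reduces to the $\mathbb D^1$-locality of $e_{an}(S)^*K^{\bullet}$. Both of these are in the wrong direction: the generating trivial cofibrations you would need $e_{an}(S)^*$ to preserve live in the \emph{source} $C^-(S)$, not the target, and the $\mathbb D^1$-locality of $e_{an}(S)^*K^{\bullet}$ is what one needs later for the Quillen \emph{equivalence} (Theorem~\ref{RAnTr}), not the adjunction. The correct (and simpler) argument is automatic: the $(\mathbb D^1,usu)$ structure is a left Bousfield localization of the usu structure, so it has the same cofibrations and a larger class of trivial cofibrations; since $e_{an}(S)^*$ (resp. $e^{tr}_{an}(S)^*$) is already left Quillen for the usu structures, it sends trivial cofibrations of $C^-(S)$ to cofibrations that are usu local equivalences, which are in particular $(\mathbb D^1,usu)$ local equivalences. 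No further input is needed. This is not a gap that affects the conclusion, but the reasoning you give does not actually establish what you claim it does.
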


\begin{proof}

\noindent(i): Follows from the fact that $\Tr(S)_*$ derive trivially hence is a right Quillen functor.

\noindent(ii): Follows from the fact that $e_{var}(S)^*$ derive trivially hence is a left Quillen functor.

\noindent(iii): Follows from the fact that $e^{tr}_{var}(S)^*$ derive trivially hence is a left Quillen functor.

\end{proof}

\begin{prop}\label{adfAn}
Let $f:T\to S$ a morphism in $\AnSp(\mathbb C)$.
\begin{itemize}
\item[(i)] The functors 
\begin{itemize}
\item $f_*:\PSh(\AnSp(\mathbb C)^{sm}/T,C^-(\mathbb Z))\to\PSh(\AnSp(\mathbb C)^{sm}/S,C^-(\mathbb Z))$ and
\item $f_*:\PSh(\Cor^{fs}_{\mathbb Z}(\AnSp(\mathbb C)^{sm}/T),C^-(\mathbb Z))
\to\PSh(\Cor^{fs}_{\mathbb Z}(\AnSp(\mathbb C)^{sm}/S),C^-(\mathbb Z))$, 
\end{itemize}
preserve and detect $\mathbb D^1$ local object, usu fibrant objects.
\item[(ii)] The functors 
\begin{itemize}
\item $f^*:\PSh(\AnSp(\mathbb C)^{sm}/S,C^-(\mathbb Z))\to\PSh(\AnSp(\mathbb C)^{sm}/T,C^-(\mathbb Z))$ and
\item $f^*:\PSh(\Cor^{fs}_{\mathbb Z}(\AnSp(\mathbb C)^{sm}/S),C^-(\mathbb Z))
\to\PSh(\Cor^{fs}_{\mathbb Z}(\AnSp(\mathbb C)^{sm}/T),C^-(\mathbb Z))$, 
\end{itemize}
preserve and detect $(\mathbb D^1,usu)$ equivalence.
\end{itemize}
\end{prop}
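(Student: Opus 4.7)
Following the strategy of Proposition \ref{adfVar}, the plan is to reduce everything to the explicit formula for $f_*$ coming from the pullback functor on sites. For (i), first observe that $(f_*F^{\bullet})(Y/S) = F^{\bullet}(P(f)(Y/S)) = F^{\bullet}(Y\times_S T/T)$, by definition of the pushforward along the morphism of sites $P(f)$. The functor $P(f)\colon Y/S \mapsto Y\times_S T/T$ commutes with the cartesian product by $\mathbb D^1$, that is $P(f)(Y\times\mathbb D^1/S) = P(f)(Y/S)\times\mathbb D^1/T$, and sends usu covers of $Y/S$ to usu covers of $Y\times_S T/T$ since the usu topology is stable under pullback in $\AnSp(\mathbb C)$. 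These two facts directly imply that $f_*$ preserves $\mathbb D^1$ local objects and preserves usu fibrant objects. The transfer version follows from the same observations together with the fact that $f_*$ commutes with $\Tr(\cdot)_*$, reducing as in the proof of Lemma \ref{DTr} to the non-transfer case.

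For the detection part of (i), I would argue that the $(\mathbb D^1,usu)$ locality of $F^{\bullet}\in\PSh(\AnSp(\mathbb C)^{sm}/T,C^-(\mathbb Z))$ is a local condition on $T$. Concretely, the assignment $Y/S\mapsto Y\times_S T/T$ is essentially surjective onto a generating family of $\AnSp(\mathbb C)^{sm}/T$ once one allows usu-local refinements (any smooth $X\to T$ admits a usu cover by open subsets whose image in $S$ lies in usu neighborhoods pulled back from $S$), so testing fibrancy and $\mathbb D^1$ locality on $f_*F^{\bullet}$ is equivalent to testing them on $F^{\bullet}$ itself. Part (ii) then follows formally from (i) by the adjunction $(f^*,f_*)$: preservation of the fibrant objects for the $(\mathbb D^1,usu)$ model structure by $f_*$ makes $(f^*,f_*)$ into a Quillen adjunction, so $f^*$ preserves $(\mathbb D^1,usu)$ weak equivalences between cofibrant objects, and detection by $f^*$ is the dual statement of detection of fibrant objects by $f_*$, obtained through a standard Yoneda argument in the homotopy category after fibrant replacement.

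The main obstacle will be making the detection claim in (i) fully rigorous. The algebraic analogue (Proposition \ref{adfVar}) is dispatched in a single line, suggesting that the author views this as an immediate consequence of the local nature of both the usu fibrancy and the $\mathbb D^1$ locality conditions. The argument will require checking carefully that the cofinality of objects of the form $Y\times_S T/T$ inside $\AnSp(\mathbb C)^{sm}/T$ is strong enough to transfer the locality conditions, and that no subtlety arises from the fact that a general smooth $X/T$ need not be the pullback of any smooth $Y/S$; however, this is handled by restricting to a suitable usu cover and exploiting that both conditions we are testing are themselves local for the usu topology.
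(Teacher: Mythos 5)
Your derivation of the preservation statements is sound: the explicit formula $(f_*F^{\bullet})(Y/S)=F^{\bullet}(Y\times_S T/T)$ combined with $P(f)(Y\times\mathbb D^1/S)=P(f)(Y/S)\times\mathbb D^1/T$ gives preservation of $\mathbb D^1$ local objects directly, the continuity of $P(f)$ for the usu topology gives preservation of usu fibrant objects, and the reduction of the transfer case via $\Tr(\cdot)_*$ in the style of Lemma \ref{DTr} is reasonable. The deduction of the ``preserve'' part of (ii) from the ``preserve'' part of (i) by passing through $\mathbb D^1$-local, usu-fibrant test objects and the adjunction isomorphism $\Hom(f^*G,K)\cong\Hom(G,f_*K)$ is also correct. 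This is, as far as one can tell, the content behind the paper's one-line proof.

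The detection half is where your argument breaks. The claim that ``the assignment $Y/S\mapsto Y\times_S T/T$ is essentially surjective onto a generating family of $\AnSp(\mathbb C)^{sm}/T$ once one allows usu-local refinements'' is not correct as stated, and the gap does not close. Take $S=\pt$, so $f\colon T\to\pt$ is the terminal map; then $f_*F$ being $\mathbb D^1$ local only records that $F(Y\times T)\to F(Y\times T\times\mathbb D^1)$ is a quasi-isomorphism for $Y\in\AnSm(\mathbb C)$. A general smooth $X/T$ (e.g. an open subset $U\subsetneq T$, or a nontrivial finite \'etale cover of $T$) is not a pullback $Y\times T$, and although $U$ does embed as an open of the pullback $\pt\times T=T$, the $\mathbb D^1$-invariance of $F'$ on the ambient $T$ gives no control over $F'(U)\to F'(U\times\mathbb D^1)$. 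Usu descent runs in the wrong direction here: it lets one deduce invariance on a union from invariance on the pieces of a cover, not conversely. The same problem undercuts your ``standard Yoneda argument'' for detection in (ii): if $f^*\phi$ is a $(\mathbb D^1,usu)$ equivalence one learns that $\Hom(\phi,f_*L)$ is an isomorphism for $L$ fibrant over $T$, but one needs this for arbitrary fibrant $K$ over $S$, and such $K$ need not be in the essential image of $f_*$. So as written neither the detection half of (i) nor that of (ii) is proved, and I do not see how to prove them without additional hypotheses on $f$ (such as smoothness). You were right to flag this as the main obstacle; but I would go further and say it is not merely an obstacle to be overcome by a more careful cofinality argument — it is a genuine missing idea, and the claim itself may require qualification. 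Note, however, that the downstream use of this proposition (Proposition \ref{adfAncor}) only invokes the preservation direction of (ii), so the parts of your argument that are on solid footing are exactly the parts that matter for the rest of the paper.
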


\begin{proof}
Point (i) follows immediately from definition of $\mathbb D^1$ local objects.
Point (ii) follows from point (i).
\end{proof}

We immediately deduce the following

\begin{prop}\label{adfAncor}
Let $f:T\to S$ a morphism in $\AnSp(\mathbb C)$. 
\begin{itemize}
\item[(i)] The adjonction 
$(f^*,f_*):\PSh(\AnSp(\mathbb C)^{sm}/S,C^-(\mathbb Z))\leftrightarrows\PSh(\AnSp(\mathbb C)^{sm}/T,C^-(\mathbb Z))$ 
is a Quillen adjonction with respect to the usual topology model structures and the $(\mathbb D^1,et)$ model structures
\item[(ii)] The adjonction
$(f^*,f_*):\PSh(\Cor^{fs}_{\mathbb Z}(\AnSp(\mathbb C)^{sm}/S),C^-(\mathbb Z))
\leftrightarrows\PSh(\Cor^{fs}_{\mathbb Z}(\AnSp(\mathbb C)^{sm}/T),C^-(\mathbb Z))$, 
is a Quillen adjonction with respect to the usual topology model structures and the $(\mathbb D^1,et)$ model structures
\end{itemize}
\end{prop}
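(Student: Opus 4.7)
The plan is to mirror the very short proof of the algebraic analogue, Proposition \ref{adfVarcor}, using Proposition \ref{adfAn} as the required input. To show that $(f^*, f_*)$ is a Quillen adjunction with respect to the usual topology, respectively $(\mathbb D^1, usu)$, model structures (in both the untransferred and transferred settings), it suffices to check that $f^*$ is a left Quillen functor, i.e. that $f^*$ preserves cofibrations and trivial cofibrations.

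First I would note that $f^*$ preserves projective cofibrations. Since $P(f) = \cdot\times_S T : \AnSp(\mathbb C)^{sm}/S \to \AnSp(\mathbb C)^{sm}/T$ sends $X/S$ to $X\times_S T/T$, one has $f^*\mathbb Z(X/S) = \mathbb Z(X\times_S T/T)$ and $f^*\mathbb Z_{tr}(X/S) = \mathbb Z_{tr}(X\times_S T/T)$. Consequently $f^*$ maps generating projective cofibrations of the projective model structure on $\PSh(\AnSp(\mathbb C)^{sm}/S,C^-(\mathbb Z))$ (resp. on the transfer analogue) to generating projective cofibrations of the target; being a left adjoint it preserves arbitrary projective cofibrations. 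Since the cofibrations of the usual topology model structure and of the $(\mathbb D^1,usu)$ model structure agree with those of the projective model structure (left Bousfield localization does not change the cofibrations), this takes care of the cofibration condition in both settings.

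Next I would deal with trivial cofibrations. For the $(\mathbb D^1,usu)$ model structures this is immediate: combined with cofibration preservation, the assertion of Proposition \ref{adfAn}(ii) that $f^*$ preserves $(\mathbb D^1,usu)$-equivalences finishes the argument. For the usual topology model structures, the cleanest route is the dual one: Proposition \ref{adfAn}(i) asserts that $f_*$ preserves usu-fibrant objects, hence preserves fibrations of the projective usual topology model structure; combined with the adjunction and the cofibration preservation of $f^*$, this shows that $(f^*,f_*)$ is a Quillen adjunction for the usual topology model structures as well.

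No serious obstacle should arise: the whole content of the proposition is already encapsulated in Proposition \ref{adfAn}, and the proof reduces to a direct citation in complete parallel with the one-line proof of Proposition \ref{adfVarcor}. The one bookkeeping point to verify carefully is that the fiber product $X\times_S T$ of a smooth $X\to S$ along $f:T\to S$ remains smooth over $T$, so that $P(f)$ genuinely restricts from $\AnSp(\mathbb C)/S \to \AnSp(\mathbb C)/T$ to the smooth subcategories, and similarly that finite surjective correspondences pull back to finite surjective correspondences; both are standard.
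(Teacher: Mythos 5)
Your cofibration argument is a useful explicit addition (the paper leaves it implicit), and your treatment of the $(\mathbb D^1,usu)$ case matches the paper's: Proposition \ref{adfAn}(ii) gives preservation of $(\mathbb D^1,usu)$-equivalences, which with cofibration preservation finishes. For the usual topology case, however, your argument diverges from the paper's and contains a gap. The paper cites \cite{AyoubT} for the fact that $f^*$ derives trivially for the usu model structures (i.e.\ $f^*$ preserves usu local equivalences, a standard consequence of the pullback along a morphism of sites being exact) and then concludes $f^*$ is left Quillen. You instead take a dual route through $f_*$ and assert that $f_*$ preserving usu-fibrant objects ``hence preserves fibrations of the projective usual topology model structure.'' That implication is not valid: in a left Bousfield localization the class of fibrations shrinks and is not determined by fibrant objects alone, so preservation of fibrant objects does not, on its own, give preservation of fibrations. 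The conclusion you want is recoverable from preservation of fibrant objects, but the deduction must go through the mapping-space criterion for Quillen pairs under left Bousfield localization (from which ``$(f^*,f_*)$ is a Quillen pair for the localized structures,'' and hence in particular $f_*$ preserves localized fibrations, follows), not through an unmediated claim about fibrations. Either invoke that criterion explicitly, or, more simply, follow the paper's direct route via $f^*$ preserving usu local equivalences.
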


\begin{proof}

\noindent(i): By \cite{AyoubT}, $f^*$ derive trivially for the usual topology model structures.
Now, by proposition \ref{adfAn}(ii), $f^*$ derive trivially for the $(\mathbb D^1,usu)$ model structures.
In particular $f^*$ is a left Quillen functor.

\noindent(ii): Similar to point (i).

\end{proof}

\begin{thm}\label{RAnS}
Let $S\in\AnSp(\mathbb C)$
\begin{itemize}
\item[(i)] For $F^{\bullet}\in\PSh(\AnSp(\mathbb C)^{sm}/S,C^-(\mathbb Z))$,
$\underline{\sing}_{\bar{\mathbb D}^*}F^{\bullet}\in\PSh(\AnSp(\mathbb C)^{sm}/S,C^-(\mathbb Z))$ 
is $\mathbb D^1$ local and 
the inclusion morphism $S(F^{\bullet}):F^{\bullet}\to\underline{\sing}_{\bar{\mathbb D}^*}F^{\bullet}$
is an $(\mathbb D^1,usu)$ equivalence. 

\item[(ii)] For $F^{\bullet}\in\PSh_{\mathbb Z}(\Cor^{fs}_{\mathbb Z}(\AnSp(\mathbb C)^{sm}/S),C^-(\mathbb Z))$, 
$\underline{\sing}_{\bar{\mathbb D}^*}F^{\bullet}\in\PSh_{\mathbb Z}(\Cor^{fs}_{\mathbb Z}(\AnSp(\mathbb C)^{sm}),C^-(\mathbb Z))$ 
is $\mathbb D^1$ local and 
the inclusion morphism $S(F^{\bullet}):F^{\bullet}\to\underline{\sing}_{\bar{\mathbb D}^*}F^{\bullet}$ 
is an $(\mathbb D^1,usu)$ equivalence. 
\end{itemize}
\end{thm}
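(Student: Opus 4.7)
The plan is to prove this theorem by exact analogy with the absolute case (Theorem \ref{AnS}), first establishing (i) via a direct homotopy argument on $\PSh(\AnSp(\mathbb C)^{sm}/S,C^-(\mathbb Z))$, and then deducing (ii) from (i) via a relative version of Lemma \ref{DTr}. The reason this strategy works uniformly in $S$ is that all the constructions ($\underline{\sing}_{\bar{\mathbb D}^*}$, $S(F^{\bullet})$, and $\Tr(S)_*$) are defined pointwise at the level of $X/S \in \AnSp(\mathbb C)^{sm}/S$, and the fibered products with the pro-object $\bar{\mathbb D}^*$ take place over $S$ rather than over $\Spec \mathbb C$, which means no new geometric input is required beyond what Ayoub's argument in \cite{AyoubG1} already provides.

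For point (i), I will follow the strategy of Theorem \ref{CWsingI}(i) adapted to the analytic setting with $\bar{\mathbb D}^*$ replacing $\mathbb I^*$. First I would reduce the statement that $S(F^{\bullet})$ is a $(\mathbb D^1,usu)$ equivalence to showing that for every $n$, the map $\underline{F}^{\bullet}(p_n): F^{\bullet} \to \underline{\Hom}(\mathbb Z(\bar{\mathbb D}^n \times S/S),F^{\bullet})$ induced by the projection $p_n: \bar{\mathbb D}^n \times X/S \to X/S$ is a $(\mathbb D^1,usu)$ equivalence. For this, I would exhibit the relative contracting homotopy $\theta_{1,n}(X/S): X \times \bar{\mathbb D}^n \times \bar{\mathbb D}^1/S \to X \times \bar{\mathbb D}^n/S$, $(x,t_1,\dots,t_n,t_{n+1}) \mapsto (x, t_1-t_{n+1}t_1, \dots, t_n-t_{n+1}t_n)$, which is a morphism in $\AnSp(\mathbb C)^{sm}/S$ and defines a $\mathbb D^1$-homotopy between $I$ and $0 \times I_X$; the rest of the argument is the same diagram chase as in the absolute case, using the analytic analogue of lemma \ref{I1hpt}(iv). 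To see that $\underline{\sing}_{\bar{\mathbb D}^*}F^{\bullet}$ is $\mathbb D^1$ local, I would repeat the argument from proposition \ref{actadjCW} adapted to the $(\mathbb D^1,usu)$ structure on $P^-(\An,S)$.

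For point (ii), the plan is to establish the relative analogue of Lemma \ref{DTr}: namely, that a complex $F^{\bullet} \in PC^-(\An,S)$ is $\mathbb D^1$ local iff $\Tr(S)_*F^{\bullet}$ is $\mathbb D^1$ local, and a morphism $\phi$ is a $(\mathbb D^1,usu)$ equivalence iff $\Tr(S)_*\phi$ is one. The arguments are identical to the absolute case: one uses that $\Tr(S)_*$ preserves usu-fibrant objects (by \cite{AyoubG2}), the Yoneda identification $\Hom_{P^-(\An,S)}(\mathbb Z(X/S)[n],\Tr(S)_*L^{\bullet}) = H^n L^{\bullet}(X/S) = \Hom_{PC^-(\An,S)}(\mathbb Z_{tr}(X/S)[n],L^{\bullet})$ for $L^{\bullet}$ usu fibrant, and the fact that $\Tr(S)_*$ preserves and detects usu local equivalences by definition of the usual topology model structure on $PC^-(\An,S)$. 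Once this is in place, (ii) follows formally: $\Tr(S)_* \underline{\sing}_{\bar{\mathbb D}^*}F^{\bullet} = \underline{\sing}_{\bar{\mathbb D}^*}\Tr(S)_*F^{\bullet}$ is $\mathbb D^1$ local by (i), hence $\underline{\sing}_{\bar{\mathbb D}^*}F^{\bullet}$ is; and $\Tr(S)_* S(F^{\bullet}) = S(\Tr(S)_*F^{\bullet})$ is a $(\mathbb D^1,usu)$ equivalence by (i), hence $S(F^{\bullet})$ is.

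The main obstacle I anticipate is verifying the relative version of the key technical Yoneda identification that underlies Lemma \ref{DTr}(i) — specifically, that $\Tr(S)_*$ sends usu-fibrant objects in $PC^-(\An,S)$ to usu-fibrant objects in $P^-(\An,S)$ in the relative setting. In the absolute case this is stated in \cite{AyoubG2}, and the argument uses that $\mathbb Z_{tr}(X)$ and $\mathbb Z(X)$ compute the same $\Hom$-groups into a usu-fibrant sheaf with transfers; in the relative case one needs to verify this passes through the slice category $\AnSp(\mathbb C)^{sm}/S$, which requires checking that the fibered product structure is compatible with the transfer structure. This should follow from a straightforward adaptation of Ayoub's proof, using that for $X/S \in \AnSp(\mathbb C)^{sm}/S$ the presheaf $\mathbb Z_{tr}(X/S)$ is still an usu sheaf and that Zariski-type arguments go through fiberwise, but one has to keep track of the morphisms to $S$ carefully.
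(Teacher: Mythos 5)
The paper's own proof of Theorem~\ref{RAnS} is the one-line remark ``Similar to the proof of the absolute case,'' and the absolute case Theorem~\ref{AnS} cites \cite{AyoubG1} directly for point~(i) and then deduces~(ii) from~(i) via Lemma~\ref{DTr}. Your treatment of~(ii) matches the paper exactly: set up the relative analogue of Lemma~\ref{DTr}, note $\Tr(S)_*$ commutes with $\underline{\sing}_{\bar{\mathbb D}^*}$ and $S(-)$, and conclude. That part is fine.

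Your treatment of~(i), however, contains a genuine gap, and it is not the one you flag at the end. You propose to transport the CW-case homotopy verbatim, using
$\theta_{1,n}(X/S)\colon (x,t_1,\dots,t_n,t_{n+1}) \mapsto (x,\,t_1-t_{n+1}t_1,\,\dots,\,t_n-t_{n+1}t_n)$.
Over $\mathbb I^1=[0,1]$ this is a well-defined contraction because $t_i(1-t_{n+1})\in[0,1]$. But in the analytic setting $t_{n+1}$ ranges over (a neighborhood of) $\bar{\mathbb D}^1\subset\mathbb C$, and then $|1-t_{n+1}|$ can be arbitrarily close to $2$, so $|t_i(1-t_{n+1})|$ leaves the closed unit polydisk. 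Consequently $\theta_{1,n}$ does \emph{not} define a morphism of pro-objects $\bar{\mathbb D}^{n}\times\bar{\mathbb D}^{1}\to\bar{\mathbb D}^{n}$ in $\AnSm(\mathbb C)$ (nor relative to $S$); there is no choice of analytic neighborhood of $\bar{D}(0,1)^{n+1}$ mapping into a prescribed neighborhood of $\bar{D}(0,1)^{n}$. The naive substitutes, such as $\theta(t,s)=ts$, preserve the disk but only reach the identity at the boundary point $s=1\notin\mathbb D^1$, so they do not give a $\mathbb D^1$-homotopy in the sense used by the $(\mathbb D^1,usu)$ model structure either. This is precisely why the analytic version of the contraction argument is nontrivial and why the paper defers point~(i) to Ayoub's proof in \cite{AyoubG1}, which handles the disks by a different mechanism. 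Your last paragraph worries about whether $\Tr(S)_*$ preserves usu-fibrancy in the relative setting; that step is indeed worth checking but is a routine slicing argument, whereas the failure of the contraction formula is the substantive obstruction you should have identified and addressed.
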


\begin{proof}

Similar to the proof of the absolute case.  
 
\end{proof}

\begin{thm}\label{RAnTr}
Let $S\in\AnSp(\mathbb C)$. 
\begin{itemize}
\item[(i)]The adjonction
$(\Tr(S)^*,\Tr(S)_*):\PSh(\AnSp(\mathbb C)^{sm}/S,C^-(\mathbb Z))\leftrightarrows
\PSh_{\mathbb Z}(\Cor^{fs}_{\mathbb Z}(\AnSp(\mathbb C)^{sm}/S),C^-(\mathbb Z))$
is a Quillen equivalence for the $(\mathbb D^1,usu)$ model structures. 
That is, the derived functor 
\begin{equation}
L\Tr^*:\AnDA^-(S,\mathbb Z)\xrightarrow{\sim}\AnDM^-(S,\mathbb Z) 
\end{equation}
is an isomorphism
and $\Tr_*:\AnDM^-(S,\mathbb Z)\xrightarrow{\sim}\AnDA^-(S,\mathbb Z)$ is it inverse. 

\item[(ii)] The adjonction
$(e_{an}(S)^*,e_{an}(S)_*):C^-(\mathbb Z)\leftrightarrows\PSh_{\mathbb Z}(\AnSm(\mathbb C),C^-(\mathbb Z))$
is a Quillen equivalence for the $(\mathbb I^1,usu)$ model structures. 
That is, the derived functor
\begin{equation}
e_{an}^*:D^-(S)\xrightarrow{\sim}\AnDA^-(S,\mathbb Z)
\end{equation}
is an isomorphism
and $Re_{an}(S)_*:\AnDA^-(S,\mathbb Z)\xrightarrow{\sim}D^-(S)$ is it inverse. 

\item[(iii)] The adjonction
$(e^{tr}_{an}(S)^*,e^{tr}_{an}(S)_*):C^-(S)\leftrightarrows
\PSh_{\mathbb Z}(\Cor^{fs}_{\mathbb Z}(\AnSp^{sm}(\mathbb C)/S),C^-(\mathbb Z))$
is a Quillen equivalence for the $(\mathbb D^1,usu)$ model structures. 
That is, the derived functor
\begin{equation}
e^{tr}_{an}(S)^*:D^-(S)\xrightarrow{\sim}\AnDM^-(S,\mathbb Z)
\end{equation}
is an isomorphism
and $Re^{tr}_{an}(S)_*:\AnDM^-(S,\mathbb Z)\xrightarrow{\sim}D^-(S)$ is it inverse. 
\end{itemize}
\end{thm}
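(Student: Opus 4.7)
The plan is to mimic the proofs of the absolute versions (theorem \ref{AnTr} and propositions \ref{cle}, \ref{actadjCW}), using theorem \ref{RAnS} as the explicit localization functor. I will treat the three points in order, with (i) and (iii) being formal once the key analytic content of (ii) is established.

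For point (i), I would first prove the relative analog of lemma \ref{DTr}: a complex $F^{\bullet} \in PC^-(\An,S)$ is $\mathbb D^1$ local if and only if $\Tr(S)_*F^{\bullet} \in P^-(\An,S)$ is $\mathbb D^1$ local, and a morphism $\phi$ in $PC^-(\An,S)$ is an $(\mathbb D^1,usu)$ local equivalence if and only if $\Tr(S)_*\phi$ is. The "if" direction follows by combining Yoneda (which identifies $\Hom_{P^-(\An,S)}(\mathbb Z(X/S)[n],\Tr(S)_*L^{\bullet}) = \Hom_{PC^-(\An,S)}(\mathbb Z_{tr}(X/S)[n],L^{\bullet})$ for $L^{\bullet}$ usu fibrant) with the fact that $\Tr(S)^*$ preserves usu fibrant objects, as in \cite{AyoubG2}. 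With this lemma and theorem \ref{RAnS}(ii) in hand, the argument for the Quillen equivalence is formal: given $F^{\bullet} \in PC^-(\An,S)$, the composite $F^{\bullet} \to \underline{\sing}_{\bar{\mathbb D}^*}F^{\bullet} \to \Tr(S)^*\Tr(S)_*\underline{\sing}_{\bar{\mathbb D}^*}F^{\bullet}$ must be shown to be a $(\mathbb D^1,usu)$ equivalence, which reduces via the lemma to the corresponding statement for $\Tr(S)_*F^{\bullet}$, which is true by theorem \ref{RAnS}(i).

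Point (ii) is the main content. By proposition \ref{adfAncor}, $e_{an}(S)^*$ is a left Quillen functor for the usual topology. I would first show that it descends to a left Quillen functor for the $(\mathbb D^1,usu)$ model structures by establishing the relative analog of proposition \ref{cle}(ii): for $K^{\bullet} \in C^-(S)$, the complex $e_{an}(S)^*K^{\bullet} \in P^-(\An,S)$ is $\mathbb D^1$ local. Concretely, for $X/S=(X,h) \in \AnSp(\mathbb C)^{sm}/S$ and $K^{\bullet}$ usu fibrant, $e_{an}(S)^*K^{\bullet}(X/S)$ computes a derived section $R\Gamma(X,h^*K^{\bullet})$, and the projection $p_X: X \times \mathbb D^1 \to X$ is a $\mathbb D^1$-bundle, so the pullback $R\Gamma(X,h^*K^{\bullet}) \to R\Gamma(X \times \mathbb D^1,(h \circ p_X)^*K^{\bullet})$ is a quasi-isomorphism. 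This adapts the argument of \cite{AyoubB} proposition 1.6 to the relative setting. Having this, I would then verify that the unit $\ad(e_{an}(S)^*,e_{an}(S)_*)(K^{\bullet}) : K^{\bullet} \to Re_{an}(S)_*e_{an}(S)^*K^{\bullet}$ is an isomorphism in $D^-(S)$ (since $e_{an}(S)^*$ is fully faithful at the level of small sites, $e_{an}(S)_*e_{an}(S)^*K^{\bullet}(U) = K^{\bullet}(U)$ for $U \in \Ouv(S)$), and that the counit is a $(\mathbb D^1,usu)$ equivalence by an argument analogous to proposition \ref{actadjCW}(i): applied to a section $X/S$, the counit at $\underline{\sing}_{\bar{\mathbb D}^*}F^{\bullet}$ restricts to the unit of the usual adjonction on the small site of $X$, which is an usu equivalence.

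Point (iii) follows formally by composing the Quillen equivalences of (i) and (ii), since $e^{tr}_{an}(S) = \Tr(S) \circ e_{an}(S)$ as morphisms of sites, so $(e^{tr}_{an}(S)^*,e^{tr}_{an}(S)_*) = (\Tr(S)^* \circ e_{an}(S)^*, e_{an}(S)_* \circ \Tr(S)_*)$ as adjoint pairs and the derived equivalences compose. Alternatively, one can argue directly: by (i), $e^{tr}_{an}(S)^*K^{\bullet} = \Tr(S)^*e_{an}(S)^*K^{\bullet}$ is $\mathbb D^1$ local iff $\Tr(S)_*\Tr(S)^*e_{an}(S)^*K^{\bullet} = e_{an}(S)^*K^{\bullet}$ is, which holds by (ii), and the unit/counit arguments then transfer. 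The main obstacle will be point (ii), specifically verifying that $e_{an}(S)^*K^{\bullet}$ is $\mathbb D^1$ local: the smooth morphism $h:X \to S$ is not a topological submersion in a trivial way, and one must pass through a local description (locally $X \simeq S \times \mathbb D^{d_X - d_S}$) to invoke contractibility of $\mathbb D^1$ together with proper base change / projection formula arguments for sheaves on analytic spaces.
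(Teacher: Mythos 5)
Your proposal follows essentially the same route as the paper, which handles the theorem very tersely: (i) is declared "similar to the proof of the absolute case," (ii) is a direct citation to \cite{AyoubB}, and (iii) "follows from (i) and (ii)." You have simply unpacked what each of those three lines would look like if written out — the relative analog of lemma \ref{DTr} together with theorem \ref{RAnS} for (i), a sketch of Ayoub's argument that $e_{an}(S)^*K^{\bullet}$ is $\mathbb D^1$ local for (ii), and composition of Quillen equivalences via the factorization $e^{tr}_{an}(S)^*=\Tr(S)^*\circ e_{an}(S)^*$ for (iii) — so there is no genuine divergence in strategy, only in the level of detail supplied.
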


\begin{proof}

\noindent(i): Similar to the proof of the absolute case.

\noindent(ii): It is proved in \cite{AyoubB}.

\noindent(iii): Follows from (i) and (ii).

\end{proof}

Let $f:T\to S$ a morphism in $\AnSp(\mathbb C)$. 
There is a canonical morphism of functor $\phi(f^*,S)$ which associate to 
$F^{\bullet}\in\PSh_{\mathbb Z}(\Cor^{fs}_{\mathbb Z}(\AnSp(\mathbb C)^{sm}/S),C^-(\mathbb Z))$
the morphism
\begin{equation*}
\phi(f^*,S)(F^{\bullet}):f^*\underline{\sing}_{\bar{\mathbb D}^*}F^{\bullet}
\to\underline{\sing}_{\bar{\mathbb D}^*}f^*F^{\bullet}
\end{equation*}
in $\PSh_{\mathbb Z}(\Cor^{fs}_{\mathbb Z}(\AnSp(\mathbb C)^{sm}/T),C^-(\mathbb Z))$
given by for $Y/T\in\AnSp(\mathbb C)^{sm}/T$, the morphism 
\begin{equation*}
\phi(f^*,S)(F^{\bullet})(Y/T):\lim_{Y/T\to X_T/T}F^{\bullet}(X\times\bar{\mathbb D}^*/S)
\to\lim_{Y\times\bar{\mathbb D}^*/T\to X_T}F^{\bullet}(X/S)
\end{equation*}
given by $(h:Y/T\to X_T/T)\mapsto(h\circ p_Y:Y\times\bar{\mathbb D}^*/T\to X_T/T)$
and $F^{\bullet}(p_X):(X\times\bar{\mathbb D}^*/S)\to F^{\bullet}(X/S)$.

Let $f:T\to S$ a morphism in $\AnSp(\mathbb C)$. 
There is also canonical morphism of functor $\phi(f,e_{an})$ which associate to 
$F^{\bullet}\in\PSh_{\mathbb Z}(\Cor^{fs}_{\mathbb Z}(\AnSp(\mathbb C)^{sm}/S),C^-(\mathbb Z))$
the following morphism in $C^-(T)$
\begin{equation*}
\phi(f^*,e)(F^{\bullet}):f^*e_{S*}F^{\bullet}\to
e_{T*}e_T^*f^*e(S)_*F^{\bullet}=e_{T*}f^*e_S^*e_{S*}F^{\bullet}\to e_{T*}f^*F^{\bullet}
\end{equation*}
given by the adjonction morphisms and
denoting for simplicity $e_S=e_{an}^{tr}(S)$ and $e_T=e_{an}^{tr}(T)$. 

Let $f:T\to S$ a morphism in $\AnSp(\mathbb C)$. 
We denote by $\phi(f^*,e_{an},S)$ the morphism of functor, which for
$F^{\bullet}\in\PSh_{\mathbb Z}(\Cor^{fs}_{\mathbb Z}(\AnSp(\mathbb C)^{sm}/S),C^-(\mathbb Z))$, 
associate the following composition in $C^-(T)$
\begin{equation*}
\phi(f^*,e_{an},S)(F^{\bullet}):f^*\sing_{\bar{\mathbb D}^*}F^{\bullet}\xrightarrow{\phi(f^*,e)(\sing_{\bar{\mathbb D}^*}F^{\bullet})}
e(T)_*f^*\underline{\sing}_{\bar{\mathbb D}^*}F^{\bullet}\xrightarrow{e(T)_*\phi(f^*,S)(F^{\bullet})}
\sing_{\bar{\mathbb D}^*}f^*F^{\bullet}.
\end{equation*}

By definition, we have:

\begin{prop}\label{adRSAn}
Let $f:T\to S$ a morphism in $\AnSp(\mathbb C)$. For $F^{\bullet}\in PC^-(\An,S)$, the morphism 
$\phi(f^*,e,S)(F^{\bullet}):f^*\sing_{\bar{\mathbb D}^*}F^{\bullet}\to\sing_{\bar{\mathbb D}^*}f^*F^{\bullet}$
in $C^-(T)$ is an isomorphism.
\end{prop}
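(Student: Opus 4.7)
The plan is to verify the claim pointwise on open subsets $U\subset T$ by unwinding the composite definition of $\phi(f^{*},e,S)(F^{\bullet})$ through its two constituent natural transformations $\phi(f^{*},e)$ and $\phi(f^{*},S)$, and then identifying the induced map of filtered colimits as a formal bijection.

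First I would compute the source of $\phi(f^{*},e,S)(F^{\bullet})(U)$ using the definition of the pullback of presheaves on open subsets along $f$ together with $e_{an}(S)_{*}\underline{\sing}_{\bar{\mathbb D}^{*}}F^{\bullet}(V)=F^{\bullet}(V\times\bar{\mathbb D}^{*}/S)$. This gives
\begin{equation*}
(f^{*}\sing_{\bar{\mathbb D}^{*}}F^{\bullet})(U)=\colim_{V\subset S\text{ open},\;V\supset f(U)}F^{\bullet}(V\times\bar{\mathbb D}^{*}/S).
\end{equation*}
Dually, using that $f^{*}$ on $PC^{-}(\An,S)$ is the left Kan extension along the base change functor $P(f)^{-1}\colon\AnSp(\mathbb C)^{sm}/S\to\AnSp(\mathbb C)^{sm}/T$, together with the formula $\underline{\sing}_{\bar{\mathbb D}^{*}}G(X/T)=G(X\times\bar{\mathbb D}^{*}/T)$, I would compute
\begin{equation*}
(\sing_{\bar{\mathbb D}^{*}}f^{*}F^{\bullet})(U)=(f^{*}F^{\bullet})(U\times\bar{\mathbb D}^{*}/T)=\colim_{(X/S,\;h\colon U\times\bar{\mathbb D}^{*}/T\to X_{T}/T)}F^{\bullet}(X/S).
\end{equation*}

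Second I would trace through the two factors of $\phi(f^{*},e,S)$. Applied at $U$, the factor $\phi(f^{*},e)(\underline{\sing}_{\bar{\mathbb D}^{*}}F^{\bullet})$ identifies the index $V\supset f(U)$ with the pair $(V/S,U\hookrightarrow V_{T})$ via the canonical map $f^{-1}(V)=V_{T}$, while $e_{T*}\phi(f^{*},S)(F^{\bullet})$ sends the pair $(X/S,h)$ to the pair $(X\times\bar{\mathbb D}^{*}/S,h\times I_{\bar{\mathbb D}^{*}})$ using the identification $(X\times\bar{\mathbb D}^{*})_{T}=X_{T}\times\bar{\mathbb D}^{*}$. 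Composing, the map $\phi(f^{*},e,S)(F^{\bullet})(U)$ sends the $V$-indexed term $F^{\bullet}(V\times\bar{\mathbb D}^{*}/S)$ to the same abelian group considered as the term indexed by $(X=V\times\bar{\mathbb D}^{*},\,U\times\bar{\mathbb D}^{*}\hookrightarrow(V\times\bar{\mathbb D}^{*})_{T}=f^{-1}(V)\times\bar{\mathbb D}^{*})$, the transition map being the identity.

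Third, and this is the key step, I would establish that the inclusion of indexing categories implicit above is cofinal, so the induced map of colimits is an isomorphism. The claim is that every pair $(X/S,h\colon U\times\bar{\mathbb D}^{*}\to X_{T}\text{ over }T)$ is dominated in the category of such pairs by one of the form $(V\times\bar{\mathbb D}^{*}/S,\text{natural inclusion})$ with $V\supset f(U)$. This relies on the pro-object interpretation of $\bar{\mathbb D}^{*}$ as the system of analytic neighborhoods of $\bar{\mathbb D}^{*}$ inside $\mathbb A^{*}_{\mathbb C}$, which allows one to replace any such $h$ by a cofinal family of products; together with the identity $(V\times\bar{\mathbb D}^{*})_{T}=f^{-1}(V)\times\bar{\mathbb D}^{*}$, this exhibits the desired cofinality. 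The main obstacle is precisely this cofinality check, which although labelled ``by definition'' in the statement is the only nontrivial step; the remainder of the argument is a purely formal rewriting of colimits in the appropriate comma categories.
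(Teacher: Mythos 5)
Your plan has the same skeleton as the paper's (very terse) proof: unwind both $f^{*}\sing_{\bar{\mathbb D}^{*}}F^{\bullet}$ and $\sing_{\bar{\mathbb D}^{*}}f^{*}F^{\bullet}$ as filtered colimits evaluated on opens $T^{o}\subset T$, trace the composite $\phi(f^{*},e,S)$ through its two factors, and reduce to a statement about the indexing categories. The computations in your first two steps are correct and match what the paper asserts ``by definition''. The difference is that you explicitly formulate what is needed as a cofinality claim, whereas the paper just asserts an ``isomorphism'' $(h:T^{o}/T\to X_{T}/T)\mapsto(h\circ p_{T}:T^{o}\times\bar{\mathbb D}^{*}/T\to X_{T}/T)$ without spelling out where the indices coming from opens $S^{o}\subset S$ enter.

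However, the specific cofinality you claim does not hold, and this is a genuine gap. Unwinding: to dominate $(X/S,\,g:T^{o}\times\bar{\mathbb D}^{*}/T\to X_{T}/T)$ by $(V\times\bar{\mathbb D}^{*}/S,\,\iota_{V})$ one needs a morphism $\alpha:V\times\bar{\mathbb D}^{*}\to X$ over $S$ with $\alpha_{T}\circ\iota_{V}=g$, which is exactly the requirement that the composite $S$-morphism $T^{o}\times\bar{\mathbb D}^{*}\xrightarrow{g}X_{T}\to X$ factor through $(f|_{T^{o}})\times I_{\bar{\mathbb D}^{*}}:T^{o}\times\bar{\mathbb D}^{*}\to V\times\bar{\mathbb D}^{*}$. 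This fails whenever $f$ identifies points of $T^{o}$ (already for $S=\pt$, $T=\mathbb A^{1}$, $T^{o}=T$, $X$ arbitrary: a map $T\times\bar{\mathbb D}^{*}\to X$ generically does not descend along $T\to\pt$). The pro-object structure of $\bar{\mathbb D}^{*}$ does not help here, since the obstruction lives in the $T^{o}$-direction, not the $\bar{\mathbb D}^{*}$-direction. Indeed, for $f$ smooth (but not an open immersion) one finds that $(\sing_{\bar{\mathbb D}^{*}}f^{*}F^{\bullet})(T^{o})$ is governed by the terminal index $(T^{o}\times\bar{\mathbb D}^{*}/S,\,\Delta)$ in the comma category, whose value $F^{\bullet}(T^{o}\times\bar{\mathbb D}^{*}/S)$ is not the same colimit as $\colim_{V\supset f(T^{o})}F^{\bullet}(V\times\bar{\mathbb D}^{*}/S)$. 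So the step you single out as ``the only nontrivial step'' is not only unproved in your proposal but, as stated, false; a correct argument (if the proposition is salvageable) would have to run differently, and the paper's one-line ``by definition'' proof does not supply one either.
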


\begin{proof}
By definition, the morphism $\phi(f^*,e,S)(F^{\bullet})$ 
is given by for $T^o\subset T$, 
\begin{equation*}
\phi(f^*,e,S)(F^{\bullet})(T^o/T):\lim_{T^o\to f^{-1}(S^o)}F^{\bullet}(S^o\times\bar{\mathbb D}^*/S)
\to\lim_{T^o\times\bar{\mathbb D}^*/T\to X_T}F^{\bullet}(X/S)
\end{equation*}
given by the isomorphism $(h:T^o/T\to X_T/T)\mapsto(h\circ p_T:T^o\times\bar{\mathbb D}^*/T\to X_T/T)$
and $F^{\bullet}(p_{S^o}):F^{\bullet}(S^o\times\bar{\mathbb D}^*/S)\to F^{\bullet}(S^o/S)$.
\end{proof}

We will use in the last subsection the following :

\begin{prop}\label{RDusuEq}
\begin{itemize}
\item[(i)] Let $G_1^{\bullet},G_2^{\bullet}\in\PSh_{\mathbb Z}(\Cor^{fs}_{\mathbb Z}(\AnSp(\mathbb C)^{sm}/S),C^-(\mathbb Z))$
and $f:G_1^{\bullet}\to G_2^{\bullet}$ a morphism. If 
\begin{equation*}
e^{tr}_{an}(S)_*S(f):\sing_{\bar{\mathbb D^*}}G_1^{\bullet}\to\sing_{\bar{\mathbb D^*}}G^{\bullet}_2 
\end{equation*}
is an equivalence usu local in $C^{-}(S)$,
then $f:G_1^{\bullet}\to G_2^{\bullet}$ is an  $(\mathbb D^1,usu)$ local equivalence. 

\item[(ii)] Let $G_1^{\bullet},G_2^{\bullet}\in\PSh_{\mathbb Z}(\Cor^{fs}_{\mathbb Z}(\AnSp(\mathbb C)^{sm}/S),C^-(\mathbb Z))$
and $f:G_1^{\bullet}\to G_2^{\bullet}$ a morphism. If 
\begin{itemize}
\item $G_1^{\bullet}$ and $G_2^{\bullet}$ are $\mathbb D^1$ local,
\item $e^{tr}_{an}(S)_*f:e^{tr}_{an*}G_1^{\bullet}\to e^{tr}_{an*}G_2^{\bullet}$ is an equivalence usu local in $C^{-}(S)$,
\end{itemize}
then $f:G_1^{\bullet}\to G_2^{\bullet}$ is an $(\mathbb D^1,usu)$ local equivalence. 
\end{itemize}
\end{prop}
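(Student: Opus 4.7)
The plan is to adapt the proof of Proposition \ref{DusuEq} from the absolute case, substituting its ingredients by their relative analogues: Theorem \ref{AnS}(ii) by Theorem \ref{RAnS}(ii), Theorem \ref{AnTr}(iii) by Theorem \ref{RAnTr}(iii), and the hypothesis ``quasi-isomorphism in $C^-(\mathbb Z)$'' by ``usu local equivalence in $C^-(S)$''. For part (i), I would first reduce to showing that $S(f):\underline{\sing}_{\bar{\mathbb D}^*}G_1^{\bullet}\to\underline{\sing}_{\bar{\mathbb D}^*}G_2^{\bullet}$ is a $(\mathbb D^1,usu)$ local equivalence, via the naturality square of $S$ at $f$: by Theorem \ref{RAnS}(ii) the horizontal arrows $S(G_i^{\bullet})$ are $(\mathbb D^1,usu)$ equivalences, so two-out-of-three reduces $f$ to $S(f)$. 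Next I would consider the naturality square of the counit $\ad(e^{tr}_{an}(S)^*,e^{tr}_{an}(S)_*)$ at $S(f)$. Since $\underline{\sing}_{\bar{\mathbb D}^*}G_i^{\bullet}$ is $\mathbb D^1$ local by Theorem \ref{RAnS}(ii) and the adjunction is a Quillen equivalence for the $(\mathbb D^1,usu)$ model structures by Theorem \ref{RAnTr}(iii), the horizontal counit morphisms are $(\mathbb D^1,usu)$ equivalences, so it suffices to show that $e^{tr}_{an}(S)^*e^{tr}_{an}(S)_*S(f)$ is one. By hypothesis $e^{tr}_{an}(S)_*S(f)$ is a usu local equivalence in $C^-(S)$, and applying the left Quillen functor $e^{tr}_{an}(S)^*$ then yields a $(\mathbb D^1,usu)$ equivalence.

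For part (ii), I would reduce to (i) by verifying its hypothesis. The $\mathbb D^1$ locality of $G_i^{\bullet}$ combined with Theorem \ref{RAnS}(ii) implies that $S(G_i^{\bullet})$ are $(\mathbb D^1,usu)$ equivalences between $\mathbb D^1$ local objects, hence usu local equivalences. Applying $e^{tr}_{an}(S)_*$ preserves usu equivalences (by definition of usu local equivalences in $PC^-(\An,S)$ via restriction to opens), and the naturality square of $S$ at $f$ combined with the hypothesis that $e^{tr}_{an}(S)_*f$ is a usu equivalence gives by two-out-of-three that $e^{tr}_{an}(S)_*S(f)$ is a usu equivalence, placing us under the hypothesis of (i).

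The main technical point worth verifying carefully is the claim in part (i) that $e^{tr}_{an}(S)^*$ sends usu local equivalences to $(\mathbb D^1,usu)$ local equivalences. This is the relative analogue of the fact invoked in the absolute proof that $e^{tr*}_{an}$ applied to a quasi-isomorphism of complexes of abelian groups yields a quasi-isomorphism of presheaves; in the relative case it follows from the Quillen adjunction property for the usu topology model structures together with the fact that every usu local equivalence is automatically a $(\mathbb D^1,usu)$ local equivalence, since the latter model structure is a left Bousfield localization of the former. Beyond this verification, the argument of the absolute Proposition \ref{DusuEq} transfers mutatis mutandis, so the proposition reduces to a formal consequence of Theorems \ref{RAnS}(ii) and \ref{RAnTr}(iii).
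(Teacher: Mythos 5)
Your proof is correct and takes essentially the same approach as the paper, which simply says the argument is ``similar to the proof of proposition~\ref{DusuEq}'' with the absolute ingredients replaced by their relative analogues: you carry this out in full, with the same two naturality squares (of $S$ at $f$ and of the counit of $(e^{tr}_{an}(S)^*,e^{tr}_{an}(S)_*)$ at $S(f)$) and the same reduction of (ii) to (i). One small imprecision worth tightening: the fact that $e^{tr}_{an}(S)^*$ carries usu local equivalences to usu local equivalences (and a fortiori to $(\mathbb{D}^1,\mathrm{usu})$ local equivalences) is not a consequence of it being a left Quillen functor per se (that only yields preservation of trivial cofibrations); it is the statement, already established in the paper when verifying the relative Quillen adjunctions, that $e^{tr}_{an}(S)^*$ derives trivially for the usu model structure, which is what makes the step $e^{tr}_{an}(S)^*e^{tr}_{an}(S)_*S(f)$ a usu local equivalence go through.
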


\begin{proof}
Similar to the proof of proposition \ref{DusuEq}. Point (i) follows from theorem \ref{RAnTr}(iii),
and point (ii) follows from point (i).
\end{proof}

\subsection{Presheaves and transfers on relative CW complexes}

Let $S\in\Top$. The category $\Top/S$ is the category whose objects are 
$X/S=(X,h)$ with $X\in\Top$ and $h:X\to S$ is a morphism,
and whose space of morphisms between
$X/S=(X,h_1)$ and $Y/S=(Y,h_2)\in\Top/S$ are the morphism $g:X\to Y$ such that $h_2\circ g=h_1$. 

We now restrict to the full subcategory $\CW\subset\Top$ of CW complexes.

\begin{defi}
A morphism $f:X\to Y$, $X,Y\in\CW$ with $X$ connected is said to be smooth if 
for all $x\in X$, there exist a neighborhood $U_x\subset X$ of $x$ in $X$ and
an open embedding $k:U_x\hookrightarrow U_x\times\mathbb R^{d_X}$ such that
$f_{|U_x}=p_{U_x}\circ k$.
\end{defi}

Let $S\in\CW$. The category $\CW/S$ is the category whose objects are 
$X/S=(X,h)$ with $X\in\CW$ and $h:X\to S$ is a morphism,
and whose space of morphisms between
$X/S=(X,h_1)$ and $Y/S=(Y,h_2)\in\CW/S$ are the morphism $g:X\to Y$ such that $h_2\circ g=h_1$. 
We denote by $\CW^{sm}/S\subset\CW/S$ the full subcategory consisting of the objects
$X/S=(X,h)$ with $X\in\CW$, such that $h:X\to S$ is a smooth morphism.
For $X/S=(X,h)\in\CW/S$, and $n\in\mathbb N$, we denote by 
\begin{itemize}
\item $(X\times\mathbb I^1/S):=(X\times\mathbb I^1,h\circ p_X)=(X\times_S(\mathbb I^1\times S)/S)$,
where $p_X:X\times_k\mathbb I^1\to X$ is the projection.
\item $(X\times\mathbb I^n/S):=(X\times\mathbb I^n,h\circ p_X)=(X\times_S(\mathbb I^n\times S)/S)$,
where $p_X:X\times\mathbb I^n\to X$ is the projection.
\end{itemize}

\begin{defi}
Let $S\in\CW$.
We define $\Cor^{fs}_{\Lambda}(\CW^{sm}/S)$ to be the category whose objects are the one of $\CW^{sm}/S$
and whose space of morphisms between
$X/S$ and $Y/S\in\CW^{sm}/S$ is the free $\Lambda$ module $\mathcal{Z}^{fs/X}(X\times_S Y,\Lambda)$. 
The composition of morphisms is defined similarly then in the absolute case. 
\end{defi}

We have 
\begin{itemize}
\item the additive embedding of categories 
$\Tr(S):\mathbb Z(\CW^{sm}/S)\hookrightarrow\Cor^{fs}_{\mathbb Z}(\CW^{sm}/S)$ 
which gives the corresponding morphism of sites
 $\Tr(S):\Cor^{fs}_{\mathbb Z}(\CW^{sm}/S)\to \mathbb Z(\CW^{sm}/S)$.
\item the inclusion functor
$e_{cw}(S):\Ouv(S)\hookrightarrow\CW^{sm}/S$,
which gives the corresponding morphism of sites
$e_{cw}(S):\CW^{sm}/S\to\Ouv(S)$,
\item the inclusion functor 
$e^{tr}_{cw}(S):=\Tr\circ e_{cw}\:\Ouv(S)\hookrightarrow\Cor^{fs}_{\mathbb Z}(\CW^{sm}/S)$
which gives the corresponding morphism of sites
$e^{tr}_{cw}(S):=\Tr\circ e_{cw}:\Cor^{fs}_{\mathbb Z}(\CW^{sm}/S)\to\Ouv(S)$.
\end{itemize}

For each morphism $f:T\to S$ in $\CW$, we have 
\begin{itemize}
\item the pullback functor
$P(f):\CW/S\to\CW/T$, $P(f)(X/S)=(X\times_S T/T)$, $P(f)(h)=h_T$,
which gives the morphism of sites $P(f):\CW/T\to\CW/S$
\item the pullback functor
$P(f):\Cor^{fs}_{\mathbb Z}(\CW^{sm}/S)\to\Cor^{fs}_{\mathbb Z}(\CW^{sm}/T)$, 
$P(f)(X/S)=(X\times_S T/T)$, $P(f)(h)=h_T$,
which gives the morphism of sites
$P(f):\Cor^{fs}_{\mathbb Z}(\CW^{sm}/T)\to\Cor^{fs}_{\mathbb Z}(\CW^{sm}/S)$. 
\end{itemize}

For $S\in\CW$, we consider the following two big categories :
\begin{itemize}
\item $\PSh(\CW^{sm}/S,C^-(\mathbb Z))=\PSh_{\mathbb Z}(\mathbb Z(\CW^{sm}/S),C^-(\mathbb Z))$, 
the category of bounded above complexes of presheaves on $\CW^{sm}/S$, 
or equivalently additive presheaves on $\mathbb Z(\CW^{sm}/S)$, 
sometimes, we will write for short $P^-(CW,S)=\PSh(\CW^{sm}/S,C^-(\mathbb Z))$,
\item $\PSh_{\mathbb Z}(\Cor^{fs}_{\mathbb Z}(\CW^{sm}/S),C^-(\mathbb Z))$, 
the category of bounded above complexes of additive presheaves on $\Cor^{fs}_{\mathbb Z}(\CW^{sm}/S)$
sometimes, we will write for short $PC^-(CW,S)=\PSh(\Cor^{fs}_{\mathbb Z}(\CW^{sm}/S),C^-(\mathbb Z))$,
\end{itemize}
and the adjonctions :
\begin{itemize}
\item $(\Tr(S)^*,\Tr(S)_*):
\PSh(\CW^{sm}/S,C^-(\mathbb Z))\leftrightarrows\PSh_{\mathbb Z}(\Cor^{fs}_{\mathbb Z}(\CW^{sm}/S),C^-(\mathbb Z))$, 
\item $(e_{cw}(S)^*,e_{cw}(S)_*):\PSh(\SmVar(\mathbb C),C^-(\mathbb Z))\leftrightarrows C^-(\mathbb Z)$,
\item $(e_{cw}^{tr}(S)^*,e_{cw}^{tr}(S)):\PSh(\Cor^{fs}_{\mathbb Z}(\CW^{sm}/S),C^-(\mathbb Z))\leftrightarrows C^-(\mathbb Z)$,
\end{itemize}
given by $\Tr(S):\Cor^{fs}_{\mathbb Z}(\CW^{sm}/S)\to \mathbb Z(\CW^{sm}/S)$,
$e_{cw}(S):\CW^{sm}/S\to\Ouv(S)$ and
$e^{tr}_{cw}(S):\Cor^{fs}_{\mathbb Z}(\CW^{sm}/S)\to\Ouv(S)$ respectively.
We denote by $a_{et}:\PSh_{\mathbb Z}(\CW^{sm}/S,\Ab)\to\Sh_{\mathbb Z,et}(\CW^{sm}/S,\Ab)$
the etale sheaftification functor.
For $X/S\in\CW^{sm}/S$, we denote by
\begin{equation}
\mathbb Z(X/S)\in\PSh(\CW^{sm}/S,C^-(\mathbb Z)) \mbox{\;\;}, \mbox{\;\;}  
\mathbb Z_{tr}(X/S)\in\PSh_{\mathbb Z}(\Cor^{fs}_{\mathbb Z}(\CW^{sm}/S),C^-(\mathbb Z))
\end{equation}
the presheaves represented by $X$. They are usu sheaves.
For $X/S=(X,h)\in\CW/S$ with $h:X\to S$ non smooth,
\begin{equation}
\mathbb Z_{tr}(X/S)\in\PSh_{\mathbb Z}(\Cor^{fs}_{\mathbb Z}(\CW^{sm}/S),C^-(\mathbb Z)), \;
Y\in\CW^{sm}/S\to\mathcal{Z}^{fs/Y}(Y\times_S X,\mathbb Z) 
\end{equation}
is also an usu sheaf ; of course if $h:X\to S$ is not dominant then $\mathbb Z_{tr}(X/S)=0$

For a morphism $f:T\to S$ in $\CW$ we have the adjonctions
\begin{itemize}
\item $(f^*,f_*):=(P(f)^*,P(f)_*):
\PSh(\CW^{sm}/S,C^-(\mathbb Z))\leftrightarrows\PSh(\CW^{sm}/T,C^-(\mathbb Z))$, 
\item $(f^*,f_*):=(P(f)^*,P(f)_*):
\PSh_{\mathbb Z}(\Cor^{fs}_{\mathbb Z}(\CW^{sm}/S),C^-(\mathbb Z))
\leftrightarrows\PSh_{\mathbb Z}(\Cor^{fs}_{\mathbb Z}(\CW^{sm}/T),C^-(\mathbb Z))$, 
\end{itemize}
given by $P(f):\CW^{sm}/T\to\CW^{sm}/S$ and 
$P(f):\Cor^{fs}_{\mathbb Z}(\CW^{sm}/T)\to\Cor^{fs}_{\mathbb Z}(\CW^{sm}/S)$, 
respectively.

\begin{defi}\label{RImodstr}
\begin{itemize}
\item[(i)] The projective usual topology model structure on $\PSh(\CW^{sm}/S,C^-(\mathbb Z))$ 
is defined in the similar way of the absolute case (c.f. definition \ref{CWTrusu}(i)). 

\item[(ii)]The projective $(\mathbb I^1,usu)$ model structure on the category
 $\PSh(\CW^{sm}/S,C^-(\mathbb Z))$ is the left Bousfield localization 
of the projective usual topology model structure with respect to the class of maps
$\left\{\mathbb Z(X\times\mathbb I^1/S)[n]\to\mathbb Z(X/S)[n], \; X/S\in\CW^{sm}/S,n\in\mathbb Z\right\}$. 

\item[(iii)] The projective usual topology model structure on $\PSh(\Cor^{fs}(\CW^{sm}/S),C^-(\mathbb Z))$ 
is defined in the similar way of the absolute case (c.f. definition \ref{CWTrusu}(ii)). 

\item[(iv)]The projective $(\mathbb I^1,usu)$ model structure on the category
$\PSh_{\mathbb Z}(\Cor^{fs}_{\mathbb Z}(\CW^{sm}/S),C^-(\mathbb Z))$ 
is the left Bousfield localization of the projective usual topology model structure with respect to the class of maps 
$\left\{\mathbb Z_{tr}(X\times\mathbb I^1/S)[n]\to\mathbb Z(X/S)[n],\; X/S\in\CW^{sm}/S,n\in\mathbb Z\right\}$. 
\end{itemize}
\end{defi}

\begin{defi}
Let $S\in\CW$.
\begin{itemize}
\item[(i)] We define 
$\CwDM^-(S,\mathbb Z):=\Ho_{\mathbb I^1,usu}(\PSh_{\mathbb Z}(\Cor^{fs}_{\mathbb Z}(\CW^{sm}/S),C^-(\mathbb Z)))$, 
to be the derived category of (effective) motives, it is
the homotopy category of $\PSh(\Cor^{fs}_{\mathbb Z}(\CW^{sm}/S),C^-(\mathbb Z))$ 
with respect to the projective $(\mathbb I^1,usu)$ model structure (c.f. definition \ref{RImodstr}(ii)).
We denote by
\begin{equation*}
D^{tr}(\mathbb I^1,usu)(S):\PSh_{\mathbb Z}(\Cor^{fs}_{\mathbb Z}(\CW^{sm}/S,C^-(\mathbb Z)))\to\CwDM^{-}(S,\mathbb Z) \, , \,
D^{tr}(\mathbb I^1,usu)(S)(F^{\bullet})=F^{\bullet}
\end{equation*}
the canonical localization functor.

\item[(ii)] By the same way, we denote 
$\CwDA^-(S,\mathbb Z):=\Ho_{\mathbb I^1,usu}(\PSh(\CW^{sm}/S,C^-(\mathbb Z)))$ (c.f.\ref{RImodstr}(i)) and
\begin{equation*}
D(\mathbb I^1,usu)(S):\PSh_{\mathbb Z}(\Cor^{fs}_{\mathbb Z}(\CW^{sm}/S),C^-(\mathbb Z)))\to\CwDA^{-}(S,\mathbb Z) \, , \,
D(\mathbb I^1,usu)(S)(F^{\bullet})=F^{\bullet}
\end{equation*}
the canonical localization functor.
\end{itemize}
\end{defi}

We now look at an explicit localization functor.

For $F^{\bullet}\in\PSh(\CW^{sm}/S,\Ab)$ and $X/S\in\CW^{sm}/S$, 
we have the complex $F(X\times\mathbb I^*/S)$ associated to the cubical object 
$F(X\times\mathbb I^*/S)$ in the category of abelian groups.

\begin{itemize}
\item If $F^{\bullet}\in\PSh(\CW^{sm}/S,C^-(\mathbb Z))$ , 
\begin{equation}
\underline{\sing}_{\mathbb I^*}F^{\bullet}:=\Tot(\underline\Hom(\mathbb Z(\mathbb I^*\times S),F^{\bullet}))\in\PSh(\CW^{sm}/S,C^-(\mathbb Z)) 
\end{equation}
is the total complex of presheaves associated to the bicomplex of presheaves $X/S\mapsto F^{\bullet}(\mathbb I^*\times X/S)$,
and $\sing_{\mathbb I^*}F^{\bullet}:=e_{cw}(S)_*\underline{\sing}_{\mathbb I^*}F^{\bullet}\in C^-(\mathbb Z)$.
We denote by $S(F^{\bullet}):F^{\bullet}\to\underline{\sing}_{\mathbb I^*}F^{\bullet}$, 
\begin{equation}\label{RSFCW}
S(F^{\bullet}):\xymatrix{
\cdots\ar[r] & 0\ar[r]\ar[d] & 0\ar[r]\ar[d] & F^{\bullet}\ar[d]^{I}\ar[r] & 0\ar[r]\ar[d] & \cdots  \\
\cdots\ar[r] & \underline{\sing}_{\mathbb I^2}F^{\bullet}\ar[r] & \underline{\sing}_{\mathbb I^1}F^{\bullet}\ar[r] 
& F^{\bullet}\ar[r] & 0\ar[r] & \cdots} 
\end{equation}
the inclusion morphism of $\PSh(\CW^{sm}/S,C^-(\mathbb Z))$ :
For $f:F_1^{\bullet}\to F_2^{\bullet}$ a morphism $\PSh(\CW^{sm}/S,C^-(\mathbb Z))$, we denote by 
$S(f):\underline{\sing}_{\mathbb I^*}F_1^{\bullet}\to\underline{\sing}_{\mathbb I^*}F_2^{\bullet}$,
the morphism of $\PSh(\CW^{sm}/S,C^-(\mathbb Z))$ given by for $X/S\in\CW^{sm}/S$,
\begin{equation}\label{RSfCW}
S(f)(X/S):\xymatrix{
\cdots\ar[r] & F_1(\mathbb I^2\times X/S)\ar[r]\ar[d]^{f(\mathbb I^2\times X/S)} &
F_1(\mathbb I^1\times X/S)\ar[r]\ar[d]^{f(\mathbb I^1\times X/S)} & F_1^{\bullet}(X/S)\ar[d]^{f(X/S)}\ar[r] & 0\ar[r]\ar[d] & \cdots \\
\cdots\ar[r] & F_2(\mathbb I^2\times X/S)\ar[r] & F_2(\mathbb I^1\times X/S)\ar[r] & F_2^{\bullet}(X/S)\ar[r] & 0\ar[r] & \cdots.}  
\end{equation}

\item If $F^{\bullet}\in\PSh_{\mathbb Z}(\Cor^{fs}_{\mathbb Z}(\CW^{sm}/S),C^-(\mathbb Z))$, 
\begin{equation}
\underline{\sing}_{\mathbb I^*}F^{\bullet}:=\underline\Hom(\mathbb Z_{tr}(\mathbb I^*\times S),F^{\bullet})
\in\PSh_{\mathbb Z}(\Cor^{fs}_{\mathbb Z}(\CW^{sm}/S),C^-(\mathbb Z)), 
\end{equation}
is the complex of presheaves 
associated to the bicomplex of presheaves $X/S\mapsto F^{\bullet}(\mathbb I^*\times X/S)$,
and $\sing_{\mathbb I^*}F^{\bullet}:=e^{tr}_{cw}(S)_*\underline{\sing}_{\mathbb I^*}F^{\bullet}\in C^-(\mathbb Z)$.
We have the inclusion morphism (\ref{RSFCW})
\begin{equation*}
S(\Tr_*F^{\bullet}):\Tr(S)_*F^{\bullet}\to\underline{\sing}_{\mathbb I^*}\Tr(S)_*F^{\bullet}
=\Tr(S)_*\underline{\sing}_{\mathbb I^*}F^{\bullet}
\end{equation*}
which is a morphism in $\PSh(\Cor^{fs}_{\mathbb Z}(\CW^{sm}/S),C^-(\mathbb Z))$ 
denoted the same way $S(F^{\bullet}):F^{\bullet}\to\underline{\sing}_{\mathbb I^*}F^{\bullet}$.  
For $f:F_1^{\bullet}\to F_2^{\bullet}$ a morphism $PC^-(S)$, we 
have the morphism (\ref{RSfCW})
\begin{equation*} 
S(\Tr_*f):\Tr(S)_*\underline{\sing}_{\mathbb I^*}F_1^{\bullet}=\underline{\sing}_{\mathbb I^*}\Tr(S)_*F_1^{\bullet}\to
\underline{\sing}_{\mathbb I^*}\Tr(S)_*F_2^{\bullet}=\Tr(S)_*\underline{\sing}_{\mathbb I^*}F_2^{\bullet}
\end{equation*}
which is a morphism in $PC^-(S)$ denoted the same way
$S(f):\underline{\sing}_{\mathbb I^*}F_1^{\bullet}\to\underline{\sing}_{\mathbb I^*}F_2^{\bullet}$.
\end{itemize}

For $F^{\bullet}\in\PSh(\Cor^{fs}_{\mathbb Z}(\CW^{sm}/S),C^-(\mathbb Z))$, we have by definition 
$\Tr(S)_*\underline{\sing}_{\mathbb I^*}F^{\bullet}=\underline{\sing}_{\mathbb I^*}\Tr(S)_*F^{\bullet}$ and 
$\Tr(S)_*S(F^{\bullet})=S(\Tr(S)_*F^{\bullet})$.

We recall the definition of homotopy in the relative setting :
\begin{itemize}
\item Let $S,X,Y\in\Top$ and $h_X:X\to S$, $h_Y:Y\to S$ two maps.
Let $f_0:X\to Y$, $f_1:X\to Y$ be two maps such that $h_Y\circ f_0=h_X$ and $h_Y\circ f_0=h_X$,
that is such that they define morphisms from $X/S=(X,h_X)$ to $Y/S=(Y,h_Y)$ in $\Top/S$.
We say that $f_0$, $f_1$ are $\mathbb I^1$ homotopic,
if there exist $h:X\times\mathbb I^1\to Y$ such that 
\begin{itemize}
\item $h_Y\circ h=h_X\circ p_X$ that is $h:X\times\mathbb I^1/S\to Y/S$ is a morphism in $\Top/S$, 
\item $f_0=h\circ(I_X\times i_0)$ and $f_1=h\circ(I_X\times i_1)$, 
\end{itemize}
with $(I_X\times i_0):X\times\left\{0\right\}\hookrightarrow X\times\mathbb I^1$ 
and $(I_X\times i_1):X\times\left\{1\right\}\hookrightarrow X\times\mathbb I^1$ 
the inclusions and $p_X:X\times\mathbb I^1\to X$ the projection.

\item Let $S\in\CW$. Let $F^{\bullet},G^{\bullet}\in\PSh(\CW/S,C^-(\mathbb Z))$. 
We say that two maps $\phi_0:F^{\bullet}\to G^{\bullet}$ and $\phi_1:F^{\bullet}\to G^{\bullet}$ 
are $\mathbb I^1$ homotopic if there exist
$\tilde{\phi}:F^{\bullet}\to\underline{\Hom}(\mathbb Z(\mathbb I^1\times S),G^{\bullet})$ such that
$\phi_0=\underline{G}^{\bullet}(i_0)\circ\tilde{\phi}$ and $\phi_1=\underline{G}^{\bullet}(i_1)\circ\tilde{\phi}$, where,
\begin{itemize}
\item $\underline{G}^{\bullet}(i_0):\underline{\Hom}(\mathbb Z(\mathbb I^1),G^{\bullet})\to G^{\bullet}$ is
induced by $i_0:\left\{\pt\right\}\to\mathbb I^1$, that is, for $X/S\in\CW/S$, 
$\underline{G}^{\bullet}(i_0)(X/S)=G^{\bullet}(I_X\times i_0):G^{\bullet}(X\times\mathbb I^1/S)\to G^{\bullet}(X/S)$,  
\item $\underline{G}^{\bullet}(i_1):\underline{\Hom}(\mathbb Z(\mathbb I^1),G^{\bullet})\to G^{\bullet}$ is
induced by $i_1:\left\{\pt\right\}\to\mathbb I^1$, that is, for $X/S\in\CW/S$, 
$\underline{G}^{\bullet}(i_1)(X/S)=G^{\bullet}(I_X\times i_1):G^{\bullet}(X\times\mathbb I^1)\to G^{\bullet}(X/S).$  
\end{itemize}
\end{itemize} 

Similary to the absolute case, we have the following lemmas :

\begin{lem}\label{RI1hptlem}
Let $X/S,Y/S\in\CW/S$ and $f_0:X/S\to Y/S$, $f_1:X/S\to Y/S$ two morphisms.
If $f_0$ and $f_1$ are $\mathbb I^1$ homotopic, then
\begin{itemize}
\item $\mathbb Z(f_0):\mathbb Z(X/S)\to\mathbb Z(Y/S)$ and $\mathbb Z(f_1):\mathbb Z(X/S)\to\mathbb Z(Y)$ 
are $\mathbb I^1$ homotopic in $\PSh(\CW/S,C^-(\mathbb Z))$,
\item $\Tr_*\mathbb Z_{tr}(f_0):\Tr_*\mathbb Z_{tr}(X/S)\to\Tr_*\mathbb Z_{tr}(Y/S)$ 
and $\Tr_*\mathbb Z_{tr}(f_1):\Tr_*\mathbb Z_{tr}(X/S)\to\Tr_*\mathbb Z_{tr}(Y/S)$ 
are $\mathbb I^1$ homotopic in $\PSh(\CW/S,C^-(\mathbb Z))$.
\end{itemize}
\end{lem}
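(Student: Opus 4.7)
The plan is to adapt the construction of the homotopy from the absolute case (lemma \ref{I1hptlem}) to the relative setting, observing that the formulas carry over verbatim once one checks that all maps produced respect the structure morphisms to $S$.

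First I would fix an $\mathbb{I}^1$ homotopy $h:X\times\mathbb{I}^1\to Y$ between $f_0$ and $f_1$, which by definition means $h_Y\circ h=h_X\circ p_X$ and $f_\epsilon=h\circ(I_X\times i_\epsilon)$ for $\epsilon=0,1$. The key point is that $h$ defines a morphism $X\times\mathbb{I}^1/S\to Y/S$ in $\CW/S$.

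For the first assertion, I would define
$\phi(h):\mathbb{Z}(X/S)\to\underline{\Hom}(\mathbb{Z}(\mathbb{I}^1\times S),\mathbb{Z}(Y/S))$
by the formula
\begin{equation*}
\phi(h)(Z/S):\alpha\in\Hom_{\CW/S}(Z/S,X/S)\mapsto h\circ(\alpha\times I_{\mathbb{I}^1})\in\Hom_{\CW/S}(Z\times\mathbb{I}^1/S,Y/S),
\end{equation*}
mimicking the absolute case. The verification that the right-hand side lives in $\Hom_{\CW/S}$ is immediate: $\alpha\times I_{\mathbb{I}^1}:Z\times\mathbb{I}^1/S\to X\times\mathbb{I}^1/S$ is a morphism in $\CW/S$, and composing with $h$ (which is a morphism in $\CW/S$) preserves the structure morphism to $S$. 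The identities $\underline{\mathbb{Z}(Y/S)}(i_\epsilon)\circ\phi(h)=\mathbb{Z}(f_\epsilon)$ for $\epsilon=0,1$ then follow by a direct substitution using $f_\epsilon=h\circ(I_X\times i_\epsilon)$.

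For the second assertion concerning transfers, I would use the analogous definition
\begin{equation*}
\phi(h):\mathbb{Z}_{tr}(X/S)\to\underline{\Hom}(\mathbb{Z}(\mathbb{I}^1\times S),\mathbb{Z}_{tr}(Y/S)),
\end{equation*}
given for $Z/S\in\CW^{sm}/S$ by sending a finite surjective correspondence $\alpha\in\mathcal{Z}^{fs/Z}(Z\times_S X,\mathbb{Z})$ to $h\circ(\alpha\times I_{\mathbb{I}^1})\in\mathcal{Z}^{fs/Z\times\mathbb{I}^1}((Z\times\mathbb{I}^1)\times_S Y,\mathbb{Z})$, where $\alpha\times I_{\mathbb{I}^1}$ is viewed in $\Hom_{\Cor^{fs}_{\mathbb{Z}}(\CW^{sm}/S)}(Z\times\mathbb{I}^1/S,X\times\mathbb{I}^1/S)$ and the composition is taken in $\Cor^{fs}_{\mathbb{Z}}(\CW^{sm}/S)$ using the composition law of correspondences (\ref{CorTopCW}). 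Applying $\Tr_*$ and the identities $\underline{\mathbb{Z}_{tr}(Y/S)}(i_\epsilon)\circ\phi(h)=\mathbb{Z}_{tr}(f_\epsilon)$ yield the claim.

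There is no real obstacle here: the proof is a formal transcription of the absolute case of lemma \ref{I1hptlem}, and the only verification specific to the relative setting is that the construction respects the morphisms to $S$, which is automatic from the definition of an $\mathbb{I}^1$ homotopy in $\CW/S$ (the condition $h_Y\circ h=h_X\circ p_X$). Since correspondences in $\Cor^{fs}_{\mathbb{Z}}(\CW^{sm}/S)$ are by definition cycles on fibre products over $S$, the product $\alpha\times I_{\mathbb{I}^1}$ automatically lives in the right space of cycles, and composability with $h$ (which is finite and surjective over $X\times\mathbb{I}^1/S$ in the sense implicit in the composition law) is exactly the content of (\ref{CorTopCW}).
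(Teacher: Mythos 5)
Your proof is correct and is exactly what the paper intends: the paper's own proof of this lemma is the single line ``Similar to the absolute case,'' and your transcription of the absolute argument from Lemma~\ref{I1hptlem}, with the verification that the constructed homotopy $\phi(h)$ respects the structure morphisms over $S$, is precisely that.
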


\begin{proof}

Similar to the absolute case.

\end{proof}

\begin{lem}\label{RI1hpt}
Let $S\in\CW$. Let $F^{\bullet}\in\PSh(\CW/S,C^-(\mathbb Z))$.
\begin{itemize}
\item[(i)] Let $X/S,Y/S\in\CW/S$ and $f_0:X/S\to Y/S$, $f_1:X/S\to Y/S$ be two morphism. 
If $f_0$ and $f_1$ are $\mathbb I^1$ homotopic, then the maps of complexes 
\begin{itemize}
\item $\underline{\sing}_{\mathbb I^*}F^{\bullet}(f_0): \Tot F^{\bullet}(Y\times\mathbb I^*/S)
\to\Tot F^{\bullet}(X\times\mathbb I^*/S)$  and
\item $\underline{\sing}_{\mathbb I^*}F^{\bullet}(f_1): \Tot F^{\bullet}(Y\times\mathbb I^*/S)
\to\Tot F^{\bullet}(X\times\mathbb I^*/S)$ 
\end{itemize}
induces the same map on homology.
\item[(ii)] Let $X/S,Y/S\in\CW/S$, if $f:X\to Y$ is a $\mathbb I^1$ homotopy equivalence then
\begin{equation}
\underline{\sing}_{\mathbb I^*}F^{\bullet}(f): \Tot F^{\bullet}(Y\times\mathbb I^*/S)\to\Tot F^{\bullet}(X\times\mathbb I^*/S)
\end{equation}
is a quasi-isomorphism of complexes of abelian groups.
\end{itemize}
\begin{itemize}
\item[(iii)] Let $F^{\bullet},G^{\bullet}\in\PSh(\CW/S,C^-(\mathbb Z))$ and 
$\phi_0:F^{\bullet}\to G^{\bullet}$, $\phi_1:F^{\bullet}\to G^{\bullet}$ be two maps.
If $\phi_0$ and $\phi_1$ are $\mathbb I^1$ homotopic, then $\phi_0=\phi_1\in\CwDA^{-}(S)$. 
\item[(iv)] Let $F^{\bullet},G^{\bullet}\in\PSh(\CW/S,C^-(\mathbb Z))$, if  
$\phi:F^{\bullet}\to G^{\bullet}$ is a $\mathbb I^1$ homotopy equivalence then $\phi$ is a $(\mathbb I^1,usu)$ local equivalence.
\end{itemize}
\end{lem}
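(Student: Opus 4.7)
The plan is to follow the structure of the proof of the absolute version, Lemma \ref{I1hpt}, with every construction relativised over $S$. Nothing essentially new is needed; we just need to check that the absolute arguments go through when morphisms, projections, and homotopies are required to lie over $S$.

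For part (i), suppose $h : X \times \mathbb I^1/S \to Y/S$ is an $\mathbb I^1$-homotopy from $f_0$ to $f_1$ in $\CW/S$. Then $h \times I_{\mathbb I^n}$ is a morphism $X \times \mathbb I^1 \times \mathbb I^n / S \to Y \times \mathbb I^n / S$, and for $\alpha \in F^k(Y \times \mathbb I^n/S)$ one computes, exactly as in the absolute case,
\begin{equation*}
\partial\bigl(F^{\bullet}(h \times I_{\mathbb I^n})(\alpha)\bigr) = F^{\bullet}(h \times I_{\mathbb I^n})(\partial \alpha) + F^{\bullet}(f_0)(\alpha) - F^{\bullet}(f_1)(\alpha),
\end{equation*}
by splitting the total differential into $\partial_F + \partial_{\mathbb I}$ and extracting the boundary faces of the extra $\mathbb I^1$-coordinate. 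This exhibits a chain homotopy from $F^{\bullet}(f_0)$ to $F^{\bullet}(f_1)$ on $\Tot F^{\bullet}(Y \times \mathbb I^*/S)$, so they agree on homology. Part (ii) then follows immediately: if $f : X/S \to Y/S$ has an $\mathbb I^1$-homotopy inverse $g : Y/S \to X/S$, applying (i) to both $g \circ f \sim I_{X/S}$ and $f \circ g \sim I_{Y/S}$ shows that $\underline{\sing}_{\mathbb I^*}F^{\bullet}(f)$ and $\underline{\sing}_{\mathbb I^*}F^{\bullet}(g)$ are mutually inverse on homology.

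For part (iii), the key reduction is the same as in the absolute case. Given the homotopy $\tilde\phi : F^{\bullet} \to \underline{\Hom}(\mathbb Z(\mathbb I^1 \times S), G^{\bullet})$ with $\phi_\epsilon = \underline{G}^{\bullet}(i_\epsilon) \circ \tilde\phi$, introduce the morphism $\underline{G}^{\bullet}(p) : G^{\bullet} \to \underline{\Hom}(\mathbb Z(\mathbb I^1 \times S), G^{\bullet})$ induced by the projection $p = a_{\mathbb I^1} \times I_S : \mathbb I^1 \times S \to S$. One has $\underline{G}^{\bullet}(i_0) \circ \underline{G}^{\bullet}(p) = \underline{G}^{\bullet}(i_1) \circ \underline{G}^{\bullet}(p) = I_{G^{\bullet}}$, so it suffices to prove that $\underline{G}^{\bullet}(p)$ is a $(\mathbb I^1,\usu)$ local equivalence, since then $\underline{G}^{\bullet}(i_0) = \underline{G}^{\bullet}(i_1)$ in $\CwDA^{-}(S)$, whence $\phi_0 = \phi_1$ there. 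To verify that $\underline{G}^{\bullet}(p)$ is a $(\mathbb I^1,\usu)$ equivalence, take an $\mathbb I^1$-local object $L^{\bullet}$, replace it if necessary by an $\usu$-fibrant model, and consider the commutative square
\begin{equation*}
\xymatrix{
G^{\bullet} \ar[r]^-{\underline{G}^{\bullet}(p)} \ar[d]_{\psi} & \underline{\Hom}(\mathbb Z(\mathbb I^1 \times S), G^{\bullet}) \ar[d]^{\underline{\Hom}(\mathbb Z(\mathbb I^1 \times S), \psi)} \\
L^{\bullet} \ar[r]^-{\underline{L}^{\bullet}(p)} & \underline{\Hom}(\mathbb Z(\mathbb I^1 \times S), L^{\bullet}).
}
\end{equation*}
Because $L^{\bullet}$ is $\mathbb I^1$-local and $\usu$-fibrant, $\underline{L}^{\bullet}(p)$ is an $\usu$ equivalence, so every $\psi$ factors uniquely through $\underline{G}^{\bullet}(p)$ in $\Ho_{\usu}$. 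This is exactly the criterion for $\underline{G}^{\bullet}(p)$ to be a $(\mathbb I^1,\usu)$ local equivalence. Finally, (iv) is immediate from (iii) applied to both compositions of $\phi$ with its homotopy inverse.

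The only point that requires some care, and is the real content beyond bookkeeping, is the last step of (iii): making sure that the homotopy-invariance argument for $\underline{G}^{\bullet}(p)$ still works in the relative setting. Since the projection $a_{\mathbb I^1} \times I_S : \mathbb I^1 \times S \to S$ is a morphism in $\CW/S$ over $S$ (with $S$ playing the role of the base), the $\mathbb I^1$-local objects in $\PSh(\CW/S, C^{-}(\mathbb Z))$ see this projection as an $\mathbb I^1$-equivalence by the very definition of the $(\mathbb I^1,\usu)$ localisation in \ref{RImodstr}. Once this is unpacked, no new ingredient is needed and the rest of the proof is formally identical to the absolute case.
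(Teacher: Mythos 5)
Your proof is correct and takes essentially the same approach as the paper, which simply says ``Similar to the absolute case''; you have spelled out the relativisation of Lemma~\ref{I1hpt} that the paper leaves implicit, including the correct replacement of the absolute projection $a_{\mathbb I^1}$ by $a_{\mathbb I^1}\times I_S\colon\mathbb I^1\times S\to S$, and the verification that $\underline{G}^{\bullet}(p)$ is an $(\mathbb I^1,usu)$ local equivalence via the universal property of Bousfield localisation against $\mathbb I^1$-local usu-fibrant objects.
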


\begin{proof}

Similar to the absolute case.

\end{proof}

\begin{lem}\label{RCWTr}
\begin{itemize}
\item[(i)] A complex of presheaves $F^{\bullet}\in\PSh(\Cor^{fs}_{\mathbb Z}(\CW^{sm}/S),C^-(\mathbb Z))$ is 
$\mathbb I^1$ local if and only if 
$\Tr(S)_*F^{\bullet}\in\PSh(\CW^{sm}/S,C^-(\mathbb Z))$ is 
$\mathbb I^1$ local.

\item[(ii)] A morphism $\phi:F^{\bullet}\to G^{\bullet}$ in $\PSh(\Cor^{fs}_{\mathbb Z}(\CW^{sm}/S),C^-(\mathbb Z))$ is 
an $(\mathbb I^1,usu)$ local equivalence if and only if
$\Tr(S)_*\phi:\Tr(S)_*F^{\bullet}\to\Tr_*G^{\bullet}$ is an $(\mathbb I^1,usu)$ local equivalence.
\end{itemize}
\end{lem}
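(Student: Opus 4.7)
The plan is to mirror the strategy used in Lemma \ref{CWTr} in the absolute case, adapting each step to the relative site over $S\in\CW$. The two key structural facts I will rely on are: (a) by the definition of the usu local equivalences on $\PSh(\Cor^{fs}_{\mathbb Z}(\CW^{sm}/S),C^-(\mathbb Z))$ (\ref{CWTrusu}(ii), in its relative formulation), $\Tr(S)_*$ preserves and detects usu local equivalences; and (b) by Yoneda, for any usu fibrant $L^\bullet\in PC^-(CW,S)$ and any $X/S\in\CW^{sm}/S$, one has
\begin{equation*}
\Hom_{P^-(CW,S)}(\mathbb Z(X/S)[n],\Tr(S)_*L^\bullet)=H^nL^\bullet(X/S)=\Hom_{PC^-(CW,S)}(\mathbb Z_{tr}(X/S)[n],L^\bullet),
\end{equation*}
and moreover $\Tr(S)_*L^\bullet$ is itself usu fibrant (the relative analogue of the fact invoked from \cite{AyoubG2} in the absolute case).

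For part (i): choose an usu local equivalence $h:F^\bullet\to L^\bullet$ with $L^\bullet$ usu fibrant. The property of being $\mathbb I^1$ local is invariant under usu equivalences, so $F^\bullet$ is $\mathbb I^1$ local iff $L^\bullet$ is; and by (a), $\Tr(S)_*h$ is an usu local equivalence between usu fibrant objects, hence $\Tr(S)_*F^\bullet$ is $\mathbb I^1$ local iff $\Tr(S)_*L^\bullet$ is. It then suffices to show $L^\bullet$ is $\mathbb I^1$ local iff $\Tr(S)_*L^\bullet$ is. For usu fibrant objects this is equivalent, by the definition of $\mathbb I^1$-locality, to asking that $\Hom(\mathbb Z_{tr}(X\times\mathbb I^1/S)[n],L^\bullet)\to\Hom(\mathbb Z_{tr}(X/S)[n],L^\bullet)$ (resp. the corresponding map with $\mathbb Z$ in place of $\mathbb Z_{tr}$) is an isomorphism for all $X/S,n$, and the Yoneda identification (b) matches these two conditions.

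For part (ii), the ``only if'' direction amounts to checking that for any $Y/S\in\CW^{sm}/S$, the morphism $\Tr(S)_*\mathbb Z_{tr}(p_Y):\Tr(S)_*\mathbb Z_{tr}(Y\times\mathbb I^1/S)\to\Tr(S)_*\mathbb Z_{tr}(Y/S)$ is an $(\mathbb I^1,usu)$ equivalence, following the argument of \cite{AyoubG1}, Lemma 2.111. By lemmas \ref{RI1hpt}(iv) and \ref{RI1hptlem} (the relative versions, which I am also invoking here) it is enough to check that $p_Y:Y\times\mathbb I^1/S\to Y/S$ is an $\mathbb I^1$-homotopy equivalence in $\CW/S$; the homotopy
\begin{equation*}
\theta_{12}(Y):Y\times\mathbb I^1\times\mathbb I^1\to Y\times\mathbb I^1,\quad (y,t_1,t_2)\mapsto(y,t_1-t_2t_1)
\end{equation*}
commutes with the structural morphism to $S$ (since it is the identity on the $Y$-coordinate) and gives a homotopy from $I_{Y\times\mathbb I^1}$ to $I_Y\times 0$, with $p_Y\circ(I_Y\times 0)=I_Y$.

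For the ``if'' direction, assume $\Tr(S)_*\phi$ is an $(\mathbb I^1,usu)$ equivalence. By (a) we may replace $F^\bullet$ and $G^\bullet$ by usu fibrant presheaves. For any $\mathbb I^1$ local $K^\bullet\in PC^-(CW,S)$, part (i) says $\Tr(S)_*K^\bullet$ is $\mathbb I^1$ local, so the adjunction chain
\begin{equation*}
\Hom_{PC^-(CW,S)}(G^\bullet,K^\bullet)=\Hom_{P^-(CW,S)}(\Tr(S)_*G^\bullet,\Tr(S)_*K^\bullet)\xrightarrow{\sim}\Hom_{P^-(CW,S)}(\Tr(S)_*F^\bullet,\Tr(S)_*K^\bullet)=\Hom_{PC^-(CW,S)}(F^\bullet,K^\bullet)
\end{equation*}
is an isomorphism, which by the universal property of the $(\mathbb I^1,usu)$ localization forces $\phi$ to be an $(\mathbb I^1,usu)$ equivalence. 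I do not anticipate a genuine obstacle here: the only delicate point, as in the absolute case, is the claim that $\Tr(S)_*$ preserves usu fibrancy, which rests on the same enrichment argument as in \cite{AyoubG2} and which I will simply cite in the relative form.
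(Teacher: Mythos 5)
Your proposal takes the same route the paper indicates (the paper's proof says simply ``similar to the absolute case'' and, for (ii), references the relative homotopy lemmas), and you have filled in the steps faithfully, including the two points that genuinely need checking in the relative setting: that the homotopy $\theta_{12}(Y)$ lies over $S$ (it does, because it is the identity on $Y$ and the structural map factors through the projection to $Y$), and that $\Tr(S)_*$ preserves usu fibrancy (cited, as in the absolute case). The proof is correct and matches the paper's approach.
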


\begin{proof}

\noindent(i): Similar to the absolute case.

\noindent(ii): As in the absolute case the only if part follows from lemma \ref{RI1hptlem}(ii) and the if part follows from (i).

\end{proof}

\begin{prop}
Let $S\in\CW$
\begin{itemize}
\item[(i)] $(\Tr(S)^*,\Tr(S)_*):\PSh(\CW^{sm}/S,C^-(\mathbb Z))\leftrightarrows
\PSh_{\mathbb Z}(\Cor^{fs}_{\mathbb Z}(\CW^{sm}/S),C^-(\mathbb Z))$
is a Quillen adjonction for the etale topology model structures
and a Quillen adjonction for the $(\mathbb I^1,et)$ model structures 
(c.f. definition \ref{RImodstr} (i) and (ii) respectively). 

\item[(ii)] $(e_{cw}(S)^*,e_{cw}(S)_*):\PSh(\CW^{sm}/S,C^-(\mathbb Z))\leftrightarrows C^-(\mathbb Z)$
is a Quillen adjonction for the etale topology model structures 
and a Quillen adjonction for the $(\mathbb I^1,et)$ model structures 
(c.f. definition \ref{RImodstr} (i)). 

\item[(iii)] $(e_{cw}(S)^{tr*},e_{cw}^{tr}(S)_*):
\PSh(\Cor^{fs}_{\mathbb Z}(\CW^{sm}/S),C^-(\mathbb Z))\leftrightarrows C^-(\mathbb Z)$
is a Quillen adjonction for the etale topology model structures
and a Quillen adjonction for the $(\mathbb I^1,et)$ model structures 
(c.f. definition \ref{RImodstr} (ii)). 
\end{itemize}
\end{prop}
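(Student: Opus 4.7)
The plan is to mirror exactly the argument given for the absolute version, replacing every use of lemma \ref{CWTr} by its relative analogue \ref{RCWTr} (proved just above in the excerpt), and every use of the definition \ref{CWmodstr} / \ref{CWTrusu} by the corresponding relative definitions \ref{RImodstr} / the relative analogue of \ref{CWTrusu}. Since the relative model structures are defined by the very same recipe (left Bousfield localization of the projective usu model structure at the relative $\mathbb{I}^1$-homotopy maps $\mathbb{Z}(X\times\mathbb{I}^1/S)\to\mathbb{Z}(X/S)$, resp.\ their transfer analogues), no new conceptual input is needed beyond what \ref{RCWTr} already supplies.

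For part (i), I first show that $\Tr(S)_*$ is a right Quillen functor for the usu model structures. By definition of the usu model structure on $\PSh_{\mathbb{Z}}(\Cor^{fs}_{\mathbb{Z}}(\CW^{sm}/S),C^-(\mathbb{Z}))$, a morphism is an usu equivalence iff its image under $\Tr(S)_*$ is; since $\Tr(S)_*$ also preserves termwise injectivity (being exact), it sends fibrations to fibrations and trivial fibrations to trivial fibrations. To upgrade to the $(\mathbb{I}^1,\mathrm{usu})$ model structures, I apply \ref{RCWTr}\,(i), which says $\Tr(S)_*$ preserves $\mathbb{I}^1$-local objects and hence fibrations in the Bousfield localization, together with \ref{RCWTr}\,(ii), which says $\Tr(S)_*$ preserves $(\mathbb{I}^1,\mathrm{usu})$-equivalences; these two properties together give that $\Tr(S)_*$ sends trivial fibrations of the $(\mathbb{I}^1,\mathrm{usu})$ structure to trivial fibrations, completing the Quillen adjunction criterion.

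For parts (ii) and (iii), I verify instead that the left adjoints $e_{cw}(S)^*$ and $e_{cw}^{tr}(S)^*$ are left Quillen. This is immediate: $e_{cw}(S)^*$ is the extension by zero / Yoneda-style left Kan extension along $e_{cw}(S):\Ouv(S)\hookrightarrow\CW^{sm}/S$, and it sends generating cofibrations (resp.\ trivial cofibrations) of $C^-(S)=\PSh(\Ouv(S),C^-(\mathbb{Z}))$ to generating cofibrations (resp.\ trivial cofibrations) of the projective usu structure on $\PSh(\CW^{sm}/S,C^-(\mathbb{Z}))$. For the $(\mathbb{I}^1,\mathrm{usu})$ structure, I only need to check that the additional generating trivial cofibrations $\mathbb{Z}(X\times\mathbb{I}^1/S)[n]\to\mathbb{Z}(X/S)[n]$ are not affected (they already live in the target), so the left Quillen property is preserved under the Bousfield localization. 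The argument for (iii) is identical, replacing $\mathbb{Z}(-)$ by $\mathbb{Z}_{tr}(-)$.

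No step should present a real obstacle, since the relative lemma \ref{RCWTr} does all the heavy lifting; the only bookkeeping to check carefully is that the generating (trivial) cofibrations of the relative projective usu model structure are indeed sent across as claimed, which is a direct consequence of the adjunctions being built from morphisms of sites.
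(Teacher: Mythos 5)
Your proposal is correct and follows essentially the same route as the paper: (i) is proved by combining lemma~\ref{RCWTr}(i) (preservation of $\mathbb I^1$-local objects, hence of fibrations between fibrant objects) with lemma~\ref{RCWTr}(ii) (preservation of $(\mathbb I^1,\mathrm{usu})$-equivalences), exactly as in the absolute analogue, and (ii)--(iii) are proved by checking that the left adjoints preserve (trivial) cofibrations. The paper's own proof is merely a terse pointer to these facts (``derive trivially hence is a right/left Quillen functor''), and your write-up is the natural unpacking of that pointer matching the level of detail the paper supplies for the corresponding absolute statement.
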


\begin{proof}

\noindent(i): Follows from the fact that $\Tr(S)_*$ by lemme \ref{RCWTr} derive trivially hence is a right Quillen functor.

\noindent(ii): Follows from the fact that $e_{cw}(S)^*$ derive trivially hence is a left Quillen functor.

\noindent(iii): Follows from the fact that $e^{tr}_{cw}(S)^*$ derive trivially hence is a left Quillen functor.

\end{proof}

\begin{prop}\label{adfCW}
Let $f:T\to S$ a morphism in $\CW$.
\begin{itemize}
\item[(i)] The functors 
\begin{itemize}
\item $f_*:\PSh(\CW^{sm}/T,C^-(\mathbb Z))\to\PSh(\CW^{sm}/S,C^-(\mathbb Z))$ and
\item $f_*:\PSh(\Cor^{fs}_{\mathbb Z}(\CW^{sm}/T),C^-(\mathbb Z))
\to\PSh(\Cor^{fs}_{\mathbb Z}(\CW^{sm}/S),C^-(\mathbb Z))$, 
\end{itemize}
preserve and detect $\mathbb I^1$ local object, usu fibrant objects.
\item[(ii)] The functors 
\begin{itemize}
\item $f^*:\PSh(\CW^{sm}/S,C^-(\mathbb Z))\to\PSh(\CW^{sm}/T,C^-(\mathbb Z))$ and
\item $f^*:\PSh(\Cor^{fs}_{\mathbb Z}(\CW^{sm}/S),C^-(\mathbb Z))
\to\PSh(\Cor^{fs}_{\mathbb Z}(\CW^{sm}/T),C^-(\mathbb Z))$, 
\end{itemize}
preserve and detect $(\mathbb I^1,usu)$ equivalence.
\end{itemize}
\end{prop}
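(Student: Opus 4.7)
The plan is to follow the pattern of Proposition \ref{adfVar} and Proposition \ref{adfAn}, where the analogous statements for algebraic varieties and analytic spaces were obtained immediately from the definition of locality together with the explicit formula for $f_*$. First, I would recall that the pullback functor $P(f):\CW^{sm}/S\to\CW^{sm}/T$, given by $X/S\mapsto X\times_S T/T$, is well-defined because smoothness in $\CW$ is stable under base change, and it extends naturally to finite and surjective correspondences since the fiber product of a finite surjective correspondence with $T$ over $S$ is again finite surjective over the base change. The resulting pushforward on presheaves is described explicitly by $(f_*G^{\bullet})(X/S)=G^{\bullet}(X\times_S T/T)$, and similarly in the transfer setting.

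For point (i), the preservation of $\mathbb I^1$ local objects is immediate from this formula: given an $\mathbb I^1$ local $G^{\bullet}$, after usu fibrant replacement the test map
\begin{equation*}
(f_*G^{\bullet})(X/S)\to (f_*G^{\bullet})(X\times\mathbb I^1/S)=G^{\bullet}((X\times_S T)\times\mathbb I^1/T)
\end{equation*}
is nothing but the $\mathbb I^1$-test for $G^{\bullet}$ at the smooth object $X\times_S T/T\in\CW^{sm}/T$, hence is a quasi-isomorphism. Preservation of usu fibrant objects follows from the general principle that the direct image under a morphism of sites preserves fibrant objects, which here is a consequence of $f^*$ being exact and sending usu covers to usu covers. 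For the detection statement, I would argue by adjunction: if $f_*G^{\bullet}$ is $\mathbb I^1$ local, then for every $Y/T\in\CW^{sm}/T$ the canonical section $s_Y:Y\to Y\times_S T$ of the projection identifies the $\mathbb I^1$-test for $G^{\bullet}$ at $Y/T$ with the one at $Y\times_S T/T$ up to $\mathbb I^1$-homotopy (using lemma \ref{RI1hpt}), and the latter equals the $\mathbb I^1$-test for $f_*G^{\bullet}$ at $Y/S\in\CW^{sm}/S$. The transfer version then reduces to the non-transfer case by lemma \ref{RCWTr}.

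Point (ii) follows formally from point (i) by adjunction. Indeed, for any $\mathbb I^1$ local object $K^{\bullet}$ on $T$ and any morphism $\phi:F_1^{\bullet}\to F_2^{\bullet}$ on $S$, the bijection
\begin{equation*}
\Hom_{\Ho_{usu}(PC^-(CW,T))}(f^*\phi,K^{\bullet})=\Hom_{\Ho_{usu}(PC^-(CW,S))}(\phi,f_*K^{\bullet}),
\end{equation*}
together with the fact that $f_*K^{\bullet}$ is $\mathbb I^1$ local by (i), shows that $f^*\phi$ is an $(\mathbb I^1,usu)$ local equivalence whenever $\phi$ is. For detection, one uses that the class $\{f_*K^{\bullet}\,:\,K^{\bullet}\text{ is }\mathbb I^1\text{ local on }T\}$ is big enough to detect $(\mathbb I^1,usu)$ equivalences between objects pulled back from $S$, which in turn rests on the fact that $f^*$ preserves the generating trivial cofibrations of the $(\mathbb I^1,usu)$ model structure.

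The main obstacle will be the detection part of (i): in the analytic and algebraic cases it was dispatched in one line, but on $\CW$ one must verify the graph section argument above to reduce a general $Y/T$ to base-changed objects of the form $X\times_S T/T$. All the remaining ingredients (exactness of $f^*$, compatibility with $\Tr$, and $\mathbb I^1$ homotopy invariance) have already been established in lemmas \ref{RI1hpt}, \ref{RI1hptlem}, and \ref{RCWTr}, so once that one point is set, both preservation and detection, in both the with- and without-transfers versions, fall out uniformly.
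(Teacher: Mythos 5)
Your argument for \emph{preservation} in both parts is sound and is essentially what the paper has in mind: the identity $(X\times\mathbb I^1)\times_S T=(X\times_S T)\times\mathbb I^1$ makes the $\mathbb I^1$-test for $f_*G^{\bullet}$ at $X/S$ literally the $\mathbb I^1$-test for $G^{\bullet}$ at $X\times_S T/T$, and the Quillen-adjunction argument for usu fibrancy and for (ii)-preservation goes through. This matches the paper's one-line proof, which really only addresses preservation.

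The problem is the \emph{detection} half of (i), where your graph-section argument breaks down. You want to compare the $\mathbb I^1$-test for $G^{\bullet}$ at an arbitrary $Y/T=(Y,h_Y)\in\CW^{sm}/T$ with the $\mathbb I^1$-test for $f_*G^{\bullet}$ at ``$Y/S$.'' But the object $Y/S$ you need is $(Y,f\circ h_Y)$, and $f\circ h_Y$ is a composite of the smooth map $h_Y$ with the \emph{arbitrary} map $f$; there is no reason this composite should be smooth, so $(Y,f\circ h_Y)$ need not lie in $\CW^{sm}/S$ at all, and $(f_*G^{\bullet})(Y/S)$ is simply undefined. By the same token $Y\times_S T/T$ (base change of $Y/S$ along $f$) need not be a smooth object of $\CW^{sm}/T$. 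A secondary issue is the phrase ``up to $\mathbb I^1$-homotopy'': the section $s_Y=(\mathrm{id}_Y,h_Y):Y\to Y\times_S T$ does exhibit $G^{\bullet}(Y)\to G^{\bullet}(Y\times\mathbb I^1)$ as a \emph{retract} of $G^{\bullet}(Y\times_S T)\to G^{\bullet}(Y\times_S T\times\mathbb I^1)$ (since $p_1\circ s_Y=\mathrm{id}_Y$), which is the honest version of what you want; but $s_Y$ is in no sense an $\mathbb I^1$-homotopy equivalence, since the fibers of $p_1:Y\times_S T\to Y$ are the fibers of $f$ over $S$ and these are not contractible in general. So even setting the smoothness obstruction aside, invoking lemma \ref{RI1hpt} here is not the right tool — the retract argument is what you actually need, but it still requires $Y/S\in\CW^{sm}/S$, which you do not have.

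The detection claim in (ii) then inherits the same gap: your adjunction argument shows that $\Hom(F_2,f_*K')\to\Hom(F_1,f_*K')$ is an isomorphism for every $\mathbb I^1$-local $K'$ on $T$, but not every $\mathbb I^1$-local object of $PC^-(CW,S)$ is of the form $f_*K'$, and the closing appeal to ``$f^*$ preserving generating trivial cofibrations'' is not an argument. In short: the preservation statements are correct and easy; the detection statements are not established by this proposal (and, for what it is worth, are not established by the paper's own one-line proof either — only preservation is ever used downstream, in proposition \ref{adfCWcor}).
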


\begin{proof}
Point (i) follows immediately from definition of $\mathbb I^1$ local objects.
Point (ii) follows from point (i).
\end{proof}

We immediately deduce the following

\begin{prop}\label{adfCWcor}
Let $f:T\to S$ a morphism in $\CW$. 
\begin{itemize}
\item[(i)] The adjonction 
$(f^*,f_*):\PSh(\CW^{sm}/S,C^-(\mathbb Z))\leftrightarrows\PSh(\CW^{sm}/T,C^-(\mathbb Z))$ 
is a Quillen adjonction with respect to the usu model structures and the $(\mathbb I^1,usu)$ model structures
\item[(ii)] The adjonction
$(f^*,f_*):\PSh(\Cor^{fs}_{\mathbb Z}(\CW^{sm}/S),C^-(\mathbb Z))
\leftrightarrows\PSh(\Cor^{fs}_{\mathbb Z}(\CW^{sm}/T),C^-(\mathbb Z))$, 
is a Quillen adjonction with respect to the usu model structures and the $(\mathbb I^1,usu)$ model structures
\end{itemize}
\end{prop}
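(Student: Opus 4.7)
The plan is to verify that $(f^*,f_*)$ is a Quillen adjunction by checking that $f^*$ is a left Quillen functor, i.e. that it preserves cofibrations and trivial cofibrations, for both the projective usual topology model structure and the projective $(\mathbb I^1,usu)$ model structure on each of the two categories. Proposition \ref{adfCW} has already done the bulk of the work; this proposition is essentially a corollary, so my proposal is to derive it cleanly from that.

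First I would observe that $f^*$ preserves cofibrations in both model structures of interest. The $(\mathbb I^1,usu)$ model structure is a left Bousfield localization of the projective usu model structure, so the two have the same class of cofibrations. The cofibrations of the projective usu structure are retracts of transfinite compositions of pushouts of the generating cofibrations, which are of the form $\mathbb Z(X/S)\otimes\partial\Delta^n\hookrightarrow\mathbb Z(X/S)\otimes\Delta^n$ in the presheaf case (and analogously $\mathbb Z_{tr}(X/S)\otimes\partial\Delta^n\hookrightarrow\mathbb Z_{tr}(X/S)\otimes\Delta^n$ with transfers), together with shifts. Since by definition $f^*\mathbb Z(X/S)=\mathbb Z(X\times_S T/T)$ and $f^*\mathbb Z_{tr}(X/S)=\mathbb Z_{tr}(X\times_S T/T)$, and since $f^*$ is a left adjoint which commutes with tensoring against complexes of abelian groups, $f^*$ sends generating cofibrations to generating cofibrations and hence preserves all cofibrations.

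Second I would invoke proposition \ref{adfCW}(ii) to conclude that $f^*$ preserves $(\mathbb I^1,usu)$ local equivalences, and its usu-only analogue (which is immediate, since the usu topology is transported along $f$ at the small-site level, and in fact is already recorded inside the proof of proposition \ref{adfCW}(i) via the statement that $f_*$ preserves usu fibrant objects) to conclude that $f^*$ preserves usu local equivalences. Combined with the preservation of cofibrations, this shows that $f^*$ preserves trivial cofibrations in both the usu and $(\mathbb I^1,usu)$ model structures, for both the presheaf and presheaf-with-transfers variants. This establishes (i) and (ii).

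There is no serious obstacle: the only bookkeeping point is that one must use the defining property of a left Bousfield localization (same cofibrations as the underlying model structure, enlarged class of weak equivalences) to deduce the $(\mathbb I^1,usu)$ statement from the preservation of cofibrations at the usu level together with the preservation of $(\mathbb I^1,usu)$ weak equivalences from proposition \ref{adfCW}(ii).
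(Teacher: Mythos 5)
Your proposal is correct and follows essentially the same route as the paper: both proofs reduce matters to proposition \ref{adfCW}(ii) for the $(\mathbb I^1,usu)$ part, and cite the general theory (Ayoub's framework) for the usu part. The one place where you are more explicit than the paper is the cofibration-preservation step --- the paper's proof simply asserts ``$f^*$ derives trivially\dots in particular $f^*$ is a left Quillen functor,'' leaving the (easy but nonzero) observation that $f^*$ preserves cofibrations implicit, whereas you spell out that this follows from $f^*$ sending generating cofibrations to generating cofibrations and from left Bousfield localization not changing the cofibrations; that makes your version of the argument a bit more self-contained.
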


\begin{proof}

\noindent(i): By \cite{AyoubT}, $f^*$ derive trivially for the usual topology model structure.
Now, by proposition \ref{adfCW}(ii), $f^*$ derive trivially for the $(\mathbb I^1,usu)$ model structure.
In particular, $f^*$ is a left Quillen functor.

\noindent(i): Similar to point (i).

\end{proof}

We deduce from lemma \ref{RI1hpt}(ii), the point (i) of the following proposition :

\begin{prop}\label{RactadjCW}
Let $S\in\CW$
\begin{itemize}
\item[(i)] For $F^{\bullet}\in\PSh(\CW^{sm}/S,C^-(\mathbb Z))$, the adjonction morphism 
\begin{equation}
\ad(e_{cw}(S)^*,e_{cw}(S)_*)(\underline{\sing}_{\mathbb I^*}F^{\bullet}):
e_{cw}(S)^*e_{cw}(S)_*\underline{\sing}_{\mathbb I^*}F^{\bullet}\to\underline{\sing}_{\mathbb I^*}F^{\bullet}
\end{equation}
is an equivalence usu local.

\item[(ii)] For $F^{\bullet}\in\PSh_{\mathbb Z}(\Cor^{fs}_{\mathbb Z}(\CW^{sm}/S),C^-(\mathbb Z))$, the adjonction morphism 
\begin{equation}
\ad(e^{tr}_{cw}(S)^*,e^{tr}_{cw}(S)_*)(\underline{\sing}_{\mathbb I^*}F^{\bullet}):
e^{tr}_{cw}(S)^*e^{tr}_{cw}(S)_*\underline{\sing}_{\mathbb I^*}F^{\bullet}\to\underline{\sing}_{\mathbb I^*}F^{\bullet}
\end{equation}
is an equivalence usu local.

\item[(iii)] For $K^{\bullet}\in C^-(\mathbb Z)$, the adjonction morphisms
\begin{itemize}
\item $\ad(e_{cw}(S)^*,e_{cw}(S)_*)(K^{\bullet}):K^{\bullet}\to e_{cw}(S)_*e_{cw}(S)^*K^{\bullet}$ 
\item $\ad(e^{tr}_{cw}(S)^*,e^{tr}_{cw}(S)_*)(K^{\bullet}):
K^{\bullet}\to e^{tr}_{cw}(S)_*e^{tr}_{cw}(S)_*K^{\bullet}$
\end{itemize}
are isomorphisms.
\end{itemize}
\end{prop}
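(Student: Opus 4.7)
My plan is to prove (i) by a localization argument using the local triviality of smooth morphisms in $\CW^{sm}/S$, to deduce (ii) from (i) via lemma \ref{RCWTr}, and to observe that (iii) is formal. For (i), the property of being an usu local equivalence is local on each $X/S = (X,h) \in \CW^{sm}/S$, so it suffices to show that for each $x \in X$ the counit becomes a quasi-isomorphism after restricting to a suitable neighborhood $U$ of $x$. By the definition of a smooth morphism in $\CW$, I may shrink $U$ to assume $U \cong V \times W$ over an open $V \subset S$, with $W \subset \mathbb R^d$ open; shrinking $W$ once more I may take $W$ to be convex with chosen basepoint $w_0 \in W$. Then the projection $\pi \colon U = V \times W \to V$ and the section $\sigma \colon V \to V \times W$, $v \mapsto (v,w_0)$, are morphisms of $\CW^{sm}/S$ with $\pi \circ \sigma = I_V$, and the linear contraction $H((v,w),t) = (v,(1-t)w + tw_0)$ is an $\mathbb I^1$-homotopy in $\CW^{sm}/S$ between $\sigma \circ \pi$ and $I_U$. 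Exactly as in the absolute case (Proposition \ref{actadjCW}(i)), the value of the counit at $U/S$ is identified with the pullback $\underline{\sing}_{\mathbb I^*}F^{\bullet}(\pi) \colon \underline{\sing}_{\mathbb I^*}F^{\bullet}(V/S) \to \underline{\sing}_{\mathbb I^*}F^{\bullet}(U/S)$, which is a quasi-isomorphism by lemma \ref{RI1hpt}(ii).

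For (ii), by lemma \ref{RCWTr}(ii) it suffices to verify the claim after applying $\Tr(S)_*$. The identity $\Tr(S)_* \underline{\sing}_{\mathbb I^*} = \underline{\sing}_{\mathbb I^*} \Tr(S)_*$, together with the natural compatibility $\Tr(S)_* e^{tr}_{cw}(S)^* e^{tr}_{cw}(S)_* = e_{cw}(S)^* e_{cw}(S)_* \Tr(S)_*$ (which reduces on representables to $\Tr(S)_*\mathbb Z_{tr}(S^o/S) = \mathbb Z(S^o/S)$ for open $S^o \subset S$ and extends by colimits), reduces (ii) to (i) applied to $\Tr(S)_*F^{\bullet}$. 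Point (iii) is then immediate from the full faithfulness of the embedding $\Ouv(S) \hookrightarrow \CW^{sm}/S$: the comma category indexing the left Kan extension $e_{cw}(S)^* K$ at an open $S^o/S$ has the identity as a terminal object, whence $e_{cw}(S)_* e_{cw}(S)^* K(S^o) = K(S^o)$; the same holds with transfers since the restriction of $\Tr(S)$ to $\Ouv(S)$ is the identity.

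The main subtlety in the argument is the identification, in step (i), of the counit at $U/S = (V\times W)/S$ with the pullback map $\underline{\sing}_{\mathbb I^*}F^{\bullet}(\pi)$. This relies on the fact that, by (iii), the counit is already an isomorphism on the full subcategory $\Ouv(S)$, combined with naturality of the counit applied to the trivialization $U = V \times W$; I expect this point to require the most care to state cleanly, since it involves the explicit computation of the left Kan extension on the colimit formula together with the fact that, in this trivialized setting, the section $\sigma$ and projection $\pi$ realize the necessary $\mathbb I^1$-homotopy equivalence.
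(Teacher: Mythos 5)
Your proposal matches the paper's proof in both structure and substance: for (i) you localize, trivialize the smooth morphism as a product of an open of $S$ with a contractible Euclidean open, and appeal to the $\mathbb I^1$-homotopy invariance of $\underline{\sing}_{\mathbb I^*}F^{\bullet}$ via lemma \ref{RI1hpt}(ii); for (ii) you reduce to (i) by applying $\Tr(S)_*$; and (iii) is formal from full faithfulness of $\Ouv(S)\hookrightarrow\CW^{sm}/S$. One citation slip in (ii): lemma \ref{RCWTr}(ii) concerns $(\mathbb I^1,usu)$ local equivalences, a strictly weaker notion than plain usu local equivalences, so invoking it would only give you the $(\mathbb I^1,usu)$ version of the statement; what you actually need is that $\Tr(S)_*$ detects usu local equivalences, which holds directly by the definition of usu local equivalence with transfers (definition \ref{RImodstr}(iii), mirroring \ref{CWTrusu}(ii)), and this is the fact the paper's proof relies on.
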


\begin{proof}

\noindent(i): Let $X/S\in\CW^{sm}/S$. Since the question is local, 
we can assume, after shrinking $X/S$, 
that $X/S=(V\times S^o,p_S)$, with $S^o\subset S$ an open subset
and $V\subset\mathbb R^{d_X}$ a contractible open subset and 
$p_S:\mathbb R^{d_X}\to S$ the projection. 
It then follows from lemma \ref{RI1hpt}(ii).

\noindent(ii): It is a particular point of (i), since $\Tr(S)_*$ preserve usu local equivalences.

\noindent(iii): Obvious

\end{proof}

\begin{prop}\label{Rcle}
Let $S\in\CW$
\begin{itemize}
\item[(i)] The functor 
$\Tr(S)_*:\PSh_{\mathbb Z}(\Cor^{fs}_{\mathbb Z}(\CW),C^-(\mathbb Z))\to\PSh(\CW,C^-(\mathbb Z))$ 
derive trivially.
\item[(ii)] For $K^{\bullet}\in C^-(S)$, $e_{cw}(S)^*K^{\bullet}$ is $\mathbb I^1$ local.
\item[(iii)] For $K^{\bullet}\in C^-(S)$, $e^{tr}_{cw}(S)^*K^{\bullet}$ is $\mathbb I^1$ local.
\end{itemize}
\end{prop}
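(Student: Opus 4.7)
The structure of the statement mirrors that of the absolute proposition \ref{cle}, and my plan is to adapt each of its three proofs to the relative setting in the obvious way, replacing lemma \ref{CWTr} by its relative counterpart lemma \ref{RCWTr} and using smoothness of structure morphisms for the local model computation.

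First, for (i) the argument is essentially formal: by lemma \ref{RCWTr}(ii) the functor $\Tr(S)_*$ preserves $(\mathbb I^1,usu)$ local equivalences (and this is indeed already a restatement of the fact that $\Tr(S)_*$ both detects and preserves such equivalences), so no derivation is needed and the functor is its own right derived functor.

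For (ii), I would follow the strategy of \cite{AyoubB} Proposition 1.6 Etape B transposed to $\CW^{sm}/S$. Let $K^{\bullet}\in C^-(S)$ and choose an usu local equivalence $h:e_{cw}(S)^*K^{\bullet}\to L^{\bullet}$ with $L^{\bullet}$ usu fibrant; since $\mathbb I^1$-locality is invariant under usu equivalences, it suffices to prove that the morphism
\begin{equation*}
\underline{L}(p):L^{\bullet}\to\underline{\Hom}(\mathbb Z(\mathbb I^1\times S/S),L^{\bullet})
\end{equation*}
induced by the projection $p:\mathbb I^1\times S\to S$ is an usu local equivalence. The question being local on $\CW^{sm}/S$, and a smooth morphism $h:X\to S$ admitting locally on $X$ a factorization $h_{|U_x}=p_{S^o}\circ k$ with $k:U_x\hookrightarrow V\times S^o$ an open embedding, $V\subset\mathbb R^{d_X}$ and $S^o\subset S$ open, we may test the statement on such trivializations. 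On trivial pieces, the restriction of $e_{cw}(S)^*K^{\bullet}$ to the small site of $X/S=(V\times S^o,p_{S^o})$ agrees with the pullback of $K^{\bullet}$ along $p_{S^o}$, and the projection $p:\mathbb I^1\times V\times S^o\to V\times S^o$ admits a section coming from $i_0$ together with an $\mathbb I^1$-homotopy over $S^o$ to the identity. By lemma \ref{RI1hpt}(ii), $L^{\bullet}(p)$ is then a quasi-isomorphism on $V\times S^o/S$, which gives the desired local statement.

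For (iii), the argument is a direct reduction to (ii): since $e^{tr}_{cw}(S)=\Tr(S)\circ e_{cw}(S)$, we have the identity $\Tr(S)_*\,e^{tr}_{cw}(S)^*K^{\bullet}=e_{cw}(S)^*K^{\bullet}$, and by (ii) the right-hand side is $\mathbb I^1$ local. Applying lemma \ref{RCWTr}(i), which says $\Tr(S)_*$ detects $\mathbb I^1$ local objects, we conclude that $e^{tr}_{cw}(S)^*K^{\bullet}$ is $\mathbb I^1$ local.

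The only real content is in (ii), and the one step I expect to require attention is the reduction to the locally trivial model $V\times S^o/S$. The absolute proof in proposition \ref{cle}(ii) defers to \cite{AyoubB}, and here one must check that the local factorization of a smooth morphism is compatible with the presheaf $e_{cw}(S)^*K^{\bullet}$; once this is clear, the homotopy invariance argument proceeds as in the absolute case. Points (i) and (iii) are then purely formal consequences of lemma \ref{RCWTr}.
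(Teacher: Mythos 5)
Your proposal follows the same three-step structure as the paper's proof: part (i) is read off from lemma \ref{RCWTr}(ii), part (ii) reduces to showing that for a usu-fibrant replacement $L^{\bullet}$ of $e_{cw}(S)^*K^{\bullet}$ the map $\underline{L}(p):L^{\bullet}\to\underline{\Hom}(\mathbb Z(S\times\mathbb I^1),L^{\bullet})$ is an usu local equivalence (deferred to \cite{AyoubB} Prop.\ 1.6 \'Etape B), and part (iii) combines (ii) with lemma \ref{RCWTr}(i). So the overall route is identical.

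One caveat on the extra detail you supply in (ii): your invocation of lemma \ref{RI1hpt}(ii) is not quite the right tool there. That lemma concerns $\underline{\sing}_{\mathbb I^*}F^{\bullet}(f)$ for an $\mathbb I^1$-homotopy equivalence $f$; in (ii) one instead has a usu fibrant $L^{\bullet}$ that is merely usu-equivalent to $e_{cw}(S)^*K^{\bullet}$, and one needs to see that $\underline L(p)$ is an usu local equivalence without having $\underline{\sing}_{\mathbb I^*}$ in play. The passage from the local trivialization $V\times S^o/S$ (where $e_{cw}(S)^*K^{\bullet}$ is explicitly homotopy invariant) to the fibrant replacement $L^{\bullet}$ requires the hypercohomology/descent argument in \cite{AyoubB}, which is precisely what the paper cites; lemma \ref{RI1hpt}(ii) alone does not close that gap. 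Since the paper itself defers at exactly this point, your sketch is consistent in spirit, but you should not present the lemma as the step that concludes the argument.
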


\begin{proof}

\noindent(i): Follows from lemma \ref{RCWTr}(ii).

\noindent (ii): Let $e_{cw}(S)^*K^{\bullet}\to L^{\bullet}$ an usu local equivalence, with $L^{\bullet}$ usu fibrant.
Since $e_{cw}(S)^*K^{\bullet}$ is usu equivalent to $ L^{\bullet}$ it suffices to prove that $L^{\bullet}$ is $\mathbb I^1$ local.
Since $L^{\bullet}$ is usu fibrant, we have to prove that 
\begin{equation*}
\underline{L}(p):L^{\bullet}\to\underline\Hom(\mathbb Z(S\times\mathbb I^1),L^{\bullet}) 
\end{equation*}
is an equivalence usu local
The proof is now similar to \cite{AyoubB} proposition 1.6 etape B.

\noindent(iii): Follows from (ii) and lemma \ref{RCWTr}(i).

\end{proof}

\begin{thm}\label{RCWsingI}
Let $S\in\CW$
\begin{itemize}
\item[(i)] For $F^{\bullet}\in\PSh(\CW^{sm}/S,C^-(\mathbb Z))$,
$\underline{\sing}_{\mathbb I^*}F^{\bullet}\in\CW^{sm}/S,C^-(\mathbb Z))$ 
is $\mathbb I^1$ local and 
the inclusion morphism $S(F^{\bullet}):F^{\bullet}\to\underline{\sing}_{\mathbb I^*}F^{\bullet}$
is an $(\mathbb I^1,usu)$ equivalence. 

\item[(ii)] For $F^{\bullet}\in\PSh_{\mathbb Z}(\Cor^{fs}_{\mathbb Z}(\CW^{sm}/S),C^-(\mathbb Z))$, 
$\underline{\sing}_{\bar{\mathbb I}^*}F^{\bullet}\in\PSh_{\mathbb Z}(\Cor^{fs}_{\mathbb Z}(\CW^{sm}/S),C^-(\mathbb Z))$ 
is $\mathbb I^1$ local and 
the inclusion morphism $S(F^{\bullet}):F^{\bullet}\to\underline{\sing}_{\mathbb I^*}F^{\bullet}$ 
is an $(\mathbb I^1,usu)$ equivalence. 
\end{itemize}
\end{thm}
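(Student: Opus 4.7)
The plan is to mirror the absolute proof of theorem \ref{CWsingI}, since all the constructions involved (the cubical singular complex, the contracting homotopy $\theta_{1,n}$, the detection properties of $\Tr(S)_*$) are compatible with pullback of the structure morphism to $S$. First, for point (i), to prove that $\underline{\sing}_{\mathbb I^*}F^{\bullet}$ is $\mathbb I^1$ local, I would use the relative analog of the commutative diagram (\ref{singH}) and reduce, for each $n\in\mathbb Z$, to showing that the morphism
\begin{equation*}
\underline{F}^{\bullet}(p_n):F^{\bullet}\to\underline{\Hom}(\mathbb Z(\mathbb I^n\times S/S),F^{\bullet})
\end{equation*}
induced by the projection $\mathbb I^n\times S\to S$ is an $(\mathbb I^1,usu)$ local equivalence in $\PSh(\CW^{sm}/S,C^-(\mathbb Z))$.

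Second, the crucial ingredient is the relative homotopy map, which I would define, for $X/S=(X,h)\in\CW^{sm}/S$, by
\begin{equation*}
\theta_{1,n}(X/S):\mathbb I^n\times\mathbb I^1\times X/S\to\mathbb I^n\times X/S,\quad (t_1,\ldots,t_n,t_{n+1},x)\mapsto(t_1-t_{n+1}t_1,\ldots,t_n-t_{n+1}t_n,x).
\end{equation*}
This is a morphism in $\CW^{sm}/S$ since it acts trivially on the $X$-factor, so the structure morphism to $S$ is preserved. It satisfies $\theta_{1,n}(X/S)\circ(I_{\mathbb I^n}\times i_0\times I_X)=I_{\mathbb I^n\times X}$ and $\theta_{1,n}(X/S)\circ(I_{\mathbb I^n}\times i_1\times I_X)=0\times I_X$, and $p_X\circ(0\times I_X)=I_X$. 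Applying $\underline{F}^{\bullet}$ and using lemma \ref{RI1hpt}(iv), this shows $\underline{F}^{\bullet}(p_n)$ is an $\mathbb I^1$-homotopy equivalence, hence an $(\mathbb I^1,usu)$ equivalence. The first part of (i) then follows from proposition \ref{RactadjCW}(i) and proposition \ref{Rcle}(ii), and the second part (that $S(F^{\bullet})$ is an $(\mathbb I^1,usu)$ equivalence) follows by the same diagram chase as in the absolute case.

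For point (ii), I would invoke lemma \ref{RCWTr}: since $\Tr(S)_*\underline{\sing}_{\mathbb I^*}F^{\bullet}=\underline{\sing}_{\mathbb I^*}\Tr(S)_*F^{\bullet}$ is $\mathbb I^1$ local by (i), and $\Tr(S)_*$ detects $\mathbb I^1$ local objects by \ref{RCWTr}(i), the presheaf $\underline{\sing}_{\mathbb I^*}F^{\bullet}$ is $\mathbb I^1$ local; similarly, $\Tr(S)_*S(F^{\bullet})=S(\Tr(S)_*F^{\bullet})$ is an $(\mathbb I^1,usu)$ local equivalence by (i), and $\Tr(S)_*$ detects $(\mathbb I^1,usu)$ equivalences by \ref{RCWTr}(ii), so $S(F^{\bullet})$ is an $(\mathbb I^1,usu)$ equivalence.

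I do not expect a serious obstacle: the content of the proof is formal, and the only point requiring attention is verifying that $\theta_{1,n}(X/S)$ is indeed a morphism over $S$, which is immediate. All other steps are already packaged into the preceding lemmas \ref{RI1hpt}, \ref{RCWTr}, \ref{RactadjCW} and proposition \ref{Rcle}, which were set up precisely to allow this reduction from the absolute to the relative case.
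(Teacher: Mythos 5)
Your proof is correct and follows essentially the same route as the paper: the same reduction via the relative analogue of diagram (\ref{singH}), the same homotopy $\theta_{1,n}(X/S)$ (which is indeed a morphism over $S$ since it acts trivially on the $X$-factor), proposition \ref{RactadjCW}(i) together with proposition \ref{Rcle}(ii) for $\mathbb I^1$-locality, and the detection lemma \ref{RCWTr} to deduce (ii) from (i). You in fact spell out one or two details the paper leaves implicit, but there is no difference in substance.
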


\begin{proof}

\noindent(i): As in the absolute case, the fact that $\underline{\sing}_{\mathbb I^*}F^{\bullet}$
is a $\mathbb I^1$ local object follows from proposition \ref{RactadjCW}(i) and proposition \ref{Rcle}(ii).
We now prove that $S(F^{\bullet})$ is an $(\mathbb I^1,usu)$ local equivalence.
As in the absolute case, it suffice to show that
that for all $n\in\mathbb Z$, the morphism 
\begin{equation*}
\underline{F}^{\bullet}(p_n):F^{\bullet}\to\underline{\Hom}(\mathbb Z(\mathbb I^n),F^{\bullet}), \;
X/S\in\CW/S\mapsto\underline{F}^{\bullet}(p_n)(X/S)=F^{\bullet}(p_X):
F^{\bullet}(X/S)\to F^{\bullet}(\mathbb I^n\times X/S) 
\end{equation*}
is an equivalence $(\mathbb I^1,usu)$ local.
For $X/S\in\CW/S$, the morphism
\begin{equation*}
\theta_{1,n}(X/S):(\mathbb I^n\times\mathbb I^1\times X)/S\to\mathbb I^n\times X/S \; ; \; 
(t_1,\cdots,t_n,t_{n+1},x)\mapsto(t_1-t_{n+1}t_1,\cdots, t_n-t_{n+1}t_n,x)
\end{equation*}
define an $\mathbb I^1$ homotopy from $I_{\mathbb I^n\times X}$ to $0\times I_X$ ; 
on the other side
$p_X\circ(0\times I_X)=I_X$, with $p_X:\mathbb I^n\times X\to X$ the projection.
Thus, $\underline{F}^{\bullet}(\theta_{1,n})$
define an $\mathbb I^1$ homotopy from $\underline{F}^{\bullet}(I_{\mathbb I^n})$ to
$\underline{F}^{\bullet}(0)$ ; on the other side,
$\underline{F}^{\bullet}(0)\circ\underline{F}^{\bullet}(p_n)=I$.
This proves (i).

\noindent(ii): As in the absolute case, follows from (i).

\end{proof}

\begin{thm}\label{RCWMot}
Let $S\in\CW$. Then,
\begin{itemize}  
\item[(i)] The adjonction
$(e_{cw}(S)^*,e_{cw}(S)_*):C^-(\mathbb Z)\leftrightarrows\PSh_{\mathbb Z}(\CW^{sm}/S,C^-(\mathbb Z))$
is a Quillen equivalence for the $(\mathbb I^1,usu)$ model structures. 
That is, the derived functor
\begin{equation}
e_{cw}(S)^*:D^-(\mathbb Z)\xrightarrow{\sim}\CwDA^-(\mathbb Z)
\end{equation}
is an isomorphism
and $Re_{cw}(S)_*:\CwDA^-(\mathbb Z)\xrightarrow{\sim}D^-(S)$ is it inverse. 

\item[(ii)] The adjonction
$(e^{tr}_{cw}(S)^*,e^{tr}_{cw}(S)_*):C^-(\mathbb Z)\leftrightarrows\PSh_{\mathbb Z}(\Cor^{fs}_{\mathbb Z}(\CW^{sm}/S),C^-(\mathbb Z))$
is a Quillen equivalence for the $(\mathbb I^1,usu)$ model structures. 
That is, the derived functor
\begin{equation}
e^{tr}_{cw}(S)^*:D^-(\mathbb Z)\xrightarrow{\sim}\CwDM^-(S,\mathbb Z)
\end{equation}
is an isomorphism
and $Re^{tr}_{cw}(S)_*:\CwDM^-(\mathbb Z)\xrightarrow{\sim}D^-(S)$ is it inverse.  

\item[(iii)] The functor 
$\Tr(S)_*:\PSh(\CW^{sm}/S,C^-(\mathbb Z))\to\PSh_{\mathbb Z}(\Cor^{fs}_{\mathbb Z}(\CW^{sm}/S),C^-(\mathbb Z))$
induces an isomorphism
and $\Tr(S)_*:\CwDM^-(S,\mathbb Z)\xrightarrow{\sim}\CwDA^-(S,\mathbb Z)$. 
\end{itemize}
\end{thm}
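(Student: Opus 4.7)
The plan is to establish the three parts in order, closely paralleling the absolute case of theorem \ref{CWMot}. Parts (i) and (ii) are Quillen equivalences proved directly using the relative analogues of the key intermediate results already available in the excerpt, and part (iii) will follow formally from (i), (ii), and the commuting triangle of sites $e^{tr}_{cw}(S) = \Tr(S)\circ e_{cw}(S)$.

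For part (i), the pair $(e_{cw}(S)^*, e_{cw}(S)_*)$ is already a Quillen adjunction for the $(\mathbb{I}^1,usu)$ structures, so it remains to verify: (a) for every $K$ in $C^-(S)$ the unit $K \to e_{cw}(S)_* e_{cw}(S)^* K$ is an $(\mathbb{I}^1,usu)$ equivalence, and (b) for every $(\mathbb{I}^1,usu)$-fibrant $F$ the counit $e_{cw}(S)^* e_{cw}(S)_* F \to F$ is an $(\mathbb{I}^1,usu)$ equivalence. Claim (a) is immediate since $e_{cw}(S)$ is induced by the fully faithful inclusion $\Ouv(S)\hookrightarrow\CW^{sm}/S$, which makes the unit an isomorphism of presheaves. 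For (b), use the naturality square
\[
\xymatrix{
e_{cw}(S)^* e_{cw}(S)_* F \ar[r] \ar[d] & F \ar[d]^{S(F)} \\
e_{cw}(S)^* e_{cw}(S)_* \underline{\sing}_{\mathbb{I}^*}F \ar[r] & \underline{\sing}_{\mathbb{I}^*}F.
}
\]
Since $F$ is $\mathbb{I}^1$ local and usu fibrant, $S(F)$ is an $(\mathbb{I}^1,usu)$ equivalence by theorem \ref{RCWsingI}(i) between two $\mathbb{I}^1$ local objects, hence a usu equivalence. The functor $e_{cw}(S)_*$ preserves usu equivalences by definition, and $e_{cw}(S)^*$ preserves them between usu-fibrant objects by a Ken Brown argument, so the left vertical is a usu equivalence; the bottom arrow is a usu equivalence by proposition \ref{RactadjCW}(i). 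Commutativity then forces the top arrow to be a usu equivalence, hence the required $(\mathbb{I}^1,usu)$ equivalence.

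Part (ii) is handled by the same argument verbatim, now using proposition \ref{RactadjCW}(ii) and theorem \ref{RCWsingI}(ii) in place of their non-transfer counterparts. Alternatively one can invoke the relative analogue of lemma \ref{CWTr} together with the pushforward identity $\Tr(S)_*\,e^{tr}_{cw}(S)^* = e_{cw}(S)^*$ obtained from the commuting triangle of sites: since $\Tr(S)_*$ detects both $\mathbb{I}^1$ local objects and $(\mathbb{I}^1,usu)$ equivalences, the Quillen equivalence property transfers from (i) to (ii). For part (iii), the commuting triangle yields the identity $e^{tr}_{cw}(S)^* = \Tr(S)^* \circ e_{cw}(S)^*$ at the level of presheaves; since $e_{cw}(S)^*$ factors through $\mathbb{I}^1$ local objects by proposition \ref{Rcle}(ii), this descends to an identity $L\,e^{tr}_{cw}(S)^* = L\Tr(S)^* \circ L\,e_{cw}(S)^*$ of derived functors $D^-(S) \to \CwDM^-(S,\mathbb{Z})$. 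Parts (i) and (ii) make the two outer functors equivalences, whence $L\Tr(S)^* : \CwDA^-(S,\mathbb{Z}) \xrightarrow{\sim} \CwDM^-(S,\mathbb{Z})$ is an equivalence, and its right adjoint $\Tr(S)_*$ provides the inverse equivalence.

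The main obstacle lies in part (i), specifically in verifying that the left Quillen functor $e_{cw}(S)^*$ sends the usu equivalence $e_{cw}(S)_* S(F)$ to a usu equivalence on $\CW^{sm}/S$; this is not automatic for a left Quillen functor applied to arbitrary weak equivalences. The resolution is that, after pre-composing with a projectively cofibrant replacement, the Ken Brown lemma applies since $e_{cw}(S)_*\underline{\sing}_{\mathbb{I}^*}F$ inherits usu fibrancy from $F$, so both source and target of $e_{cw}(S)_* S(F)$ are usu fibrant and the standard functorial argument goes through.
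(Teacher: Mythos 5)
Your proof takes essentially the same route as the paper: the paper's own proof of Theorem \ref{RCWMot} literally just cites Proposition \ref{RactadjCW} and Theorem \ref{RCWsingI} for parts (i)--(ii) and says (iii) follows from (i) and (ii), and you have reconstructed exactly the Quillen-equivalence argument those citations are meant to evoke, using the unit isomorphism from Proposition \ref{RactadjCW}(iii), the counit usu equivalence on $\underline{\sing}_{\mathbb I^*}F$ from Proposition \ref{RactadjCW}(i), and the localization morphism $S(F)$ from Theorem \ref{RCWsingI}. The reduction of (iii) to (i), (ii) and the commuting triangle $e^{tr}_{cw}(S)=\Tr(S)\circ e_{cw}(S)$ is also what the paper intends.

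One point in the fleshing-out is misstated. To conclude that the left vertical $e_{cw}(S)^*e_{cw}(S)_*S(F)$ in your square is a usu local equivalence, you invoke a Ken Brown argument for $e_{cw}(S)^*$ \emph{between usu-fibrant objects}, but Ken Brown for the left Quillen functor $e_{cw}(S)^*$ concerns weak equivalences between \emph{cofibrant} objects. Moreover $e_{cw}(S)_*\underline{\sing}_{\mathbb I^*}F$ need not be usu-fibrant, since Theorem \ref{RCWsingI} only guarantees $\underline{\sing}_{\mathbb I^*}F$ is $\mathbb I^1$-local, not usu-fibrant; and your opening claim that $e_{cw}(S)_*$ preserves usu equivalences ``by definition'' is not the definition (direct image along a morphism of sites does not preserve local equivalences in general). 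The step is repaired by observing that both functors in fact derive trivially for the usu model structures: $e_{cw}(S)^*$ is computed by a filtered colimit over open neighborhoods, hence preserves stalks and therefore all usu local equivalences, and $e_{cw}(S)_*$ is restriction along $\Ouv(S)\hookrightarrow\CW^{sm}/S$, whose usu topologies are compatible, so sheafification commutes with restriction and $e_{cw}(S)_*$ likewise preserves usu local equivalences. With these replacements the two-out-of-three argument in your square goes through unchanged and the rest of your argument is sound.
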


\begin{proof}

\noindent(i): It follows from proposition \ref{RactadjCW}(i) and theorem \ref{RCWsingI}(i). 

\noindent(ii): It follows from proposition \ref{RactadjCW}(ii) and theorem \ref{RCWsingI}(ii). 
It also follows from (i) and (ii).

\noindent(iii): It follows from (i) and (ii).

\end{proof}

\begin{rem}\label{RTrCWrem}
As in the absolute case, for $F^{\bullet}\in\PSh(\Cor_{\mathbb Z}(\CW^{sm}/S),C^-(\mathbb Z))$, 
$\ad(\Tr^*,\Tr_*)(F^{\bullet}):F^{\bullet}\to\Tr^*\Tr_*F^{\bullet}$
is an isomorphism in $\PSh(\Cor_{\mathbb Z}(\CW^{sm}),C^-(\mathbb Z))$ and we can prove that
for $X/S\in\CW^{sm}/S$, the embedding
\begin{equation*}
\ad(\Tr^*,\Tr_*)(\underline{\sing}_{\mathbb I^*}\mathbb Z(X)):
\underline{\sing}_{\mathbb I^*}\mathbb Z(X)\to\Tr_*\underline{\sing}_{\mathbb I^*}\mathbb Z_{tr}(X)
\end{equation*}
in $\PSh(\CW^{sm}/S,C^-(\mathbb Z))$ is an equivalence usu local. 
We would deduce from this that $L\Tr^*$ is the inverse of $\Tr_*$.
We will not do it since we don't use it.
\end{rem}

Let $f:T\to S$ a morphism in $\CW$. 
There is a canonical morphism of functor $\phi(f^*,S)$ which associate to 
$F^{\bullet}\in\PSh_{\mathbb Z}(\Cor^{fs}_{\mathbb Z}(\CW^{sm}/S),C^-(\mathbb Z))$
the morphism
\begin{equation*}
\phi(f^*,S)(F^{\bullet}):f^*\underline{\sing}_{\mathbb I^*}F^{\bullet}
\to\underline{\sing}_{\mathbb I^*}f^*F^{\bullet}
\end{equation*}
in $\PSh_{\mathbb Z}(\Cor^{fs}_{\mathbb Z}(\CW^{sm}/T),C^-(\mathbb Z))$
given by for $Y/T\in\CW^{sm}/T$, the morphism 
\begin{equation*}
\phi(f^*,S)(F^{\bullet})(Y/T):\lim_{Y/T\to X_T/T}F^{\bullet}(X\times\mathbb I^*/S)
\to\lim_{Y\times\mathbb I^*/T\to X_T/T}F^{\bullet}(X/S)
\end{equation*}
given by $(h:Y/T\to X_T)\mapsto(h\circ p_Y:Y\times\mathbb I^*/T\to X_T)$
and $F^{\bullet}(p_X):(X\times\mathbb I^*/S)\to F^{\bullet}(X/S)$.

Let $f:T\to S$ a morphism in $\CW$. 
There is also canonical morphism of functor $\phi(f,e_{cw})$ which associate to
$F^{\bullet}\in\PSh_{\mathbb Z}(\Cor^{fs}_{\mathbb Z}(\CW^{sm}/S),C^-(\mathbb Z))$
the morphism in $C^-(T)$
\begin{equation*}
\phi(f^*,e)(F^{\bullet}):f^*e_{S*}F^{\bullet}\to
e_{T*}e_T^*f^*e(S)_*F^{\bullet}=e_{T*}f^*e_S^*e_{S*}F^{\bullet}\to e_{T*}f^*F^{\bullet},
\end{equation*} 
given by the adjonction morphisms and denoting for simplicity 
$e_S=e_{cw}^{tr}(S)$, $e_T=e_{cw}^{tr}(T)$. 

Let $f:T\to S$ a morphism in $\CW$. 
We denote by $\phi(f^*,e_{cw},S)$ the morphism of functor, which for
$F^{\bullet}\in\PSh_{\mathbb Z}(\Cor^{fs}_{\mathbb Z}(\CW^{sm}/S),C^-(\mathbb Z))$, 
associate the following composition in $C^-(T)$
\begin{equation*}
\phi(f^*,e_{cw},S)(F^{\bullet}):
f^*\sing_{\mathbb I^*}F^{\bullet}\xrightarrow{\phi(f^*,e)(\sing_{\mathbb I^*}F^{\bullet})}
e(T)_*f^*\underline{\sing}_{\mathbb I^*}F^{\bullet}\xrightarrow{e(T)_*\phi(f^*,S)(F^{\bullet})}
\sing_{\mathbb I^*}f^*F^{\bullet}.
\end{equation*}

By definition, we have :

\begin{prop}\label{adRSCW}
Let $f:T\to S$ a morphism in $\CW$. For $F^{\bullet}\in PC^-(CW,S)$, the morphism in $C^-(T)$
$\phi(f^*,e,S)(F^{\bullet}):f^*\sing_{\mathbb I^*}F^{\bullet}\to\sing_{\mathbb I^*}f^*F^{\bullet}$
is an isomorphism in $C^-(T)$.
\end{prop}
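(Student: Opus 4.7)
The plan is to mirror the argument of Proposition \ref{adRSAn}: the statement is purely formal, obtained by unwinding the definitions of $f^*$, $\underline{\sing}_{\mathbb I^*}$, and $e^{tr}_{cw}(S)_*$ and observing that both sides, on sections, are canonically the same limit.

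First I would fix an open $T^o \subset T$ and compute both sides of the morphism on sections over $T^o/T \in \Ouv(T)$. By definition of the pullback along the morphism of sites $P(f):\Cor^{fs}_{\mathbb Z}(\CW^{sm}/T)\to\Cor^{fs}_{\mathbb Z}(\CW^{sm}/S)$, and of $e^{tr}_{cw}(T)_*$ as restriction to $\Ouv(T)$, one gets
\[
f^*\sing_{\mathbb I^*}F^{\bullet}(T^o/T) \;=\; \lim_{(h:T^o \to S^o)}\,F^{\bullet}(S^o\times\mathbb I^*/S),
\]
the limit indexed by open $S^o\subset S$ with $f(T^o)\subset S^o$, together with the factorization $h:T^o/T \to f^{-1}(S^o)/T$. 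On the other hand,
\[
\sing_{\mathbb I^*}f^*F^{\bullet}(T^o/T) \;=\; \lim_{(g:T^o\times\mathbb I^*/T\to X_T/T)}\,F^{\bullet}(X/S),
\]
the limit indexed by factorizations of the structure morphism $T^o\times\mathbb I^*\to T$ through pullbacks $X_T/T$ of objects $X/S\in\CW^{sm}/S$.

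Next I would unwind the definition of $\phi(f^*,e_{cw},S)(F^{\bullet})(T^o/T)$: it is the composite of $\phi(f^*,e)(\sing_{\mathbb I^*}F^{\bullet})(T^o)$ and $e_{cw}(T)_*\phi(f^*,S)(F^{\bullet})(T^o)$, which on indexing sends a morphism $h:T^o \to f^{-1}(S^o)$ to the morphism $h\circ p_{T^o}:T^o\times\mathbb I^* \to f^{-1}(S^o)$, accompanied on values by the identity $F^{\bullet}(S^o\times\mathbb I^*/S)\to F^{\bullet}(S^o\times\mathbb I^*/S)$. This reindexing is a functor from the first indexing category to the second. One then checks it is cofinal by producing an explicit quasi-inverse: given a factorization $g:T^o\times\mathbb I^*/T \to X_T/T$ with $X/S\in\CW^{sm}/S$, one recovers a morphism $h:T^o\to f^{-1}(S^o)$ (for suitable $S^o\subset S$ containing the image of the composite $X\to S$) by restricting along $T^o\times\{0\}\hookrightarrow T^o\times\mathbb I^*$ and using the $\mathbb I^1$ pro-structure to compare the two descriptions.

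The main (and only) obstacle is keeping the bookkeeping of the two indexing categories straight and verifying that the reindexing functor is genuinely an equivalence of pro-systems, so that the induced map on limits is an isomorphism of complexes of abelian groups. This is strictly parallel to the verification carried out in Proposition \ref{adRSAn}, with $\mathbb I^*$ and $\CW^{sm}/S$ replacing $\bar{\mathbb D}^*$ and $\AnSp(\mathbb C)^{sm}/S$, and involves no new ideas beyond this unwinding.
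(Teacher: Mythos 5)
Your plan follows the same route as the paper's (very terse) proof of this proposition and of its analytic analogue, Proposition \ref{adRSAn}: fix an open $T^o\subset T$, unwind $f^*$ on the left as a colimit over opens $S^o\supset f(T^o)$ of $F^{\bullet}(S^o\times\mathbb I^*/S)$, unwind $\sing_{\mathbb I^*}f^*$ on the right as the left Kan extension value $\mathrm{colim}_{T^o\times\mathbb I^*/T\to X_T/T}F^{\bullet}(X/S)$, observe that $\phi(f^*,e_{cw},S)$ is a reindexing, and conclude. This is exactly what the paper does (the paper's proof just records the formulas and calls the reindexing an isomorphism without argument), so your proposal is at least as complete as the original.

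Two of the details in your cofinality step are off, though. First, these are \emph{colimits} (ind-systems, left Kan extensions), not pro-systems, so the relevant condition is cofinality of the reindexing functor rather than an "equivalence of pro-systems." Second, the purported quasi-inverse does not do what you say: given $(X/S,\,g:T^o\times\mathbb I^*/T\to X_T/T)$, restricting $g$ along $T^o\times\{0\}$ gives a map $T^o\to X_T$, but the inclusion $T^o\hookrightarrow f^{-1}(S^o)$ is not "recovered" from this restriction; it is automatic as soon as one chooses $S^o\subset S$ containing the image of the structure map $h_X:X\to S$ (since then $f(T^o)\subset h_X(X)\subset S^o$). The map into the comma category that actually establishes cofinality is the one furnished by $h_X:X\to S^o$ itself, over $S$, together with its base change $X_T\to f^{-1}(S^o)$; no restriction to $T^o\times\{0\}$ nor any use of the $\mathbb I^1$ pro-structure is needed. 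If you replace your quasi-inverse recipe by this observation, the argument is clean and matches what the paper asserts.
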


\begin{proof}
By definition, the morphism $\phi(f^*,e,S)(F^{\bullet})$ 
is given by for $T^o\subset T$, 
\begin{equation*}
\phi(f^*,S,e)(F^{\bullet})(T^o/T):\lim_{T^o\to f^{-1}(S^o)}F^{\bullet}(S^o\times\mathbb I^*/S)
\to\lim_{T^o\times\mathbb I^*/T\to X_T}F^{\bullet}(X/S)
\end{equation*}
given by the isomorphism $(h:T^o/T\to X_T/T)\mapsto(h\circ p_T:T^o\times\mathbb I^*/T\to X_T/T)$
and $F^{\bullet}(p_{S^o}):F^{\bullet}(S^o\times\mathbb I^*/S)\to F^{\bullet}(S^o/S)$.
\end{proof}

\subsection{The relative Betti realisation functor}

\begin{itemize}
\item For each $S\in\Var(\mathbb C)$  we have  
the analytical functor $\An(S):\Var(\mathbb C))/S\to\AnSp(\mathbb C)/S^{an}$ 
given by $(V/S)\mapsto V^{an}/S^{an}$ on objects and $g\mapsto g^{an}$ on morphisms,
the analytical functor on transfers
$\An(S):\Cor^{fs}_{\Lambda}(\Var(\mathbb C)^{sm}/S)\to\Cor^{fs}_{\mathbb Z}(\AnSp(\mathbb C)^{sm}/S^{an}))$ 
given by $V/S\mapsto V^{an}/S^{an}$ on objects and $\Gamma\mapsto\Gamma^{an}$ on morphisms,

\item For each $S\in\AnSp(\mathbb C)$, we have  
the forgetful functor $\Cw(S):\AnSp(\mathbb C)^{sm}/S\to\CW/S^{cw}$ 
given by $W/S\mapsto W^{cw}/S^{cw}$ on objects and $g\mapsto g^{cw}$ on morphisms,
and the forgetful functor on transfers 
$\Cw(S):\Cor^{fs}_{\mathbb Z}(\AnSm(\mathbb C)^{sm}/S)\to\Cor^{fs}_{\mathbb Z}(\CW^{sm}/S^{cw})$ 
given by $W/S\mapsto W^{cw}/S^{cw}$ on objects and $\Gamma\mapsto\Gamma^{cw}$ on morphisms,

\item For each $S\in\Var(\mathbb C)$, we have  
the composites $\widetilde\Cw(S)=\Cw(S^{an})\circ\An(S):\Var(\mathbb C)/S\to\CW/S$, 
given by $V/S\mapsto V^{cw}/S^{cw}$ on objects and $g\mapsto g^{cw}$ on morphisms, and 
$\widetilde\Cw(S)=\Cw(S^{an})\circ\An(S):\Cor^{fs}_{\mathbb Z}(\Var(\mathbb C)^{sm}/S)\to\Cor^{fs}_{\mathbb Z}(\CW/S)$, 
given by $V/S\mapsto V^{cw}/S^{cw}$ and $\Gamma\mapsto\Gamma^{cw}$.

\item For each $S\in\Var(\mathbb C)$ and $S'\in\AnSp(\mathbb C)$, we have  
the embeddings of categories $\iota_{an}(S'):\AnSp(\mathbb C)^{sm}/S'\to\AnSp(\mathbb C)/S'$ and 
$\iota_{var}(S):\Var(\mathbb C)^{sm}/S\to\Var(\mathbb C)/S$.
\end{itemize}

By definition, for each $S\in\Var(\mathbb C)$, we have the following commutative diagram of sites $DCat(S)$
\begin{equation}\label{DCat(S)}
DCat(S):=\xymatrix{
\widetilde\Cw(S): \Cor^{fs}_{\mathbb Z}(\CW^{sm}/S^{cw})\ar[r]^{\Cw(S)}\ar[d]^{\Tr(S)} & 
\Cor^{fs}_{\mathbb Z}(\AnSp(\mathbb C)^{sm}/S^{an})\ar[r]^{\An(S)}\ar[d]^{\Tr(S)} 
& \Cor^{fs}_{\mathbb Z}(\Var(\mathbb C)^{sm}/S)\ar[d]^{\Tr(S)} \\
\widetilde\Cw(S): \mathbb Z(\CW^{sm}/S^{cw})\ar[r]^{\Cw(S)}\ar[rd]^{\Cw(S)} & 
\mathbb Z(\AnSp(\mathbb C)^{sm}/S^{an})\ar[r]^{\An(S)} & \mathbb Z(\Var(\mathbb C)^{sm}/S) \\
 \, & \mathbb Z(\AnSp(\mathbb C)/S^{an})\ar[r]^{\An(S)}\ar[u]^{\iota_{an}(S)} & \mathbb Z(\Var(\mathbb C)/S)\ar[u]^{\iota_{var}(S)} } 
\end{equation}
For $T,S\in\Var(\mathbb C)$ and $f:T\to S$ a morphism, the morphism of sites
\begin{itemize}
\item $P(f):\Var(\mathbb C)/T\to\Var(\mathbb C)/S$, $P(f^{an}):\AnSp(\mathbb C)/T^{an}\to\AnSp(\mathbb C)/S^{an}$,
and $P(f^{cw}):\CW/T^{cw}\to\CW/S^{cw}$ 
given by the pullback functor,
\item $P(f):\Cor^{fs}_{\mathbb Z}(\Var(\mathbb C)^{sm}/T)\to\Cor^{fs}_{\mathbb Z}(\Var(\mathbb C)^{sm}/S)$, 
$P(f^{an}):\Cor^{fs}_{\mathbb Z}(\AnSp(\mathbb C)/T^{an})\to\Cor^{fs}_{\mathbb Z}(\AnSp(\mathbb C)/S^{an})$, and
$P(f^{cw}):\Cor^{fs}_{\mathbb Z}(\CW/T^{cw})\to\Cor^{fs}_{\mathbb Z}(\CW^{sm}/S^{cw})$ 
given by the pullback functor,
\end{itemize}
gives a morphism of diagram of sites 
\begin{equation}
P(f):DCat(T)\to DCat(S).
\end{equation}  

We have the relative analogue of proposition \ref{AnCwtCw} :

\begin{prop}\label{RAnCwtCw}
\begin{itemize}
\item[(i)] For $S\in\Var(\mathbb C)$, the functors 
\begin{itemize}
\item $\An(S)^*:\PSh(\Var(\mathbb C)^{sm}/S,C^-(\mathbb Z))\to\PSh(\AnSp(\mathbb C)^{sm}/S^{an},C^-(\mathbb Z))$ and
\item $\An(S)^*:\PSh(\Cor^{fs}_{\mathbb Z}(\Var(\mathbb C)^{sm}/S),C^-(\mathbb Z))
\to\PSh(\Cor^{fs}_{\mathbb Z}(\AnSp(\mathbb C)^{sm}/S^{an}),C^-(\mathbb Z))$, 
\end{itemize}
derive trivially for the $(\mathbb A^1,et)$ and $(\mathbb D^1,usu)$ model structures.

\item[(ii)] Let $S\in\AnSp(\mathbb C)$.
\begin{itemize}
\item Let  $K^{\bullet}\in\PSh(\Cor^{fs}_{\mathbb Z}(\CW^{sm}/S^{cw}),C^-(\mathbb Z))$. If $K^{\bullet}$ is $\mathbb I^1$ local,
then $\Cw(S)_*K^{\bullet}$ is $\mathbb D^1$ local.
\item Let $K^{\bullet}\in\PSh(\CW^{sm}/S^{cw},C^-(\mathbb Z))$. 
If $K^{\bullet}$ is $\mathbb I^1$ local, then $\Cw(S)_*K^{\bullet}$ is $\mathbb D^1$ local.
\end{itemize}
Moreover, the functors 
\begin{itemize}
\item $\Cw(S)^*:\PSh(\AnSp(\mathbb C)^{sm}/S,C^-(\mathbb Z))\to\PSh(\CW^{sm}/S^{cw},C^-(\mathbb Z))$ and
\item $\Cw(S)^*:\PSh(\Cor^{fs}_{\mathbb Z}(\AnSp(\mathbb C)^{sm}/S),C^-(\mathbb Z))
\to\PSh(\Cor^{fs}_{\mathbb Z}(\CW^{sm}/S^{cw}),C^-(\mathbb Z))$, 
\end{itemize}
derive trivially for the $(\mathbb D^1,usu)$ and $(\mathbb I^1,usu)$ model structures.

\item[(iii)] For $S\in\Var(\mathbb C)$, the functors 
\begin{itemize}
\item $\widetilde\Cw(S)^*:\PSh(\Var(\mathbb C)^{sm}/S,C^-(\mathbb Z))\to\PSh(\CW^{sm}/S^{cw},C^-(\mathbb Z))$ and
\item $\widetilde\Cw(S)^*:\PSh(\Cor^{fs}_{\mathbb Z}(\Var(\mathbb C)^{sm}/S),C^-(\mathbb Z))
\to\PSh(\Cor^{fs}_{\mathbb Z}(\CW^{sm}/S^{cw}),C^-(\mathbb Z))$, 
\end{itemize}
derive trivially for the $(\mathbb A^1,et)$ and $(\mathbb I^1,usu)$ model structures.
\end{itemize}
\end{prop}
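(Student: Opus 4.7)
The plan is to adapt the proof of Proposition \ref{AnCwtCw} to the relative setting, treating the three parts in the same order. For part (i), I would follow \cite{AyoubG2}: the functor $\An(S)$ sends smooth morphisms over $S$ to smooth morphisms over $S^{an}$, etale covers to usu covers, and relative affine lines $(X\times_S\mathbb A^1_{\mathbb C})/S$ to $X^{an}\times_{S^{an}}(\mathbb A^1_{\mathbb C})^{an}/S^{an}$, which is $(\mathbb D^1,usu)$-equivalent to $X^{an}\times_{S^{an}}\mathbb D^1/S^{an}$ on the level of representables. Consequently $\An(S)^*$ sends the generating trivial cofibrations of the $(\mathbb A^1,et)$ model structure to $(\mathbb D^1,usu)$ equivalences. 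Combined with the fact that $\An(S)^*$ already derives trivially for the usu topology alone (which is shown exactly as in \cite{AyoubG2}), this yields triviality for the $(\mathbb D^1,usu)$ structure. The transfer version is identical, using that $\An(S)$ respects finite surjective correspondences.

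For part (ii), the first assertion, that $\Cw(S)_*$ sends $\mathbb I^1$-local objects to $\mathbb D^1$-local objects, adapts the absolute argument directly. Given a $\mathbb I^1$-local $K^{\bullet}\in\PSh(\CW^{sm}/S^{cw},C^-(\mathbb Z))$, take an usu-local equivalence $K^{\bullet}\to L^{\bullet}$ with $L^{\bullet}$ usu-fibrant; then $L^{\bullet}$ is still $\mathbb I^1$-local, and by Theorem \ref{RCWsingI}(i), $S(L^{\bullet}):L^{\bullet}\to\underline{\sing}_{\mathbb I^*}L^{\bullet}$ is an usu-local equivalence to an usu-fibrant object. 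The functor $\Cw(S)_*$ is essentially restriction along the identification of small sites $\Ouv(S^{cw})\simeq\Ouv(S^{an})$, so it preserves usu-local equivalences and usu-fibrant objects. Evaluating at $X/S^{an}\in\AnSp(\mathbb C)^{sm}/S^{an}$, the projection $(X\times\mathbb D^1)/S^{an}\to X/S^{an}$ corresponds after applying $\Cw$ to the projection $X^{cw}\times\mathbb D^1/S^{cw}\to X^{cw}/S^{cw}$, which is an $\mathbb I^1$-homotopy equivalence over $S^{cw}$ via the explicit contraction $\theta_{12}(X^{cw})$ used in the proof of Lemma \ref{RCWTr}(ii); by Lemma \ref{RI1hpt}(ii) this induces a quasi-isomorphism on $\underline{\sing}_{\mathbb I^*}L^{\bullet}$. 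Hence $\Cw(S)_*\underline{\sing}_{\mathbb I^*}L^{\bullet}$ is $\mathbb D^1$-local, and so is $\Cw(S)_*K^{\bullet}$. The transfer case reduces to the non-transfer case via Lemma \ref{RCWTr}(i) and the identity $\Tr(S^{an})_*\Cw(S)_*=\Cw(S)_*\Tr(S^{cw})_*$.

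The second assertion of part (ii), that $\Cw(S)^*$ derives trivially, follows by the adjunction argument used in the proof of Proposition \ref{AnCwtCw}(ii). Given a $(\mathbb D^1,usu)$ equivalence $f:G_1\to G_2$ and a $\mathbb I^1$-local target $K$ (which we may assume usu-fibrant), $\Cw(S)_*K$ is $\mathbb D^1$-local by the first assertion and remains usu-fibrant; hence $\Hom(f,\Cw(S)_*K)$ is an isomorphism, and by adjunction so is $\Hom(\Cw(S)^*f,K)$, proving $\Cw(S)^*f$ is a $(\mathbb I^1,usu)$ equivalence. Part (iii) is then immediate since $\widetilde\Cw(S)=\Cw(S^{an})\circ\An(S)$, and the composition of functors each deriving trivially between the appropriate model structures derives trivially.

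The main obstacle should be the bookkeeping in part (ii), namely verifying carefully the identification of small sites $\Ouv(S^{cw})\simeq\Ouv(S^{an})$ under which $\Cw(S)_*$ behaves like a pullback along an equivalence of sites, and making precise the compatibility of the contracting homotopy $\theta_{12}$ on $X^{cw}\times\mathbb D^1$ with the structure morphism to $S^{cw}$. Once these topological points are in place the remaining verifications are direct transcriptions of the absolute arguments.
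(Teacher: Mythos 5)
Your proposal is correct and follows essentially the same route as the paper, whose own proof of Proposition~\ref{RAnCwtCw} is simply the remark that the absolute argument of Proposition~\ref{AnCwtCw} carries over verbatim once one notes, via Lemma~\ref{RI1hpt}(ii), that the projection $p_X:X^{cw}\times\mathbb D^1\to X^{cw}$ is a homotopy equivalence in $\CW/S^{cw}$. Your elaboration (in particular the adjunction step reducing triviality of $\Cw(S)^*$ to the fact that $\Cw(S)_*$ sends $\mathbb I^1$-local objects to $\mathbb D^1$-local ones, and the reduction of the transfer case via $\Tr(S^{an})_*\Cw(S)_*=\Cw(S)_*\Tr(S^{cw})_*$) is a faithful transcription of the absolute proof; the only small imprecision is that the relevant fully-faithfulness justifying preservation of usu-fibrant objects is on small sites of opens of the objects $X$, not on $\Ouv(S)$, and $\Ouv(S^{cw})=\Ouv(S^{an})$ literally as sites, so there is no genuine obstacle there.
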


\begin{proof}
The proof is completely similar to the proof of proposition \ref{AnCwtCw} using
lemma \ref{RI1hpt} (ii), since for $X/S\in\AnSp(\mathbb C)/S$, 
$p_X:X^{cw}\times\mathbb D^1\to X^{cw}$ is a homotopy equivalence in $\CW/S^{cw}$.
\end{proof}

Now we have :
\begin{itemize}
\item The 2-functor $\DM^-:\Var(k)\to\TriCat$ is an homotopic 2-functor is the sense of \cite{AyoubT}
(theorem\ref{sixfunctorsVar}).
\item The 2-functor $D^-:\Var(\mathbb C)\to\TriCat$, $S\in\Var(\mathbb C)\mapsto D^-(S^{cw})$.
is an homotopic 2-functor (see \cite{KS})
\end{itemize}

\begin{defi}\cite{AyoubB}\cite{AyoubG2}
Let $S\in\Var(\mathbb C)$,
\begin{itemize}
\item[(i)] The Betti realisation functor (without transfers) is the composite :
\begin{equation*}
\Bti_0(S)^*:\DA^{-}(S,\mathbb Z)\xrightarrow{\An(S)^*}\AnDA^{-}(S^{an},\mathbb Z)\xrightarrow{Re_{an*}(S)}D^{-}(S^{an})
\end{equation*}

\item[(ii)] The Betti realization functor with transfers is the composite :
\begin{equation*}
\Bti(S)^*:\DM^{-}(S,\mathbb Z)\xrightarrow{\An(S)^*}\AnDM^{-}(S^{an},\mathbb Z)\xrightarrow{Re^{tr}_{an}(S)_*}D^{-}(S^{an})
\end{equation*}
\end{itemize}
Since $\An(S)^*$ derive trivially by proposition \ref{RAnCwtCw}(i)
and and $L\Tr(S^{an})^*:\AnDA^-(S^{an},\mathbb Z)\to\CwDM^-(S^{an},\mathbb Z)$ is the inverse of $\Tr(S^{an})_*$ 
(c.f.theorem \ref{RAnTr}(i)), we have $\widetilde\Bti_0(S)^*=\widetilde\Bti(S)^*\circ L\Tr(S)^*$.

\end{defi}

We have the following :

\begin{thm}\label{RAnBetti}\cite{AyoubB}
The Betti realization functor define morphisms of homotopic 2-functors :
\begin{itemize}
\item $S\in\Var(\mathbb C)\mapsto(\Bti_0(S):DA^-(S,\mathbb Z)\to D^-(S))$
\item $S\in\Var(\mathbb C)\mapsto(\Bti(S):DM^-(S,\mathbb Z)\to D^-(S))$.
\end{itemize}
\end{thm}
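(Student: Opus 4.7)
The plan is to verify the axioms of Ayoub's formalism for a morphism of homotopic 2-functors. Recall that since the 2-functors $\DM^-, \DA^-, D^-$ are all homotopic 2-functors, and all six functors $f^*, f_*, f_!, f^!, \otimes, \underline{\Hom}$ are determined (up to canonical isomorphism) by the structure $f \mapsto f^*$ together with compatibility with localization sequences, it is enough to (i) construct, for every morphism $f: T \to S$ in $\Var(\mathbb{C})$, a natural isomorphism of functors $\alpha(f): f^* \circ \Bti(S)^* \xrightarrow{\sim} \Bti(T)^* \circ f^*$ (and similarly for $\Bti_0$), (ii) check the compatibility of these $\alpha(f)$ with compositions $g \circ f$ and with the identity, and (iii) verify that $\alpha$ respects the exchange 2-morphisms coming from localization triangles associated to open/closed immersions and from smooth base change.

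For step (i), I would decompose $\Bti(S)^* = Re^{tr}_{an}(S)_* \circ \An(S)^*$ and treat the two factors separately. The commutation $\An(T)^* \circ f^* \xrightarrow{\sim} f^{an,*} \circ \An(S)^*$ is already an equality on presheaves (both send a presheaf to its value on the pullback diagram), and since $\An(S)^*$, $\An(T)^*$ and the $f^*$-functors all derive trivially by Proposition \ref{RAnCwtCw}(i) (and Propositions \ref{adfVarcor}, \ref{adfAncor}), this descends to a natural isomorphism on the derived categories. For the commutation $f^{an,*} \circ Re^{tr}_{an}(S)_* \xrightarrow{\sim} Re^{tr}_{an}(T)_* \circ f^{an,*}$, one uses the explicit $\mathbb{D}^1$-local resolution $\underline{\sing}_{\bar{\mathbb{D}}^*}$ from Theorem \ref{RAnS}(ii), together with the canonical isomorphism $\phi(f^*, e_{an}, S)(F^{\bullet}): f^{an,*}\sing_{\bar{\mathbb{D}}^*} F^{\bullet} \xrightarrow{\sim} \sing_{\bar{\mathbb{D}}^*} f^{an,*} F^{\bullet}$ of Proposition \ref{adRSAn}; this produces the required $\alpha(f)$.

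Step (ii) is a direct check, since the isomorphism $\alpha(f)$ is constructed entirely from the site-theoretic $\phi(f^*, e, S)$ and the analytification functoriality, both of which are strictly compatible with compositions. For step (iii), the key point is to verify that the exchange morphism $f_! \circ \Bti(T)^* \to \Bti(S)^* \circ f_!$ is an isomorphism. Factoring any $f$ as an open immersion followed by a proper morphism (cf.\ the definition of $f_!$ in Theorem \ref{sixfunctorsVar}), this reduces to two special cases: open immersions, where it is automatic because $\An^*$, $\Cw^*$ and $f^*$ all have left adjoints $f_\sharp$ commuting with the analytification; and proper morphisms, where it is the proper base change theorem for $Re^{tr}_{an}(-)_*$.

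The main obstacle is precisely this proper base change statement on the analytic side: given a proper morphism $f: T \to S$ in $\Var(\mathbb{C})$ and $F^{\bullet} \in PC^-(T^{an})$ one must show that the exchange map $(f^{an})_* \circ Re^{tr}_{an}(T)_* F^{\bullet} \to Re^{tr}_{an}(S)_* \circ (f^{an})_* F^{\bullet}$ is an $(\mathbb{D}^1, usu)$-local equivalence. By Proposition \ref{RDusuEq} this reduces (using theorem \ref{RAnS}(ii) to check the $\mathbb{D}^1$-locality of both sides) to an usu-local statement for the singular complexes $\sing_{\bar{\mathbb{D}}^*}$, which is the classical proper base change for sheaves on complex analytic spaces applied fiberwise to the cubical diagram $\bar{\mathbb{D}}^*$. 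Once this is in hand, the relative version of the absolute equivalence of Theorem \ref{RAnTr}, combined with the compatibility between $\Tr(S)_*$ and the $f^*, f_*$ functors (which is formal from the definition of $P(f)$ on correspondences), handles the case with transfers; the statement for $\Bti_0$ then follows from $\Bti_0(S)^* = \Bti(S)^* \circ L\Tr(S)^*$ together with the fact that $L\Tr(S)^*$ itself is a morphism of homotopic 2-functors.
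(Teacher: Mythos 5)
The paper states this theorem as a citation to \cite{AyoubB} and gives no in-paper proof, so there is no proof of record to compare your argument against directly. What the paper does give is a proof of the analogous CW-side Theorem \ref{RCWBetti}, which it explicitly describes as ``similar to the proof of theorem \ref{RAnBetti}'' and which singles out the isomorphism $\phi(f^*, e_{cw}, S)(F^\bullet)$ of Proposition \ref{adRSCW} as the key new ingredient. Your step (i), which identifies the analytic analogue $\phi(f^*, e_{an}, S)(F^\bullet)$ of Proposition \ref{adRSAn} as carrying the same weight, is therefore consistent with the template the paper has in mind.

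Two parts of step (iii) are looser than they should be. First, the open-immersion case is not ``automatic because [the functors] all have left adjoints $f_\sharp$'': the mere existence of $j_\sharp$ does not make the exchange $j_\sharp \circ \Bti(U)^* \to \Bti(S)^* \circ j_\sharp$ an isomorphism; you still need a verification, for instance on the generators $\mathbb Z_{tr}(X/U)$ where both sides reduce to extension by zero. Second, the exchange morphism you wrote, $(f^{an})_* \circ Re^{tr}_{an}(T)_* \to Re^{tr}_{an}(S)_* \circ (f^{an})_*$, lives in $C^-(S^{an})$, so it is not of the kind to which Proposition \ref{RDusuEq} applies; that proposition concerns morphisms inside $PC^-(\An,S^{an})$. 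The object you actually need to control is the exchange $\An(S)^* \circ Rf_* \to Rf^{an}_* \circ \An(T)^*$ in $PC^-(\An,S^{an})$, to which Proposition \ref{RDusuEq} (together with Theorem \ref{RAnS}(ii) for $\mathbb D^1$-locality of both sides) does apply, and then the passage to $Re^{tr}_{an}(-)_*$ is handled by the Quillen equivalence of Theorem \ref{RAnTr}(iii). The conclusion that proper base change is the technical crux is right, and it is indeed where the substantive work in Ayoub's cited proof lies, but as written your reduction is pointed at the wrong category.
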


As in the absolute case, we define :

\begin{defi}
Let $S\in\Var(\mathbb C)$.
\begin{itemize}
\item[(i)] The CW-Betti realization functor (without transfers) is the composite :
\begin{equation*}
\widetilde{\Bti}_0(S)^*:\DA^{-}(S,\mathbb Z)\xrightarrow{\widetilde\Cw(S)^*}\CwDA^{-}(S^{cw},\mathbb Z)
\xrightarrow{Re_{cw}(S)_*}D^{-}(S^{cw})
\end{equation*}

\item[(ii)] The CW-Betti realisation functor with transfers is the composite :
\begin{equation*}
\widetilde\Bti(S)^*:\DM^{-}(S,\mathbb Z)\xrightarrow{\widetilde\Cw(S)^*}\CwDM^{-}(S^{cw},\mathbb Z)
\xrightarrow{Re^{tr}_{cw}(S)_*}D^{-}(S^{cw})
\end{equation*}
\end{itemize}
Similarly, in the relative case, since $\widetilde\Cw(S)^*$ derive trivially by proposition \ref{RAnCwtCw}(ii)
and $L\Tr(S^{cw})^*:\CwDA^-(S^{cw},\mathbb Z)\to\CwDM^-(S^{cw},\mathbb Z)$ is the inverse of $\Tr_(S^{cw}*$ 
(c.f.remark \ref{RTrCWrem}), we have $\widetilde\Bti_0(S)^*=\widetilde\Bti(S)^*\circ L\Tr(S)^*$.
\end{defi}

We also have the following
\begin{thm}\label{RCWBetti}
The Betti realization functor define morphisms of homotopic 2-functors :
\begin{itemize}
\item $S\in\Var(\mathbb C)\mapsto(\widetilde\Bti_0(S):DA^-(S,\mathbb Z)\to D^-(S))$
\item $S\in\Var(\mathbb C)\mapsto(\widetilde\Bti(S):DM^-(S,\mathbb Z)\to D^-(S))$.
\end{itemize}
\end{thm}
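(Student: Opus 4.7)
The plan is to transport the morphism-of-2-functors structure from Ayoub's Betti realization $\Bti$ (Theorem \ref{RAnBetti}) to $\widetilde{\Bti}$ via the comparison theorem \ref{Rmainthm}, and then verify that the remaining compatibilities reduce to naturality of the comparison morphism $B(S)$. Observe first that the identification $(S^{an})^{cw}=S^{cw}$ of underlying topological spaces gives $D^{-}(S^{an})=D^{-}(S^{cw})$, so Theorem \ref{Rmainthm} furnishes, for each constructible motive $M\in\DM^{-}(S,\mathbb Z)$, a natural isomorphism $\beta(S,M):\widetilde{\Bti}(S)^{*}M\xrightarrow{\sim}\Bti(S)^{*}M$ in $D^{-}(S^{cw})$, namely the image under localisation of $e^{tr}_{an}(S)_{*}B(S)(F^{\bullet})$ for a representative $F^{\bullet}\in PC^{-}(S)$. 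For each morphism $f:T\to S$ in $\Var(\mathbb C)$, I would then \emph{define} the base-change natural isomorphism
\[
\alpha_{\widetilde{\Bti}}(f):f^{*}\,\widetilde{\Bti}(S)^{*}\xrightarrow{\sim}\widetilde{\Bti}(T)^{*}\,f^{*}
\]
by conjugating Ayoub's base-change isomorphism $\alpha_{\Bti}(f)$ through $\beta$, i.e.\ $\alpha_{\widetilde{\Bti}}(f):=\beta(T)^{-1}\circ\alpha_{\Bti}(f)\circ f^{*}\beta(S)$.

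Second, I would check the cocycle condition $\alpha_{\widetilde{\Bti}}(f\circ g)=\alpha_{\widetilde{\Bti}}(g)\circ g^{*}\alpha_{\widetilde{\Bti}}(f)$. Using the already-established cocycle condition for $\alpha_{\Bti}(f)$ from Theorem \ref{RAnBetti}, this reduces to naturality of $\beta$ under base change, which in turn reduces to naturality of $B(S)$ along $f^{an}$ and $f^{cw}$. The latter follows from Proposition \ref{adRSCW} combined with the evident compatibility under base change of the morphisms of functors $\psi^{\Cw}$ and $W$ entering the definition of $B(S)$, which arise from the commutativity of the diagram of sites $DCat(f):DCat(T)\to DCat(S)$. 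To upgrade this data to a full morphism of homotopic 2-functors in the sense of \cite{AyoubT}, one further needs the exchange isomorphisms with $Rf_{*}$, $f_{!}$ and $f^{!}$; by the axioms these are determined by $\alpha_{\widetilde{\Bti}}(f)$ together with the special cases of $j_{\sharp}$ for open immersions $j$ and $Rf_{*}$ for proper $f$. These two special cases transfer directly from Ayoub's functor via $\beta$, since $j^{cw}$ is an open immersion of CW complexes and $f^{cw}$ is proper whenever $f$ is proper.

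The main obstacle I anticipate is extending $\beta(S,\cdot)$ from the constructible subcategory to all of $\DM^{-}(S,\mathbb Z)$, since Theorem \ref{Rmainthm} produces $\beta$ only for constructible motives. My approach would be a filtered-homotopy-colimit argument: both $\widetilde{\Bti}(S)^{*}$ and $\Bti(S)^{*}$ are compositions of a left Quillen functor ($\widetilde{\Cw}(S)^{*}$ resp.\ $\An(S)^{*}$) with the derived functor of a right-Quillen equivalence ($Re^{tr}_{cw}(S)_{*}$ resp.\ $Re^{tr}_{an}(S)_{*}$), and each of these preserves arbitrary direct sums and filtered homotopy colimits. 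Since every object of $\DM^{-}(S,\mathbb Z)$ is canonically a filtered homotopy colimit of constructible motives, $\beta$ extends by continuity, and both the naturality under $f^{*}$ and the cocycle condition descend to the colimit. The delicate point is that this extension be compatible with the \emph{exchange} transformations for $Rf_{*}$ and $f_{!}$, which do not a priori commute with filtered colimits; for $f_{!}$ this is handled by writing $f=\hat f\circ j$ (with $\hat f$ proper and $j$ an open immersion, as in Theorem \ref{sixfunctorsVar}) and using that proper pushforward on constructible motives preserves constructibility, so that the exchange isomorphism is already determined on a generating constructible subcategory.
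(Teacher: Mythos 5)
Your approach is genuinely different from the paper's. The paper proves Theorem \ref{RCWBetti} directly, mirroring Ayoub's argument in \cite{AyoubB}: since $\widetilde\Bti(S)^*$ factors through the explicit fibrant replacement $\sing_{\mathbb I^*}$, the only thing that needs checking is that this model is strictly compatible with pullback, namely that $\phi(f^*,e_{cw},S)(F^{\bullet}):f^*\sing_{\mathbb I^*}\widetilde\Cw(S)^*F^{\bullet}\to\sing_{\mathbb I^*}\widetilde\Cw(T)^*f^*F^{\bullet}$ is an \emph{isomorphism of complexes}, which is Proposition \ref{adRSCW}(i); the 2-functor axioms then come for free from the chain-level cocycle identity. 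Your proposal instead transports the morphism-of-2-functors structure from $\Bti$ via the comparison isomorphism $\beta$ of Theorem \ref{Rmainthm}. This is a legitimate strategy in principle, but it inverts the paper's logical order --- Theorem \ref{Rmainthm} and its ingredient Proposition \ref{RBettiprop} come \emph{after} Theorem \ref{RCWBetti}; while I see no actual circularity, a reorganization would be needed, and some of the later machinery (in particular the reduction to the smooth constructible generators in \ref{RBettiprop}) is heavier than what the direct argument requires.

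There are two genuine gaps in your write-up. First, the extension of $\beta$ from constructible motives to all of $\DM^-(S,\mathbb Z)$ is not established in the paper and your filtered homotopy colimit argument uses facts (compact generation of $\DM^-(S,\mathbb Z)$ by constructibles, commutation of $Re^{tr}_{cw}(S)_*$ and $Re^{tr}_{an}(S)_*$ with filtered homotopy colimits, and compatibility of the exchange transformations with such colimits) that are not proved or cited here; you flag the $f_!$ case as delicate but do not resolve it, and the $Rf_*$ case is equally delicate. Second, your reduction of the cocycle condition to ``naturality of $B(S)$ along $f^{an}$ and $f^{cw}$'' and thence to ``the evident compatibility'' of $\psi^{\Cw}$ and $W$ with base change is exactly the place where the substantive work sits: when spelled out, this compatibility is essentially Proposition \ref{adRSCW} together with its analytic analogue \ref{adRSAn}, so after filling in the details your route would land back on the same key computation the paper uses directly, only with the extra overhead of the constructibility/colimit bootstrap. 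The direct chain-level argument avoids both gaps entirely.
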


\begin{proof}
The proof is similar to the proof of theorem \ref{RAnBetti}. 
Indeed, let $T,S\in\Var(\mathbb C)$ and $f:T\to S$ a morphism.
Then for $F^{\bullet}\in PC^-(CW,S)$, the morphism
\begin{equation*}
f^*\sing_{\mathbb I^*}\widetilde\Cw(S)^*F^{\bullet}\xrightarrow{\phi(f^*,e_{cw},S)(F^{\bullet})}
\sing_{\mathbb I^*}f^*\widetilde\Cw(S)^*F^{\bullet}=\sing_{\mathbb I^*}\widetilde\Cw(T)^*f^*F^{\bullet}
\end{equation*}
in $C^-(T)$ is an isomorphism by proposition \ref{adRSCW} (i).
\end{proof}

As in the absolute case, we the following two canonical morphisms of functors :
\begin{itemize}
\item for $S\in\AnSp(\mathbb C)$, the morphism $\psi^{\Cw}(S)$, which,
for $G^{\bullet}\in\PSh(\Cor^{fs}_{\mathbb Z}(\AnSp(\mathbb C)^{sm}/S),C^-(\mathbb Z))$, associate the morphism 
\begin{equation*}
\psi^{\Cw}(S)(G^{\bullet}):\Cw(S)^*(\underline{\sing}_{\mathbb I_{an}^*}G^{\bullet})
\to\underline{\sing}_{\mathbb I^*}\Cw(S)^*G^{\bullet}
\end{equation*}
in $\PSh(\Cor^{fs}_{\mathbb Z}(\CW/S^{cw}),C^-(\mathbb Z))$ ;
the morphism $\psi^{\Cw}(S)(G^{\bullet})$ is given by, for $Z/S^{cw}\in\CW/S^{cw}$,
\begin{equation}
\psi^{\Cw}(G^{\bullet})(Z/S^{cw}):
\lim_{X^{cw}/S^{cw}\to Z/S^{cw}}G^{\bullet}(X\times\mathbb I_{an}^*/S)
\to\lim_{Y^{cw}/S^{cw}\to Z/S^{cw}\times\mathbb I^*}G^{\bullet}(Y/S)
\end{equation}
given by 
$(f:X^{cw}/S^{cw}\to Z/S^{cw})\mapsto(f\times I_{\mathbb I^*}:
(X\times\mathbb I_{an}^*)^{cw}/S^{cw}\to Z\times\mathbb I^*/S^{cw})$
and the identity of $G^{\bullet}(X\times\mathbb I_{an}^*/S)$ ;

\item for $S\in\Var(\mathbb C)$, the morphism $\psi^{\widetilde{\Cw}}(S)$, which,
for $F^{\bullet}\in\PSh(\Cor^{fs}_{\mathbb Z}(\Var(\mathbb C)/S),C^-(\mathbb Z))$, associate the morphism 
\begin{equation*}
\psi^{\widetilde{\Cw(S)}}(F^{\bullet}):\widetilde{\Cw(S)}^*(\underline{\sing}_{\mathbb I_{et}^*}F^{\bullet})
\to\underline{\sing}_{\mathbb I^*}\widetilde{\Cw(S)}^*F^{\bullet}
\end{equation*}
in $\PSh(\Cor^{fs}_{\mathbb Z}(\CW/S^{cw}),C^-(\mathbb Z))$ ;
the morphism $\psi^{\widetilde{\Cw(S)}}(F^{\bullet})$ is given by, for $Z/S^{cw}\in\CW/S^{cw}$,
\begin{equation}
\psi^{\widetilde{\Cw}}(F^{\bullet})(Z):
\lim_{X^{cw}/S^{cw}\to Z/S^{cw}}F^{\bullet}(X\times\mathbb I_{et}^*/S)\to\lim_{Y^{cw}/S^{cw}\to Z\times\mathbb I^*/S^{cw}}F^{\bullet}(Y/S)
\end{equation}
given by 
$(f:X^{cw}/S^{cw}\to Z/S^{cw})\mapsto(f\times I_{\mathbb I^*}:(X\times\mathbb I_{et}^*)^{cw}\to Z\times\mathbb I^*/S^{cw})$
and the identity of $F^{\bullet}(X\times\mathbb I_{et}^*/S)$.
\end{itemize}

\begin{defi}
As in the absolute case, we define the following two morphism of functors :
\begin{itemize}
\item[(i)] for $S\in\AnSp(\mathbb C)$, the morphism $W(S)$, which,
for $G^{\bullet}\in\PSh(\Cor^{fs}_{\mathbb Z}(\AnSp(\mathbb C)^{sm}/S),C^-(\mathbb Z))$, associate the composition
\begin{equation*}
W(S)(G^{\bullet}):
\Cw(S)^*(\underline{\sing}_{\bar{\mathbb D}^*}G^{\bullet})\xrightarrow{\Cw(S)^*(\underline{G}^{\bullet}(i'_1))}
\Cw(S)^*(\underline{\sing}_{\mathbb I_{an}^*}G^{\bullet})\xrightarrow{\psi^{\Cw}(S)(G^{\bullet})}
\underline{\sing}_{\mathbb I^*}\Cw(S)^*G^{\bullet}
\end{equation*}
in $\PSh(\Cor^{fs}_{\mathbb Z}(\CW/S^{cw}),C^-(\mathbb Z))$,

\item[(ii)] for $S\in\Var(\mathbb C)$ the morphism $\widetilde W(S)$, which,
for $F^{\bullet}\in\PSh(\Cor^{fs}_{\mathbb Z}(\SmVar(\mathbb C)),C^-(\mathbb Z))$, associate the composition
\begin{equation*}
\widetilde W(S)(F^{\bullet}):
\widetilde\Cw(S)^*(\underline{\sing}_{\mathbb I^*}F^{\bullet})\xrightarrow{\widetilde\Cw(S)^*(\underline{F}^{\bullet}(i))}
\widetilde\Cw(S)^*(\underline{\sing}_{\mathbb I_{et}^*}F^{\bullet})\xrightarrow{\psi^{\widetilde\Cw}(S)(F^{\bullet})}
\underline{\sing}_{\mathbb I^*}\widetilde\Cw(S)^*F^{\bullet}
\end{equation*}
in $\PSh(\Cor^{fs}_{\mathbb Z}(\CW/S^{cw}),C^-(\mathbb Z))$.
\end{itemize}
\end{defi}

We have the relative analogue to proposition \ref{AWtW} : 

\begin{prop}\label{RAWtW}
\begin{itemize}
\item[(i)] Let $S\in\AnSp(\mathbb C)$.
For $G^{\bullet}\in PC^-(\An,S)$, 
$W(G^{\bullet})(S):\Cw(S)^*(\underline{\sing}_{\bar{\mathbb D}^*}G^{\bullet})
\to\underline{\sing}_{\mathbb I^*}\Cw(S)^*G^{\bullet}$
is an equivalence $(\mathbb I^1,usu)$ local in $PC^-(CW,S^{cw})$.

\item[(ii)] Let $S\in\Var(\mathbb C)$.
For $F^{\bullet}\in PC^-(S)$,
$\widetilde W(S)(F^{\bullet}):\widetilde\Cw(S)^*(\underline{\sing}_{\mathbb I^*}F^{\bullet})
\to\underline{\sing}_{\mathbb I^*}\widetilde\Cw(S)^*F^{\bullet}$
is an $(\mathbb I^1,usu)$ local equivalence in $PC^-(CW,S^{cw})$.
\end{itemize}
\end{prop}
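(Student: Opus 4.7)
The plan is to imitate exactly the template of Proposition~\ref{AWtW}, substituting each absolute input with its relative counterpart established earlier in this section. The strategy in both (i) and (ii) is the same two-out-of-three argument: factor the inclusion $S(-)$ into $\underline{\sing}_{\mathbb I^*}$ in the source category through $W(S)$, show that both the factoring map on the left and the composite equal the inclusion $S$ on the target, and conclude from model-category properties that $W(S)(G^\bullet)$ (resp.\ $\widetilde W(S)(F^\bullet)$) is a $(\mathbb I^1,usu)$ equivalence.

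For part (i), given $G^\bullet\in PC^-(\An,S)$, I would consider the commutative triangle in $PC^-(CW,S^{cw})$
\begin{equation*}
\xymatrix{\Cw(S)^*G^{\bullet}\ar[d]_{\Cw(S)^*(S(G^{\bullet}))}\ar[rrd]^{S(\Cw(S)^*G^{\bullet})} & \, & \, \\
\Cw(S)^*(\underline{\sing}_{\bar{\mathbb D}^*}G^{\bullet})\ar[rr]^{W(S)(G^{\bullet})} & \, & \underline{\sing}_{\mathbb I^*}\Cw(S)^*G^{\bullet}}
\end{equation*}
whose commutativity is immediate from the definitions of $W(S)$, $\psi^{\Cw}(S)$ and $\underline{G}^{\bullet}(i'_{1*})$. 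By Theorem~\ref{RAnS}(ii), $S(G^\bullet): G^\bullet\to\underline{\sing}_{\bar{\mathbb D}^*}G^\bullet$ is a $(\mathbb D^1,usu)$ local equivalence in $PC^-(\An,S)$; since, by Proposition~\ref{RAnCwtCw}(ii), $\Cw(S)^*$ derives trivially as a functor from the $(\mathbb D^1,usu)$ to the $(\mathbb I^1,usu)$ model structure, the vertical arrow $\Cw(S)^*(S(G^\bullet))$ is a $(\mathbb I^1,usu)$ equivalence. On the other hand, by Theorem~\ref{RCWsingI}(ii), the diagonal arrow $S(\Cw(S)^*G^\bullet)$ is a $(\mathbb I^1,usu)$ equivalence as well. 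The two-out-of-three property then forces $W(S)(G^\bullet)$ to be a $(\mathbb I^1,usu)$ local equivalence.

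For part (ii), the argument is entirely parallel. For $F^\bullet\in PC^-(S)$, consider the analogous triangle in $PC^-(CW,S^{cw})$
\begin{equation*}
\xymatrix{\widetilde\Cw(S)^*F^{\bullet}\ar[d]_{\widetilde\Cw(S)^*(S(F^{\bullet}))}\ar[rrd]^{S(\widetilde\Cw(S)^*F^{\bullet})} & \, & \, \\
\widetilde\Cw(S)^*(\underline{\sing}_{\mathbb I^*}F^{\bullet})\ar[rr]^{\widetilde W(S)(F^{\bullet})} & \, & \underline{\sing}_{\mathbb I^*}\widetilde\Cw(S)^*F^{\bullet}}
\end{equation*}
By the relative version of Theorem~\ref{VarsingA}(ii) (established along the same lines as in the absolute case, and applicable to $\underline{\sing}_{\mathbb I^*}$ in the algebraic setting), the vertical map $S(F^\bullet)$ is a $(\mathbb A^1,et)$ local equivalence. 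Proposition~\ref{RAnCwtCw}(iii) says $\widetilde\Cw(S)^*$ derives trivially from the $(\mathbb A^1,et)$ to the $(\mathbb I^1,usu)$ model structure, hence $\widetilde\Cw(S)^*(S(F^\bullet))$ is a $(\mathbb I^1,usu)$ equivalence. The diagonal map is again a $(\mathbb I^1,usu)$ equivalence by Theorem~\ref{RCWsingI}(ii), and two-out-of-three delivers the conclusion.

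The argument is essentially formal once the relative analogues of the key structural results of Section~2 are in hand; there is no serious obstacle beyond verifying that the triangles above commute, which amounts to unwinding the definitions of $W(S)$ and $\widetilde W(S)$ and noting that both $\underline{G}^{\bullet}(i'_{1*})\circ S(G^\bullet)$ (resp.\ $\underline{F}^\bullet(i)\circ S(F^\bullet)$) and $\psi^{\Cw}(S)$ (resp.\ $\psi^{\widetilde\Cw}(S)$) are compatible with the natural inclusion of $G^\bullet$ (resp.\ $F^\bullet$) into degree zero. The only subtle point is to double-check that Proposition~\ref{RAnCwtCw} really does give trivial derivability of $\Cw(S)^*$ and $\widetilde\Cw(S)^*$ for the intended pairs of model structures; this has been addressed already via the homotopy-invariance property of $\Cw(S)_*$ applied to $\mathbb I^1$-local objects, exactly as in the absolute case.
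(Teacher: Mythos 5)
Your proof is correct and unpacks exactly what the paper means by its one-line proof, ``Similar to proposition~\ref{AWtW}'': the same two-out-of-three triangle, with Theorem~\ref{RAnS}(ii), Theorem~\ref{RCWsingI}(ii), and Proposition~\ref{RAnCwtCw}(ii)--(iii) substituted for their absolute counterparts. Your flag that the relative analogue of Theorem~\ref{VarsingA}(ii) is implicitly being used in part~(ii) is accurate and appropriately noted.
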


\begin{proof}
Similar to proposition \ref{AWtW}.
\end{proof}

\begin{defi}
For $S\in\AnSp(\mathbb C)$, we define the morphism of functor $B(S)$, by associating
to $G^{\bullet}\in\PSh(\Cor^{fs}_{\mathbb Z}(\AnSp^{sm}(\mathbb C)/S),C^-(\mathbb Z))$, 
the morphism $B(S)(G^{\bullet})$ which is the composite 
\begin{equation*}
\xymatrix{
\underline{\sing}_{\bar{\mathbb D}^*}G^{\bullet}
\ar[d]_{\ad(\Cw(S)^*,\Cw(S)_*)(\underline{\sing}_{\bar{\mathbb D}^*}G^{\bullet})}\ar[rrrd]^{B(S)(G^{\bullet})} & \, & \, & \, \\
\Cw(S)_*\Cw(S)^*\underline{\sing}_{\bar{\mathbb D}^*}G^{\bullet}\ar[rrr]^{\Cw(S)_*(W(S)(G^{\bullet}))} & \, & \, &
\Cw(S)_*\underline{\sing}_{\mathbb I^*}\Cw(S)^*G^{\bullet}}
\end{equation*}
in $\PSh(\Cor^{fs}_{\mathbb Z}(\AnSp(\mathbb C)^{sm}/S),C^-(\mathbb Z))$
\end{defi}

We have the following key proposition.

\begin{prop}\label{RBettiprop}
Let $S\in\Var(\mathbb C)$ and $F^{\bullet}\in PC^-(S)$ 
such that $D(\mathbb A^1,et)(S)(F^{\bullet})\in\DM^-(S,\mathbb Z)$ is a constructible motive. Then,
\begin{itemize}
\item[(i)] 
$e^{tr}_{an}(S)_*B(S)(F^{\bullet}):
\sing_{\bar{\mathbb D}^*}\An(S)^*F^{\bullet}\to\sing_{\mathbb I^*}\widetilde\Cw(S)^*F^{\bullet}$
is an equivalence usu local in $C^{-}(S^{an})$.

\item[(ii)] 
$B(S)(F^{\bullet}):\underline{\sing}_{\bar{\mathbb D}^*}\An(S)^*F^{\bullet}\to
\Cw(S^{an})_*\underline{\sing}_{\mathbb I^*}\widetilde\Cw(S)^*F^{\bullet}$
is an equivalence $(\mathbb D^1,usu)$ local.
\end{itemize}
\end{prop}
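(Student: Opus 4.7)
Point (ii) will be deduced from point (i) exactly as in the absolute case. By theorem \ref{RAnS}(ii) the source $\underline{\sing}_{\bar{\mathbb D}^*}\An(S)^*F^{\bullet}$ is $\mathbb D^1$ local, while by theorem \ref{RCWsingI}(ii) combined with proposition \ref{RAnCwtCw}(ii) the target $\Cw(S^{an})_*\underline{\sing}_{\mathbb I^*}\widetilde\Cw(S)^*F^{\bullet}$ is $\mathbb D^1$ local as well. Granting (i), the morphism $B(S)(F^{\bullet})$ becomes a usu local equivalence after applying $e^{tr}_{an}(S)_*$, and proposition \ref{RDusuEq}(ii) then implies that $B(S)(F^{\bullet})$ itself is a $(\mathbb D^1,usu)$ local equivalence.

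For (i), the strategy has two steps: first reduce to representable generators of constructible motives, and then check the resulting statement on stalks using the absolute case. The assignment $F^{\bullet}\mapsto e^{tr}_{an}(S)_*B(S)(F^{\bullet})$ descends to a natural transformation between triangulated functors $\DM^-(S,\mathbb Z)\to D^-(S^{an})$: propositions \ref{RAnCwtCw}(i) and (iii) together with theorems \ref{RAnS}(ii) and \ref{RCWsingI}(ii) guarantee that both the source and the target send $(\mathbb A^1,et)$ equivalences of $F^{\bullet}$ to usu local equivalences in $C^-(S^{an})$, and both visibly respect cofiber sequences. Hence the class of objects of $\DM^-(S,\mathbb Z)$ on which this natural transformation is a usu local equivalence forms a thick triangulated subcategory. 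Since constructible motives form, by definition, the thick subcategory generated by $\mathbb Z_{tr}(X/S)(p)$ for $X/S$ smooth and $p\in\mathbb Z$, and since $\mathbb Z_S(1)=\mathbb Z_{tr}(\mathbb P^1\times S,\infty)$ sits in an exact triangle with representables, we reduce to verifying (i) when $F^{\bullet}=\mathbb Z_{tr}(X/S)$ with $X/S\in\Var(\mathbb C)^{sm}/S$.

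For such representable $F^{\bullet}$ I would verify that $e^{tr}_{an}(S)_*B(S)(\mathbb Z_{tr}(X/S))$ is a usu local equivalence by checking on stalks at every $s\in S^{an}$. Letting $i_s:\pt\to S^{an}$ denote the inclusion of the point, propositions \ref{adRSAn} and \ref{adRSCW} provide canonical isomorphisms
\begin{equation*}
i_s^*\sing_{\bar{\mathbb D}^*}\An(S)^*\mathbb Z_{tr}(X/S)\simeq\sing_{\bar{\mathbb D}^*}\mathbb Z_{tr}(X_s^{an}),\quad i_s^*\sing_{\mathbb I^*}\widetilde\Cw(S)^*\mathbb Z_{tr}(X/S)\simeq\sing_{\mathbb I^*}\mathbb Z_{tr}(X_s^{cw}),
\end{equation*}
under which the stalk at $s$ of $e^{tr}_{an}(S)_*B(S)(\mathbb Z_{tr}(X/S))$ is identified with the absolute morphism $e^{tr}_{an*}B(\mathbb Z_{tr}(X_s^{an}))$. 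This absolute morphism is a quasi-isomorphism by proposition \ref{Bettiprop}(i), and as the stalk at every point is a quasi-isomorphism, $e^{tr}_{an}(S)_*B(S)(\mathbb Z_{tr}(X/S))$ is a usu local equivalence.

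The main obstacle I expect is establishing the compatibility of the stalk identifications with the morphism $B(S)$: one must verify that under propositions \ref{adRSAn} and \ref{adRSCW}, the pullback $i_s^*B(S)(\mathbb Z_{tr}(X/S))$ really matches the absolute $B(\mathbb Z_{tr}(X_s^{an}))$. This amounts to a diagram chase showing that the natural transformations $\psi^{\Cw}(S)$ and $\underline{G}^{\bullet}(i'_1)$ entering the definition of $W(S)$ are compatible with pullback along $i_s$ on both the analytic and the CW sides, and that the adjunction unit $\ad(\Cw(S)^*,\Cw(S)_*)$ intertwines with the corresponding absolute unit. The reduction to representable generators is more formal but also requires care to confirm that the triangulated functoriality of $B(S)$ survives the passage through all the intermediate natural transformations in its definition.
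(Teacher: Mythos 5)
Your proof follows essentially the same strategy as the paper's: reduce to representable generators $\mathbb Z_{tr}(X/S)$ (you via a thick-subcategory argument, the paper via explicit cone induction with $\mathbb A^1$-local, etale-fibrant replacements $F(X/S)$, $F(Y/S,p,n)$), then verify the representable case stalk-wise over $S$ by identifying the stalk at $s$ with the absolute morphism $e^{tr}_{an*}B(\mathbb Z_{tr}(X_s^{an}))$ and invoking proposition \ref{Bettiprop}(i), with part (ii) deduced from (i) via proposition \ref{RDusuEq}(ii) exactly as you describe. The compatibility check you flag — that $i_s^*B(S)(\mathbb Z_{tr}(X/S))$ really is the absolute $B(\mathbb Z_{tr}(X_s^{an}))$ under the identifications of propositions \ref{adRSAn} and \ref{adRSCW} — is precisely the point the paper uses implicitly without spelling out, so your caution there is well-placed.
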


\begin{proof}

\noindent(i): As $M=D(\mathbb A^1,et)(S)(F^{\bullet})$ is a constructible motive,
we may assume using induction that there exist $X/S,Y/S\in\Var(\mathbb C)^{sm}/S$,
$X/S=(X,h_1),Y/S=(X,h_2)$ such that 
\begin{equation*}
M=D(\mathbb A_1,et)(S)(\Cone(\alpha)) 
\end{equation*}
with $\alpha\in\Hom_{PC^-(S)}(F(X/S),F(Y/S,p,n))$, where we have chosen  
$k_1:\mathbb Z_{tr}(X/S)\to F(X/S)$ and 
$k_2:\mathbb Z_{tr}(Y/S)(p)[n]\to F(Y/S,p,n)$ 
equivalences $(\mathbb A^1,et)$ local with
$F(X/S)$ and $F(X/S,p,n)$ are $\mathbb A^1$ local and etale fibrant objects. 
Consider the following commutative diagrams in $PC^-(\An,S^{an})$ :
\begin{itemize}
\item[(1)]
$\xymatrix{
\sing_{\mathbb D^*}\mathbb Z_{tr}(X^{an}/S^{an})\ar[rrr]^{e_{an}(S)_*B(\mathbb Z_{tr}(X^{an}/S^{an}))}
\ar[d]^{S(\An(S)^*k_1)} & \, & \, &
\sing_{\mathbb I^*}\mathbb Z_{tr}(X^{cw}/S^{cw})\ar[d]^{S(\widetilde\Cw(S)^*k_2)} \\
\sing_{\mathbb D^*}\An(S)^*F(X/S)\ar[rrr]^{e_{an}(S)_*B(\An(S)^*F(X/S))} & \, & \, &
\sing_{\mathbb I^*}\widetilde\Cw(S)^*F(X/S)}$,
\item[(2)] 
$\xymatrix{
\sing_{\mathbb D^*}\mathbb Z_{tr}(Y^{an}/S^{an})(p)\ar[rrr]^{e_{an}(S)_*B(\mathbb Z_{tr}(Y^{an}/S^{an})(p))}
\ar[d]^{S(\An(S)^*k_2)} & \, & \, &
\sing_{\mathbb I^*}\mathbb Z_{tr}(Y^{cw}/S^{cw})(p)\ar[d]^{S(\widetilde\Cw(S)^*k_2)} \\
\sing_{\mathbb D^*}\An(S)^*F(Y/S)\ar[rrr]^{e_{an}(S)_*B(\An(S)^*F(Y/S))} & \, & \, &
\sing_{\mathbb I^*}\widetilde\Cw(S)^*F(Y/S)}$.
\end{itemize}
Then, 
\begin{itemize}
\item $S(\An(S)^*k_1):\sing_{\mathbb D^*}\mathbb Z_{tr}(X^{an}/S^{an})\to \sing_{\mathbb D^*}\An(S)^*F(X/S)$ 
\item $S(\An(S)^*k_2):\sing_{\mathbb D^*}\mathbb Z_{tr}(Y^{an}/S^{an})(p)[n]\to\sing_{\mathbb D^*}\An(S)^*F(Y/S,p,n)$ 
\end{itemize}
are equivalences $(\mathbb D^1,usu)$ local by proposition \ref{RAnCwtCw}(i) and theorem \ref{RAnS}(ii).
Similarly, 
\begin{itemize}
\item $S(\widetilde\Cw(S)^*k_1):\sing_{\mathbb I^*}\mathbb Z_{tr}(X^{cw}/S^{cw})\to
\sing_{\mathbb I^*} \widetilde\Cw(S)^*F(X/S)$
\item $S(\widetilde\Cw(S)^*k_2):\sing_{\mathbb I^*}\mathbb Z_{tr}(Y^{cw}/S^{cw})(p)[n]
\to\sing_{\mathbb I^*}\widetilde\Cw(S)^*F(Y/S,p,n)$ 
\end{itemize}
are equivalences $(\mathbb I^1,usu)$ local by proposition \ref{RAnCwtCw} (iii) and theorem \ref{RCWsingI}(ii).
On the other side,
\begin{itemize}
\item $e_{an}(S)_*B(\mathbb Z_{tr}(X^{an}/S^{an})):
\sing_{\mathbb D^*}\mathbb Z_{tr}(X^{an}/S^{an})\to\sing_{\mathbb I^*}\mathbb Z_{tr}(X^{cw}/S^{cw})$ 
is an equivalence usu local by proposition \ref{Bettiprop}(i) applied to 
\begin{equation*}
e_{an*}B(\mathbb Z_{tr}(X^{an}_s)):\sing_{\mathbb D^*}\mathbb Z_{tr}(X^{an}_s)
\to\sing_{\mathbb I^*}\mathbb Z_{tr}(X^{cw}_s)  
\end{equation*}
for each $s\in S$ : since $h_1:X\to S$ is smooth, 
$i_s^*\mathbb Z_{tr}(X^{an}/S^{an})=\mathbb Z_{tr}(X^{an}_s)$ 
and $i_s^*\mathbb Z_{tr}(X^{cw}/S^{cw})=\mathbb Z_{tr}(X^{cw}_s)$,
where $i_s:\left\{s\right\}\hookrightarrow S$ is the closed embedding.
\item $e_{an}(S)_*B(\mathbb Z_{tr}(Y/S)(p)[n]):
\sing_{\mathbb D^*}\mathbb Z_{tr}(Y^{an}/Y^{an})(p)[n]\to\sing_{\mathbb I^*}\mathbb Z_{tr}(Y^{cw}/S^{cw})[n]$ 
is an equivalence usu local by proposition \ref{Bettiprop}(i) applied to
\begin{equation*}
e_{an*}B(\mathbb Z_{tr}(Y^{an}_s)(p)):\sing_{\mathbb D^*}\mathbb Z_{tr}(Y^{an}_s)(p)
\to\sing_{\mathbb I^*}\mathbb Z_{tr}(Y^{cw}_s)(p)  
\end{equation*}
for each $s\in S$ : since $h_2:Y\to S$ is smooth
$i_s^*\mathbb Z_{tr}(Y^{an}/S^{an})=\mathbb Z_{tr}(Y^{an}_s)$ 
and $i_s^*\mathbb Z_{tr}(Y^{cw}/S^{cw})=\mathbb Z_{tr}(Y^{cw}_s)$,
where $i_s:\left\{s\right\}\hookrightarrow S$ is the closed embedding.
\end{itemize} 
The diagram (1) then shows that
\begin{equation*}
e_{an}(S)_*B(\An(S)^*F(X/S)):\sing_{\mathbb D^*}F(X/S)\to\sing_{\mathbb I^*}\widetilde\Cw(S)^*F(X/S)  
\end{equation*}
is an equivalence usu local in $C^-(S^{an})$,
and the diagram (2) that
\begin{equation*}
e_{an}(S)_*B(\An(S)^*F(Y/S)):\sing_{\mathbb D^*}F(Y/S,p,n)\to\sing_{\mathbb I^*}\widetilde\Cw(S)^*F(Y/S,p,n)  
\end{equation*}
is equivalence usu local in $C^-(S^{an})$.

\noindent(ii): Follows from (i) 
Let us explain.
\begin{itemize}
\item On the one hand,
\begin{itemize}
\item By theorem \ref{RAnS} (ii),
$\underline{\sing}_{\bar{\mathbb D}^*}\An(S)^*F^{\bullet}$
is $\mathbb D^1$ local.
\item By theorem \ref{CWsingI}(ii), 
$\underline{\sing}_{\mathbb I^*}\widetilde\Cw(S)^*F^{\bullet}$
is $\mathbb I^1$ local.
Hence, $\Cw(S^{an})_*\underline{\sing}_{\mathbb I^*}\widetilde\Cw(S)^*F^{\bullet}$
is $\mathbb D^1$ local, by proposition \ref{AnCwtCw} (ii).
\end{itemize}
\item On the other hand by (i)
$e^{tr}_{an}(S)_*(B(S)(F^{\bullet})):
\sing_{\bar{\mathbb D}^*}\An(S)^*F^{\bullet}\to\sing_{\mathbb I^*}\widetilde\Cw(S)^*F^{\bullet}$
is a quasi isomorphism in $C^{-}(S)$.
\end{itemize}
Hence, by proposition \ref{RDusuEq} (ii), 
\begin{equation*}
B(S)(F^{\bullet}):\underline{\sing}_{\bar{\mathbb D}^*}\An(S)^*F^{\bullet}\to
\Cw(S^{an})_*\underline{\sing}_{\mathbb I^*}\widetilde\Cw(S)^*F^{\bullet}
\end{equation*}
is an equivalence $(\mathbb D^1,usu)$ local.

\end{proof}

The proposition \ref{RBettiprop} gives the following relative version of theorem \ref{mainthm}.

\begin{thm}\label{Rmainthm}
Let $S\in\Var(\mathbb C)$. 
Let $M\in\DM^-(S,\mathbb Z)$ is a constructible motive.
\begin{itemize}
\item[(i)] We have $\Bti^*M=\widetilde\Bti^*M$

\item[(ii)] Let $M_1,M_2\in\DM^-(S,\mathbb Z)$ contructible motives.
Let $F_1^{\bullet},F_2^{\bullet}\in PC^-(S)$ 
such that $M_i=D(\mathbb A^1,et)(S)(F_i^{\bullet})\in\DM^-(S,\mathbb Z)$
for $i=1,2$. The following diagram is commutative
\begin{equation*}
\xymatrix{\Hom_{\DM^{-}(S)}(M_1,M_2)\ar^{\widetilde\Cw(S)^*}[r]\ar[d]_{\An(S)^*} &  
\Hom_{\CwDM^-(S)}(\widetilde\Cw(S)^*M_1,\widetilde\Cw(S)^*M_2)\ar[d]^{Re^{tr}_{cw}(S)_*} \\  
\Hom_{\AnDM^-(S)}(\An(S)^*M_1,\An(S)^*M_2)\ar[r]^{Re^{tr}_{an}(S)_*} &  
\Hom_{D^-(S)}(\sing_{\mathbb I^*}\widetilde\Cw(S)^*F^{\bullet}_1,
\sing_{\mathbb I^*}\widetilde\Cw(S)^*F^{\bullet}).} 
\end{equation*}
\end{itemize}
\end{thm}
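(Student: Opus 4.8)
The plan is to run the argument of theorem \ref{mainthm} verbatim in the relative setting, with proposition \ref{RBettiprop} replacing proposition \ref{Bettiprop} and the relative statements \ref{RAnS}, \ref{RCWsingI}, \ref{RAnCwtCw}, \ref{RAnTr}, \ref{RCWMot} replacing their absolute counterparts. For (i), fix $F^{\bullet}\in PC^-(S)$ with $M=D(\mathbb A^1,et)(S)(F^{\bullet})$, which may be taken so that $M$ is constructible (this is exactly the defining property, so that proposition \ref{RBettiprop} applies). By theorem \ref{RAnS}(ii) the map $S(\An(S)^*F^{\bullet})$ is a $(\mathbb D^1,usu)$ equivalence onto the $\mathbb D^1$ local object $\underline{\sing}_{\bar{\mathbb D}^*}\An(S)^*F^{\bullet}$, so that $\Bti(S)^*M=e^{tr}_{an}(S)_*\underline{\sing}_{\bar{\mathbb D}^*}\An(S)^*F^{\bullet}=\sing_{\bar{\mathbb D}^*}\An(S)^*F^{\bullet}$. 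Proposition \ref{RBettiprop}(ii) says $B(S)(F^{\bullet})$ is a $(\mathbb D^1,usu)$ equivalence, and its target $\Cw(S^{an})_*\underline{\sing}_{\mathbb I^*}\widetilde\Cw(S)^*F^{\bullet}$ is $\mathbb D^1$ local by theorem \ref{RCWsingI}(ii) together with proposition \ref{RAnCwtCw}(ii); hence $\Bti(S)^*M=e^{tr}_{an}(S)_*\Cw(S^{an})_*\underline{\sing}_{\mathbb I^*}\widetilde\Cw(S)^*F^{\bullet}$. By the commutativity of the diagram of sites $DCat(S)$, after the canonical identification $\Ouv(S)\simeq\Ouv(S^{an})\simeq\Ouv(S^{cw})$ one has $e^{tr}_{an}(S)_*\Cw(S^{an})_*=e^{tr}_{cw}(S)_*$, so this equals $\sing_{\mathbb I^*}\widetilde\Cw(S)^*F^{\bullet}$. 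On the other hand theorem \ref{RCWsingI}(ii) gives $\widetilde\Bti(S)^*M=e^{tr}_{cw}(S)_*\underline{\sing}_{\mathbb I^*}\widetilde\Cw(S)^*F^{\bullet}=\sing_{\mathbb I^*}\widetilde\Cw(S)^*F^{\bullet}$, and the two coincide.

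For (ii), given $\alpha\in\Hom_{\DM^-(S)}(M_1,M_2)$ with $M_i=D(\mathbb A^1,et)(S)(F_i^{\bullet})$, I would consider the square
\begin{equation*}
\xymatrix{
\underline{\sing}_{\bar{\mathbb D}^*}\An(S)^*F_1^{\bullet}\ar[rr]^{\An(S)^*\alpha}\ar[d]_{B(S)(F_1^{\bullet})} & \, &
\underline{\sing}_{\bar{\mathbb D}^*}\An(S)^*F_2^{\bullet}\ar[d]^{B(S)(F_2^{\bullet})} \\
\Cw(S^{an})_*\underline{\sing}_{\mathbb I^*}\widetilde\Cw(S)^*F_1^{\bullet}\ar[rr]^{\Cw(S^{an})_*\widetilde\Cw(S)^*\alpha} & \, &
\Cw(S^{an})_*\underline{\sing}_{\mathbb I^*}\widetilde\Cw(S)^*F_2^{\bullet}}
\end{equation*}
in $\AnDM^-(S^{an},\mathbb Z)$, which commutes by naturality of the morphism of functors $B(S)$. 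By theorems \ref{RAnS}(ii), \ref{RCWsingI}(ii) and proposition \ref{RAnCwtCw}(ii) all four corners are $\mathbb D^1$ local, so $\An(S)^*\alpha$ and $\Cw(S^{an})_*\widetilde\Cw(S)^*\alpha$ are represented in $\Ho_{usu}(PC^-(\An,S^{an}))$; applying $e^{tr}_{an}(S)_*$ and using that the vertical maps are $(\mathbb D^1,usu)$ equivalences (proposition \ref{RBettiprop}(ii)) yields $Re^{tr}_{an}(S)_*\An(S)^*\alpha=e^{tr}_{an}(S)_*\Cw(S^{an})_*\widetilde\Cw(S)^*\alpha=e^{tr}_{cw}(S)_*\widetilde\Cw(S)^*\alpha=Re^{tr}_{cw}(S)_*\widetilde\Cw(S)^*\alpha$, which is precisely the commutativity of the stated diagram. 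Here one also uses that $\An(S)^*$ and $\widetilde\Cw(S)^*$ derive trivially (proposition \ref{RAnCwtCw}(i),(iii)), so that the functors appearing in the diagram are the naive ones, together with theorem \ref{RAnTr}(iii) and theorem \ref{RCWMot}(ii) to identify $Re^{tr}_{an}(S)_*$ and $Re^{tr}_{cw}(S)_*$ with $e^{tr}_{an}(S)_*$ and $e^{tr}_{cw}(S)_*$ on $\mathbb D^1$, resp. $\mathbb I^1$, local objects.

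The main obstacle will be bookkeeping rather than a genuinely new idea: all of the analytic content has been absorbed into proposition \ref{RBettiprop}, whose proof was already reduced fibrewise over $s\in S$ and by d\'evissage on constructible motives to the absolute proposition \ref{Bettiprop}(i). What requires care here is the precise identification of the composites of pushforward functors along the diagram of sites $DCat(S)$ — in particular the equality $e^{tr}_{an}(S)_*\Cw(S^{an})_*=e^{tr}_{cw}(S)_*$ of functors with values in $C^-(S^{cw})$ after the canonical identification of the small sites $\Ouv(S)$, $\Ouv(S^{an})$, $\Ouv(S^{cw})$ — and checking that every localization and equivalence statement invoked is the relative one from this section, so that no implicit properness or compactness hypothesis on $S$ creeps in. One should also record explicitly, for (i), that a constructible $M$ does admit a representative $F^{\bullet}\in PC^-(S)$ with $D(\mathbb A^1,et)(S)(F^{\bullet})$ constructible, which is immediate by definition but is exactly what makes proposition \ref{RBettiprop} applicable.
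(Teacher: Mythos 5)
Your proposal takes essentially the same approach as the paper's own proof: for both (i) and (ii) you run the argument of theorem \ref{mainthm} verbatim in the relative setting, using proposition \ref{RBettiprop}(ii) in place of proposition \ref{Bettiprop}(ii) and the relative localization statements \ref{RAnS}(ii), \ref{RCWsingI}(ii), \ref{RAnCwtCw} to identify $Re^{tr}_{an}(S)_*$ and $Re^{tr}_{cw}(S)_*$ with the underived pushforwards on the respective local replacements, and the commutativity of $DCat(S)$ to identify $e^{tr}_{an}(S)_*\Cw(S^{an})_*$ with $e^{tr}_{cw}(S)_*$. The naturality square for $B(S)$ that you write down for (ii) is exactly the diagram the paper uses, so there is nothing substantively different in your argument.
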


\begin{proof}
\noindent(i): By definition, $\Bti(S)^*M=Re^{tr}_{an}(S)_*\An(S)^*M$.
Since, by theorem \ref{RAnS}(ii), 
\begin{itemize}
\item $S(F^{\bullet}):\An(S)^*F^{\bullet}\to\underline{\sing}_{\mathbb D^*}\An(S)^*F^{\bullet}$
is an equivalence $(\mathbb D^1,usu)$ local in $PC^-(\An,S^{an})$ and
\item $\underline{\sing}_{\mathbb D^*}\An(S)^*F^{\bullet}$ is a $\mathbb D^1$ local object,
\end{itemize}
we have, since $\An(S)^*$ derive trivially by proposition \ref{RAnCwtCw}(i),
$\Bti^*M=e^{tr}_{an}(S)_*(\underline{\sing}_{\mathbb D^*}\An(S)^*F^{\bullet})
=\sing_{\mathbb D^*}\An(S)^*F^{\bullet}$.
Since
\begin{itemize}
\item $B(\An(S)^*F^{\bullet}):\underline{\sing}_{\mathbb D^*}\An(S)^*F^{\bullet}
\to\Cw_*\underline{\sing}_{\mathbb I^*}\widetilde\Cw(S)^*F^{\bullet}$
is an equivalence $(\mathbb D^1,usu)$ local in $PC^-(\An,S^{an})$ 
by proposition \ref{RBettiprop} (ii), and
\item $\Cw(S)_*\underline{\sing}_{\mathbb I^*}\widetilde\Cw(S)^*F^{\bullet}$
is a $\mathbb D^1$ local object by theorem \ref{RCWsingI}(ii) and proposition \ref{RAnCwtCw}(ii),
\end{itemize}
we have, since $\widetilde\Cw(S)^*$ derive trivially by proposition \ref{RAnCwtCw}(iii),
\begin{equation}\label{Rmaineq}
\Bti^*M=e^{tr}_{an}(S)_*(\Cw(S)_*\underline{\sing}_{\mathbb I^*}\widetilde\Cw(S)^*F^{\bullet})
=\sing_{\mathbb I^*}\widetilde\Cw(S)^*F^{\bullet}
\end{equation}

By definition, $\widetilde\Bti^*M=Re^{tr}_{cw*}\widetilde\Cw(S)^*M$.
Since, by theorem \ref{RCWsingI}(ii),
\begin{itemize}
\item $S(\widetilde\Cw(S)^*F^{\bullet}):\widetilde\Cw(S)^*F^{\bullet}\to
\underline{\sing}_{\mathbb I^*}\widetilde\Cw(S)^*F^{\bullet}$
is an equivalence $(\mathbb I^1,usu)$ local in $PC^-(\CW,S^{cw})$ and 
\item $\underline{\sing}_{\mathbb I^*}\widetilde\Cw(S)^*F^{\bullet}$ is an $\mathbb I^1$ local object,
\end{itemize}
we have
\begin{eqnarray*}
\widetilde\Bti^*M=e^{tr}_{cw}(S)(\underline{\sing}_{\mathbb I^*}\widetilde\Cw(S)^*F^{\bullet})
&=&\sing_{\mathbb I^*}\widetilde\Cw(S)^*F^{\bullet}) \\
&=&\Bti(S)^*M \mbox{\; by (\ref{Rmaineq}) }
\end{eqnarray*}
This proves (i).

\noindent(ii): 
Let $\alpha\in\Hom_{\DM^-(\mathbb C,\mathbb Z)}(M_1,M_2)$.
Consider the commutative diagram in $\AnDM(S,\mathbb Z)$ 
\begin{equation*}
\xymatrix{
\underline{\sing}_{\mathbb D^*}\An(S)^*F_1^{\bullet}\ar[rr]^{\An(S)^*\alpha}\ar[d]_{B(\An(S)^*F_1^{\bullet}} & \, &
\underline{\sing}_{\mathbb D^*}\An(S)^*F_2^{\bullet}\ar[d]^{B(\An(S)^*F_2^{\bullet})} \\
\Cw(S)_*\underline{\sing}_{\mathbb I^*}\widetilde\Cw(S)^*F_1^{\bullet}\ar[rr]^{\Cw(S)_*\widetilde\Cw(S)^*\alpha} & \, &
\Cw(S)_*\underline{\sing}_{\mathbb I^*}\widetilde\Cw(S)^*F_2^{\bullet}}
\end{equation*}
Since
$\underline{\sing}_{\mathbb D^*}\An^*F_1^{\bullet}$ and $\underline{\sing}_{\mathbb D^*}\An^*F_2^{\bullet}$
are $\mathbb D^1$ local objects by theorem \ref{RAnS}(ii)
and since $\An(S)$ derive trivially by proposition \ref{RAnCwtCw}(i),
\begin{equation*}
\An(S)^*\alpha\in\Hom_{\Ho_{usu}(PC^-(\An(S),S^{an}))} 
(\underline{\sing}_{\mathbb D^*}\An(S)^*F_1^{\bullet},\underline{\sing}_{\mathbb D^*}\An(S)^*F_2^{\bullet})
\end{equation*}
Thus,
\begin{equation}\label{RAnaplha}
\Bti(S)^*(\alpha):=Re^{tr}_{an}(S)_*(\An(S)^*\alpha)=e^{tr}_{an}(S)_*\An(S)^*\alpha
\end{equation}
Since
$\underline{\sing}_{\mathbb I^*}\widetilde\Cw(S)^*F_1^{\bullet}$ and 
$\underline{\sing}_{\mathbb I^*}\widetilde\Cw(S)^*F_2^{\bullet}$
are $\mathbb I^1$ local objects by theorem \ref{RCWsingI}(ii), 
\begin{equation*}
\widetilde\Cw(S)^*\alpha\in\Hom_{\Ho_{usu}(PC^-(CW,S^{cw}))} 
(\underline{\sing}_{\mathbb I^*}\widetilde\Cw(S)^*F_1^{\bullet},
\underline{\sing}_{\mathbb I^*}\widetilde\Cw(S)^*F_2^{\bullet})
\end{equation*}
Thus, since $\widetilde\Cw(S)^*$ derive trivially by proposition \ref{RAnCwtCw}(iii),
\begin{equation}\label{RCwalpha1}
\widetilde\Bti(S)^*(\alpha):=Re^{tr}_{cw}(S)_*(\widetilde\Cw(S)^*\alpha)=e^{tr}_{cw}(S)_*\widetilde\Cw(S)^*\alpha
\end{equation}
Since
$\Cw(S)_*\underline{\sing}_{\mathbb I^*}\widetilde\Cw(S)^*F_1^{\bullet}$ and 
$\Cw(S)_*\underline{\sing}_{\mathbb I^*}\widetilde\Cw(S)^*F_2^{\bullet}$
are $\mathbb D^1$ local objects by theorem \ref{RCWsingI}(ii) and proposition \ref{RAnCwtCw} (ii), 
\begin{equation}\label{Radcwusu}
\Cw(S)_*\widetilde\Cw(S)^*\alpha\in\Hom_{\Ho_{usu}(PC^-(\An,S^{an}))} 
(\Cw(S)_*\underline{\sing}_{\mathbb I^*}\widetilde\Cw(S)^*F_1^{\bullet},
\Cw(S)_*\underline{\sing}_{\mathbb I^*}\widetilde\Cw(S)^*F_2^{\bullet})
\end{equation}
Thus
\begin{eqnarray*}
\Bti^*(\alpha)&=&Re^{tr}_{an*}(\Cw(S)_*\widetilde\Cw(S)^*\alpha) 
\mbox{\; since \;} B(\An(S)^*F_1^{\bullet}) \mbox{\; and \; } B(\An(S)^*F_2^{\bullet}) \\
&\, &\mbox{\; are \;} (\mathbb D^1,usu) \mbox{\; local \;  equivalence \; by \; proposition \ref{RBettiprop}(ii)} \\
&=&e^{tr}_{an}(S)_*(\Cw(S)_*\widetilde\Cw(S)^*\alpha) \mbox{\; by \; (\ref{Radcwusu})} \\
&=&e^{tr}_{cw}(S)_*(\widetilde\Cw(S)^*\alpha) \\
&=&\widetilde\Bti(S)^*(\alpha) \mbox{\; by \; (\ref{RCwalpha1}).}
\end{eqnarray*}

\end{proof}


\end{document}